\def\Luoma#1{\uppercase\expandafter{\romannumeral#1}}
\def\luoma#1{\romannumeral#1}
\newtheorem{mythm}{Theorem}[section]
\newtheorem{mylem}[mythm]{Lemma}
\newtheorem{myprop}[mythm]{Proposition}
\newtheorem{mycor}[mythm]{Corollary}
\newtheorem{myques}[mythm]{Question}
\newtheorem{myconj}[mythm]{Conjecture}
\theoremstyle{definition}
\newtheorem{mydefn}[mythm]{Definition}
\newtheorem{myexample}[mythm]{Example}
\theoremstyle{remark}
\newtheorem{myrem}[mythm]{Remark}
\newtheorem{mypara}[mythm]{}
\newcommand{\bb}{\mathbb}
\newcommand{\ca}{\mathcal}
\newcommand{\ak}{\mathfrak}
\newcommand{\scr}{\mathscr}
\newcommand{\mbf}{\mathbf}
\newcommand{\mrm}{\mathrm}
\newcommand{\trm}{\textrm}
\def\op#1{\mathop{\mathrm{#1}}}
\newcommand{\mo}{\mrm{Mor}}
\newcommand{\ho}{\mrm{Hom}}
\newcommand{\ke}{\mrm{Ker}}
\newcommand{\cok}{\mrm{Coker}}
\newcommand{\df}{\mrm{d}}
\newcommand{\spec}{\op{Spec}}
\newcommand{\colim}{\op{colim}}
\newcommand{\ob}{\mrm{Ob}}
\newcommand{\rr}{\mrm{R}}
\newcommand{\iso}{\stackrel{\sim}{\longrightarrow}}
\newcommand{\surj}{\twoheadrightarrow}
\newcommand{\spa}{\op{Spa}}
\newcommand{\oppo}{\mrm{op}}
\newcommand{\al}{\mrm{al}}
\newcommand{\triv}{\mrm{tr}}
\newcommand{\gal}{\mrm{Gal}}
\newcommand{\dd}{\mbf{D}}
\newcommand{\module}{\trm{-}\mbf{Mod}}
\newcommand{\pro}{\mbf{Pro}}
\newcommand{\et}{\mrm{\acute{e}t}}
\newcommand{\fet}{\mrm{f\acute{e}t}}
\newcommand{\proet}{\mrm{pro\acute{e}t}}
\newcommand{\profet}{\mrm{prof\acute{e}t}}
\newcommand{\fal}{\mbf{E}}
\newcommand{\falb}{\overline{\scr{B}}}
\newcommand{\rz}{\mrm{RZ}}
\newcommand{\modf}{\scr{MD}}
\newcommand{\val}{\mrm{Val}}
\newcommand{\supprz}{\mrm{Supp}^{\rz}}
\def\ff#1{\scr{F}^{\mrm{fini}}_{\overline{#1}/#1}}
\title[Purity for Perfectoidness]{Purity for Perfectoidness}%
\author{Tongmu He}
\date{\today}
\address{Tongmu He, Institute for Advanced Study, 1 Einstein Drive, 08540 New Jersey, the United States}
\email{hetm@ias.edu}
\numberwithin{equation}{mythm}
\begin{document}
	\maketitle
	
\begin{abstract}
	There has been a long-standing question about whether being perfectoid for an algebra is local in the analytic topology. We provide affirmative answers for the algebras (e.g., over $\overline{\mathbb{Z}_p}$) whose spectra are inverse limits of semi-stable affine schemes. In fact, we established a valuative criterion for such an algebra being perfectoid, saying that it suffices to check the perfectoidness of the stalks of the associated Riemann-Zariski space. Combining with Gabber-Ramero's computation of differentials of valuation rings, we obtain a differential criterion for perfectoidness. We also establish a purity result for perfectoidness when the limit preserves generic points of the special fibres.
	
	As an application to limits of smooth $p$-adic varieties (on the generic point), assuming either the poly-stable modification conjecture or working only with curves, we prove that stalk-wise perfectoidness implies vanishing of the higher completed \'etale cohomology groups of the smooth varieties, which is inspired by Scholze's vanishing for Shimura varieties. Moreover, we give an explicit description of the completed \'etale cohomology group in the top degree in terms of the colimit of Zariski cohomology groups of the structural sheaves.
\end{abstract}
\footnotetext{\emph{2020 Mathematics Subject Classification} 14F30 (primary), 11F77.\\Keywords: perfectoid, purity, criterion, Riemann-Zariski space, Faltings topos, completed cohomology}
	
\tableofcontents

\section{Introduction}

\begin{mypara}
	To compute the cohomology of a scheme, we typically start by finding a \v Cech covering (or hypercovering in general) consisting of affine open subschemes and use the associated \v Cech complex for calculation. This is because rings satisfy cohomological descent in Zariski topology (or even faithfully flat topology). As shown by Scholze, \emph{perfectoid algebras} play a similar role in the cohomology computation in $p$-adic analytic topology \cite{scholze2012perfectoid} (or even v-topology \cite{scholze2017diamond}). By constructing perfectoid coverings, countless results in $p$-adic geometry have been proven. There is a fundamental question that arose with the emergence of perfectoid algebras:
\end{mypara}

\begin{myques}[{\cite[2.16]{scholze2013perfsurv}}]
	Is an algebra perfectoid if it is locally perfectoid in the analytic topology?
\end{myques}

\begin{mypara}
	The difficulty lies in the lack of cohomological descent properties for general algebras in the analytic topology, and there seems to be limited evidence or examples in the existing literature. In this article, we provide affirmative answers to this question for a broad class of algebras encountered in $p$-adic geometry. More precisely, we prove that for an algebra (e.g., over $\overline{\bb{Z}_p}$) whose spectrum is an inverse limit of semi-stable affine schemes, (its $p$-adic completion) being perfectoid is local in analytic topology. 
\end{mypara}

\begin{mypara}
	In fact, we established a \emph{valuative criterion} for such an algebra being perfectoid saying that it suffices to check the perfectoidness of the stalks of the associated Riemann-Zariski space (see \ref{intro-thm:criterion-1}). Our proof comprises two main steps: firstly, an elaborate computation of Galois cohomology (measuring the distance from being perfectoid, see \ref{intro-thm:acyclic}) of semi-stable affine schemes after Faltings \cite{faltings1988p,faltings2002almost}, Abbes-Gros \cite{abbes2016p}, Tsuji \cite{tsuji2018localsimpson}, Bhatt-Morrow-Scholze \cite{bhattmorrowscholze2018integral}, and secondly interpreting the Galois cohomology as the cohomology of Faltings ringed topos, which possesses a limit-preserving property, facilitating the passage to the limit (see \ref{intro-thm:essential-adequate-coh}). Along the way, we also established a \emph{purity} result for perfectoidness when the limit preserves generic points of the special fibres, i.e., it suffices to check the perfectoidness of the localizations at those generic points (see \ref{intro-cor:purity}). Combining with Gabber-Ramero's computation of differentials of valuation rings \cite{gabber2003almost}, we obtain a \emph{differential criterion} for perfectoidness (see \ref{intro-thm:criterion-2}).
\end{mypara}

\begin{mypara}
	We insist to work in the generality with semi-stable schemes for (potential) applications of resolution of singularities. In fact, if we assume that the poly-stable modification conjecture is true or only work with curves by Temkin's stable modification theorem \cite{temkin2010stablecurve}, then we are able to give some applications to countable limits of smooth $p$-adic varieties (on the generic point): inspired by Scholze's work on Calegari-Emerton's conjecture on the vanishing of higher completed cohomology groups of Shimura varieties \cite{scholze2015torsion}, we prove in a similar manner that the completed \'etale cohomology groups of the smooth $p$-adic varieties vanish in higher degrees if the stalks of the associated Riemann-Zariski space are perfectoid (see \ref{intro-thm:completed}), and moreover our framework gives an explicit relation between the completed \'etale cohomology group and the countable colimit of Zariski cohomology groups of the structural sheaves in the top degree (see \ref{intro-cor:completed}).
\end{mypara}

\begin{mypara}
	Before going into the details of this article, we firstly recall the definition of perfectoid algebras. Let $K$ be a complete discrete valuation field of characteristic $0$ with perfect residue field of characteristic $p>0$, $\overline{K}$ an algebraic closure of $K$. For a flat $\ca{O}_{\overline{K}}$-algebra $R$, we say that it is \emph{pre-perfectoid} if the Frobenius induces an isomorphism $R/p^{1/p}R\iso R/pR,\ x\mapsto x^p$ (see \ref{para:notation-perfd}).
\end{mypara}

\begin{mypara}
	The first ingredient for determining whether $R$ is pre-perfectoid is the cohomology of Faltings ringed topos (we also call it \emph{Faltings cohomology} for short), which measures the distance of $R$ from being pre-perfectoid. We put
	\begin{align}
		\eta=\spec(\overline{K}),\quad S=\spec(\ca{O}_{\overline{K}}),\quad s=\spec(\ca{O}_{\overline{K}}/\ak{m}_{\overline{K}}).
	\end{align}
	Let $X$ be a coherent scheme over $S$ and let $X_{\eta}$ (resp. $X_s$) denote its generic fibre (resp. special fibre) (here ``coherent" stands for ``quasi-compact and quasi-separated", see \ref{para:notation-intclos}). Recall that the Faltings ringed site $(\fal^\et_{X_{\eta}\to X},\falb)$ associated to $X$ is defined as follows: $\fal^\et_{X_{\eta}\to X}$ is the category of morphisms of coherent schemes $V\to U$ over $X_\eta\to X$, i.e. commutative diagrams
	\begin{align}
		\xymatrix{
			V\ar[r]\ar[d]& U\ar[d]\\
			X_{\eta}\ar[r]& X
		}
	\end{align}
	such that $U$ is \'etale over $X$ and that $V$ is finite \'etale over $U_{\eta}=X_{\eta}\times_X U$, endowed with the topology generated by the following two types of families of morphisms
	\begin{enumerate}
		\item(vertical coverings) $\{(V_m \to U) \to (V \to U)\}_{m \in M}$, where $M$ is a finite set and $\coprod_{m\in M} V_m\to V$ is surjective;
		\item(Cartesian coverings) $\{(V\times_U{U_n} \to U_n) \to (V \to U)\}_{n \in N}$, where $N$ is a finite set and $\coprod_{n\in N} U_n\to U$ is surjective.
	\end{enumerate}
	The sheaf $\falb$ on $\fal^{\et}_{X_\eta\to X}$ is given by
	\begin{align}
			\falb(V \to U) = \Gamma(U^V , \ca{O}_{U^V}),
	\end{align}
	where $U^V$ is the integral closure of $U$ in $V$. One of the main results of cohomological descent for Faltings ringed topos is the following perfectoidness criterion.
\end{mypara}

\begin{mythm}[{\cite[8.24]{he2024coh}, see \ref{thm:acyclic}}]\label{intro-thm:acyclic}
	Assume that the $\ca{O}_{\overline{K}}$-algebra $R$ is integrally closed in $R[1/p]$. Then, the following statements are equivalent:
	\begin{enumerate}
		\renewcommand{\labelenumi}{{\rm(\theenumi)}}
		\item(Perfectoidness) The $\ca{O}_{\overline{K}}$-algebra $R$ is pre-perfectoid.
		\item(Faltings acyclicity) The canonical morphism $R/pR\to\rr\Gamma(\fal^\et_{X_{\eta}\to X},\falb/p\falb)$ is an almost isomorphism, where $X=\spec(R)$.
	\end{enumerate} 
\end{mythm}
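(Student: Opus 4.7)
The plan is to exploit the structure of the Faltings ringed topos $(\fal^{\et}_{X_\eta\to X},\falb)$ to translate the local algebraic condition of pre-perfectoidness into a cohomological almost-vanishing statement. First I would note that the canonical morphism $R/pR\to \rr\Gamma(\fal^{\et}_{X_\eta\to X},\falb/p\falb)$ comes from evaluating $\falb$ at the final object $(X_\eta\to X)$: since $R$ is integrally closed in $R[1/p]$, one has $\falb(X_\eta\to X)=R$, so the question reduces to whether $H^0$ of $\falb/p\falb$ is almost $R/pR$ and whether the higher cohomology almost vanishes.

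For the direction (1)$\Rightarrow$(2), I would invoke Faltings' almost purity: if $R$ is pre-perfectoid, then for every object $(V\to U)$ of $\fal^\et_{X_\eta\to X}$, the integral closure $U^V$ is almost finite \'etale over $U$, hence $\Gamma(U^V,\ca{O}_{U^V})$ is an almost finite \'etale $\Gamma(U,\ca{O}_U)$-algebra. This gives almost acyclicity in the ``vertical'' (finite \'etale) direction by the almost version of the Cartan-Leray spectral sequence. Combining with cohomological descent for quasi-coherent sheaves in the ``Cartesian'' (\'etale) direction on $X$, and using the two-direction decomposition of the Faltings site, one concludes that $\rr\Gamma(\falb/p\falb)$ is almost $R/pR$ concentrated in degree zero.

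For (2)$\Rightarrow$(1), which is the harder direction, the plan is to construct $p^{1/p}$-th roots in $R/pR$ from the acyclicity. Given $f\in R$, I would use the Kummer-type cover $V\to X_\eta$ defined by adjoining a $p$-th root of $1+pf$ (after enlarging with roots of unity, which is harmless since we work over $\ca{O}_{\overline{K}}$). This $V\to X_\eta$ is finite \'etale, so defines an object of $\fal^\et_{X_\eta\to X}$, and the Galois action of $\mu_p$ on $\falb(V\to X)$ together with the almost isomorphism $R/pR\iso^{\al} H^0(\falb/p\falb)$ and the vanishing $H^1(\falb/p\falb)\iso^{\al}0$ force the $p$-th root to descend almost to $R$, producing the required lift of Frobenius. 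This argument parallels Faltings' original almost purity proof, its refinement by Abbes-Gros and Tsuji, and the Bhatt-Morrow-Scholze framework.

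The main obstacle lies in the second direction: extracting genuine ring-theoretic $p$-th roots from cohomological vanishing requires a delicate bookkeeping between the two types of coverings in the Faltings site, together with a careful use of the almost setting to absorb ambiguities in $\ak{m}_{\overline{K}}$. Since the statement is quoted from \cite{he2024coh}, one can simply invoke that reference; otherwise the strategy above, adapted to the current normalization where $R$ is merely integrally closed in $R[1/p]$ and not assumed $p$-torsion free a priori, provides a self-contained route.
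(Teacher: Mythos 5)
Your framing — that the statement is a criterion proved in \cite{he2024coh} and both directions pass through the Faltings topos — is right, and the paper does indeed simply cite that reference; but your proposed argument for the converse direction (2)$\Rightarrow$(1) differs from the one actually used, and the Kummer step as you state it does not work. The paper's route (visible in \ref{lem:fal-acyc-perfd}) uses only the $H^0$-part of Faltings acyclicity and no Galois-cohomological descent whatsoever. The decisive input is \ref{lem:perfd-basis}: the objects $(V\to U)$ of $\fal^{\proet}_{X_\eta\to X}$ for which $\falb(V\to U)$ is a pre-perfectoid $\ca{O}_{\overline{K}}$-algebra form a topological generating family. The Frobenius $\falb/p_1\falb\to\falb/p\falb$ (with $p_1$ a $p$-th root of $p$ up to unit) is an almost isomorphism on the sections over every such object, hence an almost isomorphism after sheafification; applying $H^0$ and using the hypothesis that $R/\pi R\to H^0(\fal^{\et}_{X_\eta\to X},\falb/\pi\falb)$ is an almost isomorphism for $\pi\in\{p_1,p\}$ gives that $R/p_1R\to R/pR$ is an almost isomorphism, so $R$ is almost pre-perfectoid; finally \ref{lem:al-perfd} uses the assumption that $R$ is integrally closed in $R[1/p]$ to remove the ``almost''. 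The vanishing of $H^{\geq 1}$ plays no role in this implication.

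The Kummer cover you propose has a concrete gap. Adjoining a $p$-th root $\alpha$ of $1+pf$ cannot produce a $p$-th root of $f$ modulo $p$: since $\alpha^p=1+pf\equiv 1\pmod{p}$, one has $\alpha\equiv 1$ up to a small error, and $\alpha$ carries no information about $f^{1/p}$. The classical argument of this flavor works with a $p$-th root $\alpha$ of $1+(\zeta_p-1)^p f$ and then with $\beta=(\alpha-1)/(\zeta_p-1)$, for which a binomial expansion yields a relation of the shape $\beta^p+u\beta\equiv f$ modulo $(\zeta_p-1)$ with $u$ a unit; even in that form, extracting a genuine $p$-th root of $f$ is a further nontrivial step and not an immediate consequence of $H^1$-vanishing. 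You would also need to handle elements $f$ that vanish somewhere on $X_\eta$, where the proposed cover fails to be \'etale. The perfectoid-basis argument sidesteps all of this, which is why the paper uses it. Your sketch of (1)$\Rightarrow$(2) via almost purity and a Cartan–Leray-type decomposition is in the right spirit, though the cited reference implements it via a hypercovering by Faltings-acyclic (i.e.\ pre-perfectoid) objects rather than a direct spectral sequence argument.
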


\begin{mypara}
	The next step is to compute Faltings cohomology for semi-stable $S$-schemes $X=\spec(R)$. Namely, \'etale locally $X$ admits a standard semi-stable chart (see \ref{exam:semi-stable}), i.e., an \'etale homomorphism over $\ca{O}_{\overline{K}}$,
	\begin{align}\label{intro-eq:exam:semi-stable-1}
		\ca{O}_{\overline{K}}[T_0,T_1,\dots,T_b,T_{b+1}^{\pm 1},\dots,T_c^{\pm 1}]/(T_0\cdots T_b-a)\longrightarrow R,
	\end{align}
	for some integers $0\leq b\leq c$ and $a\in\ak{m}_{\overline{K}}\setminus \{0\}$. It turns out that the Faltings cohomology of $X$ can be controlled by the Faltings cohomology of the localizations of $X$ at generic points of its special fibre $X_s$.
\end{mypara}

\begin{mythm}[{see \ref{thm:essential-adequate-coh}}]\label{intro-thm:essential-adequate-coh}
	For any integer $n$, the natural map of $R$-modules 
	\begin{align}\label{intro-eq:thm:essential-adequate-coh-1}
		H^n(\fal^\et_{X_\eta\to X},\falb/\pi\falb)\longrightarrow \prod_{x\in\ak{G}(X_s)}H^n(\fal^\et_{X_{(x),\eta}\to X_{(x)}},\falb/\pi\falb)
	\end{align}
	is almost injective, where $\ak{G}(X_s)$ is the finite set of generic points of $X_s$ and $X_{(x)}=\spec(\ca{O}_{X,x})$. Moreover, the canonical exact sequence $0\to \falb/p\falb\stackrel{\cdot p}{\longrightarrow}\falb/p^2\falb\to \falb/p\falb\to 0$ induces an almost exact sequence of $R$-modules
	\begin{align}\label{intro-eq:thm:essential-adequate-coh-2}
		\xymatrix{
			0\ar[r]&R/pR\ar[r]&H^0(\fal^\et_{X_\eta\to X},\falb/p\falb)\ar[r]^-{\delta^0}&H^1(\fal^\et_{X_\eta\to X},\falb/p\falb).
		}
	\end{align}
\end{mythm}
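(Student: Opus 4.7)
\begin{mypara}
The plan is to reduce, by the \'etale locality of the Faltings site and the compatibility of generic points with \'etale base change, to the case that $X=\spec(R)$ admits a standard semi-stable chart as in \eqref{intro-eq:exam:semi-stable-1}. For such a chart, both sides of \eqref{intro-eq:thm:essential-adequate-coh-1} can then be computed via a perfectoid Galois tower obtained by adjoining compatible systems of $p$-power roots of the coordinates $T_0,\dots,T_c$ (and of the element $a$).
\end{mypara}

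\begin{mypara}
Concretely, the $p$-adic completion
\begin{align}
R_\infty = R\bigl[T_0^{1/p^\infty},\ldots,T_c^{1/p^\infty}\bigr]^{\wedge}
\end{align}
is a perfectoid pro-finite-\'etale Galois cover of $R[1/p]$, with Galois group $\Delta$ arising as an almost extension of $\bb{Z}_p(1)^c$ (the monoidal relation $T_0\cdots T_b=a$ absorbs one factor). By a Faltings acyclicity theorem applied to this tower (in the spirit of \ref{intro-thm:acyclic}), $\rr\Gamma(\fal^\et_{X_\eta\to X},\falb/p\falb)$ is almost computed by the continuous Galois cohomology $\rr\Gamma_\cont(\Delta,R_\infty/pR_\infty)$, which admits an explicit Koszul presentation. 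For each generic point $x\in\ak{G}(X_s)$, corresponding to a component $\{T_i=0\}$ with $0\le i\le b$, localizing the chart at $x$ yields an analogous perfectoid tower computing $\rr\Gamma(\fal^\et_{X_{(x),\eta}\to X_{(x)}},\falb/p\falb)$ through a parallel Koszul complex.
\end{mypara}

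\begin{mypara}
Under these identifications, \eqref{intro-eq:thm:essential-adequate-coh-1} becomes the natural comparison map from the global Koszul complex into the product of its localizations at the generic points of $X_s$. Almost injectivity in each degree follows because $R$ is normal (being semi-stable), so any cocycle whose image vanishes at every height-one prime above $p$ is itself zero up to $p^\infty$-torsion, hence almost zero in the Koszul complex. The main obstacle is bookkeeping the almost structure through the localizations and verifying that the Faltings acyclicity of the perfectoid tower is preserved upon base change to $X_{(x)}$, which ultimately rests on the Abhyankar-type structure of valuations centered at the generic points of the special fibre.
\end{mypara}

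\begin{mypara}
For \eqref{intro-eq:thm:essential-adequate-coh-2}, applying $H^*$ to the short exact sequence $0\to \falb/p\falb\stackrel{\cdot p}{\longrightarrow}\falb/p^2\falb\to \falb/p\falb\to 0$ yields
\begin{align}
0\to H^0(\falb/p\falb)\stackrel{\cdot p}{\longrightarrow} H^0(\falb/p^2\falb)\to H^0(\falb/p\falb)\stackrel{\delta^0}{\longrightarrow} H^1(\falb/p\falb).
\end{align}
The Koszul computation above identifies $H^0(\falb/p^n\falb)$ almost with the $\Delta$-invariants of $R_\infty/p^nR_\infty$, which for $n=1,2$ are almost equal to $R/p^nR$ (using that $R$ is integrally closed in $R[1/p]$ together with the explicit description of the lowest Koszul term). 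The image of $H^0(\falb/p^2\falb)\to H^0(\falb/p\falb)$ is therefore almost $R/pR$, giving the desired almost exactness.
\end{mypara}
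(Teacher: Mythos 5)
Your proposal follows the same broad architecture as the paper (reduce to the semi-stable chart, invoke the perfectoid tower, compute via Koszul complexes, localize at generic points of the special fibre), but two essential steps are either skipped or wrong.

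First, the justification for almost injectivity of \eqref{intro-eq:thm:essential-adequate-coh-1} — ``$R$ is normal, so any cocycle whose image vanishes at every height-one prime above $p$ is almost zero in the Koszul complex'' — does not go through as stated. The Koszul cohomology groups $H^n(\Delta, R_\infty/\pi R_\infty)$ are not submodules or quotients of $R/\pi R$; each graded piece has genuine torsion contributed by the Koszul differentials $g_i = \nu(\tau_i) - 1$, and normality of $R$ alone does not show that a class which dies in all localizations at $\ak{G}(X_s)$ is almost zero. The paper bridges this gap by first decomposing the tower $\ca{A}^L_{p^{\underline{\infty}}}$ into $\Delta$-eigen-submodules $\ca{A}^{L,(\nu)}_{p^{\underline{\infty}}}$ (Proposition \ref{prop:galois-action}), then computing each eigen-space's Koszul cohomology explicitly (Proposition \ref{prop:koszul}, sharpening the isomorphism-class statement of Bhatt--Morrow--Scholze into a canonical $\varphi_n \oplus \psi_n$), which exhibits $H^n$ as a direct sum of direct summands of modules of the form $R'/\pi' R'$ with $R'$ an integral closure of $R$ in a finite extension. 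Only then does Proposition \ref{prop:generic-map} (injectivity of $R'/\pi' R'$ into the product of its localizations at generic points, which uses the filtered-colimit-of-Noetherian-normal-domains structure) deliver the injectivity you want. None of this structural work is present in your argument.

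Second, the claim that $H^0(\fal^\et_{X_\eta\to X},\falb/p\falb)$ is almost $R/pR$ is simply false, and if it were true the exact sequence \eqref{intro-eq:thm:essential-adequate-coh-2} would be a triviality with $\delta^0 = 0$. In reality the invariants $(R_\infty/pR_\infty)^\Delta$ pick up nonzero contributions from every non-trivial character $\nu$ with $v(\nu(\tau_i)-1) < v(p)$ for some $i$ (see \ref{thm:coh}.(2) with $n=0$: the summand $\frac{p}{p^{(\nu)}}\ca{A}^{L,(\nu)}_{p^{\underline{\infty}}}/p\ca{A}^{L,(\nu)}_{p^{\underline{\infty}}}$ is nonzero whenever $p^{(\nu)}$ properly divides $p$). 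The actual content of \eqref{intro-eq:thm:essential-adequate-coh-2} is that the kernel of $\delta^0$ is almost $R/pR$, i.e.\ the extra contributions to $H^0$ are killed precisely because $\delta^0$ is injective on them. Proving this requires the explicit description of the coboundary map on each eigen-space (Corollary \ref{cor:koszul}: $\delta^n$ identifies with a map $\psi_{n+1}$ that is almost injective on the non-trivial eigen-components once $\pi \in (\zeta_p-1)\ca{O}_L$), which your outline does not address.
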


\begin{mypara}
	As Faltings cohomology commutes with \'etale base change (see \ref{cor:acyclic}), we reduce to the local case \eqref{intro-eq:exam:semi-stable-1} so that Faltings cohomology can be computed by Galois cohomology (see \ref{cor:profet-coh} and \ref{rem:profet-coh}). Then, we can use a perfectoid tower associated to the standard semi-stable chart constructed by Tsuji \cite[\textsection4]{tsuji2018localsimpson} to reduce to the cohomology computation of Koszul complexes. Indeed, such a perfectoid tower admits a Galois equivariant decomposition into eigen-submodules (see \ref{prop:galois-action}), and the Galois cohomology of each eigen-submodule is computed by the associate Koszul complex (see \ref{lem:p-inf-nu-coh}). Then, we give an elaborate computation of Koszul cohomology and the coboundary maps (see \ref{prop:koszul} and \ref{cor:koszul}), generalizing the computations of Abbes-Gros (\cite[\Luoma{2}.8.1]{abbes2016p}) and Bhatt-Morrow-Scholze (\cite[7.10]{bhattmorrowscholze2018integral}). 
	
	It turns out that the cohomology groups are direct sums of direct summands of $R'/\pi R'$ for some integral closure $R'$ of $R$ in a finite extension of its fraction field and some $\pi\in\ak{m}_{\overline{K}}$ (see \ref{thm:coh}). As such $R'$ is a filtered colimit of Noetherian normal domains, we see that the canonical map from $R'/\pi R'$ to the product of its localizations at generic points of $X_s$ is injective (see \ref{prop:generic-map}). This proves the almost injectivity of \eqref{intro-eq:thm:essential-adequate-coh-1}. Moreover, the study of coboundary maps on Koszul cohomology enables us to bound the cokernel of $R/pR\to H^0(\fal^\et_{X_\eta\to X},\falb/p\falb)$ by $H^1(\fal^\et_{X_\eta\to X},\falb/p\falb)$. This proves the almost exactness of \eqref{intro-eq:thm:essential-adequate-coh-2}.
	
	We remark that our consideration of the scheme $X$ is not limited to the semi-stable case. Indeed, we treat more general schemes that come from log smooth schemes called \emph{essentially adequate schemes} (see \ref{defn:essential-adequate-pair}). Meanwhile, we also allow horizontal divisors (e.g., the homomorphism \eqref{intro-eq:exam:semi-stable-1} is smooth instead of \'etale and the extra coordinates $T_{c+1},\dots,T_d$ cut out a normal crossings divisor on $X_\eta$, see \ref{exam:semi-stable}).
	
	Passing the results to certain limits of semi-stable schemes, we obtain the following perfectoidness criterion.
\end{mypara}

\begin{mythm}[{Purity for perfectoidness, see \ref{cor:purity}}]\label{intro-cor:purity}
	Let $(X_\lambda=\spec(R_\lambda))_{\lambda\in\Lambda}$ be a directed inverse system of affine schemes semi-stable over $S$, $R=\colim_{\lambda\in\Lambda}R_\lambda$ and $X=\spec(R)$. Assume that for any indexes $\lambda\leq \mu$ in $\Lambda$, the transition morphism $X_\mu\to X_\lambda$ induces a map between the sets $\ak{G}(X_{\mu,s})\to \ak{G}(X_{\lambda,s})$ of generic points of their special fibres. Then, the following conditions are equivalent: 
	\begin{enumerate}
		\renewcommand{\labelenumi}{{\rm(\theenumi)}}
		\item(Perfectoidness) The $\ca{O}_{\overline{K}}$-algebra $R$ is pre-perfectoid.
		\item(Generic perfectoidness) For any $x\in \ak{G}(X_s)=\lim_{\lambda\in\Lambda}\ak{G}(X_{\lambda,s})$, the valuation field $\ca{O}_{X,x}[1/p]$ is a pre-perfectoid field.
	\end{enumerate}
\end{mythm}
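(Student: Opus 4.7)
The plan is to combine the Faltings acyclicity criterion \ref{intro-thm:acyclic} with the stalk-comparison bounds of \ref{intro-thm:essential-adequate-coh}, and to pass from each semi-stable level $X_\lambda$ to the limit $X$ via the limit-preserving property of Faltings cohomology alluded to in the introduction.

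For the direction (1)$\Rightarrow$(2), I would first observe that pre-perfectoidness localizes, so $\ca{O}_{X,x}$ is pre-perfectoid as an $\ca{O}_{\overline{K}}$-algebra. The transition morphisms preserve generic points of special fibres by hypothesis, so they induce local maps between the stalks $\ca{O}_{X_\lambda,x_\lambda}$, which are themselves discrete valuation rings since each $x_\lambda$ is a codimension-one point of a (normal) semi-stable scheme. Therefore $\ca{O}_{X,x}=\colim_\lambda \ca{O}_{X_\lambda,x_\lambda}$ is a valuation ring, $\ca{O}_{X,x}[1/p]$ is a valuation field, and pre-perfectoidness of the valuation ring is equivalent to pre-perfectoidness of its fraction field.

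For the main direction (2)$\Rightarrow$(1), by \ref{intro-thm:acyclic} it suffices to prove that $R/pR\to\rr\Gamma(\fal^\et_{X_\eta\to X},\falb/p\falb)$ is an almost isomorphism. The limit-preserving property identifies this, almost, with $\colim_\lambda\rr\Gamma(\fal^\et_{X_{\lambda,\eta}\to X_\lambda},\falb/p\falb)$, and similarly $X_{(x)}=\lim_\lambda X_{\lambda,(x_\lambda)}$ for each $x=(x_\lambda)$. Under hypothesis (2), the valuation ring $\ca{O}_{X,x}$ is pre-perfectoid, so \ref{intro-thm:acyclic} applied to $\spec(\ca{O}_{X,x})$ forces the Faltings cohomology at the stalk to vanish almost in positive degree and to equal $\ca{O}_{X,x}/p$ almost in degree zero. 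Applying \ref{intro-thm:essential-adequate-coh} to each $X_\lambda$ and taking the colimit along $\Lambda$, the almost injection \eqref{intro-eq:thm:essential-adequate-coh-1} composed with the canonical map $\colim_\lambda\prod_{x_\lambda}\to\prod_x\colim_\lambda$ almost embeds $H^n(\fal^\et_{X_\eta\to X},\falb/p\falb)$ into $\prod_{x\in\ak{G}(X_s)}H^n(\fal^\et_{X_{(x),\eta}\to X_{(x)}},\falb/p\falb)$, which vanishes almost for $n\geq 1$; the colimit of \eqref{intro-eq:thm:essential-adequate-coh-2} then becomes an almost exact sequence $0\to R/pR\to H^0\to H^1$ with $H^1=0$ almost, forcing $R/pR\to H^0$ to be an almost isomorphism.

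The principal obstacle is the commutation of the filtered colimit over $\lambda$ with the (possibly infinite) product over $\ak{G}(X_s)$, since the canonical map $\colim_\lambda\prod_{x_\lambda}H^n_{(x_\lambda)}\to\prod_x\colim_\lambda H^n_{(x_\lambda)}$ is not injective in general. To circumvent this, one must exploit that each $\ak{G}(X_{\lambda,s})$ is finite and that the cofiltered inverse system $(\ak{G}(X_{\lambda,s}))_{\lambda\in\Lambda}$ has limit $\ak{G}(X_s)$ by hypothesis, combined with the almost injectivity of \eqref{intro-eq:thm:essential-adequate-coh-1} at every finite level, so that any element dying componentwise in the limit is already almost killed uniformly at some finite stage; a Mittag-Leffler-type argument on the finite index sets should suffice. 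A secondary point is the careful reconciliation of pre-perfectoidness of a valuation ring over $\ca{O}_{\overline{K}}$ with that of its fraction field as a pre-perfectoid field in the sense required here.
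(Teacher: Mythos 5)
Your proposal follows the paper's proof essentially step for step: direction (1)$\Rightarrow$(2) by localization of pre-perfectoidness, direction (2)$\Rightarrow$(1) by combining the Faltings acyclicity criterion (\ref{intro-thm:acyclic}) with the stalk comparison (\ref{intro-thm:essential-adequate-coh}), and passage to the limit precisely by the non-emptiness of cofiltered limits of non-empty finite sets --- the same support argument the paper runs, which you describe as a ``Mittag-Leffler-type argument.'' One minor inaccuracy: the stalks $\ca{O}_{X_\lambda,x_\lambda}$ at generic points of the special fibres are \emph{not} discrete valuation rings, since $\ca{O}_{\overline{K}}$ is not Noetherian; they are valuation rings of height $1$, being filtered colimits of discrete valuation rings by \ref{prop:generic-map}, but this does not affect your conclusion that $\ca{O}_{X,x}=\colim_\lambda\ca{O}_{X_\lambda,x_\lambda}$ is a valuation ring of height $1$.
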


\begin{mypara}
	This statement shares a similar philosophy with Zariski-Nagata purity theorem, that's why we call it \emph{purity for perfectoidness}. It follows directly from a similar cohomological result \ref{intro-thm:essential-adequate-coh} for $X$. However, the latter does not follow directly from \ref{intro-thm:essential-adequate-coh} by taking colimits even though the Faltings cohomology is limit-preserving. The difficulty arises from the fact that $\ak{G}(X_s)$ is pro-finite rather than finite in general and we don't know how to exchange products over infinite sets with colimits. But one can use the non-emptiness of cofiltered limit of non-empty finite sets (or more generally, spectral spaces) to show that a section of Faltings cohomology non-zero on each $\ak{G}(X_{\lambda,s})$ is also non-zero on $\ak{G}(X_s)$. We shall use the same argument for later generalizations.
\end{mypara}

\begin{mypara}
	In general, the formation of taking the set $\ak{G}(X_s)$ of generic points of the special fibre $X_s$ is not functorial in $X$. Hence, we switch to a functorial space $\rz_{X_\eta}(X)$ associated to $X$, called the \emph{Riemann-Zariski space} of $X$. It is the cofiltered limit of all $X_\eta$-modifications $X'$ of $X$ in the category of locally ringed spaces (see \ref{para:riemann-zariski}). Remarkably, it is a spectral space, which makes our arguments for passing the cohomological results \ref{intro-thm:essential-adequate-coh} to the limit still valid (see \ref{thm:rz-continue}). On the other hand, there is a full description of its points and stalks by Temkin \cite{temkin2011rz}, especially the $p$-adic completions $\widehat{\ca{O}_{\rz_{X_\eta}(X),x}}$ of its stalks are valuation rings (see \ref{thm:riemann-zariski}). Then, we can upgrade the previous perfectoidness criterion for more general limits of semi-stable schemes.
\end{mypara}

\begin{mythm}[{Valuative criterion for perfectoidness, see \ref{thm:criterion}}]\label{intro-thm:criterion-1}
	Let $(X_\lambda=\spec(R_\lambda))_{\lambda\in\Lambda}$ be a directed inverse system of affine schemes semi-stable over $S$, $R=\colim_{\lambda\in\Lambda}R_\lambda$ and $X=\spec(R)$. Then, the following conditions are equivalent: 
	\begin{enumerate}
		\renewcommand{\labelenumi}{{\rm(\theenumi)}}
		\item The $\ca{O}_{\overline{K}}$-algebra $R$ is pre-perfectoid.
		\item For any $x\in \rz_{X_\eta}(X)$, the stalk $\ca{O}_{\rz_{X_\eta}(X),x}$ is pre-perfectoid.
	\end{enumerate}
\end{mythm}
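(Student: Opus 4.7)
The plan is to upgrade the purity result (Theorem \ref{intro-cor:purity}) to the general valuative setting by exploiting the functoriality and spectrality of the Riemann-Zariski space together with Temkin's structural description of its stalks. Both implications are treated separately.

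For the direction $(1)\Rightarrow(2)$, I would argue that pre-perfectoidness of $R$ descends to every stalk of $\rz_{X_\eta}(X)$. By construction, each stalk $\ca{O}_{\rz_{X_\eta}(X),x}$ is a filtered colimit of stalks of $X_\eta$-modifications of $X$, and by Temkin's theorem \ref{thm:riemann-zariski} its $p$-adic completion is a valuation ring lying between $\ca{O}_{\overline{K}}$ and the fraction field of $R$. Since pre-perfectoidness is a condition modulo $p^{1/p}$ insensitive to $p$-adic completion, and since this valuation ring contains $p^{1/p^n}$ for all $n$ (inherited from $R$), the Frobenius isomorphism $R/p^{1/p}R\iso R/pR$ propagates through the localization $R[1/p]\to \ca{O}_{\rz_{X_\eta}(X),x}[1/p]$ and the valuation-ring structure to yield pre-perfectoidness of the stalk.

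For the substantive direction $(2)\Rightarrow(1)$, I would combine the Faltings-acyclicity criterion (Theorem \ref{intro-thm:acyclic}) with the cohomological estimate of Theorem \ref{intro-thm:essential-adequate-coh}. For each semi-stable $X_\lambda$ one has an almost injection
\begin{align*}
H^n(\fal^\et_{X_{\lambda,\eta}\to X_\lambda},\falb/p\falb)\longrightarrow \prod_{x\in\ak{G}(X_{\lambda,s})}H^n(\fal^\et_{X_{\lambda,(x),\eta}\to X_{\lambda,(x)}},\falb/p\falb),
\end{align*}
together with the almost exact sequence analogous to \eqref{intro-eq:thm:essential-adequate-coh-2}. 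Since Faltings cohomology is limit-preserving along $R=\colim_\lambda R_\lambda$, I pass these to the limit. The key new ingredient is that each generic point $x\in\ak{G}(X_{\lambda,s})$ admits, via the projection $\rz_{X_\eta}(X)\to X\to X_\lambda$, a lift $\tilde{x}\in\rz_{X_\eta}(X)$ lying over it; this is ensured by realizing $\rz_{X_\eta}(X)$ as the cofiltered limit of the spectral spaces $\rz_{X_\eta}(X_{\lambda'})$ for $\lambda'\geq\lambda$. Under hypothesis (2), the stalk at $\tilde{x}$ is pre-perfectoid, so by Theorem \ref{intro-thm:acyclic} the local Faltings cohomology at $\ca{O}_{X_\lambda,x}$ becomes almost acyclic after passing along the tower to the limit. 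Using the non-emptiness-of-cofiltered-limit-of-nonempty-spectral-spaces argument already employed in the proof of Theorem \ref{intro-cor:purity}, an almost-nonvanishing section in the left-hand side would be detected at some $\tilde{x}\in\rz_{X_\eta}(X)$, contradicting the acyclicity. Hence the left-hand side vanishes almost in the limit, and the almost exact sequence then forces $R/pR\iso H^0(\fal^\et_{X_\eta\to X},\falb/p\falb)$ almost, giving pre-perfectoidness of $R$.

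The main obstacle is the comparison between the non-functorial formation $\ak{G}(X_{\lambda,s})$ and the functorial space $\rz_{X_\eta}(X)$. The transition maps of the system $(X_\lambda)$ need not send generic points of special fibres to generic points of special fibres, which is exactly why Theorem \ref{intro-cor:purity} does not apply directly; this obstruction is detoured through the Riemann-Zariski space, and the technical heart of the proof lies in packaging the spectrality of $\rz_{X_\eta}(X)$ (as a cofiltered limit of spectral spaces), Temkin's identification of its stalks with valuation rings, and the exchange of infinite products with filtered colimits of cohomology groups into a single coherent argument.
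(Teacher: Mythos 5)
Your proposal gets the substantive direction $(2)\Rightarrow(1)$ essentially right in outline, but the converse direction has a genuine gap.

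For $(2)\Rightarrow(1)$: this matches the paper's architecture. The paper first repackages the cohomological estimate of \ref{thm:essential-adequate-coh} as the ``Riemann--Zariski faithfulness'' property \ref{defn:rz-faithful}, proves it for adequate (hence semi-stable) schemes in \ref{prop:rz-faithful-adequate} by lifting $\ak{G}(X_{\lambda,s})$ into $X^{\rz}_\lambda$ via \ref{cor:ht-1-spa}, shows that this property passes to cofiltered limits (\ref{cor:rz-continue}), and then deduces local Faltings acyclicity in \ref{thm:val-criterion-faltings-acyclic}. The crux of the limit-passing step is indeed the cofiltered-limit non-emptiness argument you describe, but be careful: what makes it work is that the support $\supprz_\lambda(\xi_\lambda)\subseteq X^{\rz}_\lambda$ of a cohomology class is \emph{closed}, hence itself a spectral space (\ref{lem:rz-faithful-supp}); only then does non-emptiness of the cofiltered limit \cite[\href{https://stacks.math.columbia.edu/tag/0A2W}{0A2W}]{stacks-project} apply. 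Also, your phrasing in terms of ``lifting each generic point of $X_{\lambda,s}$ to $\rz_{X_\eta}(X)$'' is not really what is needed and is not in general possible, since $\rz_{X_\eta}(X)\to X_\lambda$ factors through $X\to X_\lambda$ which need not be surjective; what is needed is exactly the support-closedness argument above.

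For $(1)\Rightarrow(2)$: the argument you sketch does not work, and this is a real gap. The stalk $\ca{O}_{\rz_{X_\eta}(X),x}$ is \emph{not} a localization of $R$. By Temkin's description (\ref{thm:riemann-zariski}) it is the preimage of a valuation ring $\ca{O}_W\subseteq\kappa(y)$ under the surjection $\ca{O}_{X_\eta,y}\twoheadrightarrow\kappa(y)$, which is a non-localization construction --- in particular, $\ca{O}_W$ may have height $>1$, and even when it has height $1$, the preimage is not obtained by inverting elements of $R$. Surjectivity of Frobenius on $R/p^{1/p}R\to R/pR$ simply does not ``propagate through the localization'' here, because there is no localization. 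This is precisely why the paper interposes the differential criterion: pre-perfectoidness of $R$ gives $\Omega^1_{R/\ca{O}_{\overline K}}\to\Omega^1_{R/\ca{O}_{\overline K}}[1/p]$ almost surjective via Gabber--Ramero (\ref{prop:perfd-diff}), and --- unlike Frobenius surjectivity --- surjectivity of the differential map \emph{does} push forward along arbitrary ring homomorphisms via the cotangent exact sequence (\ref{lem:diff-criterion-faltings-acyclic}, \ref{cor:val-diff}). Then \ref{prop:val-diff} (again Gabber--Ramero) converts the differential condition on the residue field back into perfectoidness of the valuation ring, giving condition (2). Your proposal cannot be made to work without passing through differentials or some substitute for them.
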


	Combining with Gabber-Ramero's computation of differentials of valuation rings, we deduce the following theorem.

\begin{mythm}[{Differential criterion for perfectoidness, see \ref{thm:criterion}}]\label{intro-thm:criterion-2}
	Let $(X_\lambda=\spec(R_\lambda))_{\lambda\in\Lambda}$ be a directed inverse system of affine schemes semi-stable over $S$, $R=\colim_{\lambda\in\Lambda}R_\lambda$ and $X=\spec(R)$. Then, the following conditions are equivalent: 
	\begin{enumerate}
		\renewcommand{\labelenumi}{{\rm(\theenumi)}}
		\item The $\ca{O}_{\overline{K}}$-algebra $R$ is pre-perfectoid.
		\item The canonical morphism of modules of differentials $\Omega^1_{R/\ca{O}_{\overline{K}}}\to \Omega^1_{R/\ca{O}_{\overline{K}}}[1/p]$ is almost surjective.
	\end{enumerate}
\end{mythm}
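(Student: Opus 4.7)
The strategy is to bootstrap from the valuative criterion \ref{intro-thm:criterion-1} by checking the differential condition stalk-locally on the Riemann-Zariski space $\rz_{X_\eta}(X)$, then identifying this stalk-wise condition with pre-perfectoidness of each stalk via Gabber-Ramero's computation of $\Omega^1$ for valuation rings.

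For the implication (1)$\Rightarrow$(2), assume $R$ is pre-perfectoid. By the defining isomorphism $R/p^{1/p}R\iso R/pR$, for every $x\in R$ there exists $y\in R$ with $x\equiv y^p\pmod{p^{1/p}R}$, hence $dx\equiv p y^{p-1}\,dy\pmod{p^{1/p}\Omega^1_{R/\ca{O}_{\overline{K}}}}$. Iterating the perfectoid condition yields the analogous approximation modulo arbitrary $p^{1/p^n}$, which shows that every $dx$, and therefore every element of $\Omega^1_{R/\ca{O}_{\overline{K}}}$, is almost $p$-divisible. This is exactly the almost surjectivity of $\Omega^1_{R/\ca{O}_{\overline{K}}}\to\Omega^1_{R/\ca{O}_{\overline{K}}}[1/p]$.

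For the converse (2)$\Rightarrow$(1), by Theorem \ref{intro-thm:criterion-1} it suffices to show that each stalk $V=\ca{O}_{\rz_{X_\eta}(X),x}$ is pre-perfectoid. Since $V$ is a filtered colimit of localizations $\ca{O}_{X',x'}$ along $X_\eta$-modifications $X'\to X$, and since $\Omega^1$ commutes with filtered colimits of rings and with localization, the hypothesis (2) for $R$ transfers into the almost surjectivity of $\Omega^1_{V/\ca{O}_{\overline{K}}}\to\Omega^1_{V/\ca{O}_{\overline{K}}}[1/p]$. Because the $p$-adic completion of $V$ is a valuation ring (by Temkin's description of the RZ space recalled in the excerpt), Gabber-Ramero's structural computation for $\Omega^1$ of valuation rings \cite{gabber2003almost} identifies this almost surjectivity with $V$ being deeply ramified, which for valuation rings over $\ca{O}_{\overline{K}}$ is equivalent to pre-perfectoidness of $V$. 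Applying \ref{intro-thm:criterion-1} concludes that $R$ is pre-perfectoid.

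The main obstacle I anticipate is technical rather than conceptual: transferring almost surjectivity of differentials from $R$ to the stalks $V$ needs care because the modifications in the RZ space are in general not finite over $R$, and Gabber-Ramero's theorem is stated for honest valuation rings, so one must verify that almost $p$-divisibility of $\Omega^1_{V/\ca{O}_{\overline{K}}}$ is preserved under $p$-adic completion and that pre-perfectoidness of $V$ coincides with pre-perfectoidness of its completion. Both are conditions on reductions modulo arbitrarily small powers of $p$, so they should survive completion, but the bookkeeping between $V$, its completion, and the almost structure on $\ca{O}_{\overline{K}}$ must be set up carefully to match Gabber-Ramero's hypotheses.
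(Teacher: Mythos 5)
Your (1)$\Rightarrow$(2) argument is a valid elementary alternative to the paper's citation of Gabber-Ramero's cotangent complex theorem (\ref{prop:perfd-diff}): writing $x\equiv y^p\pmod{p^{1/p}R}$ gives $dx\in p^{1/p}\Omega^1_{R/\ca{O}_{\overline{K}}}$, hence $\Omega^1\subseteq p^{1/p}\Omega^1$, and iterating $p$ times gives $\Omega^1 = p\Omega^1$. This is fine, and arguably more transparent than invoking $\bb{L}_{B/\ca{O}_L}\iso\bb{L}_{B[1/p]/L}$.

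However, there is a genuine gap in your (2)$\Rightarrow$(1). You claim the differential condition on $R$ transfers to the RZ stalk $V=\ca{O}_{\rz_{X_\eta}(X),x}$ because ``$V$ is a filtered colimit of localizations $\ca{O}_{X',x'}$ along $X_\eta$-modifications $X'\to X$, and $\Omega^1$ commutes with filtered colimits of rings and with localization.'' But the passage from $R$ to a modification $X'$ is neither a localization nor unramified: $X'\to X$ is proper birational, and the pullback map $f^*\Omega^1_{X/S}\to\Omega^1_{X'/S}$ is surjective if and only if $\Omega^1_{X'/X}=0$, which fails for any nontrivial modification (e.g., blowing up a point already produces a nonzero $\Omega^1_{X'/X}$ supported on the exceptional locus). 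So almost $p$-divisibility of $\Omega^1_{R/\ca{O}_{\overline{K}}}$ does not directly yield almost $p$-divisibility of $\Omega^1_{V/\ca{O}_{\overline{K}}}$, and your proposed reduction does not go through.

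The paper circumvents this (\ref{lem:diff-criterion-faltings-acyclic}, \ref{cor:val-diff}) by a different reduction. Rather than working with the RZ stalk directly, one fixes a point $y\in X^{\triv}$ and a height-$1$ valuation ring $V$ extension of $\ca{O}_{\overline{K}}$ with $V[1/p]=\kappa(y)$ and $\ca{O}_{\overline{K}}\to V$ factoring through $R$, and considers the domain $B=\colim_{\lambda}\ca{O}_{X_\lambda,z_\lambda}/\ak{p}_\lambda$, a quotient of the local ring of $X$ at the image $z$ of the closed point of $\spec(V)$, with fraction field $\kappa(y)$. Since a surjection of rings induces a surjection $g^*\Omega^1_{X/S}\surj\Omega^1_{B/\ca{O}_{\overline{K}}}$, the differential hypothesis does transfer to $B$. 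Then the key observation is that $\Omega^1_{V/B}[1/p]=\Omega^1_{V[1/p]/B[1/p]}=0$ because $V[1/p]$ is the fraction field, hence a localization, of $B[1/p]$; this forces $\Omega^1_{V/\ca{O}_{\overline{K}}}\to\Omega^1_{V/\ca{O}_{\overline{K}}}[1/p]$ to be almost surjective, and Gabber-Ramero's criterion (\ref{prop:val-diff}) then applies directly to the height-$1$ valuation ring $V$ (no $p$-adic completion is needed at this stage). Finally \ref{rem:criterion} identifies this stalk-wise condition with condition (3) of \ref{thm:criterion}, and the valuative criterion finishes. So the idea of reducing to the RZ space is right, but the transfer of the differential hypothesis needs the detour through the quotient $B$ rather than the localization/modification path you proposed.
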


\begin{mypara}
	There are enough many $\ca{O}_{\overline{K}}$-algebras which can be written as a filtered colimit of semi-stable algebras. Indeed, for any $\ca{O}_{\overline{K}}$-algebra $R$ which can be written as a countable colimit $R=\colim_{n\in\bb{N}}R_n$ of $\ca{O}_{\overline{K}}$-algebras of finite presentation, applying de Jong's alteration theorem \cite[6.5]{dejong1996alt}, we obtain an inductive system of semi-stable $\ca{O}_{\overline{K}}$-algebras $(R'_n)_{n\in\bb{N}}$ over $(R_n)_{n\in\bb{N}}$ with colimit $R'$ such that the induced morphism $\spec(\overline{R'})\to \spec(\overline{R})$ is a v-covering, where $\overline{R}$ denotes the integral closure of $R$ in $R[1/p]$. 
	
	To apply our previous criteria, we would like to impose a stronger alteration so-called \emph{poly-stable modifications} to those $R_n$. More precisely, for each $R_n$ we want a $\spec(R_n[1/p])$-modification $X'_n$ of $\spec(R_n)$ (instead of alteration) that is poly-stable over $S$, i.e., \'etale locally $X'_n$ is the fibred product of semi-stable schemes over $S$. If it is the case, then the Riemann-Zariski spaces of $X'_n$ and $\spec(R_n)$ are the same, their Faltings cohomologies are almost the same (see \ref{prop:modification-invariance}), and our previous results apply to $X'_n$. In fact, the existence of such modifications is expected by the poly-stable modification conjecture (see \ref{conj:poly-stable}) and proved by Temkin for curves (see \ref{thm:temkin}). Assuming the existence of such modifications, we can construct poly-stable integral models for countable limits of smooth $\overline{K}$-varieties and we obtain the following result.
\end{mypara}

\begin{mythm}[{see \ref{thm:completed}}]\label{intro-thm:completed}
	Let $(Y_n)_{n\in\bb{N}}$ be a directed inverse system of separated smooth $\overline{K}$-schemes of finite type such that $d=\limsup_{n\to\infty}\{\dim(Y_n)\}<+\infty$. Assume that the following conditions hold:
	\begin{enumerate}
		\renewcommand{\labelenumi}{{\rm(\theenumi)}}
		\item Either the poly-stable modification conjecture {\rm\ref{conj:poly-stable}} is true, or $Y_n$ is equidimensional of dimension $1$ for any $n\in\bb{N}$.
		\item For any point $y$ of the inverse limit $\lim_{n\in\bb{N}}Y_n$ of locally ringed spaces, its residue field $\kappa(y)$ is a pre-perfectoid field with respect to any valuation ring $V$ of height $1$ extension of $\ca{O}_{\overline{K}}$ with fraction field $V[1/p]=\kappa(y)$.
	\end{enumerate}
	Then, for any integer $q>d$ and any integer $k\in\bb{N}$, we have
	\begin{align}
		\colim_{n\in\bb{N}}H^q_\et(Y_n,\bb{Z}/p^k\bb{Z})=0.
	\end{align} 
\end{mythm}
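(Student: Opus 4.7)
The strategy follows the general philosophy behind Scholze's vanishing theorem for Shimura varieties: reduce to affine charts whose inverse limit is pre-perfectoid by the valuative criterion, convert Faltings acyclicity into étale vanishing, and assemble via Čech.

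\textbf{Step 1 (Poly-stable integral models and affine reduction).} For each $n\in\bb{N}$, condition~(i) supplies a proper poly-stable $S$-model $\mathcal{X}_n$ whose generic fibre contains $Y_n$ as an open subscheme: this is Temkin's stable modification theorem in the curve case and the poly-stable modification conjecture in general. An inductive modification along the (finitely many at a time) transition morphisms $Y_n\to Y_m$ arranges the $\mathcal{X}_n$ into a directed inverse system. Choose, compatibly with these transitions, a finite affine open cover $\mathcal{U}_n=\{U_n^{(i)}=\spec(A_n^{(i)})\}_{i\in I_n}$ of $Y_n$ whose members are generic fibres of affine poly-stable opens $\spec(R_n^{(i)})\subseteq\mathcal{X}_n$. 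Because $Y_n$ is separated, every finite intersection $U_n^{(J)}$ is again the generic fibre of an affine poly-stable $\spec(R_n^{(J)})$.

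\textbf{Step 2 (Pre-perfectoidness of the affine limits).} For each multi-index $J$ set $R^J:=\colim_n R_n^{(J)}$ and $X^J:=\spec(R^J)$. By Temkin's description of the Riemann--Zariski space~\ref{thm:riemann-zariski}, the $p$-adic completion of each stalk $\ca{O}_{\rz_{X^J_\eta}(X^J),x}$ is a height-one valuation ring extension of $\ca{O}_{\overline{K}}$ whose fraction field is the residue field $\kappa(y)$ for some $y$ in the inverse limit of locally ringed spaces $\lim_n U_n^{(J)}\subseteq\lim_n Y_n$. Hypothesis~(ii) forces each such stalk to be pre-perfectoid; combining the valuative criterion~\ref{intro-thm:criterion-1} with the invariance of Faltings cohomology under poly-stable modification~\ref{prop:modification-invariance}, one deduces that $R^J$ is pre-perfectoid, whence by Theorem~\ref{intro-thm:acyclic} the complex $\rr\Gamma(\fal^\et_{X^J_\eta\to X^J},\falb/p^k\falb)$ is almost concentrated in degree~$0$ with value almost isomorphic to $R^J/p^kR^J$.

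\textbf{Step 3 (Transfer to étale cohomology and the degree bound $q>d$).} The limit-preserving property of Faltings cohomology identifies the left-hand side of Step~2 with $\colim_n \rr\Gamma(\fal^\et_{U_n^{(J)}\to \spec(R_n^{(J)})},\falb/p^k\falb)$, and a primitive-comparison-type argument (the $\bb{Z}/p^k$-coefficient étale cohomology of the generic fibre is recovered after tensoring with $\falb/p^k\falb$) then yields $\colim_n H^q_\et(U_n^{(J)},\bb{Z}/p^k\bb{Z})=0$ for all $q\geq 1$. Artin's vanishing on the affine $\overline{K}$-variety $U_n^{(J)}$ of dimension $\leq d$ already provides $H^q_\et(U_n^{(J)},\bb{Z}/p^k\bb{Z})=0$ for $q>d$ at each finite stage. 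Feeding both ingredients into the Čech spectral sequence $\check{H}^p(\mathcal{U}_n,\ca{H}^q_\et)\Rightarrow H^{p+q}_\et(Y_n,\bb{Z}/p^k)$ and passing to the filtered colimit in $n$, the $E_2$-page is truncated to $0<q\leq d$ by Artin and then killed for $q\geq 1$ by the colimit vanishing, so only the row $q=0$ survives; bounding the Čech dimension by the (cofinally fixed) size of the cover, or equivalently by the Zariski cohomological dimension $d$ of the affine charts of $Y_n$, one concludes $\colim_n H^q_\et(Y_n,\bb{Z}/p^k\bb{Z})=0$ for $q>d$. The main obstacle is precisely this last assembly: one must choose the affine covers $\mathcal{U}_n$ uniformly so that the Čech spectral sequences are compatible with the transition maps and commute with the filtered colimit, and must verify that the primitive-comparison translation of Faltings acyclicity is compatible with restriction to affine poly-stable charts of the integral models.
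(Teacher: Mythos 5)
The main gap is in Step 3, where you invoke ``a primitive-comparison-type argument'' to deduce $\colim_n H^q_\et(U_n^{(J)},\bb{Z}/p^k\bb{Z})=0$ for all $q\geq 1$ from the pre-perfectoidness of the affine colimit $R^J$. No such comparison exists for affine (non-proper) charts: Faltings' main $p$-adic comparison theorem \ref{thm:faltings-comparison}, which is the only comparison of this type proved in the paper, requires $X$ to be \emph{proper} over $\overline{S}$. What perfectoidness of $R^J$ gives you, via \ref{thm:acyclic}, is that the complex $\rr\Gamma(\fal^\et_{X^J_\eta\to X^J},\falb/p^k\falb)$ is almost concentrated in degree $0$; this is a statement about the sheaf $\falb/p^k\falb$, not about the constant sheaf $\bb{Z}/p^k\bb{Z}$, and exchanging one for the other is exactly the content of the comparison theorem that properness makes available. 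Without it your claimed vanishing of the filtered colimit of étale cohomology of the affine pieces in positive degrees is unsupported (indeed it fails for instance in degree $1$, where an Artin--Schreier-type contribution from $H^0$ survives in general), and the Čech spectral sequence then no longer collapses.

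The paper avoids this by never passing to an affine cover. It works directly with the proper integral models $X_n$: first it shows that local Faltings acyclicity (via Riemann--Zariski faithfulness, \ref{thm:val-criterion-faltings-acyclic}) identifies $\op{hocolim}_n\rr\Gamma(\fal^\et_{X^\triv_n\to X_n},\falb/p^k\falb)$ with $\op{hocolim}_n\rr\Gamma(X_n,\ca{O}_{X_n}/p^k\ca{O}_{X_n})$; then it applies Faltings' comparison \ref{thm:faltings-comparison} on each \emph{proper} $X_n$ to identify this with $\op{hocolim}_n\rr\Gamma(X^\triv_{n,\et},\ca{O}_{\overline{K}}/p^k\ca{O}_{\overline{K}})$; and finally the degree bound comes not from Artin vanishing on the affine generic-fibre charts but from Grothendieck's vanishing of coherent cohomology of the proper morphism $X_n\to\overline{S}$, whose special fibre has dimension strictly less than $q$ when $q>d$ (see \ref{cor:vanishing-1}). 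Your Artin-vanishing/Čech device replaces Grothendieck vanishing, which by itself is fine, but it is being fed the unproved affine comparison; if you want to keep the affine Čech strategy you would still need to bring in the proper comparison theorem at some point, which defeats the purpose of localizing.
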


\begin{mypara}\label{para:intro-shimura}
	This result is inspired by Scholze's work on Calegari-Emerton's conjecture on the vanishing of higher completed cohomology groups of Shimura varieties \cite{scholze2015torsion}. More precisely, let $G$ be a reductive group over $\bb{Q}$, $(G,X)$ a Shimura datum, $(X_K)_K$ the associated directed inverse system of Shimura varieties, where $K\subseteq G(\bb{A}_f)$ runs through sufficiently small compact open subgroups (\cite[2.1.2]{deligne1979shimura}, see also \cite[5.14]{milne2005shimura}). Fixing a field isomorphism $\bb{C}\cong \overline{\bb{Q}_p}$, we regard each $X_K$ as a separated smooth $\overline{\bb{Q}_p}$-scheme of finite type with common dimension $d$. If $(G,X)$ is of Hodge type, then for a sufficiently small compact open subgroup $K^p\subseteq G(\bb{A}_f^p)$, Scholze proved that the colimit of compactly supported \'etale cohomology groups vanish for any integer $q>d$ \cite[\Luoma{4}.2.2]{scholze2015torsion},
	\begin{align}
		\colim_{K_p\subseteq G(\bb{Q}_p)} H^q_{\mrm{c}}(X_{K^pK_p},\bb{Z}/p^k\bb{Z})=0,
	\end{align}
	where $K_p\subseteq G(\bb{Q}_p)$ runs through sufficiently small compact open subgroups.
	His proof can be divided into two steps: firstly establishing the perfectoidness of the limit of the minimal compactifications of $X_{K^pK_p}$ as adic spaces, and then using his primitive $p$-adic comparison theorem to compute the colimit of \'etale cohomology groups by sheaf cohomology of the perfectoid Shimura variety in the analytic topology, which vanishes in degrees $>d$ by Grothendieck's vanishing of sheaf cohomology on limits of Noetherian spectral spaces.
	
	For general Shimura datum, the perfectoidness is not known for the limit of Shimura varieties so that the argument above faces an obstacle. Therefore, we weaken the requirement for perfectoidness of the limit to stalk-wise perfectoidness in our analogous vanishing result \ref{intro-thm:completed}, meanwhile a purely scheme theoretic argument similar as Scholze's still works since we can control the Faltings cohomology of the limit by the Fatings cohomology of its stalks and we can apply Faltings' main $p$-adic comparison theorem in this situation. In this way, we actually represents the colimit of \'etale cohomology groups (almost) as a countable colimit of Zariski cohomology groups of coherent sheaves (see \ref{thm:vanishing}), which enables us to explicitly relate the completed \'etale cohomology group in the top degree with the countable colimit of the Zariski cohomology groups of the structural sheaves on the generic fibres:
\end{mypara}

\begin{mycor}[{see \ref{cor:completed}}]\label{intro-cor:completed}
	Following {\rm\ref{intro-thm:completed}}, let $(Y_n\to \overline{Y_n})_{n\in\bb{N}}$ a directed inverse system of open immersions of coherent $\overline{K}$-schemes such that each $\overline{Y_n}$ is proper smooth and $\overline{Y_n}\setminus Y_n$ is the support of a normal crossings divisor on $\overline{Y_n}$. Assume moreover that
	\begin{enumerate}
		\setcounter{enumi}{2}
		\renewcommand{\labelenumi}{{\rm(\theenumi)}}
		\item the completed \'etale cohomology group in the top degree $\lim_{k\in\bb{N}}\colim_{n\in\bb{N}}H^d(Y_{n,\et},\bb{Z}/p^k\bb{Z})$ has bounded $p$-power torsion.\label{item:intro-cor:completed-3}
	\end{enumerate}
	Then, there is a canonical homomorphism of $\widetilde{\overline{K}}$-modules,
	\begin{align}\label{eq:intro-cor:completed-1}
		\colim_{n\in\bb{N}}H^d(\widetilde{\overline{Y_n}},\ca{O}_{\widetilde{\overline{Y_n}}})\longrightarrow \left(\lim_{k\in\bb{N}}\colim_{n\in\bb{N}}H^d(Y_{n,\et},\ca{O}_{\widetilde{\overline{K}}}/p^k\ca{O}_{\widetilde{\overline{K}}})\right)[1/p],
	\end{align}
	where $\widetilde{\overline{K}}$ is the maximal completion (or equivalently, spherical completion) of $\overline{K}$ (see {\rm\ref{rem:max-val-exist}}) and $\widetilde{\overline{Y_n}}=\spec(\widetilde{\overline{K}})\times_{\spec(\overline{K})}\overline{Y_n}$, fitting into the following commutative diagram
	\begin{align}\label{eq:intro-cor:completed-2}
		\xymatrix{
			\colim_{n\in\bb{N}}H^d(\widetilde{\overline{Y_n}},\ca{O}_{\widetilde{\overline{Y_n}}})\ar[r]\ar@{->>}[d]& \left(\lim_{k\in\bb{N}}\colim_{n\in\bb{N}}H^d(Y_{n,\et},\ca{O}_{\widetilde{\overline{K}}}/p^k\ca{O}_{\widetilde{\overline{K}}})\right)[1/p]\\
			\bigoplus_J\widetilde{\overline{K}}\ar[r]&\widehat{\bigoplus}_J\widetilde{\overline{K}}\ar[u]_-{\wr}
		}
	\end{align}
	where $J$ is a countable set, the horizontal arrow on the bottom is induced from the $p$-adic completion map $\bigoplus_J\ca{O}_{\widetilde{\overline{K}}}\to\widehat{\bigoplus}_J\ca{O}_{\widetilde{\overline{K}}}$ by inverting $p$, the left vertical arrow is surjective and the right vertical arrow is an isomorphism. In particular, \eqref{eq:intro-cor:completed-1} has dense image.
\end{mycor}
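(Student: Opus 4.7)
The plan is to derive this corollary from the technical vanishing theorem \ref{thm:vanishing} underlying \ref{intro-thm:completed}, which represents the colimit of \'etale cohomology groups (almost) as a countable colimit of Zariski cohomology groups of coherent sheaves. Specializing to degree $q=d$ and to the structure sheaves $\ca{O}_{\widetilde{\overline{Y_n}}}$, I would extract, for each $k\in\bb{N}$, an almost isomorphism relating the Zariski cohomology $\colim_{n}H^d(\widetilde{\overline{Y_n}},\ca{O}_{\widetilde{\overline{Y_n}}}/p^k\ca{O}_{\widetilde{\overline{Y_n}}})$ to the \'etale cohomology $\colim_{n}H^d(Y_{n,\et},\ca{O}_{\widetilde{\overline{K}}}/p^k\ca{O}_{\widetilde{\overline{K}}})$. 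Composing the natural reduction map $\colim_n H^d(\widetilde{\overline{Y_n}},\ca{O}_{\widetilde{\overline{Y_n}}})\to \lim_k \colim_n H^d(\widetilde{\overline{Y_n}},\ca{O}_{\widetilde{\overline{Y_n}}}/p^k)$ with this almost identification, passing to the inverse limit over $k$, and inverting $p$ produces the canonical homomorphism \eqref{eq:intro-cor:completed-1}.

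Next I would identify the bottom row of \eqref{eq:intro-cor:completed-2} by a dimension count. Since each $\overline{Y_n}$ is proper smooth of dimension at most $d$ over $\overline{K}$, the space $H^d(\overline{Y_n},\ca{O}_{\overline{Y_n}})$ is finite-dimensional over $\overline{K}$; after base change and the countable colimit in $n$, the module $\colim_{n\in\bb{N}}H^d(\widetilde{\overline{Y_n}},\ca{O}_{\widetilde{\overline{Y_n}}})$ is a $\widetilde{\overline{K}}$-vector space of at most countable dimension. Choosing a countable set $J$ indexing a basis produces the surjection onto $\bigoplus_J\widetilde{\overline{K}}$ on the left, and the bottom horizontal arrow is then the $p$-adic completion of the corresponding $\ca{O}_{\widetilde{\overline{K}}}$-lattice, inverted over $p$. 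The right vertical isomorphism is the formal identification of the inverted-$p$ of the $p$-adic completion of that free lattice with $\widehat{\bigoplus}_J\widetilde{\overline{K}}$, which is exactly where hypothesis \eqref{item:intro-cor:completed-3} enters: the bounded $p$-power torsion of the target guarantees that, after inverting $p$, the module structure is the ``expected'' one and no pathological torsion contribution survives in the limit.

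The main obstacle I anticipate is promoting the \emph{almost} isomorphism produced by \ref{thm:vanishing} at each finite level $k$ to an \emph{honest} isomorphism after taking the inverse limit over $k$ and inverting $p$. The ``almost zero'' error terms at each level are annihilated by arbitrarily small powers of $\ak{m}_{\overline{K}}$, and one must check that they assemble, through the $\lim_k$ and the colimit over $n$, into a module whose $p$-power torsion is bounded and thus vanishes after inverting $p$. This is the precise point at which hypothesis \eqref{item:intro-cor:completed-3} is indispensable. Commutativity of \eqref{eq:intro-cor:completed-2} is then a formal consequence of the construction, and density of the top horizontal arrow follows from the density of $\bigoplus_J\widetilde{\overline{K}}\hookrightarrow\widehat{\bigoplus}_J\widetilde{\overline{K}}$ combined with the surjectivity on the left and the isomorphism on the right.
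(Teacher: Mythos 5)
Your proposal has a genuine gap: you never make contact with the integral poly-stable models $X_n$ over $\ca{O}_{\overline{K}}$, and without them the bottom row of the diagram cannot be identified. The paper's construction (via \ref{cor:vanishing-3} and \ref{rem:cor:vanishing-3}) crucially works with $\ca{M}^d=\colim_n H^d(X_n,\ca{O}_{X_n})\otimes_{\ca{O}_{\overline{K}}}\ca{O}_{\widetilde{\overline{K}}}$, the colimit of finitely generated integral cohomology groups, and applies Kaplansky's structure theorem \ref{thm:max-val-mod}/\ref{cor:max-val-mod} over the maximal valuation ring $\ca{O}_{\widetilde{\overline{K}}}$ to the countably generated torsion-free module $\ca{M}^d/\ca{M}^d[p^\infty]$. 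This yields, up to arbitrary $\epsilon$, a sandwich $\epsilon\cdot(\ca{M}^d/\ca{M}^d[p^\infty])\subseteq \widetilde{\overline{K}}^{\oplus I}\oplus\ca{O}_{\widetilde{\overline{K}}}^{\oplus J}\subseteq \ca{M}^d/\ca{M}^d[p^\infty]$, and the $p$-adic completion kills the divisible summand $\widetilde{\overline{K}}^{\oplus I}$, which is exactly why $\ca{H}^d[1/p]\cong\widehat{\bigoplus}_J\widetilde{\overline{K}}$ and why the left vertical arrow is a \emph{non-injective} surjection (with kernel $\widetilde{\overline{K}}^{\oplus I}$).

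Your version instead proposes to ``choose a countable set $J$ indexing a basis'' of $\colim_n H^d(\widetilde{\overline{Y_n}},\ca{O}_{\widetilde{\overline{Y_n}}})$ and complete the lattice spanned by that basis. That is an arbitrary lattice: different bases give non-commensurable lattices, hence genuinely different $p$-adic completions, so this cannot reproduce the canonically defined top horizontal arrow. Moreover choosing a basis would make your left vertical arrow an isomorphism, contradicting the asserted (strict) surjection in the statement — that non-injectivity is precisely the contribution of the ``$p$-divisible part'' $\widetilde{\overline{K}}^{\oplus I}$, which is invisible without the integral lattice and Kaplansky's decomposition. Finally, \ref{thm:vanishing} as stated compares $\'e$tale cohomology of $X_n^{\triv}$ with coherent cohomology of the \emph{integral} $X_n$; using the generic fibre $\widetilde{\overline{Y_n}}$ in its place skips exactly the step that supplies the lattice. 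The role of hypothesis (\ref{item:intro-cor:completed-3}) in your account is also somewhat off: it is used inside \ref{cor:vanishing-3} (through \ref{lem:torsion-free-bc} and \ref{lem:torsion-completion}) to show that the $p$-power torsion of $\widehat{\ca{M}^d}$ is bounded and hence that passing to torsion-free quotients preserves the identification, not to control ``almost'' errors as you suggest.
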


\begin{mypara}
	We note that the extra assumption (\ref{item:intro-cor:completed-3}) holds for the inverse system of Shimura varieties $(X_{K^pK_p})_{K_p\subseteq G(\bb{Q}_p)}$ by \cite[2.2]{emerton2006interpolation} (see \ref{rem:cor:completed}.(\ref{item:rem:cor:completed-2})). We remark that the poly-stable modification conjecture is not necessary for Theorem \ref{intro-thm:completed} if each $Y_n$ is proper (i.e., $Y_n=\overline{Y_n}$, see \ref{rem:thm:completed}.(\ref{item:rem:thm:completed-1})), while on the other hand, it seems inevitable to give such a concrete relation \eqref{eq:intro-cor:completed-2} between the cohomologies in the top degree. We hope that our results will contribute to a better understanding of completed cohomology of Shimura varieties in the future.
\end{mypara}

\begin{mypara}
	The article is structured as follows. In Section \ref{sec:generic}, we discuss functoriality of taking the set of generic points of schemes and recall a dominant property for Noetherian normal domains by its localizations at height-$1$ primes. We conduct an elaborate computation of the cohomology of Koszul complexes and their coboundary maps in Section \ref{sec:koszul}. We introduce the terminology of ``essentially adequate algebras" in Section \ref{sec:essential-adequate-alg} and compute their Galois cohomology and coboundary maps by the previous results on Koszul complexes. In Section \ref{sec:faltings-site}, we summarize some cohomological descent results for Faltings ringed topos proved in my thesis, especially the relation between ``Faltings acyclicity" and ``perfectoidness" is discussed. Then, we translate the results for Galois cohomology to Faltings cohomology of essentially adequate schemes in Section \ref{sec:essential-adequate-scheme}, which enables us to prove purity for perfectoidness in Section \ref{sec:purity}. To overcome the non-functoriality of the set of generic points, we turn to use the functorial Riemann-Zariski spaces. Their basic properties are reviewed in Section \ref{sec:riemann-zariski} with a particular eye on their limit behavior. Switching to stalks of Riemann-Zariski spaces from localizations at generic points of special fibres, we obtain valuative and differential criteria for Faltings acyclicity and perfectoidness for general inverse limits of essentially adequate schemes in Section \ref{sec:local-faltings}. We end this article by the study of the completed \'etale cohomology group in the top degree in Section \ref{sec:polystable}, where we need to apply Kaplansky's structure theorem on torsion-free modules reviewed in Section \ref{sec:torsion-free}.
\end{mypara}

\subsection*{Acknowledgements}
I would like to thank Michael Temkin for answering my questions on modification theorems and conjectures. I am grateful to Lue Pan and Juan Esteban Rodr\'{i}guez Camargo for their questions on perfectoidness after my Luminy lecture in June 2022, which motivated Section \ref{sec:polystable}. I extend my appreciation to thank Bhargav Bhatt, Yuanyang Jiang, Lue Pan, Emanuel Reinecke, and Mingjia Zhang for helpful discussions.

I express my sincere gratitude to Ahmed Abbes, Ofer Gabber, Kiran S. Kedlaya, Peter Scholze, and Takeshi Tsuji for their invaluable feedback and suggestions on the initial draft of this manuscript.

This material is based upon work supported by the National Science Foundation under Grant No. DMS-1926686 during my membership at Institute for Advanced Study in the ``$p$-adic Arithmetic Geometry" special year.

\section{Notation and Conventions}

\begin{mypara}
	All monoids and rings considered in this article are unitary and commutative, and we fix a prime number $p$ throughout this article.
\end{mypara}

\begin{mypara}
	A \emph{valuation field} is a pair $(K,\ca{O}_K)$ where $\ca{O}_K$ is a valuation ring with fraction field $K$ (\cite[\Luoma{6}.\textsection1.2, D\'efinition 2]{bourbaki2006commalg5-7}). We denote by $\ak{m}_K$ the maximal ideal of $\ca{O}_K$ and call the number of the nonzero prime ideals of $\ca{O}_K$ the \emph{height} of $K$ (\cite[\Luoma{6}.\textsection4.4, Proposition 5]{bourbaki2006commalg5-7}). We also refer to \cite[\href{https://stacks.math.columbia.edu/tag/00I8}{00I8}]{stacks-project} for basic properties on valuation rings. For a non-discrete valuation field $K$ of height $1$ (so that $\ak{m}_K=\ak{m}_K^2$), when referring to almost modules over $\ca{O}_K$ we always take $(\ca{O}_K,\ak{m}_K)$ as the basic setup (\cite[2.1.1]{gabber2003almost}).
\end{mypara}

\begin{mypara}\label{para:notation-perfd}
	Following \cite[\textsection5]{he2024coh}, a \emph{pre-perfectoid field} is a valuation field $K$ of height $1$ with non-discrete valuation of residue characteristic $p$ such that the Frobenius map on $\ca{O}_K/p\ca{O}_K$ is surjective (\cite[5.1]{he2024coh}). We note that for any nonzero element $\pi\in\ak{m}_K$ the fraction field $\widehat{K}$ of the $\pi$-adic completion of $\ca{O}_K$ is a perfectoid field in the sense of \cite[3.1]{scholze2012perfectoid} (see \cite[5.2]{he2024coh}). Given a pre-perfectoid field $K$, we say that an $\ca{O}_K$-algebra $R$ is (resp. \emph{almost}) \emph{pre-perfectoid} if there is a nonzero element $\pi\in\ak{m}_K$ such that $p\in \pi^p\ca{O}_K$, the $\pi$-adic completion $\widehat{R}$ is (resp. almost) flat over $\ca{O}_{\widehat{K}}$ and the Frobenius induces an (resp. almost) isomorphism $R/\pi R\to R/\pi^p R$ (see \cite[5.19]{he2024coh}, and this definition does not depend on the choice of $\pi$ by \cite[5.23]{he2024coh}). This is equivalent to that the $\ca{O}_{\widehat{K}}$-algebra $\widehat{R}$ (resp. almost $\ca{O}_{\widehat{K}}$-algebra $\widehat{R}^{\al}$ associated to $\widehat{R}$) is perfectoid in the sense of \cite[3.10.(\luoma{2})]{bhattmorrowscholze2018integral} (resp. \cite[5.1.(\luoma{2})]{scholze2012perfectoid}, see \cite[5.18]{he2024coh}).
\end{mypara}

\begin{mypara}\label{para:notation-intclos}
	Following \cite[\Luoma{6}.1.22]{sga4-2}, a \emph{coherent} scheme (resp. morphism of schemes) stands for a quasi-compact and quasi-separated scheme (resp. morphism of schemes). For a coherent morphism $Y \to X$ of schemes, we denote by $X^Y$ the integral closure of $X$ in $Y$ (\cite[\href{https://stacks.math.columbia.edu/tag/0BAK}{0BAK}]{stacks-project}). For an $X$-scheme $Z$, we say that $Z$ is \emph{$Y$-integrally closed} if $Z=Z^{Y \times_X Z}$.
\end{mypara}

\section{Preliminaries on Extension of Valuation Rings and Generic Points}\label{sec:generic}

\begin{mylem}\label{lem:val-ext}
	Let $f:V\to W$ be a homomorphism of valuation rings. Then, the following statements are equivalent:
	\begin{enumerate}
		\renewcommand{\labelenumi}{{\rm(\theenumi)}}
		\item $f$ is faithfully flat.\label{item:lem:val-ext-1}
		\item $f$ is injective and local.\label{item:lem:val-ext-2}
		\item $f$ is injective and $V=K\cap W\subseteq L$, where $K$ (resp. $L$) is the fraction field of $V$ (resp. $W$). \label{item:lem:val-ext-3}
	\end{enumerate}
\end{mylem}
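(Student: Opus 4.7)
The plan is to prove the chain of implications $(\ref{item:lem:val-ext-1}) \Rightarrow (\ref{item:lem:val-ext-2}) \Rightarrow (\ref{item:lem:val-ext-3}) \Rightarrow (\ref{item:lem:val-ext-1})$, exploiting throughout the fact that a valuation ring is a local domain in which any element of its fraction field lies either in the ring or has its inverse in the maximal ideal.

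For $(\ref{item:lem:val-ext-1}) \Rightarrow (\ref{item:lem:val-ext-2})$, I would invoke the standard fact that faithfully flat ring maps are injective (in fact universally injective). To get locality, the condition $\ak{m}_V W \neq W$, built into faithful flatness, forces $f(\ak{m}_V) \subseteq \ak{m}_W$, since $\ak{m}_W$ is the unique maximal ideal of $W$.

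For $(\ref{item:lem:val-ext-2}) \Rightarrow (\ref{item:lem:val-ext-3})$, the injectivity of $f$ extends to an embedding $K \hookrightarrow L$ of fraction fields, so the intersection $K \cap W$ inside $L$ is well-defined and obviously contains $V$. For the reverse inclusion, I take $x \in K \cap W$ with $x \neq 0$ and suppose $x \notin V$; the valuation ring property gives $x^{-1} \in \ak{m}_V$, so by locality $f(x^{-1}) = x^{-1} \in \ak{m}_W$, which together with $x \in W$ yields $1 = x \cdot x^{-1} \in \ak{m}_W$, a contradiction.

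The implication $(\ref{item:lem:val-ext-3}) \Rightarrow (\ref{item:lem:val-ext-1})$ is the main content. Flatness is free: a module over a valuation ring is flat if and only if it is torsion-free, and $W$ is a domain containing $V$. To promote flatness to faithful flatness I would verify locality and then use that $\ak{m}_V W \subseteq \ak{m}_W \neq W$. Suppose for contradiction that $a \in \ak{m}_V$ with $f(a) \notin \ak{m}_W$; then $f(a)$ is a unit in $W$, so $a^{-1} \in L$ equals $f(a)^{-1} \in W$, hence $a^{-1} \in K \cap W = V$, contradicting $a \in \ak{m}_V$. This is the step that most directly uses hypothesis (\ref{item:lem:val-ext-3}), and I anticipate it to be the technical crux; the rest of the argument is then routine.
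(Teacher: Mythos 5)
Your proof is correct, and the treatment of condition (3) is genuinely different from the paper's. For (1)$\Leftrightarrow$(2) you and the paper use essentially the same ingredients (faithful flatness gives injectivity and $\ak{m}_V W \neq W$, hence locality; conversely torsion-free over a valuation ring gives flatness, and flat plus local gives faithful flatness), so the only real divergence is how condition (3) is connected to the others.

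The paper proves (1)(2)$\Leftrightarrow$(3) by introducing the intermediate valuation ring $V' = K \cap W$, showing first that $W$ is faithfully flat over $V'$ (using the already-established (1)$\Leftrightarrow$(2)), so that $V \to W$ is faithfully flat iff $V \to V'$ is, and then invoking the maximality of a valuation ring among local subrings of its fraction field dominating it to conclude $V \to V'$ is local iff $V = V'$. You instead close the cycle by two direct, elementary arguments: for (2)$\Rightarrow$(3), you exploit the dichotomy that a nonzero $x \in K$ either lies in $V$ or has $x^{-1} \in \ak{m}_V$, and locality then makes $x \cdot x^{-1} = 1 \in \ak{m}_W$ impossible; for (3)$\Rightarrow$(1), you get flatness from torsion-freeness and locality from the observation that a nonunit $a \in \ak{m}_V$ with $f(a) \in W^\times$ would put $a^{-1}$ in $K \cap W = V$, a contradiction. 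Your route avoids the abstract maximality/dominance characterization of valuation rings entirely and does not factor through the auxiliary ring $V'$, at the cost of doing two small hand computations rather than one structural reduction. Both are complete and correct; yours is slightly more self-contained, the paper's is slightly more conceptual and recycles (1)$\Leftrightarrow$(2) as a lemma.
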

\begin{proof}
	(\ref{item:lem:val-ext-1})$\Rightarrow$(\ref{item:lem:val-ext-2}) Since $f$ is faithfully flat, we see that $f$ is injective. Moreover, as $\spec(W)\to \spec(V)$ is surjective, there exists a point of $\spec(W)$ mapping to the closed point of $\spec(V)$. As $W$ is local, the closed point of $\spec(W)$ also maps to that of $\spec(V)$. Hence, $f$ is local.
	
	(\ref{item:lem:val-ext-2})$\Rightarrow$(\ref{item:lem:val-ext-1}) Since $f$ is injective homomorphism of domains, we see that the $V$-module $W$ is torsion-free. Hence, $f$ is flat (\cite[\href{https://stacks.math.columbia.edu/tag/0539}{0539}]{stacks-project}). Therefore, $f$ is faithfully flat as $f$ is flat and local (\cite[\href{https://stacks.math.columbia.edu/tag/00HR}{00HR}]{stacks-project}).
	
	(\ref{item:lem:val-ext-1})(\ref{item:lem:val-ext-2})$\Leftrightarrow$(\ref{item:lem:val-ext-3}) Firstly, assume that $f$ is injective. Then, $V'=K\cap W\subseteq L$ is a valuation ring with fraction field $K$ (\cite[\href{https://stacks.math.columbia.edu/tag/00IB}{00IB}, \href{https://stacks.math.columbia.edu/tag/052K}{052K}]{stacks-project}) and we have $V\subseteq V'\subseteq K$. We see that $V'$ is flat over $V$, since the $V$-module $V'$ is torsion-free. Moreover, we claim that $W$ is faithfully flat over $V'$. Indeed, let $\ak{q}\in \spec(V')$ be the image of the closed point $\ak{m}\in\spec(W)$. Then, $V'_{\ak{q}}\subseteq K$ is still contained in $K\cap W$. Hence, $V'_{\ak{q}}=V'=K\cap W$, i.e., $\ak{q}$ is the closed point of $\spec(V')$, which proves the claim by the equivalence (\ref{item:lem:val-ext-1})$\Leftrightarrow$(\ref{item:lem:val-ext-2}).
	
	Then, we see that $f:V\to W$ is faithfully flat if and only if $V\to V'$ is faithfully flat. Since $V$ and $V'$ has the same fraction field, $V\to V'$ is local (i.e., faithfully flat) if and only if $V=V'$, since $V$ itself is the maximal local ring contained in $K$ dominating $V$ (\cite[\href{https://stacks.math.columbia.edu/tag/00I9}{00I9}]{stacks-project}). This proves the equivalence (\ref{item:lem:val-ext-1})(\ref{item:lem:val-ext-2})$\Leftrightarrow$(\ref{item:lem:val-ext-3}).
\end{proof}

\begin{mydefn}\label{defn:val-ext}
	Let $V\to W$ be a homomorphism of valuation rings. We say that it is an (resp. \emph{algebraic}) \emph{extension of valuation rings} if it satisfies the equivalent conditions in \ref{lem:val-ext} (resp. and if it is integral). In this case, we also say that the homomorphism of their fraction fields $\mrm{Frac}(V)\to \mrm{Frac}(W)$ is an (resp. \emph{algebraic}) \emph{extension of valuation fields}.
\end{mydefn}

We note that if $V\to W$ is an algebraic extension of valuation rings, then $W$ is the integral closure of $V$ in the fraction field of $W$.

\begin{mydefn}\label{defn:generic}
	Let $X$ be a scheme. We denote by $\ak{G}(X)$ the set of generic points of irreducible components of $X$. For $X=\spec(A)$ affine, we put $\ak{G}(\spec(A))=\ak{G}(A)$, which is the set of minimal prime ideals of $A$.
\end{mydefn}

\begin{mylem}\label{lem:generic-map}
	Let $f:Y\to X$ be a morphism of schemes. 
	\begin{enumerate}
		\renewcommand{\labelenumi}{{\rm(\theenumi)}}
		\item If generalizations lift along $f$ {\rm(\cite[\href{https://stacks.math.columbia.edu/tag/0063}{0063}]{stacks-project})}, then $f(\ak{G}(Y))\subseteq \ak{G}(X)$.\label{item:lem:generic-map-1}
		\item If $\ak{G}(X)\subseteq f(Y)$, then $\ak{G}(X)\subseteq f(\ak{G}(Y))$.\label{item:lem:generic-map-2}
		\item If the underlying topological space of the fibre of $f$ over any point in $\ak{G}(X)$ is totally disconnected {\rm(\cite[{\href{https://stacks.math.columbia.edu/tag/04MC}{04MC}}]{stacks-project})}, then $f^{-1}(\ak{G}(X))\subseteq \ak{G}(Y)$.\label{item:lem:generic-map-3}
	\end{enumerate}
\end{mylem}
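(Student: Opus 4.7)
My overarching strategy is to exploit the characterization of $\ak{G}(X)$ as the set of maximal points of $X$ under the specialization order (a point is the generic point of an irreducible component iff it admits no strict generalization), together with the standard fact that continuous maps preserve specialization. Through this lens, each of the three parts becomes a short specialization-theoretic argument, so nothing from later sections needs to be invoked.

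For item (\ref{item:lem:generic-map-1}), I would proceed by contradiction: given $y\in\ak{G}(Y)$, suppose $x=f(y)$ had a strict generalization $x'\in X$. The hypothesis that generalizations lift along $f$ would then produce a generalization $y'$ of $y$ in $Y$ with $f(y')=x'\neq x$, so $y'\neq y$, contradicting the maximality of $y$. For item (\ref{item:lem:generic-map-2}), given $x\in\ak{G}(X)$ the hypothesis supplies some $y\in Y$ with $f(y)=x$; I would pass to an irreducible component of $Y$ containing $y$ (which exists for every point of a scheme, via any minimal prime contained in the corresponding prime in an affine open chart) and let $y''$ be its generic point. Then $y''$ is a generalization of $y$, so $f(y'')$ is a generalization of $x$, and the maximality of $x$ upgrades this to $f(y'')=x$, producing the desired preimage in $\ak{G}(Y)$.

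For item (\ref{item:lem:generic-map-3}), I would again argue by contradiction. Take $y\in f^{-1}(\ak{G}(X))$ with $x=f(y)$, and suppose $y$ had a strict generalization $y'$ in $Y$. Continuity makes $f(y')$ a generalization of $x\in\ak{G}(X)$, so $f(y')=x$ by maximality, placing both $y$ and $y'$ in the topological fibre $f^{-1}(x)$. The closure of $\{y'\}$ inside the subspace $f^{-1}(x)$ would then contain $y$ and be irreducible, hence connected; total disconnectedness of $f^{-1}(x)$ would collapse this closure to a singleton, forcing $y=y'$, a contradiction. I do not anticipate a substantive obstacle in any of the three items; the only point worth double-checking is the implicit identification in (\ref{item:lem:generic-map-3}) between the subspace topology on $f^{-1}(x)\subseteq Y$ and the natural topology on the scheme-theoretic fibre $Y\times_X\spec(\kappa(x))$ over which the hypothesis is phrased, which is a standard fact about morphisms of schemes.
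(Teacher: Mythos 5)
Your proof is correct and takes essentially the same route as the paper in all three parts: arguing via the specialization order and the maximality of generic points, with the contradiction in (1) coming from lifting a strict generalization of $f(y)$, the construction in (2) passing to the generic point of a component through $y$, and (3) producing two distinct comparable points in the fibre over $x$. The only cosmetic difference is in (3), where you pass directly through the closure of $\{y'\}$ in $f^{-1}(x)$ (which is irreducible, hence connected) while the paper phrases it via the connected component of $y'$; these are the same idea. Your closing remark about identifying the subspace $f^{-1}(x)\subseteq Y$ with the underlying space of the scheme-theoretic fibre is indeed a standard fact and poses no obstacle.
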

\begin{proof}
	(\ref{item:lem:generic-map-1}) For any $y\in\ak{G}(Y)$, if $x=f(y)\notin \ak{G}(X)$ then there exists a generalization $x'\leadsto x$ of $x$ in $X$ with $x'\neq x$. As generalizations lift along $f$, there exists a generalization $y'\leadsto y$ of $y$ in $Y$ with $f(y')=x'$. Hence, $y'\neq y$, which contradicts the fact that $y$ is the generic point of an irreducible component of $Y$.
	
	(\ref{item:lem:generic-map-2}) For any $x\in\ak{G}(X)$ with $x=f(y)$ for some $y\in Y$, we take $y'\in\ak{G}(Y)$ specializing to $y$. As $f(y')\leadsto f(y)$ and $f(y)$ is a generic point, we have $x=f(y)=f(y')\in f(\ak{G}(Y))$.
	
	(\ref{item:lem:generic-map-3}) For any $y\in Y$ such that $x=f(y)\in \ak{G}(X)$, if $y\notin \ak{G}(Y)$ then there exists a generalization $y'\leadsto y$ of $y$ in $Y$ with $y'\neq y$. As $f(y')\leadsto f(y)$ and $f(y)$ is a generic point, we have $f(y')=f(y)=x$. Then, the connected component of $y'$ in $f^{-1}(x)$ contains $y$, which contradicts the assumption that $f^{-1}(x)$ is a totally disconnected topological space.
\end{proof}
\begin{myrem}\label{rem:generic-map}
	We use \ref{lem:generic-map} to analyze the behavior on generic points for some common types of morphisms of schemes $f:Y\to X$ which we shall encounter in the rest of this article.
	\begin{enumerate}
		\renewcommand{\labelenumi}{{\rm(\theenumi)}}
		\item If $f$ is flat, then $f(\ak{G}(Y))\subseteq \ak{G}(X)$. Indeed, generalizations lift along $f$ by \cite[\href{https://stacks.math.columbia.edu/tag/03HV}{03HV}]{stacks-project}.\label{item:rem:generic-map-1}
		\item If $f$ is pro-\'etale (\cite[7.13]{he2024coh}), then $f^{-1}(\ak{G}(X))= \ak{G}(Y)$. Indeed, the fibres of $f$ are pro-finite sets.\label{item:rem:generic-map-2}
		\item If there exists an injective integral homomorphism $A\to B$ of domains with $A$ normal and if $f$ is the base change of $g:\spec(B)\to \spec(A)$ along a closed immersion $X\to \spec(A)$, then $f^{-1}(\ak{G}(X))= \ak{G}(Y)$ and $f(\ak{G}(Y))=\ak{G}(X)$. Indeed, generalizations lift along $g$ by \cite[\href{https://stacks.math.columbia.edu/tag/00H8}{00H8}]{stacks-project}, $g$ is surjective and each fibre along $g$ is discrete (\cite[\href{https://stacks.math.columbia.edu/tag/00GQ}{00GQ}, \href{https://stacks.math.columbia.edu/tag/00GT}{00GT}]{stacks-project}). By base change along closed immersion, we see that $f$ satisfies the same properties.\label{item:rem:generic-map-3}
	\end{enumerate}
\end{myrem}

\begin{mylem}\label{lem:generic-discrete}
	Let $X$ be a scheme. Assume that the subset $\ak{G}(X)$ of $X$ is locally finite, i.e., any quasi-compact open subset of $X$ has finitely many irreducible components (e.g., when the underlying topological space of $X$ is locally Noetherian, \cite[\href{https://stacks.math.columbia.edu/tag/0052}{0052}]{stacks-project}). Then, the topology on $\ak{G}(X)$ induced from $X$ is discrete.
\end{mylem}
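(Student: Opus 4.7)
The plan is to show that every point $x \in \ak{G}(X)$ is isolated in the subspace topology of $\ak{G}(X)$ by producing an open subset $U \subseteq X$ with $U \cap \ak{G}(X) = \{x\}$. Once such a $U$ is constructed for each $x$, the induced topology is discrete by definition.

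First, I would choose a quasi-compact open neighborhood $V$ of $x$ in $X$ (for instance, an affine open). By the local finiteness assumption, $V$ has only finitely many irreducible components, and since $V$ is open in $X$, the irreducible components of $V$ are exactly the intersections $V \cap Z$ with $Z$ an irreducible component of $X$ meeting $V$; moreover, the generic point of $V \cap Z$ coincides with the generic point of $Z$. Consequently $V \cap \ak{G}(X) = \ak{G}(V)$, which is a finite set that I will write as $\{x_1,\dots,x_n\}$ with $x_1 = x$.

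Next, let $C = \overline{\{x_2\}} \cup \cdots \cup \overline{\{x_n\}}$, where the closures are taken in $V$. Each $\overline{\{x_i\}}$ ($i \geq 2$) is an irreducible component of $V$ distinct from $\overline{\{x\}}$, and since irreducible components are maximal among irreducible closed subsets, no $\overline{\{x_i\}}$ with $i \geq 2$ contains the generic point $x$ of a different component. Hence $x \notin C$, and $U := V \setminus C$ is an open neighborhood of $x$ in $V$, hence in $X$. By construction $U \cap \ak{G}(X) = U \cap \{x_1,\dots,x_n\} = \{x\}$, as required.

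There is no real obstacle here; the only point deserving care is the identification $V \cap \ak{G}(X) = \ak{G}(V)$ coming from the open-immersion behavior of irreducible components, and the observation that distinct irreducible components cannot share generic points. Both are standard sober-space arguments and are precisely where the local finiteness hypothesis enters, ensuring that the union $C$ cutting out the other generic points remains closed (being a \emph{finite} union of closed sets).
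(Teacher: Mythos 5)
Your proof is correct and follows essentially the same route as the paper's: pass to a quasi-compact open neighborhood where the number of generic points is finite (using the open-immersion compatibility of $\ak{G}$), and remove the closed set given by the finitely many other irreducible components. The paper compresses the reduction into a citation of \ref{rem:generic-map} (open immersions being pro-\'etale), but the underlying argument is identical.
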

\begin{proof}
	The problem is local on $X$ by \ref{rem:generic-map}.(\ref{item:rem:generic-map-2}). Thus, we may assume that $\ak{G}(X)$ is finite. Then, any $x\in\ak{G}(X)$ is not contained in the irreducible components of other generic points. Thus, there is an open subset of $X$ whose intersection with $\ak{G}(X)$ is $\{x\}$.
\end{proof}

\begin{mylem}\label{lem:generic-limit}
	Let $(X_\lambda)_{\lambda\in\Lambda}$ be a directed inverse system of coherent schemes satisfying the following conditions:
	\begin{enumerate}
		\renewcommand{\labelenumi}{{\rm(\theenumi)}}
		\item For any $\lambda\in\Lambda$, $\ak{G}(X_\lambda)$ is locally finite in $X_\lambda$.
		\item For any indexes $\lambda\leq \mu$ in $\Lambda$, the transition morphism $f_{\lambda\mu}:X_\mu\to X_\lambda$ is affine and we have $f_{\lambda\mu}(\ak{G}(X_\mu))\subseteq \ak{G}(X_\lambda)$.
	\end{enumerate}
	If we put $X=\lim_{\lambda\in\Lambda}X_\lambda$ the inverse limit of schemes, then we have
	\begin{align}
		\ak{G}(X)=\lim_{\lambda\in\Lambda}\ak{G}(X_\lambda).
	\end{align}
\end{mylem}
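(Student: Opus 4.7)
The plan is to realize $\ak{G}(X)$ as the (set-theoretic) inverse limit of the finite nonempty sets $\ak{G}(X_\lambda)$. Since each $X_\lambda$ is itself coherent, the local-finiteness hypothesis (1) forces $\ak{G}(X_\lambda)$ to be a finite set, which is moreover discrete inside $X_\lambda$ by Lemma \ref{lem:generic-discrete}. Because the transition maps are affine, $X$ is a coherent scheme whose underlying spectral space is the cofiltered limit of the spectral spaces $X_\lambda$. The crucial topological input I will use is the standard componentwise criterion for specialization in such a limit: for two points $y,x\in X$ with images $y_\lambda, x_\lambda\in X_\lambda$, one has $y\leadsto x$ in $X$ if and only if $y_\lambda\leadsto x_\lambda$ in $X_\lambda$ for every $\lambda\in\Lambda$. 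On an affine patch $\spec(\colim_\lambda A_\lambda)$ this unfolds to the identity $V(\ak{p}_y)=\bigcap_\lambda V(\ak{p}_{y_\lambda}A)$. Hypothesis (2) then makes $(\ak{G}(X_\lambda))_{\lambda\in\Lambda}$ an inverse system of finite sets whose limit $L:=\lim_{\lambda\in\Lambda}\ak{G}(X_\lambda)$ injects naturally into $X$.

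For the easier inclusion $L\subseteq \ak{G}(X)$, I take $y\in L$ and suppose $z\leadsto y$ in $X$. The componentwise criterion gives $z_\lambda\leadsto y_\lambda$ for every $\lambda$, but each $y_\lambda$ is already a generic point of $X_\lambda$, forcing $z_\lambda = y_\lambda$ for all $\lambda$ and hence $z=y$. So $y$ has no proper generalization in $X$, and by sobriety of schemes this means $y\in\ak{G}(X)$.

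For the reverse inclusion, fix $x\in\ak{G}(X)$ with components $x_\lambda=f_\lambda(x)$, and set
\begin{align*}
I_\lambda \;=\; \{\, i\in\ak{G}(X_\lambda) : i\leadsto x_\lambda \text{ in } X_\lambda\,\}\subseteq\ak{G}(X_\lambda).
\end{align*}
Each $I_\lambda$ is finite and nonempty, since $x_\lambda$ lies in some irreducible component of $X_\lambda$ whose generic point belongs to $I_\lambda$. Hypothesis (2) together with continuity of $f_{\lambda\mu}$ shows that the transition maps restrict to maps $I_\mu\to I_\lambda$ for $\lambda\leq \mu$: for $i\in I_\mu$, $f_{\lambda\mu}(i)\in\ak{G}(X_\lambda)$ by (2), and $f_{\lambda\mu}(i)\leadsto f_{\lambda\mu}(x_\mu)=x_\lambda$. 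By the classical nonemptiness of cofiltered limits of nonempty finite sets, I may choose $y=(y_\lambda)\in\lim_{\lambda\in\Lambda}I_\lambda\subseteq L$. By the previous paragraph $y\in\ak{G}(X)$, while the componentwise criterion yields $y\leadsto x$ in $X$. Since $x\in\ak{G}(X)$, the closure $\overline{\{x\}}$ is a maximal irreducible closed subset, so $\overline{\{x\}}\subseteq\overline{\{y\}}$ forces $\overline{\{x\}}=\overline{\{y\}}$, and sobriety gives $x=y\in L$.

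The main subtle point in this plan is the componentwise specialization criterion in the spectral-scheme-theoretic limit, which reduces on affines to the description of primes of a filtered colimit of rings as unions of primes of the factors; once that is granted, the entire argument consists of hypothesis (1) ensuring that each $\ak{G}(X_\lambda)$ is actually finite (so that cofiltered limits of nonempty subsets $I_\lambda$ are nonempty) together with hypothesis (2) ensuring that the relevant restrictions of transition maps are well-defined.
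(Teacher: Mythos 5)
Your proof is correct and follows essentially the same route as the paper: both inclusions are established via the componentwise specialization criterion in the cofiltered limit of spectral spaces (the paper cites this as Tag 0CUG in the form $\overline{\{y\}}=\lim_\lambda\overline{\{y_\lambda\}}$), and the hard inclusion uses non-emptiness of cofiltered limits of non-empty finite sets applied to the sets of generic points specializing to $x_\lambda$. The only cosmetic difference is that the paper first localizes to the case where every $X_\lambda$ is affine, whereas you work globally, observing directly that coherence of $X_\lambda$ together with local finiteness of $\ak{G}(X_\lambda)$ already makes each $\ak{G}(X_\lambda)$ finite; this is a valid shortcut and changes nothing essential in the argument.
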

\begin{proof}
	The problem is local on $X_{\lambda_0}$ for some $\lambda_0\in\Lambda$ by \ref{rem:generic-map}.(\ref{item:rem:generic-map-2}). Thus, we may assume that $X_\lambda$ is affine and thus $\ak{G}(X_\lambda)$ is finite for any $\lambda\in\Lambda$. Recall that the underlying topological space of $X$ is also the limit of the underlying topological spaces of $X_\lambda$ by \cite[8.2.9]{ega4-3} (see also \cite[\href{https://stacks.math.columbia.edu/tag/0CUF}{0CUF}]{stacks-project}). Hence, $\ak{G}_\infty=\lim_{\lambda\in\Lambda}\ak{G}(X_\lambda)$ is a topological subspace of $X$.
	
	Firstly, we show that $\ak{G}_\infty\subseteq \ak{G}(X)$. For any $x\in \ak{G}_\infty$ with image $x_\lambda\in\ak{G}(X_\lambda)$, we take the generic point $y$ of the irreducible component containing $x$ in $X$. Then, the image $y_\lambda\in X_\lambda$ still specializes to $x_\lambda$. But $x_\lambda$ is a generic point, we have $x_\lambda=y_\lambda$. By taking limits, we see that $x=y\in\ak{G}(X)$. 
	
	Conversely, we show that $\ak{G}(X)\subseteq \ak{G}_\infty$. For any $x\in X$ with image $x_\lambda\in X_\lambda$, consider the finite subset
	\begin{align}
		\ak{G}_{x_\lambda}(X_\lambda)=\{y_\lambda\in \ak{G}(X_\lambda)\ |\ y_\lambda\trm{ specializes to } x_\lambda\}.
	\end{align}
	For any indexes $\lambda\leq \mu$ in $\Lambda$, as $f_{\lambda\mu}(\ak{G}(X_\mu))\subseteq \ak{G}(X_\lambda)$, we see that $f_{\lambda\mu}(\ak{G}_{x_\mu}(X_\mu))\subseteq \ak{G}_{x_\lambda}(X_\lambda)$. Since each $\ak{G}_{x_\lambda}(X_\lambda)$ is a non-empty finite set, $\lim_{\lambda\in\Lambda}\ak{G}_{x_\lambda}(X_\lambda)$ is also non-empty (\cite[\href{https://stacks.math.columbia.edu/tag/0A2W}{0A2W}]{stacks-project}). We take $y\in \lim_{\lambda\in\Lambda}\ak{G}_{x_\lambda}(X_\lambda)\subseteq \ak{G}_\infty$ with image $y_\lambda\in \ak{G}_{x_\lambda}(X_\lambda)$. Then, $x=\lim_{\lambda\in\Lambda}x_\lambda\in \lim_{\lambda\in\Lambda}\overline{\{y_\lambda\}}=\overline{\{y\}}$, where the last equality follows from \cite[\href{https://stacks.math.columbia.edu/tag/0CUG}{0CUG}]{stacks-project}. In other words, $y$ specializes to $x$. In particular, if $x\in\ak{G}(X)$, then $x=y\in \ak{G}_\infty$.
\end{proof}

\begin{myprop}[{\cite[4.3]{he2022sen}}]\label{prop:generic-map}
	Let $A\to B$ be an injective and integral homomorphism of normal domains with $A$ Noetherian, $\pi$ a nonzero element of $A$. Assume that $B$ is the union of a directed system $(B_\lambda)_{\lambda\in\Lambda}$ of Noetherian normal $A$-subalgebras. 
	\begin{enumerate}
		\renewcommand{\labelenumi}{{\rm(\theenumi)}}
		\item The set $\ak{G}(B/\pi B)$ coincides with the set $\ak{S}_\pi(B)$ of height-$1$ prime ideals of $B$ containing $\pi$.\label{item:prop:generic-map-1}
		\item For each $\ak{q}\in \ak{G}(B/\pi B)$, if we denote by $\ak{q}_\lambda\in\ak{G}(B_\lambda/\pi B_\lambda)$ its image, then $(B_{\lambda,\ak{q}_\lambda})_{\lambda\in\Lambda}$ is a directed system of discrete valuation rings with faithfully flat transition maps, whose colimit is $B_{\ak{q}}$, a valuation ring of height $1$.\label{item:prop:generic-map-2}
		\item For any integer $n>0$, the natural map
		\begin{align}
			B/\pi^n B\longrightarrow \prod_{\ak{q}\in\ak{S}_\pi(B)}B_{\ak{q}}/\pi^nB_{\ak{q}} 
		\end{align} 
		is injective.\label{item:prop:generic-map-3}
	\end{enumerate}
\end{myprop}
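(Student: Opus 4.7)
The plan is to establish (1) and (2) jointly via Lemma \ref{lem:generic-limit}, then deduce (3) by combining a classical statement for each Noetherian $B_\lambda$ with the compactness of cofiltered limits of non-empty finite sets.

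For (1) and (2), I would first apply Lemma \ref{lem:generic-limit} to the inverse system $(\spec(B_\lambda/\pi B_\lambda))_{\lambda\in\Lambda}$: each piece is Noetherian so $\ak{G}$ is finite, the transitions are affine, and Remark \ref{rem:generic-map}.(\ref{item:rem:generic-map-3}) applied to the integral injection $B_\lambda\hookrightarrow B_\mu$ of normal domains shows that the transitions preserve generic points after reduction mod $\pi$. This yields $\ak{G}(B/\pi B)=\lim_\lambda\ak{G}(B_\lambda/\pi B_\lambda)$, and Krull's Hauptidealsatz identifies each $\ak{G}(B_\lambda/\pi B_\lambda)$ with $\ak{S}_\pi(B_\lambda)$. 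For (2), with $\ak{q}\in\ak{G}(B/\pi B)$ corresponding to $(\ak{q}_\lambda)$, each $B_{\lambda,\ak{q}_\lambda}$ is a DVR, and the compatibility $\ak{q}_\mu\cap B_\lambda=\ak{q}_\lambda$ makes $B_{\lambda,\ak{q}_\lambda}\to B_{\mu,\ak{q}_\mu}$ injective and local, hence faithfully flat by Lemma \ref{lem:val-ext}. The colimit identifies with $B_\ak{q}$; a filtered colimit of valuation rings along injective local maps is again a valuation ring (any element lies in some $B_{\lambda,\ak{q}_\lambda}$, where it or its inverse is integral), and the value group is a directed union of copies of $\bb{Z}$ embedded in $\bb{Q}$ via multiplication by ramification indices, so of rank $1$. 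Part (1) then follows: by (2), any $\ak{q}\in\ak{G}(B/\pi B)$ has height $1$, while conversely any height-$1$ prime of $B$ containing $\pi\neq 0$ is automatically minimal over $\pi B$, since the only strictly smaller prime is $(0)$.

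For (3), I would use the classical injectivity of $B_\lambda/\pi^n B_\lambda$ into $\prod_{\ak{q}_\lambda\in\ak{S}_\pi(B_\lambda)}B_{\lambda,\ak{q}_\lambda}/\pi^n B_{\lambda,\ak{q}_\lambda}$ for each Noetherian normal $B_\lambda$. Given a nonzero class $x\in B/\pi^n B$ represented by $x_\mu\in B_\mu/\pi^n B_\mu$ for $\mu\geq\lambda_0$, let $S_\mu\subseteq\ak{S}_\pi(B_\mu)$ be the non-empty finite set of those $\ak{q}_\mu$ where $x_\mu\neq 0$ modulo $\pi^n$. The contraction map $\ak{S}_\pi(B_\nu)\to\ak{S}_\pi(B_\mu)$ restricts to $S_\nu\to S_\mu$: if $x_\nu\neq 0$ in the localization at $\ak{q}_\nu$, then $x_\mu$ cannot vanish in the localization at $\ak{q}_\mu$ since it maps to $x_\nu$. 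This yields a cofiltered system of non-empty finite sets whose limit (\cite[\href{https://stacks.math.columbia.edu/tag/0A2W}{0A2W}]{stacks-project}) produces a prime $\ak{q}\in\ak{S}_\pi(B)$ witnessing the nonvanishing of $x$ in $B_\ak{q}/\pi^n$. The main obstacle is precisely this last step: one cannot pass the per-$\lambda$ injectivity into the product through the colimit directly, because $\ak{S}_\pi(B)$ is typically infinite and arbitrary products do not commute with filtered colimits; the decisive input is the compactness of cofiltered limits of non-empty finite sets, the very device highlighted in the introduction for deducing Theorem \ref{intro-cor:purity} from Theorem \ref{intro-thm:essential-adequate-coh}.
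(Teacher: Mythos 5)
Your proof is correct, and it is substantially more self-contained than the paper's: the paper only proves part (1) in loc. cit., deferring (2) and (3) to the external citation \cite[4.3]{he2022sen}. Your treatment of (1) also runs in a different logical order — the paper proves $\ak{q}\in\ak{G}(B/\pi B)\Rightarrow\operatorname{ht}\ak{q}=1$ directly by showing that for any nonzero prime $\ak{p}\subseteq\ak{q}$ one eventually has $\ak{p}\cap B_\lambda=\ak{q}\cap B_\lambda$, whereas you first establish (2) (that $B_\ak{q}$ is a filtered colimit of DVRs along faithfully flat local maps, hence a height-$1$ valuation ring) and read off the height-$1$ claim from it; both hinge on the same identification $\ak{G}(B/\pi B)=\lim_\lambda\ak{G}(B_\lambda/\pi B_\lambda)$ from Lemma \ref{lem:generic-limit}. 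Your proof of (3) — use the classical injectivity of $B_\lambda/\pi^n B_\lambda\hookrightarrow\prod_{\ak{q}_\lambda}B_{\lambda,\ak{q}_\lambda}/\pi^n$ for each Noetherian normal $B_\lambda$ (which follows from $S_2$), track the non-empty finite support sets $S_\mu$, and appeal to compactness of cofiltered limits of non-empty finite sets to produce a witness $\ak{q}$ — is exactly the sort of argument the paper repeatedly deploys at a higher level (e.g.\ in Theorem \ref{thm:purity}), and your observation that arbitrary products do not commute with filtered colimits pinpoints why it is needed. The only nitpick is the phrase "where it or its inverse is integral" in the argument that the colimit is a valuation ring — you mean "lies in the ring", which is what the valuation-ring criterion asks; the rest is fine.
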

\begin{proof}
	We only need to prove (\ref{item:prop:generic-map-1}) by \cite[4.3]{he2022sen}. Firstly, we note that $\ak{G}(B/\pi B)=\lim_{\lambda\in\Lambda}\ak{G}(B_\lambda/\pi B_\lambda)$ by \ref{lem:generic-limit} (whose assumptions are satisfied by the Noetherianess of $B_\lambda$ and \ref{rem:generic-map}.(\ref{item:rem:generic-map-3})). For $\ak{q}\in \ak{S}_\pi(B)$, it is clear that $\ak{q}/\pi B$ is a minimal prime ideal of $B/\pi B$, namely, $\ak{S}_\pi(B)\subseteq \ak{G}(B/\pi B)$. Conversely, for $\ak{q}\in\ak{G}(B/\pi B)$ regarded as a prime ideal of $B$ containing $\pi$, let $\ak{p}\subseteq \ak{q}$ be a nonzero prime ideal of $B$. Since $\ak{G}(B_\lambda/\pi B_\lambda)=\ak{S}_\pi(B_\lambda)$ by Noetherianess of $B_\lambda$ and by $\pi\neq 0$  (\cite[\href{https://stacks.math.columbia.edu/tag/00KV}{00KV}]{stacks-project}), $\ak{q}_\lambda\in \ak{G}(B_\lambda/\pi B_\lambda)$ is a height-$1$ prime ideal of $B$ so that $\ak{p}_\lambda=\ak{p}\cap B_\lambda$ is equal to $0$ or $\ak{q}_\lambda$. Since $\ak{p}\neq 0$, for $\lambda\in \Lambda$ large enough, we have $\ak{p}_\lambda=\ak{q}_\lambda$. Thus, $\ak{p}=\ak{q}$ by taking colimits, which shows that $\ak{q}$ is of height $1$, i.e., $\ak{q}\in\ak{S}_\pi(B)$.
\end{proof}

\begin{myrem}\label{rem:prop:generic-map}
	In \ref{prop:generic-map}, any \'etale $B$-algebra $C$ also satisfies the assumptions of \ref{prop:generic-map}. More precisely, as $C$ is a finite product of normal domains \'etale over $B$ (\cite[\href{https://stacks.math.columbia.edu/tag/033C}{033C}, \href{https://stacks.math.columbia.edu/tag/030C}{030C}]{stacks-project}, \ref{rem:generic-map}.(\ref{item:rem:generic-map-2})), we focus on the case where $C$ is a domain. Then, there exists $\lambda\in\Lambda$ and an \'etale $B_\lambda$-algebra $C_\lambda$ with $C=B\otimes_{B_\lambda}C_\lambda$ by \cite[8.8.2, 8.10.5]{ega4-3}. As $B_\lambda\to B$ is injective and integral, so is $C_\lambda\to C$. In conclusion, $(B_\mu\otimes_{B_\lambda}C_\lambda)_{\mu\in\Lambda_{\geq \lambda}}$ is a directed system of Noetherian normal $A$-algebras with injective and integral transition morphisms, and its colimit coincides with $C$.
\end{myrem}

\section{Complements on Koszul Complexes}\label{sec:koszul}
In \cite[7.10]{bhattmorrowscholze2018integral}, Bhatt-Morrow-Scholze computed the isomorphism class of the cohomology of Koszul complex. We give explicit computations of the cohomology of Koszul complex in \ref{prop:koszul} and its coboundary map in \ref{cor:koszul}, which form one of the technical cores of this article.

\begin{mypara}\label{para:notation-koszul-0}
	Let $R$ be a ring, $d\in \bb{N}_{>0}$. We denote by $(e_1,\dots,e_n)$ the standard basis of the $R$-module $R^{\oplus d}$. Then, for any integer $0\leq n\leq d$, $\wedge^nR^{\oplus d}$ is a finite free $R$-module with basis $(e_{i_1}\wedge\cdots \wedge e_{i_n})_{1\leq i_1<\cdots<i_n\leq d}$. 
	
	For any integer $1\leq k\leq d$, the $n-1$ elements $(e_1,\dots,\widehat{e_k},\dots,e_d)$ of $R^{\oplus d}$ (where the notation ``$\widehat{e_k}$" means ``removing $e_k$") generate a finite free submodule, which we still denote by $R^{\oplus (d-1)}\subseteq R^{\oplus d}$ if this inclusion is clear in the context. Then, we obtain a canonical isomorphism (by a combinatorial argument)
	\begin{align}\label{eq:para:notation-koszul-0-1}
		(\iota^k,\iota'^k):\wedge^nR^{\oplus (d-1)}\oplus\wedge^{n-1}R^{\oplus (d-1)} \iso \wedge^nR^{\oplus d},
	\end{align}
	where $\iota^k$ sends $e_{i_1}\wedge \cdots\wedge e_{i_n}$ to $e_{i_1}\wedge \cdots\wedge e_{i_n}$ for any $\{i_1,\dots,i_n\}\subseteq \{1,\dots,d\}\setminus\{k\}$(taking $n$ elements out of $d$ without the $k$-th), and $\iota'^k$ sends $e_{i_1}\wedge \cdots\wedge e_{i_{n-1}}$ to $e_k\wedge e_{i_1}\wedge \cdots\wedge e_{i_{n-1}}$ for any $\{i_1,\dots,i_{n-1}\}\subseteq \{1,\dots,d\}\setminus\{k\}$  (taking $n$ elements out of $d$ with the $k$-th). We denote by
	\begin{align}\label{eq:para:notation-koszul-0-2}
		(\pi^k,\pi'^k):\wedge^nR^{\oplus d}\iso \wedge^nR^{\oplus (d-1)}\oplus\wedge^{n-1}R^{\oplus (d-1)}
	\end{align}
	the inverse of $(\iota^k,\iota'^k)$.
\end{mypara}

\begin{mypara}\label{para:notation-koszul-1}
	Let $R$ be a ring, $M$ an $R$-module with commuting $R$-linear endomorphisms $f_1,\dots, f_d$, where $d\in \bb{N}_{>0}$. Recall that the \emph{Koszul complex} $K^\bullet(M;f_1,\dots,f_d)$ (\cite[7.1]{bhattmorrowscholze2018integral}, see also \cite[\Luoma{2}.2.7]{abbes2016p}) is a complex concentrated in degrees $[0,d]$ whose $n$-th component ($0\leq n\leq d$) is
	\begin{align}
		K^n(M;f_1,\dots,f_d)=M\otimes_R\wedge^nR^{\oplus d},
	\end{align}
	and whose $n$-th differential $\df^n:K^n(M;f_1,\dots,f_d)\to K^{n+1}(M;f_1,\dots,f_d)$ sends $x=\sum_{1\leq i_1<\cdots<i_n\leq d} x_{i_1,\dots,i_n}\otimes e_{i_1}\wedge\cdots \wedge e_{i_n}$ to
	\begin{align}
		\df^n(x)=\sum_{k=1}^d(f_k\otimes e_k)\wedge x=\sum_{1\leq i_0< i_1<\cdots<i_n\leq d} \left(\sum_{l=0}^{n}(-1)^lf_{i_l}(x_{i_0,\dots,\widehat{i_l},\dots,i_n})\right)\otimes e_{i_0}\wedge \cdots\wedge e_{i_n},
	\end{align}
	where $x_{i_1,\dots,i_n}\in M$, and $(e_1,\dots,e_d)$ is the standard $R$-basis of $R^{\oplus d}$. We call $x_{i_1,\dots,i_n}$ the \emph{$(i_1,\dots,i_n)$-component} of $x\in M\otimes_R\wedge^nR^{\oplus d}$.
\end{mypara}

\begin{mylem}\label{lem:koszul-basic}
	With the notation in {\rm\ref{para:notation-koszul-0}} and {\rm\ref{para:notation-koszul-1}}, for any integer $1\leq k\leq d$, there is a canonical exact sequence of complexes
	\begin{align}\label{eq:lem:koszul-basic-1}
		0\to K^\bullet(M;f_1,\dots,\widehat{f_k},\dots,f_d)[-1]\to  K^\bullet(M;f_1,\dots,f_d) \to K^\bullet(M;f_1,\dots,\widehat{f_k},\dots,f_d)\to 0,
	\end{align}
	where the morphisms on degree $n$ ($0\leq n\leq d$) are given by the exact sequence of $R$-modules
	\begin{align}\label{eq:lem:koszul-basic-2}
		\xymatrix{
			0\ar[r]&M\otimes_R\wedge^{n-1}R^{\oplus (d-1)}\ar[r]^-{\iota'^k}& M\otimes_R\wedge^nR^{\oplus d}\ar[r]^-{\pi^k}&M\otimes_R\wedge^nR^{\oplus (d-1)}\ar[r]&0,
		}
	\end{align}
	which splits by the morphisms $\pi'^k$ and $\iota^k$.
	
	Moreover, the morphism $K^\bullet(M;f_1,\dots,\widehat{f_k},\dots,f_d)\to K^\bullet(M;f_1,\dots,\widehat{f_k},\dots,f_d)$ given in the triangle associated to the termwise split exact sequence \eqref{eq:lem:koszul-basic-1} {\rm\cite[\href{https://stacks.math.columbia.edu/tag/014I}{014I}]{stacks-project}} is induced by the endomorphism $f_k$ on $M$.
\end{mylem}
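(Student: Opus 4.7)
The plan is a direct calculation on the wedge basis of $\wedge^n R^{\oplus d}$. Observe first that the canonical basis $(e_{i_1}\wedge\cdots\wedge e_{i_n})_{1\leq i_1<\cdots<i_n\leq d}$ splits according to whether the index $k$ appears or not; this is precisely the decomposition \eqref{eq:para:notation-koszul-0-1}, so the row \eqref{eq:lem:koszul-basic-2} is automatically a split short exact sequence of $R$-modules after tensoring with $M$, with the claimed splittings $\iota^k$ and $\pi'^k$. It remains to verify that $\iota'^k$ and $\pi^k$ intertwine the differentials (which gives \eqref{eq:lem:koszul-basic-1}) and to identify the connecting morphism.

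The essential computation is that $\iota'^k$ is a morphism of complexes after the $[-1]$ shift. For an element $y\otimes e_{i_1}\wedge\cdots\wedge e_{i_{n-1}}$ with indices in $\{1,\dots,d\}\setminus\{k\}$, I compute
\begin{align*}
\df^n\bigl(\iota'^k(y\otimes e_{i_1}\wedge\cdots\wedge e_{i_{n-1}})\bigr)&=\sum_{j=1}^{d} f_j(y)\otimes e_j\wedge e_k\wedge e_{i_1}\wedge\cdots\wedge e_{i_{n-1}}\\
&=-\sum_{j\notin\{k,i_1,\dots,i_{n-1}\}} f_j(y)\otimes e_k\wedge e_j\wedge e_{i_1}\wedge\cdots\wedge e_{i_{n-1}}\\
&=-\iota'^k\bigl(\df^{n-1}_{\widehat{k}}(y\otimes e_{i_1}\wedge\cdots\wedge e_{i_{n-1}})\bigr),
\end{align*}
where the terms with $j\in\{k,i_1,\dots,i_{n-1}\}$ vanish, the sign arises from $e_j\wedge e_k=-e_k\wedge e_j$, and $\df^\bullet_{\widehat{k}}$ denotes the Koszul differential of $K^\bullet(M;f_1,\dots,\widehat{f_k},\dots,f_d)$. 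Since the shift $[-1]$ introduces an overall sign of $-1$ in the differential, this identity is exactly the statement that $\iota'^k$ commutes with the differentials of $K^\bullet(M;f_1,\dots,\widehat{f_k},\dots,f_d)[-1]$ and $K^\bullet(M;f_1,\dots,f_d)$. The analogous verification for $\pi^k$ then follows either by a parallel direct calculation or from the observation that $\pi^k$ is forced to be a chain map once $\iota'^k$ is and the sequence is termwise split.

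For the final assertion, the formula for the connecting morphism of a termwise split short exact sequence \cite[\href{https://stacks.math.columbia.edu/tag/014I}{014I}]{stacks-project} gives, in each degree $n$, the map $\pi'^k\circ\df^n\circ\iota^k$ with respect to the splittings $\iota^k$ and $\pi'^k$. For $a=y\otimes e_{i_1}\wedge\cdots\wedge e_{i_n}$ with indices in $\{1,\dots,d\}\setminus\{k\}$, the differential decomposes as
\begin{align*}
\df^n(\iota^k(a))=\iota^k\bigl(\df^n_{\widehat{k}}(a)\bigr)+\iota'^k\bigl((f_k\otimes\id)(a)\bigr),
\end{align*}
since the terms $j\neq k$ (with $j\notin\{i_l\}$, else zero) assemble to $\iota^k(\df^n_{\widehat{k}}(a))$, while the sole $j=k$ term, $f_k(y)\otimes e_k\wedge e_{i_1}\wedge\cdots\wedge e_{i_n}$, lies in the image of $\iota'^k$. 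Applying $\pi'^k$ kills the first summand and returns $(f_k\otimes\id)(a)$, which is exactly the endomorphism $f_k$ of $K^n(M;f_1,\dots,\widehat{f_k},\dots,f_d)=M\otimes_R\wedge^n R^{\oplus(d-1)}$. The main obstacle is purely sign bookkeeping — aligning the anti-commutativity of the wedge product with the sign change introduced by the $[-1]$ shift — and once these signs match, the lemma reduces to combinatorial manipulation of basis elements.
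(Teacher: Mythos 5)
Your proof is correct and takes essentially the same route as the paper: direct computation on wedge-basis elements to verify that $\iota'^k$ (after the $[-1]$-shift sign) and $\pi^k$ are chain maps, plus the standard formula $\pi'^k\circ\df\circ\iota^k$ for the connecting morphism of a termwise split short exact sequence. One small caution on your alternative argument for $\pi^k$: "forced to be a chain map once $\iota'^k$ is and the sequence is termwise split" is not quite right as stated — the split quotient inherits \emph{some} differential making the projection a chain map, but you still must identify that induced differential with the Koszul differential $\df_{\widehat{k}}$ on $K^\bullet(M;f_1,\dots,\widehat{f_k},\dots,f_d)$; that identification is exactly what the direct calculation supplies (equivalently, it follows from your own decomposition $\df^n\circ\iota^k=\iota^k\circ\df^n_{\widehat{k}}+\iota'^k\circ(f_k\otimes\id)$ together with $\id=\iota^k\pi^k+\iota'^k\pi'^k$ and the chain-map identity for $\iota'^k$, but that reasoning should be spelled out rather than waved at).
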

\begin{proof}
	Firstly, we check that the morphisms in \eqref{eq:lem:koszul-basic-2} commute with the differential maps of Koszul complexes. Indeed, we regard $R^{\oplus(d-1)}$ as the submodule of $R^{\oplus d}$ generated by $e_1,\dots,\widehat{e_k},\dots,e_d$. Then, for any $x\in M\otimes_R\wedge^{n-1}R^{\oplus (d-1)}$,
	\begin{align}
		\df^n(\iota'^k(x))=&\df^n(e_k\wedge x)=\sum_{l=1}^d (f_l\otimes e_l)\wedge (e_k\wedge x),\\
		\iota'^k(\df^n(x))=&\iota^k(-\sum_{\substack{l=1\\l\neq k}}^d (f_l\otimes e_l)\wedge x)=-\sum_{\substack{l=1\\l\neq k}}^d e_k\wedge ((f_l\otimes e_l)\wedge x),
	\end{align}
	where we remind the readers that the minus sign appearing in the middle comes from the definition of the differential maps of (-1)-shifted complex (\cite[\href{https://stacks.math.columbia.edu/tag/011A}{011A}]{stacks-project}). Thus, $\df^n\circ\iota'^k=\iota'^k\circ\df^n$. On the other hand, for any $x\in M\otimes_R\wedge^nR^{\oplus d}$,
	\begin{align}
		\pi^k(\df^n(x))=&\pi^k(\sum_{l=1}^d (f_l\otimes e_l)\wedge x)=\sum_{\substack{l=1\\l\neq k}}^d (f_l\otimes e_l)\wedge \pi^k(x)=\df^n(\pi^k(x)),
	\end{align}
	where the equality in the middle follows from the fact that $\pi^k(e_k\wedge x)=0$. Thus, $\df^n\circ \pi^k= \pi^k\circ\df^n$.
	
	It remains to verify that 
	\begin{align}
		\pi'^k\circ\df^n\circ\iota^k:M\otimes_R\wedge^nR^{\oplus (d-1)}\longrightarrow M\otimes_R\wedge^nR^{\oplus (d-1)}
	\end{align}
	is induced by $f_k$ (\cite[\href{https://stacks.math.columbia.edu/tag/014I}{014I}]{stacks-project}). Indeed, for any $x\in M\otimes_R\wedge^nR^{\oplus (d-1)}$,
	\begin{align}
		\pi'^k(\df^n(\iota^k(x)))=\pi'^k(\sum_{l=1}^d(f_l\otimes e_l)\wedge \iota^k(x))=\pi'^k((f_k\otimes e_k)\wedge \iota^k(x))=f_k(x),
	\end{align}
	where the second equality follows from the fact that there is no $e_k$ in the nontrivial components of $\iota^k(x)$.
\end{proof}

\begin{mypara}\label{para:notation-koszul-2}
	Let $R$ be a ring, $M$ an $R$-module, $d\in \bb{N}_{>0}$, $g_1,\dots,g_d\in R$. We also regard $\{g_1,\dots,g_d\}$ as commuting endomorphisms of $M$ by multiplication. We note that
	\begin{align}
		K^\bullet(M;g_1,\dots,g_d)=M\otimes_R K^\bullet(R;g_1,\dots,g_d).
	\end{align}
\end{mypara}

\begin{mylem}\label{lem:koszul}
	With the notation in {\rm\ref{para:notation-koszul-2}}, assume that $g_i=a_i\cdot g_1$ for some $a_i\in R$ for any $2\leq i\leq d$ and we put $a_1=1$. For any integer $0\leq n\leq d$, we denote by $Z^n(K^\bullet(M;g_1,\dots,g_d))$ (resp. $B^n(K^\bullet(M;g_1,\dots,g_d))$) the $R$-submodule of $n$-cocycles (resp. $n$-coboundaries) of the Koszul complex $K^\bullet(M;g_1,\dots,g_d)$, and by $H^n(K^\bullet(M;g_1,\dots,g_d))=Z^n(K^\bullet(M;g_1,\dots,g_d))/B^n(K^\bullet(M;g_1,\dots,g_d))$ the $n$-th cohomology group. We regard $R^{\oplus (d-1)}$ as the submodule of $R^{\oplus d}$ generated by $e_2,\dots,e_d$, where $(e_1,\dots,e_d)$ is the standard basis of $R^{\oplus d}$.
	\begin{enumerate}
		\renewcommand{\labelenumi}{{\rm(\theenumi)}}
		\item For any integer $0\leq n\leq d$, the inclusion $\iota^1:M\otimes_R \wedge^n R^{\oplus (d-1)}\to M\otimes_R \wedge^n R^{\oplus d}$ \eqref{eq:para:notation-koszul-0-1} induces an inclusion
		\begin{align}\label{eq:lem:koszul-1}
			\varphi_n: M[g_1]\otimes_R\wedge^n R^{\oplus (d-1)}\longrightarrow Z^n(K^\bullet(M;g_1,\dots,g_d)),
		\end{align}
		sending $x\otimes e_{i_1}\wedge\cdots \wedge e_{i_n}$ to $x\otimes e_{i_1}\wedge\cdots \wedge e_{i_n}$ for any integers $2\leq i_1<\cdots<i_n\leq d$ and any $x\in M[g_1]$, where $M[g_1]=\ke(g_1:M\to M)$. In particular, it induces an $R$-linear homomorphism
		\begin{align}\label{eq:lem:koszul-3}
			\varphi_n: M[g_1]\otimes_R\wedge^n R^{\oplus (d-1)}\longrightarrow H^n(K^\bullet(M;g_1,\dots,g_d)).
		\end{align}\label{item:lem:koszul-1}
		\item For any integer $1\leq n\leq d$, there is an $R$-linear homomorphism
		\begin{align}\label{eq:lem:koszul-4}
			\psi_n: M\otimes_R \wedge^{n-1} R^{\oplus (d-1)}\longrightarrow Z^n(K^\bullet(M;g_1,\dots,g_d))
		\end{align}
		sending $x=\sum_{2\leq i_2<\cdots<i_n\leq d} x_{i_2,\dots,i_n}\otimes e_{i_2}\wedge\cdots \wedge e_{i_n}$ to
		\begin{align}\label{eq:lem:koszul-5}
			\psi_n(x)=&\sum_{k=1}^d(a_k\otimes e_k)\wedge x\\
			=&\sum_{2\leq i_2<\cdots<i_n\leq d} x_{i_2,\dots,i_n}\otimes e_1\wedge e_{i_2}\wedge\cdots \wedge e_{i_n},\nonumber\\
			&+\sum_{2\leq i_1<i_2<\cdots<i_n\leq d} \left(\sum_{l=1}^n (-1)^{l-1}a_{i_l}x_{i_1,\dots,\widehat{i_l},\dots,i_n}\right)\otimes e_{i_1}\wedge e_{i_2}\wedge\cdots \wedge e_{i_n}\nonumber
		\end{align}
		where $x_{i_2,\dots,i_n}\in M$. Moreover, $\psi_n$ maps $g_1M\otimes_R \wedge^{n-1} R^{\oplus (d-1)}$ into $B^n(K^\bullet(M;g_1,\dots,g_d))$, and thus induces an $R$-linear homomorphism
		\begin{align}\label{eq:lem:koszul-6}
			\psi_n: M/g_1M\otimes_R \wedge^{n-1} R^{\oplus (d-1)}\longrightarrow H^n(K^\bullet(M;g_1,\dots,g_d)).
		\end{align}\label{item:lem:koszul-2}
	\end{enumerate}
\end{mylem}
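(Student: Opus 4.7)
The plan is to verify both claims by direct computation with the Koszul differential, exploiting the identity $g_j a_k = g_k a_j$ (which follows from $g_j = a_j g_1$, $g_k = a_k g_1$, the convention $a_1 = 1$, and commutativity of $R$) together with the antisymmetry of the exterior product. Throughout I identify $R^{\oplus(d-1)}$ with the submodule of $R^{\oplus d}$ generated by $e_2, \dots, e_d$, so that $\iota^1$ is simply the inclusion of tensor factors.

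For part (\ref{item:lem:koszul-1}), let $m \in M[g_1]$ and $2 \leq i_1 < \cdots < i_n \leq d$. Unwinding the Koszul differential from \ref{para:notation-koszul-1} gives
\begin{align*}
\df^n\bigl(\iota^1(m \otimes e_{i_1} \wedge \cdots \wedge e_{i_n})\bigr) = \sum_{k=1}^d (g_k m) \otimes e_k \wedge e_{i_1} \wedge \cdots \wedge e_{i_n}.
\end{align*}
Since $g_k m = a_k g_1 m = 0$ for every $k$, this vanishes, so $\iota^1$ carries $M[g_1] \otimes_R \wedge^n R^{\oplus(d-1)}$ into the $n$-cocycles, establishing \eqref{eq:lem:koszul-1}, and the induced map to cohomology \eqref{eq:lem:koszul-3} is then automatic.

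For part (\ref{item:lem:koszul-2}), I first check that $\psi_n(x)$ is a cocycle for every $x \in M \otimes_R \wedge^{n-1} R^{\oplus(d-1)}$, where I implicitly view $x$ inside $M \otimes \wedge^{n-1} R^{\oplus d}$ via $\iota^1$. Expanding $\df^n \circ \psi_n$ yields
\begin{align*}
\df^n(\psi_n(x)) = \sum_{j,k=1}^d (g_j a_k) \otimes e_j \wedge e_k \wedge x.
\end{align*}
The diagonal terms $j = k$ vanish because $e_j \wedge e_j = 0$, and the off-diagonal terms reorganize into $\sum_{j<k} (g_j a_k - g_k a_j)\,(e_j \wedge e_k) \wedge x$, which is zero by the key identity $g_j a_k = g_k a_j$.

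For the coboundary statement, given $x = g_1 y$ with $y \in M \otimes_R \wedge^{n-1} R^{\oplus(d-1)}$, I would compute
\begin{align*}
\psi_n(g_1 y) = \sum_{k=1}^d (a_k g_1 \otimes e_k) \wedge y = \sum_{k=1}^d (g_k \otimes e_k) \wedge \iota^1(y) = \df^{n-1}(\iota^1(y)),
\end{align*}
exhibiting $\psi_n(g_1 y)$ as a genuine $n$-coboundary in $K^\bullet(M; g_1, \dots, g_d)$, so $\psi_n$ descends to \eqref{eq:lem:koszul-6}. There is no substantive obstacle here: the real content of the lemma is to pin down these explicit natural cocycle and coboundary representatives, while the verification itself is bookkeeping driven entirely by the single algebraic identity $g_j a_k = g_k a_j$ and the antisymmetry of the exterior product. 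The hard part will surface only in the companion statements \ref{prop:koszul} and \ref{cor:koszul}, where these representatives must be shown to account for \emph{all} of Koszul cohomology and to be compatible with the coboundary maps of the short exact sequences relating the various moduli.
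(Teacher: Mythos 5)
Your proof is correct and follows essentially the same route as the paper: part (1) is the same observation that every $g_k = a_k g_1$ kills $M[g_1]$, and part (2) rests on the same cancellation, since your identity $g_j a_k = g_k a_j$ is exactly the paper's observation that the coefficient $a_j a_k g_1$ in $\df^n(\psi_n(x)) = \sum_{j,k}(a_j a_k g_1 \otimes e_j \wedge e_k)\wedge x$ is symmetric while $e_j\wedge e_k$ is antisymmetric, and the coboundary computation $\psi_n(g_1 y)=\df^{n-1}(\iota^1(y))$ is likewise identical (the paper has a harmless index typo writing $\df^n$). No gaps.
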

\begin{proof} 
	(\ref{item:lem:koszul-1}) For any $x\in M[g_1]\otimes_R\wedge^n R^{\oplus (d-1)}$, 
	\begin{align}
		d^n(\varphi_n(x))=\sum_{k=1}^d(g_k\otimes e_k)\wedge x
	\end{align}
	which vanishes since the each component of $x$ lies in $M[g_1]$ and thus is killed by anyone of $g_1,\dots,g_d$.
	
	(\ref{item:lem:koszul-2}) For any $x\in M\otimes_R \wedge^{n-1} R^{\oplus (d-1)}$,
	\begin{align}
		\df^n(\psi_n(x))=&\df^n(\sum_{k=1}^d(a_k\otimes e_k)\wedge x)\\
		=&\sum_{l=1}^d(g_l\otimes e_l)\wedge \left(\sum_{k=1}^d(a_k\otimes e_k)\wedge x\right)\nonumber\\
		=&\sum_{1\leq l,k\leq d}(a_la_kg_1\otimes e_l\wedge e_k)\wedge x\nonumber\\
		=&0,\nonumber
	\end{align}
	which shows that $\psi_n(x)\in Z^n(K^\bullet(M;g_1,\dots,g_d))$. On the other hand, 
	\begin{align}
		\psi_n(g_1x)=\sum_{k=1}^d(a_kg_1\otimes e_k)\wedge x=\sum_{k=1}^d(g_k\otimes e_k)\wedge x=\df^n(x),
	\end{align}
	which shows that $\psi_n(g_1x)\in B^n(K^\bullet(M;g_1,\dots,g_d))$.
\end{proof}

\begin{myrem}\label{rem:koszul}
	In \ref{lem:koszul}, if $R$ is a valuation ring, then there are canonical choices for $a_1,\dots,a_d$ as follows: if $g_1\neq 0$, then we take $a_i=g_i/g_1$ for any $1\leq i\leq d$; if $g_1=0$, then we take $a_1=1$ and $a_2=\cdots=a_d=0$ in \ref{lem:koszul}. In particular, with these canonical choices, if $g_1=\cdots=g_d=0$, then $\varphi_n\oplus \psi_n$ coincides with the canonical isomorphism \eqref{eq:para:notation-koszul-0-1},
	\begin{align}
		(\iota^1,\iota'^1):M\otimes_R\wedge^nR^{\oplus (d-1)}\oplus M\otimes_R\wedge^{n-1}R^{\oplus (d-1)} \iso M\otimes_R\wedge^nR^{\oplus d},
	\end{align}
	 by the explicit definitions of \eqref{eq:lem:koszul-1} and \eqref{eq:lem:koszul-4}.
\end{myrem}

\begin{myprop}[{cf. \cite[7.10]{bhattmorrowscholze2018integral}}]\label{prop:koszul}
	Under the same assumptions in {\rm\ref{lem:koszul}} and with the same notation, for any integer $0\leq n\leq d$, the homomorphisms \eqref{eq:lem:koszul-1} and \eqref{eq:lem:koszul-4} induce an $R$-linear isomorphism 
	\begin{align}
		\varphi_n\oplus \psi_n: (M[g_1]\otimes_R\wedge^n R^{\oplus (d-1)})\oplus (M/g_1M\otimes_R \wedge^{n-1} R^{\oplus (d-1)})\iso H^n(K^\bullet(M;g_1,\dots,g_d)).
	\end{align}
\end{myprop}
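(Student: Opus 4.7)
The plan is to perform a direct computation by splitting the Koszul complex $K^\bullet:=K^\bullet(M;g_1,\dots,g_d)$ along the first coordinate, using the isomorphism $(\iota^1,\iota'^1)$ of \ref{para:notation-koszul-0}. Writing a general element of $K^n$ as $x=x_0+e_1\wedge x_1$ with $x_0\in M\otimes_R\wedge^nR^{\oplus(d-1)}$ and $x_1\in M\otimes_R\wedge^{n-1}R^{\oplus(d-1)}$ (where $R^{\oplus(d-1)}$ is spanned by $e_2,\dots,e_d$), and substituting $g_k=a_kg_1$ with $a_1=1$ into the Koszul differential, I expect to arrive at the closed form
\begin{align*}
    \df^n(x)=g_1\alpha(x_0)+e_1\wedge g_1(x_0-\alpha(x_1)),
\end{align*}
where $\alpha:M\otimes_R\wedge^mR^{\oplus(d-1)}\to M\otimes_R\wedge^{m+1}R^{\oplus(d-1)}$ denotes the $R$-linear operator $\alpha(y)=\sum_{k=2}^{d}a_k\,e_k\wedge y$. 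The structural fact driving the whole argument is that $\alpha^2=0$, a consequence of the antisymmetry $e_k\wedge e_l=-e_l\wedge e_k$ combined with the commutativity of the scalars $a_k$.

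With this in hand, I would characterize the cocycles by performing the change of variable $y_0:=x_0-\alpha(x_1)$, so that $x=y_0+\alpha(x_1)+e_1\wedge x_1$. Since $\alpha^2=0$, the expansion $g_1\alpha(x_0)=g_1\alpha(y_0)+g_1\alpha^2(x_1)$ collapses to $g_1\alpha(y_0)$, which vanishes automatically whenever $g_1y_0=0$; so the cocycle equation $\df^n(x)=0$ is equivalent to the single condition $g_1y_0=0$, with $x_1$ entirely free. This gives a well-defined $R$-linear isomorphism
\begin{align*}
    M[g_1]\otimes_R\wedge^nR^{\oplus(d-1)}\,\oplus\,M\otimes_R\wedge^{n-1}R^{\oplus(d-1)}\iso Z^n(K^\bullet),\quad (y_0,x_1)\longmapsto y_0+\alpha(x_1)+e_1\wedge x_1,
\end{align*}
whose injectivity is immediate by inspection of the $e_1$-component. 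Comparing with the definitions \eqref{eq:lem:koszul-1} and \eqref{eq:lem:koszul-5}, the two components of this isomorphism are exactly $\varphi_n$ and $\psi_n$, since $\alpha(x_1)+e_1\wedge x_1=\sum_{k=1}^{d}a_k\,e_k\wedge x_1=\psi_n(x_1)$.

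Finally I would identify the coboundaries under this parametrization. Applying the same differential formula to $y=y_0'+e_1\wedge y_1'\in K^{n-1}$ and setting $z:=y_0'-\alpha(y_1')$, which ranges over all of $M\otimes_R\wedge^{n-1}R^{\oplus(d-1)}$ as $(y_0',y_1')$ vary, a second use of $\alpha^2=0$ yields $\df^{n-1}(y)=g_1\alpha(z)+e_1\wedge g_1 z=g_1\psi_n(z)$. Hence $B^n(K^\bullet)$ corresponds under the previous isomorphism exactly to the submodule $0\oplus g_1M\otimes_R\wedge^{n-1}R^{\oplus(d-1)}$, and passing to the quotient yields the claimed isomorphism $\varphi_n\oplus\psi_n$ on $H^n$. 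The main technical obstacle I anticipate is the careful sign-bookkeeping required to justify the closed form for $\df^n(x)$ in the first paragraph — in particular, the expansion of $(g_k\otimes e_k)\wedge(e_1\wedge x_1)$ for $k\geq 2$ produces a sign that conspires with the $k=1$ contribution to give the clean factored expression $e_1\wedge g_1(x_0-\alpha(x_1))$; once this formula is established, the vanishing $\alpha^2=0$ makes both the cocycle parametrization and the coboundary identification essentially mechanical.
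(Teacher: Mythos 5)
Your proof is correct, and it takes a genuinely different route from the paper. The paper proceeds by induction on $d$, using the termwise split short exact sequence of Koszul complexes from \ref{lem:koszul-basic} (which strips off the last coordinate $e_d$) and then checking that $\varphi_\bullet\oplus\psi_\bullet$ intertwines that sequence with the analogous split sequence on the conjectured source, so that the isomorphism for $d-1$ forces it for $d$; this is the direct descendant of the argument in \cite[7.10]{bhattmorrowscholze2018integral}. You instead work directly with a single $d$: you split $K^n$ along the \emph{first} coordinate $e_1$, observe the closed form $\df^n(x)=g_1\alpha(x_0)+e_1\wedge g_1(x_0-\alpha(x_1))$ where $\alpha(y)=\sum_{k\ge 2}a_ke_k\wedge y$ satisfies $\alpha^2=0$, and then the change of variables $y_0=x_0-\alpha(x_1)$ lets you parametrize cocycles as $(y_0,x_1)\in M[g_1]\otimes\wedge^n\oplus M\otimes\wedge^{n-1}$ and coboundaries as $\{0\}\oplus g_1M\otimes\wedge^{n-1}$, with the parametrization visibly equal to $\varphi_n\oplus\psi_n$. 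I verified the sign bookkeeping in your closed form for $\df^n(x)$ and the two uses of $\alpha^2=0$; they check out, including the point that $g_1y_0=0$ alone kills $g_1\alpha(y_0)=\alpha(g_1y_0)$ by $R$-linearity. The trade-off: the paper's induction reduces to a one-variable base case and pushes the algebra into a diagram-chasing verification, which makes the compatibility with the split sequences (used later in \ref{cor:koszul}) manifest; your direct argument is self-contained at each $d$, makes the cocycle/coboundary structure completely explicit, and hands you the surjectivity of $\varphi_n\oplus\psi_n$ onto $Z^n/B^n$ for free rather than via the five lemma. Either proof works; yours would also immediately give the coboundary-map computation of \ref{cor:koszul} by inspecting $\psi_n$ in your parametrization, so you might note that the two results can be bundled.
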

\begin{proof}
	We follow closely the proof of \cite[7.10]{bhattmorrowscholze2018integral}. We take induction on $d$. For $d=1$, $K^\bullet(M;g_1,\dots,g_d)$ is just the complex $M\stackrel{g_1}{\longrightarrow}M$ and the statement is clear. In general, assume that the statement holds for $d-1$. Then, for any integer $0\leq n\leq d$, there is a diagram
	\begin{align}
		\xymatrix{
			M[g_1]\otimes\wedge^{n-1} R^{\oplus (d-2)}\ar[r]^-{\iota'^{d-1}}\ar@{}[d]|-{\oplus}&M[g_1]\otimes\wedge^n R^{\oplus (d-1)}\ar[r]^-{\pi^{d-1}}\ar@{}[d]|-{\oplus}&M[g_1]\otimes\wedge^n R^{\oplus (d-2)}\ar@{}[d]|-{\oplus}\\
			M/g_1M\otimes \wedge^{n-2} R^{\oplus (d-2)}\ar[r]^-{-\iota'^{d-1}}\ar[d]^-{\varphi_{n-1}\oplus\psi_{n-1}}&M/g_1M\otimes \wedge^{n-1} R^{\oplus (d-1)}\ar[r]^-{\pi^{d-1}}\ar[d]^-{\varphi_n\oplus\psi_n}&M/g_1M\otimes \wedge^{n-1} R^{\oplus (d-2)}\ar[d]^-{\varphi_n\oplus\psi_n}\\
			H^{n-1}(K^\bullet(M;g_1,\dots,g_{d-1}))\ar[r]^-{\iota'^d}&H^n(K^\bullet(M;g_1,\dots,g_d))\ar[r]^-{\pi^d}&H^n(K^\bullet(M;g_1,\dots,g_{d-1}))
		}
	\end{align}
	where the first and second rows are the split exact sequences induced by \eqref{eq:para:notation-koszul-0-1}, and the third row is the exact sequence associated to \eqref{eq:lem:koszul-basic-1}. By induction, the vertical arrows on the left and right are isomorphisms. Thus, it remains to check the commutativity of this diagram.
	
	We regard $R^{\oplus(d-1)}$ (resp. $R^{\oplus(d-2)}$) as the $R$-submodule of $R^{\oplus d}$ generated by $e_2,\dots,e_d$ (resp. $e_2,\dots,e_{d-1}$).
	
	For any integers $2\leq i_1<\cdots<i_{n-1}\leq d-1$ and any $x\in M[g_1]$, we have
	\begin{align}
		\iota'^d(\varphi_{n-1}(x\otimes e_{i_1}\wedge\cdots \wedge e_{i_{n-1}}))&=\iota'^d(x\otimes e_{i_1}\wedge\cdots \wedge e_{i_{n-1}})=x\otimes e_d\wedge e_{i_1}\wedge\cdots \wedge e_{i_{n-1}},\\
		\varphi_n(\iota'^{d-1}(x\otimes e_{i_1}\wedge\cdots \wedge e_{i_{n-1}}))&=\varphi_n(x\otimes e_d\wedge e_{i_1}\wedge\cdots \wedge e_{i_{n-1}})=x\otimes e_d\wedge e_{i_1}\wedge\cdots \wedge e_{i_{n-1}}.
	\end{align}
	Thus, $\iota'^d\circ\varphi_{n-1}=\varphi_n\circ\iota'^{d-1}$.
	
	For any integers $2\leq i_1<\cdots<i_{n-2}\leq d-1$ and any $x\in M$, we have
	\begin{align}
		\iota'^d(\psi_{n-1}(x\otimes e_{i_1}\wedge\cdots \wedge e_{i_{n-2}}))&=e_d\wedge \psi_{n-1}(x\otimes e_{i_1}\wedge\cdots \wedge e_{i_{n-2}}) \\
		&=e_d\wedge \sum_{k=1}^{d-1}(a_kx\otimes e_k)\wedge e_{i_1}\wedge\cdots \wedge e_{i_{n-2}},\nonumber\\
		\psi_n(-\iota'^{d-1}(x\otimes e_{i_1}\wedge\cdots \wedge e_{i_{n-2}}))&=\psi_n(-x\otimes e_d\wedge e_{i_1}\wedge\cdots \wedge e_{i_{n-2}})\\
		&=-\sum_{k=1}^d (a_kx\otimes e_k)\wedge e_d\wedge e_{i_1}\wedge\cdots \wedge e_{i_{n-2}}.\nonumber
	\end{align}
	Thus, $\iota'^d\circ\psi_{n-1}=\psi_n\circ(-\iota'^{d-1})$.
	
	For any integers $2\leq i_1<\cdots<i_n\leq d$ and any $x\in M[g_1]$, we have
	\begin{align}
		\pi^d(\varphi_n(x\otimes e_{i_1}\wedge\cdots\wedge e_{i_n}))=&\pi^d(x\otimes e_{i_1}\wedge\cdots\wedge e_{i_n})=\left\{\begin{array}{ll}
			0,&\trm{if }i_n= d,\\
			x\otimes e_{i_1}\wedge\cdots\wedge e_{i_n},&\trm{if }i_n\neq d,
		\end{array}\right.\\
		\varphi_n(\pi^{d-1}(x\otimes e_{i_1}\wedge\cdots\wedge e_{i_n}))=&\left\{\begin{array}{ll}
			0,&\trm{if }i_n= d,\\
			\varphi_n(x\otimes e_{i_1}\wedge\cdots\wedge e_{i_n}),&\trm{if }i_n\neq d.
		\end{array}\right.
	\end{align}
	Thus, $\pi^d\circ\varphi_n=\varphi_n\circ\pi^{d-1}$.
	
	For any integers $2\leq i_1<\cdots<i_{n-1}\leq d$ and any $x\in M$, we have
	\begin{align}
		\pi^d(\psi_n(x\otimes e_{i_1}\wedge\cdots\wedge e_{i_{n-1}}))=&\pi^d(\sum_{k=1}^d(a_kx\otimes e_k)\wedge e_{i_1}\wedge\cdots\wedge e_{i_{n-1}})\\
		=&\left\{\begin{array}{ll}
			0,&\trm{if }i_{n-1}= d,\\
			\sum_{k=1}^{d-1}(a_kx\otimes e_k)\wedge e_{i_1}\wedge\cdots\wedge e_{i_{n-1}},&\trm{if }i_{n-1}\neq d,
		\end{array}\right.\nonumber\\
		\psi_n(\pi^{d-1}(x\otimes e_{i_1}\wedge\cdots\wedge e_{i_{n-1}}))=&\left\{\begin{array}{ll}
			0,&\trm{if }i_{n-1}= d,\\
			\psi_n(x\otimes e_{i_1}\wedge\cdots\wedge e_{i_{n-1}}),&\trm{if }i_{n-1}\neq d.
		\end{array}\right.
	\end{align}
	Thus, $\pi^d\circ\psi_n=\psi_n\circ\pi^{d-1}$.
\end{proof}

\begin{mycor}\label{cor:koszul}
	Under the same assumptions in {\rm\ref{lem:koszul}} and with the same notation, let $g\in R$ such that the multiplication by $g$ on $M$ is injective and assume moreover that $g=a\cdot g_1$ for some $a\in R$. Then, for any integer $0\leq n\leq d$, the following diagram is commutative
	\begin{align}\label{diam:cor:koszul}
		\xymatrix{
			aM/gM\otimes_R\wedge^nR^{\oplus (d-1)}\ar[d]_-{\varphi_n}\ar[r]^-{\sim}&M/g_1M\otimes_R\wedge^nR^{\oplus (d-1)}\ar[d]^-{\psi_{n+1}}\\
			H^n(K^\bullet(M/gM;g_1,\dots,g_d))\ar[r]^-{\delta^n}&H^{n+1}(K^\bullet(M/gM;g_1,\dots,g_d))
		}
	\end{align}
	where the horizontal isomorphism is the division by $a$, and $\delta^n$ is the $n$-th coboundary map associated to the exact sequence 
	\begin{small}\begin{align}
		\xymatrix{
			0\ar[r]&K^\bullet(M/gM;g_1,\dots,g_d)\ar[r]^-{\cdot g}&K^\bullet(M/g^2M;g_1,\dots,g_d)\ar[r]&K^\bullet(M/gM;g_1,\dots,g_d)\ar[r]&0.
		}
	\end{align}\end{small}In particular, the sequence
	\begin{tiny}\begin{align}
		\xymatrix{
			\cdots\ar[r]^-{\delta^{n-2}}&H^{n-1}(K^\bullet(M/gM;g_1,\dots,g_d))\ar[r]^-{\delta^{n-1}}&H^n(K^\bullet(M/gM;g_1,\dots,g_d))\ar[r]^-{\delta^n}&H^{n+1}(K^\bullet(M/gM;g_1,\dots,g_d))\ar[r]^-{\delta^{n+1}}&\cdots
		}
	\end{align}\end{tiny}is exact.
\end{mycor}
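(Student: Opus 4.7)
For the commutativity of \eqref{diam:cor:koszul}, I would compute $\delta^n$ directly on a generator $\varphi_n(\overline{ay} \otimes e_{i_1} \wedge \cdots \wedge e_{i_n})$ of the $\varphi_n$-summand of $H^n$, where $y \in M$ and $2 \leq i_1 < \cdots < i_n \leq d$. Note that $\overline{ay} \in (M/gM)[g_1]$ since $g_1 \cdot ay = gy \in gM$. The plan is to lift the cocycle $\xi = ay \otimes e_{i_1} \wedge \cdots \wedge e_{i_n}$ to $\tilde\xi \in K^n(M/g^2M; g_1, \dots, g_d)$ via the same formula and apply the differential; using the relation $g_k a = a_k g_1 a = a_k g$ one finds
\begin{align*}
	\df^n(\tilde\xi) = \sum_{k=1}^d g_k(ay) \otimes e_k \wedge e_{i_1} \wedge \cdots \wedge e_{i_n} = g \cdot \Bigl(\sum_{k=1}^d a_k y \otimes e_k \wedge e_{i_1} \wedge \cdots \wedge e_{i_n}\Bigr).
\end{align*}
Since $M[g] = 0$ makes the map $K^{n+1}(M/gM) \xrightarrow{\cdot g} K^{n+1}(M/g^2 M)$ injective, division by $g$ is legitimate; by \eqref{eq:lem:koszul-5} the quotient is precisely $\psi_{n+1}(\bar y \otimes e_{i_1} \wedge \cdots \wedge e_{i_n})$. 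A short separate check shows that the horizontal arrow $aM/gM \to M/g_1 M$, $\overline{ay} \mapsto \bar y$, is a well-defined isomorphism: if $ay = ag_1 m$, then $a(y - g_1 m) = 0$, so $g(y - g_1 m) = 0$, and the injectivity of $g$ forces $y \in g_1 M$.

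For the exactness, I would apply Proposition \ref{prop:koszul} to $(M/gM; g_1, \dots, g_d)$ to obtain
\begin{align*}
	H^n(K^\bullet(M/gM; g_1, \dots, g_d)) \cong \varphi_n\bigl((M/gM)[g_1] \otimes \wedge^n R^{\oplus(d-1)}\bigr) \oplus \psi_n\bigl(M/g_1 M \otimes \wedge^{n-1} R^{\oplus(d-1)}\bigr),
\end{align*}
where I have used $(M/gM)/g_1(M/gM) = M/g_1 M$ (since $g \in g_1 R$). The key observation is that the injectivity of multiplication by $g = a g_1$ on $M$ forces the injectivity of multiplication by $g_1$, so $M[g_1] = 0$; any $\bar x \in (M/gM)[g_1]$ satisfies $g_1 x = gw$ for a unique $w \in M$, whence $g_1(x - aw) = 0$ gives $x = aw$, and hence $\bar x \in aM/gM$. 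Thus the inclusion $aM/gM \subseteq (M/gM)[g_1]$ is an equality, and the commutative diagram of the previous step applies to the full $\varphi_n$-summand.

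Combining these inputs, the diagram together with the injectivity of $\psi_{n+1}$ on $M/g_1 M \otimes \wedge^n R^{\oplus(d-1)}$ (Proposition \ref{prop:koszul}) shows that $\delta^n$ restricted to the $\varphi_n$-summand is an isomorphism onto the $\psi_{n+1}$-summand of $H^{n+1}$. On the other hand, a direct calculation shows that $\delta^n$ vanishes on the $\psi_n$-summand: lifting the cocycle $\sum_k a_k y \otimes e_k \wedge \omega$ to $K^n(M/g^2 M)$ and differentiating yields $g_1 y \cdot \sum_{l,k} a_l a_k \, e_l \wedge e_k \wedge \omega$, which vanishes by antisymmetry of the wedge product. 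Therefore $\ker(\delta^n)$ equals the $\psi_n$-summand of $H^n$; the same analysis applied to $\delta^{n-1}$ gives that $\im(\delta^{n-1})$ equals the $\psi_n$-summand of $H^n$, so $\ker(\delta^n) = \im(\delta^{n-1})$, as required. I expect the main obstacle to be the bookkeeping with the two summands and spotting the crucial implication $M[g] = 0 \Rightarrow M[g_1] = 0$, which is what prevents $\delta^n$ from having extra kernel on the $\varphi_n$-side.
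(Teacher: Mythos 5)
Your proof is correct and takes essentially the same approach as the paper: compute $\delta^n$ directly on cocycle representatives by lifting to $K^\bullet(M/g^2M;g_1,\dots,g_d)$, applying the differential, and dividing by $g$, exactly as in the paper's verification of commutativity. The only difference is that where the paper dismisses the exactness as "follows directly from \ref{prop:koszul}", you spell out the two remaining ingredients — that $(M/gM)[g_1]=aM/gM$ (so the diagram captures the full $\varphi_n$-summand, which hinges on the observation that $g$-injectivity forces $g_1$-injectivity) and that $\delta^n$ annihilates the $\psi_n$-summand by the antisymmetry of the wedge — both of which are correct and genuinely needed to close the argument.
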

\begin{proof}
	For any integers $2\leq i_1<\cdots<i_n\leq d$ and any $x\in M$, we have
	\begin{align}
		\delta^n(\varphi_n(ax\otimes e_{i_1}\wedge\cdots \wedge e_{i_n}))=&\delta^n(ax\otimes e_{i_1}\wedge\cdots \wedge e_{i_n})\\
		=&g^{-1}\sum_{k=1}^d(g_k\otimes e_k)\wedge (ax\otimes e_{i_1}\wedge\cdots \wedge e_{i_n})\nonumber\\
		=&\sum_{k=1}^d(a_kx\otimes e_k)\wedge e_{i_1}\wedge\cdots \wedge e_{i_n}\nonumber\\
		=&\psi_{n+1}(x\otimes e_{i_1}\wedge\cdots \wedge e_{i_n})\nonumber.
	\end{align}
	Thus, the diagram \eqref{diam:cor:koszul} is commutative. Then, the conclusion follows directly from \ref{prop:koszul}.
\end{proof}

\section{Perfectoid Towers over Essentially Adequate Algebras}\label{sec:essential-adequate-alg}
Firstly, we take a brief review on the definitions and basic properties of essentially adequate algebras following Tsuji \cite[\textsection4]{tsuji2018localsimpson}, and then we compute its Galois cohomology by Koszul complexes using a specific perfectoid tower following Abbes-Gros \cite[\Luoma{2}.6, \Luoma{2}.8]{abbes2016p}. Finally, we specialize the computation of the cohomology of Koszul complexes in the last section to this situation (see \ref{thm:coh}), and show that the Galois cohomology of an essentially adequate algebra is controlled by its localizations at generic points of the special fibre (see \ref{cor:coh}).

\begin{mypara}\label{para:zeta}
	In this section, we fix a complete discrete valuation field $K$ of characteristic $0$ with perfect residue field of characteristic $p>0$, an algebraic closure $\overline{K}$ of $K$, and a compatible system of primitive $n$-th roots of unity $(\zeta_n)_{n\in\bb{N}}$ in $\overline{K}$. Sometimes we denote $\zeta_n$ by $t_{0,n}$. For any rational number $r$, we put
	\begin{align}
		\zeta^r=\zeta_n^m,
	\end{align}
	where $n\in\bb{N}_{>0}$, $m\in\bb{Z}$ coprime to $n$. We note that for any $n\in\bb{N}_{>0}$ and any integer $m$ coprime to $p$, the valuation of $\zeta_{p^n}^m-1$ in $K$ is
	\begin{align}\label{eq:para:zeta-2}
		v_K(\zeta_{p^n}^m-1)=\frac{1}{p^{n-1}(p-1)}v_K(p).
	\end{align} 
	We put $\bb{Z}_p(1)=\lim_{n\to\infty}\mu_{p^n}(\overline{K})$, where $\mu_{p^n}(\overline{K})$ is the subgroup of $p^n$-th roots of unity and the transition map $\mu_{p^{n+1}}(\overline{K})\to \mu_{p^n}(\overline{K})$ is taking the $p$-th power.
\end{mypara}

\begin{mypara}\label{para:product}
	Let $d\in\bb{N}$ be a natural number. We endow the set $(\bb{N}\cup\{\infty\})^d$ with the partial order defined by $\underline{m}\leq \underline{m'}$ if ether $m'_i=\infty$ or $m_i\leq m'_i<\infty$ for any $1\leq i\leq d$, where $\underline{m}=(m_1,\dots,m_d)$ and $\underline{m'}=(m_1',\dots,m_d')$. For any $r\in \bb{N}\cup\{\infty\}$, we set $\underline{r}=(r,\dots,r)\in (\bb{N}\cup\{\infty\})^d$.
	
	On the other hand, we endow the set $(\bb{N}_{>0}\cup\{\infty\})^d$ with the partial order defined by $\underline{N}| \underline{N'}$ if either $N'_i=\infty$ or $N_i$ divides $N'_i<\infty$ for any $1\leq i\leq d$, where $\underline{N}=(N_1,\dots,N_d)$ and $\underline{N'}=(N_1',\dots,N_d')$.
\end{mypara}

\begin{mydefn}[{\cite[9.2]{he2022sen}}]\label{defn:triple}
	A \emph{$(K,\ca{O}_K,\ca{O}_{\overline{K}})$-triple} is a triple $(A_{\triv},A,\overline{A})$ where
	\begin{enumerate}
		\renewcommand{\labelenumi}{{\rm(\theenumi)}}
		\item $A$ is a Noetherian normal domain flat over $\ca{O}_K$ with $A/pA\neq 0$,
		\item $A_\triv$ is a $K$-algebra that is a localization of $A$ with respect to a nonzero element of $pA$,
		\item $\overline{A}$ is an $\ca{O}_{\overline{K}}$-algebra that is the integral closure of $A$ in a maximal unramified extension $\ca{K}_{\mrm{ur}}$ of the fraction field $\ca{K}$ of $A$ with respect to $(A_{\triv},A)$, i.e., $\ca{K}_{\mrm{ur}}$ is the union of all finite field extension $\ca{K}'$ of $\ca{K}$ contained in an algebraic closure $\overline{\ca{K}}$ such that the integral closure of $A_{\triv}$ in $\ca{K}'$ is \'etale over $A_{\triv}$,
	\end{enumerate}
	such that the diagram
	\begin{align}
		\xymatrix{
			A_{\triv}&A\ar[l]\ar[r]&\overline{A}\\
			K\ar[u]&\ca{O}_K\ar[l]\ar[r]\ar[u]&\ca{O}_{\overline{K}}\ar[u]
		}
	\end{align}
	formed by the structural morphisms is commutative. 
\end{mydefn}

\begin{mydefn}[{\cite[page 797]{tsuji2018localsimpson}, cf. \cite[9.5]{he2022sen}}]\label{defn:essential-adequate-alg}
	A $(K,\ca{O}_K,\ca{O}_{\overline{K}})$-triple $(A_{\triv},A,\overline{A})$ is called (resp. \emph{essentially}) \emph{adequate} if there exists a commutative diagram of monoids
	\begin{align}
		\xymatrix{
			A& P\ar[l]_-{\beta}\\
			\ca{O}_K\ar[u]& \bb{N}\ar[l]_-{\alpha}\ar[u]_-{\gamma}
		}
	\end{align} 
	satisfying the following conditions:
	\begin{enumerate}
		\renewcommand{\labelenumi}{{\rm(\theenumi)}}
		\item The element $\alpha(1)$ is a uniformizer of $\ca{O}_K$.\label{item:defn:essential-adequate-alg-1}
		\item The monoid $P$ is fs (i.e., fine and saturated), and if we denote by $\gamma_\eta:\bb{Z}\to P_\eta=\bb{Z}\oplus_{\bb{N}}P$ the pushout of $\gamma$ by the inclusion $\bb{N}\to \bb{Z}$, then there exists an isomorphism for some $c, d\in \bb{N}$ with $c\leq d$,
		\begin{align}\label{eq:monoid-str}
			P_\eta\cong \bb{Z}\oplus \bb{Z}^c\oplus \bb{N}^{d-c},
		\end{align}
		identifying $\gamma_\eta$ with the inclusion of $\bb{Z}$ into the first component on the right hand side.\label{item:defn:essential-adequate-alg-2}
		\item The homomorphism $\beta$ induces an $\ca{O}_K$-algebra homomorphism $\ca{O}_K\otimes_{\bb{Z}[\bb{N}]}\bb{Z}[P]\to A$ which is (resp. \emph{a localization of}) an \'etale homomorphism such that $A\otimes_{\bb{Z}[P]}\bb{Z}[P^{\mrm{gp}}]=A_{\triv}$, where $P^{\mrm{gp}}$ is the associated group of $P$.\label{item:defn:essential-adequate-alg-3}
	\end{enumerate}
	We usually denote $(A_{\triv},A,\overline{A})$ by $A$, and call it an (resp. \emph{essentially}) \emph{adequate $\ca{O}_K$-algebra} for simplicity. The triple $(\alpha:\bb{N}\to \ca{O}_K,\ \beta:P\to A,\ \gamma:\bb{N}\to P)$ is called an (resp. \emph{essentially}) \emph{adequate chart} of $A$. If we fix an isomorphism \eqref{eq:monoid-str}, then we call the images $t_1,\dots,t_d\in A[1/p]$ of the standard basis of $\bb{Z}^c\oplus \bb{N}^{d-c}$ a \emph{system of coordinates} of the chart. We call $d$ the \emph{relative dimension} of $A$ over $\ca{O}_K$ (i.e., the Krull dimension of $A_{\triv}$).
\end{mydefn}

\begin{myrem}\label{rem:quasi-adequate}
	In \ref{defn:essential-adequate-alg}, $B=\ca{O}_K\otimes_{\bb{Z}[\bb{N}]}\bb{Z}[P]$ is a Noetherian normal domain and $B_{\triv}=\ca{O}_K\otimes_{\bb{Z}[\bb{N}]}\bb{Z}[P^{\mrm{gp}}]$ is the localization of $\ca{O}_K\otimes_{\bb{Z}[\bb{N}]}\bb{Z}[P]$ by inverting $p\cdot t_1\cdots t_d$ (see \cite[8.9]{he2022sen}). Thus, if $\overline{B}$ is the integral closure of $B$ in a maximal unramified extension of the fraction field of $B$ with respect to $(B_{\triv},B)$ (see \ref{defn:triple}), then the $(K,\ca{O}_K,\ca{O}_{\overline{K}})$-triple $(B_{\triv},B,\overline{B})$ is adequate.
\end{myrem}

\begin{myexample}[Semi-stable charts]\label{exam:semi-stable-chart}
	We fix integers $0\leq b\leq c\leq d$ and $e\in\bb{N}_{>0}$. Let $P$ be the submonoid of $\bb{N}^{1+b}\oplus\bb{Z}^{c-b}\oplus\bb{N}^{d-c}$ generated by $(e\bb{N})^{1+b}\oplus \bb{Z}^{c-b}\oplus\bb{N}^{d-c}$ and $(1,1,\dots,1,0,\dots,0)\in \{1\}^{1+b}\times\{0\}^{d-b}$. In other words,
	\begin{align}\label{eq:exam:semi-stable-chart-1}
		P=\{(a_0,\dots,a_d)\in \bb{N}^{1+b}\oplus\bb{Z}^{c-b}\oplus\bb{N}^{d-c}\ |\ a_0\equiv a_1\equiv\cdots\equiv a_b\mod e\}.
	\end{align}
	Let $\gamma:\bb{N}\to P$ be the homomorphism of monoids sending $1$ to $(1,1,\dots,1,0,\dots,0)\in \{1\}^{1+b}\times\{0\}^{d-b}\subseteq P$. If we denote by $\gamma_\eta:\bb{Z}\to P_\eta=\bb{Z}\oplus_{\bb{N}}P$ the pushout of $\gamma$ by the inclusion $\bb{N}\to \bb{Z}$, then we see that
	\begin{align}
		P_\eta&=\{(a_0,\dots,a_d)\in \bb{Z}^{1+c}\oplus\bb{N}^{d-c}\ |\ a_0\equiv a_1\equiv\cdots\equiv a_b\mod e\},\\
		P^{\mrm{gp}}&=\{(a_0,\dots,a_d)\in \bb{Z}^{1+d}\ |\ a_0\equiv a_1\equiv\cdots\equiv a_b\mod e\}.
	\end{align}
	In particular, $P=P^{\mrm{gp}}\cap (\bb{N}^{1+b}\oplus\bb{Z}^{c-b}\oplus\bb{N}^{d-c})$ is saturated in $P^{\mrm{gp}}$ and thus is an fs monoid. Notice that there is an isomorphism of monoids
	\begin{align}\label{eq:exam:semi-stable-chart-4}
		P_\eta\iso \bb{Z}\oplus\bb{Z}^c\oplus\bb{N}^{d-c},\ (a_0,\dots,a_d)\mapsto (a_0,(a_1-a_0)/e,\dots,(a_b-a_0)/e,a_{b+1},\dots,a_d),
	\end{align}
	identifying $\gamma_\eta$ with the inclusion of $\bb{Z}$ into the first component on the right hand side. We take a uniformizer $\pi_K$ of $\ca{O}_K$ and let $\alpha:\bb{N}\to \ca{O}_K$ be the homomorphism of monoids sending $1$ to $\pi_K$. Then, $\ca{O}_K\otimes_{\bb{Z}[\bb{N}]}\bb{Z}[P]$ is an adequate $\ca{O}_K$-algebra by \ref{rem:quasi-adequate}, and there is an isomorphism of $\ca{O}_K$-algebras
	\begin{align}\label{eq:exam:semi-stable-chart-5}
		\ca{O}_K[T_0,T_1,\dots,T_b,T_{b+1}^{\pm 1},\dots,T_c^{\pm 1},T_{c+1},\dots,T_d]/(T_0\cdots T_b-\pi_K^e)\iso \ca{O}_K\otimes_{\bb{Z}[\bb{N}]}\bb{Z}[P]
	\end{align}
	sending $T_i$ to $1\otimes(0,\dots,e,\dots,0)$ where $e$ appears on the $i$-th position for any $0\leq i\leq d$. 
\end{myexample}

\begin{mypara}\label{para:cover}
	Let $\gamma:\bb{N}\to P$ be an injective homomorphism of fs monoids such that there exists an isomorphism for some $c, d\in \bb{N}$ with $c\leq d$,
	\begin{align}\label{eq:monoid-str-P}
		P_\eta\cong \bb{Z}\oplus \bb{Z}^c\oplus \bb{N}^{d-c}
	\end{align}
	identifying $\gamma_\eta:\bb{Z}\to P_\eta=\bb{Z}\oplus_{\bb{N}}P$ with the inclusion of $\bb{Z}$ into the first component of right hand side as in \ref{defn:essential-adequate-alg}.(\ref{item:defn:essential-adequate-alg-2}). We identify $P^{\mrm{gp}}$ with $\bb{Z}^{1+d}$ and $\bb{N}^{\mrm{gp}}$ with the first component of $\bb{Z}^{1+d}$. For any $e\in\bb{N}_{>0}$, we define a submonoid of $\bb{Q}^{1+d}$ by 
	\begin{align}\label{eq:para:cover-2}
		P_{e,\underline{\infty}}=\{x\in e^{-1}\bb{Z}\oplus \bb{Q}^d\ |\ \exists k\in\bb{N}_{>0}\trm{ s.t. }kx\in P\}.
	\end{align}
	For any $\underline{r}=(r_1,\dots,r_d)\in \bb{N}_{>0}^d$, we put
	\begin{align}\label{eq:para:cover-3}
		P_{e,\underline{r}}=(e^{-1}\bb{Z}\oplus r_1^{-1}\bb{Z}\oplus\cdots \oplus r_d^{-1}\bb{Z})\cap P_{e,\underline{\infty}}.
	\end{align}
	It is an fs monoid (\cite[3.2]{tsuji2018localsimpson}), and if we denote by $P_{e,\underline{r},\eta}$ the pushout $e^{-1}\bb{Z}\oplus_{e^{-1}\bb{N}}P_{e,\underline{r}}$, then there is an isomorphism
	\begin{align}\label{eq:monoid-tower-str-P}
		P_{e,\underline{r},\eta}\cong e^{-1}\bb{Z}\oplus r_1^{-1}\bb{Z}\oplus\cdots\oplus r_c^{-1}\bb{Z}\oplus r_{c+1}^{-1}\bb{N}\oplus \cdots\oplus r_d^{-1}\bb{N}
	\end{align}
	induced by \eqref{eq:monoid-str-P} (\cite[page 810, equation (2)]{tsuji2018localsimpson}). 
\end{mypara}

\begin{mylem}\label{lem:decomposition}
	With the notation in {\rm\ref{para:cover}}, let $e\in\bb{N}_{>0}$, $\underline{r}=(r_1,\dots,r_d)\in \bb{N}_{>0}^d$. 
	\begin{enumerate}
		\renewcommand{\labelenumi}{{\rm(\theenumi)}}
		\item There is a decomposition of sets
		\begin{align}\label{eq:lem:decomposition-1}
			P_{e,\underline{r}}=\coprod_{\underline{\rho}\in \prod_{i=1}^d(r_i^{-1}\bb{Z})\cap [0,1)}P_{e,\underline{\infty}}^{(\underline{\rho})},
		\end{align}
		where $P_{e,\underline{\infty}}^{(\underline{\rho})}=P_{e,\underline{\infty}}\cap(e^{-1}\bb{Z}\oplus (\underline{\rho}+\bb{Z}^d))$ and $[0,1)=\{x\in\bb{R}\ |\ 0\leq x<1\}$.\label{item:lem:decomposition-1}
		\item Each $P_{e,\underline{\infty}}^{(\underline{\rho})}$ is stable under the addition by $P_{e,\underline{1}}$.\label{item:lem:decomposition-2}
		\item Let $\bb{Z}[P_{e,\underline{\infty}}^{(\underline{\rho})}]$ be the free $\bb{Z}$-submodule of $\bb{Z}[P_{e,\underline{r}}]$ with basis formed by the elements of $P_{e,\underline{\infty}}^{(\underline{\rho})}$ (see \cite[\Luoma{1}.3.1]{ogus2018log}). Then, it is a $\bb{Z}[P_{e,\underline{1}}]$-module of finite presentation, and \eqref{eq:lem:decomposition-1} induces a decomposition of $\bb{Z}[P_{e,\underline{r}}]$ into its $\bb{Z}[P_{e,\underline{1}}]$-submodules of finite presentation
		\begin{align}\label{eq:lem:decomposition-2}
			\bb{Z}[P_{e,\underline{r}}]=\bigoplus_{\underline{\rho}\in \prod_{i=1}^d(r_i^{-1}\bb{Z})\cap [0,1)}\bb{Z}[P_{e,\underline{\infty}}^{(\underline{\rho})}]. 
		\end{align}
		\label{item:lem:decomposition-3}
	\end{enumerate}
\end{mylem}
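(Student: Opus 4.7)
The plan is to address the three claims in turn: (1) reduces to bookkeeping with fractional parts, (2) is a direct check using that $P_{e,\underline{\infty}}$ is a submonoid, and (3) rests on a finiteness argument exploiting that fs monoids are finitely generated.

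For (1), send each $x=(x_0,x_1,\dots,x_d)\in P_{e,\underline{r}}$ to its fractional-part vector $\underline{\rho}(x)=(\{x_1\},\dots,\{x_d\})$. Since $x_i\in r_i^{-1}\bb{Z}$, this vector lies in $\prod_{i=1}^d(r_i^{-1}\bb{Z})\cap [0,1)$, and by construction $x\in e^{-1}\bb{Z}\oplus(\underline{\rho}(x)+\bb{Z}^d)$, hence $x\in P_{e,\underline{\infty}}^{(\underline{\rho}(x))}$. The reverse inclusion $P_{e,\underline{\infty}}^{(\underline{\rho})}\subseteq P_{e,\underline{r}}$ is immediate because $\underline{\rho}+\bb{Z}^d\subseteq \bigoplus_i r_i^{-1}\bb{Z}$; the union is disjoint because two elements of $[0,1)^d$ cannot differ by a vector in $\bb{Z}^d$ unless they coincide. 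For (2), first note that $P_{e,\underline{\infty}}$ is a submonoid of $\bb{Q}^{1+d}$: if $kx,ly\in P$ then $kl(x+y)\in P$. Any element of $P_{e,\underline{1}}$ has integer coordinates in positions $1,\dots,d$, so its addition preserves the $\bb{Z}^d$-coset labelled by $\underline{\rho}$, whence $P_{e,\underline{1}}+P_{e,\underline{\infty}}^{(\underline{\rho})}\subseteq P_{e,\underline{\infty}}^{(\underline{\rho})}$.

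For (3), the substantive step is to show that $\bb{Z}[P_{e,\underline{r}}]$ is a finitely generated (hence finitely presented) $\bb{Z}[P_{e,\underline{1}}]$-module. Since $P_{e,\underline{r}}$ is fs, fix finite monoid generators $g_1,\dots,g_n$ of it, and set $N=\mrm{lcm}(r_1,\dots,r_d)$. Each $Ng_j$ has first coordinate in $e^{-1}\bb{Z}$ and remaining coordinates in $\bb{Z}$, and it lies in the submonoid $P_{e,\underline{\infty}}$, so $Ng_j\in (e^{-1}\bb{Z}\oplus\bb{Z}^d)\cap P_{e,\underline{\infty}}=P_{e,\underline{1}}$. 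The monomial identity $[g_j]^N=[Ng_j]\in \bb{Z}[P_{e,\underline{1}}]$ then exhibits $\bb{Z}[P_{e,\underline{r}}]$ as integral and finitely generated as a $\bb{Z}[P_{e,\underline{1}}]$-algebra, hence module-finite. Because $P_{e,\underline{1}}$ is itself fs, $\bb{Z}[P_{e,\underline{1}}]$ is a finitely generated $\bb{Z}$-algebra and therefore Noetherian, upgrading finite generation to finite presentation.

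The $\bb{Z}$-module decomposition \eqref{eq:lem:decomposition-2} is now immediate from (1) together with the definition of the monoid algebra, and by (2) each summand $\bb{Z}[P_{e,\underline{\infty}}^{(\underline{\rho})}]$ is $\bb{Z}[P_{e,\underline{1}}]$-stable, so the splitting holds in the category of $\bb{Z}[P_{e,\underline{1}}]$-modules; the index set has finite cardinality $\prod_{i=1}^d r_i$. Each summand, being a submodule of the finitely presented module $\bb{Z}[P_{e,\underline{r}}]$ over the Noetherian ring $\bb{Z}[P_{e,\underline{1}}]$, is finitely generated and therefore finitely presented. The only real obstacle is the integrality/finiteness input in the second paragraph, which hinges on the finite generation of $P_{e,\underline{r}}$ as an fs monoid and the clearing-of-denominators trick with $N=\mrm{lcm}(r_i)$; the rest is combinatorics of fractional parts.
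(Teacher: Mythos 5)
Your proof is correct and follows essentially the same route as the paper's. The only material difference is in part (3), where you make explicit the finiteness argument the paper leaves implicit: the paper simply asserts that $\bb{Z}[P_{e,\underline{r}}]$ is finite over $\bb{Z}[P_{e,\underline{1}}]$ ``by definition of $P_{e,\underline{\infty}}$,'' whereas you supply the clean justification via finite generators of the fs monoid $P_{e,\underline{r}}$ and the uniform clearing-of-denominators trick with $N=\mrm{lcm}(r_1,\dots,r_d)$ exhibiting each monomial generator as integral over $\bb{Z}[P_{e,\underline{1}}]$.
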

\begin{proof}
	(\ref{item:lem:decomposition-1}) Notice that there exists a decomposition of $e^{-1}\bb{Z}\oplus \bb{Q}^d$,
	\begin{align}
		e^{-1}\bb{Z}\oplus \bb{Q}^d=\coprod_{\underline{\rho}\in (\bb{Q}\cap [0,1))^d}e^{-1}\bb{Z}\oplus (\underline{\rho}+\bb{Z}^d).
	\end{align}
	Thus, $P_{e,\underline{r}}=\coprod_{\underline{\rho}\in (\bb{Q}\cap [0,1))^d}(e^{-1}\bb{Z}\oplus r_1^{-1}\bb{Z}\oplus\cdots \oplus r_d^{-1}\bb{Z})\cap P_{e,\underline{\infty}}\cap (e^{-1}\bb{Z}\oplus (\underline{\rho}+\bb{Z}^d))$ \eqref{eq:para:cover-3}. The conclusion follows from the fact that 
	\begin{align}
		&(e^{-1}\bb{Z}\oplus r_1^{-1}\bb{Z}\oplus\cdots \oplus r_d^{-1}\bb{Z})\cap (e^{-1}\bb{Z}\oplus (\underline{\rho}+\bb{Z}^d))\\
		=&\left\{\begin{array}{ll}
			e^{-1}\bb{Z}\oplus (\underline{\rho}+\bb{Z}^d),&\trm{if } \underline{\rho}\in \prod_{i=1}^d(r_i^{-1}\bb{Z})\cap [0,1),\\
			\emptyset,&\trm{otherwise}.
		\end{array}\right.\nonumber
	\end{align}
	
	(\ref{item:lem:decomposition-2}) It follows from the fact that $P_{e,\underline{1}}$ is a submonoid of $P_{e,\underline{\infty}}\cap(e^{-1}\bb{Z}\oplus \bb{Z}^d)$.
	
	(\ref{item:lem:decomposition-3})  By the definition of the algebra structure on $\bb{Z}[P_{e,\underline{r}}]$ (\cite[\Luoma{1}.3.1]{ogus2018log}), $\bb{Z}[P_{e,\underline{\infty}}^{(\underline{\rho})}]$ is a $\bb{Z}[P_{e,\underline{1}}]$-submodule as $P_{e,\underline{\infty}}^{(\underline{\rho})}$ is stable under the addition by $P_{e,\underline{1}}$, and we also get the decomposition \eqref{eq:lem:decomposition-2}. Finally, since $\bb{Z}[P_{e,\underline{r}}]$ is finite over the finitely generated $\bb{Z}$-algebra $\bb{Z}[P_{e,\underline{1}}]$ by definition \eqref{eq:para:cover-2}, it is of finite presentation as a module and so are its direct summands.
\end{proof}

\begin{mypara}\label{para:adequate-setup}
	In the rest of this section, we fix an essentially adequate $\ca{O}_K$-algebra $A$, an essentially adequate chart $(\alpha:\bb{N}\to \ca{O}_K,\ \beta:P\to A,\ \gamma:\bb{N}\to P)$ of $A$, and an isomorphism
	$P_\eta\cong \bb{Z}\oplus \bb{Z}^c\oplus \bb{N}^{d-c}$ \eqref{eq:monoid-str}. Let $t_1,\dots,t_d\in A[1/p]$ be the associated system of coordinates of this chart.
	
	Following \cite[page 797]{tsuji2018localsimpson}, let $\ca{A}$ be one of following three $A$-algebras satisfying the corresponding assumptions:
	
	(Case I) The algebra $A$ itself.
	
	(Case II) The Henselization of $(A,pA)$, and we assume that $\spec(A/pA)$ is connected.
	
	(Case III) The $p$-adic completion of $A$, and we assume that $\spec(A/pA)$ is connected.
	
	Then, $\ca{A}$ is still a Noetherian normal domain flat over $\ca{O}_K$ with $\ca{A}/p\ca{A}\neq 0$ (\cite[4.6]{tsuji2018localsimpson}). We put $\ca{A}_{\triv}=A_{\triv}\otimes_A\ca{A}$, $\ca{K}$ the fraction field of $\ca{A}$, and we fix a maximal unramified extension $\ca{K}_{\mrm{ur}}$ of $\ca{K}$ with respect to $(\ca{A}_{\triv},\ca{A})$ containing $\overline{K}$, and we put $\overline{\ca{A}}$ the integral closure of $\ca{A}$ in $\ca{K}_{\mrm{ur}}$. Then, $(\ca{A}_{\triv},\ca{A},\overline{\ca{A}})$ is a $(K,\ca{O}_K,\ca{O}_{\overline{K}})$-triple (\ref{defn:triple}).
	
	Following \cite[\textsection8]{tsuji2018localsimpson} (cf. \cite[9.13]{he2022sen}), for any integer $1\leq i\leq d$, we fix a compatible system of $k$-th roots $(t_{i,k})_{k\in \bb{N}}$ of $t_i$ in $\overline{\ca{A}}[1/p]$. For any field extension $E'/E$, let $\scr{F}_{E'/E}$ (resp. $\scr{F}^{\mrm{fini}}_{E'/E}$) be the set of algebraic (resp. finite) field extensions of $E$ contained in $E'$, and we endow it with the partial order defined by the inclusion relation. For any $L\in \ff{K}$ and any $\underline{r}=(r_1,\dots,r_d)\in\bb{N}^d_{>0}$, we set
	\begin{align}\label{eq:para:adequate-setup}
		\ca{K}^L_{\underline{r}}=L\ca{K}(t_{i,r_i}\ |\ 1\leq i\leq d)
	\end{align}
	where the composites of fields are taken in $\ca{K}_{\mrm{ur}}$. It is clear that $\ca{K}^L_{\underline{r}}$ forms an inductive system of fields over the directed partially ordered set $\ff{K}\times \bb{N}^d_{>0}$ (see \ref{para:product}). Let $\ca{A}^L_{\underline{r}}$ be the integral closure of $\ca{A}$ in $\ca{K}^L_{\underline{r}}$, $\ca{A}^L_{\underline{r},\triv}=\ca{A}_{\triv}\otimes_{\ca{A}}\ca{A}^L_{\underline{r}}$. Note that for any finite subextensions $L, L'$ of $\overline{K}/K$ with $L\subseteq L'$ and any $\underline{r},\underline{r'}\in\bb{N}^d_{>0}$ with $\underline{r}|\underline{r'}$ (\ref{para:product}), the transition morphism 
	\begin{align}
		\ca{A}^L_{\underline{r}}\longrightarrow\ca{A}^{L'}_{\underline{r'}}
	\end{align}
	is an injective finite homomorphism of Noetherian normal domains (\cite[\href{https://stacks.math.columbia.edu/tag/032L}{032L}]{stacks-project}). We extend the notation above to any $(L,\underline{r})\in\scr{F}_{\overline{K}/K}\times (\bb{N}_{>0}\cup\{\infty\})^d$ by taking filtered colimits, and we omit the index $L$ or $\underline{r}$ if $L=K$ or $\underline{r}=\underline{1}$ respectively.
\end{mypara}

We collect some basic properties of the system $(\ca{A}^L_{\underline{r}})_{(L,\underline{r})\in\scr{F}_{\overline{K}/K}\times (\bb{N}_{>0}\cup\{\infty\})^d}$ proved by Tsuji in \cite[\textsection4]{tsuji2018localsimpson} in the following proposition.

\begin{myprop}[{\cite[\textsection4]{tsuji2018localsimpson}}]\label{prop:tower-basic}
	Let $L\in \ff{K}$, $\underline{r}\in\bb{N}^d_{>0}$, $e\in\bb{N}_{>0}$ the integer associated to the morphism $\gamma:\bb{N}\to P$  defined in \cite[3.9.(2)]{tsuji2018localsimpson}.
	\begin{enumerate}
		\renewcommand{\labelenumi}{{\rm(\theenumi)}}
		\item The $(L,\ca{O}_L,\ca{O}_{\overline{K}})$-triple $(A^L_{\underline{r},\triv},A^L_{\underline{r}},\overline{A})$ is essentially adequate with a chart (with the notation in {\rm\ref{para:cover}})
		\begin{align}\label{eq:prop:tower-basic-1}
			(\alpha^L: e_L^{-1}\bb{N}\to \ca{O}_L,\ \beta^L_{\underline{r}}:P_{e_L,\underline{r}}\to A^L_{\underline{r}},\ \gamma^L_{\underline{r}}:e_L^{-1}\bb{N}\to P_{e_L,\underline{r}})
		\end{align}
		where $e_L$ is the ramification index of $L/K$, $\alpha^L$ is a homomorphism of monoids sending $e_L^{-1}$ to a uniformizer $\pi_L$ of $L$, $\beta^L_{\underline{r}}(k_0/e_L,k_1/r_1,\dots,k_d/r_d)=\pi_L^{k_0}\cdot t_{1,r_1}^{k_1}\cdots t_{d,r_d}^{k_d}$, and $\gamma^L_{\underline{r}}$ is induced by the inclusion of the first component of \eqref{eq:monoid-tower-str-P}.\label{item:prop:tower-basic-1}
		\item If $\ca{A}$ is the Henselization (resp. completion) of $(A,pA)$, then $\spec((A^L_{\underline{r}},pA^L_{\underline{r}})^h)$ (resp. $\spec((A^L_{\underline{r}},pA^L_{\underline{r}})^\wedge)$) is a finite disjoint union of Noetherian normal integral schemes, one of whose components identifies with $\spec(\ca{A}^L_{\underline{r}})$. In particular, $\spec(\ca{A}^L_{\underline{r}}/p\ca{A}^L_{\underline{r}})$ is an open and closed subscheme of $\spec(A^L_{\underline{r}}/pA^L_{\underline{r}})$.\label{item:prop:tower-basic-2}
		\item Consider the adequate $\ca{O}_K$-algebra $B=\ca{O}_K\otimes_{\bb{Z}[\bb{N}]}\bb{Z}[P]$ {\rm(\ref{rem:quasi-adequate})}. Then, we have
		\begin{align}\label{eq:prop:tower-basic-2}
			B^L_{\underline{r}}=\ca{O}_L\otimes_{\bb{Z}[e_L^{-1}\bb{N}]}\bb{Z}[P_{e_L,\underline{r}}],
		\end{align}
		where $B^L_{\underline{r}}$ is defined as in {\rm\ref{para:cover}} with respect to the same isomorphism $P_\eta\cong \bb{Z}\oplus \bb{Z}^c\oplus \bb{N}^{d-c}$ \eqref{eq:monoid-str} fixed in {\rm\ref{para:cover}} for $A$. Moreover, $\spec(\ca{A}\otimes_BB^L_{\underline{r}})$ is a finite disjoint union Noetherian normal integral schemes, one of whose components identifies with $\spec(\ca{A}^L_{\underline{r}})$ via \eqref{eq:prop:tower-basic-1}. \label{item:prop:tower-basic-3}
		\item The number of connected components of $\spec(\ca{A}\otimes_BB^L_{\underline{r}})$ and the number of generic points of $\spec(\ca{A}^L_{\underline{r}}/p\ca{A}^L_{\underline{r}})$ are bounded when $L$ and $\underline{r}$ vary. 
		 \label{item:prop:tower-basic-4}
		\item For any $\underline{r'}\in \bb{N}^d_{>0}$ with $\underline{r}|\underline{r'}$ such that $r_i'/r_i$ is a power of $p$ for any $1\leq i\leq d$, the morphism $\spec(\ca{A}\otimes_BB^L_{\underline{r'}})\to \spec(\ca{A}\otimes_BB^L_{\underline{r}})$ induces a bijection between the sets of connected components
		 \begin{align}\label{eq:prop:tower-basic-3}
		 	\pi_0(\spec(\ca{A}\otimes_BB^L_{\underline{r'}}))\iso \pi_0(\spec(\ca{A}\otimes_BB^L_{\underline{r}})).
		 \end{align} 
		 In particular, we have
		 \begin{align}\label{eq:prop:tower-basic-4}
		 	\ca{A}^L_{\underline{r'}}=\ca{A}^L_{\underline{r}}\otimes_{\bb{Z}[P_{e_L,\underline{r}}]}\bb{Z}[P_{e_L,\underline{r'}}].
		 \end{align}
		 Moreover, if $er_i|e_L$ for any $1\leq i\leq d$, then the morphism $\spec(\ca{A}^L_{\underline{r'}})\to \spec(\ca{A}^L_{\underline{r}})$ induces a bijection between the subsets of generic points of their special fibres {\rm(\ref{defn:generic})},
		 \begin{align}\label{eq:prop:tower-basic-5}
		 	\ak{G}(\ca{A}^L_{\underline{r'}}/p\ca{A}^L_{\underline{r'}})\iso \ak{G}(\ca{A}^L_{\underline{r}}/p\ca{A}^L_{\underline{r}}).
		 \end{align}
		 \label{item:prop:tower-basic-5}
		 \item Assume that $er_i|e_L$ for any $1\leq i\leq d$. Then, for any $L'\in\scr{F}_{\overline{K}/L}$, we have
		 \begin{align}\label{eq:prop:tower-basic-6}
		 	B^{L'}_{\underline{r}}=\ca{O}_{L'}\otimes_{\ca{O}_L}B^L_{\underline{r}}.
		 \end{align}
		 In particular, there exists $M\in\scr{F}^{\mrm{fini}}_{\overline{K}/L}$ such that for any $M'\in \scr{F}_{\overline{K}/M}$, we have
		 \begin{align}\label{eq:prop:tower-basic-7}
		 	\ca{A}^{M'}_{\underline{r}}=\ca{O}_{M'}\otimes_{\ca{O}_M}\ca{A}^M_{\underline{r}}.
		 \end{align}\label{item:prop:tower-basic-6}
	\end{enumerate} 
	
\end{myprop}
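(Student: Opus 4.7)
The plan is to reduce all six clauses to explicit facts about the monoid-algebra model $B = \ca{O}_K \otimes_{\bb{Z}[\bb{N}]} \bb{Z}[P]$ and its covers $B^L_{\underline{r}} = \ca{O}_L \otimes_{\bb{Z}[e_L^{-1}\bb{N}]} \bb{Z}[P_{e_L,\underline{r}}]$, then transfer the information to $\ca{A}^L_{\underline{r}}$ via the étale (or localization-of-étale) structural morphism $B \to \ca{A}$. The key combinatorial input is the decomposition \eqref{eq:lem:decomposition-2}, which exhibits $B^L_{\underline{r}}$ as a finitely presented $B^L_{\underline{1}}$-module and makes the normalization and Galois structure of the covers transparent; most of the content is already in Tsuji \cite[\textsection4]{tsuji2018localsimpson}, and the plan below indicates how one would re-verify each clause.

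For (1) and (3), I first check directly that the triple $(B^L_{\underline{r},\triv}, B^L_{\underline{r}}, \overline{B})$ is adequate with the chart \eqref{eq:prop:tower-basic-1}, since $P_{e_L,\underline{r}}$ is fs by \cite[3.2]{tsuji2018localsimpson} and $P_{e_L,\underline{r},\eta}$ has the product form \eqref{eq:monoid-tower-str-P}. Because $B^L_{\underline{r}}$ is étale over $B^L_{\underline{1}} = \ca{O}_L \otimes_{\ca{O}_K} B$ after inverting $p \cdot t_1 \cdots t_d$, the tensor product $\ca{A} \otimes_B B^L_{\underline{r}}$ is an étale cover of $\ca{A}$ over its trivial locus, and its integral closure is a finite product of Noetherian normal domains by \cite[\href{https://stacks.math.columbia.edu/tag/030C}{030C}]{stacks-project}; the component singled out by the embedding $\overline{K} \hookrightarrow \ca{K}_{\mrm{ur}}$ is precisely $\ca{A}^L_{\underline{r}}$, which yields (3). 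Pulling the chart of $B^L_{\underline{r}}$ back along $B \to \ca{A}$ and restricting to that component gives (1). Statement (2) then follows because henselization (resp.\ completion) of a Noetherian normal ring at an ideal splits into a finite product of Noetherian normal domains, one factor per connected component of the henselized (resp.\ completed) spectrum, compatibly with integral closure in $\ca{K}^L_{\underline{r}}$.

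The main subtlety is (5). On the monoid side, $P_{e_L,\underline{r}} \to P_{e_L,\underline{r'}}$ becomes an isomorphism after inverting $p$ when $\underline{r'}/\underline{r}$ is a vector of $p$-powers, so $B^L_{\underline{r}} \to B^L_{\underline{r'}}$ is integral and radicial modulo $p$; hence $\spec(B^L_{\underline{r'}}) \to \spec(B^L_{\underline{r}})$ is a universal homeomorphism on special fibres, which yields the bijection \eqref{eq:prop:tower-basic-3} after tensoring with $\ca{A}$ and, via \ref{rem:generic-map}.(\ref{item:rem:generic-map-3}), also the bijection \eqref{eq:prop:tower-basic-5} of generic points of the special fibre. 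The divisibility hypothesis $er_i \mid e_L$ ensures that all $e_L$-th roots of unity needed are already in $\ca{O}_L$, so no further residue-field extension occurs and \eqref{eq:prop:tower-basic-4} follows. For (4), the number of components of $\spec(\ca{A} \otimes_B B^L_{\underline{r}})$ is controlled by the action of the prime-to-$p$ Galois group of $\ca{K}^L_{\underline{r}}/\ca{K}$, which factors through a finite quotient that stabilizes once $L$ contains enough roots of unity; the bound on generic points of the special fibre then follows from \ref{prop:generic-map}. Finally, (6) follows from (5) together with (4): once $er_i \mid e_L$ one has $B^{L'}_{\underline{r}} = \ca{O}_{L'} \otimes_{\ca{O}_L} B^L_{\underline{r}}$ for every $L' \supseteq L$, and choosing $M$ large enough to separate the finitely many components of $\ca{A} \otimes_B B^L_{\underline{r}}$ produces \eqref{eq:prop:tower-basic-7}. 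The central bookkeeping obstacle throughout is tracking which branch of each finite étale cover corresponds to the embedding $\overline{K} \hookrightarrow \ca{K}_{\mrm{ur}}$, which I handle uniformly by always working with the full tensor product $\ca{A} \otimes_B B^L_{\underline{r}}$ and extracting the distinguished component only at the end.
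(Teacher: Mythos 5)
Your overall strategy — reduce to the monoid model $B^L_{\underline{r}}=\ca{O}_L\otimes_{\bb{Z}[e_L^{-1}\bb{N}]}\bb{Z}[P_{e_L,\underline{r}}]$ and transfer to $\ca{A}^L_{\underline{r}}$ via the (localization of an) \'etale structural morphism — is exactly the paper's, and your treatment of (1) and (3) matches. The genuine gap is in (2): you assert that the $p$-adic completion of a Noetherian normal ring ``splits into a finite product of Noetherian normal domains.'' For the Henselization this is fine (ind-\'etale preserves normality), but for completion it is false in general: the $p$-adic (or $\ak{m}$-adic) completion of a Noetherian normal domain need not be normal. The paper's proof explicitly invokes the \emph{excellence} of $A$ to get that $A\to \widehat{A}$ is a regular ring homomorphism, whence $\widehat{A}\otimes_A A^L_{\underline{r}}=\widehat{A^L_{\underline{r}}}$ is normal by [EGA IV 6.5.2]; without that input your argument for the completion case does not close.

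Two further points are imprecise enough to matter. In (4), the claim that $\#\pi_0(\spec(\ca{A}\otimes_B B^L_{\underline{r}}))$ is ``controlled by the prime-to-$p$ Galois group of $\ca{K}^L_{\underline{r}}/\ca{K}$'' is wrong as stated: when $\underline{r}$ has $p$-power entries, $\gal(\ca{K}^L_{\underline{r}}/\ca{K})$ certainly has $p$-torsion (compare \ref{cor:galois-group}); and ``stabilizes once $L$ contains enough roots of unity'' is an assertion, not an argument — the boundedness is the content of Tsuji's [4.7], which the paper simply cites. In (5), saying that ``$P_{e_L,\underline{r}}\to P_{e_L,\underline{r'}}$ becomes an isomorphism after inverting $p$'' is not a well-posed operation on monoids; what you want is that $\bb{F}_p[P_{e_L,\underline{r}}]\to\bb{F}_p[P_{e_L,\underline{r'}}]$ is integral and radicial (adjoining $p$-power roots in characteristic $p$), hence a universal homeomorphism. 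That gives a bijection on $\pi_0$ of the \emph{special} fibres, but the statement \eqref{eq:prop:tower-basic-3} is about $\pi_0$ of the whole schemes $\spec(\ca{A}\otimes_B B^L_{\underline{r}})$; to bridge that you need the Henselian or $p$-adically complete structure of $\ca{A}$ in cases (II), (III) (or the argument of Tsuji [4.8] in case (I)). The paper sidesteps all of this by citing Tsuji for (4), (5), (6); if you want a self-contained argument you need to supply these missing steps.
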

\begin{proof}
	(\ref{item:prop:tower-basic-1})  By \cite[9.15.(1)]{he2022sen}, we have $B^L_{\underline{r}}=\ca{O}_L\otimes_{\bb{Z}[e_L^{-1}\bb{N}]}\bb{Z}[P_{e_L,\underline{r}}]$, which is an adequate $\ca{O}_L$-algebra (see \ref{rem:quasi-adequate}). Since $B\to A$ is a localization of an \'etale ring homomorphism, $\spec(A\otimes_B B^L_{\underline{r}})$ is a finite disjoint union of Noetherian normal integral schemes, one of whose components is $\spec(A^L_{\underline{r}})$, and the conclusion follows.
	
	(\ref{item:prop:tower-basic-2}) As $A^L_{\underline{r}}$ is a Noetherian normal domain finite over $A$, $(A^h\otimes_AA^L_{\underline{r}},pA^h\otimes_AA^L_{\underline{r}})$ is still a Noetherian normal Henselian pair (\cite[\href{https://stacks.math.columbia.edu/tag/0AGV}{0AGV}, \href{https://stacks.math.columbia.edu/tag/0DYE}{0DYE}]{stacks-project}). Since its reduction modulo $p$ is $A^L_{\underline{r}}/pA^L_{\underline{r}}$ which is Noetherian,  $\spec(A^h\otimes_AA^L_{\underline{r}})$ is a finite disjoint union of normal integral schemes and $\spec((A^h)^L_{\underline{r}})$ is one of the components by construction.
	
	Since $A$ is an excellent ring, $A\to \widehat{A}$ is a regular ring homomorphism (\cite[7.8.3]{ega4-2}). Thus, $\widehat{A}\otimes_AA^L_{\underline{r}}=\widehat{A^L_{\underline{r}}}$ (as $A^L_{\underline{r}}$ is finite over $A$) is a Noetherian normal ring (\cite[6.5.2]{ega4-2}). As above, $\spec(\widehat{A^L_{\underline{r}}})$ is a finite disjoint union of normal integral schemes and $\spec((\widehat{A})^L_{\underline{r}})$ is one of the components by construction.
	
	(\ref{item:prop:tower-basic-3}) It follows from the arguments above.
	
	(\ref{item:prop:tower-basic-4}) The first part is proved in \cite[4.7]{tsuji2018localsimpson} whose arguments actually proves the second part (see \cite[9.3]{tsuji2018localsimpson} and \ref{prop:generic-map}.(\ref{item:prop:generic-map-1})).
	
	(\ref{item:prop:tower-basic-5}) The bijectivity of \eqref{eq:prop:tower-basic-3} is proved in \cite[4.8]{tsuji2018localsimpson} whose arguments actually proves \eqref{eq:prop:tower-basic-5}.
	
	(\ref{item:prop:tower-basic-6}) The equality \eqref{eq:prop:tower-basic-6} is proved in \cite[4.5]{tsuji2018localsimpson}, and thus \eqref{eq:prop:tower-basic-7} follows by combining with (\ref{item:prop:tower-basic-3}) and (\ref{item:prop:tower-basic-4}).
\end{proof}

\begin{myrem}\label{rem:sncd}
	For any $L\in \scr{F}_{\overline{K}/K}$ and $\underline{r}\in\bb{N}^d_{>0}$, $t_{c+1,r_{c+1}},\dots,t_{d,r_d}$ defines a strict normal crossings divisor on the regular scheme $\spec(\ca{A}^L_{\underline{r}}[1/p])$ with complement $\spec(\ca{A}^L_{\underline{r},\triv})$, i.e., in the localization of $\ca{A}^L_{\underline{r}}[1/p]$ at any point, those elements $t_i$ contained in the maximal ideal form a subset of a regular system of parameters. This follows from immediately the fact that $B^L_{\underline{r}}\to \ca{A}^L_{\underline{r}}$ is a regular ring homomorphism. We refer to \cite[10.3]{tsuji2018localsimpson} for a detailed proof.
\end{myrem}

\begin{mycor}[{\cite[page 840, equation (17)]{tsuji2018localsimpson}}]\label{cor:galois-group}
	Let $L\in \scr{F}_{\overline{K}/K}$ containing $\{\zeta_{p^n}\}_{n\in\bb{N}}$, $\underline{N}\in(\bb{N}\setminus p\bb{N})^d\subseteq \bb{N}^d_{>0}$. Then, there is an isomorphism of groups
	\begin{align}\label{eq:cor:galois-group-1}
		\xi:\gal(\ca{K}^L_{p^{\underline{\infty}}\underline{N}}/\ca{K}^L_{\underline{N}})\iso \prod_{i=1}^d\bb{Z}_p(1),\ \tau\mapsto ((\tau(t_{i,p^n})/t_{i,p^n})_{n\in\bb{N}})_i,
	\end{align}
	which identifies $\gal(\ca{K}^L_{p^{\underline{n}}\underline{N}}/\ca{K}^L_{\underline{N}})$ with $\prod_{i=1}^d(\bb{Z}/p^n\bb{Z})(1)=\prod_{i=1}^d\mu_{p^n}(\overline{K})$ for any $n\in\bb{N}$.
\end{mycor}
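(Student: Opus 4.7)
My approach is to reduce the statement to a finite-level Kummer-theoretic computation and then pass to the inverse limit over $n$. I first check that $\xi$ is well-defined: for $\tau\in\gal(\ca{K}^L_{p^{\underline{\infty}}\underline{N}}/\ca{K}^L_{\underline{N}})$, the element $t_{i,p^n}$ lies in $\ca{K}^L_{p^{\underline{\infty}}\underline{N}}$ via the relation $t_{i,p^nN_i}^{N_i}=t_{i,p^n}$, while $t_i=t_{i,p^n}^{p^n}\in \ca{K}$ is fixed by $\tau$; hence $\tau(t_{i,p^n})/t_{i,p^n}\in\mu_{p^n}(\overline{K})$, and the compatibility $t_{i,p^{n+1}}^p=t_{i,p^n}$ yields an element of $\bb{Z}_p(1)=\plim_n\mu_{p^n}(\overline{K})$. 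The map $\xi$ is a group homomorphism because all $p$-power roots of unity lie in $L$ and are therefore fixed by $\tau$.

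Next I show that for each $n$, the reduction $\xi_n\colon\gal(\ca{K}^L_{p^{\underline{n}}\underline{N}}/\ca{K}^L_{\underline{N}})\to\prod_{i=1}^d\mu_{p^n}(\overline{K})$ is an isomorphism. The finite extension $\ca{K}^L_{p^n\underline{N}}/\ca{K}^L_{\underline{N}}$ is obtained by adjoining the $t_{i,p^nN_i}$, which are $p^n$-th roots of $t_{i,N_i}\in\ca{K}^L_{\underline{N}}$; since $\mu_{p^n}\subseteq L\subseteq\ca{K}^L_{\underline{N}}$, this is a Kummer extension whose Galois group embeds into $\mu_{p^n}^d$ via the standard pairing $\tau\mapsto(\tau(t_{i,p^nN_i})/t_{i,p^nN_i})_i$. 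The relation $t_{i,p^n}=t_{i,p^nN_i}^{N_i}$ together with $\gcd(N_i,p)=1$ (so that $\zeta\mapsto\zeta^{N_i}$ is an automorphism of $\mu_{p^n}$) identifies this Kummer pairing with $\xi_n$ up to post-composition with an automorphism of the target. Thus $\xi_n$ is always injective, and surjectivity reduces to verifying the degree identity $[\ca{K}^L_{p^n\underline{N}}:\ca{K}^L_{\underline{N}}]=p^{nd}$.

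This degree equality is the main technical input and follows from Proposition \ref{prop:tower-basic}: by \eqref{eq:prop:tower-basic-2}, $B^L_{p^n\underline{N}}=\ca{O}_L\otimes_{\bb{Z}[e_L^{-1}\bb{N}]}\bb{Z}[P_{e_L,p^n\underline{N}}]$, and after inverting $p$ this gives a finite \'etale extension of $B^L_{\underline{N}}[1/p]$ of degree equal to the group index $[P_{e_L,p^n\underline{N}}^{\mrm{gp}}:P_{e_L,\underline{N}}^{\mrm{gp}}]=p^{nd}$. The connected-component bijection \eqref{eq:prop:tower-basic-3} (applicable since each ratio $p^nN_i/N_i$ is a power of $p$) together with \eqref{eq:prop:tower-basic-4} preserves this degree upon restricting to the component $\ca{A}^L_{p^n\underline{N}}$, yielding the desired degree at the level of fraction fields. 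Each $\xi_n$ is therefore an isomorphism, and passing to the inverse limit over $n$ delivers $\xi$, since both sides are profinite and the tower $(\ca{K}^L_{p^n\underline{N}})_n$ is cofinal in $\ca{K}^L_{p^{\underline{\infty}}\underline{N}}/\ca{K}^L_{\underline{N}}$. The chief obstacle is the degree computation; everything else is formal Kummer theory combined with the coprimality of $N_i$ with $p$.
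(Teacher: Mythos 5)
Your proof is correct and takes essentially the same route as the paper: injectivity of $\xi_n$ from Kummer theory using $\mu_{p^n}\subseteq L\subseteq\ca{K}^L_{\underline{N}}$ together with coprimality of $N_i$ and $p$, surjectivity from the degree count $[\ca{K}^L_{p^{\underline{n}}\underline{N}}:\ca{K}^L_{\underline{N}}]=p^{nd}$ supplied by Proposition~\ref{prop:tower-basic} (via \eqref{eq:prop:tower-basic-4}), and an inverse limit over $n$. The only cosmetic difference is that you spell out the well-definedness and the Kummer pairing in more detail, whereas the paper compresses these to a sentence and phrases the final passage as a cofiltered limit over $L$ and $n$ (i.e.\ it first treats finite $L'\subseteq L$ containing $\zeta_{p^n}$ and then colimits).
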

\begin{proof}
	For any $n\in\bb{N}$ and $L\in \ff{K}$ containing $\zeta_{p^n}$, we see that $\ca{K}^L_{p^{\underline{n}}\underline{N}}=\ca{K}^L_{\underline{N}}(t_{1,p^n},\dots,t_{d,p^n})$ \eqref{eq:para:adequate-setup} and it is a finite Galois extension of $\ca{K}^L_{\underline{N}}$. Thus, the map $\xi$ induces an injection
	\begin{align}
		\gal(\ca{K}^L_{p^{\underline{n}}\underline{N}}/\ca{K}^L_{\underline{N}})\longrightarrow \prod_{i=1}^d(\bb{Z}/p^n\bb{Z})(1).
	\end{align}
	It is actually an isomorphism, since the degree $[\ca{K}^L_{p^{\underline{n}}\underline{N}}:\ca{K}^L_{\underline{N}}]$ is equal to that of $\bb{Z}[P_{e_L,p^{\underline{n}}\underline{N}}^{\mrm{gp}}]=\bb{Z}[T_0^{\pm 1/e_L},T_1^{\pm 1/p^nN_1},\dots,T_d^{\pm 1/p^nN_d}]$ over ${\bb{Z}[P_{e_L,\underline{N}}^{\mrm{gp}}]}=\bb{Z}[T_0^{\pm 1/e_L},T_1^{\pm 1/N_1},\dots,T_d^{\pm 1/N_d}]$ by \eqref{eq:prop:tower-basic-4}, i.e., $p^{nd}$. The conclusion follows from taking cofiltered limit over $L$ and $n$.
\end{proof}

\begin{myprop}[{cf. \cite[\Luoma{2}.8.9]{abbes2016p}}]\label{prop:galois-action}
	Let $L\in \scr{F}_{\overline{K}/K}$ containing $\{\zeta_{p^n}\}_{n\in\bb{N}}$, $\underline{N}\in(\bb{N}\cup\{\infty\}\setminus p\bb{N})^d\subseteq (\bb{N}_{>0}\cup\{\infty\})^d$. Then, there is a canonical $\Delta_{p^\infty}=\gal(\ca{K}^L_{p^{\underline{\infty}}\underline{N}}/\ca{K}^L_{\underline{N}})$-equivariant decomposition of $\ca{A}^L_{p^{\underline{\infty}}\underline{N}}$ into its $\ca{A}^L_{\underline{N}}$-submodules,
	\begin{align}\label{eq:cor:galois-action-1}
		\ca{A}^L_{p^{\underline{\infty}}\underline{N}}=\bigoplus_{\nu\in\ho(\Delta_{p^\infty},\mu_{p^\infty}(\overline{K}))}\ca{A}^{L,(\nu)}_{p^{\underline{\infty}}\underline{N}},
	\end{align}
	where $\ca{A}^{L,(\nu)}_{p^{\underline{\infty}}\underline{N}}=\{x\in\ca{A}^L_{p^{\underline{\infty}}\underline{N}} \ |\ \tau(x)=\nu(\tau)x,\ \forall\ \tau\in\Delta_{p^\infty}\}$, $\mu_{p^\infty}(\overline{K})=\bigcup_{n\in\bb{N}}\mu_{p^n}(\overline{K})\subseteq \overline{K}$ the subgroup of $p$-power roots of unity viewed as a $\bb{Z}_p$-module (isomorphic to $\bb{Q}_p/\bb{Z}_p(1)$), and $\nu$ is a $\bb{Z}_p$-linear homomorphism. Moreover, for any $n\in\bb{N}$, we have
	\begin{align}\label{eq:cor:galois-action-2}
		\ca{A}^L_{p^{\underline{n}}\underline{N}}=\bigoplus_{\nu\in\ho(\Delta_{p^\infty},\mu_{p^n}(\overline{K}))}\ca{A}^{L,(\nu)}_{p^{\underline{\infty}}\underline{N}}.
	\end{align}
\end{myprop}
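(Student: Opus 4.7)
The plan is to first establish the decomposition at the level of the ``model'' algebra $B^L_{p^{\underline{n}}\underline{N}} = \ca{O}_L \otimes_{\bb{Z}[e_L^{-1}\bb{N}]} \bb{Z}[P_{e_L,p^{\underline{n}}\underline{N}}]$ using the monoid decomposition of \ref{lem:decomposition}, transfer it to $\ca{A}^L_{p^{\underline{n}}\underline{N}}$ via the \'etale base change of \ref{prop:tower-basic}.(\ref{item:prop:tower-basic-3}), and finally pass to the colimit in $n$. By first reducing to the case where the components of $\underline{N}$ are finite (using a filtered colimit in $\underline{N}$), we can assume $\underline{N}\in(\bb{N}\setminus p\bb{N})^d$.

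First I would set up the characters explicitly. Via $\xi$ of \ref{cor:galois-group}, for each $\tau \in \Delta_{p^\infty}$ and each $1 \le i \le d$, the ratio $\tau(t_{i,p^m N_i})/t_{i,p^m N_i}$ is a $p^m$-th root of unity: since $\gcd(N_i,p)=1$, its $N_i$-th power is $\tau(t_{i,p^m})/t_{i,p^m} \in \mu_{p^m}(\overline{K})$, while its $p^m$-th power equals $\tau(t_{i,N_i})/t_{i,N_i} = 1$ because $t_{i,N_i} \in \ca{K}^L_{\underline{N}}$. Consequently, to each residue vector $\underline{\rho} \in \prod_i \bb{Q}/\bb{Z}$ with $p$-power denominators one associates a character $\nu_{\underline{\rho}} : \Delta_{p^\infty} \to \mu_{p^\infty}(\overline{K})$, and every continuous $\bb{Z}_p$-linear homomorphism $\Delta_{p^\infty} \to \mu_{p^\infty}(\overline{K})$ arises uniquely this way because $\Delta_{p^\infty} \cong \prod_i \bb{Z}_p(1)$.

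Next I would apply \ref{lem:decomposition} with $e = e_L$ and $\underline{r} = p^{\underline{n}}\underline{N}$ to obtain a decomposition of $\bb{Z}[P_{e_L,p^{\underline{n}}\underline{N}}]$ into $\bb{Z}[P_{e_L,\underline{1}}]$-submodules $\bb{Z}[P^{(\underline{\rho})}_{e_L,\underline{\infty}}]$. The $\Delta_{p^n}$-action on a monomial whose exponent has residue $\underline{\rho}$ is multiplication by the scalar $\nu_{\underline{\rho}}(\tau)$, so each piece is $\Delta_{p^n}$-stable and sits in a single eigenspace; grouping those $\underline{\rho}$ whose $p$-parts coincide assembles them into genuine $\nu$-eigenspaces which are $\bb{Z}[P_{e_L,\underline{N}}]$-submodules (since adding an element of $P_{e_L,\underline{N}}$ to an exponent changes $\underline{\rho}$ only in its prime-to-$p$ part and thus leaves $\nu_{\underline{\rho}}$ invariant). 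Tensoring with $\ca{O}_L$ then gives the desired $\Delta_{p^n}$-equivariant decomposition of $B^L_{p^{\underline{n}}\underline{N}}$ as a $B^L_{\underline{N}}$-module. Transporting this decomposition across the \'etale base change $B \to A$: by \ref{prop:tower-basic}.(\ref{item:prop:tower-basic-3}), $\ca{A} \otimes_B B^L_{p^{\underline{n}}\underline{N}}$ is a finite product of Noetherian normal integral domains with $\ca{A}^L_{p^{\underline{n}}\underline{N}}$ as one factor, preserved by $\Delta_{p^n}$ (which acts trivially on $\ca{A}$). Tensoring each eigenspace with $\ca{A}\otimes_B B^L_{\underline{N}}$ over $B^L_{\underline{N}}$ and projecting onto the factor $\ca{A}^L_{p^{\underline{n}}\underline{N}}$ produces the $\ca{A}^L_{\underline{N}}$-module decomposition of \eqref{eq:cor:galois-action-2}; directness of the sum persists because in characteristic zero the distinct characters of the finite abelian group $\Delta_{p^n}$ are linearly independent on $\ca{K}^L_{p^{\underline{n}}\underline{N}}$. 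Finally, passing to the colimit $n \to \infty$, using \eqref{eq:prop:tower-basic-4}, and observing that any $\bb{Z}_p$-linear map $\Delta_{p^\infty} \to \mu_{p^\infty}(\overline{K})$ factors through some $\Delta_{p^n}$, yields \eqref{eq:cor:galois-action-1}.

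The main obstacle I anticipate is the bookkeeping to match the character indexing with the factor $\ca{A}^L_{p^{\underline{n}}\underline{N}}$ of $\ca{A}\otimes_B B^L_{p^{\underline{n}}\underline{N}}$ coherently as $n$ grows, so that the eigenspace decompositions at successive levels assemble into a single decomposition of the colimit $\ca{A}^L_{p^{\underline{\infty}}\underline{N}}$. The compatibility \eqref{eq:prop:tower-basic-3} from \ref{prop:tower-basic}.(\ref{item:prop:tower-basic-5}) should make this step routine, as it ensures that the connected component labelling $\spec(\ca{A}^L_{p^{\underline{n}}\underline{N}})$ is preserved under the transition maps for $\underline{r}$ varying through $p^{\underline{n}}\underline{N}$.
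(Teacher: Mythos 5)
Your proof is correct and takes essentially the same approach as the paper's: the monoid decomposition of \ref{lem:decomposition}, the eigencharacter computation via \ref{cor:galois-group}, and a filtered-colimit argument. The only organizational difference is that the paper bypasses your detour through $\ca{A}\otimes_BB^L_{p^{\underline{n}}\underline{N}}$ (and the attendant projection and linear-independence check) by applying the base-change identity $\ca{A}^L_{p^{\underline{n}}\underline{N}}=\ca{A}^L_{\underline{N}}\otimes_{\bb{Z}[P_{e_L,\underline{N}}]}\bb{Z}[P_{e_L,p^{\underline{n}}\underline{N}}]$ from \eqref{eq:prop:tower-basic-4} up front, and the bookkeeping worry you raise dissolves once the eigen-submodules are defined intrinsically as in the statement and one notes that their formation commutes with filtered colimits, with $\ca{A}^{L,(\nu)}_{p^{\underline{m}}\underline{N}}\subseteq(\ca{A}^L_{p^{\underline{m}}\underline{N}})^{p^n\Delta_{p^\infty}}=\ca{A}^L_{p^{\underline{n}}\underline{N}}$ giving \eqref{eq:cor:galois-action-2}.
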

\begin{proof}
	For any $n\in\bb{N}$ and $L\in \ff{K}$ containing $\zeta_{p^n}$, we have $\ca{A}^L_{p^{\underline{n}}\underline{N}}=\ca{A}^L_{\underline{N}}\otimes_{\bb{Z}[P_{e_L,\underline{N}}]}\bb{Z}[P_{e_L,p^{\underline{n}}\underline{N}}]$ by \eqref{eq:prop:tower-basic-4}. By \ref{lem:decomposition}.(\ref{item:lem:decomposition-3}), there is a decomposition of $\ca{A}^L_{p^{\underline{n}}\underline{N}}$ into its $\ca{A}^L_{\underline{N}}$-submodules of finite presentation
	\begin{align}
		\ca{A}^L_{p^{\underline{n}}\underline{N}}=\bigoplus_{\underline{\rho}\in\prod_{i=1}^d(p^{-n}N_i^{-1}\bb{Z})\cap [0,N_i^{-1})}\ca{A}^L_{\underline{N}}\otimes_{\bb{Z}[P_{e_L,\underline{N}}]} \bb{Z}[P_{e_L,\underline{\infty}}^{(\underline{\rho})}],
	\end{align}
	where $P_{e_L,\underline{\infty}}^{(\underline{\rho})}=P_{e_L,\underline{\infty}}\cap (e_L^{-1}\bb{Z}\oplus(\rho_1+N_1^{-1}\bb{Z})\oplus\cdots\oplus(\rho_d+N_d^{-1}\bb{Z}))$. Notice that for any element $\tau\in \Delta_{p^\infty}$ and any $x_{\underline{\rho}}=(k_0/e_L,\rho_1+k_1/N_1,\dots,\rho_d+k_d/N_d)\in P_{e_L,\underline{\infty}}^{(\underline{\rho})}$ (where $k_0,\dots,k_d\in\bb{Z}$), we deduce from \ref{prop:tower-basic}.(\ref{item:prop:tower-basic-1}) that
	\begin{align}
		\tau(1\otimes x_{\underline{\rho}})=(\prod_{i=1}^d\tau(t_{i,p^nN_i}^{(\rho_i+k_i/N_i)p^nN_i})/t_{i,p^nN_i}^{(\rho_i+k_i/N_i)p^nN_i})\otimes x_{\underline{\rho}}=(\prod_{i=1}^d\tau(t_{i,p^nN_i}^{\rho_ip^nN_i})/t_{i,p^nN_i}^{\rho_ip^nN_i})\otimes x_{\underline{\rho}}.
	\end{align}
	We define a $\bb{Z}_p$-linear homomorphism
	\begin{align}
		\nu_{\underline{\rho}}:\Delta_{p^\infty}\longrightarrow \mu_{p^n}(\overline{K}),\ \tau\mapsto \prod_{i=1}^d\tau(t_{i,p^nN_i}^{\rho_ip^nN_i})/t_{i,p^nN_i}^{\rho_ip^nN_i}.
	\end{align}
	Combining with \eqref{eq:cor:galois-group-1}, one can check that this defines a bijection
	\begin{align}
		\prod_{i=1}^d(p^{-n}N_i^{-1}\bb{Z})\cap [0,N_i^{-1})\iso \ho(\Delta_{p^\infty},\mu_{p^n}(\overline{K})),\ \underline{\rho}\mapsto \nu_{\underline{\rho}}.
	\end{align}
	Indeed, its composition with the isomorphism induced by \eqref{eq:cor:galois-group-1} is
	\begin{align}
		\prod_{i=1}^d(p^{-n}N_i^{-1}\bb{Z})\cap [0,N_i^{-1})\longrightarrow \prod_{i=1}^d\ho(\bb{Z}_p(1),\mu_{p^n}(\overline{K})),\ \underline{\rho}\mapsto (\zeta\mapsto \zeta^{p^n\rho_i})_i,
	\end{align}
	which is clearly a bijection (\ref{para:zeta}).
	On the other hand, we see that 
	\begin{align}
		\ca{A}^L_{\underline{N}}\otimes_{\bb{Z}[P_{e_L,\underline{N}}]} \bb{Z}[P_{e_L,\underline{\infty}}^{(\underline{\rho})}]=\{x\in \ca{A}^L_{p^{\underline{n}}\underline{N}}\ |\  \tau(x)=\nu_{\underline{\rho}}(\tau)x,\ \forall\ \tau\in\Delta_{p^\infty}\},
	\end{align}
	which we denote by $\ca{A}^{L,(\nu_{\underline{\rho}})}_{p^{\underline{n}}\underline{N}}$. In other words, we decompose $\ca{A}^L_{p^{\underline{n}}\underline{N}}$ into its eigen-submodules under the action of $\Delta_{p^\infty}$,
	\begin{align}\label{eq:cor:galois-action-8}
		\ca{A}^L_{p^{\underline{n}}\underline{N}}=\bigoplus_{\nu\in\ho(\Delta_{p^\infty},\mu_{p^n}(\overline{K}))} \ca{A}^{L,(\nu)}_{p^{\underline{n}}\underline{N}}.
	\end{align}
	As the formation of taking eigen-submodules commutes with filtered colimit, we see that \eqref{eq:cor:galois-action-8} holds for any $n\in \bb{N}\cup\{\infty\}$ and any $L\in \scr{F}_{\overline{K}/K}$ containing $\{\zeta_{p^k}\}_{k\in\bb{N}_{\leq n}}$. 
	
	Finally, for any $L\in \scr{F}_{\overline{K}/K}$ containing $\{\zeta_{p^k}\}_{k\in\bb{N}}$, any $n\in\bb{N}$ and any $\nu\in \ho(\Delta_{p^\infty},\mu_{p^n}(\overline{K}))$, since $\ca{A}^{L,(\nu)}_{p^{\underline{m}}\underline{N}}$ is $p^n\Delta_{p^\infty}$-invariant for any $m\in\bb{N}_{\geq n}\cup\{\infty\}$, it lies in $(\ca{A}^L_{p^{\underline{m}}\underline{N}})^{p^n\Delta_{p^\infty}}=\ca{A}^L_{p^{\underline{n}}\underline{N}}$ by \ref{cor:galois-group}, and thus we have
	\begin{align}
		\ca{A}^{L,(\nu)}_{p^{\underline{m}}\underline{N}}=\ca{A}^{L,(\nu)}_{p^{\underline{n}}\underline{N}},
	\end{align}
	which verifies \eqref{eq:cor:galois-action-2} by \eqref{eq:cor:galois-action-8}.
\end{proof}
\begin{myrem}\label{rem:galois-action}
	In \ref{prop:galois-action}, if $L=\overline{K}$, then for any $\nu\in\ho(\Delta_{p^\infty},\mu_{p^\infty}(\overline{K}))$, $\ca{A}^{\overline{K},(\nu)}_{p^{\underline{\infty}}\underline{N}}$ is an $\ca{A}^{\overline{K}}_{\underline{N}}$-module of finite presentation, since $\ca{A}^{\overline{K}}_{p^{\underline{n}}\underline{N}}$ is an $\ca{A}^{\overline{K}}_{\underline{N}}$-module of finite presentation by \ref{prop:tower-basic}.(\ref{item:prop:tower-basic-6}).
\end{myrem}

\begin{mycor}\label{cor:galois-action-1}
	Let $L\in \scr{F}_{\overline{K}/K}$ containing $\{\zeta_{p^n}\}_{n\in\bb{N}}$, $\underline{N}\in(\bb{N}\cup\{\infty\}\setminus p\bb{N})^d$, $\nu:\Delta_{p^\infty}\to\mu_{p^\infty}(\overline{K})$ a $\bb{Z}_p$-linear homomorphism. Then, for any nonzero element $\pi\in\ak{m}_L$, the natural map
	\begin{align}
		\ca{A}^{L,(\nu)}_{p^{\underline{\infty}}\underline{N}}/\pi\ca{A}^{L,(\nu)}_{p^{\underline{\infty}}\underline{N}}\longrightarrow \prod_{\ak{p}\in\ak{G}(\ca{A}^L_{\underline{N}}/\pi\ca{A}^L_{\underline{N}})} (\ca{A}^{L,(\nu)}_{p^{\underline{\infty}}\underline{N}}/\pi\ca{A}^{L,(\nu)}_{p^{\underline{\infty}}\underline{N}})_{\ak{p}}
	\end{align}
	is injective, where $\ak{G}(\ca{A}^L_{\underline{N}}/\pi\ca{A}^L_{\underline{N}})$ is the finite set of generic points of $\spec(\ca{A}^L_{\underline{N}}/\pi\ca{A}^L_{\underline{N}})$ {\rm(\ref{defn:generic}, \ref{prop:tower-basic}.(\ref{item:prop:tower-basic-4}))}.
\end{mycor}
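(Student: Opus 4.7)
The plan is to derive the corollary by applying Proposition \ref{prop:generic-map}.(\ref{item:prop:generic-map-3}) to the integral normal extension $\ca{A}^L_{\underline{N}}\hookrightarrow \ca{A}^L_{p^{\underline{\infty}}\underline{N}}$ and then extracting the $\nu$-summand of the resulting injection via the decomposition of Proposition \ref{prop:galois-action}. I first reduce to the case where $\underline{N}\in\bb{N}^d$ is finite: writing $\underline{N}$ as a filtered supremum of finite $\underline{M}\,|\,\underline{N}$ with entries coprime to $p$, one has $\ca{A}^{L,(\nu)}_{p^{\underline{\infty}}\underline{N}}=\colim_{\underline{M}}\ca{A}^{L,(\nu)}_{p^{\underline{\infty}}\underline{M}}$ by \ref{prop:tower-basic}.(\ref{item:prop:tower-basic-5}) (compatibility with eigenspaces follows from \ref{prop:galois-action}), while $\ak{G}(\ca{A}^L_{\underline{N}}/\pi)=\lim_{\underline{M}}\ak{G}(\ca{A}^L_{\underline{M}}/\pi)$ by Lemma \ref{lem:generic-limit} together with \ref{rem:generic-map}.(\ref{item:rem:generic-map-3}). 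Any element $x$ of the source is represented at some finite level $\underline{M_0}$, and any $\ak{p}\in\ak{G}(\ca{A}^L_{\underline{N}}/\pi)$ above a prime $\ak{p}_{\underline{M_0}}$ contributes a nonzero image if $x$ does at level $\ak{p}_{\underline{M_0}}$, so the finite case implies the general case.

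Assuming henceforth that $\underline{N}\in\bb{N}^d$, so that $A\coloneqq\ca{A}^L_{\underline{N}}$ is a Noetherian normal domain, I apply Proposition \ref{prop:generic-map}.(\ref{item:prop:generic-map-3}) to the injective and integral homomorphism $A\hookrightarrow B\coloneqq\ca{A}^L_{p^{\underline{\infty}}\underline{N}}=\bigcup_{n\in\bb{N}}\ca{A}^L_{p^{\underline{n}}\underline{N}}$ of normal domains, which is a directed union of the Noetherian normal $A$-subalgebras $\ca{A}^L_{p^{\underline{n}}\underline{N}}$. Taking $n=1$ in that proposition yields the injection
\begin{align*}
B/\pi B\hookrightarrow \prod_{\ak{q}\in\ak{S}_\pi(B)}B_{\ak{q}}/\pi B_\ak{q},
\end{align*}
where, by \ref{prop:generic-map}.(\ref{item:prop:generic-map-1}), $\ak{S}_\pi(B)=\ak{G}(B/\pi B)$.

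Next I factor this injection through localizations at primes of $A$. For any $\ak{q}\in\ak{S}_\pi(B)$, the contraction $\ak{p}=\ak{q}\cap A$ contains $\pi$ and, by going-down for the integral extension of normal domains $A\hookrightarrow B$, has height $1$ in $A$; hence $\ak{p}\in\ak{G}(A/\pi A)$ by \ref{prop:generic-map}.(\ref{item:prop:generic-map-1}) applied to $A$ itself. Since $A\setminus\ak{p}\subseteq B\setminus\ak{q}$, the canonical map $B\to B_\ak{q}$ factors through the $A$-localization $B_\ak{p}$, so the injection above factors as
\begin{align*}
B/\pi B\hookrightarrow \prod_{\ak{p}\in\ak{G}(A/\pi A)}B_\ak{p}/\pi B_\ak{p}\longrightarrow \prod_{\ak{q}\in\ak{S}_\pi(B)}B_\ak{q}/\pi B_\ak{q},
\end{align*}
which forces the first arrow to be injective as well.

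Finally, the $\Delta_{p^\infty}$-equivariant decomposition $B=\bigoplus_\nu\ca{A}^{L,(\nu)}_{p^{\underline{\infty}}\underline{N}}$ of \ref{prop:galois-action} is a direct sum of $A$-submodules; both reduction mod $\pi$ and localization at a prime $\ak{p}$ of $A$ are $A$-linear and preserve direct sums, so the displayed injection $B/\pi B\hookrightarrow \prod_\ak{p}B_\ak{p}/\pi B_\ak{p}$ decomposes as a direct sum indexed by $\nu$, and injectivity on each $\nu$-summand is the desired conclusion. The only nontrivial ingredient is the $\underline{N}$-reduction in the first paragraph and the height-$1$ bookkeeping around $\ak{q}$ and $\ak{p}$ in the third paragraph; both are routine once one has Propositions \ref{prop:generic-map} and \ref{prop:galois-action} in hand, and no Koszul-level computation is needed for this statement.
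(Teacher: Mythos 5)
Your proposal has a genuine gap at the heart of the argument: you claim that once $\underline{N}\in\bb{N}^d$ is finite, $A\coloneqq\ca{A}^L_{\underline{N}}$ is a Noetherian normal domain and the $\ca{A}^L_{p^{\underline{n}}\underline{N}}$ are Noetherian normal $A$-subalgebras of $B=\ca{A}^L_{p^{\underline{\infty}}\underline{N}}$. Both claims are false: by hypothesis $L$ contains all $p$-power roots of unity, so $L$ is an \emph{infinite} algebraic extension of $K$, and therefore $\ca{A}^L_{\underline{r}}$ (being the integral closure of $\ca{A}$ in the infinite extension $L\ca{K}(t_{i,r_i})$) is not Noetherian for any $\underline{r}$. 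Hence your invocations of Proposition \ref{prop:generic-map} — both part (\ref{item:prop:generic-map-3}) for the injection into localizations and part (\ref{item:prop:generic-map-1}) ``applied to $A$ itself'' — are not valid as stated, and the reduction to finite $\underline{N}$ does nothing to repair this, since $L$ remains infinite. The paper sidesteps the issue by taking the Noetherian base to be $\ca{A}$ (the Case I/II/III local algebra) rather than $\ca{A}^L_{\underline{N}}$: it first chooses $n$ with $\nu$ factoring through $\mu_{p^n}$, so that $\ca{A}^{L,(\nu)}_{p^{\underline{\infty}}\underline{N}}$ is a direct summand of $B\coloneqq\ca{A}^L_{p^{\underline{n}}\underline{N}}$ by the finite decomposition \eqref{eq:cor:galois-action-2}, then applies Proposition \ref{prop:generic-map}.(\ref{item:prop:generic-map-3}) to the integral extension $\ca{A}\hookrightarrow B$, where $B$ is exhausted by the genuinely Noetherian subalgebras $\ca{A}^{L'}_{p^{\underline{n}}\underline{M}}$ for $L'\in\scr{F}^{\mrm{fini}}_{\overline{K}/K}$ and finite $\underline{M}\,|\,\underline{N}$. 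After that, the factorization of $\ak{G}(B/\pi B)$ through $\ak{G}(\ca{A}^L_{\underline{N}}/\pi\ca{A}^L_{\underline{N}})$ proceeds via \ref{rem:generic-map}.(\ref{item:rem:generic-map-3}) much as you do.

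Separately, the reduction to finite $\underline{N}$ is neither needed (the paper's argument above handles arbitrary $\underline{N}\in(\bb{N}\cup\{\infty\}\setminus p\bb{N})^d$ directly) nor adequately justified as written: the statement that ``any $\ak{p}$ above $\ak{p}_{\underline{M_0}}$ contributes a nonzero image if $x$ does at level $\ak{p}_{\underline{M_0}}$'' does not follow, because a section nonzero at level $\underline{M_0}$ may die in the localization at a given $\ak{p}$ above it. Making a limit reduction of this kind rigorous would require the argument with nonempty cofiltered limits of finite sets used in the proof of Theorem \ref{thm:purity}, which is exactly the extra work the paper avoids by choosing the right Noetherian base from the outset.
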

\begin{proof}
	We take $n\in\bb{N}$ such that $\nu$ factors through $\mu_{p^n}(\overline{K})$. Since $\ca{A}^L_{p^{\underline{n}}\underline{N}}$ is the union of a directed system of Noetherian normal $\ca{A}$-subalgebras, the natural map 
	\begin{align}
		\ca{A}^L_{p^{\underline{n}}\underline{N}}/\pi\ca{A}^L_{p^{\underline{n}}\underline{N}}\longrightarrow \prod_{\ak{q}\in\ak{G}(\ca{A}^L_{p^{\underline{n}}\underline{N}}/\pi\ca{A}^L_{p^{\underline{n}}\underline{N}})} (\ca{A}^L_{p^{\underline{n}}\underline{N}}/\pi\ca{A}^L_{p^{\underline{n}}\underline{N}})_{\ak{q}}
	\end{align}
	is injective by \ref{prop:generic-map}.(\ref{item:prop:generic-map-3}). Since the morphism $\spec(\ca{A}^L_{p^{\underline{n}}\underline{N}})\to\spec(\ca{A}^L_{\underline{N}})$ induces a map $\ak{G}(\ca{A}^L_{p^{\underline{n}}\underline{N}}/\pi\ca{A}^L_{p^{\underline{n}}\underline{N}})\to \ak{G}(\ca{A}^L_{\underline{N}}/\pi\ca{A}^L_{\underline{N}})$ (\ref{rem:generic-map}.(\ref{item:rem:generic-map-3})), the above injection factors through the natural map
	\begin{align}
		\ca{A}^L_{p^{\underline{n}}\underline{N}}/\pi\ca{A}^L_{p^{\underline{n}}\underline{N}}\longrightarrow \prod_{\ak{p}\in\ak{G}(\ca{A}^L_{\underline{N}}/\pi\ca{A}^L_{\underline{N}})} (\ca{A}^L_{p^{\underline{n}}\underline{N}}/\pi\ca{A}^L_{p^{\underline{n}}\underline{N}})_{\ak{p}},
	\end{align}
	which is thus also injective. Therefore, the conclusion follows from the decomposition \eqref{eq:cor:galois-action-2}.
\end{proof}

\begin{mylem}[{cf. \cite[9.16]{he2022sen}}]\label{lem:A-perfd}
	Let $L\in\scr{F}_{\overline{K}/K}$ be a pre-perfectoid field {\rm(\ref{para:notation-perfd})}. Then, the $\ca{O}_L$-algebra $\ca{A}^L_{\underline{\infty}}$ is pre-perfectoid {\rm(\ref{para:notation-perfd})}.
\end{mylem}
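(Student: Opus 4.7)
My plan is to reduce, via the adequate chart, to the monoid-algebra case $B^L_{\underline{\infty}} = \ca{O}_L \otimes_{\bb{Z}[e_L^{-1}\bb{N}]}\bb{Z}[P_{e_L,\underline{\infty}}]$ (interpreted as a filtered colimit over $L' \in \ff{K}$ with $L'\subset L$), and then to verify pre-perfectoidness of $B^L_{\underline{\infty}}$ directly by a Frobenius computation.

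For the reduction, Proposition \ref{prop:tower-basic}.(\ref{item:prop:tower-basic-3}) exhibits $\spec(\ca{A}^L_{\underline{r}})$ as one component of the finite disjoint union $\spec(\ca{A}\otimes_B B^L_{\underline{r}})$, and part (\ref{item:prop:tower-basic-5}) ensures these decompositions are stable under refinement. Taking colimits in $\underline{r}$, the scheme $\spec(\ca{A}^L_{\underline{\infty}})$ becomes a direct factor of $\spec(\ca{A}\otimes_B B^L_{\underline{\infty}})$, which in Case I of \ref{para:adequate-setup} is a localization of an \'etale $B^L_{\underline{\infty}}$-algebra, and in Cases II and III the same up to Henselization or $p$-adic completion along $(-,p)$. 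Since pre-perfectoidness (as formulated in \ref{para:notation-perfd}) is preserved under \'etale base change, localization, Henselization of $(-,p)$, $p$-adic completion, and passage to direct factors (all compatible with Frobenius), the lemma reduces to the pre-perfectoidness of $B^L_{\underline{\infty}}$.

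For this core computation, I choose a nonzero $\pi\in\ak{m}_L$ with $p \in \pi^p\ca{O}_L$, as supplied by pre-perfectoidness of $L$. The ring $B^L_{\underline{\infty}}$ is torsion-free over $\ca{O}_L$ (in fact a direct sum of rank-one pieces by Lemma \ref{lem:decomposition}.(\ref{item:lem:decomposition-3})), so its $\pi$-adic completion is flat over $\ca{O}_{\widehat{L}}$. On the coefficient ring $\ca{O}_L/\pi$, Frobenius is bijective by pre-perfectoidness of $L$. On the monoid part, Frobenius modulo $\pi$ acts on each basis element corresponding to $m \in \colim_{L'} P_{e_{L'},\underline{\infty}}$ as the multiplication $m \mapsto pm$, and this is bijective because (i) adjoining all $n$-th roots of the horizontal coordinates $t_1,\ldots,t_d$ (as $\underline{r}\to \underline{\infty}$) supplies $p$-divisibility of the horizontal components of $m$, and (ii) the ramification indices $e_{L'}$ for $L' \in \ff{K}$ with $L'\subset L$ are unbounded---a consequence of $L$ having non-discrete valuation, itself a consequence of pre-perfectoidness---which supplies $p$-divisibility in the first component of $m$. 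Combining, Frobenius on $B^L_{\underline{\infty}}/\pi$ is bijective, so $B^L_{\underline{\infty}}$ is pre-perfectoid.

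The main obstacle is the reduction step, specifically checking in Cases II and III of \ref{para:adequate-setup} that pre-perfectoidness commutes with the Henselization / $p$-adic completion / integral closure operations involved in the tower; this requires tracking compatibility of these operations with the decomposition into components provided by Proposition \ref{prop:tower-basic}. The monoid-algebra computation itself is a direct generalization of the standard perfectoid example $\ca{O}_C\langle T_1^{1/p^\infty},\ldots,T_d^{1/p^\infty}\rangle$, with the one extra subtlety being the need to secure $p$-divisibility in the ``vertical'' direction from the pre-perfectoidness of $L$.
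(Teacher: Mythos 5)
Your approach is a genuinely different route from the paper's, and the comparison is instructive. The paper first invokes \cite[5.4]{he2024coh} to reduce pre-perfectoidness to the single Frobenius-isomorphism criterion $\ca{A}^L_{\underline{\infty}}/p_1\ca{A}^L_{\underline{\infty}} \to \ca{A}^L_{\underline{\infty}}/p\ca{A}^L_{\underline{\infty}}$, and then---crucially---applies \ref{prop:tower-basic}.(\ref{item:prop:tower-basic-2}), which is a statement \emph{modulo $p$}: it says $\spec(\ca{A}^L_{\underline{r}}/p\ca{A}^L_{\underline{r}})$ is open and closed in $\spec(A^L_{\underline{r}}/pA^L_{\underline{r}})$. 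Since Frobenius mod $p$ sees nothing but the reduction mod $p$, the entire Henselization/completion issue (your Cases II and III) vanishes instantly: the $\ca{A}$-case is a direct factor of the $A$-case \emph{after} reduction mod $p$, and the $A$-case is then handled by citing \cite[9.16]{he2022sen}. You instead reduce further, to the monoid algebra $B^L_{\underline{\infty}}$, and do so using \ref{prop:tower-basic}.(\ref{item:prop:tower-basic-3}) and (\ref{item:prop:tower-basic-5}), which are statements about the rings themselves (before mod $p$). This is exactly what creates the obstacle you correctly flag: you must track how Henselization and $p$-adic completion interact with the decomposition into connected components and with the colimit in $\underline{r}$, which is genuinely delicate (and note that (\ref{item:prop:tower-basic-5}) only guarantees stability of the component decomposition along $p$-power refinements of $\underline{r}$, not along all refinements, so even the ``direct factor of $\ca{A}\otimes_B B^L_{\underline{\infty}}$'' claim requires the boundedness in (\ref{item:prop:tower-basic-4}) and a further argument). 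The fix is to borrow the paper's first move: pass to the mod-$p$ Frobenius criterion \emph{before} reducing to $B^L_{\underline{\infty}}$, at which point Henselization/completion are invisible and you only need \'etale base change, localization, and direct factors modulo $p$, all of which are clean.

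What your approach buys is a self-contained proof: your Frobenius computation on $B^L_{\underline{\infty}}$ (bijectivity on the coefficient ring $\ca{O}_L/\pi$ from pre-perfectoidness of $L$, plus bijectivity of $m \mapsto pm$ on the monoid via $p$-divisibility of the ``horizontal'' components by construction and of the ``vertical'' component from $p$-divisibility of the value group of $L$) is essentially the content of the cited \cite[9.16]{he2022sen}, spelled out. The computation itself looks correct, and identifying the $p$-divisibility of the value group as the source of vertical $p$-divisibility is exactly the right observation. The one point to make sharper: ``$e_{L'}$ unbounded'' is not quite enough---you need the $e_{L'}$ to be divisible by arbitrarily high powers of $p$, which follows from $p$-divisibility of the value group of the pre-perfectoid field $L$, not merely from non-discreteness.
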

\begin{proof}
	It suffices to check that the Frobenius induces an isomorphism $\ca{A}^L_{\underline{\infty}}/p_1\ca{A}^L_{\underline{\infty}}\to \ca{A}^L_{\underline{\infty}}/p\ca{A}^L_{\underline{\infty}}$, where $p_1$ is a $p$-th root of $p$ up to a unit in $L$ (\cite[5.4]{he2024coh}). Thus, it suffices to check that the Frobenius induces an isomorphism $A^L_{\underline{\infty}}/p_1A^L_{\underline{\infty}}\to A^L_{\underline{\infty}}/pA^L_{\underline{\infty}}$ by \ref{prop:tower-basic}.(\ref{item:prop:tower-basic-2}), which is proved in \cite[9.16]{he2022sen}.
\end{proof}

\begin{myprop}[{cf. \cite[9.23]{he2022sen}}]\label{prop:almost-purity}
	Let $L\in\scr{F}_{\overline{K}/K}$ be a pre-perfectoid field. Then, for any $\ca{K}'\in \scr{F}^{\mrm{fini}}_{\ca{K}_{\mrm{ur}}/\ca{K}^L_{\underline{\infty}}}$, the integral closure $\ca{A}'$ of $\ca{A}$ in $\ca{K}'$ is almost finite \'etale over $\ca{A}^L_{\underline{\infty}}$.
\end{myprop}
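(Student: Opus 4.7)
The plan is to reduce to the adequate case, which is the author's earlier result \cite[9.23]{he2022sen}. Let $B = \ca{O}_K \otimes_{\bb{Z}[\bb{N}]} \bb{Z}[P]$ be the adequate $\ca{O}_K$-algebra attached to the chart (see \ref{rem:quasi-adequate}); by \ref{defn:essential-adequate-alg}.(\ref{item:defn:essential-adequate-alg-3}) the map $B \to A$ is a localization of an étale ring map, and by \ref{prop:tower-basic}.(\ref{item:prop:tower-basic-3}) the scheme $\spec(\ca{A}^L_{\underline{\infty}})$ is a connected component of $\spec(\ca{A} \otimes_B B^L_{\underline{\infty}})$. Thus $B^L_{\underline{\infty}} \to \ca{A}^L_{\underline{\infty}}$ decomposes as a finite sequence of operations---étale base change, Zariski localization, extraction of a connected component, and (in cases (II)--(III) of \ref{para:adequate-setup}) Henselization or $p$-adic completion in the $p$-adic topology---each of which preserves the class of almost finite étale morphisms by the standard properties of almost modules \cite{gabber2003almost}.

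Given $\ca{K}' \in \scr{F}^{\mrm{fini}}_{\ca{K}_{\mrm{ur}}/\ca{K}^L_{\underline{\infty}}}$ corresponding to a finite étale cover $Y \to \spec(\ca{A}^L_{\underline{\infty}, \triv})$, I would first use \cite[8.8.2, 8.10.5]{ega4-3} applied to the filtered colimit presentation of $\ca{A}^L_{\underline{\infty}, \triv}$ to descend $Y$ to a finite étale cover $Y_0$ over $\spec(\ca{A}^{L_0}_{\underline{r_0}, \triv})$ for some finite step $(L_0, \underline{r_0})$, and then lift $Y_0$---after possibly enlarging $L_0$ and $\underline{r_0}$, still within $\scr{F}^{\mrm{fini}}_{\overline{K}/K} \times \bb{N}^d_{>0}$ (which preserves pre-perfectoidness)---Zariski-locally on the base to a finite étale cover of $\spec(B^{L_0}_{\underline{r_0}, \triv})$ along the étale-local map $\spec(A) \to \spec(B)$. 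Applying the adequate almost purity \cite[9.23]{he2022sen} to the resulting integral closure $\ca{B}'$ of $B^L_{\underline{\infty}}$ in the corresponding finite extension inside a maximal unramified extension then yields that $B^L_{\underline{\infty}} \to \ca{B}'$ is almost finite étale.

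Finally, $\ca{A}'$ should identify, up to almost isomorphism, with the connected component of $\ca{B}' \otimes_{B^L_{\underline{\infty}}} \ca{A}^L_{\underline{\infty}}$ (followed by Henselization or $p$-adic completion in cases (II)--(III)) whose generic fiber is $\ca{K}'$; the conclusion then follows from the first paragraph. The main obstacle is precisely this last identification, since integral closure does not in general commute with base change. To overcome it, one uses that the base change $\ca{B}' \otimes_{B^L_{\underline{\infty}}} \ca{A}^L_{\underline{\infty}}$ is already almost finite étale over the Noetherian normal ring $\ca{A}^L_{\underline{\infty}}$, hence is, up to almost isomorphism, a finite product of Noetherian normal $\ca{A}^L_{\underline{\infty}}$-algebras; the component with generic fiber $\ca{K}'$ must then match $\ca{A}'$ up to almost isomorphism, analogously to the bookkeeping carried out in \ref{prop:tower-basic}.(\ref{item:prop:tower-basic-3})--(\ref{item:prop:tower-basic-4}).
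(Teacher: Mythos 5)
The paper's proof is much shorter and takes a genuinely different route from yours: it invokes Abhyankar's lemma (from \cite[10.3]{tsuji2018localsimpson}, together with the limit argument \cite[8.21]{he2024coh}) to show directly that $\ca{A}'[1/p]$ is finite \'etale over $\ca{A}^L_{\underline{\infty}}[1/p]$, and then applies Scholze's almost purity theorem \cite[7.9]{scholze2012perfectoid} to the pre-perfectoid ring $\ca{A}^L_{\underline{\infty}}$ (pre-perfectoidness being Lemma \ref{lem:A-perfd}). The point of Abhyankar's lemma is that since $\ca{K}^L_{\underline{\infty}}$ already contains all $n$-th roots of the coordinates cutting out the horizontal divisor, any further finite extension $\ca{K}'$ becomes unramified there after inverting $p$; this immediately discharges the subtlety about horizontal ramification, with no need to pass back to the adequate model $B$ at all.

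Your approach of reducing to the adequate case has two genuine gaps. First, the ``lifting'' step fails: $\ca{A}^{L_0}_{\underline{r_0},\triv}$ is a localization of an \'etale $B^{L_0}_{\underline{r_0},\triv}$-algebra, and a finite \'etale cover of such a localization need not extend to a finite \'etale cover of the bigger ring, not even Zariski-locally on the target (the basic obstruction is the same as for the Kummer cover of $\spec(\bb{Q}[x,1/x])$ given by $y^2=x$, which does not extend over $\spec(\bb{Q}[x])$). There is no descent mechanism for finite \'etale covers along localizations of \'etale maps. Second, in your final paragraph you use that $\ca{A}^L_{\underline{\infty}}$ is a Noetherian normal ring in order to split the base change $\ca{B}'\otimes_{B^L_{\underline{\infty}}}\ca{A}^L_{\underline{\infty}}$ into a finite product of Noetherian normal algebras. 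But $\ca{A}^L_{\underline{\infty}}$ is \emph{not} Noetherian: it is the filtered colimit over $\underline{r}\in\bb{N}^d_{>0}$ (and over $L'\in\scr{F}^{\mrm{fini}}_{L/K}$) of the Noetherian rings $\ca{A}^{L'}_{\underline{r}}$, and this colimit adjoins all $p$-power roots of the coordinates $t_i$ (indeed even $\ca{O}_L$ is already non-Noetherian when $L$ is a pre-perfectoid field). So the claimed structural decomposition, and hence the identification of $\ca{A}'$ with the relevant component, is unjustified as stated. Closing these gaps would essentially force you into the Abhyankar-plus-almost-purity argument the paper actually uses.
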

\begin{proof}
	By Abhyankar's lemma \cite[10.3]{tsuji2018localsimpson} and a limit argument (see \cite[8.21]{he2024coh}), $\ca{A}'[1/p]$ is finite \'etale over $\ca{A}^L_{\underline{\infty}}[1/p]$. Then, the conclusion follows from the almost purity theorem \cite[7.9]{scholze2012perfectoid} (see \cite[7.12]{he2022sen}) and \ref{lem:A-perfd}.
\end{proof}

\begin{mycor}\label{cor:almost-purity}
	Let $L\in\scr{F}_{\overline{K}/K}$ be a pre-perfectoid field, $M$ an $\overline{\ca{A}}$-module endowed with a continuous $\overline{\ca{A}}$-semi-linear action of the Galois group $\ca{H}^L_{\infty}=\gal(\ca{K}_{\mrm{ur}}/\ca{K}^L_{\underline{\infty}})$ with respect to the discrete topology on $M$. Then, the natural homomorphism
	\begin{align}
		\overline{\ca{A}}\otimes_{\ca{A}^L_{\underline{\infty}}}M^{\ca{H}^L_{\infty}}\longrightarrow M
	\end{align}
	is an almost isomorphism, and the continuous group cohomology $H^n(\ca{H}^L_{\infty},M)$ is almost zero for any integer $n>0$. In particular, $\ca{A}^L_{\underline{\infty}}/\pi\ca{A}^L_{\underline{\infty}}\to (\overline{\ca{A}}/\pi \overline{\ca{A}})^{\ca{H}^L_{\infty}}$ is an almost isomorphism for any nonzero element $\pi\in \ak{m}_L$.
\end{mycor}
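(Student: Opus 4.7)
The plan is a standard almost \'etale Galois descent, bootstrapping the finite-layer information furnished by Proposition \ref{prop:almost-purity} to the infinite Galois situation.

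First, I would reduce to finite layers. Since the action of $\ca{H}^L_{\infty}$ on $M$ is continuous for the discrete topology, every element of $M$ has an open stabilizer, so
\begin{align*}
M=\colim_{\ca{K}'}M^{\gal(\ca{K}_{\mrm{ur}}/\ca{K}')},
\end{align*}
where $\ca{K}'$ ranges over the filtered system of finite Galois subextensions of $\ca{K}_{\mrm{ur}}/\ca{K}^L_{\underline{\infty}}$. Write $\ca{A}'$ for the integral closure of $\ca{A}$ in $\ca{K}'$ (equivalently $\ca{A}'=\overline{\ca{A}}^{\gal(\ca{K}_{\mrm{ur}}/\ca{K}')}$) and $G=\gal(\ca{K}'/\ca{K}^L_{\underline{\infty}})$; then each $N=M^{\gal(\ca{K}_{\mrm{ur}}/\ca{K}')}$ is an $\ca{A}'$-module carrying a semi-linear $G$-action. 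Since both continuous group cohomology and the invariants functor commute with filtered colimits of discrete modules, it suffices to prove finite-layer-wise that $\ca{A}'\otimes_{\ca{A}^L_{\underline{\infty}}}N^G\to N$ is an almost isomorphism and $H^n(G,N)$ is almost zero for $n>0$.

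Second, I would invoke almost Galois descent on each layer. Proposition \ref{prop:almost-purity} makes $\ca{A}^L_{\underline{\infty}}\to \ca{A}'$ almost finite \'etale, and the Galois hypothesis on $\ca{K}'/\ca{K}^L_{\underline{\infty}}$ upgrades this to an almost $G$-torsor in the sense that the natural map $\ca{A}'\otimes_{\ca{A}^L_{\underline{\infty}}}\ca{A}'\to \prod_{g\in G}\ca{A}'$ is an almost isomorphism. The standard Gabber--Ramero almost \'etale descent package \cite{gabber2003almost} (see also \cite[Section 7]{scholze2012perfectoid}) then yields exactly the two statements needed for arbitrary semi-linear $N$, via the almost trace projector. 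Passing to the colimit over $\ca{K}'$ gives the first two assertions of the corollary.

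Third, for the ``in particular'' claim, I would apply the first two parts to the discrete $\ca{H}^L_{\infty}$-module $M=\overline{\ca{A}}$ (discrete because every element lies in some $\ca{A}'$). This gives an almost isomorphism $\ca{A}^L_{\underline{\infty}}\to \overline{\ca{A}}^{\ca{H}^L_{\infty}}$ together with the almost vanishing of $H^1(\ca{H}^L_{\infty},\overline{\ca{A}})$. Feeding the short exact sequence $0\to \overline{\ca{A}}\stackrel{\cdot \pi}{\longrightarrow}\overline{\ca{A}}\to \overline{\ca{A}}/\pi\overline{\ca{A}}\to 0$ into the long exact sequence of continuous cohomology then realizes the cokernel of $\ca{A}^L_{\underline{\infty}}/\pi\ca{A}^L_{\underline{\infty}}\to (\overline{\ca{A}}/\pi\overline{\ca{A}})^{\ca{H}^L_{\infty}}$ as a submodule of $H^1(\ca{H}^L_{\infty},\overline{\ca{A}})[\pi]$, hence almost zero, while the kernel is almost zero by the first part.

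The main obstacle I anticipate is the compatibility of the almost-\'etale data across the finite layers: one must ensure that the almost trace maps $\ca{A}'\to \ca{A}^L_{\underline{\infty}}$ (equivalently, the associated almost projectors) behave transitively along the Galois tower and interact well with the filtered colimit, so that almost-vanishing and almost-invariance survive the passage to $\ca{H}^L_{\infty}$. Once this bookkeeping is in place, the rest is a formal application of Gabber--Ramero's machinery.
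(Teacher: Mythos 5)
Your proposal is correct and is essentially the route the paper takes: the paper's proof consists of citing Proposition~\ref{prop:almost-purity} together with almost Galois descent, pointing to the arguments of Abbes--Gros [II.6.19, II.6.22], which is exactly the finite-layer reduction plus almost-torsor descent that you spell out. The only small remark is that one can avoid the long exact sequence in the final step by simply applying the first two assertions directly to the discrete module $M=\overline{\ca{A}}/\pi\overline{\ca{A}}$ and using $\overline{\ca{A}}^{\ca{H}^L_{\infty}}=\ca{A}^L_{\underline{\infty}}$, but your route via $0\to\overline{\ca{A}}\stackrel{\cdot\pi}{\to}\overline{\ca{A}}\to\overline{\ca{A}}/\pi\overline{\ca{A}}\to 0$ works equally well.
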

\begin{proof}
	It follows from \ref{prop:almost-purity} and almost Galois descent, see the arguments of \cite[\Luoma{2}.6.19, \Luoma{2}.6.22]{abbes2016p}.
\end{proof}

\begin{mylem}\label{lem:coprime-p-coh}
	Let $L\in\scr{F}_{\overline{K}/K}$ containing $\{\zeta_n\}_{n\in\bb{N}\setminus p\bb{N}}$, $M$ an $\ca{A}^L_{\underline{\infty}}$-module consisting of $p$-power torsion elements endowed with an $\ca{A}^L_{\underline{\infty}}$-semi-linear action of the Galois group $\ca{H}^L_{p^\infty}/\ca{H}^L_{\infty}=\gal(\ca{K}^L_{\underline{\infty}}/\ca{K}^L_{p^{\underline{\infty}}})$ with respect to the discrete topology on $M$. Then, the natural homomorphism
	\begin{align}
		\ca{A}^L_{\underline{\infty}}\otimes_{\ca{A}^L_{p^{\underline{\infty}}}}M^{\ca{H}^L_{p^\infty}/\ca{H}^L_{\infty}}\longrightarrow M
	\end{align}
	is an isomorphism, and the continuous group cohomology $H^n(\ca{H}^L_{p^\infty}/\ca{H}^L_{\infty},M)$ is zero for any integer $n>0$. Moreover, $\ca{A}^L_{p^{\underline{\infty}}}/\pi\ca{A}^L_{p^{\underline{\infty}}}\to (\ca{A}^L_{\underline{\infty}}/\pi \ca{A}^L_{\underline{\infty}})^{\ca{H}^L_{p^\infty}/\ca{H}^L_{\infty}}$ is an isomorphism for any element $\pi\in \ak{m}_L$.
\end{mylem}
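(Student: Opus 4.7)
The approach is to mirror the eigenspace analysis of \ref{prop:galois-action}, exploiting that $G := \ca{H}^L_{p^\infty}/\ca{H}^L_\infty$ is a pro-prime-to-$p$ group while $M$ is $p$-power torsion.

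First, I identify the Galois group. By Kummer theory and the assumption that $L$ contains $\zeta_n$ for every $n$ coprime to $p$, an argument analogous to \ref{cor:galois-group} yields
\begin{align}
G = \gal(\ca{K}^L_{\underline{\infty}}/\ca{K}^L_{p^{\underline{\infty}}}) \cong \prod_{i=1}^d \widehat{\bb{Z}}^{(p')}(1),
\end{align}
where $\widehat{\bb{Z}}^{(p')}(1) = \lim_{n \in \bb{N}\setminus p\bb{N}} \mu_n(\overline{K})$. In particular every open normal subgroup $H \le G$ has quotient $G/H$ finite of order coprime to $p$.

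Second, the vanishing of $H^n_{\cont}(G, M)$ in positive degrees is immediate. Since the action on $M$ is discrete, $H^n_{\cont}(G, M) = \colim_H H^n(G/H, M^H)$ over open normal $H$; each $M^H$ is $p$-power torsion and $|G/H|$ is coprime to $p$, hence $|G/H|$ is invertible on $M^H$. The classical corestriction--restriction identity forces $|G/H|$ to annihilate $H^n(G/H, M^H)$, so the latter vanishes for $n>0$.

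Third, for the invariants isomorphism, I adapt the eigenspace decomposition of \ref{prop:galois-action}. Applying \ref{lem:decomposition} together with \eqref{eq:prop:tower-basic-4} at each finite Kummer level and passing to the colimit, one obtains a $G$-equivariant decomposition
\begin{align}
\ca{A}^L_{\underline{\infty}} = \bigoplus_\nu \ca{A}^{L,(\nu)}_{\underline{\infty}}
\end{align}
as $\ca{A}^L_{p^{\underline{\infty}}}$-modules, indexed by continuous characters $\nu : G \to \mu^{(p')}(\overline{K})$, with each $\ca{A}^{L,(\nu)}_{\underline{\infty}}$ free of rank one (generated by an explicit monomial $\xi_\nu$ in the prime-to-$p$ roots $t_{i, N_i}$) and $\ca{A}^{L,(1)}_{\underline{\infty}} = \ca{A}^L_{p^{\underline{\infty}}}$. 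The averaging idempotents $e_\nu = \frac{1}{|G/H|} \sum_{g \in G/H} \nu(g)^{-1} g$ act on each $M^H$, yielding the decomposition $M = \bigoplus_\nu M^{(\nu)}$ with $M^G = M^{(1)}$. The natural map then splits termwise into $\ca{A}^{L,(\nu)}_{\underline{\infty}} \otimes_{\ca{A}^L_{p^{\underline{\infty}}}} M^G \to M^{(\nu)}$, $\xi_\nu \otimes m \mapsto \xi_\nu m$, and these are shown to be isomorphisms by reducing to finite Kummer level via the filtered colimit $M = \colim_{\underline{N}} M^{H_{\underline{N}}}$ and using that $\ca{A}^L_{p^{\underline{\infty}}\underline{N}}$ is free of rank $\prod N_i$ over $\ca{A}^L_{p^{\underline{\infty}}}$ with the monomials $\xi_\nu$ as a basis. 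Finally, the mod-$\pi$ statement is a direct application to $M = \ca{A}^L_{\underline{\infty}}/\pi\ca{A}^L_{\underline{\infty}}$, which is $p$-power torsion because $L$ is algebraic over $K$ (hence $\ca{O}_L$ has height one and any $\pi \in \ak{m}_L \setminus \{0\}$ divides some power of $p$ in $\ca{O}_L$).

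The main obstacle is the descent argument for the invariants: because $\ca{A}^L_{\underline{\infty}}/\ca{A}^L_{p^{\underline{\infty}}}$ is Kummer ramified along $\{t_i = 0\}$ rather than finite \'etale, the isomorphism does not follow from the standard faithfully flat descent for Galois covers and must be checked termwise on eigencomponents using the explicit rank-one structure of the Kummer tower at every finite level.
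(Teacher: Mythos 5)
Your steps 1 and 2 (identifying $G=\ca{H}^L_{p^\infty}/\ca{H}^L_\infty$ as pro-prime-to-$p$, and the restriction--corestriction argument for $H^n_{\cont}(G,M)=0$, $n>0$) are fine; they match the only observation the paper makes explicitly before delegating to the cited general lemmas of Abbes--Gros.

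Step 3 contains a genuine gap. The claim you lean on --- that $\ca{A}^L_{p^{\underline{\infty}}\underline{N}}$ is free of rank $\prod_i N_i$ over $\ca{A}^L_{p^{\underline{\infty}}}$ with the monomials $\xi_\nu=\prod_i t_{i,N_i}^{a_i}$ as a basis, and hence that each eigenspace $\ca{A}^{L,(\nu)}_{\underline{\infty}}$ is free of rank one --- is false as soon as some coordinate $t_i$ with $c<i\le d$ is present (i.e.\ a non-unit cutting out the horizontal divisor). At each finite level $\ca{A}^L_{p^{\underline{n}}\underline{N}}$ is indeed free with monomial basis over $\ca{A}^L_{p^{\underline{n}}}$, but these bases are \emph{not} compatible with the transition maps: going from level $p^{\underline{n}}$ to $p^{\underline{n+1}}$ the change of basis is (up to permutation) a diagonal matrix whose entries are non-unit monomials in the $t_{i,p^{n+1}N_i}$. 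Consequently the colimit $\ca{A}^L_{p^{\underline{\infty}}\underline{N}}$ is not finite over $\ca{A}^L_{p^{\underline{\infty}}}$, let alone free. Concretely, in the monoid-ring model the $\nu$-eigenspace is spanned by the $t^{\underline{\alpha}}$ with $\alpha_i$ running over a fixed nontrivial coset of $\bb{Z}[1/p]$ in $\bb{Q}$, subject to $\alpha_i\ge 0$ for $i>c$; since $\bb{Z}[1/p]$ is dense, the infimum of the admissible exponents is $0$ and is never attained, so the eigenspace is an infinite ascending union of rank-one free $\ca{A}^L_{p^{\underline{\infty}}}$-modules with no smallest member --- flat of generic rank one but not finitely generated, not free, and not even invertible (the product $\ca{A}^{L,(\nu)}_{\underline{\infty}}\cdot\ca{A}^{L,(\nu^{-1})}_{\underline{\infty}}$ is a proper ideal of $\ca{A}^L_{p^{\underline{\infty}}}$). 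So the "explicit rank-one structure at every finite Kummer level" that you invoke to resolve the Kummer-ramification obstacle does not survive passage to the perfectoid base $\ca{A}^L_{p^{\underline{\infty}}}$, and there is no element $\xi_\nu$ to define your termwise maps. This is exactly the difficulty you correctly flag in your last paragraph, but the structure you propose to fix it with is special to the case $c=d$ in which all $t_i$ are units (where the Kummer cover is honest finite \'etale). The paper's proof delegates precisely this point to the arguments of Abbes--Gros II.6.11 and II.6.13, and one would need those (or some other replacement for the nonexistent rank-one basis) to close the argument.
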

\begin{proof}
	Note that for any $N\in\bb{N}\setminus p\bb{N}$, there is an injective homomorphism of groups
	\begin{align}
		\ca{H}^L_{p^\infty}/\ca{H}^L_{p^\infty N}=\gal(\ca{K}^L_{p^{\underline{\infty}}\underline{N}}/\ca{K}^L_{p^{\underline{\infty}}})\longrightarrow \prod_{i=1}^d\mu_N(\overline{K}),\ \tau\mapsto (\tau(t_{i,N})/t_{i,N})_i,
	\end{align}
	where the target is a finite abelian group with order coprime to $p$, and thus so is the source. Then, the conclusion follows from the general arguments of \cite[\Luoma{2}.6.11, \Luoma{2}.6.13]{abbes2016p}.
\end{proof}

\begin{mylem}\label{lem:coh-p-inf}
	Let $L\in\scr{F}_{\overline{K}/K}$ be a pre-perfectoid field containing $\{\zeta_n\}_{n\in\bb{N}_{>0}}$, $\pi$ a nonzero element of $\ak{m}_L$, $\ca{H}^L=\gal(\ca{K}_{\mrm{ur}}/\ca{K}^L)$, $\Delta_{p^\infty}=\gal(\ca{K}^L_{p^{\underline{\infty}}}/\ca{K}^L)$. Then, for any integer $n$, the canonical morphism of continuous group cohomology groups
	\begin{align}
		H^n(\Delta_{p^\infty}, \ca{A}^L_{p^{\underline{\infty}}}/\pi\ca{A}^L_{p^{\underline{\infty}}})\longrightarrow H^n(\ca{H}^L,\overline{\ca{A}}/\pi\overline{\ca{A}})
	\end{align}
	is an almost isomorphism of $\ca{A}^L$-modules.
\end{mylem}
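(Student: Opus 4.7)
The plan is to factor the comparison through the tower of Galois extensions $\ca{K}^L\subseteq \ca{K}^L_{p^{\underline{\infty}}}\subseteq \ca{K}^L_{\underline{\infty}}\subseteq \ca{K}_{\mrm{ur}}$. Set $\ca{H}^L_{p^\infty}=\gal(\ca{K}_{\mrm{ur}}/\ca{K}^L_{p^{\underline{\infty}}})$ and $\ca{H}^L_\infty=\gal(\ca{K}_{\mrm{ur}}/\ca{K}^L_{\underline{\infty}})$, so $\ca{H}^L_\infty\trianglelefteq \ca{H}^L_{p^\infty}\trianglelefteq \ca{H}^L$ (the intermediate extensions are Galois since $L$ contains all roots of unity by hypothesis) with quotients $\ca{H}^L_{p^\infty}/\ca{H}^L_\infty=\gal(\ca{K}^L_{\underline{\infty}}/\ca{K}^L_{p^{\underline{\infty}}})$ and $\ca{H}^L/\ca{H}^L_{p^\infty}=\Delta_{p^\infty}$. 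The $\ca{H}^L$-module $\overline{\ca{A}}/\pi\overline{\ca{A}}$ is smooth for the discrete topology because $\overline{\ca{A}}$ is the filtered colimit of finite-level integral closures, each stabilized by an open subgroup; thus the relevant Hochschild--Serre spectral sequences for continuous cohomology are available.

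First I would apply Corollary \ref{cor:almost-purity} to the pair $(\ca{H}^L_\infty, \overline{\ca{A}}/\pi\overline{\ca{A}})$: the higher continuous cohomology is almost zero and the invariants almost equal $\ca{A}^L_{\underline{\infty}}/\pi\ca{A}^L_{\underline{\infty}}$. Feeding this into the Hochschild--Serre spectral sequence for $\ca{H}^L_\infty\trianglelefteq \ca{H}^L_{p^\infty}$ yields an almost isomorphism
\begin{align*}
H^n(\ca{H}^L_{p^\infty}/\ca{H}^L_\infty,\ \ca{A}^L_{\underline{\infty}}/\pi\ca{A}^L_{\underline{\infty}})\ \longrightarrow\ H^n(\ca{H}^L_{p^\infty},\ \overline{\ca{A}}/\pi\overline{\ca{A}})
\end{align*}
for every $n$. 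Next, Lemma \ref{lem:coprime-p-coh} applied to the $\ca{H}^L_{p^\infty}/\ca{H}^L_\infty$-module $\ca{A}^L_{\underline{\infty}}/\pi\ca{A}^L_{\underline{\infty}}$ forces the left-hand side to vanish (not merely almost) for $n>0$ and to equal $\ca{A}^L_{p^{\underline{\infty}}}/\pi\ca{A}^L_{p^{\underline{\infty}}}$ for $n=0$. Combining, the continuous cohomology of $\ca{H}^L_{p^\infty}$ on $\overline{\ca{A}}/\pi\overline{\ca{A}}$ is almost concentrated in degree $0$, with value almost equal to $\ca{A}^L_{p^{\underline{\infty}}}/\pi\ca{A}^L_{p^{\underline{\infty}}}$.

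Finally I would run the Hochschild--Serre spectral sequence for $\ca{H}^L_{p^\infty}\trianglelefteq \ca{H}^L$,
\begin{align*}
E_2^{s,t}=H^s(\Delta_{p^\infty},\ H^t(\ca{H}^L_{p^\infty},\ \overline{\ca{A}}/\pi\overline{\ca{A}}))\ \Longrightarrow\ H^{s+t}(\ca{H}^L,\ \overline{\ca{A}}/\pi\overline{\ca{A}}).
\end{align*}
By the previous step every row $t>0$ is almost zero, so the sequence almost degenerates on the bottom row: the edge map
\begin{align*}
H^n(\Delta_{p^\infty},\ \ca{A}^L_{p^{\underline{\infty}}}/\pi\ca{A}^L_{p^{\underline{\infty}}})\ \longrightarrow\ H^n(\ca{H}^L,\ \overline{\ca{A}}/\pi\overline{\ca{A}})
\end{align*}
has kernel and cokernel built as successive extensions of subquotients of the $E_\infty^{s,t}$ with $t>0$ (all almost zero), hence is an almost isomorphism. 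One checks by inspection that this edge map is precisely the canonical map in the statement, so we are done.

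The main obstacle is bookkeeping the ``almost'' qualifier through two successive spectral sequences. This rests on the fact that the category of almost zero modules is closed under kernels, cokernels, extensions and filtered colimits, so that continuous group cohomology and the Hochschild--Serre filtration send almost isomorphisms to almost isomorphisms; granted this (which is exactly the style of argument used in the preceding \cite[\Luoma{2}.6.11--6.13]{abbes2016p}), the remainder of the proof is mechanical.
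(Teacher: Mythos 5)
Your proposal is correct and is exactly the argument the paper invokes: the paper's proof consists of the one-line citation ``It follows from \ref{cor:almost-purity} and \ref{lem:coprime-p-coh}, see the proof of \cite[\Luoma{2}.6.24]{abbes2016p},'' and the Abbes--Gros proof referenced there is precisely this two-stage Hochschild--Serre d\'evissage through $\ca{H}^L_\infty\trianglelefteq\ca{H}^L_{p^\infty}\trianglelefteq\ca{H}^L$, using almost purity to collapse the inner layer and then \ref{lem:coprime-p-coh} to kill the prime-to-$p$ layer. Your write-up just spells out the spectral-sequence bookkeeping that the paper delegates to the citation.
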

\begin{proof}
	It follows from \ref{cor:almost-purity} and \ref{lem:coprime-p-coh}, see the proof of \cite[\Luoma{2}.6.24]{abbes2016p}.
\end{proof}

\begin{mylem}\label{lem:p-inf-nu-coh}
	Let $L\in\scr{F}_{\overline{K}/K}$ be a pre-perfectoid field containing $\{\zeta_{p^n}\}_{n\in\bb{N}}$, $\pi$ a nonzero element of $\ak{m}_L$, $\Delta_{p^\infty}=\gal(\ca{K}^L_{p^{\underline{\infty}}}/\ca{K}^L)$ with $\bb{Z}_p$-basis $\tau_1,\dots,\tau_d$ {\rm(\ref{cor:galois-group})}, $\nu:\Delta_{p^\infty}\to\mu_{p^\infty}(\overline{K})$ a $\bb{Z}_p$-linear homomorphism. Then, there is a canonical isomorphism in the derived category $\dd(\ca{A}^L\module)$,
	\begin{align}
		\rr\Gamma(\Delta_{p^\infty},\ca{A}^{L,(\nu)}_{p^{\underline{\infty}}}/\pi\ca{A}^{L,(\nu)}_{p^{\underline{\infty}}})\cong K^\bullet(\ca{A}^{L,(\nu)}_{p^{\underline{\infty}}}/\pi\ca{A}^{L,(\nu)}_{p^{\underline{\infty}}};\nu(\tau_1)-1,\dots,\nu(\tau_d)-1),
	\end{align}
	where $\ca{A}^{L,(\nu)}_{p^{\underline{\infty}}}=\{x\in\ca{A}^L_{p^{\underline{\infty}}} \ |\ \tau(x)=\nu(\tau)x,\ \forall\ \tau\in\Delta_{p^\infty}\}$ {\rm(\ref{prop:galois-action})}, and $K^\bullet$ is the Koszul complex {\rm(\ref{para:notation-koszul-1})}.
\end{mylem}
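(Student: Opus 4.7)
The plan is to identify $\Delta_{p^\infty}$ with the topologically free $\bb{Z}_p$-module $\bigoplus_{i=1}^d\bb{Z}_p\tau_i\cong\bb{Z}_p^d$ via \ref{cor:galois-group}, and then to apply the classical computation of continuous cohomology of $\bb{Z}_p^d$ by a Koszul complex to the module $M:=\ca{A}^{L,(\nu)}_{p^{\underline{\infty}}}/\pi\ca{A}^{L,(\nu)}_{p^{\underline{\infty}}}$.

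First I would verify that the $\Delta_{p^\infty}$-action on $M$ is continuous for the discrete topology. Since the target $\mu_{p^\infty}(\overline{K})\cong \bb{Q}_p/\bb{Z}_p(1)$ is a discrete divisible $\bb{Z}_p$-module, the $\bb{Z}_p$-linear homomorphism $\nu$ factors through $\mu_{p^n}(\overline{K})$ for some $n\in\bb{N}$. By the definition of the $\nu$-eigenspace in \ref{prop:galois-action}, each $\tau\in\Delta_{p^\infty}$ acts on $\ca{A}^{L,(\nu)}_{p^{\underline{\infty}}}$ (and hence on $M$) by multiplication by the scalar $\nu(\tau)\in\ca{O}_L$, so the action factors through the finite quotient $\Delta_{p^\infty}/p^n\Delta_{p^\infty}$; in particular it is continuous.

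Next I would invoke the standard fact that for any continuous action of $\bb{Z}_p^d=\bigoplus_{i=1}^d\bb{Z}_p\sigma_i$ on a discrete (or $p$-adically complete) $p$-primary abelian group $N$, the continuous group cohomology $\rr\Gamma_{\cont}(\bb{Z}_p^d,N)$ is canonically quasi-isomorphic to the Koszul complex $K^\bullet(N;\sigma_1-1,\dots,\sigma_d-1)$. For $d=1$ this reduces to the well-known identification $\rr\Gamma_{\cont}(\bb{Z}_p,N)\cong[N\xrightarrow{\sigma-1}N]$, which uses that $\bb{Z}_p$ has cohomological dimension one; the general case follows by induction on $d$ (see e.g.\ \cite[\Luoma{2}.3.24]{abbes2016p}).

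Applying this with $\sigma_i=\tau_i$ acting on $N=M$, and using the key observation above that the endomorphism $\tau_i-1$ coincides with multiplication by the scalar $\nu(\tau_i)-1$ on the eigen-submodule $M$, the resulting complex is precisely $K^\bullet(M;\nu(\tau_1)-1,\dots,\nu(\tau_d)-1)$, yielding the claimed isomorphism. The only delicate point is checking the continuity and finite-quotient reduction for the action on the $\nu$-eigenspace; once this is unwound, the identification of the differentials with the scalar multiplications $\nu(\tau_i)-1$ is immediate, and the rest is purely formal.
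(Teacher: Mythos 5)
Your proof is correct and follows the same route as the paper, which simply cites \cite[\Luoma{2}.3.25]{abbes2016p} and \cite[7.3]{bhattmorrowscholze2018integral} --- precisely the continuous-cohomology Koszul computation for $\bb{Z}_p^d$-actions that you unwind. The key observation that $\tau_i - 1$ acts on the $\nu$-eigenspace by the scalar $\nu(\tau_i)-1$, together with the continuity/finite-quotient check, is exactly what makes those citations apply.
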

\begin{proof}
	It is proved in \cite[\Luoma{2}.3.25]{abbes2016p} and also in \cite[7.3]{bhattmorrowscholze2018integral}.
\end{proof}

\begin{mythm}\label{thm:coh}
	Let $L\in\scr{F}_{\overline{K}/K}$ be a pre-perfectoid field containing $\{\zeta_n\}_{n\in\bb{N}_{>0}}$, $\pi$ a nonzero element of $\ak{m}_L$, $\ca{H}^L=\gal(\ca{K}_{\mrm{ur}}/\ca{K}^L)$. 
	\begin{enumerate}
		\renewcommand{\labelenumi}{{\rm(\theenumi)}}
		\item For any integer $n>d$, the continuous group cohomology $H^n(\ca{H}^L,\overline{\ca{A}}/\pi\overline{\ca{A}})$ is almost zero.\label{item:thm:coh-1}
		\item For any integer $0\leq n\leq d$, there exists a canonical $\ca{A}^L$-linear almost isomorphism $\varphi_n\oplus \psi_n=(\varphi_n^{(\nu)}\oplus \psi_n^{(\nu)})_{\nu}$ from
		\begin{small}\begin{align}
			\bigoplus_{\nu\in\ho(\Delta_{p^\infty},\mu_{p^\infty}(\overline{K}))} (\frac{\pi}{\pi^{(\nu)}}\ca{A}^{L,(\nu)}_{p^{\underline{\infty}}}/\pi\ca{A}^{L,(\nu)}_{p^{\underline{\infty}}} \otimes_{\ca{O}_L}\wedge^n \ca{O}_L^{\oplus (d-1)})\oplus (\ca{A}^{L,(\nu)}_{p^{\underline{\infty}}}/\pi^{(\nu)}\ca{A}^{L,(\nu)}_{p^{\underline{\infty}}}\otimes_{\ca{O}_L} \wedge^{n-1} \ca{O}_L^{\oplus (d-1)})
		\end{align}\end{small}to $H^n(\ca{H}^L,\overline{\ca{A}}/\pi\overline{\ca{A}})$, where $\Delta_{p^\infty}=\gal(\ca{K}^L_{p^{\underline{\infty}}}/\ca{K}^L)$ with $\bb{Z}_p$-basis $\tau_1,\dots,\tau_d$, $\nu:\Delta_{p^\infty}\to\mu_{p^\infty}(\overline{K})$ is a $\bb{Z}_p$-linear homomorphism, $\pi^{(\nu)}\in \{\pi, \nu(\tau_1)-1,\dots,\nu(\tau_d)-1\}\subseteq \ak{m}_L$ is the element with the minimal valuation {\rm(\ref{para:zeta})} and $\ca{A}^{L,(\nu)}_{p^{\underline{\infty}}}=\{x\in\ca{A}^L_{p^{\underline{\infty}}} \ |\ \tau(x)=\nu(\tau)x,\ \forall\ \tau\in\Delta_{p^\infty}\}$ {\rm(\ref{prop:galois-action})}.\label{item:thm:coh-2}
		\item If $\pi\in(\zeta_p-1)\ca{O}_L$, then the canonical morphism of complexes of $\ca{A}^L$-modules \label{item:thm:coh-3}
		\begin{small}\begin{align}\label{diam:thm:coh}
			\xymatrix{
				\cdots\ar[r]^-{0}&\ca{A}^L/\pi\ca{A}^L\otimes_{\bb{Z}_p}\wedge^{n-1} \bb{Z}_p^{\oplus d}\ar[d]\ar[r]^-{0}&\ca{A}^L/\pi\ca{A}^L\otimes_{\bb{Z}_p}\wedge^n \bb{Z}_p^{\oplus d}\ar[d]\ar[r]^-{0}&\ca{A}^L/\pi\ca{A}^L\otimes_{\bb{Z}_p}\wedge^{n+1} \bb{Z}_p^{\oplus d}\ar[d]\ar[r]^-{0}&\cdots\\
				\cdots\ar[r]^-{\delta^{n-2}}&H^{n-1}(\ca{H}^L,\overline{\ca{A}}/\pi\overline{\ca{A}})\ar[r]^-{\delta^{n-1}}&H^n(\ca{H}^L,\overline{\ca{A}}/\pi\overline{\ca{A}})\ar[r]^-{\delta^n}&H^{n+1}(\ca{H}^L,\overline{\ca{A}}/\pi\overline{\ca{A}})\ar[r]^-{\delta^{n+1}}&\cdots.
			}
		\end{align}\end{small}is an almost quasi-isomorphism, where the vertical arrows are induced by the canonical isomorphism $\ca{A}^L/\pi\ca{A}^L\otimes_{\bb{Z}_p} \bb{Z}_p^{\oplus d}\iso\ho_{\bb{Z}_p}(\Delta_{p^\infty},\ca{A}^L/\pi\ca{A}^L)$ (induced by the $\bb{Z}_p$-basis $\tau_1,\dots,\tau_d$ of $\Delta_{p^\infty}$) and the canonical maps $\wedge^n\ho_{\bb{Z}_p}(\Delta_{p^\infty},\ca{A}^L/\pi\ca{A}^L)\to H^n(\Delta_{p^\infty},\ca{A}^L/\pi\ca{A}^L)\to H^n(\ca{H}^L,\overline{\ca{A}}/\pi\overline{\ca{A}})$ (induced by cup products, see \cite[\Luoma{2}.3.30]{abbes2016p}), and where $\delta^n$ is the $n$-th coboundary map given in the long exact sequence associated to the exact sequence 
		\begin{align}
			\xymatrix{
				0\ar[r]&\overline{\ca{A}}/\pi\overline{\ca{A}}\ar[r]^-{\cdot \pi}&\overline{\ca{A}}/\pi^2\overline{\ca{A}}\ar[r]&\overline{\ca{A}}/\pi\overline{\ca{A}}\ar[r]&0.
			}
		\end{align}
	\end{enumerate}
\end{mythm}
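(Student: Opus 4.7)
The strategy is to reduce via Lemma \ref{lem:coh-p-inf} to the continuous cohomology of $\Delta_{p^\infty}$ acting on $\ca{A}^L_{p^{\underline{\infty}}}/\pi\ca{A}^L_{p^{\underline{\infty}}}$ (up to almost isomorphism), then apply the $\Delta_{p^\infty}$-equivariant eigenspace decomposition of Proposition \ref{prop:galois-action} to split this cohomology into summands indexed by characters $\nu : \Delta_{p^\infty} \to \mu_{p^\infty}(\overline{K})$, and finally apply Lemma \ref{lem:p-inf-nu-coh} to each summand, identifying its cohomology with that of the Koszul complex $K^\bullet(\ca{A}^{L,(\nu)}_{p^{\underline{\infty}}}/\pi\ca{A}^{L,(\nu)}_{p^{\underline{\infty}}};\,\nu(\tau_1)-1,\ldots,\nu(\tau_d)-1)$. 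This places the whole statement within the framework of Section \ref{sec:koszul}. Part (\ref{item:thm:coh-1}) is then immediate, since each such Koszul complex is concentrated in degrees $[0,d]$.

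For part (\ref{item:thm:coh-2}), I fix $\nu$ and choose $g_1 \in \{\nu(\tau_1)-1,\ldots,\nu(\tau_d)-1\}$ to be an element of minimal valuation in $\ca{O}_L$ (any choice is fine when $\nu$ is trivial and all these vanish); after permuting indices, I apply Proposition \ref{prop:koszul}, falling back on Remark \ref{rem:koszul} in the degenerate case $g_1 = 0$. To identify $M[g_1]$ with $(\pi/g_1)\ca{A}^{L,(\nu)}_{p^{\underline{\infty}}}/\pi\ca{A}^{L,(\nu)}_{p^{\underline{\infty}}}$ and $M/g_1M$ with $\ca{A}^{L,(\nu)}_{p^{\underline{\infty}}}/g_1\ca{A}^{L,(\nu)}_{p^{\underline{\infty}}}$, I use that $\ca{A}^{L,(\nu)}_{p^{\underline{\infty}}}$ is flat over $\ca{O}_L$, being a direct summand of the filtered colimit $\ca{A}^L_{p^{\underline{\infty}}}$ of finite flat $\ca{O}_L$-algebras via \eqref{eq:cor:galois-action-1}. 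Case-splitting on whether $g_1$ divides $\pi$ in $\ca{O}_L$ or vice versa recovers exactly the $\pi^{(\nu)}$-formula in the statement.

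For part (\ref{item:thm:coh-3}), the hypothesis $\pi \in (\zeta_p-1)\ca{O}_L$ ensures that for every nontrivial $\nu$ one has $v(\pi^{(\nu)}) \le v(\zeta_p-1) \le v(\pi)$, so Corollary \ref{cor:koszul} applies with $g = \pi$, $g_1 = \pi^{(\nu)}$, and $a = \pi/\pi^{(\nu)}$. That corollary shows the Bockstein $\delta^n$ sends the $\varphi_n$-summand of $H^n$ isomorphically onto the $\psi_{n+1}$-summand of $H^{n+1}$; combined with the three-term exactness in Corollary \ref{cor:koszul}, this forces $\ker\delta^n = \mrm{im}\,\delta^{n-1}$ equal to the $\psi_n$-summand, so the Bockstein complex is acyclic on every nontrivial character component. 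For the trivial character, $(\ca{A}^L_{p^{\underline{\infty}}})^{\Delta_{p^\infty}} = \ca{A}^L$, every operator $\nu(\tau_i)-1$ vanishes, both Koszul complexes for $N/\pi N$ and $N/\pi^2 N$ have zero differentials, so the Bockstein is zero and the trivial-$\nu$ cohomology equals $\ca{A}^L/\pi\ca{A}^L \otimes_{\ca{O}_L} \wedge^n \ca{O}_L^{\oplus d}$. Finally, identifying the source complex $\ca{A}^L/\pi\ca{A}^L \otimes_{\bb{Z}_p} \wedge^n\bb{Z}_p^{\oplus d}$ with $H^n(\Delta_{p^\infty},\ca{A}^L/\pi\ca{A}^L)$ via the standard trivial-coefficient Koszul computation for $\Delta_{p^\infty} \cong \bb{Z}_p^d$, one checks that the cup-product vertical maps land exactly in this trivial-$\nu$ summand, yielding the desired almost quasi-isomorphism.

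The main difficulty is the bookkeeping in part (\ref{item:thm:coh-3}): showing that the $\psi_n$-summand is exactly (not merely contained in) $\mrm{im}\,\delta^{n-1}$, for which the explicit formulas for $\varphi_n$ and $\psi_n$ from Lemma \ref{lem:koszul} together with the precise description of $\delta^{n-1}$ from Corollary \ref{cor:koszul} are essential. A subsidiary concern is establishing the flatness of $\ca{A}^{L,(\nu)}_{p^{\underline{\infty}}}$ over $\ca{O}_L$ underpinning the $g_1$-torsion computation in part (\ref{item:thm:coh-2}); this is not stated explicitly in Section \ref{sec:essential-adequate-alg} but follows from its being a direct summand of a flat algebra.
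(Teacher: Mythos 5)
Your proposal matches the paper's proof essentially step for step: reduce to $\Delta_{p^\infty}$-cohomology via \ref{lem:coh-p-inf}, split across characters $\nu$ via \ref{prop:galois-action}, invoke \ref{lem:p-inf-nu-coh} to reduce to Koszul cohomology, then apply \ref{prop:koszul}/\ref{rem:koszul} for part (\ref{item:thm:coh-2}) and \ref{cor:koszul} for part (\ref{item:thm:coh-3}), with the trivial character handled separately via the cup-product description. Your observation about the flatness of $\ca{A}^{L,(\nu)}_{p^{\underline{\infty}}}$ over $\ca{O}_L$ (needed to compute $M^{(\nu)}[g_1]$ and to apply \ref{cor:koszul} with $g=\pi$) is a point the paper leaves implicit; your justification — that it is a direct summand of $\ca{A}^L_{p^{\underline{\infty}}}$, which is $\ca{O}_L$-flat — is correct and a useful clarification of the published argument.
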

\begin{proof}
	For any $\bb{Z}_p$-linear homomorphism $\nu: \Delta_{p^\infty}\to\mu_{p^\infty}(\overline{K})$ and any nonzero element $\pi\in\ak{m}_L$, we write $M^{(\nu)}=\ca{A}^{L,(\nu)}_{p^{\underline{\infty}}}/\pi\ca{A}^{L,(\nu)}_{p^{\underline{\infty}}}$ and $\{\nu(\tau_1)-1,\dots,\nu(\tau_d)-1\}=\{g_1,\dots,g_d\}$ such that $v_L(g_1)\leq \cdots\leq v_L(g_d)$, where $v_L:L\to \bb{R}\cup\{\infty\}$ is a valuation map. Then, $H^n(\Delta_{p^\infty},M^{(\nu)})$ is canonically computed by the $n$-th cohomology group of the Koszul complex $K^\bullet(M^{(\nu)};g_1,\dots,g_d)$ by \ref{lem:p-inf-nu-coh}. Thus, there is a canonical $\ca{A}^L$-linear isomorphism by \ref{rem:koszul} and \ref{prop:koszul},
	\begin{small}\begin{align}
		\varphi_n^{(\nu)}\oplus \psi_n^{(\nu)}:(M^{(\nu)}[g_1]\otimes_{\ca{O}_L}\wedge^n \ca{O}_L^{\oplus (d-1)})\oplus (M^{(\nu)}/g_1M^{(\nu)}\otimes_{\ca{O}_L} \wedge^{n-1} \ca{O}_L^{\oplus (d-1)})\iso H^n(\Delta_{p^\infty},M^{(\nu)}).
	\end{align}\end{small}As $\pi^{(\nu)}\in\{\pi,g_1\}$ is the element with minimal valuation, we see that
	\begin{align}
		M^{(\nu)}[g_1]=\frac{\pi}{\pi^{(\nu)}}\ca{A}^{L,(\nu)}_{p^{\underline{\infty}}}/\pi\ca{A}^{L,(\nu)}_{p^{\underline{\infty}}}\quad\trm{ and }\quad M^{(\nu)}/g_1M^{(\nu)}=\ca{A}^{L,(\nu)}_{p^{\underline{\infty}}}/\pi^{(\nu)}\ca{A}^{L,(\nu)}_{p^{\underline{\infty}}}.
	\end{align}
	We remark that if $\nu=1$, then $M^{(0)}=\ca{A}^{L,(0)}_{p^{\underline{\infty}}}/\pi\ca{A}^{L,(0)}_{p^{\underline{\infty}}}=\ca{A}^L/\pi\ca{A}^L$ and the following canonical diagram is commutative by \ref{rem:koszul},
	\begin{align}
		\xymatrix{
			(M^{(0)}\otimes_{\ca{O}_L}\wedge^n \ca{O}_L^{\oplus (d-1)})\oplus (M^{(0)}\otimes_{\ca{O}_L} \wedge^{n-1} \ca{O}_L^{\oplus (d-1)})\ar[r]^-{\sim}_-{\varphi_n^{(0)}\oplus \psi_n^{(0)}}\ar[d]^-{\wr}_{\iota^1\oplus \iota'^1}& H^n(\Delta_{p^\infty},M^{(0)})\\
			M^{(0)}\otimes_{\ca{O}_L}\wedge^n \ca{O}_L^{\oplus d}\ar[r]^-{\sim}&\wedge^n\ho_{\bb{Z}_p}(\Delta_{p^\infty},M^{(0)})\ar[u]_-{\wr}
		}
	\end{align}
	where $\iota^1\oplus \iota'^1$ is the isomorphism \eqref{eq:para:notation-koszul-0-1}, the bottom horizontal isomorphism is induced by the isomorphism $\ca{O}_L^{\oplus d}\iso \ho_{\bb{Z}_p}(\Delta_{p^\infty},\ca{O}_L)$ and the $\bb{Z}_p$-basis $\tau_1,\dots,\tau_d$ of $\Delta_{p^\infty}$, and the right vertical isomorphism is induced by the canonical isomorphism $\ho_{\bb{Z}_p}(\Delta_{p^\infty},M^{(0)})\iso \ho_{\bb{Z}_p}(\Delta_{p^\infty},M^{(0)})$ and the cup product (\cite[\Luoma{2}.3.30]{abbes2016p}).
	
	Moreover, if $\pi\in(\zeta_p-1)\ca{O}_L$ and $\nu\neq 1$, then $\pi$ is divisible by $g_1$ \eqref{eq:para:zeta-2}. Thus, the following diagram is commutative by \ref{cor:koszul},
	\begin{align}
		\xymatrix{
			M^{(\nu)}[g_1]\otimes_{\ca{O}_L}\wedge^n \ca{O}_L^{\oplus (d-1)}\ar[d]_-{\varphi_n^{(\nu)}}\ar[r]^-{\sim}&M^{(\nu)}/g_1M^{(\nu)}\otimes_{\ca{O}_L} \wedge^n \ca{O}_L^{\oplus (d-1)}\ar[d]^-{\psi_{n+1}^{(\nu)}}\\
			H^n(\Delta_{p^\infty},M^{(\nu)})\ar[r]^-{\delta^n}&H^{n+1}(\Delta_{p^\infty},M^{(\nu)})
		}
	\end{align}
	where the horizontal isomorphism is the division by $\pi/g_1$, and $\delta^n$ is the $n$-th coboundary map associated to $0\to M^{(\nu)}\stackrel{\cdot \pi}{\longrightarrow} \ca{A}^{L,(\nu)}_{p^{\underline{\infty}}}/\pi^2\ca{A}^{L,(\nu)}_{p^{\underline{\infty}}}\to M^{(\nu)}\to 0$. In particular, we see that the canonical sequence
	\begin{align}\label{eq:thm:coh-9}
		\xymatrix{
			\cdots\ar[r]^-{\delta^{n-2}}&	H^{n-1}(\Delta_{p^\infty},M^{(\nu)})\ar[r]^-{\delta^{n-1}}&	H^n(\Delta_{p^\infty},M^{(\nu)})\ar[r]^-{\delta^n}&	H^{n+1}(\Delta_{p^\infty},M^{(\nu)})\ar[r]^-{\delta^{n+1}}&\cdots
			}
	\end{align}
	is exact. We note that if $\nu=1$, then $\ca{A}^{L,(0)}_{p^{\underline{\infty}}}=\ca{A}^L$ is endowed with trivial $\Delta_{p^\infty}$-action so that the $n$-th coboundary map $\delta^n:H^n(\Delta_{p^\infty},M^{(0)})\to	H^{n+1}(\Delta_{p^\infty},M^{(0)})$ associated to $0\to M^{(0)}\stackrel{\cdot \pi}{\longrightarrow} \ca{A}^{L,(0)}_{p^{\underline{\infty}}}/\pi^2\ca{A}^{L,(0)}_{p^{\underline{\infty}}}\to M^{(0)}\to 0$ is zero. Thus, the canonical sequence \eqref{eq:thm:coh-9} is equal to
	\begin{align}
		\xymatrix{
			\cdots\ar[r]^-{0}&	H^{n-1}(\Delta_{p^\infty},M^{(0)})\ar[r]^-{0}&	H^n(\Delta_{p^\infty},M^{(0)})\ar[r]^-{0}&	H^{n+1}(\Delta_{p^\infty},M^{(0)})\ar[r]^-{0}&\cdots.
		}
	\end{align}
	All in all, the conclusion follows from the fact that the canonical morphism
	\begin{align}
		\bigoplus_{\nu\in\ho(\Delta_{p^\infty},\mu_{p^\infty}(\overline{K}))}H^n(\Delta_{p^\infty},M^{(\nu)})\longrightarrow H^n(\ca{H}^L,\overline{\ca{A}}/\pi\overline{\ca{A}})
	\end{align}
	is an almost isomorphism by \ref{lem:coh-p-inf} and \ref{prop:galois-action}.
\end{proof}

\begin{mycor}\label{cor:coh}
	Let $L\in\scr{F}_{\overline{K}/K}$ be a pre-perfectoid field containing $\{\zeta_n\}_{n\in\bb{N}_{>0}}$, $\pi$ a nonzero element of $\ak{m}_L$, $\ca{H}^L=\gal(\ca{K}_{\mrm{ur}}/\ca{K}^L)$. Then, for any integer $n$, the natural map of $\ca{O}_L$-modules
	\begin{align}\label{eq:cor:coh-1}
		H^n(\ca{H}^L,\overline{\ca{A}}/\pi\overline{\ca{A}})\longrightarrow \prod_{\ak{p}\in\ak{G}(\ca{A}^L/p\ca{A}^L)}H^n(\ca{H}^L,\overline{\ca{A}}/\pi\overline{\ca{A}})_{\ak{p}}
	\end{align}
	is almost injective, where $\ak{G}(\ca{A}^L/p\ca{A}^L)$ is the set of generic points of $\spec(\ca{A}^L/p\ca{A}^L)$ {\rm(\ref{defn:generic})}. Moreover, if $\pi\in(\zeta_p-1)\ca{O}_L$, then the canonical exact sequence $0\to\overline{\ca{A}}/\pi\overline{\ca{A}}\stackrel{\cdot \pi}{\longrightarrow}\overline{\ca{A}}/\pi^2\overline{\ca{A}}\to\overline{\ca{A}}/\pi\overline{\ca{A}}\to 0$ induces an almost exact sequence of $\ca{A}^L$-modules
	\begin{align}\label{eq:cor:coh-2}
		\xymatrix{
			0\ar[r]&\ca{A}^L/\pi\ca{A}^L\ar[r]&H^0(\ca{H}^L,\overline{\ca{A}}/\pi\overline{\ca{A}})\ar[r]^-{\delta^0}&H^1(\ca{H}^L,\overline{\ca{A}}/\pi\overline{\ca{A}}).
		}
	\end{align}
\end{mycor}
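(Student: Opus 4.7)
The plan is to deduce both assertions from Theorem~\ref{thm:coh} combined with Corollary~\ref{cor:galois-action-1}. The key initial observation is that the three subsets $\ak{G}(\ca{A}^L/p\ca{A}^L)$, $\ak{G}(\ca{A}^L/\pi\ca{A}^L)$, and $\ak{G}(\ca{A}^L/\pi^{(\nu)}\ca{A}^L)$ of $\spec(\ca{A}^L)$ coincide: any height-$1$ prime of $\ca{A}^L$ containing $p$ meets $\ca{O}_L$ in $\ak{m}_L$, hence contains every nonzero element of $\ak{m}_L$, including $\pi$, $\pi^{(\nu)}$, and each $\nu(\tau_i)-1$.

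For the almost injectivity of \eqref{eq:cor:coh-1}, I would invoke Theorem~\ref{thm:coh}.(\ref{item:thm:coh-2}) to identify $H^n(\ca{H}^L,\overline{\ca{A}}/\pi\overline{\ca{A}})$ almost with a direct sum, indexed by $\nu\in\ho(\Delta_{p^\infty},\mu_{p^\infty}(\overline{K}))$, of finitely many copies of the two $\ca{A}^L$-modules
\[
\tfrac{\pi}{\pi^{(\nu)}}\ca{A}^{L,(\nu)}_{p^{\underline{\infty}}}/\pi\ca{A}^{L,(\nu)}_{p^{\underline{\infty}}}\quad\text{and}\quad \ca{A}^{L,(\nu)}_{p^{\underline{\infty}}}/\pi^{(\nu)}\ca{A}^{L,(\nu)}_{p^{\underline{\infty}}}.
\]
Since $\ca{A}^{L,(\nu)}_{p^{\underline{\infty}}}$ is a direct summand of the $\ca{O}_L$-flat module $\ca{A}^L_{p^{\underline{\infty}}}$ by \eqref{eq:cor:galois-action-1}, multiplication by $\pi/\pi^{(\nu)}$ gives an $\ca{A}^L$-linear isomorphism between these two modules. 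Then, applying Corollary~\ref{cor:galois-action-1} (taking $\underline{N}=\underline{1}$, and with $\pi$ replaced by $\pi^{(\nu)}$ for the second module), each eigen-component injects into the product of its localizations at the points of $\ak{G}(\ca{A}^L/p\ca{A}^L)$. Since localization commutes with direct sums, assembling these injections and summing over $\nu$ yields the almost injectivity of \eqref{eq:cor:coh-1}.

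For the almost exactness of \eqref{eq:cor:coh-2}, I would use Theorem~\ref{thm:coh}.(\ref{item:thm:coh-3}): under the extra hypothesis $\pi\in(\zeta_p-1)\ca{O}_L$, the complex $(H^\bullet(\ca{H}^L,\overline{\ca{A}}/\pi\overline{\ca{A}}),\delta^\bullet)$ is almost quasi-isomorphic to the complex with zero differentials whose $n$-th term is $\ca{A}^L/\pi\ca{A}^L\otimes_{\bb{Z}_p}\wedge^n\bb{Z}_p^{\oplus d}$. In degree $0$, this forces $\ke(\delta^0)$ to coincide almost with $\ca{A}^L/\pi\ca{A}^L$ via the canonical vertical arrow in \eqref{diam:thm:coh}, which (unwinding the construction through \ref{lem:coh-p-inf} and the $\nu=1$ summand of \ref{prop:galois-action}) factors through the structural map $\ca{A}^L/\pi\ca{A}^L\to H^0(\ca{H}^L,\overline{\ca{A}}/\pi\overline{\ca{A}})$ induced by the inclusion $\ca{A}^L\hookrightarrow\overline{\ca{A}}$. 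This yields simultaneously the almost injectivity of $\ca{A}^L/\pi\ca{A}^L\to H^0$ and the equality of its image with $\ke(\delta^0)$, which is exactly the almost exactness claim for \eqref{eq:cor:coh-2}.

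The main obstacle will be book-keeping: verifying that the $\ca{A}^L$-linear decomposition of $H^n$ furnished by Theorem~\ref{thm:coh}.(\ref{item:thm:coh-2}) is indeed compatible with localization at points of $\ak{G}(\ca{A}^L/p\ca{A}^L)$ (so that the product of localizations of the summands matches the localization of $H^n$), and that the degree-$0$ vertical arrow in \eqref{diam:thm:coh} coincides with the structural inclusion. Both are formal consequences of the explicit formulas for $\varphi_n$, $\psi_n$ and the $\Delta_{p^\infty}$-eigen-decomposition \eqref{eq:cor:galois-action-1}, but they must be recorded with care so that the exactness and injectivity transfer cleanly through the identifications.
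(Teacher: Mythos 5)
Your proposal is correct and follows essentially the same route as the paper's (terse) proof: the almost injectivity of \eqref{eq:cor:coh-1} is extracted from the almost decomposition of $H^n(\ca{H}^L,\overline{\ca{A}}/\pi\overline{\ca{A}})$ into eigen-components given by Theorem~\ref{thm:coh}.(\ref{item:thm:coh-2}) together with the summand-wise injectivity of Corollary~\ref{cor:galois-action-1}, and the almost exactness of \eqref{eq:cor:coh-2} is read off in degree $0$ from the almost quasi-isomorphism of Theorem~\ref{thm:coh}.(\ref{item:thm:coh-3}). Two small remarks: you should also invoke Theorem~\ref{thm:coh}.(\ref{item:thm:coh-1}) (which the paper explicitly cites) to handle the range $n>d$ where $H^n$ is almost zero and \eqref{eq:cor:coh-1} is trivially almost injective; and the passage from ``the eigen-decomposition injects into its localizations'' to ``$H^n$ almost injects into its localizations'' requires the short formal lemma that almost injectivity into a product of localizations is preserved under almost isomorphisms (using $\ak{m}_L=\ak{m}_L^2$ and the exactness of localization), which is the kind of book-keeping you already flag.
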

\begin{proof}
	The almost injectivity of \eqref{eq:cor:coh-1} follows directly from \ref{thm:coh} ((\ref{item:thm:coh-1}) and (\ref{item:thm:coh-2})) and \ref{cor:galois-action-1}, and the almost exactness of \eqref{eq:cor:coh-2} follows directly from \ref{thm:coh}.(\ref{item:thm:coh-3}).
\end{proof}

\section{Brief Review on Cohomological Descent for Faltings Ringed Topos}\label{sec:faltings-site}
In this section, we briefly review some results on the relation between ``Faltings acyclicity" and ``perfectoidness" following \cite{he2024coh}.

\begin{mypara}\label{para:faltings-site}
	Let $Y\to X$ be a morphism of coherent schemes, and let $\fal^{\et}_{Y\to X}$ (resp. $\fal^{\proet}_{Y\to X}$) be the category of morphisms $V\to U$ of coherent schemes \emph{\'etale} (resp. \emph{pro-\'etale}) over the morphism $Y\to X$, namely, the category of commutative diagrams of coherent schemes
	\begin{align}
		\xymatrix{
			V\ar[r]\ar[d]&U\ar[d]\\
			Y\ar[r]&X
		}
	\end{align}
	such that $U\to X$ is \'etale (resp. pro-\'etale) and that $V\to U_Y=Y\times_XU$ is finite \'etale (resp. pro-finite \'etale) (\cite[7.2 (resp. 7.20)]{he2024coh}). The functor
	\begin{align}
		\phi^+: \fal^{\et}_{Y\to X}&\longrightarrow X_\et,\ (V\to U)\mapsto U,\\
		(\trm{resp. } \phi^+: \fal^{\proet}_{Y\to X}&\longrightarrow X_\proet,\ (V\to U)\mapsto U),
	\end{align}
	endows $\fal^{\et}_{Y\to X}/X_\et$ (resp. $\fal^{\proet}_{Y\to X}/X_\proet$) with a structure of fibred sites, whose fibre site over $U$ is the finite \'etale site $U_{Y,\fet}$ (resp. pro-finite \'etale site $U_{Y,\profet}$). Here, the each site is formed only by coherent schemes. We endow $\fal^{\et}_{Y\to X}$ (resp. $\fal^{\proet}_{Y\to X}$) with the associated covanishing topology (\cite[7.4 (resp. 7.23)]{he2024coh}). We define a presheaf of rings $\falb$ on it by
	\begin{align}
		\falb(V\to U)=\Gamma(U^V,\ca{O}_{U^V}),
	\end{align}
	where $U^V$ is the integral closure of $U$ in $V$. It is actually a sheaf with respect to the covanishing topology (\cite[7.32]{he2024coh}). We call $(\fal^{\et}_{Y\to X},\falb)$ (resp. $(\fal^{\proet}_{Y\to X},\falb)$) the (resp. \emph{pro-\'etale}) \emph{Faltings ringed site} associated to $Y\to X$. We note that for any morphism of coherent schemes $V\to U$ \'etale (resp. pro-\'etale) over $Y\to X$, $(\fal^{\et}_{V\to U},\falb)$ (resp. $(\fal^{\proet}_{V\to U},\falb)$) identifies canonically with the localization of ringed site $(\fal^{\et}_{Y\to X},\falb)_{/(V\to U)}$ (resp. $(\fal^{\proet}_{Y\to X},\falb)_{/(V\to U)}$) (see \cite[7.3 (resp. 7.21)]{he2024coh} and remarks after \cite[7.7 (resp. 7.24)]{he2024coh}). 
\end{mypara}

\begin{mydefn}[{\cite[8.1]{he2024coh}}]\label{defn:faltings-acyclic}
	Let $L$ be a pre-perfectoid field extension of $\bb{Q}_p$, $Y\to X$ a morphism of coherent schemes such that $Y\to X^Y$ is over $\spec(L)\to \spec(\ca{O}_L)$, where $X^Y$ denotes the integral closure of $X$ in $Y$. We say that $Y\to X$ is \emph{Faltings acyclic} if $X$ is affine and if the canonical morphism
	\begin{align}\label{eq:defn:faltings-acyclic}
		B/p B\longrightarrow \rr\Gamma(\fal^\proet_{Y\to X},\falb/p\falb)
	\end{align}
	is an almost isomorphism (see \cite[5.7]{he2024coh}), where $B$ denotes the $\ca{O}_L$-algebra $\falb(Y\to X)$ (i.e., $X^Y=\spec(B)$).
\end{mydefn}
\begin{myrem}\label{rem:faltings-acyclic}
	In \ref{defn:faltings-acyclic}, the canonical morphism
	\begin{align}\label{eq:rem:faltings-acyclic-1}
		\rr\Gamma(\fal_{Y\to X}^\et,\falb/\pi\falb)\longrightarrow \rr\Gamma(\fal_{Y\to X}^\proet,\falb/\pi\falb)
	\end{align}
	is an isomorphism for any nonzero element $\pi\in \ak{m}_L$ by \cite[7.32]{he2024coh}, and the condition \eqref{eq:defn:faltings-acyclic} is equivalent to that the canonical morphism
		\begin{align}\label{eq:rem:faltings-acyclic-2}
		B/\pi B\longrightarrow \rr\Gamma(\fal^\et_{Y\to X},\falb/\pi\falb)
	\end{align}
	is an almost isomorphism for any nonzero element $\pi\in \ak{m}_L$ by \cite[8.3]{he2024coh}.
\end{myrem}

\begin{mylem}\label{lem:al-perfd}
	Let $L$ be a pre-perfectoid field extension of $\bb{Q}_p$, $B$ a flat $\ca{O}_L$-algebra. Assume that for any $x\in B[1/p]$ if $x^p\in B$ then $x\in B$. Then, $B$ is almost pre-perfectoid if and only if $B$ is pre-perfectoid {\rm(\ref{para:notation-perfd})}.
\end{mylem}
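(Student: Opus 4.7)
The direction ``pre-perfectoid $\Rightarrow$ almost pre-perfectoid'' is tautological, so we focus on the converse. Choose $\pi\in\ak{m}_L$ witnessing the almost pre-perfectoidness of $B$; by \cite[5.23]{he2024coh} we may arrange $v_L(\pi^p)=v_L(p)$, so that $p=\pi^p u$ for a unit $u\in\ca{O}_L^\times$. I first observe that the honest flatness of $\widehat{B}$ over $\ca{O}_{\widehat{L}}$ is automatic from the given flatness of $B$ over $\ca{O}_L$: a direct analysis of compatible sequences in $\widehat{B}=\lim_n B/\pi^n B$ using the $\pi$-torsion freeness of $B$ shows that $\widehat{B}[\pi]=0$, and $\pi$-torsion freeness over the height-one valuation ring $\ca{O}_{\widehat{L}}$ is equivalent to flatness. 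Consequently the entire content of the lemma is to upgrade the almost isomorphism of Frobenius $F\colon B/\pi B\to B/\pi^p B$ to a genuine one.

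Injectivity of $F$ is immediate from the hypothesis: if $x\in B$ satisfies $x^p\in\pi^pB$, then $y:=x/\pi\in B[1/p]$ has $y^p\in B$, so $y\in B$ and $x\in\pi B$. For surjectivity I would invoke the equivalence of \cite[5.18]{he2024coh}, which translates almost pre-perfectoidness of $B$ into perfectoidness of the almost $\ca{O}_{\widehat{L}}$-algebra $\widehat{B}^{\al}$; the ring of almost elements $\widehat{B}_*:=(\widehat{B}^{\al})_*$ is then a perfectoid $\ca{O}_{\widehat{L}}$-algebra containing $\widehat{B}$ with almost-zero cokernel, and in particular its Frobenius produces honest $p$-th roots modulo $\pi^p$. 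Given $a\in B$, lift a $p$-th root $b\in\widehat{B}_*$ of $a$ modulo $\pi^p\widehat{B}_*$; the $p$-root-closed hypothesis, transferred from $B$ to $\widehat{B}$ by $\pi$-adic approximation (if $y\in\widehat{B}[1/p]$ satisfies $y^p\in\widehat{B}$, then the reduction $y\bmod\pi^n$ can be realised as the $p$-th root of $y^p\bmod\pi^{np}$ already in $B$ by hypothesis and the identification $B/\pi^k B=\widehat{B}/\pi^k\widehat{B}$), then forces $b\in\widehat{B}$, giving a genuine preimage of $a$ modulo $\pi^p B=\pi^p\widehat{B}\cap B$.

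The main obstacle will be the rigorous transfer of the $p$-root-closed property from $B$ to $\widehat{B}$ and the passage between $\widehat{B}$ and $\widehat{B}_*$; both should follow from standard almost-ring technology summarised in Section \ref{sec:faltings-site} together with the flatness just established, which guarantees that the canonical maps $B\to\widehat{B}\to\widehat{B}_*$ are injective and compatible with truncation modulo $\pi^n$. A more hands-on alternative, avoiding the use of $\widehat{B}_*$, is to iterate the injectivity argument: almost surjectivity applied with $\epsilon=p=\pi^p u$ produces $b_0,c_0\in B$ with $b_0^p=\pi^p(ua-c_0)$, whence injectivity yields $b_0=\pi b_0'$ with $(b_0')^p=ua-c_0$, and successive iteration on the remainder $c_i$ — combined with $u=u_1^p\bmod\pi^p$ available from the Frobenius surjectivity of $\ca{O}_L/p\ca{O}_L$ — produces a $\pi$-adically Cauchy sequence of approximate $p$-th roots converging in $\widehat{B}$ to a genuine preimage.
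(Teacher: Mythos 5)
Your overall framing is sound — the forward direction is trivial, injectivity of Frobenius is immediate from $p$-root closedness, flatness of $\widehat{B}$ is automatic, and the real content is surjectivity of $B/\pi B\to B/\pi^pB$. But both of your two routes to surjectivity have a genuine gap, and they both stem from the same oversight: you fixed $\pi$ at the ``shallowest'' admissible level ($p=\pi^p u$), whereas the one-step argument needs $\pi$ deeper.

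For the first route via $\widehat{B}_*$: after you deduce $\tilde b\in\widehat{B}$ with $a-\tilde b^p\in\pi^p\widehat{B}_*\cap\widehat{B}$, you claim this gives a preimage of $a$ modulo $\pi^p\widehat{B}$. That does not follow: in general $\pi^p\widehat{B}_*\cap\widehat{B}\supsetneq\pi^p\widehat{B}$. (Already for $B=\ak{m}_L$ one has $\widehat{B}_*=\ca{O}_{\widehat{L}}$, and $\pi^p\ca{O}_{\widehat{L}}\cap\ak{m}_{\widehat{L}}=\pi^p\ca{O}_{\widehat{L}}$, which is strictly larger than $\pi^p\ak{m}_{\widehat{L}}$.) The fix is to start from a \emph{deeper} approximation, say $\tilde b^p\equiv a\bmod\pi^{p+1}\widehat{B}_*$; then $a-\tilde b^p=\pi^p(\pi c_*)$ with $\pi c_*\in\ak{m}_L\widehat{B}_*\subseteq\widehat{B}$, and the argument closes. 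But as written the passage from ``mod $\pi^p\widehat{B}_*$'' to ``mod $\pi^p\widehat{B}$'' is unjustified.

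For the second route (iteration): writing $\pi^p ua=b_0^p+\pi^pc_0$ and extracting $b_0'=b_0/\pi\in B$ with $(b_0')^p=ua-c_0$ gives $a\equiv (u_1^{-1}b_0')^p+u^{-1}c_0\bmod\pi^pB$, but the remainder $c_0$ carries no smallness whatsoever — almost surjectivity bounds the cokernel, not the size of the witness $c_0$. The proposed iteration on $c_i$ therefore does not produce a $\pi$-adically Cauchy sequence; nothing forces $c_{i+1}$ to be closer to $0$ than $c_i$.

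The paper instead chooses $\pi\in\ak{m}_L$ with $p\in\pi^{p^2}\ca{O}_L$ (which still satisfies $p\in\pi^p\ca{O}_L$, so it is allowed), and applies almost surjectivity at the level $B/\pi^pB\to B/\pi^{p^2}B$ with $\epsilon=\pi^p$: this gives $\pi^px=y^p+\pi^{p^2}z$, whence $(y/\pi)^p=x-\pi^{p^2-p}z\in B$, so $y/\pi\in B$ by $p$-root closedness, and since $p^2-p\ge p$ the error $\pi^{p^2-p}z$ already lies in $\pi^pB$. Thus the $\epsilon$ is absorbed and the error lands where you want it, in a single step — no iteration, no passage through $\widehat{B}_*$. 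The moral is that the freedom in choosing $\pi$ (justified by {\cite[5.23]{he2024coh}}) should be used to make $\pi$ \emph{smaller}, not pinned to $v_L(\pi^p)=v_L(p)$.
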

\begin{proof}
	We only need to prove that almost pre-perfectoidness implies pre-perfectoidness. As the valuation on $L$ is non-discrete, we take a nonzero element $\pi\in\ak{m}_L$ such that $p\in \pi^{p^2}\ca{O}_L$. The assumption on $B$ is equivalent to that the Frobenius induces an injection $B/\pi B\to B/\pi^p B$ by \cite[5.21]{he2024coh}. It remains to check that the Frobenius induces a surjection $B/\pi B\to B/\pi^p B$ (as $\widehat{B}$ is flat over $\ca{O}_{\widehat{L}}$ by \cite[5.20]{he2024coh}). Since $B$ is almost pre-perfectoid, the Frobenius induces an almost surjection $B/\pi^pB\to B/\pi^{p^2}B$ (\cite[5.23]{he2024coh}). Thus, for any $x\in B$, there exists $y,z\in B$ such that $\pi^px=y^p+\pi^{p^2}z$. As $B$ is flat, we see that $(y/\pi)^p=x-\pi^{p^2-p}z\in B$. By assumption, $y/\pi\in B$. Hence, $x$ lies in the image of the Frobenius map $B/\pi B\to B/\pi^p B$, which completes the proof.
\end{proof}

\begin{mylem}[{cf. \cite[8.10]{he2024coh}}]\label{lem:perfd-basis}
	Let $L$ be a pre-perfectoid field extension of $\bb{Q}_p$, $Y\to X$ a morphism of coherent schemes such that $Y\to X^Y$ is over $\spec(L)\to \spec(\ca{O}_L)$. Then, the objects $V\to U$ of $\fal^\proet_{Y\to X}$ such that $U$ is affine and that $U^V=\spec(C)$ is the spectrum of an $\ca{O}_L$-algebra $C$ which is pre-perfectoid form a topological generating family.
\end{mylem}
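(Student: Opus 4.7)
The plan is to show that every object $(V \to U)$ of $\fal^\proet_{Y\to X}$ admits a covering by objects in the proposed family. The strategy combines both types of coverings in the covanishing topology: a Cartesian refinement of $U$ to reduce to affine pieces, followed by a vertical pro-(finite \'etale) refinement of $V$ adjoining enough $p$-power roots to make the integral closure perfectoid.

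First, by taking an affine Zariski open cover $\{U_i\}_i$ of $U$, the Cartesian family $\{(V\times_U U_i\to U_i)\to(V\to U)\}_i$ reduces to the case $U=\spec(A)$ affine. In this case, I would produce a surjective pro-(finite \'etale) cover $V'\to V$ over $U_Y=Y\times_X U$ such that $C:=\falb(V'\to U)$ is pre-perfectoid; then the singleton vertical family $\{(V'\to U)\to(V\to U)\}$ is the required covering. To construct $V'$, start from a pre-perfectoid algebraic extension $L_\infty/L$ (for instance $L_\infty=L(\zeta_{p^\infty},\pi^{1/p^\infty})$) and set $V_0=V\times_{\spec(L)}\spec(L_\infty)$, which is pro-(finite \'etale) over $V$; then iteratively refine by pro-(finite \'etale) covers along the perfectoid tower of Section~\ref{sec:essential-adequate-alg} (applied after reducing to the essentially adequate case). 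By the almost purity theorem~\ref{prop:almost-purity}, the integral closure $C=A^{V'}$ is almost pre-perfectoid over $\ca{O}_L$.

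To upgrade almost pre-perfectoidness to pre-perfectoidness, I apply Lemma~\ref{lem:al-perfd}: since $C$ is the integral closure of $A$ in $V'=\spec(C[1/p])$, any $x\in C[1/p]$ with $x^p\in C$ satisfies the monic equation $T^p-x^p=0$ over $C\supseteq A$, so $x$ is integral over $A$ and lies in $V'$, hence $x\in C$. This verifies the hypothesis of Lemma~\ref{lem:al-perfd}, yielding that $C$ is honestly pre-perfectoid.

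The main obstacle is the construction of $V'$ for general $U$, which may lack a semi-stable or adequate chart a priori. Naive choices such as the pro-universal finite \'etale cover of $V$ can fail: for example, taking $A=\ca{O}_L[T]$, the pro-universal finite \'etale cover of $\spec(L[T])$ is $\spec(\overline{L}[T])$ (since $\mathbb{A}^1_{\overline{L}}$ is simply connected in characteristic zero), whose integral closure $\ca{O}_{\overline{L}}[T]$ is not pre-perfectoid because the variable $T$ has no $p$-th root. The remedy is to combine Cartesian refinements on $U$ (to invert suitable coordinates, or to pass \'etale-locally to an essentially adequate chart) with vertical refinements on $V$ (to adjoin $p$-power roots of the now-invertible coordinates using the perfectoid tower). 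The detailed execution, carried out in \cite[8.10]{he2024coh}, produces the desired covering family by objects in the proposed class.
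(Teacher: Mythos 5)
Your outer structure matches the paper's: reduce to $U$ affine, produce a pro-(finite \'etale) vertical refinement $V'\to V$ whose integral closure $C$ is almost pre-perfectoid, then promote ``almost'' to ``genuine'' by verifying the integrality hypothesis of Lemma~\ref{lem:al-perfd} exactly as you do. That last step is correct and is exactly how the paper argues.

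There are two genuine gaps in the middle step. First, you have skipped the reduction to the case where $p$ lies in the Jacobson radical of $R=\Gamma(U,\ca{O}_U)$. The paper passes to the pro-\'etale cover $\spec((1+pR)^{-1}R)\coprod\spec(R[1/p])\to\spec(R)$; the second piece has trivial $p$-adic completion so drops out, and on the first piece one gets $p\in J(R)$, hence $p\in J(B)$ for $B=\Gamma(X^Y,\ca{O}_{X^Y})$. This is not cosmetic: the key existence result the paper then invokes, \cite[8.8]{he2024coh}, requires precisely that $p$ lie in the Jacobson radical, and your own example $A=\ca{O}_L[T]$ with $Y=\spec(L[T])$ is exactly the kind of case this reduction disposes of (here $p\notin J(\ca{O}_L[T])$). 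Second, and more seriously, your construction of $V'$ goes through ``reducing to the essentially adequate case'' and the perfectoid tower of Section~\ref{sec:essential-adequate-alg}. But the lemma is stated for an arbitrary morphism $Y\to X$ of coherent schemes, with no chart of any kind; a general $R$ will not admit an essentially adequate presentation, and even in your own example $Y=\spec(L[T])$ (as opposed to $\spec(L[T^{\pm1}])$) is not the complement of a horizontal divisor, so the machinery of Section~\ref{sec:essential-adequate-alg} simply does not apply. The paper avoids this entirely by appealing to the chart-free, purely ring-theoretic construction \cite[8.8]{he2024coh}, applied to the $\ca{O}_L$-algebra $B$ once $p\in J(B)$, which produces a pro-(finite \'etale) cover $V_\infty\to (X^Y)_L$ whose integral closure $B_\infty$ is almost pre-perfectoid; the remainder (checking $X^{V_\infty}=X^{Y\times_V V_\infty}$ via \cite[3.17, 3.18]{he2024coh} and the $\ref{lem:al-perfd}$ upgrade) then proceeds as in your sketch.
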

\begin{proof}
	We follow the proof of \cite[8.10]{he2024coh} (where the case $Y=(X^Y)_L$ is proved). After replacing $X$ by an affine open covering, we may assume that $X=\spec(R)$. Consider the the localization $R'=(1+pR)^{-1}R$ of $R$. We have $R'/pR'=R/pR$. It is clear that $\spec(R') \coprod \spec(R[1/p]) \to \spec(R)$ is pro-\'etale and surjective, thus a covering in $X_\proet$. So we reduce to the situation where $p$ lies in the Jacobson radical $J(R)$ or $p \in R^\times$. The latter case is trivial, since the $p$-adic completion of $B= \Gamma(X^Y,\ca{O}_{X^Y})$ is zero if $p$ is invertible in $R$. Therefore, we may assume that $p \in J(R)$ in the following.
	
	Since $B= \Gamma(X^Y,\ca{O}_{X^Y})$ is integral over $R$, we also have $p \in J(B)$. Applying \cite[8.8]{he2024coh} to the $\ca{O}_L$-algebra $B$, we obtain a covering $V_\infty=\spec (B_\infty[1/p]) \to V=(X^Y)_L=\spec (B[1/p])$ in $V_\profet$ such that $B_\infty = \Gamma(X^{V_\infty},\ca{O}_{X^{V_\infty}})$ is an $\ca{O}_L$-algebra which is almost pre-perfectoid. Since $B_\infty$ is integrally closed in $B_\infty[1/p]$, we see that it is pre-perfectoid by \ref{lem:al-perfd}. As $V$ is integrally closed in $Y$, $V_\infty$ is integrally closed in $Y\times_VV_\infty$ by \cite[3.17, 3.18]{he2024coh} ($V_\infty$ is pro-\'etale over $V$). Thus, $X^{V_\infty}$ is integrally closed in $Y\times_VV_\infty$.
	\begin{align}
		\xymatrix{
			Y\times_VV_\infty\ar[r]\ar[d]&V_\infty\ar[d]\ar[r]&X^{V_\infty}\ar[d]&\\
			Y\ar[r]&V\ar[r]&X^Y\ar[r]&X
		}
	\end{align}
	Thus, $\falb(Y\times_VV_\infty\to X)=\Gamma(X^{Y\times_VV_\infty},\ca{O}_{X^{Y\times_VV_\infty}})=\Gamma(X^{V_\infty},\ca{O}_{X^{V_\infty}})=B_\infty$ is pre-perfectoid. As $Y\times_VV_\infty\to Y$ is a covering in $Y_\profet$, the conclusion follows.
\end{proof}

\begin{mylem}[{cf. \cite[8.24]{he2024coh}}]\label{lem:fal-acyc-perfd}
	Let $L$ be a pre-perfectoid field extension of $\bb{Q}_p$, $Y\to X$ a morphism of coherent schemes such that $Y\to X^Y$ is over $\spec(L)\to \spec(\ca{O}_L)$, $B=\falb(Y\to X)$. If $B/\pi B\to H^0(\fal^{\proet}_{Y\to X},\falb/\pi\falb)$ is an almost isomorphism for any nonzero element $\pi\in\ak{m}_L$ (e.g., if $Y\to X$ is Faltings acyclic), then the $\ca{O}_L$-algebra $B$ is pre-perfectoid.
\end{mylem}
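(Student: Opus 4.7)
The plan is to reduce everything to Lemma \ref{lem:al-perfd} by verifying that $B$ is almost pre-perfectoid and is integrally closed in $B[1/p]$. The latter is almost immediate from the definition: $X^Y=\spec(B)$ is by construction the integral closure of $X$ in $Y$, so on every affine open of $X$ the corresponding ring is integrally closed in the global sections of its preimage in $Y$, and these glue to show that $B$ is integrally closed in $B[1/p]\subseteq \Gamma(Y,\ca{O}_Y)$. Similarly $B$ is $p$-torsion-free since $p$ is invertible on $Y$, giving flatness of $\widehat{B}$ over $\ca{O}_{\widehat{L}}$. So the real content is to show that the Frobenius $B/\pi B\to B/\pi^p B$ is an almost isomorphism for some nonzero $\pi\in\ak{m}_L$ with $p\in\pi^p\ca{O}_L$.

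First I would promote the Frobenius to a morphism of sheaves of rings
\begin{equation*}
\phi:\falb/\pi\falb\longrightarrow \falb/\pi^p\falb,\qquad x\mapsto x^p,
\end{equation*}
on $\fal^\proet_{Y\to X}$; the divisibility $\pi^p\mid p$ in $\ca{O}_L$ is exactly what makes $\phi$ well-defined. Next, Lemma \ref{lem:perfd-basis} supplies a topological generating family of objects $(V\to U)$ such that $C=\falb(V\to U)$ is pre-perfectoid, and on each such section $\phi$ induces an isomorphism $C/\pi C\iso C/\pi^p C$ by the very definition of pre-perfectoidness. Since an isomorphism of sheaves can be checked on a topological generating family, $\phi$ is an isomorphism of sheaves on $\fal^\proet_{Y\to X}$, and consequently
\begin{equation*}
H^0(\phi):H^0(\fal^\proet_{Y\to X},\falb/\pi\falb)\iso H^0(\fal^\proet_{Y\to X},\falb/\pi^p\falb)
\end{equation*}
is an isomorphism of $\ca{O}_L$-modules.

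Finally, the hypothesis tells me that the natural maps $B/\pi B\to H^0(\fal^\proet_{Y\to X},\falb/\pi\falb)$ and $B/\pi^p B\to H^0(\fal^\proet_{Y\to X},\falb/\pi^p\falb)$ are both almost isomorphisms, and they fit into a commutative square together with the ring-theoretic Frobenius on one side and $H^0(\phi)$ on the other. Chasing this square, one deduces that $\mathrm{Frob}:B/\pi B\to B/\pi^p B$ is itself an almost isomorphism, so $B$ is almost pre-perfectoid; Lemma \ref{lem:al-perfd} then upgrades this to genuine pre-perfectoidness. I expect the only subtle point to be interpreting $\phi$ as a map between the \emph{sheafified} quotients rather than their presheaf versions, but this is precisely what \ref{lem:perfd-basis} is designed to handle by providing sections on which the quotients already live on pre-perfectoid rings.
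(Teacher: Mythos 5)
Your proof is correct and follows essentially the same route as the paper's: use Lemma \ref{lem:perfd-basis} to get a topological generating family of objects with pre-perfectoid sections, observe Frobenius is an isomorphism there, sheafify, take $H^0$, compare with $B/\pi B$ and $B/\pi^p B$ via the hypothesis, and finish with Lemma \ref{lem:al-perfd}. The only cosmetic deviations (working with general $\pi$ with $\pi^p\mid p$ rather than the $p$-th root $p_1$ of $p$; spelling out the verification that $B$ is $p$-root-closed in $B[1/p]$, which the paper leaves implicit) do not change the substance. One tiny imprecision: you write $X^Y=\spec(B)$, which only holds when $X$ is affine — the lemma does not assume this — but the glueing argument you give immediately afterwards (integrally-closedness on affine opens of $X^Y$ plus $B[1/p]\subseteq\Gamma(Y,\ca{O}_Y)$) is exactly what is needed and is correct without that identification.
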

\begin{proof}
	We follow the proof of \cite[8.24]{he2024coh}. Let $p_1\in \ca{O}_K$ be a $p$-th root of $p$ up to a unit (\cite[5.4]{he2024coh}). Then, for any object $V\to U$ in \ref{lem:perfd-basis}, the Frobenius induces an almost isomorphism $\falb(V\to U)/p_1\falb(V\to U)\to \falb(V\to U)/p\falb(V\to U)$ by construction. We see that the Frobenius induces an almost isomorphism $\falb/p_1\falb\to \falb/p\falb$ by sheafification. Taking global section, it also induces an almost isomorphism $B/p_1B\to B/pB$ (applying the assumption for $\pi=p_1$ and $\pi=p$). Since $B$ is flat over $\ca{O}_L$, it is pre-perfectoid by \ref{lem:al-perfd}.
\end{proof}

\begin{mypara}\label{para:notation-open}
	Let $L$ be a pre-perfectoid field extension of $\bb{Q}_p$, $\eta = \spec(L)$, $S = \spec (\ca{O}_L)$, $Y \to X$ a morphism of coherent schemes such that  $Y\to X^Y$ is over $\eta\to S$. Consider the following condition:
	\begin{enumerate}
		\renewcommand{\labelenumi}{{\rm(\theenumi)}}
		\item[($\star$)] The morphism $Y\to X^Y$ is an open immersion and there exist finitely many nonzero divisors $t_1,\dots, t_r$ of $\Gamma(X^Y_\eta,\ca{O}_{X^Y_\eta})$ such that the divisor $D=\sum_{i=1}^r \mrm{div}(t_i)$ on $X^Y_\eta$ has support $X^Y_\eta\setminus Y$ and that at each strict henselization of $X^Y_\eta$ those elements $t_i$ contained in the maximal ideal form a subset of a regular system of parameters (in particular, $D$ is a normal crossings divisor on $X^Y_\eta$, and we allow $D$ to be empty, i.e. $r=0$).
	\end{enumerate}
	Under this assumption, we set
	\begin{align}
		Y_{\infty}&=\lim_{n\in\bb{N}_{>0}} \spec\nolimits_{\ca{O}_Y}(\ca{O}_Y[T_1,\dots,T_r]/(T_1^{n}-t_1,\dots,T_r^{n}-t_r)),
	\end{align}
	where the limit is taken over $\bb{N}_{>0}$ with respect to the division relation. We see that $Y_\infty$ is faithfully flat and pro-finite \'etale over $Y$.
	
	We remark that with the notation in \ref{para:adequate-setup} and assume that $L$ is a subextension of $\overline{K}/K$. Then, for any $\underline{r}\in\bb{N}^d_{>0}$, $Y=\spec(\ca{A}^L_{\underline{r},\triv})$ and $X=\spec(\ca{A}^L_{\underline{r}})$ satisfy the above assumptions ($\star$) with $X^Y=X$ and $r=d-c$ by \ref{rem:sncd}. Moreover, the canonical morphism $\spec(\ca{A}^L_{\underline{\infty},\triv})\to Y$ factors through $Y_\infty$.
\end{mypara}

\begin{mythm}[{\cite[8.24]{he2024coh} and \ref{lem:al-perfd}}]\label{thm:acyclic}
	With the notation in {\rm\ref{para:notation-open}}, assume that $Y\to X$ satisfies the condition $(\star)$ and let $V\to U$ be an object of $\fal_{Y_{\infty}\to X}^\proet$. Then, the following statements are equivalent:
	\begin{enumerate}
		\renewcommand{\labelenumi}{{\rm(\theenumi)}}
		\item The morphism $V\to U$ is Faltings acyclic.\label{chap1-item:thm-acyclic1}
		\item The scheme $U$ is affine and $U^V=\spec(A)$ is the spectrum of a pre-perfectoid $\ca{O}_L$-algebra $A$.\label{chap1-item:thm-acyclic2}
	\end{enumerate} 
\end{mythm}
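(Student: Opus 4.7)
The proof plan is to combine the author's earlier result \cite[8.24]{he2024coh}, which establishes the present equivalence with ``pre-perfectoid" replaced by ``almost pre-perfectoid", with the upgrade from almost pre-perfectoid to genuinely pre-perfectoid provided by \ref{lem:al-perfd}. The bridge between the two formulations is the observation that $A = \Gamma(U^V, \ca{O}_{U^V})$ is always integrally closed in $A[1/p]$, which is precisely the hypothesis needed by \ref{lem:al-perfd}.

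For the direction (\ref{chap1-item:thm-acyclic1}) $\Rightarrow$ (\ref{chap1-item:thm-acyclic2}), I would argue directly from the definitions. Faltings acyclicity forces $U$ to be affine by definition, and the almost isomorphism $A/pA \iso \rr\Gamma(\fal^\proet_{V\to U}, \falb/p\falb)$ extends to any nonzero $\pi \in \ak{m}_L$ by the equivalence recorded in \ref{rem:faltings-acyclic}. Taking $H^0$ yields the input needed by \ref{lem:fal-acyc-perfd}, which immediately produces pre-perfectoidness of $A$.

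For the converse (\ref{chap1-item:thm-acyclic2}) $\Rightarrow$ (\ref{chap1-item:thm-acyclic1}), I would first invoke \cite[8.24]{he2024coh} to obtain the equivalent characterization of Faltings acyclicity in terms of $A$ being almost pre-perfectoid, and then argue that this almost condition is the same as genuine pre-perfectoidness in our context. For this, I would verify the hypothesis of \ref{lem:al-perfd}: since $V \to U$ is an object of $\fal^\proet_{Y_\infty \to X}$, the scheme $V$ factors over $\spec(L)$, so $p$ is invertible in $\Gamma(V, \ca{O}_V)$ and the embedding $A \hookrightarrow \Gamma(V, \ca{O}_V)$ extends to $A[1/p] \hookrightarrow \Gamma(V, \ca{O}_V)$. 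Since by construction $A$ is the integral closure of $\Gamma(U, \ca{O}_U)$ in $\Gamma(V, \ca{O}_V)$, any $x \in A[1/p]$ with $x^p \in A$ is integral over $A$ inside $\Gamma(V, \ca{O}_V)$, hence lies in $A$. Then \ref{lem:al-perfd} identifies almost pre-perfectoid $A$ with pre-perfectoid $A$, completing the argument.

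The main obstacle I anticipate is not a deep mathematical one but rather the careful bookkeeping required to match conventions between \cite[8.24]{he2024coh} (which works with almost pre-perfectoid algebras) and the present statement (which works with genuine pre-perfectoid algebras), and to verify that the integral-closure structure of $U^V$ inside $V$ is preserved along the pro-\'etale tower $Y_\infty$. Once this identification is in place, the substantive content is already contained in the cited results and no further cohomological computation is needed.
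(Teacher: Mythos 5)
Your proposal is correct and follows exactly the route the paper intends: the theorem header's attribution ``\cite[8.24]{he2024coh} and \ref{lem:al-perfd}'' is precisely the combination you describe, with \ref{lem:fal-acyc-perfd} handling (\ref{chap1-item:thm-acyclic1})$\Rightarrow$(\ref{chap1-item:thm-acyclic2}) and \cite[8.24]{he2024coh} together with \ref{lem:al-perfd} handling the converse. Your verification that $A = \falb(V\to U)$ is integrally closed in $A[1/p]$ (via the integral-closure description of $U^V$ inside $\Gamma(V,\ca{O}_V)$, which is an $L$-algebra because $V$ lies over $Y_\infty \to Y \to \eta$) is exactly the bookkeeping needed to invoke \ref{lem:al-perfd}, and the paper leaves it implicit.
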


\begin{mycor}\label{cor:profet-coh}
	With the notation in {\rm\ref{para:notation-open}}, assume that $Y\to X$ satisfies the condition $(\star)$ and that there exists a covering $V\to Y$ in $Y_\profet$ factoring through $Y_\infty$ such that $V\to X$ is Faltings acyclic. Then, for any nonzero element $\pi\in\ak{m}_L$ and any integer $n$, the natural morphism
	\begin{align}
		H^n(Y_\profet,\falb/\pi\falb)\longrightarrow H^n(\fal_{Y\to X}^\proet,\falb/\pi\falb)
	\end{align}
	is an almost isomorphism, where we view $\falb$ as a sheaf over $Y_\profet$ by restriction via the functor $\beta^+:Y_\profet\to \fal_{Y\to X}^\proet,\ V\mapsto (V\to X)$ {\rm(\cite[7.26]{he2024coh})}.
\end{mycor}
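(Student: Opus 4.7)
The plan is to use the Leray spectral sequence for the morphism of topoi $\beta$ associated to the continuous functor $\beta^+\colon Y_\profet \to \fal^\proet_{Y\to X}$, $V' \mapsto (V' \to X)$ (cf.\ \cite[7.26]{he2024coh}), which reduces the statement to the almost vanishing of $R^q\beta_*(\falb/\pi\falb)$ for $q > 0$.

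By the localization property of Faltings sites recalled in \ref{para:faltings-site}, the higher direct image $R^q\beta_*(\falb/\pi\falb)$ is the sheafification on $Y_\profet$ of the presheaf $V' \mapsto H^q(\fal^\proet_{V'\to X},\falb/\pi\falb)$. It therefore suffices to show this presheaf has almost zero sections for $q > 0$ on a topologically generating family, and I will take the family of objects $V' \in V_\profet \subseteq Y_\profet$ (which is generating since $V \to Y$ is a covering in $Y_\profet$). For each such $V'$, the composition $V' \to V \to Y_\infty$ places the pair $(V',X)$ (with $U = X$) inside $\fal^\proet_{Y_\infty \to X}$, so Theorem \ref{thm:acyclic} is applicable.

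The key step will be to check that $V' \to X$ is itself Faltings acyclic. By the hypothesis and \ref{thm:acyclic} applied to $V \to X$, we have $X^V = \spec(A)$ for a pre-perfectoid $\ca{O}_L$-algebra $A$. The integral closure $X^{V'}$ then coincides with the integral closure of $\spec(A)$ in the pro-finite \'etale extension $V' \to V = \spec(A[1/p])$; Scholze's almost purity theorem \cite[7.9]{scholze2012perfectoid} (combined with a filtered colimit along finite \'etale sub-covers) gives that $\Gamma(X^{V'},\ca{O}_{X^{V'}})$ is almost pre-perfectoid. Being integrally closed in its generic fibre by construction, it is pre-perfectoid by \ref{lem:al-perfd}, and another application of \ref{thm:acyclic} yields Faltings acyclicity of $V' \to X$. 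In particular $H^q(\fal^\proet_{V'\to X},\falb/\pi\falb) \approx 0$ for $q > 0$, and Faltings acyclicity also identifies $\beta_*(\falb/\pi\falb)(V')$ almost with $\falb(V' \to X)/\pi$, so that the edge map of Leray on the $q=0$ row matches the map in the statement.

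This establishes the almost vanishing of $R^q\beta_*(\falb/\pi\falb)$ for $q > 0$ on the chosen generating family, hence as a sheaf, and the Leray spectral sequence then collapses to yield the asserted almost isomorphism. The hard part will be the preservation of pre-perfectoidness under taking integral closures in pro-finite \'etale extensions: almost purity only provides almost pre-perfectoidness, and the upgrade to genuine pre-perfectoidness via \ref{lem:al-perfd} relies crucially on the fact that $X^{V'}$ is by construction integrally closed in its generic fibre.
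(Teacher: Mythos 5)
Your proposal is correct and follows essentially the same approach as the paper's proof. Both arguments construct the morphism of ringed sites $\beta$, establish that every object $V'$ of $Y_{\profet/V}$ has Faltings-acyclic image under $\beta^+$, and conclude that $\falb/\pi\falb \to \rr\beta_*(\falb/\pi\falb)$ is an almost isomorphism on $Y_\profet$ (which is precisely what your Leray formulation amounts to). The only real difference is in the middle step: the paper simply cites a preparatory lemma from the author's thesis (\cite[8.23]{he2024coh}) for the statement that the integral closure $X^{V'} = \spec(B')$ has pre-perfectoid global sections, whereas you reconstruct that lemma's proof via Scholze's almost purity theorem (applied along finite \'etale sub-covers and passed to the colimit) followed by the upgrade from almost pre-perfectoid to pre-perfectoid using \ref{lem:al-perfd} and the integral-closedness of $B'$ in $B'[1/p]$. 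That reconstruction is sound and is indeed the content behind the cited reference, though to be airtight you would need to spell out the completion/decompletion bookkeeping: Scholze's \cite[7.9]{scholze2012perfectoid} is stated for perfectoid (complete) algebras, so one must first pass to $\widehat{A}$, apply almost purity there, and then transfer the almost pre-perfectoid property back to the non-complete integral closure before invoking \ref{lem:al-perfd}.
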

\begin{proof}
	Recall that $\beta^+$ is left exact and continuous, and thus defines a morphism of ringed sites $\beta:(\fal_{Y\to X}^\proet,\falb)\to (Y_\profet,\falb)$. Notice that for any object $V'$ of $Y_{\profet/V}$, $X^{V'}=\spec(B')$ is still affine and $B'$ is pre-perfectoid by \ref{thm:acyclic} and \cite[8.23]{he2024coh}. Thus, we see that the images of objects of $Y_{\profet/V}$ by $\beta^+$ are Faltings acyclic (\ref{thm:acyclic}). Therefore, $\falb/\pi\falb\to \rr\beta_*(\falb/\pi\falb)$ is an almost isomorphism, which completes the proof.
\end{proof}
\begin{myrem}\label{rem:profet-coh}
	Under the assumptions in {\rm\ref{cor:profet-coh}} and with the same notation, assume moreover that $Y$ is connected and fix a geometric point $\overline{y}\to Y$. Let $(Y_i)_{i\in I}$ be a normalized universal cover of $Y$ at $\overline{y}$ (\cite[\Luoma{6}.9.8]{abbes2016p}) and we put $\overline{Y}=\lim_{i\in I}Y_i\in\ob(Y_{\profet})$. Then, $(\falb/\pi\falb)(\overline{Y})$ is endowed with a canonical continuous action of the fundamental group $\pi_1(Y,\overline{y})$ with respect to the discrete topology (\cite[\Luoma{6}.9.8.4]{abbes2016p}). Moreover, there is a canonical isomorphism for any integer $n$ (see \cite[\Luoma{6}.9.8.6]{abbes2016p} and \cite[7.32]{he2024coh})
	\begin{align}
		 H^n(\pi_1(Y,\overline{y}),(\falb/\pi\falb)(\overline{Y}))\iso H^n(Y_\fet,\falb/\pi\falb)=H^n(Y_\profet,\falb/\pi\falb).
	\end{align}
	By construction, $\overline{Y}\to Y$ factors through $V$, thus $\overline{Y}\to X$ is Faltings acyclic by the arguments of  \ref{cor:profet-coh}. In particular, the canonical morphism
	\begin{align}
		\falb(\overline{Y})/\pi\falb(\overline{Y})\longrightarrow(\falb/\pi\falb)(\overline{Y}) 
	\end{align}
	is an almost isomorphism.
	
	Moreover, if $Y$ is integral with fraction field $\ca{K}$, then we fix an algebraic closure $\overline{\ca{K}}$ of $\ca{K}$ as the geometric point $\overline{y}=\spec(\overline{\ca{K}})\to Y$. We denote by $\ca{K}_{\mrm{ur}}$ the fraction field of $\overline{Y}$, and we put $\ca{H}=\gal(\ca{K}_{\mrm{ur}}/\ca{K})=\pi_1(Y,\overline{y})$, $\falb(\overline{Y})=\overline{B}$. Then, we conclude that there is a canonical almost isomorphism (combining with \ref{rem:faltings-acyclic} and \ref{cor:profet-coh})
	\begin{align}
		H^n(\ca{H}, \overline{B}/\pi\overline{B})\longrightarrow H^n(\fal_{Y\to X}^\et,\falb/\pi\falb).
	\end{align}
\end{myrem}

\begin{mypara}\label{para:notation-open-2}
	With the notation in {\rm\ref{para:notation-open}}, we consider the following condition:
	\begin{enumerate}
		\renewcommand{\labelenumi}{{\rm(\theenumi)}}
		\item[($\star\star$)] There exists a covering family $\{(V_i\to U_i)\to (Y\to X)\}_{i\in I}$ in $\fal_{Y\to X}^\proet$ such that each $V_i\to U_i$ satisfies the condition $(\star)$ in \ref{para:notation-open}. More precisely, for any $i\in I$, $V_i\to U_i^{V_i}$ is an open immersion and there exist finitely many nonzero divisors $t_{i,1},\dots, t_{i,r_i}$ of $\Gamma(U_{i,\eta}^{V_i},\ca{O}_{U_{i,\eta}^{V_i}})$ such that the divisor $D_i=\sum_{j=1}^{r_i} \mrm{div}(t_{i,j})$ on $U_{i,\eta}^{V_i}$ has support $U_{i,\eta}^{V_i}\setminus V_i$ and that at each strict henselization of $U_{i,\eta}^{V_i}$ those elements $t_{i,j}$ ($1\leq j\leq r_i$) contained in the maximal ideal form a subset of a regular system of parameters.
	\end{enumerate}
\end{mypara}

\begin{mycor}\label{cor:acyclic}
	With the notation in {\rm\ref{para:notation-open}}, assume that $Y\to X$ satisfies the condition $(\star\star)$ in {\rm\ref{para:notation-open-2}}. Let $U=\spec(A')$ be an object of $X_\proet$ and assume moreover that $X=\spec(A)$ is affine. Then, for any nonzero element $\pi\in\ak{m}_L$ and any integer $n$, the natural morphism
	\begin{align}
		H^n(\fal_{Y\to X}^\proet,\falb/\pi\falb)\otimes_AA'\longrightarrow H^n(\fal_{U_Y\to U}^\proet,\falb/\pi\falb)
	\end{align}
	is an almost isomorphism.
\end{mycor}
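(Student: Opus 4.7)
The plan is to combine three ingredients: a \v{C}ech cover of $(Y\to X)$ by Faltings acyclic objects (whose existence will come from $(\star\star)$ together with Theorem \ref{thm:acyclic}), compatibility of integral closures with pro-\'etale base change, and the flatness of $A\to A'$. I will almost identify $\rr\Gamma(\fal_{Y\to X}^\proet,\falb/\pi\falb)$ with a \v{C}ech complex of $A$-modules, base-change it, and recognise the result as the \v{C}ech complex computing $\rr\Gamma(\fal_{U_Y\to U}^\proet,\falb/\pi\falb)$.

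For the first step, I would refine the covering family $\{(V_i\to U_i)\}$ provided by $(\star\star)$. Lemma \ref{lem:perfd-basis} allows me to assume each $U_i$ is affine and $\falb(V_i\to U_i)$ is a pre-perfectoid $\ca{O}_L$-algebra, and Theorem \ref{thm:acyclic} then gives that each $(V_i\to U_i)$ is Faltings acyclic. Since $(\star)$ is preserved under fibered products over $(Y\to X)$, a further such refinement would make all terms $(V_{i_0,\ldots,i_n}\to U_{i_0,\ldots,i_n})$ in the \v{C}ech nerve Faltings acyclic as well. The \v{C}ech-to-derived spectral sequence then almost identifies $\rr\Gamma(\fal_{Y\to X}^\proet,\falb/\pi\falb)$ with the \v{C}ech complex $C^\bullet$ whose degree $n$ term is $C^n=\prod_{(i_0,\ldots,i_n)}\falb(V_{i_0,\ldots,i_n}\to U_{i_0,\ldots,i_n})/\pi$, a complex of $A$-modules.

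For the second step, I would base-change the cover along the pro-\'etale morphism $U=\spec(A')\to X=\spec(A)$. The pulled-back family $\{(V_i\times_X U\to U_i\times_X U)\to (U_Y\to U)\}$ would be a covering of $(U_Y\to U)$ in $\fal_{U_Y\to U}^\proet$. Since pro-\'etale is flat and a filtered limit of \'etale morphisms, one expects that the formation of integral closure commutes with this base change, giving $\falb(V_i\times_X U\to U_i\times_X U)\cong \falb(V_i\to U_i)\otimes_A A'$, and similarly for all higher \v{C}ech terms. Pre-perfectoidness is stable under such base change (being a Frobenius surjectivity condition modulo $p$), so Theorem \ref{thm:acyclic} would yield Faltings acyclicity of the pulled-back cover, making the corresponding \v{C}ech complex on $\fal_{U_Y\to U}^\proet$ almost equal to $C^\bullet\otimes_A A'$. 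Since $A\to A'$ is flat, taking cohomology commutes with $\otimes_A A'$, yielding the desired almost isomorphism at each degree $n$.

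The main obstacle I anticipate is the compatibility $\falb(V\times_X U\to U'\times_X U)\cong \falb(V\to U')\otimes_A A'$ under pro-\'etale base change, i.e.\ that formation of the integral closure of an affine scheme in a pro-finite \'etale cover commutes with pro-\'etale base change on the base. For finite \'etale base change this is classical; the pro-\'etale case should follow by writing $A'$ as a filtered colimit of \'etale $A$-algebras and invoking the limit-preserving properties of both integral closure and $\falb$ on coherent objects. Once this compatibility is in place, the rest of the argument should assemble without further surprise.
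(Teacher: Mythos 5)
Your overall strategy — resolve $\rr\Gamma$ by a cover of Faltings acyclic objects, base-change the resulting complex termwise using compatibility of integral closure with pro-\'etale base change, stability of pre-perfectoidness, and flatness of $A\to A'$ — is indeed the paper's plan, and your last three ingredients are exactly what the paper invokes (\cite[3.17, 3.18]{he2024coh}, \cite[5.37]{he2024coh}, and flatness). However, there is a genuine gap in the first step: you propose to use a plain \v{C}ech cover and claim that, after refinement, all fiber products $(V_{i_0,\ldots,i_n}\to U_{i_0,\ldots,i_n})$ in the \v{C}ech nerve are Faltings acyclic. This does not follow from Lemma \ref{lem:perfd-basis} or Theorem \ref{thm:acyclic}: the fiber product of objects with pre-perfectoid $\falb$ need not again have pre-perfectoid $\falb$ (and the assertion that $(\star)$ is preserved under fibered products over $(Y\to X)$ is not established anywhere and is doubtful, since the divisor data need not glue compatibly). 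Refining the original cover does not help, because the obstruction lives at the level of the fiber products themselves, not the cover.

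The paper avoids this by using a \emph{hypercovering} $(Y_\bullet\to X_\bullet)\to(Y\to X)$, constructed so that \emph{at each simplicial degree} one refines again into Faltings acyclic objects lying over $Y'_\infty\to X'$ for a single object $(Y'\to X')$ satisfying $(\star)$ (extracted from $(\star\star)$ by taking finite coproducts); the cohomological descent for hypercoverings then gives the needed almost quasi-isomorphism with $B_0/\pi B_0\to B_1/\pi B_1\to\cdots$. To fix your argument, replace the \v{C}ech nerve with this hypercovering: the remaining steps of your proof then go through verbatim, with $\falb(U_{Y_i}\to U_{X_i}) = B_i\otimes_A A'$ still pre-perfectoid, hence Faltings acyclic, and flatness of $A\to A'$ giving the conclusion degreewise.
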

\begin{proof}
	Firstly, we note that finite limits and finite coproducts exist in $\fal_{Y\to X}^\proet$ and any object is quasi-compact (\cite[7.23]{he2024coh}). In particular, we may assume that the index set $I$ is finite in $(\star\star)$. Moreover, it follows directly from the definition that $\coprod_{i\in I}V_i\to \coprod_{i\in I}U_i$ also satisfies the condition $(\star)$. In other words, we may assume that there exists a covering $(Y'\to X')\to (Y\to X)$ in $\fal_{Y\to X}^{\proet}$ such that $Y'\to X'$ satisfies the condition $(\star)$ in \ref{para:notation-open}. 
	
	Now, any object in $\fal_{Y\to X}^\proet$ can be covered by an object of $\fal_{Y'_\infty\to X'}^{\proet}=(\fal_{Y\to X}^{\proet})_{/(Y'_\infty\to X')}$, and we can require such an object of $\fal_{Y'_\infty\to X'}^{\proet}$ to be Faltings acyclic by \ref{lem:perfd-basis} and \ref{thm:acyclic}. Thus, we can take a hypercovering $(Y_\bullet\to X_\bullet)\to (Y\to X)$ in $\fal_{Y\to X}^\proet$ with $Y_i\to X_i$ Faltings acyclic and $Y_i\to Y$ factors through $Y'_\infty$ for any object $i$ of the simplicial category (\cite[\href{https://stacks.math.columbia.edu/tag/094K}{094K}, \href{https://stacks.math.columbia.edu/tag/0DB1}{0DB1}]{stacks-project}, see also \cite[8.17]{he2024coh}). We put $\falb(Y_i\to X_i)=B_i$. Then, $\rr\Gamma(\fal_{Y\to X}^\proet,\falb/\pi\falb)$ is almost quasi-isomorphic to the complex $B_0/\pi B_0\to B_1/\pi B_1\to\cdots$ (see \cite[\href{https://stacks.math.columbia.edu/tag/0D8H}{0D8H}, \href{https://stacks.math.columbia.edu/tag/0D7E}{0D7E}]{stacks-project}). Since $U\to X$ is pro-\'etale, $\falb(U_{Y_i}\to U_{X_i})=\falb(Y\to X)\otimes_AA'=B_i\otimes_AA'$ by \cite[3.17, 3.18]{he2024coh}. Moreover, $B_i\otimes_AA'$ is still pre-perfectoid by \cite[5.37]{he2024coh} and \ref{lem:al-perfd}. Thus, $U_{Y_i}\to U_{X_i}$ is again Faltings acyclic by \ref{thm:acyclic}. As $(U_{Y_\bullet}\to U_{X_\bullet})\to (U_Y\to U)$ is a hypercovering in $\fal_{Y\to X}^\proet$, we know that $\rr\Gamma(\fal_{U_Y\to U}^\proet,\falb/\pi\falb)$ is almost quasi-isomorphic to the complex $(B_0/\pi B_0)\otimes_AA'\to (B_1/\pi B_1)\otimes_AA'\to\cdots$. The conclusion follows from the flatness of $A'$ over $A$.
\end{proof}

\begin{myrem}\label{rem:quasi-coherent}
	With the notation in {\rm\ref{para:notation-open}}, assume that $Y\to X$ satisfies the condition $(\star\star)$ in {\rm\ref{para:notation-open-2}}. Let $\sigma:(\fal^{\proet}_{Y\to X},\falb)\to (X_\proet,\ca{O}_{X_\proet})$ be the morphism of ringed sites defined by the left exact continuous functor $\sigma^+:X_\proet\to \fal^{\proet}_{Y\to X},\ U\mapsto (U_Y\to U)$ (\cite[(7.26.5)]{he2024coh}). Then, for any nonzero element $\pi\in\ak{m}_L$ and any integer $n$, the $\ca{O}_{X_\proet}$-module $\ak{m}_L\otimes_{\ca{O}_L}\rr^n\sigma_*(\falb/\pi\falb)$ is quasi-coherent by \ref{cor:acyclic} (see \cite[2.8.2, 4.7.1]{abbes2020suite}).
\end{myrem}

\begin{myprop}\label{prop:modification-invariance}
	With the notation in {\rm\ref{para:notation-open}}, assume that $Y\to X$ satisfies the condition $(\star\star)$ in {\rm\ref{para:notation-open-2}}. Let $f:X'\to X$ be a morphism of coherent schemes under $Y$ such that $g:X'^Y\to X^Y$ is a separated v-covering {\rm(\cite[3.1.(1)]{he2024coh})} and that $g_\eta:X'^Y_\eta\to X^Y_\eta$ is an isomorphism. 
	\begin{align}\label{eq:prop:modification-invariance-1}
		\xymatrix{
			Y\ar[r]\ar@{=}[d]&X'^Y_\eta\ar[d]^-{g_\eta}_-{\wr}\ar[r]&X'^Y\ar[r]\ar[d]^-{g}&X'\ar[d]^-{f}\\
			Y\ar[r]&X^Y_\eta\ar[r]&X^Y\ar[r]&X
		}
	\end{align}
	If we denote by $u:(\fal^{\proet}_{Y\to X'},\falb)\to (\fal^{\proet}_{Y\to X},\falb)$ the canonical morphism of ringed sites, then for any nonzero element $\pi\in\ak{m}_L$, the canonical morphism
	\begin{align}
		\falb/\pi\falb\longrightarrow \rr u_*(\falb/\pi\falb)
	\end{align} 
	is an almost isomorphism {\rm(\cite[5.7]{he2024coh})}.
\end{myprop}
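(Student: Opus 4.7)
The plan is to verify the almost isomorphism $\falb/\pi\falb \to \rr u_*(\falb/\pi\falb)$ object-by-object on a topological generating family of $\fal^{\proet}_{Y\to X}$. Combining Lemma \ref{lem:perfd-basis} with Theorem \ref{thm:acyclic} and the standing condition $(\star\star)$, I would restrict attention to Faltings acyclic objects $(V\to U)\in\fal^{\proet}_{Y\to X}$ such that $U^V=\spec(B)$ is affine with $B$ a pre-perfectoid $\ca{O}_L$-algebra. For such $(V\to U)$, the statement reduces to proving that
\[
B/\pi B\longrightarrow \rr\Gamma\bigl(\fal^{\proet}_{V\to U\times_X X'},\,\falb/\pi\falb\bigr)
\]
is an almost isomorphism, since $u_*$ evaluated at $(V\to U)$ is (by construction of $u^+$) exactly the Faltings cohomology of $V\to U\times_X X'$.

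Next, I would check that $V\to U\times_X X'$ again belongs to $\fal^{\proet}_{Y\to X'}$ and inherits $(\star)$: the divisor data of the original $(V\to U)$ transports across, and $U\times_X X'\to X'$ is still pro-\'etale since $U\to X$ is. Furthermore, the induced morphism on integral closures $(U\times_X X')^V\to U^V=\spec(B)$ fits into the diagram \eqref{eq:prop:modification-invariance-1} applied to $U$ in place of $X$, so it factors through $\spec(B)\times_{X^Y}X'^Y\to \spec(B)$, which is the base change of $g:X'^Y\to X^Y$; in particular, it is again a separated $v$-covering that is an isomorphism over the generic fibre. I would then choose a Faltings acyclic hypercover $(V_\bullet\to W_\bullet)\to (V\to U\times_X X')$ in $\fal^{\proet}_{V\to U\times_X X'}$ (possible by the propagation of $(\star\star)$ together with Lemma \ref{lem:perfd-basis} and Theorem \ref{thm:acyclic}), so that the right-hand side is almost computed by the \v Cech complex $B_0/\pi\to B_1/\pi\to\cdots$ of the pre-perfectoid $\ca{O}_L$-algebras $B_n=\falb(V_n\to W_n)$, exactly as in the proof of Corollary \ref{cor:acyclic}.

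The geometric heart of the argument is then to show that this \v Cech complex is almost quasi-isomorphic to $B/\pi B$ placed in degree $0$. This is the main obstacle and rests on a $v$-descent property for (almost) pre-perfectoid algebras: the $v$-cover $\spec(B)\times_{X^Y}X'^Y\to \spec(B)$ is an isomorphism on the generic fibre, and for an integral perfectoid (hence arc-sheaf, after \`a la Bhatt--Scholze and Bhatt--Mathew) structure sheaf, such a $v$-covering contributes no higher cohomology and recovers $B/\pi B$ at degree $0$. I would apply this to the hypercover produced above, matching the local charts on the $X'$-side to pre-perfectoid enlargements of $B$ via the almost purity theorem encoded in Theorem \ref{thm:acyclic}.

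The delicate point is calibrating the $v$-descent input with the specific almost framework of the paper: one must ensure that the \v Cech terms $B_n$ produced on the $X'$-side really match the $v$-cover $\spec(B)\times_{X^Y}X'^Y$ up to pro-\'etale refinement, so that $v$-descent for pre-perfectoid algebras can be invoked. Once this is achieved, the almost vanishing of the positive-degree \v Cech cohomology and the almost identification of the $0$-th term with $B/\pi B$ yield the desired almost isomorphism at $(V\to U)$, and hence the proposition.
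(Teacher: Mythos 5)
Your reduction to a topological generating family of Faltings acyclic objects is legitimate and parallels the paper's first move, which likewise localizes to objects satisfying~$(\star)$. From that point, however, your proof and the paper's proof diverge, and the heart of your argument is not actually carried out.

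The paper's proof, after localizing to a $V\to U$ satisfying~$(\star)$, passes to $Y_\infty$, invokes Abhyankar's lemma \cite[8.22]{he2024coh} to identify $\fal^{\proet}_{Y_\infty\to X}$ with $\fal^{\proet}_{X^{Y_\infty}_\eta\to X}$ (and similarly on the $X'$ side), thereby eliminating the horizontal divisor, and then cites \cite[8.16]{he2024coh}, which is precisely the modification-invariance statement in the divisor-free case. You never use Abhyankar's lemma; instead you try to prove directly that $\rr\Gamma(\fal^{\proet}_{V\to U\times_X X'},\falb/\pi\falb)\cong B/\pi B$ almost, by appealing to ``$v$-descent (or arc-descent) for pre-perfectoid algebras.'' This is a genuine gap. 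Faltings cohomology is a covanishing cohomology built from the \'etale/pro-\'etale topology of the integral model and the (pro-)finite \'etale topology of the generic fibre; it is \emph{not} a priori the $v$-cohomology or arc-cohomology of an integral perfectoid structure sheaf on $\spec(B)$. The bridge between the two is exactly the content of the cited precursor result \cite[8.16]{he2024coh} (or, if you want to avoid citing it, of a substantial argument you would have to supply). Your own phrasing --- ``one must ensure that the \v Cech terms $B_n$ produced on the $X'$-side really match the $v$-cover $\spec(B)\times_{X^Y}X'^Y$ up to pro-\'etale refinement'' --- identifies this missing step, but no mechanism is given to carry it out. You would also need to justify the assertion that $(U\times_XX')^V$ agrees with (or is almost identified with) $\spec(B)\times_{X^Y}X'^Y$; this is plausible but not immediate, and it is part of the same gap. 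As written, the proof delegates its core geometric content to an unverified compatibility between two cohomology theories.
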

\begin{proof}
	We shall reduce to the case where $Y=X'^Y_\eta= X^Y_\eta$ by Abhyankar's lemma (which is proved in \cite[8.16]{he2024coh}). For any object $U$ of $X_\proet$, we put $U'=X'\times_XU$. Since taking integral closure commutes with pro-\'etale base change \cite[3.17, 3.18]{he2024coh}, the base change of \eqref{eq:prop:modification-invariance-1} along $U\to X$ is the following commutative diagram
	\begin{align}\label{eq:prop:modification-invariance-3}
		\xymatrix{
			U'_Y\ar[r]\ar@{=}[d]&U'^{U'_Y}_\eta\ar[d]^-{g_\eta}_-{\wr}\ar[r]&U'^{U'_Y}\ar[r]\ar[d]^-{g}&U'\ar[d]^-{f}\\
			U_Y\ar[r]&U^{U_Y}_\eta\ar[r]&U^{U_Y}\ar[r]&U
		}
	\end{align}
	Taking integral closures in an object $V$ of $U_{Y,\profet}$, we obtain a commutative diagram
	\begin{align}\label{eq:prop:modification-invariance-4}
		\xymatrix{
			V\ar[r]\ar@{=}[d]&U'^V_\eta\ar[d]^-{g_\eta}_-{\wr}\ar[r]&U'^V\ar[r]\ar[d]^-{g}&U'\ar[d]^-{f}\\
			V\ar[r]&U^V_\eta\ar[r]&U^V\ar[r]&U
		}
	\end{align}
	This shows that the problem is local in $\fal^{\proet}_{Y\to X}$. After replacing $Y\to X$ by $V\to U$, we may assume that $Y\to X$ satisfies the condition $(\star)$ in \ref{para:notation-open}. Since $g_\eta:X'^Y_\eta\to X^Y_\eta$ is an isomorphism, we see that $Y\to X'$ also satisfies the condition $(\star)$ in \ref{para:notation-open} with respect to the same divisors $t_1,\dots,t_r$. Consider the canonical commutative diagram of ringed sites
	\begin{align}
		\xymatrix{
			(\fal^{\proet}_{Y_\infty\to X'},\falb)\ar[r]^-{u}\ar[d]&(\fal^{\proet}_{Y_\infty\to X},\falb)\ar[d]\\
			(\fal^{\proet}_{X'^{Y_\infty}_\eta\to X'},\falb)\ar[r]^-{u'}&(\fal^{\proet}_{X^{Y_\infty}_\eta\to X},\falb)
		}
	\end{align}
	where the vertical arrows are equivalences by Abhyankar's lemma \cite[8.22]{he2024coh}. Since $\falb/\pi\falb\to \rr u'_*(\falb/\pi\falb)$ is an almost isomorphism proved in \cite[8.16]{he2024coh}, we see that so is $\falb/\pi\falb\to \rr u_*(\falb/\pi\falb)$. This completes the proof as $Y_\infty\to Y$ is a covering in $Y_{\profet}$.
\end{proof}

\section{Faltings Cohomology of Essentially Adequate Schemes}\label{sec:essential-adequate-scheme}

In this section, we translate the previous results on the Galois cohomology of essentially adequate algebras to the Faltings cohomology of essentially adequate schemes. Especially, we show that the Faltings cohomology of an essentially adequate scheme is controlled by the Faltings cohomology of the localizations at generic points of the special fibre (see \ref{thm:essential-adequate-coh}). 

\begin{mydefn}[{cf. \ref{defn:essential-adequate-alg}}]\label{defn:essential-adequate-pair}
	Let $L$ be a valuation field of height $1$ extension of $\bb{Q}_p$, $\eta=\spec(L)$, $S=\spec(\ca{O}_L)$, $X^{\triv}\to X$ be an open immersion of coherent schemes over $\eta\to S$. We say that $X^{\triv}\to X$ is (resp. \emph{essentially}) \emph{adequate} if for any geometric point $\overline{x}$ of $X$, there exists 
	\begin{enumerate}
		\renewcommand{\labelenumi}{{\rm(\theenumi)}}
		\item a complete discrete valuation field $K$ extension of $\bb{Q}_p$ with perfect residue field and a local, injective and integral homomorphism $\ca{O}_K\to \ca{O}_L$ (i.e., $L$ is an algebraic valuation extension of $K$),
		\item an (resp. essentially) adequate $(K,\ca{O}_K,\ca{O}_{\overline{K}})$-triple $(A_{\triv},A,\overline{A})$ with $\ca{A}$ being one of the three $A$-algebras satisfying the corresponding assumptions in \ref{para:adequate-setup},
		\item an \'etale neighborhood $U\to X$ of $\overline{x}$ with an \'etale morphism $U\to \spec(\ca{A}^L)$ over $S$ such that $X^{\triv}\times_XU=\spec(\ca{A}^L_{\triv})\times_{\spec(\ca{A}^L)}U$, where $\ca{A}^L$ is the integral closure of $\ca{A}$ in $L\ca{K}$, $\ca{K}$ is the fraction field of $\ca{A}$, and $\ca{A}^L_{\triv}=A_{\triv}\otimes_A\ca{A}^L$.
	\end{enumerate} 
	We usually omit $X^{\triv}$ and simply call $X$ an (resp. \emph{essentially}) \emph{adequate scheme} over $S$.
\end{mydefn}

\begin{myrem}\label{rem:essential-adequate-log-smooth}
	Log smooth morphisms will produce adequate schemes as follows: given the following data (see \cite[\textsection8]{he2022sen} for the terminologies on logarithmic geometry)
	\begin{enumerate}
		\renewcommand{\labelenumi}{{\rm(\theenumi)}}
		\item $K$ a complete discrete valuation field extension of $\bb{Q}_p$, $\overline{K}$ an algebraic closure of $K$,
		\item $(\spec(\ca{O}_K),\scr{M}_K)$ the log scheme with underlying scheme $\spec(\ca{O}_K)$ endowed with the compactifying log structure defined by the closed point,
		\item $f:(X,\scr{M}_X)\to (\spec(\ca{O}_K),\scr{M}_K)$ a smooth morphism of fs log schemes whose underlying generic fibre $X_K$ is regular,
	\end{enumerate}
	let $\overline{X}$ be the integral closure of $X$ in $\spec(\overline{K})\times_{\spec(\ca{O}_K)} X$ and we put $\overline{X}^{\triv}=X^{\triv}\times_X\overline{X}$. Then, the open immersion $\overline{X}^{\triv}\to \overline{X}$ over $\spec(\overline{K})\to\spec(\ca{O}_{\overline{K}})$ is adequate. 
	
	Indeed, by a theorem of Abbes-Gros-Tsuji on the local description of log smooth morphisms (see \cite[8.11]{he2022sen}), every geometric point of $X$ admits an \'etale neighborhood $U$ such that $f|_U:(U,\scr{M}_X|_U)\to (\spec(\ca{O}_K),\scr{M}_K)$ factors through $(\spec(\ca{O}_{K'}),\scr{M}_{K'})$ for a tamely ramified finite extension $K'$ of $K$ and that the induced morphism of fs log schemes $(U,\scr{M}_X|_U)\to (\spec(\ca{O}_{K'}),\scr{M}_{K'})$ admits an adequate chart in the sense of \cite[8.8]{he2022sen}. After shrinking $U$, we may assume that $U$ is the spectrum of an adequate $\ca{O}_{K'}$-algebra $A$ with $U^{\triv}=\spec(A_{\triv})$. Thus, the integral closure $\overline{U}$ of $U$ in $\spec(\overline{K})\times_{\spec(\ca{O}_K)}U$ is the disjoint union of finitely many copies of the integral closure of $U$ in $\spec(\overline{K})\times_{\spec(\ca{O}_{K'})}U$, which is a finite disjoint union of $\spec(A^{\overline{K}})$ with the notation in \ref{para:adequate-setup}. As such $\overline{U}$ forms an \'etale covering of $\overline{X}$ (\cite[3.17]{he2024coh}), we see that $\overline{X}$ is an adequate scheme over $\ca{O}_{\overline{K}}$.
\end{myrem}

\begin{myexample}[Semi-stable case]\label{exam:semi-stable}
	Let $L$ be a valuation field of height $1$ extension of $\bb{Q}_p$, $X^{\triv}\to X$ be an open immersion of coherent schemes over $\spec(L)\to \spec(\ca{O}_L)$. We say that $X^{\triv}\to X$ is \emph{semi-stable} if for any geometric point $\overline{x}$ of $X$, there exists an \'etale neighborhood $U\to X$ of $\overline{x}$ with an \'etale morphism over $\ca{O}_L$,
	\begin{align}\label{eq:exam:semi-stable-1}
		U\longrightarrow \spec(\ca{O}_L[T_0,T_1,\dots,T_b,T_{b+1}^{\pm 1},\dots,T_c^{\pm 1},T_{c+1},\dots,T_d]/(T_0\cdots T_b-a))
	\end{align}
	for some integers $0\leq b\leq c\leq d$ and $a\in\ak{m}_L\setminus \{0\}$, such that $U^{\triv}=X^{\triv}\times_XU$ is the complement of the divisor $T_0\cdots T_d$.
	
	We remark that if there exists a complete discrete valuation field $K$ extension of $\bb{Q}_p$ with perfect residue field and a local, injective and integral homomorphism $\ca{O}_K\to \ca{O}_L$ (i.e., $L$ is an algebraic valuation extension of $K$), then $X^{\triv}\to X$ is adequate over $\eta\to S$.
	
	Indeed, it suffices to show that $U$ is adequate. After shrinking $U$, we may assume that $U$ is affine. After enlarging $K$, we may assume that $a=\pi_K^e$ for a uniformizer $\pi_K$ of $K$ and an integer $e\in\bb{N}_{>0}$, and that there exists an \'etale ring homomorphism
	\begin{align}\label{eq:exam:semi-stable-2}
		\ca{O}_K[T_0,T_1,\dots,T_b,T_{b+1}^{\pm 1},\dots,T_c^{\pm 1},T_{c+1},\dots,T_d]/(T_0\cdots T_b-\pi_K^e)\longrightarrow A
	\end{align}
	whose base change along $\ca{O}_K\to \ca{O}_L$ gives \eqref{eq:exam:semi-stable-1} (\cite[8.8.2, 8.10.5]{ega4-3}). Notice that for any finite extension $K'$ of $K$, $\ca{O}_{K'}[T_0,T_1,\dots,T_b,T_{b+1}^{\pm 1},\dots,T_c^{\pm 1},T_{c+1},\dots,T_d]/(T_0\cdots T_b-\pi_K^e)$ is an adequate $\ca{O}_{K'}$-algebra by \ref{exam:semi-stable-chart}. In particular, it is a Noetherian normal domain. Thus, we see that $U=\spec(\ca{O}_L\otimes_{\ca{O}_K}A)$ is a finite disjoint union of $\spec(A^L)$ with the notation in \ref{para:adequate-setup}, which shows that $U$ is adequate.
\end{myexample}

\begin{mypara}\label{para:zeta-2}
	In the rest of this section, we fix a pre-perfectoid field $L$ extension of $\bb{Q}_p$ containing a compatible system of primitive $n$-th roots of unity $\{\zeta_n\}_{n\in\bb{N}_{>0}}$. We put
	\begin{align}
		\eta=\spec(L), \quad S=\spec(\ca{O}_L),\quad s=\spec(\ca{O}_L/\ak{m}_L).
	\end{align}
	We fix an open immersion of coherent schemes $X^{\triv}\to X$ essentially adequate over $\eta\to S$. For any $x\in X_s$, we put $X_{(x)}=\spec(\ca{O}_{X,x})$ the localization of $X$ at $x$, and we put $X^{\triv}_{(x)}= X^{\triv}\times_XX_{(x)}$.
\end{mypara}

\begin{mylem}\label{lem:essential-adequate-open}
	The open immersion $X^{\triv}\to X$ over $\eta\to S$ is affine, dominant, and satisfies the condition $(\star\star)$ in {\rm\ref{para:notation-open-2}}. In particular, $X_\eta$ is a regular scheme with normal crossings divisor $X_\eta\setminus X^{\triv}$. 
\end{mylem}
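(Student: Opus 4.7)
The plan is to reduce everything to the local description provided by Definition \ref{defn:essential-adequate-pair} and then invoke \ref{rem:sncd}. By definition, every geometric point of $X$ admits an \'etale neighborhood $U\to X$ equipped with an \'etale $S$-morphism $U\to \spec(\ca{A}^L)$ for some essentially adequate $(K,\ca{O}_K,\ca{O}_{\overline{K}})$-triple $(A_\triv,A,\overline{A})$, such that $X^\triv\times_X U=\spec(\ca{A}^L_\triv)\times_{\spec(\ca{A}^L)} U$. Since $\ca{A}^L_\triv$ is the localization of $\ca{A}^L$ obtained by inverting $p\cdot t_1\cdots t_d$ (see \ref{rem:quasi-adequate} and \ref{prop:tower-basic}.(\ref{item:prop:tower-basic-3})), the morphism $\spec(\ca{A}^L_\triv)\to\spec(\ca{A}^L)$ is an affine dominant open immersion; by base change, so is $X^\triv\times_X U\to U$. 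As affineness and dominance of a morphism are \'etale-local on the target, it follows that $X^\triv\to X$ is affine and dominant.

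Next I would verify condition $(\star\star)$ of \ref{para:notation-open-2} using the family $\{(V_i\to U_i)\to (X^\triv\to X)\}$ where $U_i\to X$ runs over the \'etale neighborhoods of the form above (which jointly cover $X$) and $V_i=X^\triv\times_X U_i$. Each such $V_i\to U_i$ is an object of $\fal^\proet_{X^\triv\to X}$, and the family is a covering since $\coprod U_i\to X$ is surjective \'etale. It remains to check that each $V_i\to U_i$ satisfies $(\star)$. Since $\ca{A}^L$ is a Noetherian normal domain (being an integral closure), it is integrally closed in its fraction field, hence in the localization $\ca{A}^L_\triv$; passing to the \'etale base change $U_i$, we get $U_i^{V_i}=U_i$, and so $V_i\to U_i^{V_i}$ is an open immersion. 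The divisors cutting out the complement on the generic fibre are furnished by \ref{rem:sncd}, applied with $\underline r=\underline 1$: the elements $t_{c+1},\ldots,t_d$ form a strict normal crossings divisor on the regular scheme $\spec(\ca{A}^L[1/p])$ with complement $\spec(\ca{A}^L_\triv)$. Pulling back along the \'etale morphism $U_i\to \spec(\ca{A}^L)$ preserves both regularity and the property that these elements extend to a regular system of parameters at each strict henselization, which yields $(\star)$ for $V_i\to U_i$.

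The final assertion that $X_\eta$ is regular with normal crossings divisor $X_\eta\setminus X^\triv$ follows from the same local description: \'etale-locally $X_\eta$ is isomorphic to $\spec(\ca{A}^L[1/p])$, which is regular by \ref{rem:sncd}, and the complement of $X^\triv$ is the pullback of the strict normal crossings divisor cut out by $t_{c+1},\ldots,t_d$. Regularity and the normal crossings property are both \'etale-local, so they descend to $X_\eta$.

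The only substantive point is the verification that $U_i^{V_i}=U_i$ and the identification of the boundary divisor with the one provided by \ref{rem:sncd}; both reduce to the explicit description of $\ca{A}^L_\triv$ as the localization $\ca{A}^L[1/(p\,t_{c+1}\cdots t_d)]$ (the coordinates $t_1,\ldots,t_c$ are already invertible in $\ca{A}^L$ by \eqref{eq:monoid-str}), combined with the fact that $\ca{A}^L$ is normal. No step requires genuine input beyond \ref{rem:sncd} and the definitions.
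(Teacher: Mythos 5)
Your proof is correct and follows essentially the same route as the paper's: reduce to the \'etale-local description provided by the chart in Definition \ref{defn:essential-adequate-pair} and invoke \ref{rem:sncd}, using normality of $\ca{A}^L$ to identify $U_i^{V_i}=U_i$. One tiny slip in your final parenthetical: $t_1,\dots,t_c$ are units in $\ca{A}^L[1/p]$, not in $\ca{A}^L$ itself, but since the localization already inverts $p$ this does not affect the identification $\ca{A}^L_\triv=\ca{A}^L[1/(p\,t_{c+1}\cdots t_d)]$ or the argument.
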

\begin{proof}
	With the notation in \ref{defn:essential-adequate-pair}, we take a system of coordinates $t_1,\dots,t_d\in A[1/p]$ of the chart of the essentially adequate algebra $A$ (\ref{defn:essential-adequate-alg}). Then, by \ref{rem:sncd}, we see that the generic fibre $U_\eta$ of the \'etale neighborhood $U$ is regular, and that $U^{\triv}$ is the complement of a normal crossings divisor defined by $t_1\cdots t_d$ and that $U^{\triv}\to U$ satisfies the condition $(\star)$ in \ref{para:notation-open} (as $U$ is normal).
\end{proof}

\begin{mylem}\label{lem:essential-adequate-generic}
	The scheme $X$ is a finite disjoint union of normal integral schemes flat over $S$, and the set of generic points $\ak{G}(X_s)$ of its special fibre $X_s$ is finite.
\end{mylem}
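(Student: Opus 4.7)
My plan is to reduce to the essentially adequate local models via the definition \ref{defn:essential-adequate-pair} and then exploit the finiteness statements in \ref{prop:tower-basic}. Since $X$ is quasi-compact, I will first choose a finite \'etale covering $\{U_i\to X\}_{i\in I}$ with each $U_i=\spec(C_i)$ affine carrying an \'etale morphism $U_i\to \spec(\ca{A}_i^L)$ over $S$, where $\ca{A}_i$ is one of the three Noetherian normal $\ca{O}_{K_i}$-algebras of \ref{para:adequate-setup} and $\ca{A}_i^L$ is its integral closure in $L\ca{K}_i$. It will then suffice to show each $U_i$ is a finite disjoint union of normal integral $S$-flat schemes with finite set of generic points in the special fibre, and assemble via qcqs of $X$, \'etale descent of normality, and \ref{rem:generic-map}.(\ref{item:rem:generic-map-1}) applied to the flat surjection $\coprod U_i\to X$.

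For the structure of $U_i$: the ring $\ca{A}_i^L$ is a normal integral domain containing $\ca{O}_L$, hence $\ca{O}_L$-flat (it is torsion-free over the valuation ring $\ca{O}_L$). Consequently $C_i$ is \'etale over a normal domain, hence normal by \cite[\href{https://stacks.math.columbia.edu/tag/033C}{033C}]{stacks-project} and flat over $\ca{O}_L$. The generic fibre $C_i\otimes_{\ca{A}_i^L}\mrm{Frac}(\ca{A}_i^L)$ is \'etale and quasi-compact over a field, hence a finite product of finite separable field extensions; by flatness, these index the minimal primes of $C_i$. Thus $C_i$ is a finite product of normal integral domains.

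The finiteness of $\ak{G}(U_{i,s})$ is the heart of the argument. Since $L$ has height $1$, $\ak{m}_L$ and $p\ca{O}_L$ have the same radical, so $U_{i,s}$ and $\spec(C_i/pC_i)$ share the same underlying space. I will write $\ca{A}_i^L=\colim_{L'} \ca{A}_i^{L'}$ over finite subextensions $L'/K_i$, where each $\ca{A}_i^{L'}$ is a Noetherian normal domain and hence $|\ak{G}(\ca{A}_i^{L'}/p\ca{A}_i^{L'})|$ is finite, then invoke the uniform bound in \ref{prop:tower-basic}.(\ref{item:prop:tower-basic-4}). Next, \ref{lem:generic-limit}, whose hypotheses hold by \ref{rem:generic-map}.(\ref{item:rem:generic-map-3}), yields that $\ak{G}(\ca{A}_i^L/p\ca{A}_i^L)$ is a cofiltered limit of finite sets of uniformly bounded cardinality, hence is finite. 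Finally, the \'etale morphism $\spec(C_i/pC_i)\to \spec(\ca{A}_i^L/p\ca{A}_i^L)$ has quasi-compact fibres over each generic point which are finite \'etale algebras over a field, so $\ak{G}(U_{i,s})$ is finite.

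The main obstacle is the non-Noetherianness of $\ca{A}_i^L$, which prevents a direct appeal to standard Noetherian finiteness. Overcoming it requires the colimit presentation by finite subextensions together with the uniform bound from \ref{prop:tower-basic}.(\ref{item:prop:tower-basic-4}); this boundedness is precisely what makes the cofiltered limit argument collapse to a finite set.
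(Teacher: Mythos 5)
Your proposal is correct and follows essentially the same route as the paper: reduce via an affine \'etale covering to the local model $\spec(\ca{A}^L)$, invoke that $\spec(\ca{A}^L)$ is a normal integral $\ca{O}_L$-flat domain, and obtain finiteness of $\ak{G}(\ca{A}^L/p\ca{A}^L)$ from the uniform bound in \ref{prop:tower-basic}.(\ref{item:prop:tower-basic-4}) (the paper cites this bound directly, whereas you unpack the implicit passage to the limit over finite subextensions $L'/K$ via \ref{lem:generic-limit} and \ref{rem:generic-map}.(\ref{item:rem:generic-map-3}) --- a reasonable clarification since \ref{prop:tower-basic} is stated for finite $L$). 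One small bookkeeping point: for the assembly step you cite \ref{rem:generic-map}.(\ref{item:rem:generic-map-1}), which only gives that generic points map to generic points; you also need the converse (that every generic point of $X_s$ lifts to a generic point of the cover) to conclude finiteness of $\ak{G}(X_s)$ from finiteness of $\ak{G}(\coprod U_{i,s})$. This is exactly what \ref{rem:generic-map}.(\ref{item:rem:generic-map-2}) (or \ref{lem:generic-map}.(\ref{item:lem:generic-map-2}) together with surjectivity) supplies, and it is the cleaner ingredient the paper uses both for the initial reduction and for pushing finiteness along $U\to\spec(\ca{A}^L)$.
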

\begin{proof}
	As $X$ is quasi-compact, it suffices to prove the statements for an \'etale covering of $X$ (\ref{rem:generic-map}.(\ref{item:rem:generic-map-2})). With the notation in \ref{defn:essential-adequate-pair}, we may assume that the \'etale neighborhood $U$ is affine. Recall that $\spec(\ca{A}^L)$ is a normal integral scheme flat over $S$ with $\ak{G}(\ca{A}^L/p\ca{A}^L)$ finite by \ref{prop:tower-basic}.(\ref{item:prop:tower-basic-4}). Thus, $U$ is a finite disjoint union of normal integral schemes with $\ak{G}(U_s)$ finite by \ref{rem:generic-map}.(\ref{item:rem:generic-map-2}) as $U\to \spec(\ca{A}^L)$ is \'etale and quasi-compact.
\end{proof}

\begin{mylem}\label{lem:weak-dim}
	Let $A\to B$ be a faithfully flat ring homomorphism, $d\in\bb{N}$. If every $B$-module has Tor-dimension $\leq d$, then every $A$-module has Tor-dimension $\leq d$. In particular, if every local ring of $B$ is a valuation ring, then so does $A$.
\end{mylem}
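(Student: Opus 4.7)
The plan is to exploit the compatibility of $\tor$ with flat base change together with faithful flatness of $B$ over $A$. Given an $A$-module $M$, choose a flat resolution $P_\bullet\to M$ over $A$; since $B$ is flat over $A$, the complex $P_\bullet\otimes_AB$ is a flat resolution of $M\otimes_AB$ over $B$. For any $A$-module $N$ one then has canonical identifications
\begin{align}
\tor_i^A(M,N)\otimes_AB\ \cong\ \tor_i^A(M,N\otimes_AB)\ \cong\ \tor_i^B(M\otimes_AB,\ N\otimes_AB),
\end{align}
the first because $-\otimes_AB$ is exact and hence commutes with taking homology, the second by the flat resolution just described. By hypothesis the right-hand side vanishes for $i>d$, so $\tor_i^A(M,N)\otimes_AB=0$, and faithful flatness of $B$ over $A$ forces $\tor_i^A(M,N)=0$ for $i>d$ and arbitrary $N$. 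Thus every $A$-module has Tor-dimension $\leq d$.

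For the ``in particular" claim I would invoke the classical characterization that a commutative local ring is a valuation ring if and only if its weak global dimension is at most $1$: over a valuation ring every torsion-free module is flat (so w.dim $\leq 1$), and conversely in a local ring of w.dim $\leq 1$ every finitely generated ideal is flat, hence principal, forcing the ring to be a valuation ring. Under the hypothesis that every local ring of $B$ is a valuation ring, localizing $\tor$ (which commutes with localization) shows w.dim$(B)\leq 1$. Applying the first part of the lemma with $d=1$ then gives w.dim$(A)\leq 1$. Finally, for any prime $\ak{p}\subset A$, localization is flat and commutes with $\tor$, so w.dim$(A_{\ak{p}})\leq 1$ as well; being local, $A_{\ak{p}}$ is therefore a valuation ring.

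The argument is essentially routine and presents no real obstacle; the only point requiring a careful citation is the characterization of local rings of weak dimension $\leq 1$ as valuation rings, which I would reference from \cite{stacks-project}.
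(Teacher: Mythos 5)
Your proof is correct, and it is close in spirit to the paper's but packaged a bit differently. The paper truncates a free resolution $F_\bullet\to M$: since $B\otimes_A\cok(F_{d+1}\to F_d)$ is a flat $B$-module (by the standard syzygy criterion for Tor-dimension), faithfully flat descent of flatness makes $\cok(F_{d+1}\to F_d)$ flat over $A$, giving a length-$d$ flat resolution of $M$. You instead work with the Tor groups directly, using the base-change isomorphism $\tor_i^A(M,N)\otimes_A B\cong\tor_i^B(M\otimes_A B,N\otimes_A B)$ and then faithfully flat descent of vanishing. Both arguments rest on the same two pillars — tensoring a resolution with the flat $A$-algebra $B$, and descending a property along a faithfully flat map — and each gets to the conclusion cleanly; the paper's version avoids spelling out the Tor base-change identity, while yours avoids the syzygy characterization. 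For the ``in particular'' part, both you and the paper invoke the same characterization of rings whose local rings are all valuation rings via weak dimension $\leq 1$ (stacks tag 092S); your explicit unwinding (localization commutes with Tor, so each $A_{\ak{p}}$ has weak dimension $\leq 1$) is exactly what that citation encapsulates.
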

\begin{proof}
	For any $A$-module $M$, we take a free resolution $F_\bullet\to M$. As $B\otimes_AM$ has Tor-dimension $\leq d$, $B\otimes_A\cok(F_{d+1}\to F_d)=\cok(B\otimes_AF_{d+1}\to B\otimes_AF_d)$ is a flat $B$-module (\cite[\href{https://stacks.math.columbia.edu/tag/0653}{0653}]{stacks-project}). Thus, $\cok(F_{d+1}\to F_d)$ is a flat $A$-module as $A\to B$ is faithfully flat. Hence, $0\to \cok(F_{d+1}\to F_d)\to F_{d-1}\to \cdots\to F_0\to M$ is a flat resolution of $M$, which implies that the Tor-dimension of $M$ $\leq d$. The ``in particular" part follows from the statement for $d=1$ and \cite[\href{https://stacks.math.columbia.edu/tag/092S}{092S}]{stacks-project}.
\end{proof}

\begin{mylem}\label{lem:essential-adequate-generic-loc}
	For any $x\in \ak{G}(X_s)$, the stalk $\ca{O}_{X,x}$ is a valuation ring of height $1$ extension of $\ca{O}_L$. In particular, $X^{\triv}_{(x)}=\spec(\ca{O}_{X,x}[1/p])$ is the generic point of $X_{(x)}$.
\end{mylem}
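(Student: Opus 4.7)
The plan is to reduce the statement to the adequate chart case by passing to an \'etale neighborhood, do the key analysis there, and then descend back to $\ca{O}_{X,x}$ by faithful flatness. Concretely, I first choose, using Definition \ref{defn:essential-adequate-pair}, an \'etale neighborhood $U\to X$ of a geometric point above $x$ together with an \'etale morphism $U\to \spec(\ca{A}^L)$ for some essentially adequate chart $(A_\triv,A,\overline{A})$. Since $U_s\to X_s$ is \'etale and its image contains $x$, \ref{rem:generic-map}.(\ref{item:rem:generic-map-1}) combined with \ref{lem:generic-map}.(\ref{item:lem:generic-map-2}) produces a point $u\in\ak{G}(U_s)$ lying over $x$; by flatness of $U\to\spec(\ca{A}^L)$, its image $\ak{q}\in\spec(\ca{A}^L)$ lies in $\ak{G}(\ca{A}^L/p\ca{A}^L)$.

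The heart of the argument is to show that $\ca{O}_{U,u}$ is itself a valuation ring of height $1$. By \ref{para:adequate-setup}, I write $\ca{A}^L=\colim_{L'}\ca{A}^{L'}$ as a filtered colimit over finite subextensions $K\subseteq L'\subseteq L$ of Noetherian normal domains. An affine open neighborhood $\spec(C)\subseteq U$ of $u$ is \'etale over $\ca{A}^L$, and finite presentation of \'etale morphisms lets me descend $C$ to an \'etale $\ca{A}^{L_0}$-algebra $C_0$ for some $L_0$. Setting $C_{L'}=C_0\otimes_{\ca{A}^{L_0}}\ca{A}^{L'}$ and letting $\ak{p}_{L'}\in\spec(C_{L'})$ and $\ak{q}_{L'}\in\spec(\ca{A}^{L'})$ be the respective images of $u$, I obtain $\ca{O}_{U,u}=\colim_{L'}(C_{L'})_{\ak{p}_{L'}}$. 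Proposition \ref{prop:generic-map}.(\ref{item:prop:generic-map-2}) identifies each $(\ca{A}^{L'})_{\ak{q}_{L'}}$ as a DVR (this is where $x\in\ak{G}(X_s)$ enters, ensuring $\ak{q}_{L'}$ has height~$1$), and an \'etale local extension of a DVR is again a DVR because \'etale morphisms preserve being a Noetherian regular local ring of a given dimension. The transition maps in the system $((C_{L'})_{\ak{p}_{L'}})_{L'}$ are then injective local homomorphisms of DVRs, hence faithfully flat by Lemma \ref{lem:val-ext}, and the colimit $\ca{O}_{U,u}$ is accordingly a valuation ring of height $1$.

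To descend, observe that $\ca{O}_{X,x}\to \ca{O}_{U,u}$ is an \'etale local ring homomorphism, hence faithfully flat. Since every localization of the valuation ring $\ca{O}_{U,u}$ is a valuation ring, Lemma \ref{lem:weak-dim} implies that every local ring of $\ca{O}_{X,x}$ is a valuation ring; in particular $\ca{O}_{X,x}$ itself is one. The Krull dimension is preserved under the \'etale local map $\ca{O}_{X,x}\to\ca{O}_{U,u}$ (the fibre is a field), so $\ca{O}_{X,x}$ is of height~$1$. For the ``in particular'' part, the underlying set of $X_{(x)}$ consists of just the generic point $\xi$ and the closed point $x$; since $p\in\ak{m}_{X,x}$ and inverting any nonzero maximal ideal element of a height-$1$ valuation ring collapses $\spec$ to the fraction field, $\spec(\ca{O}_{X,x}[1/p])=\{\xi\}$. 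On the other hand, $X^{\triv}_{(x)}$ is an open subscheme of $X_{(x)}$ missing $x$ (since $X^{\triv}\subseteq X_\eta$) and containing $\xi$ (since $X^{\triv}\to X$ is dominant by Lemma \ref{lem:essential-adequate-open}), hence $X^{\triv}_{(x)}=\{\xi\}=\spec(\ca{O}_{X,x}[1/p])$. The main technical obstacle is the second paragraph, namely identifying $\ca{O}_{U,u}$ with a filtered colimit of DVRs with faithfully flat local transitions; all subsequent steps are standard descent.
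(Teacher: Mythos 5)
Your proof is correct and takes essentially the same route as the paper: pass to an \'etale neighborhood $U$ over $\spec(\ca{A}^L)$, recognize $\ca{O}_{U,u}$ as a filtered colimit of DVRs with faithfully flat local transitions (which is precisely the content of Proposition~\ref{prop:generic-map}.(\ref{item:prop:generic-map-2}) together with Remark~\ref{rem:prop:generic-map}, which you re-derive by hand), and then descend to $\ca{O}_{X,x}$ via Lemma~\ref{lem:weak-dim} and the dimension bound from faithful flatness. You have additionally written out the proof of the ``in particular'' statement, which the paper treats as immediate.
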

\begin{proof}
	With the notation in \ref{defn:essential-adequate-pair}, we may assume that the \'etale neighborhood $U$ is affine whose image contains $x$. We take $y\in\ak{G}(U_s)$ over $x\in\ak{G}(X_s)$ by \ref{rem:generic-map}.(\ref{item:rem:generic-map-2}). As $U$ is \'etale over $\spec(\ca{A}^L)$, we see that $\ca{O}_{U,y}$ is a valuation ring of height $1$ extension of $\ca{O}_L$ by \ref{prop:generic-map} and \ref{rem:prop:generic-map}. Then, $\ca{O}_{X,x}\to \ca{O}_{U,y}$ is a faithfully flat \'etale local ring homomorphism. Thus, $\ca{O}_{X,x}$ is a valuation ring of height $1$ extension of $\ca{O}_L$ by \ref{lem:weak-dim}.
\end{proof}

\begin{mythm}\label{thm:essential-adequate-coh}
	With the notation in {\rm\ref{para:zeta-2}}, assume that $X=\spec(B)$ is affine. Then, for any nonzero element $\pi$ of $\ak{m}_L$ and any integer $n$, the natural map of $A$-modules 
	\begin{align}\label{eq:thm:essential-adequate-coh-1}
		H^n(\fal^\et_{X^{\triv}\to X},\falb/\pi\falb)\longrightarrow \prod_{x\in\ak{G}(X_s)}H^n(\fal^\et_{X^{\triv}_{(x)}\to X_{(x)}},\falb/\pi\falb)
	\end{align}
	is almost injective, where $\ak{G}(X_s)$ is the finite set of generic points of $X_s$ {\rm(\ref{defn:generic})}. Moreover, if $\pi\in(\zeta_p-1)\ca{O}_L$, then the canonical exact sequence $0\to \falb/\pi\falb\stackrel{\cdot \pi}{\longrightarrow}\falb/\pi^2\falb\to \falb/\pi\falb\to 0$ induces an almost exact sequence of $B$-modules
	\begin{align}\label{eq:thm:essential-adequate-coh-2}
		\xymatrix{
			0\ar[r]&B/\pi B\ar[r]&H^0(\fal^\et_{X^{\triv}\to X},\falb/\pi\falb)\ar[r]^-{\delta^0}&H^1(\fal^\et_{X^{\triv}\to X},\falb/\pi\falb).
		}
	\end{align}
\end{mythm}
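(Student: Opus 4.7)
The plan is to reduce to the local adequate case where $X = \spec(\ca{A}^L)$ for some essentially adequate $A$, and then invoke the Galois-cohomology computations of \ref{cor:coh}.

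First I would perform an \'etale descent. By \ref{defn:essential-adequate-pair} there is an \'etale surjection $f \colon U = \spec(B') \to X$ with $U$ a finite disjoint union of affine pieces, each \'etale over some $\spec(\ca{A}^L)$ (where $A$ is an essentially adequate $\ca{O}_K$-algebra for a complete discrete valuation subfield $K \subseteq L$). Since $B \to B'$ is faithfully flat, both the almost injectivity of \eqref{eq:thm:essential-adequate-coh-1} and the almost exactness of \eqref{eq:thm:essential-adequate-coh-2} can be verified after base change along $f$. To transport the statements through this base change I would apply \ref{cor:acyclic}: it yields an almost isomorphism
\begin{align*}
H^n(\fal^\et_{X^\triv \to X}, \falb/\pi\falb) \otimes_B B' \longrightarrow H^n(\fal^\et_{U^\triv \to U}, \falb/\pi\falb),
\end{align*}
and, applied to the pro-\'etale localization at $x \in \ak{G}(X_s)$, an almost isomorphism $H^n_{X_{(x)}} \otimes_B B' \longrightarrow \prod_{y \mapsto x} H^n_{U_{(y)}}$ with $y$ running over $\ak{G}(U_s)$ above $x$. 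Here one uses that $\ca{O}_{X,x}$ is local by \ref{lem:essential-adequate-generic-loc}, so that $U \times_X X_{(x)}$ is a finite disjoint union of the $U_{(y)}$'s, and that $\ak{G}(X_s)$ is finite by \ref{lem:essential-adequate-generic} so the product commutes with $\otimes_B B'$.

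This reduces the problem to the case $X = \spec(\ca{A}^L)$ with $X^\triv = \spec(\ca{A}^L_{\triv})$. Here \ref{rem:sncd} shows that $X^\triv \to X$ satisfies condition $(\star)$ of \ref{para:notation-open}, and the tower $\spec(\ca{A}^L_{\underline{\infty}})$ provides a pro-finite-\'etale cover of $X^\triv$ factoring through $Y_\infty$ whose structure ring is pre-perfectoid by \ref{lem:A-perfd} and is therefore Faltings acyclic over $X$ by \ref{thm:acyclic}. Passing to a normalized universal cover of $X^\triv$ and applying \ref{rem:profet-coh} then identifies
\begin{align*}
H^n(\fal^\et_{X^\triv \to X}, \falb/\pi\falb) \longrightarrow H^n(\ca{H}^L, \overline{\ca{A}}/\pi\overline{\ca{A}})
\end{align*}
as an almost isomorphism, compatibly with the analogous identifications at each localization $X_{(\ak{p})}$ for $\ak{p}$ ranging over $\ak{G}(\ca{A}^L/p\ca{A}^L)$; the latter set equals $\ak{G}(X_s)$ by \ref{prop:generic-map}.(\ref{item:prop:generic-map-1}), matching the indexing in the theorem.

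With this translation in place, the almost injectivity of \eqref{eq:thm:essential-adequate-coh-1} is precisely the almost injectivity \eqref{eq:cor:coh-1} of \ref{cor:coh}, and the almost exactness of \eqref{eq:thm:essential-adequate-coh-2} (for which the hypothesis $\pi \in (\zeta_p - 1)\ca{O}_L$ is exactly the hypothesis required there) is precisely the almost exact sequence \eqref{eq:cor:coh-2}. The main obstacle I anticipate is the careful bookkeeping in the \'etale descent step: verifying that the formation of the product $\prod_{x \in \ak{G}(X_s)} H^n_{X_{(x)}}$ genuinely commutes, up to almost isomorphism, with \'etale base change along $B \to B'$. This rests on the three compatibilities noted above -- finiteness of $\ak{G}(X_s)$, the splitting of $U \times_X X_{(x)}$ into local pieces over the preimages in $\ak{G}(U_s)$, and the base-change formula \ref{cor:acyclic} for Faltings cohomology under pro-\'etale localization -- and once they are formalized the descent should proceed cleanly.
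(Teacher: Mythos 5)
Your proposal follows essentially the same route as the paper's own proof: both reduce via \'etale base change and flatness (using \ref{cor:acyclic} and \ref{lem:essential-adequate-generic} for the finiteness of $\ak{G}(X_s)$) to the case $X=\spec(\ca{A}^L)$, then identify Faltings cohomology with the Galois cohomology of $\overline{\ca{A}}/\pi\overline{\ca{A}}$ via \ref{rem:profet-coh}, and conclude from \ref{cor:coh}. The "bookkeeping" you flag — compatibility of the product over $\ak{G}(X_s)$ with \'etale base change — is handled by exactly the three points you already identified, so the argument is complete.
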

\begin{proof}
	For any $x\in \ak{G}(X_s)$, notice that the localization morphism $X_{(x)}\to X$ is pro-\'etale. Thus, the canonical morphism
	\begin{align}
		H^n(\fal^\et_{X^{\triv}\to X},\falb/\pi\falb)\otimes_BB_{\ak{p}_x}\longrightarrow H^n(\fal^\et_{X^{\triv}_{(x)}\to X_{(x)}},\falb/\pi\falb)
	\end{align}
	is an almost isomorphism by \ref{rem:faltings-acyclic} and \ref{cor:acyclic} (whose assumptions are satisfied by \ref{lem:essential-adequate-open}), where $\ak{p}_x$ is the prime ideal of $B$ corresponding to the point $x$. With the notation in \ref{defn:essential-adequate-pair}, we may assume that the \'etale neighborhood $U=\spec(B')$ is affine whose image contains $x$. It suffices to prove that \eqref{eq:thm:essential-adequate-coh-1} is almost injective and that \eqref{eq:thm:essential-adequate-coh-2} is almost exact after tensoring with $B'$ over $B$. Again by \ref{cor:acyclic}, it suffices to prove that the canonical map
	\begin{align}
		H^n(\fal^\et_{U^{\triv}\to U},\falb/\pi\falb)\longrightarrow \prod_{x\in\ak{G}(U_s)}H^n(\fal^\et_{U^{\triv}\to U},\falb/\pi\falb)_{\ak{p}_x}
	\end{align}
	is almost injective and that (if $\pi\in(\zeta_p-1)\ca{O}_L$)
	\begin{align}
		\xymatrix{
			0\ar[r]&B'/\pi B'\ar[r]&H^0(\fal^\et_{U^{\triv}\to U},\falb/\pi\falb)\ar[r]^-{\delta^0}&H^1(\fal^\et_{U^{\triv}\to U},\falb/\pi\falb)
		}
	\end{align}
	is almost exact (we used the finiteness of $\ak{G}(X_s)$ by \ref{lem:essential-adequate-generic}). Apply the same reduction to the \'etale morphism $U\to \spec(\ca{A}^L)$, we can reduce further to the case where $U=\spec(\ca{A}^L)$, which is proved in \ref{cor:coh} by \ref{rem:profet-coh}.
\end{proof}

\section{Purity for Perfectoidness of Certain Limits of Essentially Adequate Schemes}\label{sec:purity}
The property that Faltings cohomology of an essentially adequate scheme is controlled by the Faltings cohomology of the localizations at generic points of the special fibre is preserved after taking certain inverse limit of essentially adequate schemes (see \ref{thm:purity}). As a corollary, we show that being pre-perfectoid for such a limit can be checked on the localization at the generic points of its special fibre, which is a purity-type criterion (see \ref{cor:purity}).

\begin{mypara}\label{para:zeta-4}
	In this section, we fix a pre-perfectoid field $L$ extension of $\bb{Q}_p$ containing a compatible system of primitive $n$-th roots of unity $\{\zeta_n\}_{n\in\bb{N}_{>0}}$. We put
	\begin{align}
		\eta=\spec(L), \quad S=\spec(\ca{O}_L),\quad s=\spec(\ca{O}_L/\ak{m}_L).
	\end{align}
	We fix a directed inverse system $(X^{\triv}_\lambda\to X_\lambda)_{\lambda\in\Lambda}$ of open immersion of coherent schemes essentially adequate over $\eta\to S$ (\ref{defn:essential-adequate-pair}) with affine transition morphisms. We put 
	\begin{align}\label{para:zeta-4-2}
		(X^{\triv}\to X)=\lim_{\lambda\in\Lambda} (X^{\triv}_\lambda\to X_\lambda).
	\end{align}
	Recall that the Faltings ringed site $(\fal^{\et}_{X^{\triv}\to X},\falb)$ (\ref{para:faltings-site}) is canonically equivalent to the limit of ringed sites defined in \cite[8.2.3, 8.6.2]{sga4-2} by \cite[7.12]{he2024coh},
	\begin{align}\label{para:zeta-4-3}
		(\fal^{\et}_{X^{\triv}\to X},\falb)&=\lim_{\lambda\in\Lambda}(\fal^{\et}_{X^{\triv}_\lambda\to X_\lambda},\falb).
	\end{align}
	We remark that $X$ is integrally closed in $X^{\triv}$ as each $X_\lambda$ is integrally closed in $X^{\triv}_{\lambda}$ by \ref{lem:essential-adequate-open} and \ref{lem:essential-adequate-generic} (\cite[3.18]{he2024coh}).
	
	For any $x\in X_s$, we put $X_{(x)}=\spec(\ca{O}_{X,x})$ the localization of $X$ at $x$, and we put $X^{\triv}_{(x)}= X^{\triv}\times_XX_{(x)}$. Then, there is a canonical morphism $(X^{\triv}_{(x)}\to X_{(x)})\to (X^{\triv}\to X)$ of open immersions of coherent schemes over $\eta\to S$, which induces a canonical morphism of Faltings ringed sites
	\begin{align}\label{para:zeta-4-4}
		(\fal^{\et}_{X^{\triv}_{(x)}\to X_{(x)}},\falb)\longrightarrow(\fal^{\et}_{X^{\triv}\to X},\falb).
	\end{align}
\end{mypara}

\begin{mypara}\label{para:purity-setup}
	Consider the following condition on the inverse system $(X^{\triv}_\lambda\to X_\lambda)_{\lambda\in\Lambda}$: 
	\begin{enumerate}
		\renewcommand{\labelenumi}{{\rm(\theenumi)}}
		\item[$(\mbf{G})$]  For any indexes $\lambda\leq \mu$ in $\Lambda$, the transition morphism
		\begin{align}
			f_{\lambda\mu}:X_\mu\longrightarrow X_\lambda
		\end{align}
		sends the set $\ak{G}(X_{\mu,s})$ of generic points of the special fibre of $X_\mu$ to the set $\ak{G}(X_{\lambda,s})$ of generic points of the special fibre of $X_\lambda$.
	\end{enumerate} 
	In this case, $(\ak{G}(X_{\lambda,s}))_{\lambda\in\Lambda}$ forms an inverse system of finite discrete subspaces (\ref{lem:essential-adequate-generic} and \ref{lem:generic-discrete}) whose limit coincides with the subspace $\ak{G}(X_s)$ of generic points of $X_s$ by \ref{lem:generic-limit},
	\begin{align}\label{eq:para:purity-setup-2}
		\ak{G}(X_s)=\lim_{\lambda\in\Lambda}\ak{G}(X_{\lambda,s}),
	\end{align}
	which is pro-finite.
\end{mypara}

\begin{myrem}\label{rem:cond-G}
	The condition $(\mbf{G})$ in {\rm\ref{para:purity-setup}} holds in each of the following situations:
	\begin{enumerate}
		\renewcommand{\labelenumi}{{\rm(\theenumi)}}
		\item The transition morphism $f_{\lambda\mu}$ is flat for any indexes $\lambda\leq \mu$ in $\Lambda$. Indeed, this follows from \ref{rem:generic-map}.(\ref{item:rem:generic-map-1}).\label{item:rem:cond-G-1}
		\item The transition morphism $f_{\lambda\mu}$ is integral and sends the set $\ak{G}(X_\mu)$ of generic points of $X_\mu$ to the set $\ak{G}(X_\lambda)$ of generic points of $X_\lambda$ for any indexes $\lambda\leq \mu$ in $\Lambda$. Indeed, this follows from \ref{rem:generic-map}.(\ref{item:rem:generic-map-3}) and \ref{lem:essential-adequate-generic}.\label{item:rem:cond-G-2}
	\end{enumerate}
\end{myrem}

\begin{mylem}\label{lem:val-transit}
	Assume that the condition $(\mbf{G})$ in {\rm\ref{para:purity-setup}} holds. 
	\begin{enumerate}
		\renewcommand{\labelenumi}{{\rm(\theenumi)}}
		\item For any $\lambda\in\Lambda$ and any point $x_\lambda\in\ak{G}(X_{\lambda,s})$, the local ring $\ca{O}_{X_\lambda,x_\lambda}$ is a valuation ring of height $1$ extension of $\ca{O}_L$. Moreover, $X^{\triv}_{\lambda,(x_\lambda)}=\spec(\ca{O}_{X_\lambda,x_\lambda}[1/p])$ is the generic point of $X_{\lambda,(x_\lambda)}$.\label{item:lem:val-transit-1}
		\item For any indexes $\lambda\leq \mu$ in $\Lambda$ and any point $x_\mu\in \ak{G}(X_\mu)$ with image $x_\lambda\in\ak{G}(X_\lambda)$, the canonical morphism of local rings induced by $f_{\lambda\mu}$,
		\begin{align}\label{eq:lem:val-transit-1}
			\ca{O}_{X_\lambda,x_\lambda}\longrightarrow \ca{O}_{X_\mu,x_\mu}
		\end{align} 
		is an extension of valuation rings of height $1$.\label{item:lem:val-transit-2}
		\item For any $x\in\ak{G}(X_s)$, if we denote by $x_\lambda$ its image in $\ak{G}(X_{\lambda,s})$ for any $\lambda\in\Lambda$, then
		\begin{align}\label{eq:lem:val-transit-2}
			\ca{O}_{X,x}=\colim_{\lambda\in\Lambda}\ca{O}_{X_\lambda,x_\lambda}
		\end{align}
		is a valuation ring of height $1$ extension of $\ca{O}_L$. Moreover, $X^{\triv}_{(x)}=\spec(\ca{O}_{X,x}[1/p])$ is the generic point of $X_{(x)}$.\label{item:lem:val-transit-3}
	\end{enumerate}
\end{mylem}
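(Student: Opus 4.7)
The plan is to address the three parts in order, with part (\ref{item:lem:val-transit-1}) being essentially immediate, part (\ref{item:lem:val-transit-2}) reducing to the valuative criterion \ref{lem:val-ext}, and part (\ref{item:lem:val-transit-3}) requiring a limit argument on valuation rings. Part (\ref{item:lem:val-transit-1}) is nothing more than Lemma \ref{lem:essential-adequate-generic-loc} applied to the essentially adequate scheme $X_\lambda$ at the generic point $x_\lambda$ of its special fibre.

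For part (\ref{item:lem:val-transit-2}), the map $\ca{O}_{X_\lambda, x_\lambda} \to \ca{O}_{X_\mu, x_\mu}$ is manifestly local since $f_{\lambda\mu}(x_\mu) = x_\lambda$, and both rings are valuation rings of height $1$ by part (\ref{item:lem:val-transit-1}). Thus, by the equivalence (\ref{item:lem:val-ext-2})$\Leftrightarrow$(\ref{item:lem:val-ext-1}) of Lemma \ref{lem:val-ext}, the remaining point is injectivity. Since both rings are domains, injectivity amounts to showing that the generic point of $\spec(\ca{O}_{X_\mu, x_\mu})$ maps to the generic point of $\spec(\ca{O}_{X_\lambda, x_\lambda})$. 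By part (\ref{item:lem:val-transit-1}), these generic points are identified with $X^{\triv}_{\mu,(x_\mu)}$ and $X^{\triv}_{\lambda,(x_\lambda)}$ respectively, and since the transition morphism is a morphism of open immersions $(X^{\triv}_\mu\to X_\mu)\to (X^{\triv}_\lambda\to X_\lambda)$, it maps $X^{\triv}_\mu$ into $X^{\triv}_\lambda$, so the claim is clear.

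For part (\ref{item:lem:val-transit-3}), the first step is to identify $\ca{O}_{X,x} = \colim_{\lambda\in\Lambda}\ca{O}_{X_\lambda, x_\lambda}$, using the standard fact that the stalk of the limit of a directed system of schemes with affine transition morphisms at a point $x$ is the colimit of stalks at the compatible system of points $x_\lambda$ (one reduces to the affine case by choosing an affine open neighborhood of some $x_{\lambda_0}$ and pulling back). By part (\ref{item:lem:val-transit-2}), this expresses $\ca{O}_{X,x}$ as a filtered colimit of valuation rings along extensions of valuation rings; an elementary argument on fractions then shows that the colimit is again a valuation ring, and it contains $\ca{O}_L$ as a valuation subring. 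For the height-$1$ claim, I would use that $\spec(\ca{O}_{X,x}) = \lim_\lambda \spec(\ca{O}_{X_\lambda, x_\lambda})$ as topological spaces, and that each $\spec(\ca{O}_{X_\lambda, x_\lambda})$ is a two-point space by part (\ref{item:lem:val-transit-1}), with transition maps sending generic to generic and closed to closed (by part (\ref{item:lem:val-transit-2}) and condition $(\mbf{G})$); therefore the inverse limit also has exactly two points. Finally, $X^{\triv}_{(x)} = \spec(\ca{O}_{X,x}[1/p])$ is the generic point because $p\in\ak{m}_x$ (as $x\in X_s$) and $X^{\triv}$ is the trivialising open subscheme.

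The main (mild) obstacle is ensuring the height-$1$ property passes to the colimit: a direct approach via value groups would require tracking compatible embeddings into $\bb{R}$, but the topological/spectral argument above — leveraging that $\spec$ turns colimits of rings into limits of spaces and that the finite two-point spaces are preserved under the prescribed transition maps — bypasses this cleanly.
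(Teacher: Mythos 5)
Your proof is correct and follows the same overall structure as the paper's: part (1) cites \ref{lem:essential-adequate-generic-loc}, part (2) reduces to showing the transition map of local rings is local and injective (hence an extension of valuation rings by \ref{lem:val-ext}), and part (3) passes to the filtered colimit. Two small points of comparison. For the injectivity in (2), you argue via the location of the generic point in $X^{\triv}_\lambda$; the paper leaves this implicit, and the most economical version is that the kernel is a prime ideal of the height-one valuation ring $\ca{O}_{X_\lambda,x_\lambda}$ not containing $p$ (since the map is an $\ca{O}_L$-algebra homomorphism into a domain in which $p\neq 0$), hence is $(0)$ --- your route and this one are two ways of reading off the same structure of a two-point $\spec$. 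For (3), the paper cites the Stacks Project tag \tt{0AS4} which only gives that the colimit is a valuation ring (with the structure maps again extensions); your topological argument that $\spec(\ca{O}_{X,x})=\lim_\lambda\spec(\ca{O}_{X_\lambda,x_\lambda})$ is a two-point space supplies the height-one claim explicitly, which the paper leaves to the reader (an equivalent route is that a filtered colimit of archimedean value groups along order-embeddings is archimedean). So: same argument, with you filling in two micro-gaps the paper leaves implicit.
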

\begin{proof}
	(\ref{item:lem:val-transit-1}) is proved in \ref{lem:essential-adequate-generic-loc}. Then, the morphism \eqref{eq:lem:val-transit-1} has to be local and injective, i.e., an extension of valuation rings, which proves (\ref{item:lem:val-transit-2}).  By taking filtered colimit, we get (\ref{item:lem:val-transit-3}) (\cite[\href{https://stacks.math.columbia.edu/tag/0AS4}{0AS4}]{stacks-project}).
\end{proof}

\begin{mythm}\label{thm:purity}
	Assume that the condition $(\mbf{G})$ in {\rm\ref{para:purity-setup}} holds, and that $X_\lambda=\spec(B_\lambda)$ is affine for any $\lambda\in\Lambda$ and we put $B=\colim_{\lambda\in\Lambda}B_\lambda$ (so that $X=\spec(B)$). Let $\pi$ be a nonzero element of $\ak{m}_L$.
	\begin{enumerate}
		\renewcommand{\labelenumi}{{\rm(\theenumi)}}
		\item For any integer $n$, the natural map of $B$-modules induced by \eqref{para:zeta-4-4},
		\begin{align}\label{eq:thm:purity-1}
			H^n(\fal^\et_{X^{\triv}\to X},\falb/\pi\falb)\longrightarrow \prod_{x\in \ak{G}(X_s)}H^n(\fal^\et_{X^{\triv}_{(x)}\to X_{(x)}},\falb/\pi\falb)
		\end{align}
		is almost injective.\label{item:thm:purity-1}
		\item If $\pi\in(\zeta_p-1)\ca{O}_L$, then the canonical exact sequence $0\to \falb/\pi\falb\stackrel{\cdot \pi}{\longrightarrow}\falb/\pi^2\falb\to \falb/\pi\falb\to 0$ induces an almost exact sequence of $B$-modules
		\begin{align}\label{eq:thm:purity-2}
			\xymatrix{
				0\ar[r]&B/\pi B\ar[r]&H^0(\fal^\et_{X^{\triv}\to X},\falb/\pi\falb)\ar[r]^-{\delta^0}&H^1(\fal^\et_{X^{\triv}\to X},\falb/\pi\falb).
			}
		\end{align} \label{item:thm:purity-2}
	\end{enumerate}
\end{mythm}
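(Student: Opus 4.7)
The plan is to deduce both claims from the finite-level assertions in \ref{thm:essential-adequate-coh} by combining the limit-preserving property \eqref{para:zeta-4-3} of the Faltings ringed topos with the compactness of the pro-finite space $\ak{G}(X_s) = \lim_{\lambda} \ak{G}(X_{\lambda,s})$ recorded in \eqref{eq:para:purity-setup-2}. A preliminary input is that, by \eqref{para:zeta-4-3} and the corresponding statement for the localizations (using \ref{lem:val-transit}.(\ref{item:lem:val-transit-3})), Faltings cohomology commutes with the filtered limits under consideration, so that
\begin{align*}
H^n(\fal^\et_{X^\triv \to X}, \falb/\pi\falb) &\cong \colim_{\lambda \in \Lambda} H^n(\fal^\et_{X^\triv_\lambda \to X_\lambda}, \falb/\pi\falb),\\
H^n(\fal^\et_{X^\triv_{(x)} \to X_{(x)}}, \falb/\pi\falb) &\cong \colim_{\lambda \geq \lambda_0} H^n(\fal^\et_{X^\triv_{\lambda,(x_\lambda)} \to X_{\lambda,(x_\lambda)}}, \falb/\pi\falb)
\end{align*}
for every $x \in \ak{G}(X_s)$ with images $x_\lambda \in \ak{G}(X_{\lambda,s})$.

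For (\ref{item:thm:purity-1}), I would take a class $\alpha$ in the kernel of \eqref{eq:thm:purity-1} and lift it to some $\alpha_{\lambda_0}$ at finite level, writing $\alpha_\lambda(x_\lambda)$ for the localization of its image at each $x_\lambda \in \ak{G}(X_{\lambda,s})$ with $\lambda \geq \lambda_0$. Consider the finite set
\begin{align*}
C_\lambda = \{\, x_\lambda \in \ak{G}(X_{\lambda,s}) \ |\ \alpha_\lambda(x_\lambda) \neq 0 \,\}.
\end{align*}
Functoriality of the localizations forces $f_{\lambda\mu}(C_\mu) \subseteq C_\lambda$ for $\lambda \leq \mu$, so $(C_\lambda)_{\lambda \geq \lambda_0}$ is a cofiltered inverse system of finite sets. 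The vanishing of $\alpha$ at each stalk translates, through the colimit description above, into $\lim_\lambda C_\lambda = \emptyset$ as a subset of $\ak{G}(X_s)$. Invoking the non-emptiness of cofiltered limits of non-empty finite sets (the same compactness fact used in the purity argument of the introduction), some $C_{\lambda_1}$ must be empty, meaning $\alpha_{\lambda_1}(x_{\lambda_1}) = 0$ for every $x_{\lambda_1} \in \ak{G}(X_{\lambda_1,s})$. Applying \ref{thm:essential-adequate-coh} at level $\lambda_1$ then yields $\ak{m}_L \cdot \alpha_{\lambda_1} = 0$, hence $\ak{m}_L \cdot \alpha = 0$.

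Part (\ref{item:thm:purity-2}) should be essentially formal: the almost exact sequences at each finite $\lambda$ furnished by \ref{thm:essential-adequate-coh} are natural in $\lambda$ by the naturality of the coboundary maps $\delta^0_\lambda$, and filtered colimits preserve both exactness and the class of almost-zero modules. Passing to the colimit and identifying terms via $B/\pi B = \colim_\lambda B_\lambda/\pi B_\lambda$ together with the cohomological colimits displayed above then yields \eqref{eq:thm:purity-2} directly.

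The main obstacle is the one flagged in the introduction: since $\ak{G}(X_s)$ is only pro-finite, one cannot naively commute the infinite product in \eqref{eq:thm:purity-1} with the colimit defining its source, so the finite-level almost injectivity in \ref{thm:essential-adequate-coh} does not transfer verbatim. The argument for (\ref{item:thm:purity-1}) sidesteps this by dualizing to the non-vanishing loci $C_\lambda \subseteq \ak{G}(X_{\lambda,s})$ and exploiting the compactness of their profinite limit, reducing the limit assertion to a single finite level where \ref{thm:essential-adequate-coh} directly applies.
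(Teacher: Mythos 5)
Your argument is correct and follows essentially the same route as the paper's proof: you use the limit description \eqref{para:zeta-4-3} to identify the source and stalks with filtered colimits, introduce the finite non-vanishing loci (your $C_\lambda$ are exactly the paper's $\mrm{Supp}_\lambda(\xi_\lambda)$), invoke the non-emptiness of cofiltered limits of non-empty finite sets to reduce to a single finite level, and finish with \ref{thm:essential-adequate-coh}; the only difference is that you argue directly while the paper phrases the same compactness step as the contrapositive ($\alpha(\xi)\neq 0 \Rightarrow \beta(\xi)\neq 0$). Part (\ref{item:thm:purity-2}) is likewise handled identically by passing the almost exact sequences of \ref{thm:essential-adequate-coh} through the filtered colimit.
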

\begin{proof}
	(\ref{item:thm:purity-1}) Note that for any $\lambda\leq\mu$ in $\Lambda$ the assumption that $f_{\lambda\mu}(\ak{G}(X_{\mu,s}))\subseteq \ak{G}(X_{\lambda,s})$ induces a canonical morphism
	\begin{align}\label{eq:thm:purity-3}
		f_{\lambda\mu}^*:\prod_{x_\lambda\in \ak{G}(X_{\lambda,s})}H^n(\fal^\et_{X^{\triv}_{\lambda,(x_\lambda)}\to X_{\lambda,(x_\lambda)}},\falb/\pi\falb)\longrightarrow\prod_{x_\mu\in \ak{G}(X_{\mu,s})}H^n(\fal^\et_{X^{\triv}_{\mu,(x_\mu)}\to X_{\mu,(x_\mu)}},\falb/\pi\falb)
	\end{align}
	such that for any element $(\xi_{x_\lambda})_{x_\lambda\in \ak{G}(X_{\lambda,s})}$ on the left, the $x_\mu$-component of its image on the right is the image of $\xi_{f_{\lambda\mu}(x_\mu)}$ via the canonical morphism
	\begin{align}\label{eq:thm:purity-4}
		H^n(\fal^\et_{X^{\triv}_{\lambda,(f_{\lambda\mu}(x_\mu))}\to X_{\lambda,(f_{\lambda\mu}(x_\mu))}},\falb/\pi\falb)\longrightarrow H^n(\fal^\et_{X^{\triv}_{\mu,(x_\mu)}\to X_{\mu,(x_\mu)}},\falb/\pi\falb)
	\end{align}
	induced by the canonical morphism $(X_{\mu,(x_\mu)}^{\triv}\to X_{\mu,(x_\mu)})\to (X^{\triv}_{\lambda,(f_{\lambda\mu}(x_\mu))}\to X_{\lambda,(f_{\lambda\mu}(x_\mu))})$. Therefore, there is a canonical commutative diagram
	\begin{align}\label{eq:thm:purity-5}
		\xymatrix{
			\colim_{\lambda\in\Lambda}H^n(\fal^\et_{X^{\triv}_\lambda\to X_\lambda},\falb/\pi\falb)\ar@{=}[d]\ar[r]^-{\alpha}& \colim_{\lambda\in\Lambda}\prod_{x_\lambda\in \ak{G}(X_{\lambda,s})}H^n(\fal^\et_{X^{\triv}_{\lambda,(x_\lambda)}\to X_{\lambda,(x_\lambda)}},\falb/\pi\falb)\ar[d]\\
			H^n(\fal^\et_{X^{\triv}\to X},\falb/\pi\falb)\ar[r]^-{\beta}&\prod_{x\in \ak{G}}H^n(\fal^\et_{X^{\triv}_{(x)}\to X_{(x)}},\falb/\pi\falb)
		}
	\end{align}
	where the left vertical equality follows from $(X^{\triv}\to X)=\lim_{\lambda\in\Lambda} (X^{\triv}_\lambda\to X_\lambda)$, \cite[7.12]{he2024coh} and \cite[8.7.7]{sga4-2}, and where $\alpha$ is almost injective as each 
	\begin{align}\label{eq:thm:purity-6}
		\alpha_\lambda:H^n(\fal^\et_{X^{\triv}_\lambda\to X_\lambda},\falb/\pi\falb)\longrightarrow \prod_{x_\lambda\in \ak{G}(X_{\lambda,s})}H^n(\fal^\et_{X^{\triv}_{\lambda,(x_\lambda)}\to X_{\lambda,(x_\lambda)}},\falb/\pi\falb)
	\end{align}
	is almost injective by \ref{thm:essential-adequate-coh}. In order to check the almost injectivity of $\beta$, it suffices to show that for any $\xi\in H^n(\fal^\et_{X^{\triv}\to X},\falb/\pi\falb)$, if $\beta(\xi)=0$ then $\alpha(\xi)=0$.
	
	Assume that $\alpha(\xi)\neq 0$. We take $\lambda_0\in\Lambda$ large enough such that there exists $\xi_{\lambda_0}\in H^n(\fal^\et_{X^{\triv}_{\lambda_0}\to X_{\lambda_0}},\falb/\pi\falb)$ whose image in $H^n(\fal^\et_{X^{\triv}\to X},\falb/\pi\falb)$ is $\xi$. We denote by $\xi_\lambda=f_{\lambda_0\lambda}^*(\xi_{\lambda_0})$ the image of $\xi_{\lambda_0}$ in $H^n(\fal^\et_{X^{\triv}_\lambda\to X_\lambda},\falb/\pi\falb)$ for any $\lambda\in\Lambda_{\geq\lambda_0}$. Consider the subset
	\begin{align}
		\mrm{Supp}_\lambda(\xi_\lambda)=\{x_\lambda\in \ak{G}(X_{\lambda,s})\ |\ \alpha_\lambda(\xi_\lambda)_{x_\lambda}\neq 0\}\subseteq \ak{G}(X_{\lambda,s}),
	\end{align}
	where $\alpha_\lambda(\xi_\lambda)_{x_\lambda}$ is the $x_\lambda$-component of $\alpha_\lambda(\xi_\lambda)$ \eqref{eq:thm:purity-6}. For any $\lambda\leq\mu$ in $\Lambda_{\geq \lambda_0}$, as $f_{\lambda\mu}^*(\xi_\lambda)=\xi_{\mu}$, we see that $f_{\lambda\mu}:\ak{G}(X_{\mu,s})\to \ak{G}(X_{\lambda,s})$ sends $\mrm{Supp}_\mu(\xi_\mu)$ into $\mrm{Supp}_\lambda(\xi_\lambda)$ by \eqref{eq:thm:purity-3} and \eqref{eq:thm:purity-4}. The assumption $\alpha(\xi)\neq 0$ implies that $\mrm{Supp}_\lambda(\xi_\lambda)$ is non-empty for any $\lambda\in\Lambda_{\geq \lambda_0}$. Thus,
	\begin{align}
		\lim_{\lambda\in\Lambda_{\geq \lambda_0}}\mrm{Supp}_\lambda(\xi_\lambda)
	\end{align}
	is also non-empty as a cofiltered limit of non-empty finite sets by \cite[\href{https://stacks.math.columbia.edu/tag/0A2W}{0A2W}]{stacks-project} (as each $\ak{G}(X_{\lambda,s})$ is finite by \ref{lem:essential-adequate-generic}). We take $x\in \lim_{\lambda\in\Lambda_{\geq \lambda_0}}\mrm{Supp}_\lambda(\xi_\lambda)\subseteq \ak{G}(X_s)=\lim_{\lambda\in\Lambda_{\geq\lambda_0}}\ak{G}(X_{\lambda,s})\subseteq X_s=\lim_{\lambda\in\Lambda_{\geq\lambda_0}}X_{\lambda,s}$ with image $x_\lambda\in \mrm{Supp}_\lambda(\xi_\lambda)$. Then, $\alpha_\lambda(\xi_\lambda)_{x_\lambda}\neq 0$ for any $\lambda\in \Lambda_{\geq \lambda_0}$. Thus, the $x$-component of $\beta(\xi)$, as an element of
	\begin{align}
		\colim_{\lambda\in\Lambda}H^n(\fal^\et_{X^{\triv}_{\lambda,(x_\lambda)}\to X_{\lambda,(x_\lambda)}},\falb/\pi\falb)=H^n(\fal^\et_{X^{\triv}_{(x)}\to X_{(x)}},\falb/\pi\falb),
	\end{align} 
	does not vanish, where the equality above follows from  $X_{(x)}=\lim_{\lambda\in\Lambda} X_{\lambda,(x_\lambda)}$ \eqref{eq:lem:val-transit-2}, \cite[7.12]{he2024coh} and \cite[8.7.7]{sga4-2}. This shows that $\beta(\xi)\neq 0$.
	
	(\ref{item:thm:purity-2}) Consider the canonical morphism of sequences
	\begin{small}\begin{align}
			\xymatrix{
				0\ar[r]&\colim_{\lambda\in\Lambda}B_\lambda/\pi B_\lambda\ar[d]\ar[r]&\colim_{\lambda\in\Lambda}H^0(\fal^\et_{X^{\triv}_{\lambda}\to X_\lambda},\falb/\pi\falb)\ar[d]\ar[r]^-{\delta^0}& \colim_{\lambda\in\Lambda}H^1(\fal^\et_{X^{\triv}_\lambda\to X_\lambda},\falb/\pi\falb)\ar[d]\\
				0\ar[r]&B/\pi B\ar[r]&H^0(\fal^\et_{X^{\triv}\to X},\falb/\pi\falb)\ar[r]^-{\delta^0}& H^1(\fal^\et_{X^{\triv}\to X},\falb/\pi\falb)
			}
	\end{align}\end{small}where the vertical maps are isomorphisms by $(X^{\triv}\to X)=\lim_{\lambda\in\Lambda} (X^{\triv}_\lambda\to X_\lambda)$, \cite[7.12]{he2024coh} and \cite[8.7.7]{sga4-2}. The first row is almost exact by \ref{thm:essential-adequate-coh}. Thus, so is the second row.
\end{proof}

\begin{mycor}[Purity for perfectoidness]\label{cor:purity}
	Assume that the condition $(\mbf{G})$ in {\rm\ref{para:purity-setup}} holds, and that $X_\lambda=\spec(B_\lambda)$ is affine for any $\lambda\in\Lambda$ and we put $B=\colim_{\lambda\in\Lambda}B_\lambda$ (so that $X=\spec(B)$). Then, the following conditions are equivalent: 
	\begin{enumerate}
		\renewcommand{\labelenumi}{{\rm(\theenumi)}}
		\item For any $x\in \ak{G}(X_s)$, the valuation field $\ca{O}_{X,x}[1/p]$ is a pre-perfectoid field.\label{item:cor:purity-1}
		\item The $\ca{O}_L$-algebra $B$ is pre-perfectoid.\label{item:cor:purity-2}
	\end{enumerate}
\end{mycor}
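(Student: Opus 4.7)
For the direction (\ref{item:cor:purity-2}) $\Rightarrow$ (\ref{item:cor:purity-1}) I would use localization: the stalk $\ca{O}_{X,x} = B_{\ak{p}_x}$ is a localization of $B$ at a prime, so the Frobenius isomorphism $B/\pi B \iso B/\pi^p B$ localizes to one on $\ca{O}_{X,x}$; flatness of its $\pi$-adic completion over $\ca{O}_{\widehat L}$ is automatic because $\ca{O}_{X,x}$ is a height-$1$ valuation ring extending $\ca{O}_L$ by Lemma~\ref{lem:val-transit}. Hence $\ca{O}_{X,x}$ is pre-perfectoid, and its fraction field $\ca{O}_{X,x}[1/p]$ is a pre-perfectoid field.

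The main direction (\ref{item:cor:purity-1}) $\Rightarrow$ (\ref{item:cor:purity-2}) is the harder one. My plan is to apply Lemma~\ref{lem:al-perfd}: since $B$ is $\ca{O}_L$-flat (as a filtered colimit of flat algebras) and integrally closed in $B[1/p]$ (this follows from $X$ being integrally closed in $X^{\triv}$), it suffices to show $B$ is almost pre-perfectoid. I fix $\pi_0 \in \ak{m}_L$ with $p \in \pi_0^p \ca{O}_L$ and $\pi_0^p \in (\zeta_p - 1) \ca{O}_L$ (say $\pi_0 = \zeta_{p^2}-1$, since $v_L(\pi_0^p) = v_L(\zeta_p-1) \leq v_L(p)$) and aim to show the Frobenius $B/\pi_0 B \to B/\pi_0^p B$ is an almost isomorphism. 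Under~(\ref{item:cor:purity-1}), each stalk $\ca{O}_{X,y}$ for $y \in \ak{G}(X_s)$ is pre-perfectoid (by the previous paragraph applied in reverse), so the open immersion $X^{\triv}_{(y)} \hookrightarrow X_{(y)}$ (whose source is the generic point of the target by Lemma~\ref{lem:val-transit}) satisfies condition $(\star)$ of~\ref{para:notation-open} trivially and is Faltings acyclic by Theorem~\ref{thm:acyclic}. Feeding this into Theorem~\ref{thm:purity} yields $H^n(\fal^\et_{X^{\triv}\to X}, \falb/\pi\falb) \stackrel{a}{=} 0$ for all $n \geq 1$ and nonzero $\pi \in \ak{m}_L$, together with an almost isomorphism $B/\pi_0^p B \stackrel{a}{\cong} H^0(\fal^\et_{X^{\triv}\to X}, \falb/\pi_0^p \falb)$.

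Next, the topological generating family of Lemma~\ref{lem:perfd-basis}, on which $\falb$ takes pre-perfectoid values, makes the sheaf Frobenius $\falb/\pi_0\falb \to \falb/\pi_0^p\falb$ an almost isomorphism, hence induces one on $H^0$. Combining, the Frobenius $B/\pi_0 B \to B/\pi_0^p B$ factors as the natural map $\alpha : B/\pi_0 B \to H^0(\fal^\et_{X^{\triv}\to X}, \falb/\pi_0\falb)$ followed by almost isomorphisms, so it remains to prove that $\alpha$ itself is an almost isomorphism. For almost surjectivity I would use the long exact sequence associated to $0 \to \falb/\pi_0^{p-1}\falb \xrightarrow{\cdot \pi_0} \falb/\pi_0^p\falb \to \falb/\pi_0\falb \to 0$ (exact because $\falb$ is $\ca{O}_L$-flat), the vanishing $H^1(\falb/\pi_0^{p-1}\falb) \stackrel{a}{=} 0$, and the surjection $B/\pi_0^p B \twoheadrightarrow B/\pi_0 B$ transported through the almost isomorphism $B/\pi_0^p B \stackrel{a}{\cong} H^0(\falb/\pi_0^p \falb)$.

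The hard step will be almost injectivity of $\alpha$, which requires passing from pointwise information on stalks back to a genuine membership in $\pi_0 B$. I will combine three ingredients: (a) by Theorem~\ref{thm:purity}.(1) and Faltings acyclicity of the stalks, the map $H^0(\fal^\et_{X^{\triv}\to X},\falb/\pi_0\falb) \to \prod_{y \in \ak{G}(X_s)} \ca{O}_{X,y}/\pi_0\ca{O}_{X,y}$ is almost injective; (b) each $\ca{O}_{X,y}$ is a height-$1$ valuation ring extending $\ca{O}_L$ (Lemma~\ref{lem:val-transit}) whose value group is dense in $\bb{R}$ (since $L$ has non-discrete valuation), so a direct valuation computation shows that any element of $\ca{O}_{X,y}/\pi_0 \ca{O}_{X,y}$ annihilated by $\ak{m}_L$ is actually zero; (c) a spectral limit (compactness) argument. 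Given $b \in B$ with $\alpha(b)$ annihilated by $\ak{m}_L$, (a) and (b) imply $b \in \pi_0 \ca{O}_{X,y}$ for each $y$; lifting $b$ to $b_{\lambda_1} \in B_{\lambda_1}$, the finite sets $S_\lambda = \{y_\lambda \in \ak{G}(X_{\lambda,s}) : b_\lambda \notin \pi_0 \ca{O}_{X_\lambda, y_\lambda}\}$ for $\lambda \geq \lambda_1$ form an inverse system under $f_{\lambda\mu}$ with $\lim S_\lambda = \emptyset$, so \cite[\href{https://stacks.math.columbia.edu/tag/0A2W}{0A2W}]{stacks-project} forces some $S_{\lambda_0}$ to already be empty; the classical injection $B_{\lambda_0}/\pi_0 B_{\lambda_0} \hookrightarrow \prod_{y_{\lambda_0}} \ca{O}_{X_{\lambda_0}, y_{\lambda_0}}/\pi_0$ (Proposition~\ref{prop:generic-map}.(\ref{item:prop:generic-map-3}), applicable since $B_{\lambda_0}$ is a finite product of Krull domains by Lemma~\ref{lem:essential-adequate-generic}) then yields $b_{\lambda_0} \in \pi_0 B_{\lambda_0}$, so $b = 0$ in $B/\pi_0 B$. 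With $\alpha$ an almost isomorphism, the Frobenius $B/\pi_0 B \to B/\pi_0^p B$ is almost iso, $B$ is almost pre-perfectoid, and Lemma~\ref{lem:al-perfd} gives the conclusion.
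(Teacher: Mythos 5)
Your direction (\ref{item:cor:purity-2})$\Rightarrow$(\ref{item:cor:purity-1}) agrees with the paper's: both rest on localizations of pre-perfectoid algebras being pre-perfectoid (the paper cites \cite[5.37]{he2024coh}; your direct check via Lemma~\ref{lem:val-transit} is also valid). For (\ref{item:cor:purity-1})$\Rightarrow$(\ref{item:cor:purity-2}) your argument is correct but substantially longer than the paper's, and the extra length duplicates machinery that is already packaged. The paper applies Theorem~\ref{thm:purity} only at $\pi=p$ (which does satisfy $p\in(\zeta_p-1)\ca{O}_L$), concludes $X^{\triv}\to X$ is Faltings acyclic, and invokes Lemma~\ref{lem:fal-acyc-perfd}; the ``for every nonzero $\pi$'' upgrade that your argument reproves is already built into Remark~\ref{rem:faltings-acyclic} and the proof of Lemma~\ref{lem:fal-acyc-perfd}. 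By pinning yourself to $\pi_0=\zeta_{p^2}-1$, which does not lie in $(\zeta_p-1)\ca{O}_L$, you lose direct access to Theorem~\ref{thm:purity}.(\ref{item:thm:purity-2}) for $\alpha:B/\pi_0 B\to H^0(\falb/\pi_0\falb)$, and this is what forces your ``hard step'': the spectral limit via Tag 0A2W plus Proposition~\ref{prop:generic-map}.(\ref{item:prop:generic-map-3}), which is sound (the valuation-density step, the stability of $\pi_0$-membership under the transition maps $\ca{O}_{X_\lambda,y_\lambda}\to\ca{O}_{X_\mu,y_\mu}$, and $\ak{G}(X_{\lambda_0,s})=\ak{G}(B_{\lambda_0}/\pi_0 B_{\lambda_0})$ all hold) but essentially re-derives the internals of Theorem~\ref{thm:purity}.(\ref{item:thm:purity-1}). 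Within your own framework there is a cheaper route: once you have the almost isomorphism $\beta:B/\pi_0^p B\to H^0(\falb/\pi_0^p\falb)$ from Theorem~\ref{thm:purity}.(\ref{item:thm:purity-2}) applied to $\pi_0^p$, the commutative square whose horizontal arrows are multiplication by $\pi_0^{p-1}$ (injective on $B/\pi_0 B$ by $\ca{O}_L$-flatness of $B$, and injective on $H^0$ by left exactness) forces $\alpha$ to be almost injective with no compactness argument at all; and almost surjectivity you already have. Simpler still, follow the paper: establish Faltings acyclicity at $\pi=p$ and let Lemma~\ref{lem:fal-acyc-perfd} absorb the Frobenius argument.
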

\begin{proof}
	(\ref{item:cor:purity-1})$\Rightarrow$(\ref{item:cor:purity-2}) Since $\ca{O}_{X,x}$ is pre-perfectoid, $X^{\triv}_{(x)}\to X_{(x)}$ is Faltings acyclic by \ref{thm:acyclic}. Thus, $H^n(\fal^\et_{X^{\triv}_{(x)}\to X_{(x)}},\falb/p\falb)$ is almost zero for any integer $n>0$, and so is $H^n(\fal^\et_{X^{\triv}\to X},\falb/p\falb)$ by \ref{thm:purity}.(\ref{item:thm:purity-1}). Then, $B/p B\to \rr\Gamma(\fal^\et_{X^{\triv}\to X},\falb/p\falb)$ is an almost isomorphism by \ref{thm:purity}.(\ref{item:thm:purity-2}). In other words, $X^{\triv}\to X$ is Faltings acyclic (\ref{defn:faltings-acyclic}). Thus, $B=\falb(X_{\triv}\to X)$ is pre-perfectoid by \ref{lem:fal-acyc-perfd}.
	
	(\ref{item:cor:purity-2})$\Rightarrow$(\ref{item:cor:purity-1}) Since $\ca{O}_{X,x}$ is a localization of $B$, it is also pre-perfectoid (\cite[5.37]{he2024coh}).
\end{proof}

%
%

\section{Preliminaries on Riemann-Zariski Spaces}\label{sec:riemann-zariski}
The formation of taking the set of generic points of special fibres is not functorial in general. One of the functorial replacements of the set of generic points of special fibres is the so-called Riemann-Zariski space (see \ref{para:riemann-zariski}). We refer to \cite{temkin2011rz} for a systematic development of the theory of Riemann-Zariski spaces, and add some basic properties in this section mainly focusing on their limit behavior (see \ref{prop:riemann-zariski-limit} and \ref{cor:riemann-zariski-limit}).

\begin{mypara}\label{para:valuation-spectrum}
	Let $Y\to X$ be a morphism of coherent schemes. We put $\spa(Y,X)$ the set of isomorphism classes of commutative diagrams of coherent schemes
	\begin{align}\label{eq:para:riemann-zariski-2}
		\xymatrix{
			y\ar[r]\ar[d]&W\ar[d]\\
			Y\ar[r]&X
		}
	\end{align} 
	where $y$ is a point of $Y$ and $W$ is the spectrum of a valuation ring with generic point $y$ (\cite[page 20]{temkin2011rz}). We denote by $\val_Y(X)$ the subset of $\spa(Y,X)$ consisting of the diagrams \eqref{eq:para:riemann-zariski-2} such that the induced morphism $y\to Y\times_XW$ is a closed immersion (\cite[page 9]{temkin2011rz}). When $X=\spec(A)$ and $Y=\spec(B)$ are affine, we simply put $\spa(Y,X)=\spa(B,A)$ and $\val_Y(X)=\val_B(A)$.
\end{mypara}

\begin{mydefn}\label{defn:pro-open}
	A morphism $Y\to X$ of coherent schemes is called \emph{pro-open} if there is a directed inverse system of quasi-compact open subsets $(U_\lambda)_{\lambda\in\Lambda}$ with affine transition inclusion maps such that there is an isomorphism of $X$-schemes $Y\cong \lim_{\lambda\in\Lambda} U_\lambda$. We also call $Y$ a \emph{pro-open subset} of $X$.
\end{mydefn}

We remark that the stalk of a pro-open subset $Y$ of a coherent scheme $X$ at any point $y\in Y$ coincides with the stalk of $X$ at $y$.

\begin{mylem}\label{lem:riemann-zariski-basic-spa}
	Let $(Y'\to X')\to (Y\to X)$ be a morphism of morphisms of coherent schemes. Then, there is a canonical map
	\begin{align}\label{eq:lem:riemann-zariski-basic-spa-1}
		\spa(Y',X')\longrightarrow \spa(Y,X)
	\end{align}
	characterized by the following properties: for any element $y'\to W'$ of $\spa(Y',X')$ with image $y\to W$ in $\spa(Y,X)$, $y=\spec(\kappa(y))\in Y$ is the image of $y'=\spec(\kappa(y'))\in Y'$ and $\ca{O}_W=\kappa(y)\cap\ca{O}_{W'}\subseteq \kappa(y')$ (and thus $\ca{O}_W\to \ca{O}_{W'}$ is an extension of valuation rings).
	
	Moreover, if $Y'=Y\times_XX'$ and if $Y\to X$ is pro-open, then the map above induces a map of subsets
	\begin{align}
		\val_{Y'}(X')\longrightarrow\val_Y(X)
	\end{align}
	and we have $y'=y\times_WW'=Y\times_XW'$.
\end{mylem}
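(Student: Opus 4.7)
The plan is to construct the map by chasing valuation rings and residue fields, and then exploit faithful flatness of the resulting extension for the $\val$ part. Given $(y'\to W')\in\spa(Y',X')$, I would take $y\in Y$ to be the image of $y'$ under $Y'\to Y$, giving a field inclusion $\kappa(y)\hookrightarrow\kappa(y')$, and set $\ca{O}_W:=\kappa(y)\cap\ca{O}_{W'}$ inside $\kappa(y')$. This intersection is a valuation ring with fraction field $\kappa(y)$ by \cite[\href{https://stacks.math.columbia.edu/tag/00IB}{00IB}, \href{https://stacks.math.columbia.edu/tag/052K}{052K}]{stacks-project} (precisely as in the proof of \ref{lem:val-ext}), and $\ca{O}_W\hookrightarrow\ca{O}_{W'}$ is an extension of valuation rings in the sense of \ref{defn:val-ext}.

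To produce the morphism $W=\spec(\ca{O}_W)\to X$, I would let $x_{W'}\in X$ denote the image of the closed point of $W'$ and note that the local ring map $\ca{O}_{X,x_{W'}}\to\ca{O}_{W'}$, composed with $\ca{O}_{W'}\hookrightarrow\kappa(y')$, factors through $\kappa(y)\hookrightarrow\kappa(y')$ because the generic point $y'$ of $W'$ maps through $y$ into $Y\subseteq X$. Hence the image of $\ca{O}_{X,x_{W'}}$ in $\kappa(y')$ lies in $\kappa(y)\cap\ca{O}_{W'}=\ca{O}_W$, yielding a local map $\ca{O}_{X,x_{W'}}\to\ca{O}_W$ and a morphism $W\to X$; this morphism is uniquely determined among factorizations compatible with the inclusion $\ca{O}_W\subseteq\ca{O}_{W'}$, since the latter is injective. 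The resulting $(y\to W)$ lies in $\spa(Y,X)$ by construction, which verifies both the existence of the map \eqref{eq:lem:riemann-zariski-basic-spa-1} and its stated characterization.

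For the ``moreover'' part, I would assume $Y'=Y\times_XX'$, $Y\to X$ pro-open, and $(y'\to W')\in\val_{Y'}(X')$. Since $\ca{O}_W\hookrightarrow\ca{O}_{W'}$ is a valuation extension, it is faithfully flat by \ref{lem:val-ext}, so $\spec(\ca{O}_{W'})\to\spec(\ca{O}_W)$ is surjective. Base change gives $Y'\times_{X'}W'=Y\times_XW'$, which is pro-open in $W'$; any pro-open subscheme of the spectrum of a valuation ring is (the limit of principal opens, hence) a localization $\spec(\ca{O}_{W',\ak{p}})$ at some prime, and the $\val$ hypothesis, which asks for a closed immersion $\spec(\kappa(y'))\to Y\times_XW'$, forces that prime to be zero, so $Y\times_XW'=\spec(\kappa(y'))=y'$ as schemes. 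Pulling back the underlying set $\{y'\}$ along the surjection of spectra then forces the underlying set of the preimage of $Y$ in $W$ to be $\{y\}$; applying the same rigidity for pro-open subschemes of $W=\spec(\ca{O}_W)$ upgrades this to the scheme-theoretic equality $Y\times_XW=\spec(\ca{O}_{W,y})=\spec(\kappa(y))=y$. Consequently $y\to Y\times_XW$ is the identity (a closed immersion), so $(y\to W)\in\val_Y(X)$, and by base change $y\times_WW'=(Y\times_XW)\times_WW'=Y\times_XW'=y'$.

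The delicate step will be transferring the $\val$ condition across the valuation extension $\ca{O}_W\hookrightarrow\ca{O}_{W'}$: faithful flatness provides only set-theoretic surjectivity of spectra, and one must combine this with the rigidity of pro-open subschemes of valuation-ring spectra (each determined by its underlying set as a localization at a prime) to promote the set-level identification $T=\{y\}$ into the scheme-theoretic equality $Y\times_XW=y$, which is what ultimately produces the closed immersion.
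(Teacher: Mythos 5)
Your proof is correct and follows essentially the same route as the paper. The paper also defines $y$ as the image of $y'$ and $\ca{O}_W = \kappa(y)\cap\ca{O}_{W'}$, then for the ``moreover'' part uses that $Y\times_X W'$ is a pro-open subset of $W'$ whose closed-immersion condition from $\val$ combined with the fact that $y'$ is the generic point forces $Y\times_X W' = y'$, transfers this across the faithfully flat map $W'\to W$ to show $Y\times_X W$ has a single point, and then uses the pro-open rigidity again to conclude $Y\times_X W = y$. Your variant, characterizing pro-opens of a valuation-ring spectrum as localizations at a prime rather than invoking the stalk observation following \ref{defn:pro-open}, is an equivalent formulation of the same rigidity; your explicit construction of the morphism $W\to X$ fills in a step the paper leaves implicit.
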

\begin{proof}
	Indeed, we define $y=\spec(\kappa(y))\in Y$ to be the image of $y'=\spec(\kappa(y'))\in Y'$ and $\ca{O}_W=\kappa(y)\cap \ca{O}_{W'}$. We see that $\ca{O}_W\to \ca{O}_{W'}$ is an extension of valuation rings by \ref{lem:val-ext}. 
	
	For the ``moreover" part, assume that $y'\to W'$ is an element of $\val_{Y'}(X')$. Notice that $Y'\times_{X'}W'=Y\times_XW'\subseteq W'$ is a pro-open subset by base change. Since $y'$ is the generic point of $W'$, we see that the closed immersion $y'\to Y\times_XW'$ is bijective. Moreover, $y'\to Y\times_XW'$ is an isomorphism, since the stalk of the pro-open subset $Y\times_XW'\subset W'$ at $y'$ coincides with the stalk of $W'$ at $y'$.
	\begin{align}
		\xymatrix{
			y'\ar[r]^-{\sim}\ar[d]&Y\times_XW'\ar[r]\ar[d]&W'\ar[d]\\
			y\ar[r]&Y\times_XW\ar[r]\ar[d]&W\ar[d]\\
			&Y\ar[r]&X\\
		}
	\end{align}
	Since $W'\to W$ is faithfully flat (\ref{lem:val-ext}), so is $Y\times_XW'\to Y\times_XW$. Therefore, $Y\times_XW$ also has only one point. Since $Y\times_XW$ is a pro-open subset of the integral scheme $W$, $y\to Y\times_XW$ is an isomorphism by the same arguments as above. In particular, $y\to W$ is an element of $\val_Y(X)$.
\end{proof}

\begin{mylem}\label{lem:val-spa-inj}
	Let $(Y'\to X')\to (Y\to X)$ be a morphism of morphisms of coherent schemes such that $Y'\to Y$ is an immersion and that $X'\to X$ is separated.
	\begin{enumerate}
		\renewcommand{\labelenumi}{{\rm(\theenumi)}}
		\item The canonical map $\spa(Y',X')\to \spa(Y,X)$ is injective.\label{item:lem:val-spa-inj-1}
		\item Let $(Y''\to X'')\to (Y\to X)$ be a morphism of morphisms of coherent schemes. Then, we have
		\begin{align}
			\spa(Y'\times_YY'', X'\times_XX'')=\spa(Y',X')\times_{\spa(Y,X)}\spa(Y'',X'')\subseteq \spa(Y'',X'').
		\end{align}\label{item:lem:val-spa-inj-2}
	\end{enumerate}
\end{mylem}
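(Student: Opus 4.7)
For part (1), I would unpack the data: an element of $\spa(Y',X')$ consists of a point $y' \in Y'$, a valuation subring $\ca{O}_{W'} \subseteq \kappa(y')$, and a morphism $W' \to X'$ making the evident diagram commute. The key structural observation is that an immersion $Y' \to Y$ is injective on points and induces an isomorphism on residue fields at corresponding points (for open immersions this is automatic, and for a closed immersion $\ca{O}_{Y',y'}$ is a quotient of $\ca{O}_{Y,y}$ by an ideal contained in the maximal ideal). Hence, if two elements $(y'_i \to W'_i)_{i=1,2}$ share the same image $(y \to W) \in \spa(Y,X)$, the injectivity on points yields $y'_1 = y'_2 =: y'$; the identification $\kappa(y') = \kappa(y)$ combined with the formula $\ca{O}_W = \kappa(y) \cap \ca{O}_{W'_i}$ from \ref{lem:riemann-zariski-basic-spa} forces $\ca{O}_{W'_i} = \ca{O}_W$ for both $i$, identifying $W'_1 = W'_2$ as schemes; and the two structure morphisms $W'_i \to X'$ coincide by the valuative criterion of separatedness applied to $X' \to X$, since they have the same restriction $y' \to X'$ to the generic point (inherited from $y' \to Y' \to X'$) and the same composite $W' \to W \to X$.

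For part (2), the morphism $Y' \times_Y Y'' \to Y''$ is an immersion and $X' \times_X X'' \to X''$ is separated by base change, so part (1) furnishes the two injections into $\spa(Y'',X'')$ that are needed to make the statement meaningful. Functoriality of $\spa$ applied to the two projections gives one inclusion directly. For the reverse inclusion, given an element $(y'' \to W'')$ of $\spa(Y'',X'')$ whose image $(y \to W) \in \spa(Y,X)$ lies in $\spa(Y',X')$ via some lift $(y' \to W')$, I would construct a preimage in $\spa(Y' \times_Y Y'', X' \times_X X'')$ explicitly: take $\tilde y$ to be the unique lift of $y''$ to $Y' \times_Y Y''$ (possible because the immersion image contains $y$, with $\kappa(\tilde y) = \kappa(y'')$), set $\tilde W := W''$, and build the morphism $\tilde W \to X' \times_X X''$ out of the pair $W'' \to X''$ and the composite $W'' \to W \to X'$, the second arrow being supplied by the lifted element $(y' \to W')$ after identifying $W' = W$ by the analysis in part (1).

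The only computation that requires genuine attention is verifying that the two composites $W'' \to X$ coming from $W'' \to X''$ and from $W'' \to W \to X' \to X$ actually agree, so that a morphism to $X' \times_X X''$ exists; this follows from the defining property of $\spa(Y'',X'') \to \spa(Y,X)$ in \ref{lem:riemann-zariski-basic-spa}, which asserts precisely that $W'' \to X$ factors through $W \to X$. The remaining check — that the restriction to the generic point $\tilde y$ of the assembled morphism recovers $\tilde y \to Y' \times_Y Y'' \to X' \times_X X''$ — is routine bookkeeping. The main conceptual leverage throughout is the preservation of residue fields under immersions, which collapses the valuation data on the $Y'$-side onto the same object as on the $Y$-side and makes both the uniqueness argument of (1) and the construction of the preimage in (2) transparent.
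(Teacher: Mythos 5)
Your proof is correct and follows essentially the same route as the paper: part (1) is exactly the valuative criterion of separatedness (Stacks 01KZ), after using the fact that an immersion induces an isomorphism on residue fields to collapse $W'$ onto $W$. For part (2) you phrase the reverse inclusion as an explicit construction of the preimage $(\tilde y \to \tilde W)$ with $\tilde W = W''$, whereas the paper characterizes membership in both subsets by a pair of equivalent conditions ($y$ lying in the relevant immersion and $W$ factoring through the relevant separated morphism) and matches them up; these are two presentations of the same argument, and both hinge on the same points — residue-field preservation under immersions, the defining property of the canonical map from \ref{lem:riemann-zariski-basic-spa}, and the universal property of the fibre product $X'\times_X X''$.
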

\begin{proof}
	(\ref{item:lem:val-spa-inj-1}) follows from directly the valuative criterion for separateness (\cite[\href{https://stacks.math.columbia.edu/tag/01KZ}{01KZ}]{stacks-project}). For (\ref{item:lem:val-spa-inj-2}), note that $Y'\times_YY''\to Y''$ is still an immersion and $X'\times_XX''\to X''$ is still separated. Hence, we view $\spa(Y'\times_YY'', X'\times_XX'')$ as a subset of $\spa(Y'',X'')$. Let $y''\to W''$ be an element of $\spa(Y'',X'')$ with image $y\to W$ in $\spa(Y,X)$.
	\begin{align}
		\xymatrix{
			y''\ar[r]\ar[d]&W''\ar@/^1pc/[rr]\ar[d]\ar@{.>}[r]&X'\times_XX''\ar[r]\ar[d]&X''\ar[d]\\
			y\ar[r]&W\ar@/_1pc/[rr]\ar@{.>}[r]&X'\ar[r]&X
		}
	\end{align}
	Then, $y''\to W''$ lies in the subset $\spa(Y'\times_YY'', X'\times_XX'')$ if and only if $y''\in Y'\times_YY''\subseteq Y''$ (i.e., its image $y$ lies in $Y'\subseteq Y$ by \ref{lem:riemann-zariski-basic-spa}) and $W''\to X''$ (uniquely) factors through $X'\times_XX''$ (i.e., $W\to X$ (uniquely) factors through $X'$ by \ref{lem:riemann-zariski-basic-spa}). Thus, it is equivalent to that $y\to W$ lies in the subset $\spa(Y',X')$. 
\end{proof}

\begin{mypara}\label{para:val-spa-top}
	Let $Y\to X$ be a morphism of coherent schemes. An \emph{affine subset} of $\spa(Y,X)$ is a subset of the form $\spa(Y',X')$ where $Y'\to X'$ is a morphism of affine schemes over $Y\to X$ such that $Y'\to Y$ is an open immersion and $X'\to X$ is of finite type. Note that the intersection of two affine subsets is a finite union of affine subsets (\ref{lem:val-spa-inj}, see also \cite[3.1.1.(\luoma{3})]{temkin2011rz}). We endow $\spa(Y,X)$ with the topology generated by its affine subsets, and we endow $\val_Y(X)$ with the topology induced from $\spa(Y,X)$ (\cite[page 21]{temkin2011rz}).
\end{mypara}

\begin{mylem}[{\cite[3.1.2]{temkin2011rz}}]\label{lem:spa-top}
	Let $A\to B$ be a ring homomorphism. Then, the affine subsets of $\spa(B,A)$,
	\begin{align}\label{eq:para:val-spa-top-2}
		\spa(B[\frac{1}{b_0}],A[\frac{b_1}{b_0},\dots,\frac{b_n}{b_0}])=\{(y\to W)\in \spa(B,A)\ |\ b_0\neq 0\trm{ in }\kappa(y)\trm{ and }b_1,\dots,b_n\in b_0\ca{O}_W\subseteq \kappa(y)\},
	\end{align} 
	 where $\kappa(y)$ is the residue field of $y$ and $b_0,b_1,\dots,b_n\in B$ are finitely many elements with $n\in\bb{N}_{>0}$, form a topological basis. In particular, the topological space $\spa(B,\bb{Z})$ coincides with the the valuation spectrum $\mrm{Spv}(B)$ of $B$ defined in \cite[\textsection2]{huber1993val}.
\end{mylem}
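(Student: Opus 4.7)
The plan is to prove the lemma in three stages: first the set-theoretic description of the subset $\spa(B[1/b_0], A[b_1/b_0, \dots, b_n/b_0])$, second the topological basis property, and third the identification with Huber's $\mrm{Spv}(B)$. For the first stage, I will apply \ref{lem:val-spa-inj}.(1) to embed $\spa(B[1/b_0], A[b_1/b_0, \dots, b_n/b_0])$ into $\spa(B,A)$ (the open immersion $\spec(B[1/b_0]) \hookrightarrow \spec(B)$ is certainly an immersion, and the finite-type morphism $\spec(A[b_1/b_0, \dots, b_n/b_0]) \to \spec(A)$ between affine schemes is separated), then unravel \ref{para:valuation-spectrum}: a point $y \to W$ of $\spa(B,A)$ lies in this subset precisely when $y$ factors through $\spec(B[1/b_0])$, equivalently $b_0 \neq 0$ in $\kappa(y)$, and the induced morphism $A[b_1/b_0, \dots, b_n/b_0] \to \kappa(y)$ lifts through $\ca{O}_W$, equivalently each $b_i/b_0 \in \ca{O}_W$.

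For the second stage, I will start from an arbitrary affine subset $V = \spa(B', A')$ and a point $(y\to W) \in V$, and construct a basic subset of the desired form containing $(y\to W)$ and contained in $V$. Since $\spec(B')$ is a quasi-compact open subscheme of $\spec(B)$, I can find $b \in B$ with $y \in D(b) \subseteq \spec(B')$, so that agreement of structure sheaves on $D(b)$ gives $B[1/b] = B'[1/b]$. Fixing $A$-algebra generators $a_1, \dots, a_r$ of $A'$, I will write their images in $B'[1/b] = B[1/b]$ with a common denominator as $a_i = b_i/b_0$, where $b_0 = b^N$ for $N$ large enough, and then verify that $U := \spa(B[1/b_0], A[b_1/b_0, \dots, b_r/b_0])$ satisfies $(y\to W) \in U \subseteq V$. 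The forward inclusion follows from $a_i \in \ca{O}_W$ (since $(y\to W) \in V$), while the reverse inclusion uses the surjection of $A$-algebras $A' \twoheadrightarrow A[b_1/b_0, \dots, b_r/b_0]$ induced by $A' \to B' \to B[1/b_0]$ to transport membership in $\ca{O}_{W''}$ from the $b_i/b_0$ back to the $a_i$ for any $(y''\to W'') \in U$.

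For the third stage, I will observe that a point $y\to W$ of $\spa(B, \bb{Z})$ is exactly the data of a prime $y$ of $B$ together with a valuation ring $\ca{O}_W \subseteq \kappa(y)$ containing the image of $B$, equivalently an equivalence class of valuations $v : B \to \Gamma_v \cup \{\infty\}$ with support $y$; under this identification, the basic subset $\spa(B[1/b_0], \bb{Z}[b_1/b_0, \dots, b_n/b_0])$ becomes exactly the rational subset $\{v \mid v(b_0) \neq \infty \text{ and } v(b_i) \geq v(b_0) \text{ for all } i\}$, and these are precisely Huber's basic open subsets generating the topology on $\mrm{Spv}(B)$ in \cite[\textsection 2]{huber1993val}. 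The main delicate point I expect lies in the second stage: because $A'$ is only an abstract $A$-algebra with a structural map to $B'$ and not literally a subring, I must extract the surjection $A' \twoheadrightarrow A[b_1/b_0, \dots, b_r/b_0]$ from the chosen presentation and check that the map $A' \to \ca{O}_{W''}$ reconstructed via this surjection genuinely agrees with the structural map $A' \to B' \to \ca{O}_{W''}$ coming from $(y''\to W'') \in U$; once this compatibility is verified, both inclusions fall out immediately.
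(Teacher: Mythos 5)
Your proposal is correct and takes essentially the same route as the paper: localize to a principal open $D(b_0)\subseteq\spec(B')$, clear denominators of a finite set of $A$-algebra generators of $A'$, and show that the resulting basic subset refines $\spa(B',A')$ via the induced chain of morphisms $(A\to B)\to(A'\to B')\to(A[b_1/b_0,\dots,b_n/b_0]\to B[1/b_0])$ together with the injectivity in \ref{lem:val-spa-inj}. The only difference is presentational: you spell out the ``reverse inclusion'' $U\subseteq V$ via the surjection $A'\twoheadrightarrow A[b_1/b_0,\dots,b_n/b_0]$, while the paper folds the same verification into the functorial language of morphisms of pairs; both arguments are sound and equivalent.
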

\begin{proof}
	Let $(y\to W)\in\spa(B,A)$ and let $\spa(B',A')$ be an affine subset of $\spa(B,A)$ containing $y\to W$. As $\spec(B')$ is an open neighborhood of $y$ in $\spec(B)$, we take $b_0\in B$ such that $y\in \spec(B[1/b_0])\subseteq \spec(B')$. As $A'$ is of finite type over $A$, let $b_1/b_0^m,\dots,b_n/b_0^m\in B[1/b_0]$ ($m\in\bb{N}_{>0}$) be the images of a finite family of generators of $A'$ over $A$. After replacing $b_0$ by $b_0^m$, we may assume that $m=1$. Thus, we have morphisms $(A\to B)\to (A'\to B')\to (A[\frac{b_1}{b_0},\dots,\frac{b_n}{b_0}]\to B[1/b_0])$. Moreover, the image of each $b_i/b_0$ in $\kappa(y)$ lies in $\ca{O}_W$ as $A'\to \kappa(y)$ factors through $\ca{O}_W$. Hence, $\spa(B[\frac{1}{b_0}],A[\frac{b_1}{b_0},\dots,\frac{b_n}{b_0}])$ is an affine subset containing $y\to W$ and refining $\spa(B',A')$.
	
	Notice that giving an element of $\spa(B,\bb{Z})$ is equivalent to giving a point $y$ of $\spec(B)$ and a valuation on the residue field $\kappa(y)$, which is thus equivalent to giving a valuation of $B$ in the sense of \cite[\Luoma{6}.\textsection3.1, D\'efinition 1]{bourbaki2006commalg5-7}. Hence, $\spa(B,\bb{Z})=\mrm{Spv}(B)$ and their topologies coincide by the discussion above (\cite[page 461]{huber1993val}).
\end{proof}

\begin{myprop}\label{lem:spa-induced-top}
	Let $(Y'\to X')\to (Y\to X)$ be a morphism of morphisms of coherent schemes such that $Y'\to Y$ is an immersion and that $X'\to X$ is separated. Then, the topology of $\spa(Y',X')$ is induced from the topology of $\spa(Y,X)$ by the canonical injection {\rm(\ref{lem:val-spa-inj}.(\ref{item:lem:val-spa-inj-1}))}.
\end{myprop}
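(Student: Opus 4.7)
The plan is to verify the two containments between topologies separately; the inclusion $\iota\colon \spa(Y',X')\hookrightarrow \spa(Y,X)$ is injective by \ref{lem:val-spa-inj}.(\ref{item:lem:val-spa-inj-1}).

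\emph{Continuity of $\iota$.} For any affine subset $\spa(Y_1,X_1)$ of $\spa(Y,X)$, by \ref{lem:val-spa-inj}.(\ref{item:lem:val-spa-inj-2}) its preimage is $\spa(Y'\times_Y Y_1,\, X'\times_X X_1)$. Since $Y_1\to Y$ is an open immersion, $Y'\times_Y Y_1$ is an open subscheme of $Y'$, and $X'\times_X X_1\to X'$ is of finite type. Covering $Y'\times_Y Y_1$ and $X'\times_X X_1$ by affine opens writes the preimage as a union of affine subsets of $\spa(Y',X')$, hence it is open.

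\emph{Every open of $\spa(Y',X')$ is induced.} It suffices to check that any point $(y'\to W')\in\spa(Y',X')$ admits an affine neighborhood basis each member of which has the form $\iota^{-1}(U)$ for some open $U\subseteq \spa(Y,X)$. First I would reduce to the affine situation: writing $(y\to W)$ for the image of $(y'\to W')$, the residue fields $\kappa(y)$ and $\kappa(y')$ coincide because $Y'\to Y$ is an immersion, hence $\ca{O}_W=\ca{O}_{W'}$ by \ref{lem:riemann-zariski-basic-spa}. I then choose an affine open $X_1\subseteq X$ containing the image of the closed point of $W$, an affine open $Y_1\subseteq Y$ containing the image of $y'$, an affine open $X'_1\subseteq X'\times_X X_1$ containing the image of the closed point of $W'$, and an affine open $Y'_1\subseteq (Y'\times_Y Y_1)\cap (Y'\times_{X'}X'_1)$ containing $y'$ (the image of $y'$ in $X'$ lies in $X'_1$ since opens are stable under generalization). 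Then $(Y'_1\to X'_1)\to (Y_1\to X_1)$ satisfies the same hypotheses as $(Y'\to X')\to (Y\to X)$, and after intersecting the given neighborhood with $\spa(Y'_1,X'_1)$ I may assume $X$, $Y$, $X'$, $Y'$ are all affine.

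In the affine case I write $X=\spec A$, $Y=\spec B$, $X'=\spec A'$, $Y'=\spec B'$, and since $Y'\to Y$ is locally closed I present $B'=B[1/b]/I$ for some $b\in B$ and ideal $I\subseteq B[1/b]$. By \ref{lem:spa-top}, after further shrinking, the given affine neighborhood of $(y'\to W')$ has the form $N=\spa(B'[1/b'_0],\, A'[b'_1/b'_0,\ldots,b'_n/b'_0])$ for some $b'_0,\ldots,b'_n\in B'$ with $b'_0$ nonzero at $y'$ and $b'_i/b'_0\in\ca{O}_{W'}$. Lifting each $b'_i$ as $\overline{c_i}/b^k\in B'$ for $c_i\in B$ and a common $k\in\bb{N}$, I set
\[
U=\spa(B[1/(bc_0)],\, A[c_1/c_0,\ldots,c_n/c_0])\subseteq \spa(Y,X),
\]
which is affine by \ref{lem:spa-top} (taking $b_0=bc_0$, $b_i=bc_i$). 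Via $\iota$, the defining condition of $U$ translates directly to the defining condition of $N$: $b$ is already invertible in $B'$, $c_0=b^kb'_0$ and $c_i/c_0=b'_i/b'_0$ in $B'$, and $\ca{O}_W=\ca{O}_{W'}$. So $\iota^{-1}(U)=N$ and $(y'\to W')\in U$, finishing the proof.

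\emph{Expected main obstacle.} The only delicate part is the bookkeeping in the local reduction, arranging the affine opens of $X$, $Y$, $X'$, $Y'$ to be compatible with each other and with the given point $(y'\to W')$, using both that $Y'\to Y$ is an immersion and that $X'\to X$ is separated (so that \ref{lem:val-spa-inj} applies). Once in the affine case, lifting generators from $B'$ through the presentation $B'=B[1/b]/I$ together with the explicit description of affine subsets in \ref{lem:spa-top} makes the identification $\iota^{-1}(U)=N$ essentially formal.
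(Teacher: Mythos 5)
Your plan is sound and mirrors the paper's: establish continuity by pulling back affine subsets via Lemma \ref{lem:val-spa-inj}, then for openness reduce to the affine case and lift elements of $B'$ to $B$, identifying affine neighborhoods of $\spa(Y',X')$ with preimages of affine subsets of $\spa(Y,X)$. The continuity half is fine.

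The openness half has a gap, though, at the point where you "intersect the given neighborhood with $\spa(Y'_1,X'_1)$" and proceed to compute $\iota^{-1}(U)$. The identity you establish is the preimage of $U$ under the \emph{restricted} map $\iota_1\colon\spa(Y'_1,X'_1)\to\spa(Y_1,X_1)$, but the statement requires controlling $\iota^{-1}(U)$ under the \emph{full} map $\iota\colon\spa(Y',X')\to\spa(Y,X)$. The defining conditions of your $U$ (that $bc_0\neq 0$ in $\kappa(y)$ and $c_i/c_0\in\ca{O}_W$) only constrain $\kappa(y)$ and $\ca{O}_W$, so there can be points $(y''\to W'')\in\spa(Y',X')$ with $y''\in Y'\times_Y Y_1$ but $y''\notin Y'_1$ (e.g., not killed by $I$), or with $W''\to X'\times_X X_1$ not factoring through your chosen $X'_1$, which nevertheless land in $U$: these lie in $\iota^{-1}(U)$ but not in $N$. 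The paper's reduction sidesteps this by choosing $Y_1$ \emph{inside} a quasi-compact open $V\subseteq Y$ in which $Y'$ is closed, so that $Y'\times_Y Y_1=Y'\cap Y_1$ is closed in the affine $Y_1$ (hence itself affine) and one never discards any of $Y'\times_Y Y_1$; there is then no "lost" locus for $\iota^{-1}(U)$ to hit. Relatedly, the assertion "I present $B'=B[1/b]/I$ for some $b\in B$" is not automatic for an affine immersion — not every immersion of affine schemes is closed into a single principal open — and needs one more shrinking step around $y'$. Both issues point to the same fix: arrange the reduction so that $Y'$ is replaced by the \emph{whole} of $Y'\times_Y Y_1$ (made affine by choosing $Y_1$ appropriately), matching the paper's "closed immersion into an affine" normal form, rather than by a further proper open shrinking of it.
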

\begin{proof}
	Firstly, we consider the case where $Y'$ and $X'$ are affine, $Y'\to Y$ is an open immersion and $X'\to X$ is of finite type. In this case, $\spa(Y',X')$ is an affine subset of $\spa(Y,X)$. Since the intersection of $\spa(Y',X')$ with an affine subset of $\spa(Y,X)$ is a finite union of affine subsets of $\spa(Y,X)$, which are also affine subsets of $\spa(Y',X')$ (\ref{lem:val-spa-inj}.(\ref{item:lem:val-spa-inj-2})). Hence, $\spa(Y',X')\to \spa(Y,X)$ is continuous. Moreover, any affine subset of $\spa(Y',X')$ is also an affine subset of $\spa(Y,X)$ by definition. Hence, $\spa(Y',X')$ is endowed with the subspace topology.
	
	In general, we can replace $\spa(Y',X')$ and $\spa(Y,X)$ by their affine subsets by the discussion above. Thus, we may assume that $Y', X', Y, X$ are affine and that $Y'\to Y$ is a closed immersion. We write $Y=\spec(B)$, $X=\spec(A)$, $Y'=\spec(B/I)$ and $X'=\spec(A')$. Then, for any elements $b_0,b_1,\dots,b_n\in B/I$, we take liftings $\widetilde{b_0},\widetilde{b_1},\dots,\widetilde{b_n}\in B$.
	\begin{align}
		\xymatrix{
			&\kappa(y)&\ca{O}_W\ar[l]\\
			b_0,b_1,\dots,b_n	&B/I\ar[u]&A'\ar[l]\ar[u]\\
			\widetilde{b_0},\widetilde{b_1},\dots,\widetilde{b_n}\ar@{|->}[u]	&B\ar[u]&A\ar[l]\ar[u]
		}
	\end{align}
	Then, for any $(y\to W)\in\spa(Y',X')$, the condition that $b_0\neq 0$ in $\kappa(y)$ and $b_1,\dots,b_n\in b_0\ca{O}_W$ is equivalent to the condition that $\widetilde{b_0}\neq 0$ in $\kappa(y)$ and $\widetilde{b_1},\dots,\widetilde{b_n}\in \widetilde{b_0}\ca{O}_W$. In other words, we have
	\begin{align}
		\spa(B/I[\frac{1}{b_0}],A'[\frac{b_1}{b_0},\dots,\frac{b_n}{b_0}])=\spa(Y',X')\cap \spa(B[\frac{1}{\widetilde{b_0}}],A[\frac{\widetilde{b_1}}{\widetilde{b_0}},\dots,\frac{\widetilde{b_n}}{\widetilde{b_0}}]).
	\end{align}
	Thus, $\spa(Y',X')$ is endowed with the subspace topology by \ref{lem:spa-top}.
\end{proof}

\begin{myprop}\label{prop:spa-spectral}
	Let $Y\to X$ be a morphism of coherent schemes. Then, $\spa(Y,X)$ is a spectral space {\rm(\cite[\href{https://stacks.math.columbia.edu/tag/08YF}{08YF}]{stacks-project})}.
\end{myprop}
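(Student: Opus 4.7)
The plan is to reduce to the affine case and there identify $\spa(B,A)$ with a pro-constructible subspace of Huber's valuation spectrum $\mrm{Spv}(B)$, which is already known to be spectral.

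For the affine case I write $X=\spec(A)$ and $Y=\spec(B)$. By Lemma \ref{lem:spa-top}, $\spa(B,\bb{Z})$ coincides as a topological space with $\mrm{Spv}(B)$, which is spectral by Huber \cite{huber1993val}; the same lemma shows that the basic opens of $\spa(B,A)$ are the traces of Huber's rational subset basis on $\mrm{Spv}(B)$, so $\spa(B,A)$ inherits the subspace topology. Unpacking the defining condition of $\spa$ one finds
\begin{align}
	\spa(B,A)=\bigcap_{a\in A}\{v\in\mrm{Spv}(B):v(a)\geq 0\},
\end{align}
and each factor is the rational subset $U(\{a\}/1)$, a retrocompact open and in particular constructible. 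Thus $\spa(B,A)$ is pro-constructible in the spectral space $\mrm{Spv}(B)$, and so itself spectral.

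For the general case I would choose a finite affine open cover $X=X_1\cup\cdots\cup X_n$ and for each $i$ a finite affine open cover $Y\times_X X_i=\bigcup_j Y_{ij}$. The affine subsets $\spa(Y_{ij},X_i)$ then cover $\spa(Y,X)$. Each $X_i\to X$ is a quasi-compact open immersion and hence separated, so Lemmas \ref{lem:val-spa-inj}.(\ref{item:lem:val-spa-inj-1}) and \ref{lem:spa-induced-top} realize each $\spa(Y_{ij},X_i)$ as an open topological subspace of $\spa(Y,X)$ carrying its intrinsic topology, and the affine case shows it is spectral. Lemma \ref{lem:val-spa-inj}.(\ref{item:lem:val-spa-inj-2}) then identifies the intersection of two affine subsets with the affine subset attached to their fibred product, and after covering the $Y$-side by finitely many affine opens this is a finite union of affine subsets. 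Hence the finite unions of affine subsets form a basis of quasi-compact opens of $\spa(Y,X)$ closed under finite intersection. Quasi-compactness of $\spa(Y,X)$ is immediate from the finite spectral cover, while $T_0$-separation and sobriety descend from the open cover by sober $T_0$ pieces: an irreducible closed $Z\subseteq \spa(Y,X)$ meets some $\spa(Y_{ij},X_i)$ in a dense open, whose closure in that piece is irreducible with a unique generic point, and that point then generates $Z$ in $\spa(Y,X)$. The standard characterization \cite[\href{https://stacks.math.columbia.edu/tag/08YF}{08YF}]{stacks-project} will then yield that $\spa(Y,X)$ is spectral.

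The main obstacle is purely conceptual: recognizing $\spa(B,A)$ as a pro-constructible locus inside Huber's valuation spectrum $\mrm{Spv}(B)$ via Lemma \ref{lem:spa-top}. Once this step is in place, the remainder is formal gluing and hinges entirely on the intersection formula of Lemma \ref{lem:val-spa-inj}.(\ref{item:lem:val-spa-inj-2}) to produce a basis of quasi-compact opens of $\spa(Y,X)$ stable under finite intersection.
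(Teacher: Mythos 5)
Your proposal is correct and takes essentially the same approach as the paper: for the affine case the paper also realizes $\spa(B,A)$ as the intersection $\bigcap_{a\in A}\spa(B,\bb{Z}[a])$ of quasi-compact opens inside the spectral space $\spa(B,\bb{Z})=\mrm{Spv}(B)$ (which is exactly your pro-constructibility argument in Huber's notation), and for the general case it likewise uses a finite cover by affine subsets together with the intersection formula of Lemma \ref{lem:val-spa-inj}.(\ref{item:lem:val-spa-inj-2}) to produce a basis of quasi-compact opens closed under finite intersection.
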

\begin{proof}
	Firstly, assume that $X=\spec(A)$ and $Y=\spec(B)$ are affine. Let $A'$ be the image of $A$ in $B$. As $\spa(B,A')=\spa(B,A)$, we may assume that $A\subseteq B$. Recall that for any $b\in B$, $\spa(B,\bb{Z}[b])=\{(y\to W)\in \spa(B,\bb{Z})\ |\ b\in \ca{O}_W\}$ is spectral by \cite[2.2]{huber1993val}. Then, the intersection of quasi-compact open subsets $\bigcap_{a\in A}\spa(B,\bb{Z}[a])$ is again a spectral subspace of $\spa(B,\bb{Z})$ (\cite[\href{https://stacks.math.columbia.edu/tag/0A31}{0A31}]{stacks-project}). Notice that $\spa(B,A)=\bigcap_{a\in A}\{(y\to W)\in \spa(B,\bb{Z})\ |\ a\in \ca{O}_W\}=\bigcap_{a\in A}\spa(B,\bb{Z}[a])$ as a subset of $\spa(B,\bb{Z})$ and that the topology on $\spa(B,A)$ is induced from $\spa(B,\bb{Z})$ by \ref{lem:spa-induced-top}. We conclude that $\spa(B,A)$ is spectral.
	
	In general, since $\spa(Y,X)$ can be covered by finitely many affine subsets and the affine subsets are spectral with respect to the induced topology by \ref{lem:spa-induced-top} and the discussion above, we see that $\spa(Y,X)$ is sober (\cite[\href{https://stacks.math.columbia.edu/tag/06N9}{Lemma 06N9}]{stacks-project}), quasi-compact, and quasi-compact open subsets of $\spa(Y,X)$ form a basis. Moreover, since the intersection of two affine subsets is a finite union of affine subsets, we conclude that $\spa(Y,X)$ is spectral (\cite[\href{https://stacks.math.columbia.edu/tag/08YG}{08YG}]{stacks-project}).
\end{proof}

\begin{mycor}\label{cor:spa-spectral}
	Let $(Y'\to X')\to (Y\to X)$ be a morphism of morphisms of coherent schemes. Then, the canonical map $\spa(Y',X')\to \spa(Y,X)$ \eqref{eq:lem:riemann-zariski-basic-spa-1} is spectral {\rm(\cite[\href{https://stacks.math.columbia.edu/tag/08YF}{08YF}]{stacks-project})}.
\end{mycor}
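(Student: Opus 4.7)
The plan is to combine Proposition \ref{prop:spa-spectral} (both source and target are spectral spaces) with the fibre product identification in Lemma \ref{lem:val-spa-inj}.(\ref{item:lem:val-spa-inj-2}) to explicitly compute the preimages of affine subsets. Since the affine subsets of $\spa(Y,X)$ form a basis of quasi-compact opens (by the definition of the topology, see \ref{para:val-spa-top}), and since on a spectral space every quasi-compact open is a finite union of basis elements, it suffices to show that the preimage of each affine subset $\spa(V,U)\subseteq \spa(Y,X)$ (with $V,U$ affine, $V\to Y$ an open immersion, $U\to X$ of finite type) is an open quasi-compact subset of $\spa(Y',X')$.

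To identify the preimage, I first note that the open immersion $V\to Y$ is in particular an immersion, and that $U\to X$ is separated because any morphism from an affine scheme is separated (by the valuative criterion, using that a valuation ring injects into its fraction field). Hence Lemma \ref{lem:val-spa-inj}.(\ref{item:lem:val-spa-inj-2}) applies and gives
\begin{align}
\spa(V\times_Y Y',U\times_X X')=\spa(V,U)\times_{\spa(Y,X)}\spa(Y',X')\subseteq \spa(Y',X'),
\end{align}
which is precisely the preimage of $\spa(V,U)$ under the canonical map \eqref{eq:lem:riemann-zariski-basic-spa-1}.

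The second step is to exhibit this preimage as a finite union of affine subsets of $\spa(Y',X')$. Since $V\times_Y Y'\to Y'$ is an open immersion with quasi-compact source, I choose a finite affine open cover $\{V'_i\}_{i=1}^n$ of $V\times_Y Y'$ by affine opens of $Y'$. Likewise, I choose a finite affine open cover $\{U'_j\}_{j=1}^m$ of $U\times_X X'$; each $U'_j$ is affine, and the composition $U'_j\hookrightarrow U\times_X X'\to X'$ is of finite type, so $\spa(V'_i,U'_j)$ is indeed an affine subset of $\spa(Y',X')$. For any element $(y'\to W')\in \spa(V\times_Y Y',U\times_X X')$, the point $y'$ lies in some $V'_i$, and the structural morphism $W'\to U\times_X X'$ sends the closed point of $W'$ into some $U'_j$; because $W'=\spec(\ca{O}_{W'})$ is the spectrum of a local ring, every open subset containing its closed point is the whole of $W'$, so $W'\to U\times_X X'$ factors through $U'_j$. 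Therefore
\begin{align}
\spa(V\times_Y Y',U\times_X X')=\bigcup_{i=1}^n\bigcup_{j=1}^m \spa(V'_i,U'_j),
\end{align}
a finite union of affine subsets of $\spa(Y',X')$.

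This shows simultaneously that the preimage is open (each $\spa(V'_i,U'_j)$ is a basis element) and quasi-compact (finite union of spectral, hence quasi-compact, subspaces, by \ref{prop:spa-spectral}). Continuity of the canonical map follows, and since every quasi-compact open of $\spa(Y,X)$ is a finite union of affine subsets, its preimage is a finite union of quasi-compact opens, hence quasi-compact. Thus the canonical map is spectral. The only delicate point is the valuation-ring locality argument used to factor $W'\to U\times_X X'$ through a single $U'_j$; everything else is formal from the preparatory lemmas.
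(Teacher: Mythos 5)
Your approach is close to the paper's but takes a slightly different route: instead of first reducing to the case where $Y$, $X$, $Y'$, $X'$ are all affine (which is how the paper proceeds, via \ref{lem:spa-induced-top}), you identify the preimage of an affine subset $\spa(V,U)\subseteq\spa(Y,X)$ as the fibre product $\spa(V\times_Y Y',U\times_X X')$ using \ref{lem:val-spa-inj}.(\ref{item:lem:val-spa-inj-2}), and then try to write this directly as a finite union of affine subsets of $\spa(Y',X')$. The affine reduction in the paper buys that $V\times_Y Y'$ and $U\times_X X'$ are automatically affine, so the fibre product is immediately an affine subset with no further decomposition. Both routes hinge on the same fibre-product identity, and your preliminary observation that any morphism from an affine scheme is separated is correct and needed.

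However, your decomposition step has a genuine gap. You cover $V\times_Y Y'$ and $U\times_X X'$ \emph{independently} by finite collections of affine opens $\{V'_i\}$ and $\{U'_j\}$, and assert that $\spa(V'_i,U'_j)$ is an affine subset of $\spa(Y',X')$. But $\spa(V'_i,U'_j)$ is only defined when there is a morphism $V'_i\to U'_j$ over $Y'\to X'$, and a $V'_i$ chosen independently of $U'_j$ will generally not map into $U'_j$ under the natural map $V\times_Y Y'\to U\times_X X'$; without that morphism the symbol $\spa(V'_i,U'_j)$ has no meaning, so neither does the asserted union. The repair is to take compatible covers: first cover $U\times_X X'$ by finite affine opens $U'_j$ (each $U'_j\to X'$ is of finite type, as you observe), then for each $j$ cover the preimage of $U'_j$ in $V\times_Y Y'$ by finite affine opens $V'_{jk}$ of $Y'$. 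Each $V'_{jk}\to U'_j$ is then an honest morphism, $\spa(V'_{jk},U'_j)$ is a well-defined affine subset, and your local-ring argument correctly yields $\spa(V\times_Y Y',U\times_X X')=\bigcup_{j,k}\spa(V'_{jk},U'_j)$. With this repair the argument goes through; carrying out the affine reduction first, as the paper does, avoids the issue entirely.
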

\begin{proof}
	We can replace $\spa(Y',X')$ and $\spa(Y,X)$ by \ref{lem:spa-induced-top} so that we may assume that $Y', X', Y, X$ are affine. Let $\spa(Y'',X'')$ be an affine subset of $\spa(Y,X)$ such that $Y''$ and $X''$ affine, $Y''\to Y$ is an open immersion, $X''\to X$ is of finite type. Then, we see that $\spa(Y'\times_YY'',X'\times_XX'')$ is also an affine subset of $\spa(Y',X')$ and moreover we have (\ref{lem:val-spa-inj}.(\ref{item:lem:val-spa-inj-2}))
	\begin{align}
		\spa(Y'\times_YY'',X'\times_XX'')=\spa(Y',X')\times_{\spa(Y,X)}\spa(Y'',X''),
	\end{align}
	which shows that $\spa(Y',X')\to \spa(Y,X)$ is continuous and quasi-compact by \ref{prop:spa-spectral} (i.e., spectral).
\end{proof}

\begin{myprop}\label{prop:spa-limit}
	Let $(Y_\lambda\to X_\lambda)_{\lambda\in\Lambda}$ be a directed inverse system of morphisms of coherent schemes with affine transition morphisms, $(Y\to X)=\lim_{\lambda\in\Lambda}(Y_\lambda\to X_\lambda)$. Then, the canonical map of topological spaces
	\begin{align}\label{eq:prop:spa-limit-1}
		\spa(Y,X)\longrightarrow \lim_{\lambda\in\Lambda}\spa(Y_\lambda,X_\lambda)
	\end{align}
	is a homeomorphism.
\end{myprop}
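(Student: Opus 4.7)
The plan is to construct an explicit inverse to the map \eqref{eq:prop:spa-limit-1} and then verify that the basic opens on both sides correspond, which will give the homeomorphism.

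\textbf{Step 1 (construction of the inverse and bijectivity).} Given a compatible system $(y_\lambda \to W_\lambda)_{\lambda\in\Lambda}$ in $\lim_{\lambda\in\Lambda}\spa(Y_\lambda,X_\lambda)$, I note that since the transition morphisms are affine, we have an isomorphism $Y=\lim_{\lambda\in\Lambda}Y_\lambda$ of topological spaces with residue field $\kappa(y)=\colim_{\lambda\in\Lambda}\kappa(y_\lambda)$ at any $y\in Y$ over $(y_\lambda)$ (\cite[8.2.9]{ega4-3}), giving a distinguished point $y\in Y$. The compatibility from \ref{lem:riemann-zariski-basic-spa} reads $\ca{O}_{W_\lambda}=\kappa(y_\lambda)\cap\ca{O}_{W_\mu}$ for $\lambda\leq\mu$, so the transition maps $\ca{O}_{W_\lambda}\to\ca{O}_{W_\mu}$ are extensions of valuation rings by \ref{lem:val-ext}. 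I then set $\ca{O}_W=\colim_{\lambda\in\Lambda}\ca{O}_{W_\lambda}$; as a filtered colimit of valuation rings under injective local transition maps, this is a valuation ring with fraction field $\colim_{\lambda\in\Lambda}\kappa(y_\lambda)=\kappa(y)$ (\cite[\href{https://stacks.math.columbia.edu/tag/0AS4}{0AS4}]{stacks-project}), and the compatible morphisms $W_\lambda\to X_\lambda$ assemble to $W=\spec(\ca{O}_W)\to X$. This produces an element $y\to W$ of $\spa(Y,X)$ mapping to $(y_\lambda\to W_\lambda)$, giving surjectivity. For injectivity, if two elements $y\to W$ and $y'\to W'$ map to the same system, then $y=y'$ as points of $Y=\lim Y_\lambda$, and $\ca{O}_W=\colim \ca{O}_{W_\lambda}=\ca{O}_{W'}$ since the intersections $\kappa(y)\cap\ca{O}_{W_\lambda}$ inside $\kappa(y)$ determine both valuation rings.

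\textbf{Step 2 (continuity and basic opens).} By \ref{cor:spa-spectral}, each projection $\spa(Y,X)\to\spa(Y_\lambda,X_\lambda)$ is spectral and in particular continuous, so the bijection \eqref{eq:prop:spa-limit-1} is continuous. To see it is a homeomorphism, it suffices to show that every affine subset $\spa(Y',X')$ of $\spa(Y,X)$ (with $Y'\to Y$ an affine open immersion and $X'\to X$ affine of finite type) is the preimage of an affine subset from some $\spa(Y_\lambda,X_\lambda)$. By the standard limit arguments for schemes (\cite[8.8.2, 8.10.5, 8.6.3]{ega4-3}), there exists $\lambda\in\Lambda$, an affine open immersion $Y_\lambda'\to Y_\lambda$, and an affine morphism of finite type $X_\lambda'\to X_\lambda$ such that $Y'=Y\times_{Y_\lambda}Y_\lambda'$ and $X'=X\times_{X_\lambda}X_\lambda'$. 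Then \ref{lem:val-spa-inj}.(\ref{item:lem:val-spa-inj-2}) gives
\begin{align}
\spa(Y',X')=\spa(Y_\lambda',X_\lambda')\times_{\spa(Y_\lambda,X_\lambda)}\spa(Y,X),
\end{align}
i.e., $\spa(Y',X')$ is the preimage of the affine open $\spa(Y_\lambda',X_\lambda')$ under the projection to $\spa(Y_\lambda,X_\lambda)$. Since such $\spa(Y',X')$ form a basis of $\spa(Y,X)$ (\ref{para:val-spa-top}), and preimages of opens from the factors form a basis of the limit topology (using that $\Lambda$ is directed), the two topologies agree.

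\textbf{Main obstacle.} The substantive point is in Step 1: verifying that $\ca{O}_W:=\colim \ca{O}_{W_\lambda}$ is a valuation ring with the correct fraction field, and that the assembled morphism $W\to X$ sits in the required commutative diagram. This rests crucially on the compatibility condition $\ca{O}_{W_\lambda}=\kappa(y_\lambda)\cap \ca{O}_{W_\mu}$ provided by \ref{lem:riemann-zariski-basic-spa}, which in turn guarantees via \ref{lem:val-ext} that the transition maps are extensions of valuation rings so that the filtered colimit remains a valuation ring. Step 2 is essentially bookkeeping once the descent of affine subsets to a finite stage is invoked.
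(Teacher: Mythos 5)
Your overall strategy matches the paper's: construct an explicit set-theoretic inverse via filtered colimits of the valuation rings, then check the homeomorphism on a basis of opens. Step~1 is essentially verbatim the paper's argument, including the use of \ref{lem:riemann-zariski-basic-spa}, \ref{lem:val-ext}, and \cite[\href{https://stacks.math.columbia.edu/tag/0AS4}{0AS4}]{stacks-project}.

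There is a gap in Step~2. You want to descend a general affine subset $\spa(Y',X')$ to a finite stage by writing $X'=X\times_{X_\lambda}X_\lambda'$ and invoking the limit theorems of \cite{ega4-3}. But those approximation theorems require $X'\to X$ to be of \emph{finite presentation}, whereas the definition in \ref{para:val-spa-top} only asks for finite type, and $X$ is merely coherent, not Noetherian. A typical affine subset, e.g.\ $X'=\spec(A[\tfrac{b_1}{b_0},\dots,\tfrac{b_n}{b_0}])\subseteq \spec(B[1/b_0])$, is a quotient of $A[T_1,\dots,T_n]$ by an ideal that need not be finitely generated, so the equality $X'=X\times_{X_\lambda}X'_\lambda$ can genuinely fail for every $\lambda$. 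The paper sidesteps this by first reducing to the affine case via \ref{lem:spa-induced-top} and then arguing at the level of elements: it descends $b_0,\dots,b_n$ to $B_\lambda$ and directly verifies $U=f_\mu^{-1}(U_\mu)$ using the intersection formula $\ca{O}_{W_\mu}=\kappa(y_\mu)\cap\ca{O}_W$ from \ref{lem:riemann-zariski-basic-spa}; the basic opens of \ref{lem:spa-top} are described purely by these element-wise conditions, so no fibre product of schemes is ever needed.

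Your argument can be repaired without changing its shape: replace the affine subset by $X''=\spec\bigl(A[T_1,\dots,T_n]/(b_0T_i-b_i)_{1\le i\le n}\bigr)$, which is finitely presented over $A$ and yields the same subset $\spa(Y',X'')=\spa(Y',X')$ of $\spa(Y,X)$, since the condition for $W\to X$ to factor through $X''$ is again exactly $b_1,\dots,b_n\in b_0\ca{O}_W$ (once $b_0\neq 0$ in $\kappa(y)$, the elements $t_i$ with $b_0t_i=b_i$ in the domain $\ca{O}_W$ are forced to be $b_i/b_0$). With this finitely presented model the descent via \cite[8.8.2, 8.10.5]{ega4-3} and \ref{lem:val-spa-inj}.(\ref{item:lem:val-spa-inj-2}) is legitimate. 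Alternatively, simply reduce to the affine case by \ref{lem:spa-induced-top} and run the element-wise check on the basic opens of \ref{lem:spa-top}, as the paper does.
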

\begin{proof}
	Notice that by \ref{lem:riemann-zariski-basic-spa}, any element $(y_\lambda\to W_\lambda)_{\lambda\in\Lambda}$ of $\lim_{\lambda\in\Lambda}\spa(Y_\lambda,X_\lambda)$ is actually a directed inverse system of morphisms of affine schemes such that $\ca{O}_{W_\lambda}\to \ca{O}_{W_\mu}$ is an extension of valuation rings for any indexes $\lambda\leq \mu$ in $\Lambda$. Thus, we set
	\begin{align}
		y=\lim_{\lambda\in\Lambda}y_\lambda\in Y=\lim_{\lambda\in\Lambda}Y_\lambda,\quad \trm{ and }\quad W=\lim_{\lambda\in\Lambda}W_\lambda.
	\end{align}
	Then, we still have $\ca{O}_{W_\lambda} \to \ca{O}_W$ is an extension of valuation rings for any $\lambda\in\Lambda$ (\cite[\href{https://stacks.math.columbia.edu/tag/0AS4}{0AS4}]{stacks-project}). Thus, $y\to W$ is an element of $\spa(Y,X)$. This defines a set-theoretic section 
	\begin{align}\label{eq:prop:spa-limit-3}
		\lim_{\lambda\in\Lambda}\spa(Y_\lambda,X_\lambda)\longrightarrow \spa(Y,X)
	\end{align}
	of the canonical map \eqref{eq:prop:spa-limit-1} (see \ref{lem:val-ext}). 
	
	Conversely, for any element $y\to W$ in $\spa(Y,X)$ with image $(y_\lambda\to W_\lambda)_{\lambda\in\Lambda}$ in $\lim_{\lambda\in\Lambda}\spa(Y_\lambda,X_\lambda)$, we deduce from the fact $Y=\lim_{\lambda\in\Lambda}Y_\lambda$ that $y=\lim_{\lambda\in\Lambda}y_\lambda$. Moreover, we have $\ca{O}_{W_\lambda}=\kappa(y_\lambda)\cap \ca{O}_W\subseteq \kappa(y)$ by \ref{lem:riemann-zariski-basic-spa}. We see that $y\to W$ is the image of $(y_\lambda\to W_\lambda)_{\lambda\in\Lambda}$ under the section \eqref{eq:prop:spa-limit-3}, which implies that the \eqref{eq:prop:spa-limit-1} is a bijection.
	
	It remains to show that the bijection \eqref{eq:prop:spa-limit-3} is continuous (as we already know that \eqref{eq:prop:spa-limit-1} is continuous by \ref{cor:spa-spectral}). Indeed, it suffices to check the case where $Y=\spec(B),\ X=\spec(A),\ Y_\lambda=\spec(B_\lambda),\  X_\lambda=\spec(A_\lambda)$ are affine by \ref{lem:spa-induced-top}. We put $f_\lambda:\spa(B,A)\to \spa(B_\lambda,A_\lambda)$ the canonical map. For any finitely many elements $b_0,\dots,b_n$ of $B$, there exists $\lambda\in \Lambda$ and $b_{\lambda,0},\dots,b_{\lambda,n}\in B_\lambda$ whose images in $B$ are $b_0,\dots,b_n$. For for any $\mu\in\Lambda_{\geq\lambda}$, we denote by $b_{\mu,0},\dots,b_{\mu,n}\in B_\mu$ the images of $b_{\lambda,0},\dots,b_{\lambda,n}\in B_\lambda$. Consider the open subset \eqref{eq:para:val-spa-top-2}
	\begin{align}
		U=\{(y\to W)\in \spa(B,A)\ |\ b_0\neq 0\trm{ in }\kappa(y)\trm{ and }b_1,\dots,b_n\in b_0\ca{O}_W\subseteq \kappa(y)\}
	\end{align}
	 of $\spa(B,A)$ and the open subset
	 \begin{align}
	 	U_\mu=\{(y_\mu\to W_\mu)\in \spa(B_\mu,A_\mu)\ |\ b_{0,\mu}\neq 0\trm{ in }\kappa(y_\mu)\trm{ and }b_{1,\mu},\dots,b_{n,\mu}\in b_{0,\mu}\ca{O}_{W_\mu}\subseteq \kappa(y_\mu)\}
	 \end{align}
	 of $\spa(B_\mu,A_\mu)$, we see that $f_\mu(U)\subseteq U_\mu$ and $f_\mu^{-1}(U_\mu)\subseteq U$ by \ref{lem:riemann-zariski-basic-spa}. Thus, we have $U=f_\mu^{-1}(U_\mu)$ and it is an open subset of $\lim_{\lambda\in\Lambda}\spa(Y_\lambda,X_\lambda)$ with respect to the limit topology, which verifies the continuity of \eqref{eq:prop:spa-limit-3} by \ref{lem:spa-top}.
\end{proof}

\begin{mylem}[{\cite[3.1.10]{temkin2011rz}}]\label{lem:spa-X}
	Let $Y\to X$ be a morphism of coherent schemes. Then, there is a canonical continuous map
	\begin{align}\label{eq:lem:spa-X-1}
		\spa(Y,X)\longrightarrow X
	\end{align}
	sending $y\to W$ to the image of the closed point of $W$ in $X$.
\end{mylem}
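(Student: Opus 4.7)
The plan is first to verify that the prescription defines a map of sets, then establish continuity by reducing to the affine case and computing preimages of a basis explicitly.

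To see that the map is well-defined, note that an element of $\spa(Y,X)$ comes with a morphism of schemes $W\to X$, where $W=\spec(\ca{O}_W)$ for a valuation ring $\ca{O}_W$ whose maximal ideal I will denote $\ak{m}_W$. The closed point of $W$ corresponds to $\ak{m}_W$, and we simply send the isomorphism class of $(y\to W)$ to the image of this closed point in $X$. This clearly does not depend on the chosen representative of the isomorphism class.

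For continuity, I first reduce to the affine case. Given an affine open $X'\subseteq X$, set $Y'=Y\times_XX'$, which is a quasi-compact open subscheme of $Y$, and observe by \ref{lem:val-spa-inj}.(\ref{item:lem:val-spa-inj-2}) together with \ref{lem:spa-induced-top} that $\spa(Y',X')$ is an open subspace of $\spa(Y,X)$ whose underlying set is the preimage of $X'$ under the prescribed map. Moreover, writing $Y'$ as a cofiltered limit along affine open subschemes $Y'_\lambda\subseteq Y'$ and invoking \ref{prop:spa-limit}, I further reduce to showing continuity when both $X=\spec(A)$ and $Y=\spec(B)$ are affine.

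In the affine case, for any $a\in A$ the preimage of the principal open $D(a)\subseteq \spec(A)$ consists exactly of those $(y\to W)\in\spa(B,A)$ such that the image of $a$ in $\ca{O}_W$ is a unit, i.e., such that $a\neq 0$ in $\kappa(y)$ and $1\in a\ca{O}_W$. By \ref{lem:spa-top}, this is precisely the affine subset
\begin{align*}
\spa\bigl(B[1/a],\,A[1/a]\bigr)\subseteq \spa(B,A),
\end{align*}
which is open. Since the $D(a)$'s form a basis of $\spec(A)$, the map $\spa(B,A)\to \spec(A)$ is continuous, and combining with the reduction step this finishes the proof.

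There is no real obstacle here: everything follows from unwinding definitions and using the explicit basis of the topology on $\spa(B,A)$ already recorded in \ref{lem:spa-top}. The only place that requires minor care is the patching step that glues the continuity statement from affine opens of $X$ to all of $X$, for which one must remember that pro-open immersions on the source side produce open embeddings on the $\spa$-side (\ref{lem:val-spa-inj} and \ref{lem:spa-induced-top}).
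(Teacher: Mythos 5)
Your prescription of the map and your computation in the affine case are correct: the preimage of the principal open $D(a)\subseteq\spec(A)$ is indeed $\spa(B[1/a],A[1/a])$ by \ref{lem:spa-top}, which handles the case $X=\spec(A)$, $Y=\spec(B)$ cleanly. The identification of the preimage of an affine open $X'\subseteq X$ with the subspace $\spa(Y',X')$, $Y'=Y\times_XX'$, is also sound and agrees with the paper's proof. However, the reduction you invoke is not: one cannot write a general quasi-compact scheme $Y'$ as a cofiltered limit of its affine open subschemes $Y'_\lambda\subseteq Y'$ (such a limit would be contained in every $Y'_\lambda$, i.e.\ it is the intersection rather than the union), so the appeal to \ref{prop:spa-limit} does not apply and the passage to affine $Y$ is not justified as written. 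Moreover, the lemmas you cite (\ref{lem:val-spa-inj} and \ref{lem:spa-induced-top}) show that $\spa(Y',X')$ is a subspace of $\spa(Y,X)$, but not that it is open; that requires the separate observation that $\spa(Y',X')=\bigcup_i\spa(V_i,X')$ for an affine open cover $\{V_i\}$ of $Y'$, each piece being an affine subset.

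Once you have that covering argument you are in fact already done, because then the preimage of every affine open $X'\subseteq X$ is open and these form a basis of $X$; there is no need to pass to affine $Y$ or to compute preimages of principal opens. This is precisely what the paper's proof does: it notes that the closed point of $W$ lying in $U$ is equivalent to $W\to X$ factoring through $U$, so the preimage of $U$ is $\spa(Y\times_XU,U)$, and it is open as a union of affine subsets $\spa(V_i,U)$. Your affine-case calculation is a pleasant sanity check, but as a logical step it is both superfluous and incorrectly reached.
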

\begin{proof}
	We only need to check the continuity of \eqref{eq:lem:spa-X-1}. Let $U$ be an affine open subset of $X$. Then, the closed point of $W$ lies in $U$ if and only if $W\to X$ factors through $U$, i.e., $y\to W$ lies in the subspace $\spa(Y\times_XU,U)\subseteq \spa(Y,X)$ (\ref{lem:spa-induced-top}). In other words, the inverse image of $U$ by \eqref{eq:lem:spa-X-1} is the subspace $\spa(Y\times_XU,U)$, which is open as a union of the affine subsets $\spa(V_i,U)$ of $\spa(Y,X)$ where $\{V_i\to Y\times_XU\}_{i\in I}$ is an affine open covering.
\end{proof}

\begin{mypara}\label{para:limit-ringed-space}
	Let $(X_\lambda)_{\lambda\in\Lambda}$ be a directed inverse system of (resp. locally) ringed spaces with spectral underlying topological spaces and spectral transition maps. Recall that the the limit of $(X_\lambda)_{\lambda\in\Lambda}$ in the category of (resp. locally) ringed spaces is represented by $X$, whose underlying topological space is the spectral space $\lim_{\lambda\in\Lambda}X_\lambda$ and whose structural sheaf is $\colim_{\lambda\in\Lambda}f_\lambda^{-1}\ca{O}_{X_\lambda}$, where $f_\lambda:X\to X_\lambda$ is the canonical spectral map (\cite[\href{https://stacks.math.columbia.edu/tag/0A2Z}{0A2Z}]{stacks-project}). We note that any quasi-compact open subset of $X$ is the pullback of a quasi-compact open subset of $X_\lambda$ for some $\lambda\in\Lambda$, and any finite covering of quasi-compact open subsets of $X$ is the pullback of a finite covering of quasi-compact open subsets of $X_\lambda$ for some $\lambda\in\Lambda$ (\cite[\href{https://stacks.math.columbia.edu/tag/0A30}{0A30}]{stacks-project}).
	
	On the other hand, let $X_{\lambda,\mrm{Zar}}$ be the site formed by quasi-compact open subsets of $X_\lambda$ with open coverings (for any $\lambda\in\Lambda$ or $\lambda$ the empty index). Then, we obtain a directed inverse system of ringed sites $(X_{\lambda,\mrm{Zar}},\ca{O}_{X_\lambda})_{\lambda\in\Lambda}$, and let $\lim_{\lambda\in\Lambda}(X_{\lambda,\mrm{Zar}},\ca{O}_{X_\lambda})$ be the limit of ringed sites defined in \cite[8.2.3, 8.6.2]{sga4-2}. We deduce from \cite[8.3.3]{sga4-2} and the discussion above that there is a canonical equivalence of ringed sites
	\begin{align}
		(X_{\mrm{Zar}},\ca{O}_X)\iso \lim_{\lambda\in\Lambda}(X_{\lambda,\mrm{Zar}},\ca{O}_{X_\lambda}).
	\end{align}
	In the following, we shall omit the subscript ``Zar" and regard $X$ as the limit in both senses of (resp. locally) ringed spaces or ringed sites.
\end{mypara}

\begin{mypara}\label{para:riemann-zariski}
	Let $Y\to X$ be a morphism of coherent schemes. A \emph{$Y$-modification of $X$} is a factorization of $Y\to X$ into a composition of a scheme theoretically dominant morphism of coherent schemes $Y\to X'$ (\cite[5.4.2]{ega1-2}) with a proper morphism $X'\to X$. We denote by $\modf_Y(X)$ the category of $Y$-modifications of $X$. It is a cofiltered category and there exists at most one arrow between two objects (\cite[3.3]{temkin2010stablecurve}), and thus we also regard its opposite category as a directed set. We note that if $Y\to X$ is an open immersion of coherent schemes, then any $Y$-modification $f:X'\to X$ is an isomorphism over $Y$, i.e., $f|_{f^{-1}(Y)}:f^{-1}(Y)\iso Y$ is an isomorphism (\cite[\href{https://stacks.math.columbia.edu/tag/0CNG}{0CNG}]{stacks-project}).
	
	We define the \emph{Riemann-Zariski space} of $X$ with respect to $Y$ to be the cofiltered limit of locally ringed spaces (\cite[3.3]{temkin2010stablecurve}, \cite[2.1.1]{temkin2011rz})
	\begin{align}\label{eq:para:riemann-zariski-1}
		\rz_Y(X)=\lim_{X'\in \modf_Y(X)}X'.
	\end{align}
	In other words, $\rz_Y(X)$ is a locally ringed space whose underlying topological space is the limit of topological spaces $\lim X'$ and whose structure sheaf is the colimit of sheaves $\colim \ca{O}_{X'}|_{\rz_Y(X)}$, where $\ca{O}_{X'}|_{\rz_Y(X)}$ denotes the pullback of $\ca{O}_{X'}$ along the continuous map $\rz_Y(X')\to X'$ (\ref{para:limit-ringed-space}). For any $x\in \rz_Y(X)$, the stalk $\ca{O}_{\rz_Y(X),x}=\colim \ca{O}_{X',x'}$, where $x'$ is the image of $x$ under $\rz_Y(X')\to X'$, which is a local ring.
	
	By the valuative criterion for proper morphism (\cite[\href{https://stacks.math.columbia.edu/tag/0BX5}{0BX5}]{stacks-project}), for any $Y$-modification $X'$ of $X$, the canonical map $\spa(Y,X')\to \spa(Y,X)$ \eqref{eq:lem:riemann-zariski-basic-spa-1} is a homeomorphism (\ref{lem:spa-induced-top}). Taking cofiltered limit of the canonical map $\spa(Y,X')\to X'$ \eqref{eq:lem:spa-X-1} over $X'\in\modf_Y(X)$, we obtain a canonical continuous map 
	\begin{align}\label{eq:para:riemann-zariski-3}
		\spa(Y,X)\longrightarrow \rz_Y(X),
	\end{align}
	sending $y\to W$ to the inverse system of the images of the closed point of $W$ in $X'$. In particular, $\spa(Y,X)$ canonically identifies with the set of isomorphism classes of commutative diagram of locally ringed spaces
	\begin{align}\label{eq:para:riemann-zariski-4}
		\xymatrix{
			y\ar[r]\ar[d]&W\ar[d]\\
			Y\ar[r]&\rz_Y(X)
		}
	\end{align} 
	where $y$ is a point of $Y$ and $W$ is the spectrum of a valuation ring with generic point $y$, and the canonical map \eqref{eq:para:riemann-zariski-3} sends such a diagram \eqref{eq:para:riemann-zariski-4} to the image of the closed point of $W$ in $\rz_Y(X)$.
\end{mypara}

\begin{mythm}[{\cite[2.2.1, 3.4.7]{temkin2011rz}}]\label{thm:riemann-zariski}
	Let $Y\to X$ be a separated morphism of coherent schemes. Then, the canonical map \eqref{eq:para:riemann-zariski-3} induces a homeomorphism
	\begin{align}\label{eq:thm:riemann-zariski}
		\val_Y(X)\iso \rz_Y(X).
	\end{align}
	Moreover, for any $(y\to W)\in\val_Y(X)$ with image $x\in\rz_Y(X)$, the canonical commutative diagram of local rings induced by \eqref{eq:para:riemann-zariski-4},
	\begin{align}\label{eq:thm:riemann-zariski-2}
		\xymatrix{
			\kappa(y)&\ca{O}_W\ar[l]\\
			\ca{O}_{Y,y}\ar@{->>}[u]&\ca{O}_{\rz_Y(X),x}\ar[l]\ar@{->>}[u]
		}
		\end{align} 
	is Cartesian, i.e., $\ca{O}_{\rz_Y(X),x}\subseteq \ca{O}_{Y,y}$ is the preimage of $\ca{O}_W\subseteq \kappa(y)$ along the surjection $\ca{O}_{Y,y}\surj \kappa(y)$, where $\kappa(y)$ is the residue field of $y$ and $W=\spec(\ca{O}_W)$. In particular, for any element $\pi\in \ca{O}_{\rz_Y(X),x}\cap \ca{O}_{Y,y}^\times$, the canonical morphism
	\begin{align}\label{eq:thm:riemann-zariski-3}
		\ca{O}_{\rz_Y(X),x}/\pi\ca{O}_{\rz_Y(X),x}\longrightarrow \ca{O}_W/\pi\ca{O}_W
	\end{align}
	is an isomorphism.
\end{mythm}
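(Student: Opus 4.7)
The plan is to establish the bijection $\val_Y(X) \iso \rz_Y(X)$ first, then upgrade it to a homeomorphism using spectrality, and finally analyze the stalks to obtain the Cartesian square. Note that since $Y \to X$ is separated, $\spa(Y,X) \to \rz_Y(X)$ is the cofiltered limit of the maps $\spa(Y,X) = \spa(Y,X') \to X'$ over $X' \in \modf_Y(X)$ (using the valuative criterion for properness to identify $\spa(Y,X)$ with $\spa(Y,X')$ homeomorphically). For surjectivity onto $\rz_Y(X)$, given a compatible system $(x_{X'})_{X' \in \modf_Y(X)}$ defining $x$, I would construct a valuation ring $W$ in some residue field $\kappa(y)$ ($y \in Y$) dominating the directed union $\colim_{X'} \ca{O}_{X',x_{X'}} \subseteq \ca{O}_{Y,y}$; the image of the closed point of $W$ in each $X'$ recovers $x_{X'}$ by the valuative criterion for properness, and a Zorn-type argument produces $W$ with $y \to W$ lying in $\val_Y(X)$ (the closedness of $y$ inside $Y \times_X W$ comes from the separatedness assumption). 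Injectivity follows from separatedness as well: a valuation $y \to W$ is determined by its trace on every modification, and the image in $\rz_Y(X)$ encodes precisely the closed point of $W$.

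For the topology, both sides are spectral: $\spa(Y,X)$ by \ref{prop:spa-spectral} and $\rz_Y(X)$ as a cofiltered limit of spectral spaces with spectral transition maps. The canonical map $\val_Y(X) \to \rz_Y(X)$ is continuous (the inverse image of the open $U \subseteq X'$ is $\val_{Y \times_{X'} U}(U)$), and I would check it is spectral using \ref{prop:spa-limit}, which reduces to the spectrality of each $\spa(Y,X') \to X'$ checked via \ref{lem:spa-X}. A continuous spectral bijection between spectral spaces is automatically a homeomorphism, so this yields \eqref{eq:thm:riemann-zariski}.

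The Cartesian property of \eqref{eq:thm:riemann-zariski-2} is the delicate part. The inclusion $\ca{O}_{\rz_Y(X),x} \subseteq \ca{O}_{Y,y} \times_{\kappa(y)} \ca{O}_W$ is immediate: any $f \in \ca{O}_{X',x_{X'}}$ pulls back to $\ca{O}_{Y,y}$ via $Y \to X'$ and, since the closed point of $W$ maps to $x_{X'}$, its image in $\kappa(y)$ lies in $\ca{O}_W$. The reverse inclusion is the main obstacle. Given $f \in \ca{O}_{Y,y}$ with $\bar f \in \ca{O}_W$, I would write $f = a/b$ with $a,b$ sections on an affine open of some modification $X'$, $b$ non-vanishing at $y$, and blow up the ideal $(a,b)$ (extended trivially outside the relevant chart) to obtain a new $Y$-modification $X''$; in the chart where $b$ generates the pullback of $(a,b)$, $f$ becomes a regular function, and the hypothesis $\bar f \in \ca{O}_W$ together with the valuative criterion forces the image of $x$ in $X''$ to lie precisely in this chart. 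Hence $f \in \ca{O}_{X'',x_{X''}} \subseteq \ca{O}_{\rz_Y(X),x}$. The final ``in particular'' follows: for $\pi \in \ca{O}_{\rz_Y(X),x} \cap \ca{O}_{Y,y}^{\times}$, surjectivity of \eqref{eq:thm:riemann-zariski-3} comes from surjectivity of $\ca{O}_{Y,y} \surj \kappa(y)$ combined with the Cartesian property (lift any class modulo $\pi \ca{O}_W$ back to $\ca{O}_{Y,y}$ and correct by an element of $\ca{O}_{\rz_Y(X),x}$), and injectivity follows immediately since $\ca{O}_{\rz_Y(X),x} = \ca{O}_{Y,y} \times_{\kappa(y)} \ca{O}_W$ with $\pi$ a unit in $\ca{O}_{Y,y}$.
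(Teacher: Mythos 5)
The paper does not actually reprove the homeomorphism $\val_Y(X)\iso\rz_Y(X)$ or the Cartesian property of \eqref{eq:thm:riemann-zariski-2}: those are cited verbatim to Temkin \cite[2.2.1, 3.4.7]{temkin2011rz}, and the paper's own proof consists entirely of the two-line argument for the ``in particular'' part. Your proposal, by contrast, tries to reconstruct Temkin's theorem from scratch, and then proves the ``in particular'' part.

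For the part the paper actually proves, your argument is essentially the same as the paper's, just phrased element-wise: you lift an element of $\ca{O}_W/\pi$ through the surjection $\ca{O}_{\rz_Y(X),x}\surj\ca{O}_W$ and use invertibility of $\pi$ in $\ca{O}_{Y,y}$ to divide inside the fibre product. The paper packages this as the short exact sequence $0\to\ak{m}_{Y,y}\to\ca{O}_{\rz_Y(X),x}\to\ca{O}_W\to 0$ (the kernel of the Cartesian surjection) together with $\ak{m}_{Y,y}=\pi\ak{m}_{Y,y}$, which is slightly cleaner but amounts to the same computation. That part is fine.

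For the reconstruction of Temkin's theorem: your outline follows the standard Zariski--Riemann approach and the ideas are correct, but several steps are hand-waved in a way that hides real work. For surjectivity, the existence of a valuation ring $W$ dominating $\colim_{X'}\ca{O}_{X',x_{X'}}$ \emph{inside a residue field of $Y$}, with the additional property that $y\to Y\times_X W$ is a closed immersion (so that $(y,W)$ lies in $\val_Y(X)$ and not merely in $\spa(Y,X)$), is precisely the content of Temkin's ``semi-valuation ring'' analysis; ``a Zorn-type argument produces $W$'' does not by itself identify which residue field $\kappa(y)$ the colimit sits in, nor why the closedness condition holds. For the reverse inclusion of the Cartesian square, the blow-up argument is the right idea, but in the coherent (non-Noetherian) setting you must take a finitely generated ideal sheaf on all of $X'$, not an ideal ``extended trivially outside the relevant chart''; you also need to verify that the strict transform is still a $Y$-modification (scheme-theoretic dominance of $Y$ over the blow-up) and that the valuative criterion places $x$ in the distinguished chart, which uses $\bar f\in\ca{O}_W$ together with the fact that the closed point of $W$ detects the chart. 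These can all be filled in, and they are exactly what Temkin does, but as written they are gaps. Given that the paper simply cites Temkin, there is no mathematical payoff to reproving this from scratch unless every step is written out.
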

\begin{proof}
	After \cite[2.2.1, 3.4.7]{temkin2011rz}, it remains to check the ``in particular" part. Let $\ak{m}_{Y,y}$ be the maximal ideal of $\ca{O}_{Y,y}$. Then, there is a canonical exact sequence
	\begin{align}\label{eq:thm:riemann-zariski-4}
		\xymatrix{
			0\ar[r]&\ak{m}_{Y,y}\ar[r]&\ca{O}_{\rz_Y(X),x}\ar[r]&\ca{O}_W\ar[r]&0.
		}
	\end{align}
	As $\pi$ is invertible in $\ca{O}_{Y,y}$, we have $\ak{m}_{Y,y}=\pi\ak{m}_{Y,y}\subseteq \pi\ca{O}_{\rz_Y(X),x}$. Hence, $\ca{O}_{\rz_Y(X),x}/\pi\ca{O}_{\rz_Y(X),x}= \ca{O}_W/\pi\ca{O}_W$.
\end{proof}

\begin{mycor}\label{cor:riemann-zariski}
	Let $Y\to X$ be an affine pro-open morphism of coherent schemes. Then, for any $(y\to W)\in\val_Y(X)$ with image $x\in\rz_Y(X)$, the canonical commutative diagrams of schemes
	\begin{align}\label{eq:cor:riemann-zariski-1}
		\xymatrix{
			y\ar[r]\ar[d]&W\ar[d]\\
			\spec(\ca{O}_{Y,y})\ar[r]\ar[d]&\spec(\ca{O}_{\rz_Y(X),x})\ar[d]\\
			Y\ar[r]&X
		}
	\end{align} 
	are Cartesian.
\end{mycor}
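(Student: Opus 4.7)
The plan is to deduce both Cartesian squares from the ring-theoretic Cartesian diagram of \ref{thm:riemann-zariski} (whose separatedness hypothesis is fulfilled automatically, since pro-open morphisms are monomorphisms). That result furnishes the short exact sequence
\begin{align*}
0 \longrightarrow \ak{m}_{Y,y} \longrightarrow \ca{O}_{\rz_Y(X),x} \longrightarrow \ca{O}_W \longrightarrow 0,
\end{align*}
in which $\ak{m}_{Y,y}$, the maximal ideal of $\ca{O}_{Y,y}$, simultaneously sits as a prime ideal of the subring $\ca{O}_{\rz_Y(X),x}$. For the top square I would simply tensor with $\ca{O}_{Y,y}$: since $\ca{O}_W = \ca{O}_{\rz_Y(X),x}/\ak{m}_{Y,y}$ as an $\ca{O}_{\rz_Y(X),x}$-algebra, one obtains
\begin{align*}
\ca{O}_{Y,y} \otimes_{\ca{O}_{\rz_Y(X),x)}} \ca{O}_W \;=\; \ca{O}_{Y,y}/\ak{m}_{Y,y}\ca{O}_{Y,y} \;=\; \ca{O}_{Y,y}/\ak{m}_{Y,y} \;=\; \kappa(y),
\end{align*}
which yields $y = \spec(\ca{O}_{Y,y}) \times_{\spec(\ca{O}_{\rz_Y(X),x})} W$ and uses neither affineness nor pro-openness of $Y \to X$.

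For the bottom square I would first localize on $X$ to reduce to the case where $X = \spec(A)$ and $Y = \spec(B)$ are both affine. The affine pro-open hypothesis then presents $B$ as a filtered colimit of localizations of $A$, giving $B = S^{-1}A$ for some multiplicative subset $S \subseteq A$. Writing $\ak{p}_y \subseteq A$ for the image of $y$ in $X$, one has $\ca{O}_{Y,y} = A_{\ak{p}_y}$, so Cartesianness of the bottom square reduces to the ring identity $B \otimes_A \ca{O}_{\rz_Y(X),x} = S^{-1}\ca{O}_{\rz_Y(X),x} = \ca{O}_{Y,y}$. The inclusion $S^{-1}\ca{O}_{\rz_Y(X),x} \subseteq \ca{O}_{Y,y}$ is immediate because $S$ is mapped into $\ca{O}_{Y,y}^{\times}$.

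The substantive content, and the main obstacle, lies in the reverse inclusion, which I would handle by decoding the hypothesis $(y \to W) \in \val_Y(X)$. By definition, $y \to Y \times_X W = \spec(S^{-1}\ca{O}_W)$ is a closed immersion; since $y$ is also the generic point of $W$, this forces $S^{-1}\ca{O}_W = \kappa(y) = \mrm{Frac}(\ca{O}_W)$, which means precisely that every nonzero element of $\ca{O}_W$ divides the image $\bar s$ of some $s \in S$. Given $b = u/v \in A_{\ak{p}_y}$ with $u, v \in A$ and $v \notin \ak{p}_y$, I would then choose $s \in S$ whose image $\bar s \in \ca{O}_W$ is divisible by $\bar v$. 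The element $c := sb = su/v \in \ca{O}_{Y,y}$ has image $\bar c = \bar u \cdot (\bar s/\bar v) \in \ca{O}_W$, so $c \in \ca{O}_{\rz_Y(X),x}$ by \ref{thm:riemann-zariski}, exhibiting $b = c/s \in S^{-1}\ca{O}_{\rz_Y(X),x}$ and completing the proof. The hard part is recognizing that the purely scheme-theoretic $\val_Y(X)$ condition translates into exactly the cofinality of $\bar S$ in $\ca{O}_W$ that is needed to recover $\ca{O}_{Y,y}$ as the $S$-localization of $\ca{O}_{\rz_Y(X),x}$.
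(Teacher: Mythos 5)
Your proof is correct, and for the bottom square it takes a genuinely different route from the paper's. The paper bundles both squares into one argument: it first invokes \ref{lem:riemann-zariski-basic-spa} to identify $Y\times_X W$ with $y$, then sets $B=\ca{O}(Y\times_X\spec(\ca{O}_{\rz_Y(X),x}))$ and tensors the exact sequence $0\to\ak{m}_{Y,y}\to\ca{O}_{\rz_Y(X),x}\to\ca{O}_W\to 0$ with the flat (pro-open) ring map to $B$, obtaining $0\to\ak{m}_{Y,y}\to B\to\kappa(y)\to0$; comparing with the trivial sequence $0\to\ak{m}_{Y,y}\to\ca{O}_{Y,y}\to\kappa(y)\to 0$ and applying the five lemma gives $B=\ca{O}_{Y,y}$ at a stroke. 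Your version of the top square is essentially the same observation (that $\ak{m}_{Y,y}$, as an ideal of $\ca{O}_{\rz_Y(X),x}$, already generates the maximal ideal of $\ca{O}_{Y,y}$), and you are right that this part only needs separatedness of $Y\to X$, not affineness or pro-openness — a fact the paper leaves implicit. For the bottom square, however, you replace the paper's structural flat-base-change-and-five-lemma argument with an explicit element chase: writing $B=S^{-1}A$, you decode the $\val_Y(X)$ condition as the statement that $\bar S$ is cofinal in $\ca{O}_W$ (every nonzero $\alpha\in\ca{O}_W$ divides some $\bar s$), then for $b=u/v\in\ca{O}_{Y,y}$ clear $v$ by a suitable $s\in S$ so that $sb$ has image in $\ca{O}_W$, hence lies in $\ca{O}_{\rz_Y(X),x}$ by \ref{thm:riemann-zariski}. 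This makes visible exactly what the valuation condition is doing (the cofinality of $\bar S$ is precisely what forces $S^{-1}\ca{O}_{\rz_Y(X),x}$ to fill up all of $\ca{O}_{Y,y}$), at the cost of being more computational than the paper's one-line base-change argument. Both proofs are valid; yours is more hands-on and pedagogically transparent, the paper's is slicker and reuses \ref{lem:riemann-zariski-basic-spa} directly.
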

\begin{proof}
	As $Y\to X$ is affine, we put $Y\times_X\spec(\ca{O}_{\rz_Y(X),x})=\spec(B)$. As $Y\to X$ is pro-open, we know that $Y\times_XW=y$ by \ref{lem:riemann-zariski-basic-spa}. It remains to check that the canonical morphism $B\to\ca{O}_{Y,y}$ is an isomorphism. Notice that it induces a canonical morphism of exact sequences
	\begin{align}
		\xymatrix{
			0\ar[r]&\ak{m}_{Y,y}\ar@{=}[d]\ar[r]&B\ar[d]\ar[r]&\kappa(y)\ar[r]\ar@{=}[d]&0\\
			0\ar[r]&\ak{m}_{Y,y}\ar[r]&\ca{O}_{Y,y}\ar[r]&\kappa(y)\ar[r]&0
		}
		\end{align}
	where $\kappa(y)$ is the residue field of $y$, $\ak{m}_{Y,y}$ is the maximal ideal of $\ca{O}_{Y,y}$, and the first row is induced from \eqref{eq:thm:riemann-zariski-4} by base change. Thus, we see that $B=\ca{O}_{Y,y}$.
\end{proof}

\begin{mycor}\label{cor:ht-1-spa}
	Let $A$ be a ring, $\pi$ an element of $A$, $S_\pi(A)$ the subset of prime ideals $\ak{p}$ of $A$ such that $A_{\ak{p}}$ is a valuation ring of height $1$ with $\pi\in\ak{p}A_{\ak{p}}\setminus \{0\}$. If we put $Y=\spec(A[1/\pi])$ and $X=\spec(A)$, then there is a canonical injective map
	\begin{align}
		S_\pi(A)\longrightarrow \rz_Y(X)
	\end{align}
	such that for any $\ak{p}\in S_\pi(A)$ with image $x\in \rz_Y(X)$, there is a canonical isomorphism
	\begin{align}
		\ca{O}_{\rz_Y(X),x}\iso A_{\ak{p}}.
	\end{align}
\end{mycor}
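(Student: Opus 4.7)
The plan is to construct the map $\ak{p}\mapsto (y_{\ak{p}}\to W_{\ak{p}})\in\val_Y(X)$ by taking $W_{\ak{p}}=\spec(A_{\ak{p}})$ and $y_{\ak{p}}=\spec(\mrm{Frac}(A_{\ak{p}}))$ its generic point, with all morphisms induced by $A\to A_{\ak{p}}$. Since $A_{\ak{p}}$ is a valuation ring of height $1$ and $\pi\in\ak{p}A_{\ak{p}}\setminus\{0\}$, inverting $\pi$ already produces the fraction field, so $y_{\ak{p}}=\spec(A_{\ak{p}}[1/\pi])$ factors through $Y$, and $Y\times_X W_{\ak{p}}=\spec(A_{\ak{p}}[1/\pi])=y_{\ak{p}}$; hence the diagram really lies in $\val_Y(X)$. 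Composing with the homeomorphism $\val_Y(X)\iso\rz_Y(X)$ of Theorem \ref{thm:riemann-zariski} (applicable because $Y\to X$ is an open immersion, hence separated) yields the desired map $S_\pi(A)\to\rz_Y(X)$.

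Injectivity will follow by composing with the canonical continuous map $\rz_Y(X)\to X$ of Lemma \ref{lem:spa-X}: for our element, the image of $x$ in $X$ is the image of the closed point of $W_{\ak{p}}=\spec(A_{\ak{p}})$, which is $\ak{p}$ itself, so that $S_\pi(A)\to\rz_Y(X)\to X$ is the tautological inclusion $\ak{p}\mapsto\ak{p}$.

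For the stalk computation, I will apply the Cartesian diagram \eqref{eq:thm:riemann-zariski-2}, which identifies $\ca{O}_{\rz_Y(X),x}$ with the fibre product $\ca{O}_{Y,y_{\ak{p}}}\times_{\kappa(y_{\ak{p}})}\ca{O}_{W_{\ak{p}}}=\ca{O}_{Y,y_{\ak{p}}}\times_{\mrm{Frac}(A_{\ak{p}})}A_{\ak{p}}$. The crux is to show that $\ca{O}_{Y,y_{\ak{p}}}$ coincides with $\kappa(y_{\ak{p}})=\mrm{Frac}(A_{\ak{p}})$, for then the fibre product collapses to $A_{\ak{p}}$. Setting $\ak{q}=\ke(A\to A_{\ak{p}})$, which is prime since $A_{\ak{p}}$ is a domain, one has $\pi\notin\ak{q}$ (as $\pi\neq 0$ in $A_{\ak{p}}$), so that $y_{\ak{p}}$ corresponds to the prime $\ak{q}A[1/\pi]\subseteq A[1/\pi]$ and $\ca{O}_{Y,y_{\ak{p}}}=A_{\ak{q}}[1/\pi]=A_{\ak{q}}$ (inverting $\pi\notin\ak{q}$ is automatic after localizing at $A\setminus\ak{q}$). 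The elementary point I need is that $\ak{q}A_{\ak{q}}=0$: any $a\in\ak{q}$ is annihilated by some $s\in A\setminus\ak{p}\subseteq A\setminus\ak{q}$, and $s$ is a unit in $A_{\ak{q}}$, so $a=0$ in $A_{\ak{q}}$. Therefore $A_{\ak{q}}=\mrm{Frac}(A/\ak{q})=\mrm{Frac}(A_{\ak{p}})$, giving $\ca{O}_{Y,y_{\ak{p}}}=\kappa(y_{\ak{p}})$ as required.

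I do not anticipate a serious obstacle; the only subtlety is that $A$ is not assumed to be a domain, which is handled by the explicit computation of $\ak{q}$ and the verification that $A_{\ak{q}}$ is a field. Everything else is a direct application of Theorem \ref{thm:riemann-zariski} and Lemma \ref{lem:spa-X}.
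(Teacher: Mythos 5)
Your proof is correct and follows essentially the same route as the paper's: produce the element $\spec(A_{\ak{p}}[1/\pi])\to\spec(A_{\ak{p}})$ of $\val_Y(X)$, transport it to $\rz_Y(X)$ via the homeomorphism of Theorem \ref{thm:riemann-zariski}, and read off the stalk from the Cartesian square \eqref{eq:thm:riemann-zariski-2} once one knows $\ca{O}_{Y,y_{\ak{p}}}=\kappa(y_{\ak{p}})$. You supply two details the paper elides: the injectivity is argued cleanly by composing with the map $\rz_Y(X)\to X$ of Lemma \ref{lem:spa-X} (the paper merely asserts it), and your explicit computation of $\ak{q}=\ke(A\to A_{\ak{p}})$ with $\ak{q}A_{\ak{q}}=0$ unpacks, in the non-domain case, the paper's compressed remark that ``$A_{\ak{p}}[1/\pi]$ is a localization of $A[1/\pi]$, hence equals $\ca{O}_{Y,y}$.'' These are clarifications, not a different argument.
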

\begin{proof}
	For any $\ak{p}\in S_\pi(A)$, $A_{\ak{p}}$ is a valuation ring with fraction field $A_{\ak{p}}[1/\pi]$ by assumption. In particular, $\spec(A_{\ak{p}}[1/\pi])\to Y\times_X\spec(A_{\ak{p}})$ is an isomorphism (thus, a closed immersion). Hence, $\spec(A_{\ak{p}}[1/\pi]) \to \spec(A_{\ak{p}})$ is an element of $\val_Y(X)$. This defines a canonical injection $S_\pi(A)\to \val_Y(X)$. Moreover, as the fraction field $A_{\ak{p}}[1/\pi]$ is also a localization of $A[1/\pi]$, it coincides with the stalk $\ca{O}_{Y,y}$ of $\ca{O}_Y$ at $y=\spec(A_{\ak{p}}[1/\pi])\in Y$ (actually $y$ is a generic point of $Y$). Therefore, the conclusion follows from \ref{thm:riemann-zariski}.
\end{proof}

\begin{mylem}[{\cite[3.2.2]{temkin2010stablecurve}}]\label{lem:riemann-zariski-descent}
	Let $Y\to X$ be a morphism of coherent schemes. Then, for any $x\in \rz_Y(X)$, there exists a directed inverse system $(U_\lambda)_{\lambda\in\Lambda}$ of affine schemes such that $\spec(\ca{O}_{\rz_Y(X),x})=\lim_{\lambda\in\Lambda}U_\lambda$ and that each $U_\lambda$ is an open subset of a $Y$-modification $X'$ of $X$ fitting into the following commutative diagram of locally ringed spaces
	\begin{align}
		\xymatrix{
			\spec(\ca{O}_{\rz_Y(X),x})\ar[r]\ar[d]&\rz_Y(X)\ar[d]\\
			U_\lambda\ar[r]&X'
		}
	\end{align}
\end{mylem}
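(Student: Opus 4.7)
The plan is to construct $\Lambda$ as (a directed partial order equivalent to) the opposite of the category $\mathcal{C}$ whose objects are pairs $(X', U)$ with $X' \in \modf_Y(X)$ and $U \subseteq X'$ an affine open containing the image $x'$ of $x$, and whose morphisms $(X'', U'') \to (X', U')$ are the (necessarily unique) morphisms $X'' \to X'$ in $\modf_Y(X)$ that restrict to a morphism $U'' \to U'$. For each such $(X', U)$ we take $U_{(X', U)} = U$. The hoped-for identification will follow from a double colimit computation.

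First I would check that $\mathcal{C}$ is cofiltered. It is non-empty: for any $X' \in \modf_Y(X)$ (e.g.\ $X' = X$), choose an affine open neighborhood $U$ of $x'$. Given two objects $(X_1, U_1)$ and $(X_2, U_2)$, cofiltered\-ness of $\modf_Y(X)$ yields some $X_3 \in \modf_Y(X)$ with morphisms $X_3 \to X_i$ sending $x_3 \mapsto x_i$, so the preimages of $U_1$ and $U_2$ in $X_3$ are open neighborhoods of $x_3$; choose an affine open $U_3$ contained in their intersection and containing $x_3$. Then $(X_3, U_3)$ maps to both $(X_i, U_i)$. Uniqueness of parallel morphisms in $\mathcal{C}$ follows from the fact, recalled in \ref{para:riemann-zariski}, that $\modf_Y(X)$ has at most one morphism between any two objects. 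Hence $\mathcal{C}$ (equivalently a cofinal directed poset inside it) serves as our $\Lambda$.

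Next I would compute both sides. On one hand, by \ref{para:limit-ringed-space} and \ref{para:riemann-zariski}, for the image $x' \in X'$ of $x$ we have
\begin{align*}
	\ca{O}_{\rz_Y(X),x} \;=\; \colim_{X' \in \modf_Y(X)} \ca{O}_{X',x'} \;=\; \colim_{X'} \colim_{U} \Gamma(U, \ca{O}_{X'}),
\end{align*}
where $U$ ranges over affine open neighborhoods of $x'$ in $X'$; the inner colimit is a cofinal subcolimit of the stalk, and since the morphisms in $\modf_Y(X)$ are continuous they induce compatible maps on these neighborhoods. Rewriting this double colimit as a single colimit over $\mathcal{C}^{\mrm{op}}$, we obtain $\ca{O}_{\rz_Y(X),x} = \colim_{(X', U) \in \mathcal{C}^{\mrm{op}}} \Gamma(U, \ca{O}_U)$. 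On the other hand, limits of affine schemes along affine transition maps are computed by the colimit of rings, so
\begin{align*}
	\lim_{(X', U) \in \mathcal{C}} U \;=\; \spec\Bigl(\colim_{(X', U) \in \mathcal{C}^{\mrm{op}}} \Gamma(U, \ca{O}_U)\Bigr) \;=\; \spec(\ca{O}_{\rz_Y(X),x}),
\end{align*}
which yields the desired equality.

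Finally, the commutativity of the required square is functorial: the canonical morphism $\spec(\ca{O}_{\rz_Y(X),x}) \to \rz_Y(X)$ is built out of the compatible maps $\spec(\ca{O}_{X',x'}) \to X'$, and each of these factors through the chosen affine open $U \subseteq X'$, giving $\spec(\ca{O}_{\rz_Y(X),x}) \to U \hookrightarrow X'$. I expect the main technical point to be keeping the choices of affine neighborhoods compatible as $X'$ varies, which is handled cleanly by taking the inverse system indexed by $\mathcal{C}$ (rather than trying to choose a coherent $U_{X'}$ for each modification in advance); the cofilteredness verification above is where this issue is really resolved.
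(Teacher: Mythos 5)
Your proof is correct and essentially reproduces the paper's own argument: the paper also indexes the inverse system by the set of pairs $(Z,U)$ of a $Y$-modification $Z$ and an affine open neighborhood $U$ of the image of $x$ in $Z$, with the same pre-order, checks directedness using $\modf_Y(X)^{\oppo}$ being directed, and identifies $\lim_{(Z,U)}U$ with $\lim_{Z}\lim_{U}U=\spec(\ca{O}_{\rz_Y(X),x})$ via the canonical mutually-inverse maps between the single and double limit. Your explicit verification of cofilteredness of $\mathcal{C}$ spells out a step the paper leaves as ``one checks easily.''
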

\begin{proof}
	For any $Y$-modification $Z$ of $X$, let $(U_{Z,\lambda})_{\lambda\in\Lambda_Z}$ be the directed inverse system of all affine open neighborhoods of the image $z$ of $x$ in $Z$. Then, we have $\spec(\ca{O}_{Z,z})=\lim_{\lambda\in\Lambda_Z}U_{Z,\lambda}$. We endow a pre-order on the set $I=\{(Z,\lambda)\ |\ Z\in\modf_Y(X),\ \lambda\in\Lambda_Z\}$ as follows: $(Z,\lambda)\leq (Z',\lambda')$ if and only if there exists a morphism $Z'\to Z$ in $\modf_Y(X)$ (thus the unique one) sending $U_{Z',\lambda'}$ into $U_{Z,\lambda}$. As $\modf_Y(X)^{\oppo}$ is a directed set, one check easily that so is $I$. By universal properties, we obtain two canonical maps
	\begin{align}
		\xymatrix{
			\lim_{(Z,\lambda)\in I}U_{Z,\lambda}\ar@<0.6ex>[r]&\lim_{Z\in\modf_Y(X)}\lim_{\lambda\in\Lambda_Z}U_{Z,\lambda}\ar@<0.6ex>[l]
		}
	\end{align}
	mutual inverse to each other. As the right hand side is equal to $\spec(\ca{O}_{\rz_Y(X),x})=\lim_{Z\in\modf_Y(X)}\spec(\ca{O}_{Z,z})$ by construction, we complete the proof.
\end{proof}

\begin{mylem}\label{lem:riemann-zariski-basic}
	Let $Y\to X$ be a morphism of coherent schemes. Then, the underlying topological space of $\rz_Y(X)$ is spectral, and the canonical map $\rz_Y(X)\to X$ is spectral.
\end{mylem}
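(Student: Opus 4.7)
The plan is to exploit the cofiltered limit presentation in \eqref{eq:para:riemann-zariski-1} and reduce the statement to the standard fact that a cofiltered limit of spectral spaces along spectral transition maps is again spectral.

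First I would verify that the index category $\modf_Y(X)$ is non-empty, so that the limit is well-defined: letting $Z \hookrightarrow X$ be the scheme-theoretic image of $Y \to X$ (\cite[\href{https://stacks.math.columbia.edu/tag/01R8}{01R8}]{stacks-project}), the factorization $Y \to Z \to X$ exhibits $Z$ as a $Y$-modification of $X$, since a closed immersion is proper and $Y \to Z$ is scheme-theoretically dominant by construction. Together with the fact recalled in \ref{para:riemann-zariski} that $\modf_Y(X)$ is cofiltered with at most one arrow between two objects, this makes the limit \eqref{eq:para:riemann-zariski-1} legitimate.

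Next I would observe that every $X' \in \modf_Y(X)$ is proper over the coherent scheme $X$, hence is itself a coherent scheme, so its underlying topological space is spectral. Every transition arrow in $\modf_Y(X)$ is an $X$-morphism of coherent schemes, hence quasi-compact and thus spectral on underlying topological spaces. The set-up of \ref{para:limit-ringed-space} therefore applies to the inverse system $(X')_{X' \in \modf_Y(X)}$, so the underlying topological space of $\rz_Y(X)$ is computed as the cofiltered limit of the spectral spaces $X'$ along spectral maps. By the standard result on such limits (for instance \cite[\href{https://stacks.math.columbia.edu/tag/0A2Z}{0A2Z}]{stacks-project}), this limit is spectral and each canonical projection $\rz_Y(X) \to X'$ is spectral.

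Finally, fixing any $X' \in \modf_Y(X)$, the structural morphism $X' \to X$ is proper, hence quasi-compact, hence spectral; composing with the spectral projection $\rz_Y(X) \to X'$ yields the canonical map $\rz_Y(X) \to X$ (independent of the chosen $X'$ by the universal property), and it is spectral as a composition of spectral maps. I do not anticipate any real obstacle; the only minor subtlety is ensuring the non-emptiness of $\modf_Y(X)$ without any separation hypothesis on $Y \to X$, which is handled by the scheme-theoretic image argument above.
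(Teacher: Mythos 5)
Your proof is correct and follows exactly the same approach as the paper, which simply invokes the fact that $\rz_Y(X)$ is a cofiltered limit of spectral spaces with spectral transition maps together with the same Stacks project reference. The additional details you supply — non-emptiness of $\modf_Y(X)$ via the scheme-theoretic image, coherence of each modification $X'$, and properness of the transition arrows — are all correct and merely make explicit what the paper leaves implicit.
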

\begin{proof}
	It follows directly from the fact that $\rz_Y(X)$ is the cofiltered limit of spectral spaces with spectral transition maps and \cite[\href{https://stacks.math.columbia.edu/tag/0A2Z}{0A2Z}]{stacks-project}.
\end{proof}

\begin{myprop}\label{prop:riemann-zariski-basic}
	Let $(Y'\to X')\to (Y\to X)$ be a morphism of morphisms of coherent schemes. Then, there is a canonical morphism of locally ringed spaces
	\begin{align}\label{eq:prop:riemann-zariski-basic-0}
		\rz_{Y'}(X')\longrightarrow \rz_{Y}(X)
	\end{align}
	which is spectral as a map of spectral spaces and fits into the following commutative diagram of spectral spaces
	\begin{align}\label{eq:prop:riemann-zariski-basic-1}
		\xymatrix{
			\spa(Y',X')\ar[r]\ar[d]&\rz_{Y'}(X')\ar[d]\\
			\spa(Y,X)\ar[r]&\rz_{Y}(X)
		}
	\end{align}
	where the left vertical map is \eqref{eq:lem:riemann-zariski-basic-spa-1} and the horizontal maps are \eqref{eq:para:riemann-zariski-3}.
\end{myprop}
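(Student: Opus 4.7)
The plan is to construct the morphism \eqref{eq:prop:riemann-zariski-basic-0} via a functoriality of modifications and then extract the commutative diagram \eqref{eq:prop:riemann-zariski-basic-1} from the valuative criterion for properness. Given any $X''\in\modf_Y(X)$, form the fibre product $X'\times_XX''$, which is proper over $X'$, and let $X'''$ denote the scheme-theoretic image of $Y'\to X'\times_XX''$ (\cite[5.4.2]{ega1-2}). Then $Y'\to X'''$ is scheme-theoretically dominant and $X'''\to X'$ is proper, so $X'''\in\modf_{Y'}(X')$. Because arrows in $\modf_Y(X)$ are essentially unique (see \ref{para:riemann-zariski}), the assignment $X''\mapsto X'''$ is functorial in $X''$, and each $X'''\to X''$ is a canonical morphism of coherent schemes. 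Passing to the cofiltered limit of locally ringed spaces (\ref{para:limit-ringed-space}) produces the desired morphism $\rz_{Y'}(X')=\lim_{X'''}X'''\longrightarrow\lim_{X''}X''=\rz_Y(X)$.

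To verify spectrality, observe that each map $X'''\to X''$ is a morphism of coherent schemes, hence a spectral map of spectral spaces. Both $\rz_{Y'}(X')$ and $\rz_Y(X)$ are spectral by \ref{lem:riemann-zariski-basic}, and the constructed morphism is, at the level of underlying topological spaces, the cofiltered limit of the spectral system $(X'''\to X'')$ indexed by $\modf_Y(X)$; spectrality of such a limit map follows from the standard characterization via preimages of quasi-compact open subsets in conjunction with \cite[\href{https://stacks.math.columbia.edu/tag/0A2Z}{0A2Z}]{stacks-project}.

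For the commutativity of \eqref{eq:prop:riemann-zariski-basic-1}, start with $(y'\to W')\in\spa(Y',X')$ and let $(y\to W)\in\spa(Y,X)$ be its image under \eqref{eq:lem:riemann-zariski-basic-spa-1}, so that $\ca{O}_W=\kappa(y)\cap\ca{O}_{W'}\subseteq\kappa(y')$ and $W'\to W$ is a local injective extension of valuation rings (\ref{lem:val-ext}). For each $X''\in\modf_Y(X)$ with associated $X'''\in\modf_{Y'}(X')$, apply the valuative criterion for properness (\cite[\href{https://stacks.math.columbia.edu/tag/0BX5}{0BX5}]{stacks-project}) in two ways: the pair $(y\to X'',\ W\to X)$ lifts uniquely to $W\to X''$, and the pair $(y'\to X''',\ W'\to X')$ lifts uniquely to $W'\to X'''$. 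The two resulting morphisms $W'\to W\to X''$ and $W'\to X'''\to X'\times_XX''\to X''$ both extend the map $\spec(\kappa(y'))\to Y\to X''$ on the generic point of $W'$, so by separatedness of $X''$ over $X$ they coincide. Since $W'\to W$ is local, the images of the closed points of $W'$ and of $W$ in $X''$ therefore agree, which yields the required equality of images in the inverse limit $\rz_Y(X)$.

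The main obstacle is the compatibility underlying step one, i.e. verifying that the assignment $X''\mapsto X'''$ is genuinely functorial and, in particular, that the induced transition maps in the system $(X''')_{X''\in\modf_Y(X)}$ coincide with those inherited from $\modf_{Y'}(X')$; this will reduce to the essential uniqueness of arrows in both categories together with the universal property of the scheme-theoretic image. Once this is settled, the morphism of locally ringed spaces is automatic since each $X'''\to X''$ is a scheme morphism, and the remaining assertions follow as outlined.
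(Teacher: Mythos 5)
Your proof is correct and takes essentially the same approach as the paper: the morphism is built from the functor $\modf_Y(X)\to\modf_{Y'}(X')$, $X''\mapsto X''' = \overline{\mrm{im}}(Y'\to X'\times_XX'')$, and spectrality is checked via preimages of quasi-compact opens, while the commutativity of \eqref{eq:prop:riemann-zariski-basic-1} is verified by the valuative criterion for properness. Your commutativity argument is a welcome elaboration of what the paper dismisses as ``clearly compatible''. One small imprecision: the map is not literally ``the cofiltered limit of the spectral system $(X'''\to X'')$ indexed by $\modf_Y(X)$'', since $\rz_{Y'}(X')$ is the limit over $\modf_{Y'}(X')$ and the functor $\modf_Y(X)\to\modf_{Y'}(X')$ need not be cofinal; rather, the map factors through the spectral projection $\rz_{Y'}(X')\to\lim_{X''\in\modf_Y(X)}X'''$ followed by the genuine limit map $\lim X'''\to\lim X''$, or more directly one argues as the paper does: any quasi-compact open $U\subseteq\rz_Y(X)$ is pulled back from some quasi-compact open $V\subseteq Z$ for $Z\in\modf_Y(X)$, and its preimage in $\rz_{Y'}(X')$ is the preimage of $V$ along $\rz_{Y'}(X')\to Z'\to Z$, hence quasi-compact. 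This is exactly the ``preimage characterization'' you invoke, so the argument goes through after this rephrasing.
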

\begin{proof}
	For any $Y$-modification $Z$ of $X$, let $Z'$ be the scheme theoretical image of the induced morphism $Y'\to Z\times_XX'$ of coherent schemes. Then, we see that $Y'\to Z'$ is scheme theoretically dominant and that $Z'\to X'$ is proper, i.e., $Z'$ is a $Y'$-modification of $X'$. This natural process defines a functor $\modf_Y(X)\to \modf_{Y'}(X')$ and thus a canonical morphism of locally ringed spaces
	\begin{align}\label{eq:prop:riemann-zariski-basic-2}
		\rz_{Y'}(X')=\lim_{Z'\in \modf_{Y'}(X')}Z'\longrightarrow\lim_{Z\in \modf_Y(X)}Z =\rz_{Y}(X),
	\end{align}
	which is clearly compatible with \eqref{eq:lem:riemann-zariski-basic-spa-1} by the valuative criterion for proper morphisms \eqref{eq:para:riemann-zariski-3}. Moreover, it is spectral, i.e., quasi-compact. Indeed, for a quasi-compact open subset $U$ of $\rz_Y(X)$, there exists $Z\in \modf_Y(X)$ and a quasi-compact open subset $V$ of $Z$ whose inverse image in $\rz_Y(X)$ is $U$ (\cite[\href{https://stacks.math.columbia.edu/tag/0A30}{0A30}]{stacks-project}). Then, for any $Z'\in \modf_{Y'}(X')$ over $Z$, the inverse image of $V$ in $Z'$ is also a quasi-compact open subset as $Z'\to Z$ is quasi-compact. Thus, the inverse image of $V$ in $\rz_{Y'}(X')$ (equal to the inverse image of $U$ in $\rz_{Y'}(X')$) is a quasi-compact open subset (\cite[\href{https://stacks.math.columbia.edu/tag/0A2V}{0A2V}]{stacks-project}). This completes the proof.
\end{proof}

\begin{myprop}\label{prop:riemann-zariski-limit}
	Let $(Y_\lambda\to X_\lambda)_{\lambda\in\Lambda}$ be a directed inverse system of pro-open morphisms of coherent schemes with affine transition morphisms such that $Y_\mu=Y_\lambda\times_{X_\lambda}X_\mu$ for any indexes $\lambda\leq \mu$ in $\Lambda$, $(Y\to X)=\lim_{\lambda\in\Lambda}(Y_\lambda\to X_\lambda)$. Then, the canonical morphism of locally ringed spaces
	\begin{align}\label{eq:prop:riemann-zariski-limit-1}
		\rz_Y(X)\longrightarrow \lim_{\lambda\in\Lambda}\rz_{Y_\lambda}(X_\lambda)
	\end{align} 
	is an isomorphism which fits into the following commutative diagram of spectral spaces
	\begin{align}\label{eq:prop:riemann-zariski-limit-1-2}
		\xymatrix{
			\val_Y(X)\ar[r]^-{\sim}\ar[d]_-{\wr}&\rz_Y(X)\ar[d]^-{\wr}\\
			\lim_{\lambda\in\Lambda}\val_{Y_\lambda}(X_\lambda)\ar[r]^-{\sim}&\lim_{\lambda\in\Lambda}\rz_{Y_\lambda}(X_\lambda)
		}
	\end{align}
	where the horizontal homeomorphisms are induced by \eqref{eq:thm:riemann-zariski} and the left vertical homeomorphism is induced by \eqref{eq:prop:spa-limit-1}.
\end{myprop}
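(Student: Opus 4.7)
The plan is to verify the two pieces of the asserted isomorphism separately: first that \eqref{eq:prop:riemann-zariski-limit-1} is a homeomorphism on underlying spectral spaces, and then that the induced map on structure sheaves is an isomorphism, which by \ref{para:limit-ringed-space} can be checked stalk by stalk. The canonical morphism itself is supplied by the functoriality of Proposition \ref{prop:riemann-zariski-basic} together with the universal property of the limit.

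For the topological part, I will exhibit the diagram \eqref{eq:prop:riemann-zariski-limit-1-2} and argue that three of its four arrows are already known homeomorphisms, so the fourth must be too. Every $Y_\lambda\to X_\lambda$ is affine (by definition of pro-open) and hence separated, and $Y\to X$ inherits separatedness as a cofiltered limit; so Theorem \ref{thm:riemann-zariski} supplies the two horizontal homeomorphisms. For the left vertical map, Proposition \ref{prop:spa-limit} gives $\spa(Y,X)\iso\lim_\lambda\spa(Y_\lambda,X_\lambda)$ as spectral spaces, and I will check that this bijection restricts to $\val_Y(X)\iso\lim_\lambda\val_{Y_\lambda}(X_\lambda)$: starting from $(y_\lambda\to W_\lambda)_\lambda$ on the right, the associated element $(y\to W)\in\spa(Y,X)$ satisfies $y=\lim_\lambda y_\lambda$ and $W=\lim_\lambda W_\lambda$, and the ``moreover'' clause of Lemma \ref{lem:riemann-zariski-basic-spa}, applied to the pro-open morphism $Y\to X$, forces $Y\times_X W=y$, which is exactly what membership in $\val_Y(X)$ requires; the converse direction is the same lemma applied fibrewise.

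For the sheaf part, pick $x\in\rz_Y(X)$ with corresponding $(y\to W)\in\val_Y(X)$, and write $y_\lambda,W_\lambda,x_\lambda$ for its images at each level. Applying Corollary \ref{cor:riemann-zariski} at every stage (which needs the pro-open hypothesis on each $Y_\lambda\to X_\lambda$ and on $Y\to X$) yields short exact sequences of the form \eqref{eq:thm:riemann-zariski-4}, namely $0\to\ak{m}_{Y_\lambda,y_\lambda}\to\ca{O}_{\rz_{Y_\lambda}(X_\lambda),x_\lambda}\to\ca{O}_{W_\lambda}\to 0$ together with its analogue for the limit. The transition maps between the stalks $\ca{O}_{Y_\lambda,y_\lambda}$ are local (each $y_\lambda$ is the image of $y_\mu$), so $\ca{O}_{Y,y}=\colim_\lambda\ca{O}_{Y_\lambda,y_\lambda}$ is local with maximal ideal $\colim_\lambda\ak{m}_{Y_\lambda,y_\lambda}$; likewise $\ca{O}_W=\colim_\lambda\ca{O}_{W_\lambda}$ because $W=\lim_\lambda W_\lambda$. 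Exactness of filtered colimits and the five lemma then give the desired stalk isomorphism $\colim_\lambda\ca{O}_{\rz_{Y_\lambda}(X_\lambda),x_\lambda}\iso\ca{O}_{\rz_Y(X),x}$.

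The main obstacle sits in the first stage, specifically in showing that the valuation condition is preserved under the cofiltered limit; this is precisely where the hypothesis $Y_\mu=Y_\lambda\times_{X_\lambda}X_\mu$ enters, via Lemma \ref{lem:riemann-zariski-basic-spa}. Once that is in hand, the sheaf-theoretic comparison is a routine five-lemma argument resting on the Cartesian description of stalks in \ref{cor:riemann-zariski} and the good behavior of filtered colimits of local rings along local transition maps.
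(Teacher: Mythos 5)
Your proof is correct and follows essentially the same route as the paper: establish the topological homeomorphism via the three-sided diagram relating $\val$, $\spa$, and $\rz$ (with the left vertical map obtained by restricting \ref{prop:spa-limit} using the ``moreover'' clause of \ref{lem:riemann-zariski-basic-spa}), then identify stalks via the exact sequence \eqref{eq:thm:riemann-zariski-4} and the exactness of filtered colimits. One minor slip: a pro-open morphism need not be affine (its source is a cofiltered limit of quasi-compact open subsets of the target, and open immersions are not affine in general), but it \emph{is} separated---being an affine morphism followed by an open immersion---so the appeal to \ref{thm:riemann-zariski} goes through, and your citation of \ref{cor:riemann-zariski} (which requires affineness) should be replaced by a reference to \ref{thm:riemann-zariski} itself, from which \eqref{eq:thm:riemann-zariski-4} is drawn.
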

\begin{proof}
	It follows from \ref{lem:riemann-zariski-basic-spa} and \ref{prop:riemann-zariski-basic} that there is a canonical commutative diagram of spectral spaces
	\begin{align}\label{eq:prop:riemann-zariski-limit-3}
		\xymatrix{
			\val_Y(X)\ar[r]\ar[d]&\spa(Y,X)\ar[r]\ar[d]&\rz_Y(X)\ar[d]\\
			\lim_{\lambda\in\Lambda}\val_{Y_\lambda}(X_\lambda)\ar[r]&\lim_{\lambda\in\Lambda}\spa(Y_\lambda,X_\lambda)\ar[r]&\lim_{\lambda\in\Lambda}\rz_{Y_\lambda}(X_\lambda)
		}
	\end{align}
	where the two compositions of the horizontal arrows are both homeomorphisms by \ref{thm:riemann-zariski}, and the vertical arrow in the middle is a homeomorphism by \ref{prop:spa-limit}. Notice that by \ref{lem:riemann-zariski-basic-spa}, any element $(y_\lambda\to W_\lambda)_{\lambda\in\Lambda}$ of $\lim_{\lambda\in\Lambda}\val_{Y_\lambda}(X_\lambda)$ is actually a directed inverse system of morphisms of affine schemes such that $\ca{O}_{W_\lambda}\to \ca{O}_{W_\mu}$ is an extension of valuation rings for any indexes $\lambda\leq \mu$ in $\Lambda$ and that $y_\lambda=Y_\lambda\times_{X_\lambda}W_\lambda$ for any $\lambda\in\Lambda$. Thus, we set
	\begin{align}
		y=\lim_{\lambda\in\Lambda}y_\lambda\in Y=\lim_{\lambda\in\Lambda}Y_\lambda,\quad \trm{ and }\quad\ca{O}_W=\colim_{\lambda\in\Lambda}\ca{O}_{W_\lambda}.
	\end{align}
	Then, we still have $\ca{O}_{W_\lambda} \to \ca{O}_W$ is an extension of valuation rings for any $\lambda\in\Lambda$ and $y=Y\times_XW$. Thus, $y\to W$ is an element of $\val_Y(X)$. Arguing as in \ref{prop:spa-limit}, we see that the left vertical arrow in \eqref{eq:prop:riemann-zariski-limit-3} is bijective. Moreover, it is a homeomorphism since it is a bijection between topological subspaces of $\spa(Y,X)=\lim_{\lambda\in\Lambda}\spa(Y_\lambda,X_\lambda)$. Therefore, \eqref{eq:prop:riemann-zariski-limit-1} is a homeomorphism. The conclusion follows from the comparison between the local rings via the horizontal arrows by \ref{thm:riemann-zariski}.
\end{proof}

\begin{myrem}\label{rem:prop:riemann-zariski-limit}
	It seems that we couldn't prove \ref{prop:riemann-zariski-limit} by the standard limit argument in \cite{ega4-3}, since without Noetherian or finite presented assumptions, we couldn't directly approximate $Y$-modifications of $X$ by $Y_{\lambda}$-modifications of $X_\lambda$.
\end{myrem}

\begin{mypara}\label{para:riemann-zariski-openopen}
	Let $Y\to X$ be a pro-open morphism of coherent schemes. Note that for any quasi-compact open subset $U$ of $Y$, $\spa(U,X)$ is a quasi-compact open subset of $\spa(Y,X)$ (\ref{lem:val-spa-inj}, \ref{lem:spa-induced-top}, \ref{prop:spa-spectral}). Let $Z$ be a pro-open subset of $Y$ (\ref{defn:pro-open}). We deduce from \ref{prop:spa-limit} that
	\begin{align}
		\spa(Z,X)=\bigcap_{\lambda\in\Lambda}\spa(U_\lambda,X),
	\end{align}
	where $Z=\bigcap_{\lambda\in\Lambda}U_\lambda$ for a directed inverse system of quasi-compact open subsets $(U_\lambda)_{\lambda\in\Lambda}$ of $Y$ with affine transition inclusion maps.	Then, $\spa(Z,X)\cap \val_Y(X)$ defines a subspace $\rz^Z_Y(X)$ of $\rz_Y(X)$ via the homeomorphism \eqref{eq:thm:riemann-zariski}. It is a spectral space as an intersection of quasi-compact open subsets (\cite[\href{https://stacks.math.columbia.edu/tag/0A31}{0A31}]{stacks-project}). Unwinding the definitions, we see that
	\begin{align}\label{eq:para:riemann-zariski-openopen}
		\rz^Z_Y(X)=\{x\in \rz_Y(X)\ |\ y\in Z,\trm{ where }(y\to W)\in \val_Y(X) \trm{ maps to $x$ via }\eqref{eq:thm:riemann-zariski}\}.
	\end{align}
\end{mypara}

\begin{mycor}\label{cor:riemann-zariski-limit}
	Under the assumptions in {\rm\ref{prop:riemann-zariski-limit}} and with the same notation, let $(Z_\lambda)_{\lambda\in\Lambda}$ be an inverse directed system of pro-open subsets of $(Y_\lambda)_{\lambda\in\Lambda}$, $Z=\lim_{\lambda\in\Lambda}Z_\lambda\subseteq Y$. Then, $Z$ is a pro-open subset of $Y$, and the canonical map \eqref{eq:prop:riemann-zariski-limit-1} induces a homeomorphism of spectral subspaces
	\begin{align}\label{eq:cor:riemann-zariski-limit}
		\rz_Y^Z(X)\iso \lim_{\lambda\in\Lambda}\rz_{Y_\lambda}^{Z_\lambda}(X_\lambda).
	\end{align}
\end{mycor}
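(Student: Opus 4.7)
The plan is to establish the pro-openness of $Z$ in $Y$ by a pullback-and-intersect argument, and then deduce the homeomorphism \eqref{eq:cor:riemann-zariski-limit} by restricting the ambient homeomorphism \eqref{eq:prop:riemann-zariski-limit-1} to the appropriate subspaces using the characterization \eqref{eq:para:riemann-zariski-openopen}. The first part is the technical one; the second part is essentially formal.

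For the pro-openness of $Z$, I would fix, for each $\lambda\in\Lambda$, a presentation $Z_\lambda=\lim_{\mu\in M_\lambda}U_{\lambda,\mu}$ as a pro-open subset of $Y_\lambda$ with affine transition inclusions. Let $p_\lambda:Y\to Y_\lambda$ denote the canonical projection; since the transition morphisms $Y_\mu\to Y_\lambda$ are affine (by hypothesis $Y_\mu=Y_\lambda\times_{X_\lambda}X_\mu$ and the transition morphisms $X_\mu\to X_\lambda$ being affine), $p_\lambda$ is affine as well. Thus $V_{\lambda,\mu}:=Y\times_{Y_\lambda}U_{\lambda,\mu}$ is a quasi-compact open subset of $Y$, the inclusions $V_{\lambda,\mu'}\hookrightarrow V_{\lambda,\mu}$ for $\mu\leq\mu'$ are affine by base change, and $Y\times_{Y_\lambda}Z_\lambda=\lim_\mu V_{\lambda,\mu}$ is pro-open in $Y$. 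Since $\lvert Z\rvert=\lim_\lambda\lvert Z_\lambda\rvert$ as subsets of $\lvert Y\rvert=\lim_\lambda\lvert Y_\lambda\rvert$, we have $Z=\bigcap_\lambda Y\times_{Y_\lambda}Z_\lambda$. To combine these into a single presentation, I would take the cofiltered system indexed by finite sets $S=\{(\lambda_1,\mu_1),\dots,(\lambda_k,\mu_k)\}$ of $V_S:=\bigcap_iV_{\lambda_i,\mu_i}$; the key point is that the transitions $V_{S'}\hookrightarrow V_S$ for $S\subseteq S'$ remain affine, which should follow by choosing an upper bound $\lambda_0$ of all $\lambda_i$ involved and expressing each $V_{\lambda_i,\mu_i}$ as $Y\times_{Y_{\lambda_0}}(Y_{\lambda_0}\times_{Y_{\lambda_i}}U_{\lambda_i,\mu_i})$, so that the transitions reduce to pullbacks of affine inclusions from $Y_{\lambda_0}$.

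For the homeomorphism, under \eqref{eq:prop:riemann-zariski-limit-1-2} a point $x\in\rz_Y(X)$ corresponds to $(y\to W)\in\val_Y(X)$, and further to a compatible system $(y_\lambda\to W_\lambda)_\lambda\in\lim_\lambda\val_{Y_\lambda}(X_\lambda)$, with $y_\lambda$ the image of $y$ in $Y_\lambda$. By \eqref{eq:para:riemann-zariski-openopen}, $x\in\rz_Y^Z(X)$ if and only if $y\in Z$, which (since $\lvert Z\rvert=\lim_\lambda\lvert Z_\lambda\rvert$) is equivalent to $y_\lambda\in Z_\lambda$ for every $\lambda$, i.e.\ to $x_\lambda\in\rz_{Y_\lambda}^{Z_\lambda}(X_\lambda)$ for every $\lambda$. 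Hence the bijection \eqref{eq:prop:riemann-zariski-limit-1} restricts to a bijection on the claimed subspaces. Since both sides carry subspace topologies inherited respectively from $\rz_Y(X)$ and $\lim_\lambda\rz_{Y_\lambda}(X_\lambda)$ and the ambient map is already a homeomorphism, the restriction is a homeomorphism, giving \eqref{eq:cor:riemann-zariski-limit}. The main obstacle throughout is the bookkeeping of pro-open systems with affine transitions when combining data from different indices $\lambda$; everything else, including the identification on the level of topological subspaces, is a direct consequence of \ref{prop:riemann-zariski-limit} and the set-theoretic description of $\rz_Y^Z(X)$.
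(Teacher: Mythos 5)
Your proof is correct and follows essentially the same approach as the paper: verifying that $Z$ is a cofiltered intersection of quasi-compact opens with affine inclusions (you spell out the pullback-and-intersection bookkeeping that the paper dismisses as "one checks easily"), then passing to the $\val$-picture from \eqref{eq:prop:riemann-zariski-limit-1-2} to match points, and concluding via subspace topologies. No divergence in strategy, only in the level of detail on the first part.
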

\begin{proof}
	One checks easily that $Z$ is a cofiltered intersection of quasi-compact open subsets of $Y$ with affine inclusion maps. Then, for an element $x=(x_\lambda)_{\lambda\in\Lambda}\in \lim_{\lambda\in\Lambda}\rz_{Y_\lambda}(X_\lambda)$, let $(y\to W)=(y_\lambda\to W_\lambda)_{\lambda\in\Lambda}\in \val_Y(X)=\lim_{\lambda\in\Lambda}\val_{Y_\lambda}(X_\lambda)$ be the corresponding element via \eqref{eq:prop:riemann-zariski-limit-1-2}. Then, $x$ lies in the subset $\lim_{\lambda\in\Lambda}\rz_{Y_\lambda}^{Z_\lambda}(X_\lambda)$ if and only if the map of inverse system of schemes $(y_\lambda)_{\lambda\in\Lambda}\to (Y_\lambda)_{\lambda\in\Lambda}$ factors through $(Z_\lambda)_{\lambda\in\Lambda}$. This is equivalent to that $\lim_{\lambda\in\Lambda}y_\lambda\to \lim_{\lambda\in\Lambda}Y_\lambda$ factors through $\lim_{\lambda\in\Lambda}Z_\lambda$, i.e., $y\in Y$ lies in $Z$. In other words, this is saying that $x\in \rz_Y^Z(X)$. This shows that \eqref{eq:prop:riemann-zariski-limit-1} induces a bijection between subsets \eqref{eq:cor:riemann-zariski-limit}. Since each side of \eqref{eq:cor:riemann-zariski-limit} is endowed with the subspace topology of the corresponding side of \eqref{eq:prop:riemann-zariski-limit-1}, \eqref{eq:cor:riemann-zariski-limit} is a homeomorphism.
\end{proof}

\begin{mylem}\label{lem:riemann-zariski-openopen}
	Let $Y\to X$ be an affine pro-open morphism of coherent schemes, $Z$ a pro-open subset of $Y$. Then, we have
	\begin{align}\label{eq:lem:riemann-zariski-openopen}
		\rz^Z_Y(X)=\{x\in \rz_Y(X)\ |\ Y\times_X\spec(\ca{O}_{\rz_Y(X),x})\to Y\trm{ factors through }Z\}.
	\end{align}
\end{mylem}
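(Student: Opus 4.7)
The plan is to combine the Cartesian description of the local rings provided by Corollary \ref{cor:riemann-zariski} with the definition \eqref{eq:para:riemann-zariski-openopen} of $\rz^Z_Y(X)$, reducing the statement to a straightforward assertion about open neighborhoods of points.

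First, let $x\in\rz_Y(X)$ and let $(y\to W)\in\val_Y(X)$ be the element corresponding to $x$ under the homeomorphism \eqref{eq:thm:riemann-zariski}. By Corollary \ref{cor:riemann-zariski} (whose hypotheses are satisfied since $Y\to X$ is affine pro-open), the two squares in \eqref{eq:cor:riemann-zariski-1} are Cartesian; in particular the base change $Y\times_X\spec(\ca{O}_{\rz_Y(X),x})$ is canonically identified with $\spec(\ca{O}_{Y,y})$, and the map to $Y$ is the canonical morphism sending the closed point to $y$. In view of the definition \eqref{eq:para:riemann-zariski-openopen}, it therefore suffices to show that, for $y\in Y$, the canonical morphism $\spec(\ca{O}_{Y,y})\to Y$ factors through $Z$ if and only if $y\in Z$.

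For the second step, write $Z=\lim_{\lambda\in\Lambda}U_\lambda$ as a cofiltered intersection of quasi-compact open subsets of $Y$ with affine transition inclusions, as guaranteed by Definition \ref{defn:pro-open}. Since the canonical morphism $\spec(\ca{O}_{Y,y})\to Y$ factors through every open subset of $Y$ containing $y$ (its set-theoretic image consists of generalizations of $y$), the factorization through $U_\lambda$ is equivalent to $y\in U_\lambda$ for each $\lambda$. By the universal property of the limit $Z=\lim_{\lambda\in\Lambda}U_\lambda$ in the category of schemes (the transition maps being affine open immersions, so the limit is represented by the pro-open subset $Z$), the morphism $\spec(\ca{O}_{Y,y})\to Y$ factors through $Z$ if and only if it factors through each $U_\lambda$. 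Conversely, if $y\in Z=\bigcap_\lambda U_\lambda$, then each factorization exists and the factorizations are compatible, giving the desired factorization through $Z$. Combining these equivalences yields $y\in Z$ iff $\spec(\ca{O}_{Y,y})\to Y$ factors through $Z$, which together with Step 1 proves \eqref{eq:lem:riemann-zariski-openopen}.

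The argument is essentially bookkeeping; the only nontrivial input is Corollary \ref{cor:riemann-zariski}, whose role is to replace the \emph{a priori} awkward base change $Y\times_X\spec(\ca{O}_{\rz_Y(X),x})$ by the transparent object $\spec(\ca{O}_{Y,y})$. No obstacle of any depth is expected.
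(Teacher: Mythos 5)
Your proof is correct and follows essentially the same route as the paper's: apply Corollary \ref{cor:riemann-zariski} to identify $Y\times_X\spec(\ca{O}_{\rz_Y(X),x})$ with $\spec(\ca{O}_{Y,y})$, then observe that for a pro-open subset $Z\subseteq Y$ the morphism $\spec(\ca{O}_{Y,y})\to Y$ factors through $Z$ iff $y\in Z$, and conclude from the description \eqref{eq:para:riemann-zariski-openopen}. The only difference is that you spell out the last observation (reducing to each quasi-compact open $U_\lambda$ and using that open immersions are monomorphisms), which the paper leaves implicit.
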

\begin{proof}
	For any $x\in \rz_Y(X)$, let $(y\to W)\in\val_Y(X)$ be the corresponding element via \eqref{eq:thm:riemann-zariski}. Since $Y\to X$ is affine and pro-open, we have $\spec(\ca{O}_{Y,y})=Y\times_X\spec(\ca{O}_{\rz_Y(X),x})$ by \ref{cor:riemann-zariski}. Since $Z$ is a pro-open subset of $Y$, a point $y$ of $Y$ lies in $Z$ if and only if the morphism $\spec(\ca{O}_{Y,y})\to Y$ factors through $Z$. Hence, we obtain \eqref{eq:lem:riemann-zariski-openopen} from \eqref{eq:para:riemann-zariski-openopen}.
\end{proof}

\section{Criteria for Perfectoidness of General Limits of Essentially Adequate Schemes}\label{sec:local-faltings}
We generalize the main results of Section \ref{sec:purity} replacing the set of generic points of the special fibre by the Riemann-Zariski space. In a similar manner, we prove that the property that Faltings cohomology of an essentially adequate scheme is controlled by the Faltings cohomology of stalks of the Riemann-Zariski space (called \emph{Riemman-Zariski faithfulness}, see \ref{defn:rz-faithful}) is preserved after taking general inverse limit of essentially adequate schemes (see \ref{cor:rz-continue}). As a corollary, we show that the Faltings acyclicity for such a limit can be checked on the stalks of the Riemann-Zariski space (see \ref{thm:val-criterion-faltings-acyclic}). Combining with Gabber-Ramero's computation of differentials of valuation rings, we deduce a differential criterion for Faltings acyclicity (see \ref{thm:diff-criterion-faltings-acyclic}). Restricting to the affine case, we obtain valuative and differential criteria for perfectoidness (see \ref{thm:criterion}). For applications in Section \ref{sec:polystable}, we study general limits (pro-objects) and relative version of Faltings cohomology in this section.

\begin{mypara}\label{para:zeta-3}
	In this section, we fix a pre-perfectoid field $L$ extension of $\bb{Q}_p$. We put
	\begin{align}\label{eq:para:zeta-3-1}
		\eta=\spec(L), \quad S=\spec(\ca{O}_L),\quad s=\spec(\ca{O}_L/\ak{m}_L).
	\end{align}
	Let $\fal^{\mrm{open}}_{\eta\to S}$ be the category of open immersions of coherent schemes over $\eta\to S$, and let $\pro(\fal^{\mrm{open}}_{\eta\to S})$ the category of pro-objects of $\fal^{\mrm{open}}_{\eta\to S}$ (\cite[\Luoma{1}.8.10]{sga4-1}). In other words, an object of $\pro(\fal^{\mrm{open}}_{\eta\to S})$ is a directed inverse system $(X^{\triv}_\lambda\to X_\lambda)_{\lambda\in\Lambda}$ of open immersions of coherent schemes over $\eta\to S$, and the set of morphisms of such two objects is given by
	\begin{align}\label{eq:para:zeta-3-2}
		\mo_{\pro(\fal^{\mrm{open}}_{\eta\to S})}((Y^{\triv}_\xi,Y_\xi)_{\xi\in\Xi},(X^{\triv}_\lambda\to X_\lambda)_{\lambda\in\Lambda})=\lim_{\lambda\in\Lambda}\colim_{\xi\in\Xi}\mo_{\fal^{\mrm{open}}_{\eta\to S}}((Y^{\triv}_\xi,Y_\xi),(X^{\triv}_\lambda\to X_\lambda)).
	\end{align}
	We regard $\fal^{\mrm{open}}_{\eta\to S}$ as a full subcategory of $\pro(\fal^{\mrm{open}}_{\eta\to S})$ (\cite[\Luoma{1}.8.10.6]{sga4-1}). 
	
	For any object $(X^{\triv}_\lambda\to X_\lambda)_{\lambda\in\Lambda}$ in $\pro(\fal^{\mrm{open}}_{\eta\to S})$, we put
	\begin{align}\label{eq:fal-limit}
		(\fal^{\et}_{X^{\triv}\to X},\falb)&=\lim_{\lambda\in\Lambda}(\fal^{\et}_{X^{\triv}_\lambda\to X_\lambda},\falb)
	\end{align}
	the cofiltered limit of ringed sites defined in \cite[8.2.3, 8.6.2]{sga4-2}. Note that $\falb$ is flat over $\ca{O}_L$. We remark that if the transition morphisms of $(X^{\triv}_\lambda\to X_\lambda)_{\lambda\in\Lambda}$ are affine, then $(\fal^{\et}_{X^{\triv}\to X},\falb)$ is canonically equivalent to the Faltings ringed site associated to the morphism of coherent schemes $\lim_{\lambda\in\Lambda} X^{\triv}_\lambda\to \lim_{\lambda\in\Lambda} X_\lambda$ by \cite[7.12]{he2024coh}.
	
	Similarly, we put
	\begin{align}\label{eq:rz-limit}
		X=\lim_{\lambda\in\Lambda}X_\lambda,\quad X_\eta=\lim_{\lambda\in\Lambda}X_{\lambda,\eta},\quad X^{\triv}=\lim_{\lambda\in\Lambda}X^{\triv}_\lambda
	\end{align}
	the cofiltered limits of locally ringed spectral spaces, which are also the cofiltered limits of ringed sites (\ref{para:limit-ringed-space}). The canonical morphisms of ringed sites 
	\begin{align}\label{eq:sigma_lambda}
		\sigma_\lambda:(\fal^{\et}_{X^{\triv}_\lambda\to X_\lambda},\falb)\longrightarrow (X_\lambda,\ca{O}_{X_\lambda}),
	\end{align}
	defined by the left exact continuous functor $\sigma^+_\lambda:X_{\lambda,\mrm{Zar}}\to \fal^{\et}_{X^{\triv}_\lambda\to X_\lambda}$ sending each quasi-compact open subset $U$ of $X_\lambda$ to $U^{\triv}=X^{\triv}_\lambda\times_{X_\lambda}U\to U$ (cf. \cite[(7.8.5)]{he2024coh}), define a canonical morphism of ringed sites by taking cofiltered limits,
	\begin{align}\label{eq:sigma}
		\sigma: (\fal^{\et}_{X^{\triv}\to X},\falb)\longrightarrow (X,\ca{O}_X).
	\end{align}
	We remark that any construction above is functorial in $\pro(\fal^{\mrm{open}}_{\eta\to S})$, any site above is coherent (\cite[\Luoma{6}.2.3]{sga4-2}), and thus any morphism of sites above is coherent (\cite[\Luoma{6}.3.1]{sga4-2}).
\end{mypara}

\begin{mypara}\label{para:limit-rz}
	Following \ref{para:zeta-3}, for any object $(X^{\triv}_\lambda\to X_\lambda)_{\lambda\in\Lambda}$ in $\pro(\fal^{\mrm{open}}_{\eta\to S})$, we put (see \ref{prop:riemann-zariski-basic})
	\begin{align}\label{eq:para:limit-rz-1}
		\rz_{X_\eta}(X)=\lim_{\lambda\in\Lambda}\rz_{X_{\lambda,\eta}}(X_\lambda)
	\end{align}
	the cofiltered limit of locally ringed spectral spaces, which are also the cofiltered limit of ringed sites (\ref{para:limit-ringed-space}). 
	
	For any $\lambda\in\Lambda$, consider the spectral subspace $\rz^{X^{\triv}_\lambda}_{X_{\lambda,\eta}}(X_\lambda)$ of $\rz_{X_{\lambda,\eta}}(X_\lambda)$ defined in \ref{para:riemann-zariski-openopen}. For simplicity, we denote it by $X^{\rz}_\lambda$ and we put
	\begin{align}\label{eq:para:limit-rz-2}
		X^{\rz}=\lim_{\lambda\in\Lambda}X^{\rz}_\lambda,
	\end{align}
	which is a spectral subspace of $\rz_{X_\eta}(X)$ (\cite[\href{https://stacks.math.columbia.edu/tag/0A2V}{0A2V}]{stacks-project}). 
	
	Moreover, it follows from \ref{lem:riemann-zariski-basic-spa} and \ref{prop:riemann-zariski-basic} that there is a canonical commutative diagram of inverse systems of spectral spaces
	\begin{align}\label{eq:para:limit-rz-3}
		\xymatrix{
			(\val_{X_{\lambda,\eta}}(X_\lambda)\cap\spa(X^{\triv}_\lambda,X_\lambda) )_{\lambda\in\Lambda}\ar[r]\ar[d]&(\spa(X^{\triv}_\lambda,X_\lambda))_{\lambda\in\Lambda}\ar[r]\ar[d]&(\rz^{X^{\triv}_\lambda}_{X_{\lambda,\eta}}(X_\lambda))_{\lambda\in\Lambda}\ar[d]\\
			(\val_{X_{\lambda,\eta}}(X_\lambda))_{\lambda\in\Lambda}\ar[r]&(\spa(X_{\lambda,\eta},X_\lambda))_{\lambda\in\Lambda}\ar[r]&(\rz_{X_{\lambda,\eta}}(X_\lambda))_{\lambda\in\Lambda}
		}
	\end{align}
	where for each $\lambda\in\Lambda$, the two compositions of the horizontal arrows are both homeomorphisms by \ref{thm:riemann-zariski} and the vertical arrows are inclusions of spectral subspaces. We put 
	\begin{align}\label{eq:para:limit-rz-4}
		\val_{X_\eta}(X)=\lim_{\lambda\in\Lambda}\val_{X_{\lambda,\eta}}(X_\lambda),\  \spa(X^{\triv},X)=\lim_{\lambda\in\Lambda}\spa(X^{\triv}_\lambda,X_\lambda),\  \spa(X_\eta,X)=\lim_{\lambda\in\Lambda}\spa(X_{\lambda,\eta},X_\lambda)
	\end{align}
	the cofiltered limits of spectral spaces. Thus, taking cofiltered limits of \eqref{eq:para:limit-rz-3}, we obtain a commutative diagram of spectral spaces
	\begin{align}\label{eq:para:limit-rz-5}
		\xymatrix{
			\val_{X_\eta}(X)\cap\spa(X^{\triv},X)\ar[r]\ar[d]&\spa(X^{\triv},X)\ar[r]\ar[d]&X^{\rz}\ar[d]\\
			\val_{X_\eta}(X)\ar[r]&\spa(X_\eta,X)\ar[r]&\rz_{X_\eta}(X)
		}
	\end{align}
	where the two compositions of the horizontal arrows are both homeomorphisms and the vertical arrows are inclusions of spectral subspaces.
	
	We remark that if the transition morphisms of $(X^{\triv}_\lambda\to X_\lambda)_{\lambda\in\Lambda}$ are affine, then $\rz_{X_\eta}(X)$ is canonically equivalent to the Riemann-Zariski space associated to the pro-open morphism of coherent schemes $\lim_{\lambda\in\Lambda}X_{\lambda,\eta}\to \lim_{\lambda\in\Lambda}X_\lambda$ by \ref{prop:riemann-zariski-limit}, and that $X^{\rz}$ is the spectral subspace $\rz_{\lim_{\lambda\in\Lambda}X_{\lambda,\eta}}^{\lim_{\lambda\in\Lambda}X^{\triv}_\lambda} (\lim_{\lambda\in\Lambda}X_\lambda)$ by \ref{cor:riemann-zariski-limit}. Similar statements hold for $\val$ and $\spa$. We can extend properties in the last section formally by taking cofiltered limits to this situation. We only list some properties in need and sketch the proof below.
\end{mypara}

\begin{myprop}\label{prop:limit-rz}
	Let $(X^{\triv}_\lambda\to X_\lambda)_{\lambda\in\Lambda}$ be an object in $\pro(\fal^{\mrm{open}}_{\eta\to S})$.
	\begin{enumerate}
		\renewcommand{\labelenumi}{{\rm(\theenumi)}}
		\item The underlying set of $\spa(X_\eta,X)$ naturally identifies with the set of isomorphism classes of commutative diagrams of locally ringed spectral spaces
		\begin{align}\label{eq:prop:limit-rz-1}
			\xymatrix{
				y\ar[r]\ar[d]&W\ar[d]\\
				X_\eta\ar[r]&X
			}
		\end{align} 
		where $y$ is a point of $X_\eta$ and $W$ is the spectrum of a valuation ring with generic point $y$. \label{item:prop:limit-rz-1}
		\item The subset $\spa(X^{\triv},X)\subset \spa(X_\eta,X)$ identifies with those $y\to W$ such that $y\in X^{\triv}\subseteq X_\eta$.\label{item:prop:limit-rz-2}
		\item The subset $\val_{X_\eta}(X)\subset \spa(X_\eta,X)$ identifies with those $y\to W$ such that $\kappa(y)=\ca{O}_W[1/p]$, where $\kappa(y)$ is the residue field of $y$ and $W=\spec(\ca{O}_W)$.\label{item:prop:limit-rz-3}
		\item Let $(X'^{\triv}_\xi,X'_\xi)_{\xi\in\Xi}\to (X^{\triv}_\lambda\to X_\lambda)_{\lambda\in\Lambda}$ be a morphism in $\pro(\fal^{\mrm{open}}_{\eta\to S})$. Then, the canonical spectral map of spectral spaces $\spa(X'_\eta,X')\to \spa(X_\eta,X)$ sends an element $y'\to W'$ to $y\to W$, where $y$ is the image of $y'$ under $X'_\eta\to X_\eta$ and $\ca{O}_W=\kappa(y)\cap \ca{O}_{W'}\subseteq \kappa(y')$. In particular, $\ca{O}_W\to \ca{O}_{W'}$ is an extension of valuation rings.\label{item:prop:limit-rz-4}
		\item For any $(y\to W)\in \spa(X_\eta,X)$ with image $x\in\rz_{X_\eta}(X)$, there is a canonical commutative Cartesian and co-Cartesian diagram of local rings
		\begin{align}\label{eq:prop:limit-rz-2}
			\xymatrix{
				\kappa(y)&\ca{O}_W\ar[l]\\
				\ca{O}_{\rz_{X_\eta}(X),x}[1/p]\ar@{->>}[u]&\ca{O}_{\rz_{X_\eta}(X),x}\ar[l]\ar@{->>}[u]
			}
		\end{align} 
		which induces an isomorphism of $p$-adic completions
		\begin{align}\label{eq:prop:limit-rz-3}
			\widehat{\ca{O}_{\rz_{X_\eta}(X),x}}\iso \widehat{\ca{O}_W}.
		\end{align}\label{item:prop:limit-rz-5}
		\item We have $X^{\rz}=\{x\in \rz_{X_{\eta}}(X)\ |\ \spec(\ca{O}_{\rz_{X_{\eta}}(X),x}[1/p])\to X_{\eta}\trm{ factors through }X^{\triv}\}$.\label{item:prop:limit-rz-6}
	\end{enumerate}
\end{myprop}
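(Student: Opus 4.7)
The plan is to deduce each of (1)--(6) from the corresponding statement at each finite level $\lambda\in\Lambda$, which has already been established in Section \ref{sec:riemann-zariski}, by passing to the cofiltered limit. For (1), I would combine \ref{prop:spa-limit} (which gives $\spa(X_\eta,X)=\lim_\lambda\spa(X_{\lambda,\eta},X_\lambda)$ as topological spaces when the transitions are affine, and which I extend to our pro-object setting using the double limit \eqref{eq:para:zeta-3-2}) with the description in \ref{lem:riemann-zariski-basic-spa} of the transition maps: an element of the limit is a compatible family $(y_\lambda\to W_\lambda)_{\lambda\in\Lambda}$ with $\ca{O}_{W_\lambda}\subseteq\ca{O}_{W_\mu}$ an extension of valuation rings whenever $\lambda\leq\mu$. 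Setting $y=\lim_\lambda y_\lambda\in X_\eta$ and $\ca{O}_W=\colim_\lambda\ca{O}_{W_\lambda}$ (still a valuation ring by \cite[\href{https://stacks.math.columbia.edu/tag/0AS4}{0AS4}]{stacks-project}) assembles this into a commutative diagram \eqref{eq:prop:limit-rz-1} in the limit; conversely, every such diagram restricts to each level. Part (2) then follows, since $\spa(X^{\triv},X)\subset\spa(X_\eta,X)$ is cut out level by level by $y_\lambda\in X^{\triv}_\lambda$, and $X^{\triv}=\lim_\lambda X^{\triv}_\lambda$. For (3), the closed-immersion condition defining $\val_{X_{\lambda,\eta}}(X_\lambda)$, combined with the fact that $X_{\lambda,\eta}\hookrightarrow X_\lambda$ is pro-open, forces (via \ref{lem:riemann-zariski-basic-spa}) $y_\lambda=X_{\lambda,\eta}\times_{X_\lambda}W_\lambda=\spec(\ca{O}_{W_\lambda}[1/p])$, i.e.\ $\kappa(y_\lambda)=\ca{O}_{W_\lambda}[1/p]$; this passes to the colimit to give $\kappa(y)=\ca{O}_W[1/p]$. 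Part (4) is then a direct translation of \ref{lem:riemann-zariski-basic-spa} through the double limit \eqref{eq:para:zeta-3-2}.

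The main technical work will be in part (5). At each $\lambda$, \ref{thm:riemann-zariski} supplies the Cartesian square
\[
\ca{O}_{\rz_{X_{\lambda,\eta}}(X_\lambda),x_\lambda}=\ca{O}_{X_{\lambda,\eta},y_\lambda}\times_{\kappa(y_\lambda)}\ca{O}_{W_\lambda},
\]
together with the isomorphism $\ca{O}_{\rz_{X_{\lambda,\eta}}(X_\lambda),x_\lambda}/p^n\iso\ca{O}_{W_\lambda}/p^n$ for every $n\geq 1$ (the latter following from \ref{thm:riemann-zariski} since $p$ is invertible in $\ca{O}_{X_{\lambda,\eta},y_\lambda}$, so the maximal ideal of the latter is $p$-divisible). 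I would then take the filtered colimit over $\lambda$, exploiting that filtered colimits commute with finite limits and with reduction modulo $p^n$, together with the identification $\ca{O}_{X_\eta,y}=\colim_\lambda\ca{O}_{X_{\lambda,\eta},y_\lambda}=\ca{O}_{\rz_{X_\eta}(X),x}[1/p]$ coming from \ref{para:limit-ringed-space} and the fact that inverting $p$ is itself a filtered colimit. This yields the Cartesian diagram \eqref{eq:prop:limit-rz-2} and the isomorphism \eqref{eq:prop:limit-rz-3}; the co-Cartesianness of \eqref{eq:prop:limit-rz-2} is then equivalent to $\kappa(y)=\ca{O}_W[1/p]$, which is part (3). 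The delicate point will be tracking the local rings through the limits of locally ringed spectral spaces carefully, but once the two ingredients above are in place it is formal.

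Finally, for (6), I would combine the level-$\lambda$ description from \ref{lem:riemann-zariski-openopen}, which characterizes $X^{\rz}_\lambda$ as the set of points $x_\lambda$ such that the corresponding $y_\lambda$ lies in $X^{\triv}_\lambda$, with the limit identity $X^{\triv}=\lim_\lambda X^{\triv}_\lambda$. The key observation is that $X^{\triv}\subseteq X_\eta$, being a cofiltered intersection of quasi-compact opens, is a pro-open subspace and hence closed under generalization in $X_\eta$; therefore a morphism from a local scheme into $X_\eta$ factors through $X^{\triv}$ if and only if its closed point lies in $X^{\triv}$. Applying this to $\spec(\ca{O}_{X_\eta,y})\to X_\eta$, and using the identification $\ca{O}_{X_\eta,y}=\ca{O}_{\rz_{X_\eta}(X),x}[1/p]$ from part (5), yields the desired characterization of $X^{\rz}$.
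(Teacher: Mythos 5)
Your proposal follows essentially the same route as the paper's proof: establish all six claims at each finite level $\lambda$ using \ref{lem:riemann-zariski-basic-spa}, \ref{thm:riemann-zariski}, \ref{cor:riemann-zariski}, and \ref{lem:riemann-zariski-openopen}, then pass to the filtered colimit/cofiltered limit over $\Lambda$ using the same identifications $y=\lim y_\lambda$, $\ca{O}_W=\colim\ca{O}_{W_\lambda}$, $\ca{O}_{X_\eta,y}=\colim\ca{O}_{X_{\lambda,\eta},y_\lambda}$ and the compatibility of Cartesian squares and reduction mod $p^n$ with filtered colimits. The small variations — citing \ref{lem:riemann-zariski-openopen} directly for (6) rather than the homeomorphism $\val_{X_\eta}(X)\cap\spa(X^{\triv},X)\iso X^{\rz}$ from \eqref{eq:para:limit-rz-5} — are cosmetic.
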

\begin{proof}
	(\ref{item:prop:limit-rz-1}) It follows from the same arguments as in \ref{prop:spa-limit}. We remind the readers that each $(y\to W)\in \spa(X_\eta,X)$ is actually the cofiltered limit of the associated directed inverse system $(y_\lambda\to W_\lambda)_{\lambda\in\Lambda}$ of morphisms of affine schemes such that $\ca{O}_{W_\lambda}\to \ca{O}_{W_\mu}$ is an extension of valuation rings with inclusion of fraction fields $\kappa(y_\lambda)\subseteq \kappa(y_\mu)$ for any indexes $\lambda\leq \mu$ in $\Lambda$.
	
	(\ref{item:prop:limit-rz-2}) It follows from 	(\ref{item:prop:limit-rz-1}) and the fact that $y\in X^{\triv}$ if and only if $y_\lambda\in X^{\triv}_\lambda$ for any $\lambda\in\Lambda$ (see \ref{lem:val-spa-inj}.(\ref{item:lem:val-spa-inj-1})).
	
	(\ref{item:prop:limit-rz-3}) It follows from (\ref{item:prop:limit-rz-1}) and the fact that $\kappa(y)=\ca{O}_W[1/p]$ if and only if $\kappa(y_\lambda)=\ca{O}_{W_\lambda}[1/p]$ for any $\lambda\in\Lambda$ (see \ref{lem:riemann-zariski-basic-spa} and \ref{lem:val-ext}).
	
	(\ref{item:prop:limit-rz-4}) The morphism $(X'^{\triv}_\xi,X'_\xi)_{\xi\in\Xi}\to (X^{\triv}_\lambda\to X_\lambda)_{\lambda\in\Lambda}$ is given by a compatible family of morphisms $(X'^{\triv}_{\xi_\lambda},X'_{\xi_\lambda})\to(X^{\triv}_\lambda\to X_\lambda)$ for each $\lambda\in\Lambda$ and for some $\xi_\lambda\in\Xi$ large enough \eqref{eq:para:zeta-3-2}. Thus, for any $\lambda\in\Lambda$, the associated homomorphism $\ca{O}_{W_\lambda}\to \ca{O}_{W'_{\xi_\lambda}}$ is an extension of valuation rings by \ref{lem:riemann-zariski-basic-spa}. Taking filtered colimits, we see that $\ca{O}_W=\colim_{\lambda\in\Lambda}\ca{O}_{W_\lambda}\to \ca{O}_{W'}=\colim_{\xi\in\Xi}\ca{O}_{W'_\xi}$ is still an extension of valuation rings (see \ref{lem:val-ext}).
	
	(\ref{item:prop:limit-rz-5}) Let $x_\lambda\in \rz_{X_{\lambda,\eta}}(X_\lambda)$ be the image of $(y_\lambda\to W_\lambda)\in \spa(X_{\lambda,\eta},X_\lambda)$. Then, the canonical commutative diagram \eqref{eq:thm:riemann-zariski-2}
	\begin{align}
		\xymatrix{
			\kappa(y_\lambda)&\ca{O}_{W_\lambda}\ar[l]\\
			\ca{O}_{X_{\lambda,\eta},y_\lambda}\ar@{->>}[u]&\ca{O}_{\rz_{X_{\lambda,\eta}}(X_\lambda),x_\lambda}\ar[l]\ar@{->>}[u]
		}
	\end{align}
	is Cartesian by \ref{thm:riemann-zariski} and we have $\ca{O}_{\rz_{X_{\lambda,\eta}}(X_\lambda),x_\lambda}/p^n\ca{O}_{\rz_{X_{\lambda,\eta}}(X_\lambda),x_\lambda}=\ca{O}_{W_\lambda}/p^n\ca{O}_{W_\lambda}$ for any $n\in\bb{N}$. Moreover, it follows from \ref{cor:riemann-zariski} that $\ca{O}_{X_{\lambda,\eta},y_\lambda}=\ca{O}_{\rz_{X_{\lambda,\eta}}(X_\lambda),x_\lambda}[1/p]$ and that this diagram is also co-Cartesian. Taking filtered colimits, we see that \eqref{eq:prop:limit-rz-2} is Cartesian and co-Cartesian, that $\ca{O}_{\rz_{X_\eta}(X),x}[1/p]$ is a local ring and that $\ca{O}_{\rz_{X_\eta}(X),x}/p^n\ca{O}_{\rz_{X_\eta}(X),x}=\ca{O}_W/p^n\ca{O}_W$. 
	
	(\ref{item:prop:limit-rz-6}) Notice that $X^{\triv}\to X_\eta$ induces isomorphisms on stalks. Thus, $\spec(\ca{O}_{\rz_{X_{\eta}}(X),x}[1/p])\to X_{\eta}$ factors through $X^{\triv}$ if and only if $y\in X^{\triv}$, since $\ca{O}_{\rz_{X_{\eta}}(X),x}[1/p]$ is a local ring whose closed point is $y$. The conclusion follows from the homeomorphism $\val_{X_\eta}(X)\cap\spa(X^{\triv},X)\iso X^{\rz}$ and (\ref{item:prop:limit-rz-2}).
\end{proof}

\begin{mylem}\label{lem:val-micro}
	Let $V$ be an $\ca{O}_L$-algebra which is a valuation ring with fraction field $V[1/p]$. Then, either $V$ is a field extension of $L$ or its localization $V_{\sqrt{pV}}$ at the radical ideal generated by $p$ is a valuation ring of height $1$ extension of $\ca{O}_L$. In both cases, $V\to V_{\sqrt{pV}}$ is an almost isomorphism.
\end{mylem}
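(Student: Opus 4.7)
The plan is to split on whether $p$ is invertible in $V$. If $p\in V^\times$, then $V=V[1/p]$ is a field, and the structural map $\ca{O}_L\to V$ cannot have kernel $\ak{m}_L$ (otherwise $p\mapsto 0$ would force $V=V[1/p]=0$). Hence $\ca{O}_L\hookrightarrow V$ extends to $L\hookrightarrow V$, putting us in the field-extension case; under the convention $V_{\sqrt{pV}}=V$ the map $V\to V_{\sqrt{pV}}$ is the identity and nothing more is required. If instead $p\notin V^\times$, then $p$ lies in the maximal ideal $\ak{m}_V$, and since the primes of a valuation ring are totally ordered, the set of primes containing $p$ has a smallest element which coincides with $\sqrt{pV}$; in particular $\sqrt{pV}$ is a prime ideal and $V_{\sqrt{pV}}$ is a well-defined valuation ring.

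The next step is to analyze $V_{\sqrt{pV}}$ via value groups. Let $\Gamma$ denote the value group of $V[1/p]$ with valuation $v$, and let $\Delta'\subseteq\Gamma$ be the subgroup $\{\gamma\in\Gamma : n|\gamma|<v(p)\text{ for all }n\geq 1\}$, which one verifies is the largest convex subgroup not containing $v(p)$ (the convex subgroups of $\Gamma$ form a chain by total ordering). Under the standard bijection between primes of $V$ and convex subgroups of $\Gamma$, the prime $\sqrt{pV}$ corresponds to $\Delta'$: indeed $a\in V\setminus\sqrt{pV}$ iff $nv(a)<v(p)$ for every $n$, iff $v(a)\in\Delta'$. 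Consequently $V_{\sqrt{pV}}$ has value group $\Gamma/\Delta'$, in which the image of $v(p)$ is cofinal by maximality of $\Delta'$; by H\"older's theorem $\Gamma/\Delta'$ embeds into $\bb{R}$ as an ordered group, so $V_{\sqrt{pV}}$ has height one.

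Finally, $\ca{O}_L\to V_{\sqrt{pV}}$ is injective (again because otherwise $p\mapsto 0$) and local (since $\ak{m}_L=\sqrt{p\ca{O}_L}$ maps into $\sqrt{pV}\,V_{\sqrt{pV}}$), hence an extension of valuation rings by \ref{lem:val-ext}. For the almost isomorphism, injectivity of $V\to V_{\sqrt{pV}}$ is clear as $V$ is a domain; for the cokernel, consider $a/s\in V_{\sqrt{pV}}$ with $s\in V\setminus\sqrt{pV}$ and any $\epsilon\in\ak{m}_L\setminus\{0\}$. Since $\ak{m}_L=\sqrt{p\ca{O}_L}$, some power $\epsilon^n$ lies in $p\ca{O}_L\subseteq pV$, so $\epsilon\in\sqrt{pV}$ and thus $v(\epsilon)\notin\Delta'$; on the other hand $v(s)\in\Delta'$ with $v(s)\geq 0$, so convexity of $\Delta'$ forces $v(\epsilon)>v(s)$, giving $v(\epsilon a/s)\geq v(\epsilon)-v(s)>0$ and therefore $\epsilon(a/s)\in V$. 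The main subtlety is this value-group bookkeeping, for which the rank-one hypothesis on $\ca{O}_L$ is crucial: it is what forces every nonzero $\epsilon\in\ak{m}_L$ to escape $\Delta'$ under $v$, reducing the almost-surjectivity to a pure convexity statement.
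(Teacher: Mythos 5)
Your value-group approach is more self-contained than the paper's proof (which simply cites the references \cite[2.1]{he2024coh} and \cite[4.7]{he2024coh}), and the prime/convex-subgroup dictionary, the convexity argument giving $v(\epsilon)>v(s)$, and the localness of $\ca{O}_L\to V_{\sqrt{pV}}$ are all fine. But there is a genuine gap in the height-one claim. You assert that the image of $v(p)$ is cofinal in $\Gamma/\Delta'$ ``by maximality of $\Delta'$''; this does not follow. Maximality of $\Delta'$ among convex subgroups \emph{not} containing $v(p)$ says nothing about convex subgroups that \emph{do} contain $v(p)$. Take $\Gamma=\bb{Z}\times\bb{Z}$ with the lexicographic order and $v(p)=(0,1)$: then $\Delta'=0$ is indeed the largest convex subgroup avoiding $v(p)$, but $\overline{v(p)}=(0,1)$ is not cofinal in $\Gamma/\Delta'=\Gamma$ (no multiple of $(0,1)$ dominates $(1,0)$), and $\Gamma/\Delta'$ has rank two.

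What rescues the claim is the hypothesis you never invoke: $\mrm{Frac}(V)=V[1/p]$. This is precisely the assertion that $v(p)$ is cofinal in $\Gamma$ (for any $x\in\mrm{Frac}(V)^\times$ some $xp^n$ lies in $V$, i.e.\ $-v(x)\leq n\,v(p)$). Once this is in hand, the only convex subgroup of $\Gamma$ containing $v(p)$ is $\Gamma$ itself, so $\Delta'$ is the unique maximal \emph{proper} convex subgroup and $\Gamma/\Delta'$ has rank one directly; equivalently and more simply, $\mrm{Frac}(V)=V[1/p]$ forces every nonzero prime of $V$ to contain $p$, so $\sqrt{pV}$ is the unique minimal nonzero prime and $V_{\sqrt{pV}}$ has exactly two primes. (Note also that cofinality of the single element $\overline{v(p)}$ is not by itself the Archimedean hypothesis H\"older's theorem needs; you would additionally have to use that any $\gamma\notin\Delta'$ satisfies $n|\gamma|\geq v(p)$ for some $n$, which combined with cofinality of $\overline{v(p)}$ does give Archimedean — but once the rank-one statement is obtained as above, this detour is unnecessary.)
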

\begin{proof}
	If $V$ is not a field (i.e., $V\neq V[1/p]$), then the radical ideal $\sqrt{pV}$ is a prime ideal of $V$, and $V_{\sqrt{pV}}$ is a valuation ring of height $1$ extension of $\ca{O}_L$ (\cite[2.1]{he2024coh}). Note that $V\to V_{\sqrt{pV}}$ is injective whose cokernel is killed by $\sqrt{pV}$ (\cite[4.7]{he2024coh}). As $\sqrt{pV}$ contains $\ak{m}_L$, we see that $V\to V_{\sqrt{pV}}$ is an almost isomorphism.
\end{proof}
	
\begin{mypara}\label{para:limit-rz-summary}
	Following \ref{para:limit-rz}, for any $x\in X^{\rz}$, we put 
	\begin{align}\label{eq:para:zeta-3-3}
		X^{\rz}_{(x)}=\spec(\ca{O}_{\rz_{X_\eta}(X),x})\quad \trm{and}\quad X^{\rz}_{(x),\eta}=\spec(\ca{O}_{\rz_{X_\eta}(X),x}[1/p]).
	\end{align}
	We note that there is a canonical morphism $(X^{\rz}_{(x),\eta}\to X^{\rz}_{(x)})\to (X^{\triv}\to X)$ of morphisms of locally ringed spaces by \ref{prop:limit-rz}.(\ref{item:prop:limit-rz-6}), and (equivalently) a canonical morphism $(X^{\rz}_{(x),\eta}\to X^{\rz}_{(x)})\to (X^{\triv}_\lambda\to X_\lambda)_{\lambda\in\Lambda}$ in $\pro(\fal^{\mrm{open}}_{\eta\to S})$. In particular, there is a canonical commutative diagram of ringed sites by the functoriality of \eqref{eq:sigma},
	\begin{align}\label{diam:sigma}
		\xymatrix{
			(\fal^\et_{X^{\rz}_{(x),\eta}\to X^{\rz}_{(x)}},\falb)\ar[d]_-{\sigma_x}\ar[r]&(\fal^{\et}_{X^{\triv}\to X},\falb)\ar[d]^-{\sigma} \\
			(X^{\rz}_{(x)},\ca{O}_{\rz_{X_\eta}(X),x})\ar[r]^-{f_x}&(X,\ca{O}_X).
		}
	\end{align}
\end{mypara}

\begin{mydefn}\label{defn:rz-faithful}
	Let $(X^{\triv}_\lambda\to X_\lambda)_{\lambda\in\Lambda}$ be an object in $\pro(\fal^{\mrm{open}}_{\eta\to S})$. We say that its \emph{Faltings cohomology is Riemann-Zariski faithful} if the following conditions hold:
	\begin{enumerate}
		\renewcommand{\labelenumi}{{\rm(\theenumi)}}
		\item For any nonzero element $\pi\in\ak{m}_L$ and any integer $n$, the natural map of $\ca{O}_X$-modules over $X$ induced by \eqref{diam:sigma},
		\begin{align}\label{eq:defn:rz-faithful-1}
			\rr^n\sigma_*(\falb/\pi\falb)\longrightarrow \prod_{x\in X^{\rz}}f_{x*}\rr^n\sigma_{x*}(\falb/\pi\falb)
		\end{align}
		is almost injective.\label{item:defn:rz-faithful-1}
		\item For any nonzero element $\pi\in(\zeta_p-1)\ca{O}_L$, the canonical exact sequence $0\to \falb/\pi\falb\stackrel{\cdot \pi}{\longrightarrow}\falb/\pi^2\falb\to \falb/\pi\falb\to 0$ induces an almost exact sequence of $\ca{O}_X$-modules
		\begin{align}\label{eq:defn:rz-faithful-2}
			\xymatrix{
				0\ar[r]&\ca{O}_X/\pi \ca{O}_X\ar[r]&\sigma_*(\falb/\pi\falb)\ar[r]^-{\delta^0}&\rr^1\sigma_*(\falb/\pi\falb).
			}
		\end{align}
		where $\sigma: (\fal^{\et}_{X^{\triv}\to X},\falb)\to (X,\ca{O}_X)$ is the canonical morphism of ringed sites \eqref{eq:sigma}.\label{item:defn:rz-faithful-2}
	\end{enumerate}
\end{mydefn}

\begin{myprop}\label{prop:rz-faithful-adequate}
	Let $X^{\triv}\to X$ be an open immersion of coherent schemes essentially adequate over $\eta\to S$ {\rm(\ref{defn:essential-adequate-pair})}. Assume that $L$ contains a compatible system of primitive $n$-th roots of unity $\{\zeta_n\}_{n\in\bb{N}_{>0}}$. Then, the Faltings cohomology of $X^{\triv}\to X$ is Riemann-Zariski faithful.
\end{myprop}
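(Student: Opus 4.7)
The strategy is to deduce both conditions of Definition~\ref{defn:rz-faithful} from the cohomological statements of Theorem~\ref{thm:essential-adequate-coh} by identifying each generic point of the special fibre with a point of $X^{\rz}$ with the same stalk, and then invoking quasi-coherence to pass from sections to sheaves.

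My first step would be to reduce the sheaf-theoretic assertions to statements on sections over affine opens $U=\spec(B)\subseteq X$. By Remark~\ref{rem:quasi-coherent} the sheaf $\ak{m}_L\otimes_{\ca{O}_L}\rr^n\sigma_*(\falb/\pi\falb)$ is quasi-coherent, and since $\ak{m}_L$ is $\ca{O}_L$-flat, almost vanishing of the kernel of \eqref{eq:defn:rz-faithful-1} and almost exactness of \eqref{eq:defn:rz-faithful-2} can be verified on an affine open cover of $X$. On such a $U$, we have $\Gamma(U,\rr^n\sigma_*(\falb/\pi\falb))=H^n(\fal^\et_{U^{\triv}\to U},\falb/\pi\falb)$, and because $X^{\rz}_{(x)}$ is a local scheme whose closed point maps to $f(x)\in X$, the preimage $f_x^{-1}(U)$ is either $X^{\rz}_{(x)}$ or empty, so
\[
\Gamma\bigl(U,\,\textstyle\prod_{x\in X^{\rz}}f_{x*}\rr^n\sigma_{x*}(\falb/\pi\falb)\bigr)=\prod_{x\in X^{\rz},\,f(x)\in U}H^n(\fal^\et_{X^{\rz}_{(x),\eta}\to X^{\rz}_{(x)}},\falb/\pi\falb).
\]

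Next, I would identify the subproduct indexed by $\ak{G}(U_s)$ with the target of the map \eqref{eq:thm:essential-adequate-coh-1} applied to $U$. For each $x\in\ak{G}(U_s)$, Lemma~\ref{lem:essential-adequate-generic-loc} shows that $\ca{O}_{U,x}$ is a valuation ring of height~$1$ with generic point $U^{\triv}_{(x)}=\spec(\ca{O}_{U,x}[1/p])$, so Corollary~\ref{cor:ht-1-spa} produces a point $\tilde{x}\in\rz_{U_\eta}(U)$ with $\ca{O}_{\rz_{U_\eta}(U),\tilde{x}}\cong\ca{O}_{U,x}$. Since $U^{\triv}_{(x)}\subseteq X^{\triv}$, Proposition~\ref{prop:limit-rz}.(\ref{item:prop:limit-rz-6}) places $\tilde{x}$ in $U^{\rz}\subseteq X^{\rz}$, with $f(\tilde{x})=x\in U$ and a scheme-theoretic identification $(X^{\rz}_{(\tilde{x}),\eta}\to X^{\rz}_{(\tilde{x})})=(X^{\triv}_{(x)}\to X_{(x)})$. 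By naturality of the diagrams \eqref{diam:sigma}, the $\tilde{x}$-component of the section-level product map equals the $x$-component of the map \eqref{eq:thm:essential-adequate-coh-1} for $U$.

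Combining these observations, the product map on sections over $U$ factors the almost-injective map \eqref{eq:thm:essential-adequate-coh-1} of Theorem~\ref{thm:essential-adequate-coh}, so its kernel is almost zero, giving condition~(1); condition~(2) similarly follows since the almost exact sequence \eqref{eq:thm:essential-adequate-coh-2} on each affine open assembles into almost exactness of the sheaf sequence \eqref{eq:defn:rz-faithful-2} via the same quasi-coherence argument. The principal obstacle is the sheaf-to-sections reduction: one must confirm that evaluating the product sheaf $\prod_{x\in X^{\rz}}f_{x*}(-)$ on an affine open $U$ really yields a product over the subset $\{x\in X^{\rz}:f(x)\in U\}$, and that the quasi-coherence of $\ak{m}_L\otimes\rr^n\sigma_*$ justifies checking almost vanishing affine-locally. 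Beyond this, the argument requires no new cohomological input past Theorem~\ref{thm:essential-adequate-coh}.
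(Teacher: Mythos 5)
Your proof takes essentially the same route as the paper: reduce to affine $U$, invoke Theorem \ref{thm:essential-adequate-coh}, identify $\ak{G}(U_s)$ with a subset of $X^{\rz}$ via Corollary \ref{cor:ht-1-spa} and Lemma \ref{lem:essential-adequate-generic-loc}, and use the quasi-coherence of $\rr^n\sigma_{x*}(\falb/\pi\falb)$ (from \ref{cor:acyclic}) to pass between sections and sheaves. The overall logic is sound.

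One side claim, however, is incorrect: you assert that since $X^{\rz}_{(x)}$ is local, $f_x^{-1}(U)$ is either $X^{\rz}_{(x)}$ or empty, and deduce
\begin{align*}
\Gamma\bigl(U,\,\textstyle\prod_{x\in X^{\rz}}f_{x*}\rr^n\sigma_{x*}(\falb/\pi\falb)\bigr)=\prod_{x\in X^{\rz},\,f(x)\in U}H^n(\fal^\et_{X^{\rz}_{(x),\eta}\to X^{\rz}_{(x)}},\falb/\pi\falb).
\end{align*}
When $f(x)\in U$, indeed $f_x^{-1}(U)=X^{\rz}_{(x)}$ because $X^{\rz}_{(x)}\to X$ factors through $\spec(\ca{O}_{X,f(x)})\subseteq U$. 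But when $f(x)\notin U$, $f_x^{-1}(U)$ is merely a proper open subset of the local scheme $X^{\rz}_{(x)}$ (all of whose points are generizations of $f(x)$), not necessarily empty; e.g.\ if $U$ contains only the generic fibre, the preimage is nonempty but proper. So the displayed equality understates the right-hand side by omitting $\Gamma(f_x^{-1}(U),\rr^n\sigma_{x*}(\falb/\pi\falb))$ for the remaining $x$. Fortunately this does not break your argument: your injectivity conclusion is drawn by projecting onto the $\ak{G}(U_s)$-subproduct, all of whose indices do satisfy $f(x)\in U$, and adding further (even uncontrolled) factors to the target of a map cannot destroy its almost injectivity. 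The paper avoids the issue entirely by constructing a chain of inclusions $\prod_{\ak{G}(U_s)}\subseteq\prod_{U^{\rz}}\hookrightarrow\Gamma(U,\prod_{U^{\rz}})\subseteq\Gamma(U,\prod_{X^{\rz}})$ rather than attempting to compute the full section module. I would recommend simply dropping the false computation of $\Gamma(U,\prod_x f_{x*}(-))$ and replacing it with the observation that for $x\in\ak{G}(U_s)$ the $x$-component of the map is, up to the almost isomorphisms from \ref{cor:acyclic} and \ref{cor:ht-1-spa}, the corresponding component of \eqref{eq:thm:essential-adequate-coh-1}.
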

\begin{proof}
	Recall that for the morphism $\sigma:\fal^\et_{X^{\triv}\to X}\to X$ \eqref{eq:sigma}, $\rr^n\sigma_*(\falb/\pi\falb)$ is the sheaf associated to the presheaf on $X$ sending a quasi-compact open subset $U$ to $H^n(\fal^{\et}_{U^{\triv}\to U},\falb/\pi\falb)$, where $U^{\triv}=X^{\triv}\times_XU$. On the other hand, if $U=\spec(B)$ is affine, then by \ref{thm:essential-adequate-coh} the canonical morphism \eqref{eq:thm:essential-adequate-coh-1}
	\begin{align}
		H^n(\fal^\et_{U^{\triv}\to U},\falb/\pi\falb)\longrightarrow \prod_{x\in\ak{G}(U_s)}H^n(\fal^\et_{U^{\triv}_{(x)}\to U_{(x)}},\falb/\pi\falb)
	\end{align}
	is almost injective and the canonical sequence \eqref{eq:thm:essential-adequate-coh-2} (if $\pi\in(\zeta_p-1)\ca{O}_L$)
	\begin{align}
		\xymatrix{
			0\ar[r]&B/\pi B\ar[r]&H^0(\fal^\et_{U^{\triv}\to U},\falb/\pi\falb)\ar[r]^-{\delta^0}&H^1(\fal^\et_{U^{\triv}\to U},\falb/\pi\falb)
		}
	\end{align}
	is almost exact. Note that there are canonical injections $\ak{G}(U_s)\to U^{\rz}\to X^{\rz}$ identifying the corresponding local rings by \ref{cor:ht-1-spa} (whose assumptions are satisfied by \ref{lem:essential-adequate-generic-loc}) and \ref{prop:limit-rz}. Thus, there are natural maps
	\begin{align}
		\prod_{x\in\ak{G}(U_s)}H^n(\fal^\et_{U^{\triv}_{(x)}\to U_{(x)}},\falb/\pi\falb)&\subseteq \prod_{x\in U^{\rz}}H^n(\fal^\et_{U^{\rz}_{(x),\eta}\to U^{\rz}_{(x)}},\falb/\pi\falb)\\
		&\to \Gamma(U, \prod_{x\in U^{\rz}}f_{x*}\rr^n\sigma_{x*}(\falb/\pi\falb))\nonumber\\
		&\subseteq \Gamma(U, \prod_{x\in X^{\rz}}f_{x*}\rr^n\sigma_{x*}(\falb/\pi\falb)).\nonumber
	\end{align}
	where the second map is an almost injection, since $\rr^n\sigma_{x*}(\falb/\pi\falb)$ is canonically almost isomorphic to the quasi-coherent $\ca{O}_{\rz_{X_\eta}(X),x}$-module associated to $H^n(\fal^\et_{U^{\rz}_{(x),\eta}\to U^{\rz}_{(x)}},\falb/\pi\falb)$ by \ref{cor:acyclic}.
	After sheafification over $X$, we see that \eqref{eq:defn:rz-faithful-1} is almost injective and that \eqref{eq:defn:rz-faithful-2} is almost exact.
\end{proof}

\begin{mylem}\label{lem:rz-faithful-supp}
	Let $X^{\triv}\to X$ be an open immersion of coherent schemes over $\eta\to S$. Then, for any quasi-compact open subset $U$ of $X$, any nonzero element $\pi\in\ak{m}_L$, any integer $n$ and any element $\xi\in  \Gamma(U,	\rr^n\sigma_*(\falb/\pi\falb))$, the subset of $X^{\rz}$,
	\begin{align}\label{eq:lem:rz-faithful-supp-1}
		\supprz(\xi)=\{x\in X^{\rz}\ |\ \xi\neq 0\trm{ in } \Gamma(U,f_{x*}\rr^n\sigma_{x*}(\falb/\pi\falb))\trm{ via }\eqref{eq:defn:rz-faithful-1}\},
	\end{align}
	is a closed subspace.
\end{mylem}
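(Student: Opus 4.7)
The plan is to first reduce to the case where the pro-object $(X^{\triv}_\lambda \to X_\lambda)_{\lambda \in \Lambda}$ is constant, and then, for a single open immersion $X^{\triv} \to X$, to exhibit an explicit open neighborhood of any point in the complement of $\supprz(\xi)$ using the descent description of Riemann-Zariski stalks provided by Lemma \ref{lem:riemann-zariski-descent}.

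For the reduction, by the limit formula for Faltings cohomology (\eqref{eq:fal-limit} combined with \cite[8.7.7]{sga4-2}), the quasi-compact open $U \subseteq X$ is the preimage of some quasi-compact open $U_{\lambda_0} \subseteq X_{\lambda_0}$, and for some $\lambda_1 \geq \lambda_0$ the class $\xi$ admits a representative $\xi_{\lambda_1} \in H^n(\fal^\et_{U^{\triv}_{\lambda_1} \to U_{\lambda_1}}, \falb/\pi\falb)$; denote the corresponding images by $\xi_\lambda$ and the analogous subsets by $\supprz_\lambda(\xi_\lambda) \subseteq X^{\rz}_\lambda$ for $\lambda \geq \lambda_1$. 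Applying the same limit formula to $f_x^{-1}(U) = \lim_\lambda f_{x_\lambda}^{-1}(U_\lambda)$ (with $x_\lambda = g_\lambda(x)$ and $g_\lambda : X^{\rz} \to X^{\rz}_\lambda$ the canonical spectral map coming from $X^{\rz} = \lim_\lambda X^{\rz}_\lambda$), one sees that $x \notin \supprz(\xi)$ if and only if $x_\lambda \notin \supprz_\lambda(\xi_\lambda)$ for some $\lambda$. Hence $\supprz(\xi) = \bigcap_{\lambda \geq \lambda_1} g_\lambda^{-1}(\supprz_\lambda(\xi_\lambda))$, and by continuity of $g_\lambda$ the closedness of $\supprz(\xi)$ in $X^{\rz}$ reduces to the corresponding statement at each single level.

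Assume now that $X^{\triv}\to X$ is a single open immersion, and take $y \in X^{\rz}$ with $\xi \mapsto 0$ in $H^n(\fal^\et_{f_y^{-1}(U)^{\triv} \to f_y^{-1}(U)}, \falb/\pi\falb)$. By Lemma \ref{lem:riemann-zariski-descent} applied to $X_\eta \to X$, there is a cofiltered system $(V_\mu)$ of affine opens in $X_\eta$-modifications $X'_\mu \to X$ with $\spec(\ca{O}_{\rz_{X_\eta}(X),y}) = \lim_\mu V_\mu$, hence $f_y^{-1}(U) = \lim_\mu (V_\mu \times_X U)$; the limit formula for Faltings cohomology then produces an index $\mu$ with $\xi|_{V_\mu \times_X U} = 0$. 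Consider the continuous composite $\phi_\mu : X^{\rz} \hookrightarrow \rz_{X_\eta}(X) = \rz_{X_\eta}(X'_\mu) \to X'_\mu$ (invariance of Riemann-Zariski spaces under modifications, then Lemma \ref{lem:spa-X} in the limit form \eqref{eq:para:riemann-zariski-3}); by construction $\phi_\mu(y) \in V_\mu$, so $\phi_\mu^{-1}(V_\mu)$ is an open neighborhood of $y$ in $X^{\rz}$. For any $z \in \phi_\mu^{-1}(V_\mu)$, the canonical morphism $\spec(\ca{O}_{\rz,z}) \to X'_\mu$ sends the closed point to $\phi_\mu(z) \in V_\mu$, hence factors through the open subscheme $V_\mu$, yielding a morphism $f_z^{-1}(U) \to V_\mu \times_X U$ of coherent schemes. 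By functoriality of Faltings cohomology, $\xi|_{f_z^{-1}(U)}$ is the pullback of $\xi|_{V_\mu \times_X U} = 0$ and therefore vanishes, so $z \notin \supprz(\xi)$. Thus $\phi_\mu^{-1}(V_\mu) \subseteq X^{\rz} \setminus \supprz(\xi)$, showing that $X^{\rz} \setminus \supprz(\xi)$ is open and $\supprz(\xi)$ is closed. The main obstacle is primarily organizational: ensuring all the pullback diagrams and morphisms of Faltings sites commute correctly, and handling the minor point that $\xi \in \Gamma(U, R^n\sigma_*(\falb/\pi\falb))$ needs to be replaced by a genuine Faltings cohomology class after shrinking $U$ to basic opens on which the canonical presheaf-to-sheaf map admits local representatives; both are routine.
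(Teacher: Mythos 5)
Your proof is correct and takes essentially the same approach as the paper's: both apply Lemma \ref{lem:riemann-zariski-descent} to realize the Riemann–Zariski stalk as a cofiltered limit of affine opens $U_\lambda$ in $X_\eta$-modifications $X'$, use the colimit formula \cite[8.7.5, 8.7.7]{sga4-2} for the cofiltered limit of Faltings ringed sites to find a level $\lambda$ where the image of $\xi$ dies, and then observe that the preimage of $U_\lambda$ under $X^{\rz}\to \rz_{X_\eta}(X)\to X'$ is an open neighborhood on which all stalk images vanish (since the local scheme $\spec(\ca{O}_{\rz_{X_\eta}(X),z})\to X'$ factors through $U_\lambda$ whenever $\phi(z)\in U_\lambda$). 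Two pieces of your write-up are unneeded: the opening reduction from a pro-object to a constant one (the lemma is already stated for a single open immersion $X^{\triv}\to X$), and the final caveat about replacing $\xi$ by a genuine Faltings cohomology class after shrinking $U$ — the colimit identity is applied directly at the sheaf level, so one works with $\Gamma(U,\rr^n\sigma_*(\falb/\pi\falb))$ and its restriction maps throughout without ever choosing presheaf representatives.
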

\begin{proof}
	For any $x\notin \supprz(\xi)$, there exists a directed inverse system $(U_\lambda)_{\lambda\in\Lambda}$ of affine schemes such that $X^{\rz}_{(x)}=\lim_{\lambda\in\Lambda}U_\lambda$ and that each $U_\lambda$ is an open subset of an $X_\eta$-modification $X'$ of $X$ fitting into the following commutative diagram of locally ringed spaces (\ref{lem:riemann-zariski-descent})
	\begin{align}
		\xymatrix{
			X^{\rz}_{(x)}\ar[r]\ar[d]&\rz_{X_\eta}(X)\ar[d]\\
			U_\lambda\ar[r]&X'
		}
	\end{align}
	Note that $\sigma_x:(\fal^\et_{X^{\rz}_{(x),\eta}\to X^{\rz}_{(x)}},\falb)\to(X^{\rz}_{(x)},\ca{O}_{\rz_{X_\eta}(X),x})$ is the cofiltered limit of $\sigma_\lambda:(\fal^{\et}_{U^{\triv}_\lambda\to U_\lambda},\falb)\to (U_\lambda,\ca{O}_{U_\lambda})$ (where $U^{\triv}_\lambda=X^{\triv}_\lambda\times_{X_\lambda}U_\lambda$).
	\begin{align}
		\xymatrix{
			\fal^\et_{X^{\rz}_{(x),\eta}\to X^{\rz}_{(x)}}\ar[r]\ar[d]^-{\sigma_x}&\fal^{\et}_{U^{\triv}_\lambda\to U_\lambda}\ar[r]\ar[d]^-{\sigma_\lambda}&\fal^{\et}_{X^{\triv}\to X}\ar[d]^-{\sigma}&\\
			X^{\rz}_{(x)}\ar[r]\ar@/_1pc/[rr]_-{f_x}&U_\lambda\ar[r]^-{f_\lambda}&X&U\ar[l]
		}
	\end{align}
	By \ref{para:limit-ringed-space} and \cite[8.7.5, 8.7.7]{sga4-2} we have
	\begin{align}
		\Gamma(U,f_{x*}\rr^n\sigma_{x*}(\falb/\pi\falb))=\colim_{\lambda\in\Lambda}\Gamma(U,f_{\lambda*}\rr^n\sigma_{\lambda*}(\falb/\pi\falb)).
	\end{align}
	As $x\notin \supprz(\xi)$, there exists $\lambda\in\Lambda$ large enough such that the image of $\xi$ vanishes under the canonical morphism
	\begin{align}\label{eq:lem:rz-faithful-supp-3}
		\Gamma(U,\rr^n\sigma_*(\falb/\pi\falb))\longrightarrow  \Gamma(U,f_{\lambda*}\rr^n\sigma_{\lambda*}(\falb/\pi\falb)).
	\end{align}
	Therefore, for any point $y\in X^{\rz}$ lies in the preimage of the quasi-compact open subset $U_\lambda$ along the composition $X^{\rz}\to \rz_{X_\eta}(X)\to X'$, the image of $\xi$ vanishes under the canonical morphism
	\begin{align}
		\Gamma(U,\rr^n\sigma_*(\falb/\pi\falb))\longrightarrow  \Gamma(U,f_{y*}\rr^n\sigma_{y*}(\falb/\pi\falb))
	\end{align} 
	as it factors through \eqref{eq:lem:rz-faithful-supp-3}. In other words, we have $y\notin \supprz(\xi)$, which shows that $\supprz(\xi)$ is a closed subset.
\end{proof}

\begin{mythm}\label{thm:rz-continue}
	Let $(X^{\triv}_\lambda\to X_\lambda)_{\lambda\in\Lambda}$ be an object in $\pro(\fal^{\mrm{open}}_{\eta\to S})$. For any $x\in X^{\rz}$, let $x_\lambda$ be its image in $X^{\rz}_\lambda$ for any $\lambda\in\Lambda$.
	\begin{align}
		\xymatrix{
			\fal^\et_{X^{\rz}_{(x),\eta}\to X^{\rz}_{(x)}}\ar[r]\ar[d]&\fal^{\et}_{X^{\triv}\to X}\ar[d]&X^{\rz}_{(x)}\ar[r]^-{f_x}\ar[d]&X\ar[d]\\
			\fal^\et_{X^{\rz}_{\lambda,(x_\lambda),\eta}\to X^{\rz}_{\lambda,(x_\lambda)}}\ar[r]&\fal^{\et}_{X^{\triv}_\lambda\to X_\lambda}&X^{\rz}_{\lambda,(x_\lambda)}\ar[r]^-{f_{x_\lambda}}&X_\lambda
		}
	\end{align}
	For any nonzero element $\pi$ of $\ak{m}_L$ and any integer $n$, consider the canonical commutative diagram induced by the canonical commutative diagrams above and the canonical morphisms $\sigma$ \eqref{eq:sigma} of ringed sites,
	\begin{align}\label{eq:thm:rz-continue-1}
		\xymatrix{
			\colim_{\lambda\in\Lambda}\rr^n\sigma_{\lambda*}(\falb/\pi\falb)\ar@{=}[d]\ar[r]^-{\alpha}& \colim_{\lambda\in\Lambda}\prod_{x_\lambda\in X^{\rz}_\lambda}f_{x_\lambda*}\rr^n\sigma_{x_\lambda*}(\falb/\pi\falb)\ar[d]\\
			\rr^n\sigma_*(\falb/\pi\falb)\ar[r]^-{\beta}&\prod_{x\in X^{\rz}}f_{x*}\rr^n\sigma_{x*}(\falb/\pi\falb)
		}
	\end{align}
	where the left vertical equality follows from \cite[8.7.7]{sga4-2}. Then, for any local section $\xi$ of $\rr^n\sigma_*(\falb/\pi\falb)$, if $\beta(\xi)=0$, then $\alpha(\xi)=0$.
\end{mythm}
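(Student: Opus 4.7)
The plan is to mimic the argument of Theorem~\ref{thm:purity}.(\ref{item:thm:purity-1}), upgrading the finite sets $\ak{G}(X_{\lambda,s})$ to the spectral spaces $X^{\rz}_\lambda$. The obstruction to a direct generalization was that the support of a cohomology section on $\ak{G}(X_s)$ was a finite set, whereas now $X^{\rz}$ is only pro-finite; this is compensated by Lemma~\ref{lem:rz-faithful-supp}, which tells us that the analogous supports are closed (hence quasi-compact) subsets of spectral spaces.

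First, since the statement is local on $X$, I would take a quasi-compact open $U \subseteq X$ on which $\xi$ is a section. By \cite[\href{https://stacks.math.columbia.edu/tag/0A30}{0A30}]{stacks-project} and \cite[8.7.7]{sga4-2}, after enlarging some $\lambda_0 \in \Lambda$, the open $U$ is the preimage of a quasi-compact open $U_{\lambda_0} \subseteq X_{\lambda_0}$ and $\xi$ is the image of some section $\xi_{\lambda_0} \in \Gamma(U_{\lambda_0},\rr^n\sigma_{\lambda_0*}(\falb/\pi\falb))$. For each $\lambda \in \Lambda_{\geq \lambda_0}$ with image $U_\lambda \subseteq X_\lambda$, write $\xi_\lambda$ for the pullback of $\xi_{\lambda_0}$ and set
\begin{align}
\supprz_\lambda(\xi_\lambda) = \{x_\lambda \in X^{\rz}_\lambda \cap g_\lambda^{-1}(U_\lambda)\ |\ \xi_\lambda \neq 0 \text{ in } \Gamma(U_\lambda, f_{x_\lambda*}\rr^n\sigma_{x_\lambda*}(\falb/\pi\falb))\},
\end{align}
where $g_\lambda:X^{\rz}_\lambda \to X_\lambda$ is the canonical map. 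By Lemma~\ref{lem:rz-faithful-supp} this is a closed subset of $X^{\rz}_\lambda \cap g_\lambda^{-1}(U_\lambda)$, hence a closed (and therefore quasi-compact, with respect to the constructible topology, spectral) subset of the spectral space $X^{\rz}_\lambda$.

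Next, I would verify that the transition map $X^{\rz}_\mu \to X^{\rz}_\lambda$ (for $\lambda \leq \mu$) sends $\supprz_\mu(\xi_\mu)$ into $\supprz_\lambda(\xi_\lambda)$: this is immediate from the functoriality of the canonical diagram \eqref{diam:sigma} in $\pro(\fal^{\mrm{open}}_{\eta\to S})$, since the $x_\mu$-component of the image of $\xi_\lambda$ in $\Gamma(U_\mu, f_{x_\mu*}\rr^n\sigma_{x_\mu*}(\falb/\pi\falb))$ is the image of the $(g_{\lambda\mu}(x_\mu))$-component of $\xi_\lambda$ via the canonical colimit map. Argue by contradiction: assume $\alpha(\xi) \neq 0$ but $\beta(\xi) = 0$. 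Then for cofinally many $\lambda$, and hence by compatibility for all $\lambda \in \Lambda_{\geq \lambda_0}$, the closed subset $\supprz_\lambda(\xi_\lambda)$ is non-empty.

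The crucial step—and the main obstacle—is to produce an element in the cofiltered limit $\lim_{\lambda} \supprz_\lambda(\xi_\lambda)$. In the setting of Theorem~\ref{thm:purity} one invoked \cite[\href{https://stacks.math.columbia.edu/tag/0A2W}{0A2W}]{stacks-project} for non-empty finite sets; here I would invoke the analogous result for cofiltered limits of non-empty spectral spaces along spectral transition maps (e.g.\ \cite[\href{https://stacks.math.columbia.edu/tag/0A2Z}{0A2Z}]{stacks-project}, via quasi-compactness in the constructible topology and Tychonoff). The transition maps $X^{\rz}_\mu \to X^{\rz}_\lambda$ are spectral by Proposition~\ref{prop:riemann-zariski-basic}, and closed subspaces of spectral spaces inherit a spectral structure, so the hypotheses are satisfied. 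Picking $x \in \lim_{\lambda} \supprz_\lambda(\xi_\lambda) \subseteq X^{\rz}$ with image $x_\lambda \in \supprz_\lambda(\xi_\lambda)$, the colimit description
\begin{align}
\Gamma(U, f_{x*}\rr^n\sigma_{x*}(\falb/\pi\falb)) = \colim_{\lambda} \Gamma(U_\lambda, f_{x_\lambda*}\rr^n\sigma_{x_\lambda*}(\falb/\pi\falb))
\end{align}
(from \eqref{eq:para:limit-rz-1}, \eqref{eq:prop:limit-rz-3} and \cite[8.7.7]{sga4-2}) shows that the $x$-component of $\beta(\xi)$ is non-zero, contradicting $\beta(\xi) = 0$. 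This completes the plan.
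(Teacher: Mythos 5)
Your proof takes essentially the same approach as the paper: represent $\xi$ at a finite level $\lambda_0$, form the supports $\supprz_\lambda(\xi_\lambda)$, show they are spectral subspaces with spectral transition maps by Lemma~\ref{lem:rz-faithful-supp}, and use the non-emptiness of cofiltered limits of non-empty spectral spaces to produce a contradiction. However, there are two issues worth flagging.

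First, you have changed the definition of $\supprz_\lambda(\xi_\lambda)$ by intersecting with $g_\lambda^{-1}(U_\lambda)$; the paper's definition is simply $\{x_\lambda\in X^{\rz}_\lambda\ |\ \xi_\lambda\neq 0\trm{ in }\Gamma(U_{\lambda},f_{x_\lambda*}\rr^n\sigma_{x_\lambda*}(\falb/\pi\falb))\}$, with no such restriction. This has two consequences. (a) Lemma~\ref{lem:rz-faithful-supp} gives closedness of the \emph{unrestricted} support in $X^{\rz}_\lambda$; your restricted set is the intersection of a closed subset with the quasi-compact open $g_\lambda^{-1}(U_\lambda)\cap X^{\rz}_\lambda$, hence only locally closed in $X^{\rz}_\lambda$, contrary to what you assert. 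It is still pro-constructible, hence spectral, so the limit argument survives, but the intermediate claim ``hence a closed subset of $X^{\rz}_\lambda$'' is wrong as stated. (b) More importantly, the non-emptiness of the \emph{restricted} support does not directly follow from $\alpha(\xi)\neq 0$: that assumption only guarantees the unrestricted support is non-empty, and a priori it could be populated entirely by points $x_\lambda$ with $g_\lambda(x_\lambda)\notin U_\lambda$ (since $f_{x_\lambda}^{-1}(U_\lambda)$ may be a non-trivial open subset of $X^{\rz}_{\lambda,(x_\lambda)}$ even then). Both problems disappear if you simply drop the intersection with $g_\lambda^{-1}(U_\lambda)$ and use the paper's definition.

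Second, for the non-emptiness of the cofiltered limit of non-empty spectral spaces you should cite \cite[\href{https://stacks.math.columbia.edu/tag/0A2W}{0A2W}]{stacks-project}, not \cite[\href{https://stacks.math.columbia.edu/tag/0A2Z}{0A2Z}]{stacks-project} (the latter is about limits of locally ringed spaces). Also, the final colimit identification $\Gamma(U, f_{x*}\rr^n\sigma_{x*}(\falb/\pi\falb)) = \colim_{\lambda} \Gamma(U_\lambda, f_{x_\lambda*}\rr^n\sigma_{x_\lambda*}(\falb/\pi\falb))$ really rests on $X^{\rz}_{(x)}=\lim_{\lambda}X^{\rz}_{\lambda,(x_\lambda)}$ (from Proposition~\ref{prop:riemann-zariski-limit}) together with \cite[7.12]{he2024coh} and \cite[8.7.5, 8.7.7]{sga4-2}, rather than \eqref{eq:prop:limit-rz-3} which is the statement about $p$-adic completions.
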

\begin{proof}
	Assume that $\alpha(\xi)\neq 0$ and $\xi\in \Gamma(U,\rr^n\sigma_*(\falb/\pi\falb))$ for some quasi-compact open subset $U$ of $X$. Notice that there exists $\lambda_0\in\Lambda$ and a quasi-compact open subset $U_{\lambda_0}$ of $X_{\lambda_0}$ whose base change along $X\to X_{\lambda_0}$ is $U$ (\cite[\href{https://stacks.math.columbia.edu/tag/0A30}{0A30}]{stacks-project}). For any $\lambda\in\Lambda_{\geq \lambda_0}$, let $U_\lambda$ be the base change of $U_{\lambda_0}$ along $X_\lambda\to X_{\lambda_0}$. Since $\sigma:(\fal^{\et}_{X^{\triv}\to X},\falb)\longrightarrow (X,\ca{O}_X)$ is the cofiltered limit of $\sigma_\lambda:(\fal^{\et}_{X^{\triv}_\lambda\to X_\lambda},\falb)\longrightarrow (X_\lambda,\ca{O}_{X_\lambda})$ by \eqref{eq:fal-limit} and \eqref{eq:rz-limit}, by \cite[8.7.5, 8.7.7]{sga4-2} we have 
	\begin{align}\label{eq:lem:rz-faithful-supp-2}
		\Gamma(U,\rr^n\sigma_*(\falb/\pi\falb))=\colim_{\lambda\in\Lambda}\Gamma(U_\lambda,\rr^n\sigma_{\lambda*}(\falb/\pi\falb))
	\end{align}
	After enlarging $\lambda_0$, we may assume that there exists $\xi_{\lambda_0}\in \Gamma(U_{\lambda_0},\rr^n\sigma_{\lambda_0*}(\falb/\pi\falb))$ whose image in $\Gamma(U,\rr^n\sigma_*(\falb/\pi\falb))$ is $\xi$. For any $\lambda\in\Lambda_{\geq\lambda_0}$, let $\xi_\lambda\in \Gamma(U_\lambda,\rr^n\sigma_{\lambda*}(\falb/\pi\falb))$ be the image of $\xi_{\lambda_0}$. 
	
	Note that for any $\lambda\in\Lambda_{\geq\lambda_0}$ the subset of $X^{\rz}_\lambda$, 
	\begin{align}\label{eq:thm:rz-continue-2}
		\supprz_\lambda(\xi_\lambda)=\{x_\lambda\in X^{\rz}_\lambda\ |\ \xi_\lambda\neq 0\trm{ in }\Gamma(U_{\lambda},f_{x_\lambda*}\rr^n\sigma_{x_\lambda*}(\falb/\pi\falb))\}
	\end{align}
	is a closed subset by \ref{lem:rz-faithful-supp}. Recall that for any indexes $\lambda\leq \mu$ in $\Lambda$, the canonical spectral map of spectral spaces (cf. \ref{prop:riemann-zariski-basic})
	\begin{align}\label{eq:thm:rz-continue-3}
		X_\mu^{\rz}\longrightarrow X_\lambda^{\rz}
	\end{align}
	induces a natural map
	\begin{align}
		\prod_{x_\lambda\in X^{\rz}_\lambda}\Gamma(U_{\lambda},f_{x_\lambda*}\rr^n\sigma_{x_\lambda*}(\falb/\pi\falb))\longrightarrow \prod_{x_\mu\in X^{\rz}_\mu}\Gamma(U_{\mu},f_{x_\mu*}\rr^n\sigma_{x_\mu*}(\falb/\pi\falb)).
	\end{align}
	Then, we see that $X_\mu^{\rz}\to X_\lambda^{\rz}$ \eqref{eq:thm:rz-continue-3} sends $\supprz_\mu(\xi_\mu)$ into $\supprz_\lambda(\xi_\lambda)$ for any indexes $\lambda\leq \mu$ in $\Lambda_{\geq\lambda_0}$. The assumption $\alpha(\xi)\neq 0$ implies that $\supprz_\lambda(\xi_\lambda)$ is non-empty for any $\lambda\in\Lambda_{\geq\lambda_0}$. Then,
	\begin{align}
		\lim_{\lambda\in\Lambda_{\geq\lambda_0}}\supprz_\lambda(\xi_\lambda)
	\end{align}
	is also non-empty as a cofiltered limit of non-empty spectral spaces with spectral transition maps (\cite[\href{https://stacks.math.columbia.edu/tag/0A2W}{0A2W}]{stacks-project}). We take $x\in \lim_{\lambda\in\Lambda_{\geq\lambda_0}}\supprz_\lambda(\xi_\lambda)$ and let $x_\lambda$ be its image in $\supprz_\lambda(\xi_\lambda)\subseteq X^{\rz}_\lambda$. Then, the image of $\xi$ in 
	\begin{align}
		\colim_{\lambda\in\Lambda}\Gamma(U_{\lambda},f_{x_\lambda*}\rr^n\sigma_{x_\lambda*}(\falb/\pi\falb))=\Gamma(U,f_{x*}\rr^n\sigma_{x*}(\falb/\pi\falb))
	\end{align}
	does not vanish by construction, where the equality above follows from $X^{\rz}_{(x)}=\lim_{\lambda\in\Lambda}X^{\rz}_{\lambda,(x_\lambda)}$ (\ref{prop:riemann-zariski-limit}), \cite[7.12]{he2024coh} and \cite[8.7.5, 8.7.7]{sga4-2}. This shows that $\beta(\xi)\neq 0$.
\end{proof}

\begin{mycor}\label{cor:rz-continue}
	Let $(X^{\triv}_\lambda\to X_\lambda)_{\lambda\in\Lambda}$ be an object in $\pro(\fal^{\mrm{open}}_{\eta\to S})$. If the Faltings cohomology of $X^{\triv}_\lambda\to X_\lambda$ is Riemann-Zariski faithful for any $\lambda\in\Lambda$, then so is $(X^{\triv}_\lambda\to X_\lambda)_{\lambda\in\Lambda}$.
\end{mycor}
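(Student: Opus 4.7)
The plan is to verify the two conditions of Definition~\ref{defn:rz-faithful} for the pro-object $(X^{\triv}_\lambda \to X_\lambda)_{\lambda \in \Lambda}$ by combining the hypothesis (the level-wise Riemann--Zariski faithfulness) with the key transfer result Theorem~\ref{thm:rz-continue}. The underlying principle is that both sides of the maps in Definition~\ref{defn:rz-faithful} are compatible with filtered colimits along $\Lambda$ via the canonical morphisms $X \to X_\lambda$, $\rz_{X_\eta}(X) \to \rz_{X_{\lambda,\eta}}(X_\lambda)$ and the formulas of \eqref{eq:fal-limit}, \eqref{eq:rz-limit}, \eqref{eq:para:limit-rz-1}, together with \cite[8.7.5, 8.7.7]{sga4-2}.

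For condition~(\ref{item:defn:rz-faithful-1}), fix a nonzero $\pi \in \ak{m}_L$, an integer $n$, a quasi-compact open $U$ of $X$, and a section $\xi \in \Gamma(U, \rr^n\sigma_*(\falb/\pi\falb))$ with $\beta(\xi)=0$, where $\beta$ is the map in the diagram \eqref{eq:thm:rz-continue-1}. By Theorem~\ref{thm:rz-continue}, this already implies $\alpha(\xi)=0$. Now by the hypothesis each level-wise comparison map
\begin{align}
\alpha_\lambda : \rr^n\sigma_{\lambda*}(\falb/\pi\falb) \longrightarrow \prod_{x_\lambda \in X_\lambda^{\rz}} f_{x_\lambda*}\rr^n\sigma_{x_\lambda*}(\falb/\pi\falb)
\end{align}
is almost injective. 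Since filtered colimits are exact (hence preserve almost injectivity, see \cite[2.2.14]{gabber2003almost} or simply by passing to $\ak{m}_L$-tensoring), the colimit map $\alpha = \colim_{\lambda}\alpha_\lambda$ is also almost injective. Therefore $\alpha(\xi)=0$ implies that $\xi$ is almost zero, which is exactly condition~(\ref{item:defn:rz-faithful-1}).

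For condition~(\ref{item:defn:rz-faithful-2}), fix $\pi \in (\zeta_p-1)\ca{O}_L$ nonzero. By hypothesis, for every $\lambda \in \Lambda$ the coboundary map associated to $0 \to \falb/\pi\falb \stackrel{\cdot\pi}{\longrightarrow} \falb/\pi^2\falb \to \falb/\pi\falb \to 0$ on $\fal^\et_{X^{\triv}_\lambda \to X_\lambda}$ fits into an almost exact sequence of $\ca{O}_{X_\lambda}$-modules
\begin{align}
\xymatrix{
0 \ar[r] & \ca{O}_{X_\lambda}/\pi\ca{O}_{X_\lambda} \ar[r] & \sigma_{\lambda*}(\falb/\pi\falb) \ar[r]^-{\delta^0_\lambda} & \rr^1\sigma_{\lambda*}(\falb/\pi\falb).
}
\end{align}
These sequences form a filtered system along $\Lambda$ (pulled back along $f_\lambda : X \to X_\lambda$), compatibly since the defining short exact sequence of sheaves on $\fal^\et_{X^{\triv}_\lambda \to X_\lambda}$ is functorial in $\lambda$. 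Applying the filtered colimit $\colim_{\lambda}f_\lambda^{-1}(-)$, which is exact and commutes with $\rr^n\sigma_{\lambda*}$ in the sense of \eqref{eq:fal-limit} and \cite[8.7.5, 8.7.7]{sga4-2}, we recover
\begin{align}
\xymatrix{
0 \ar[r] & \ca{O}_X/\pi\ca{O}_X \ar[r] & \sigma_*(\falb/\pi\falb) \ar[r]^-{\delta^0} & \rr^1\sigma_*(\falb/\pi\falb),
}
\end{align}
which is almost exact as filtered colimits preserve almost exactness. This yields condition~(\ref{item:defn:rz-faithful-2}).

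The only genuine content of the argument is Theorem~\ref{thm:rz-continue}, which was established earlier using the non-emptiness of cofiltered limits of spectral spaces (via closedness of $\supprz$ in Lemma~\ref{lem:rz-faithful-supp}). Everything else is a formal consequence of exactness of filtered colimits and the compatibility of $\sigma_*$, $\rr^n\sigma_*$, and $\rz_{X_\eta}(X)$ with limits. In particular there is no obstacle to overcome here, the proof being essentially a direct application of Theorem~\ref{thm:rz-continue} on top of the level-wise hypothesis.
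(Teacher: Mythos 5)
Your proof is correct and follows essentially the same route as the paper's: for condition (1), apply Theorem \ref{thm:rz-continue} to reduce the almost injectivity of $\beta$ to that of $\alpha$, which follows from the level-wise hypothesis together with the exactness of filtered colimits; for condition (2), simply take colimits of the level-wise almost exact sequences. The extra exposition around "$\alpha(\xi)=0$ implies $\xi$ is almost zero" is fine (it amounts to $\ker\beta\subseteq\ker\alpha$ being almost zero), and the whole argument matches the paper's one-paragraph proof.
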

\begin{proof}
	For any integer $n$, consider the canonical commutative diagram \eqref{eq:thm:rz-continue-1}. The map $\alpha$ is almost injective by assumption so that $\beta$ is also almost injective by \ref{thm:rz-continue}. This verifies the condition \ref{defn:rz-faithful}.(\ref{item:defn:rz-faithful-1}) for $X$. The condition \ref{defn:rz-faithful}.(\ref{item:defn:rz-faithful-2}) follows simply from taking colimits (\cite[8.7.5]{sga4-2}).
\end{proof}

\begin{mydefn}[{cf. \ref{defn:faltings-acyclic}}]\label{defn:locally-faltings-acyclic}
	Let $(Y_\lambda\to X_\lambda)_{\lambda\in\Lambda}$ be a directed inverse system of morphisms of coherent schemes such that $Y_\lambda\to X_\lambda^{Y_\lambda}$ is over $\eta\to S$, where $X_\lambda^{Y_\lambda}$ is the integral closure of $X_\lambda$ in $Y_\lambda$. We say that $(Y_\lambda\to X_\lambda)_{\lambda\in\Lambda}$ is \emph{locally Faltings acyclic} if the canonical morphism
	\begin{align}\label{eq:defn:locally-faltings-acyclic}
		\ca{O}_{X^Y}/p\ca{O}_{X^Y}\longrightarrow \rr\sigma_*(\falb/p\falb)
	\end{align}
	is an almost isomorphism (see \cite[5.7]{he2024coh}), where $X^Y=\lim_{\lambda\in\Lambda}X_\lambda^{Y_\lambda}$ is the cofiltered limit of locally ringed spaces (\ref{para:limit-ringed-space}) and $\sigma:(\fal^{\et}_{Y\to X},\falb)\to (X^Y,\ca{O}_{X^Y})$ is the cofiltered limit of the canonical morphisms of ringed sites $\sigma_\lambda:(\fal^{\et}_{Y_\lambda\to X_\lambda},\falb)\to (X_\lambda^{Y_\lambda},\ca{O}_{X_\lambda^{Y_\lambda}})$ (cf. \eqref{eq:sigma_lambda}).
\end{mydefn}
\begin{myrem}\label{rem:locally-faltings-acyclic}
	Notice that $\ca{O}_{X^Y}$ and $\falb$ are flat over $\ca{O}_L$. We can replace modulo $p$ in \eqref{eq:defn:locally-faltings-acyclic} by modulo any nonzero element $\pi\in \ak{m}_L$. More precisely, by the same d\'evissage arguments as in \cite[8.3]{he2024coh}, we can prove that the condition \eqref{eq:defn:locally-faltings-acyclic} is equivalent to that the canonical morphism
	\begin{align}\label{eq:rem:locally-faltings-acyclic}
		\ca{O}_{X^Y}/\pi\ca{O}_{X^Y}\longrightarrow \rr\sigma_*(\falb/\pi\falb)
	\end{align}
	is an almost isomorphism.
\end{myrem}

\begin{mylem}\label{lem:faltings-acyclic}
	Let $Y\to X$ be a morphism of coherent schemes such that $Y\to X^Y$ is over $\eta\to S$ and that $X$ is affine. 
	\begin{enumerate}
		\renewcommand{\labelenumi}{{\rm(\theenumi)}}
		\item If $Y\to X$ is locally Faltings acyclic in the sense of {\rm\ref{defn:locally-faltings-acyclic}} (regarded as a constant inverse system), then $Y\to X$ is Faltings acyclic in the sense of {\rm\ref{defn:faltings-acyclic}}.\label{item:lem:faltings-acyclic-1}
		\item If $Y\to X$ is is Faltings acyclic in the sense of {\rm\ref{defn:faltings-acyclic}} and if $Y\to X$ satisfies the condition $(\star\star)$ in {\rm\ref{para:notation-open-2}}, then $Y\to X$ is locally Faltings acyclic in the sense of {\rm\ref{defn:locally-faltings-acyclic}}.\label{item:lem:faltings-acyclic-2}
	\end{enumerate}
\end{mylem}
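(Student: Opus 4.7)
The plan for part (1) is straightforward. Since $X$ is affine and $X^Y\to X$ is integral hence affine, $X^Y=\spec(B)$ is affine, where $B=\falb(Y\to X)$. The hypothesis of local Faltings acyclicity is an almost isomorphism of complexes of sheaves of $\ca{O}_L$-modules on $X^Y$:
\begin{align*}
  \ca{O}_{X^Y}/p\ca{O}_{X^Y}\longrightarrow \rr\sigma_*(\falb/p\falb).
\end{align*}
Apply $\rr\Gamma(X^Y,-)$. Since $\ca{O}_{X^Y}/p\ca{O}_{X^Y}$ is quasi-coherent and $X^Y$ is affine, the left side computes to $B/pB$; by composition of derived functors, the right side computes to $\rr\Gamma(\fal^\et_{Y\to X},\falb/p\falb)$. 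Almost isomorphisms of complexes of $\ca{O}_L$-modules are preserved under $\rr\Gamma$ because the $\ak{m}_L$-action on the mapping cone passes directly to the cohomology sheaves and then to their global cohomology. Hence $B/pB\to \rr\Gamma(\fal^\et_{Y\to X},\falb/p\falb)$ is an almost isomorphism, which by \ref{rem:faltings-acyclic} is precisely Faltings acyclicity.

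For part (2), I would combine Faltings acyclicity with the base-change statement of \ref{cor:acyclic} afforded by $(\star\star)$. For any affine pro-\'etale $U=\spec(A')\to X=\spec(A)$, \ref{cor:acyclic} gives an almost isomorphism $H^n(\fal^\et_{Y\to X},\falb/\pi\falb)\otimes_A A'\iso H^n(\fal^\et_{U_Y\to U},\falb/\pi\falb)$. Feeding in Faltings acyclicity, the left side is almost $(B\otimes_AA')/\pi$ in degree $0$ and almost zero in higher degrees. Combined with $\falb(U_Y\to U)=B\otimes_AA'$ via \cite[3.17, 3.18]{he2024coh}, each such $U_Y\to U$ is itself Faltings acyclic. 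It remains to pass from this collection of almost-vanishing statements to the sheaf-level almost isomorphism on $X^Y$.

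To accomplish this, I would use that under $(\star\star)$ the $\ca{O}_{X}$-modules $\ak{m}_L\otimes_{\ca{O}_L}\rr^n\sigma_*(\falb/\pi\falb)$ (for $\sigma$ landing in $X_\proet$, cf.\ \ref{rem:quasi-coherent}) are quasi-coherent. Combined with the affineness of $\tau\colon X^Y\to X$ (which makes $R\tau_*$ exact on quasi-coherent sheaves) and the identification of pushforward along $\tau$ of quasi-coherent sheaves on $X^Y$ with quasi-coherent sheaves on $X$, one deduces that the analogous sheaves $\rr^n\sigma_*(\falb/\pi\falb)$ for the $\sigma$ landing in $X^Y$ are almost quasi-coherent on the affine scheme $X^Y$. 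Almost quasi-coherent sheaves on an affine scheme are determined by their global sections; by Faltings acyclicity those global sections are almost $B/\pi B$ in degree $0$ and almost zero in higher degrees, matching exactly $\ca{O}_{X^Y}/\pi\ca{O}_{X^Y}$ and zero. This yields the required almost isomorphism $\ca{O}_{X^Y}/\pi\ca{O}_{X^Y}\to \rr\sigma_*(\falb/\pi\falb)$, hence local Faltings acyclicity by \ref{rem:locally-faltings-acyclic}.

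The main obstacle is the technical reduction of almost quasi-coherence of $\rr^n\sigma_*(\falb/\pi\falb)$ from the $X_\proet$-version of \ref{rem:quasi-coherent} to the $X^Y$-version. This requires unwinding carefully how $\sigma\colon\fal^\et_{Y\to X}\to X^Y$ is built (it factors the pro-\'etale $\sigma$ to $X$ through the affine morphism $\tau\colon X^Y\to X$), so that sections on $V=\spec(B\otimes_A A')$ for pro-\'etale $U=\spec(A')\to X$ are computed by the Faltings cohomology of $U_Y\to U$. Given this compatibility, the argument reduces cleanly to the base-change result of \ref{cor:acyclic}, and the sheaf conclusion on $X^Y$ follows from affineness.
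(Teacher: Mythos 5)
Your proof of part (1) is exactly the paper's argument: apply $\rr\Gamma(X^Y,-)$ and observe that almost isomorphisms pass to global cohomology over the affine scheme $X^Y$. For part (2) you arrive at the same conclusion by essentially the same route, namely using \ref{cor:acyclic} to obtain almost quasi-coherence of $\rr^n\sigma_*(\falb/\pi\falb)$ over the affine scheme $X^Y$ and then reading off the conclusion from global sections; the paper phrases this in one line, saying that by \ref{cor:acyclic} the sheaf $\rr^n\sigma_*(\falb/p\falb)$ is canonically almost isomorphic to the quasi-coherent $\ca{O}_{X^Y}$-module associated to $H^n(\fal^\et_{Y\to X},\falb/p\falb)$, after which the claim is immediate. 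The ``main obstacle'' you flag --- relating the $X_\proet$-quasi-coherence of \ref{rem:quasi-coherent} to the sheaves on $X^Y$ --- is not treated as a separate step in the paper: \ref{cor:acyclic} already gives pro-\'etale base change for the Faltings cohomology of $U_Y\to U$, and combined with the construction of $\sigma$ landing in $X^Y$ (built from $\sigma^+(U)=(U_Y\to U)$ and the identification $\falb(U_Y\to U)=\Gamma(U^{U_Y},\ca{O}_{U^{U_Y}})$), this directly yields the almost quasi-coherent description over $X^Y$ without a detour through pointwise Faltings acyclicity of each $U_Y\to U$.
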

\begin{proof}
	(\ref{item:lem:faltings-acyclic-1})	It follows directly from taking $\rr\Gamma(X^Y,-)$ on the almost isomorphism \eqref{eq:defn:locally-faltings-acyclic}.
	
	(\ref{item:lem:faltings-acyclic-2}) Notice that for any integer $n$, $\rr^n\sigma_*(\falb/p\falb)$ is canonically almost isomorphic to the quasi-coherent $\ca{O}_{X^Y}$-module associated to $H^n(\fal^\et_{Y\to X},\falb/p\falb)$ by \ref{cor:acyclic}. On the other hand, $\ca{O}_{X^Y}/p\ca{O}_{X^Y}$ is the quasi-coherent $\ca{O}_{X^Y}$-module associated to $B/p B$, where $X^Y=\spec(B)$. Hence, we see that if $B/p B\to \rr\Gamma(\fal^\et_{Y\to X},\falb/p\falb)$ is an almost isomorphism, then so is $\ca{O}_{X^Y}/p\ca{O}_{X^Y}\to \rr\sigma_*(\falb/p\falb)$.
\end{proof}

\begin{mythm}[Valuative criterion for local Faltings acyclicity]\label{thm:val-criterion-faltings-acyclic}
	Let $(X^{\triv}_\lambda\to X_\lambda)_{\lambda\in\Lambda}$ be an object in $\pro(\fal^{\mrm{open}}_{\eta\to S})$ whose Faltings cohomology is Riemann-Zariski faithful {\rm(\ref{defn:rz-faithful})}. Assume that the following conditions hold:
	\begin{enumerate}
		\renewcommand{\labelenumi}{{\rm(\theenumi)}}
		\item For any $\lambda\in\Lambda$, $X_\lambda$ is integrally closed in $X^{\triv}_\lambda$.\label{item:thm:val-criterion-faltings-acyclic-1}
		\item For any $x\in X^{\rz}$ {\rm(\ref{para:limit-rz})}, the stalk $\ca{O}_{\rz_{X_\eta}(X),x}$ is pre-perfectoid.\label{item:thm:val-criterion-faltings-acyclic-2}
	\end{enumerate}
	Then, $(X^{\triv}_\lambda\to X_\lambda)_{\lambda\in\Lambda}$ is locally Faltings acyclic {\rm(\ref{defn:locally-faltings-acyclic})}.
\end{mythm}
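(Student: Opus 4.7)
The plan is to verify the almost isomorphism $\ca{O}_X/\pi\ca{O}_X \to \rr\sigma_*(\falb/\pi\falb)$ for some nonzero $\pi \in (\zeta_p-1)\ca{O}_L$, which by \ref{rem:locally-faltings-acyclic} is equivalent to the desired local Faltings acyclicity; note that assumption~(\ref{item:thm:val-criterion-faltings-acyclic-1}) ensures $X^Y = X$, where $Y = X^{\triv}$, since integral closures commute with the cofiltered limit along affine transitions.

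First I would establish stalk-wise Faltings acyclicity. For $x \in X^{\rz}$, set $V = \ca{O}_{\rz_{X_\eta}(X),x}$, so $X^{\rz}_{(x)} = \spec(V)$ and $X^{\rz}_{(x),\eta} = \spec(V[1/p])$. The open immersion $X^{\rz}_{(x),\eta} \to X^{\rz}_{(x)}$ trivially satisfies condition $(\star)$ of \ref{para:notation-open} with $r = 0$ (the divisor is empty on the generic fibre). Applying \ref{thm:acyclic} to the trivial object $(V[1/p] \to V)$ of $\fal^{\proet}_{Y_\infty \to X}$ with $Y_\infty = Y$, Faltings acyclicity is equivalent to the integral closure $V'$ of $V$ in $V[1/p]$ being pre-perfectoid. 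The key point is transferring pre-perfectoidness from $V$ (assumption~(\ref{item:thm:val-criterion-faltings-acyclic-2})) to $V'$: by \ref{prop:limit-rz}.(\ref{item:prop:limit-rz-5}), $\widehat V = \widehat{\ca{O}_W}$ is the completion of a valuation ring, hence integrally closed in its fraction field, which forces $V \hookrightarrow V'$ to be an almost isomorphism modulo any $\pi \in \ak{m}_L$. Since $V'$ is integrally closed in $V'[1/p] = V[1/p]$, Lemma \ref{lem:al-perfd} then yields the pre-perfectoidness of $V'$, so that $\rr^n\sigma_{x*}(\falb/\pi\falb)$ is almost zero for $n \geq 1$ while $\sigma_{x*}(\falb/\pi\falb)$ is almost $V/\pi V$.

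Second, I would globalize using the Riemann-Zariski faithfulness hypothesis. Condition \ref{defn:rz-faithful}.(\ref{item:defn:rz-faithful-1}) gives an almost injection $\rr^n\sigma_*(\falb/\pi\falb) \hookrightarrow \prod_{x \in X^{\rz}} f_{x*}\rr^n\sigma_{x*}(\falb/\pi\falb)$, whose target is almost zero for $n \geq 1$ by Step~1, so $\rr^n\sigma_*(\falb/\pi\falb)$ is almost zero for $n \geq 1$. Condition \ref{defn:rz-faithful}.(\ref{item:defn:rz-faithful-2}) then supplies the almost exact sequence $0 \to \ca{O}_X/\pi\ca{O}_X \to \sigma_*(\falb/\pi\falb) \to \rr^1\sigma_*(\falb/\pi\falb)$, and the vanishing of the right-hand term forces $\ca{O}_X/\pi\ca{O}_X \iso \sigma_*(\falb/\pi\falb)$ almost, completing the argument.

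The main obstacle is the transfer of pre-perfectoidness from $V$ to its integral closure $V'$ in Step~1: while each element of $V'/V$ is killed by some power of $p$, showing it is killed by $\ak{m}_L$ (i.e., that $V \to V'$ is an almost isomorphism) requires exploiting the valuation-ring structure of $\widehat V$ through \ref{lem:val-micro}, lifting integral equations to $\widehat V[1/p]$ and invoking the integral-closedness of $\widehat V$ in its fraction field to produce the requisite approximations. This is the valuative analog of the purity argument carried out at generic points of special fibres in \ref{cor:purity}.
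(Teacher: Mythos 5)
Your overall two-step plan — stalk-wise Faltings acyclicity, then globalization via Riemann-Zariski faithfulness — matches the paper's proof, and your second step is carried out correctly. The problem is in your first step: the ``main obstacle'' you identify, namely transferring pre-perfectoidness from $V=\ca{O}_{\rz_{X_\eta}(X),x}$ to the integral closure $V'$ of $V$ in $V[1/p]$, does not actually exist, because $V$ is already integrally closed in $V[1/p]$. To see this, use the Cartesian diagram of \ref{prop:limit-rz}.(\ref{item:prop:limit-rz-5}): $V$ is the preimage of the valuation ring $\ca{O}_W\subseteq\kappa(y)$ along the surjection $V[1/p]\twoheadrightarrow\kappa(y)$. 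If $v\in V[1/p]$ is integral over $V$, its image $\bar v\in\kappa(y)$ is integral over $\ca{O}_W$; since $\ca{O}_W$ is a valuation ring it is integrally closed in its fraction field $\kappa(y)=\ca{O}_W[1/p]$ (\ref{prop:limit-rz}.(\ref{item:prop:limit-rz-3})), so $\bar v\in\ca{O}_W$ and hence $v\in V$. Thus $U^V=\spec(V)$ and \ref{thm:acyclic} applies directly to the hypothesis that $V$ is pre-perfectoid, which is exactly what the paper does.

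Your proposed workaround for this non-obstacle is also not rigorous as stated. Knowing that $\widehat{V}=\widehat{\ca{O}_W}$ is (almost) a valuation ring, hence integrally closed in $\widehat{V}[1/p]$, does not by itself force $V\hookrightarrow V'$ to be an almost isomorphism: for $v\in V'$, mapping to the completion only shows $v\in V[1/p]\cap\widehat{V}$, and identifying that intersection with $V$ (even almost) is essentially what you are trying to prove. Moreover, \ref{lem:val-micro} assumes the $\ca{O}_L$-algebra is a valuation ring, which $V$ need not be; in the paper that lemma is used in \ref{rem:val-criterion-faltings-acyclic} to reinterpret assumption (2), not to compare $V$ with an integral closure. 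Once the spurious step is removed, your argument coincides with the paper's: apply \ref{thm:acyclic} to get Faltings acyclicity of $X^{\rz}_{(x),\eta}\to X^{\rz}_{(x)}$, upgrade to local Faltings acyclicity via \ref{lem:faltings-acyclic}.(\ref{item:lem:faltings-acyclic-2}) (condition $(\star)$ holds with $r=0$), then use the two clauses of \ref{defn:rz-faithful} to conclude.
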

\begin{proof}
	Since $\ca{O}_{\rz_{X_\eta}(X),x}$ is pre-perfectoid, $X^{\rz}_{(x),\eta}\to X^{\rz}_{(x)}$ is Faltings acyclic by \ref{thm:acyclic}. Thus, it is locally Faltings acyclic by \ref{lem:faltings-acyclic}.(\ref{item:lem:faltings-acyclic-2}). Then, for any nonzero element $\pi\in\ak{m}_L$ and any integer $n>0$, $\rr^n\sigma_{x*}(\falb/p\falb)$ is almost zero, and thus so is $\rr^n\sigma_*(\falb/p\falb)$ by \ref{defn:rz-faithful}.(\ref{item:defn:rz-faithful-1}). Then, $\ca{O}_X/p \ca{O}_X\to \rr\sigma_*(\falb/p\falb)$ is an almost isomorphism by \ref{defn:rz-faithful}.(\ref{item:defn:rz-faithful-2}). Notice that $X_\lambda=X_\lambda^{Y_\lambda}$ by assumption so that $X=X^Y$ by taking cofiltered limits. Thus, $(X^{\triv}_\lambda\to X_\lambda)_{\lambda\in\Lambda}$ is locally Faltings acyclic by definition \ref{defn:locally-faltings-acyclic}.
\end{proof}

\begin{myrem}\label{rem:val-criterion-faltings-acyclic}
	In \ref{thm:val-criterion-faltings-acyclic}, the condition (\ref{item:thm:val-criterion-faltings-acyclic-2}) is equivalent to the following condition: for any point $y\in X^{\triv}$, its residue field $\kappa(y)$ is a pre-perfectoid field with respect to any valuation ring $V$ of height $1$ extension of $\ca{O}_L$ with fraction field $V[1/p]=\kappa(y)$ and $\spec(V)\to\spec(\ca{O}_L)$ factors through $X$. Indeed, this follows directly from the description of the stalks (\ref{prop:limit-rz}.(\ref{item:prop:limit-rz-5}) and \ref{lem:val-micro}).
\end{myrem}

\begin{myprop}[{\cite[6.5.13.(\luoma{2})]{gabber2003almost}}]\label{prop:perfd-diff}
	Let $B$ be a (resp. almost) flat $\ca{O}_L$-algebra that is (resp. almost) pre-perfectoid. Then, the canonical morphism of cotangent complexes
	\begin{align}
		\bb{L}_{B/\ca{O}_L}\longrightarrow \bb{L}_{B[1/p]/L}
	\end{align}
	is an (resp. almost) isomorphism in the derived category of simplicial $B$-modules.
\end{myprop}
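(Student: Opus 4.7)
The plan is to reduce, via two transitivity triangles for the cotangent complex, to the essential computation of Gabber-Ramero \cite[6.5.13.(\luoma{2})]{gabber2003almost}, which is a Frobenius argument modulo $p$.

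Since $\ca{O}_L \to L$ and $B \to B[1/p]$ are localizations, their cotangent complexes vanish: $\bb{L}_{L/\ca{O}_L}\simeq 0$ and $\bb{L}_{B[1/p]/B}\simeq 0$. Applying the transitivity triangles to the composites $\ca{O}_L\to L\to B[1/p]$ and $\ca{O}_L\to B\to B[1/p]$ respectively, we obtain canonical isomorphisms in $\dd(B\module)$,
\begin{align*}
\bb{L}_{B/\ca{O}_L}\otimes_B^L B[1/p] \iso \bb{L}_{B[1/p]/\ca{O}_L} \iso \bb{L}_{B[1/p]/L}.
\end{align*}
Hence the canonical morphism $\bb{L}_{B/\ca{O}_L}\to \bb{L}_{B[1/p]/L}$ is identified with the localization map $\bb{L}_{B/\ca{O}_L}\to \bb{L}_{B/\ca{O}_L}[1/p]$, and it suffices to show that multiplication by $p$ acts almost invertibly on $\bb{L}_{B/\ca{O}_L}$. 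Taking the cone of $p$, this amounts to proving that $\bb{L}_{B/\ca{O}_L}\otimes_{\bb{Z}}^L\bb{Z}/p$ is almost zero in $\dd(B\module)$, which by base change and the (almost) flatness of $B$ over $\ca{O}_L$ (see \ref{para:notation-perfd}) is equivalent to $\bb{L}_{(B/pB)/(\ca{O}_L/p)}$ being almost zero.

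At this point the argument is the standard Frobenius argument for (almost) perfect algebras in characteristic $p$. By pre-perfectoidness, for a suitable nonzero $\pi\in\ak{m}_L$ with $p\in\pi^p\ca{O}_L$, the Frobenius induces an almost isomorphism $B/\pi B\iso B/\pi^p B$. Pushing this through, the relative Frobenius on $B/pB$ over $\ca{O}_L/p$ becomes an almost isomorphism. On the one hand, the endomorphism of $\bb{L}_{(B/pB)/(\ca{O}_L/p)}$ induced by Frobenius is then an almost isomorphism by functoriality; on the other hand, it factors through multiplication by $p$ on the cotangent complex, hence is the zero map modulo $p$. Comparing these forces $\bb{L}_{(B/pB)/(\ca{O}_L/p)}$ to be almost zero. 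This is precisely the computation carried out in \cite[6.5.13.(\luoma{2})]{gabber2003almost}, which we therefore invoke directly.

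The main obstacle lies in this last step: making the Frobenius argument work cleanly in the almost setting requires careful bookkeeping of the mismatch between $B/\pi$ and $B/\pi^p$, and of the difference between the absolute and relative Frobenius on $B/pB$ over $\ca{O}_L/p$. These technicalities are already handled in Gabber-Ramero's treatise, so our proof reduces to performing the transitivity reductions above and citing their result.
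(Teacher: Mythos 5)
Your transitivity-triangle reduction to showing that $p$ acts almost invertibly on $\bb{L}_{B/\ca{O}_L}$, and the subsequent passage to $\bb{L}_{(B/pB)/(\ca{O}_L/p)}$ with the Frobenius argument, are sound in outline — but they essentially re-derive what \cite[6.5.13.(\luoma{2})]{gabber2003almost} already proves. The result you cite is stated (and the paper uses it) as giving, for a \emph{flat} $\ca{O}_L$-algebra $B$ on which Frobenius induces an honest isomorphism $B/p_1 B\iso B/pB$, the full conclusion that $\bb{L}_{B/\ca{O}_L}\to\bb{L}_{B[1/p]/L}$ is an isomorphism. So the only work remaining after the citation is the one step you flag as the ``main obstacle'' and then defer: reducing the almost case to the honest flat case.

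That reduction is precisely what the paper's proof supplies and is the idea your proposal is missing. The paper replaces $B$ by $(B^\al)_*=\ho_{\ca{O}_L}(\ak{m}_L,B)$: by \cite[5.5, 5.12]{he2024coh} this is \emph{genuinely} flat over $\ca{O}_L$ and $B\to(B^\al)_*$ is an almost isomorphism (so the conclusion is unchanged), and by \cite[5.22]{he2024coh} the Frobenius then gives a genuine isomorphism modulo $p_1$ and $p$. After this replacement the hypotheses of \cite[6.5.13.(\luoma{2})]{gabber2003almost} hold on the nose and the conclusion is immediate. Without this device, your base-change step $\bb{L}_{B/\ca{O}_L}\otimes_{\bb{Z}}^{\mrm{L}}\bb{Z}/p\simeq\bb{L}_{(B/pB)/(\ca{O}_L/p)}$ needs honest Tor-independence (i.e.\ $p$-torsion-freeness of $B$), which almost flatness gives only up to almost isomorphism; one would then have to re-run Gabber-Ramero's Frobenius argument entirely in the almost category — exactly the ``careful bookkeeping'' you wave away. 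A minor imprecision as well: the Frobenius does not ``factor through multiplication by $p$'' on $\bb{L}_{(B/pB)/(\ca{O}_L/p)}$; the relevant fact is that the relative Frobenius induces the zero map on the cotangent complex in characteristic $p$, which together with it being an almost isomorphism forces almost-vanishing.
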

\begin{proof}
	Recall that the canonical morphism $B\to (B^\al)_*=\ho_{\ca{O}_L}(\ak{m}_L,B)$ is an almost isomorphism (\cite[5.5]{he2024coh}). Moreover, $(B^\al)_*$ is flat over $\ca{O}_L$ (\cite[5.12]{he2024coh}). After replacing $B$ by $(B^\al)_*$, we may assume that $B=(B^\al)_*$. Thus, the Frobenius induces an isomorphism $B/p_1B\to B/pB$ by \cite[5.22]{he2024coh}, where $p_1$ is a $p$-th root of $p$ up to a unit (\cite[5.4]{he2024coh}). In particular, the assumptions of \cite[6.5.13.(\luoma{2})]{gabber2003almost} on the flat homomorphism $\ca{O}_L\to B$ are satisfied so that the conclusion follows immediately.
\end{proof}

\begin{myprop}\label{prop:val-diff}
	Let $\ca{O}_F$ be a valuation ring of height $1$ extension of $\ca{O}_L$. Then, the canonical morphism of modules of differentials 
	\begin{align}
		\Omega^1_{\ca{O}_F/\ca{O}_L}\longrightarrow \Omega^1_{F/L}
	\end{align}
	is injective. Moreover, it is almost surjective if and only if $F$ is a pre-perfectoid field, and in this case we have $\Omega^1_{\ca{O}_F/\ca{O}_L}= \Omega^1_{F/L}$.
\end{myprop}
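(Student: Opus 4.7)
The plan is to combine torsion-freeness of the module of differentials for valuation ring extensions (after Gabber-Ramero) with Proposition \ref{prop:perfd-diff}, and to invoke Gabber-Ramero's explicit structural description of $\Omega^1_{\ca{O}_F/\ca{O}_L}$ for the harder converse direction.

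For the injectivity claim, I would appeal to the Gabber-Ramero result in \cite{gabber2003almost} asserting that for an extension $\ca{O}_L\to\ca{O}_F$ of valuation rings (both of height $1$, characteristic $0$), the module $\Omega^1_{\ca{O}_F/\ca{O}_L}$ is $\ca{O}_F$-flat, and in particular torsion-free. Since $\Omega^1_{F/L}$ is the localization $F\otimes_{\ca{O}_F}\Omega^1_{\ca{O}_F/\ca{O}_L}=\Omega^1_{\ca{O}_F/\ca{O}_L}[1/p]$, the natural map is just localization at the regular element $p$, hence injective on the torsion-free module.

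For the forward direction ($F$ pre-perfectoid $\Rightarrow$ almost surjective): the valuation ring $\ca{O}_F$ is automatically flat over $\ca{O}_L$ (torsion-free over a valuation ring), and it is a pre-perfectoid $\ca{O}_L$-algebra exactly when $F$ is a pre-perfectoid field. Applying Proposition \ref{prop:perfd-diff} to $B=\ca{O}_F$ gives an honest isomorphism of cotangent complexes $\bb{L}_{\ca{O}_F/\ca{O}_L}\iso\bb{L}_{F/L}$; taking $H^0$ yields an equality $\Omega^1_{\ca{O}_F/\ca{O}_L}=\Omega^1_{F/L}$, which is certainly almost surjective. This simultaneously proves the forward implication and the ``in particular'' clause.

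The converse direction, almost surjectivity $\Rightarrow$ pre-perfectoidness, is where the main work lies. Here the plan is to exploit Gabber-Ramero's explicit computation of $\Omega^1_{\ca{O}_F/\ca{O}_L}$ for extensions of height-$1$ valuation rings, which expresses this module functorially in terms of the value-group extension $\Gamma_L\to\Gamma_F$ together with the residue-field extension $k_L\to k_F$. The almost surjectivity of $\Omega^1_{\ca{O}_F/\ca{O}_L}\to\Omega^1_{\ca{O}_F/\ca{O}_L}[1/p]$ says precisely that the cokernel is almost zero, and in Gabber-Ramero's description this cokernel decomposes into two contributions: a ``residual'' part detecting failure of $k_F$ to be perfect, and a ``log'' part detecting failure of $\Gamma_F$ to be $p$-divisible modulo the valuation of $p$. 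The almost vanishing forces each contribution to be almost zero separately, which translates into $k_F$ being perfect and $\Gamma_F/v(p)\bb{Z}$ being $p$-divisible—equivalently, into surjectivity of the Frobenius on $\ca{O}_F/p\ca{O}_F$. The main obstacle is thus the careful bookkeeping required to extract these two structural conditions on $\ca{O}_F$ cleanly from the almost-surjectivity hypothesis, using the precise form of Gabber-Ramero's structure theorem rather than just its qualitative consequences.
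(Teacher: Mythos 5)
Your forward implication is fine and matches the paper: $\ca{O}_F$ is flat over $\ca{O}_L$, so \ref{prop:perfd-diff} (or equivalently, a direct application of \cite[6.5.13.(\luoma{2})]{gabber2003almost} to $\ca{O}_L\to\ca{O}_F$) gives $\bb{L}_{\ca{O}_F/\ca{O}_L}\iso\bb{L}_{F/L}$, and taking $H_0$ yields the equality $\Omega^1_{\ca{O}_F/\ca{O}_L}=\Omega^1_{F/L}$.

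Your injectivity argument, however, has a genuine gap. There is no Gabber-Ramero theorem asserting that $\Omega^1_{W/V}$ is flat or torsion-free for an arbitrary extension of height-$1$ valuation rings of residue characteristic $p$: the example $V=\bb{Z}_p$, $W=\bb{Z}_p[p^{1/\ell}]$ for a prime $\ell\neq p$ already gives $\Omega^1_{W/V}\cong W/(p^{(\ell-1)/\ell})$, which is torsion. What saves the statement in the proposition is precisely that $L$ is fixed to be pre-perfectoid throughout the section; torsion-freeness of $\Omega^1_{\ca{O}_F/\ca{O}_L}$ is then a consequence of that hypothesis, not a general fact about valuation rings. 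The paper derives it from the \emph{forward direction} applied to the algebraic closure $\overline{F}$: since $\overline{F}$ is pre-perfectoid, $\Omega^1_{\ca{O}_{\overline{F}}/\ca{O}_L}=\Omega^1_{\overline{F}/L}$ is torsion-free, and since $\overline{F}/F$ is separable, $H_1(\bb{L}_{\ca{O}_{\overline{F}}/\ca{O}_F})=0$ so the transitivity sequence embeds $\ca{O}_{\overline{F}}\otimes_{\ca{O}_F}\Omega^1_{\ca{O}_F/\ca{O}_L}$ into $\Omega^1_{\ca{O}_{\overline{F}}/\ca{O}_L}$; faithful flatness of $\ca{O}_F\to\ca{O}_{\overline{F}}$ then gives the result. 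Your argument would need to be replaced by something like this; you cannot get it for free.

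Your converse direction is only an outline, and the route you propose through Gabber-Ramero's explicit structure theorem for $\Omega^1$ of a valuation-ring extension (decomposing into residue-field and value-group contributions) is more work than necessary. The paper bypasses the structure theorem entirely: given almost surjectivity, the map $\Omega^1_{\ca{O}_F/\ca{O}_L}\to\Omega^1_{F/L}$ is an almost isomorphism by the just-proven injectivity, so multiplication by $p$ on $\Omega^1_{\ca{O}_F/\ca{O}_L}$ is almost surjective. Feeding this together with the almost $p$-divisibility of $\Omega^1_{\ca{O}_L/\bb{Z}}$ (which holds because $L$ is pre-perfectoid, again \cite[6.6.6]{gabber2003almost}) into the conormal sequence $\ca{O}_F\otimes_{\ca{O}_L}\Omega^1_{\ca{O}_L/\bb{Z}}\to\Omega^1_{\ca{O}_F/\bb{Z}}\to\Omega^1_{\ca{O}_F/\ca{O}_L}\to 0$ shows that $\Omega^1_{\ca{O}_F/\bb{Z}}$ is almost $p$-divisible, and \cite[6.6.6]{gabber2003almost} converts this directly into $F$ being pre-perfectoid. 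This avoids the bookkeeping you flagged as the main obstacle, and you should adopt it.
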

\begin{proof}
	Assume firstly that $F$ is a pre-perfectoid field. Then, we can apply \cite[6.5.13.(\luoma{2})]{gabber2003almost} to the flat homomorphism $\ca{O}_L\to \ca{O}_F$, since the Frobenius endomorphisms on $\ca{O}_L/p\ca{O}_L$ and $\ca{O}_F/p\ca{O}_F$ are surjective by definition (\ref{para:notation-perfd}) and there exists a $p$-th root $p_1$ of $p$ up to a unit (\cite[5.4]{he2024coh}). Therefore, the canonical morphism $\bb{L}_{\ca{O}_F/\ca{O}_L}\to \bb{L}_{\ca{O}_F[1/p]/\ca{O}_L[1/p]}=\bb{L}_{F/L}$ is an isomorphism. Taking the $0$-th homology groups, we get $\Omega^1_{\ca{O}_F/\ca{O}_L}=\Omega^1_{F/L}$. 
	
	In general, let $\overline{F}$ be an algebraic closure of $F$. Then, $\overline{F}$ is a pre-perfectoid field. As $\overline{F}$ is separable over $F$, $H_n(\bb{L}_{\ca{O}_{\overline{F}}/\ca{O}_F})=0$ for any integer $n>0$ by \cite[6.3.32]{gabber2003almost}. Thus, there is a canonical exact sequence
	\begin{align}
		0=H_1(\bb{L}_{\ca{O}_{\overline{F}}/\ca{O}_F})\longrightarrow \ca{O}_{\overline{F}}\otimes_{\ca{O}_F}\Omega^1_{\ca{O}_F/\ca{O}_L} \longrightarrow \Omega^1_{\ca{O}_{\overline{F}}/\ca{O}_L}\longrightarrow \Omega^1_{\ca{O}_{\overline{F}}/\ca{O}_F}\longrightarrow 0.
	\end{align}
	In particular, $\Omega^1_{\ca{O}_F/\ca{O}_L}$ is $p$-torsion-free as a submodule of $\Omega^1_{\ca{O}_{\overline{F}}/\ca{O}_L}=\Omega^1_{\overline{F}/L}$, where the equality follows from the discussion above. This proves the first statement.
	
	It remains to check that if $\Omega^1_{\ca{O}_F/\ca{O}_L}\to \Omega^1_{F/L}$ is almost surjective then $F$ is a pre-perfectoid field. Indeed, we see that $\Omega^1_{\ca{O}_F/\ca{O}_L}\to \Omega^1_{F/L}$ is an almost isomorphism by the discussion above. Consider the morphism of exact sequences given by multiplication by $p$,
	\begin{align}
		\xymatrix{
			\ca{O}_F\otimes_{\ca{O}_L}\Omega^1_{\ca{O}_L/\bb{Z}}\ar[d]^-{\cdot p}\ar[r]&\Omega^1_{\ca{O}_F/\bb{Z}}\ar[r]\ar[d]^-{\cdot p}&\Omega^1_{\ca{O}_F/\ca{O}_L}\ar[r]\ar[d]^-{\cdot p}&0\\
			\ca{O}_F\otimes_{\ca{O}_L}\Omega^1_{\ca{O}_L/\bb{Z}}\ar[r]&\Omega^1_{\ca{O}_F/\bb{Z}}\ar[r]&\Omega^1_{\ca{O}_F/\ca{O}_L}\ar[r]&0
		}
	\end{align}
	where the left vertical arrow is surjective as $L$ is a pre-perfectoid field (\cite[6.6.6]{gabber2003almost}), the right vertical arrow is an almost isomorphism by assumption. Thus, the middle vertical arrow is almost surjective. In other words, $\Omega^1_{\ca{O}_F/\bb{Z}}$ is almost $p$-divisible so that $F$ is a pre-perfectoid field (\cite[6.6.6]{gabber2003almost}).
\end{proof}

\begin{mycor}\label{cor:val-diff}
	Let $B$ be an $\ca{O}_L$-algebra. Assume that $\Omega^1_{B/\ca{O}_L}\to \Omega^1_{B/\ca{O}_L}[1/p]$ is almost surjective (e.g., when $p\cdot \Omega^1_{B/\ca{O}_L}\to \Omega^1_{B/\ca{O}_L}$ is almost surjective). Then, for any valuation ring $V$ of height $1$ extension of $\ca{O}_L$ with an $\ca{O}_L$-homomorphism $B\to V$ such that $V[1/p]$ is a localization of $B[1/p]$, the valuation field $V[1/p]$ is a pre-perfectoid field.
\end{mycor}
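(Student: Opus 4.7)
The plan is to reduce the statement to Proposition \ref{prop:val-diff} by proving that $\Omega^1_{V/\ca{O}_L}\to\Omega^1_{V[1/p]/L}$ is almost surjective, and to obtain this by base change from the hypothesis on $B$.

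First, since $V[1/p]$ is a localization of $B[1/p]$, the standard localization formula for differentials gives
\begin{align*}
\Omega^1_{V[1/p]/L} \;=\; V[1/p]\otimes_{B[1/p]}\Omega^1_{B[1/p]/L} \;=\; V[1/p]\otimes_B \Omega^1_{B/\ca{O}_L},
\end{align*}
where we use $\Omega^1_{B[1/p]/L}=\Omega^1_{B/\ca{O}_L}[1/p]$. Since $V[1/p]$ is also the localization of $V$ at $p$, we likewise have $\Omega^1_{V[1/p]/L}=V[1/p]\otimes_V\Omega^1_{V/\ca{O}_L}$. Putting these together, the canonical commutative square
\begin{align*}
\xymatrix{
\Omega^1_{B/\ca{O}_L}\otimes_B V \ar[r]\ar[d] & \Omega^1_{B/\ca{O}_L}[1/p]\otimes_B V \ar@{=}[d]\\
\Omega^1_{V/\ca{O}_L}\ar[r] & \Omega^1_{V[1/p]/L}
}
\end{align*}
shows that the composite $\Omega^1_{B/\ca{O}_L}\otimes_B V \to \Omega^1_{V[1/p]/L}$ is the base change along $B\to V$ of the assumed almost surjection $\Omega^1_{B/\ca{O}_L}\to \Omega^1_{B/\ca{O}_L}[1/p]$. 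Since tensor product is right exact and the cokernel of the top map is annihilated by $\ak{m}_L$, the top map remains almost surjective after base change; consequently the composite is almost surjective, which forces the lower horizontal arrow $\Omega^1_{V/\ca{O}_L}\to\Omega^1_{V[1/p]/L}$ to be almost surjective as well.

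Finally, applying Proposition \ref{prop:val-diff} to the valuation ring $V$ of height $1$ extension of $\ca{O}_L$, the almost surjectivity of $\Omega^1_{V/\ca{O}_L}\to\Omega^1_{V[1/p]/L}$ is exactly the criterion for $V[1/p]$ to be a pre-perfectoid field, which completes the proof. For the parenthetical ``e.g.'' remark, one only needs to observe that if multiplication by $p$ on $\Omega^1_{B/\ca{O}_L}$ is almost surjective, then for every $\omega/p^n\in\Omega^1_{B/\ca{O}_L}[1/p]$ and every $\epsilon\in\ak{m}_L$ one can iteratively divide $\epsilon\omega$ by $p$ up to an almost controlled error, producing a lift of $\epsilon^{n+1}\omega/p^n$ in $\Omega^1_{B/\ca{O}_L}$; since $\ak{m}_L=\ak{m}_L^{n+1}$ (as $L$ is of height $1$ with non-discrete valuation), this gives almost surjectivity of $\Omega^1_{B/\ca{O}_L}\to \Omega^1_{B/\ca{O}_L}[1/p]$. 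No serious obstacle arises: the only mildly delicate point is keeping track of the almost setting through right-exact base change, but this is immediate once the hypothesis is rewritten as the vanishing of $\ak{m}_L$ on the cokernel.
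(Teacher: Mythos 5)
Your proof is correct and takes essentially the same approach as the paper: both reduce to showing $\Omega^1_{V/\ca{O}_L}\to\Omega^1_{V/\ca{O}_L}[1/p]$ is almost surjective and then apply Proposition \ref{prop:val-diff}, with the key point in both being $\Omega^1_{V[1/p]/B[1/p]}=0$ (the paper phrases this via the conormal exact sequence for $\ca{O}_L\to B\to V$, while you phrase it via the localization formula for differentials, but these are two ways of stating the same fact). Your treatment of the parenthetical ``e.g.'' remark is also fine.
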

\begin{proof}
	Consider the canonical morphism of exact sequences
	\begin{align}
		\xymatrix{
			V\otimes_B\Omega^1_{B/\ca{O}_L}\ar[d]\ar[r]&\Omega^1_{V/\ca{O}_L}\ar[r]\ar[d]&\Omega^1_{V/B}\ar[r]\ar[d]&0\\
			V\otimes_B\Omega^1_{B/\ca{O}_L}[1/p]\ar[r]&\Omega^1_{V/\ca{O}_L}[1/p]\ar[r]&\Omega^1_{V/B}[1/p]\ar[r]&0
		}
	\end{align}
	where the left vertical arrow is almost surjective by assumption. Notice that $\Omega^1_{V/B}[1/p]=\Omega^1_{V[1/p]/B[1/p]}=0$, since $V[1/p]$ is a localization of $B[1/p]$. Hence, the middle vertical arrow $\Omega^1_{V/\ca{O}_L}\to\Omega^1_{V/\ca{O}_L}[1/p]$ is also almost surjective. Then, the conclusion follows from \ref{prop:val-diff}.
\end{proof}

\begin{mylem}[{cf. \cite[7.4.13]{gabber2003almost}}]\label{lem:diff-criterion-faltings-acyclic}
	Let $(X^{\triv}_\lambda\to X_\lambda)_{\lambda\in\Lambda}$ be an object in $\pro(\fal^{\mrm{open}}_{\eta\to S})$. Assume that $\Omega^1_{X/S}\to \Omega^1_{X/S}[1/p]$ is almost surjective, where $\Omega^1_{X/S}$ (resp. $\Omega^1_{X/S}[1/p]$) is the filtered colimit over $\lambda\in\Lambda$ of the pullback of $\Omega^1_{X_\lambda/S}$ (resp. $\Omega^1_{X_\lambda/S}[1/p]$) to $X$. Then, for any $x\in X^{\rz}$, the stalk $\ca{O}_{\rz_{X_\eta}(X),x}$ is pre-perfectoid.
\end{mylem}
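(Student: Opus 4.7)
The strategy is to fix $x \in X^{\rz}$ and, via the homeomorphism \eqref{eq:para:limit-rz-5}, identify it with an element $(y \to W)$ of $\val_{X_\eta}(X) \cap \spa(X^{\triv},X)$. By \ref{prop:limit-rz}.(\ref{item:prop:limit-rz-3}) and \ref{prop:limit-rz}.(\ref{item:prop:limit-rz-5}), $\ca{O}_W$ is then an $\ca{O}_L$-valuation ring with fraction field $\ca{O}_W[1/p] = \kappa(y)$, the structural map $\ca{O}_L \to \ca{O}_W$ is an extension of valuation rings, and $\ca{O}_{\rz_{X_\eta}(X),x}/\pi\ca{O}_{\rz_{X_\eta}(X),x} = \ca{O}_W/\pi\ca{O}_W$ for every nonzero $\pi \in \ak{m}_L$. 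From the Cartesian square \eqref{eq:prop:limit-rz-2} I would deduce that $\ca{O}_{\rz_{X_\eta}(X),x}$ is integrally closed in $\ca{O}_{\rz_{X_\eta}(X),x}[1/p]$ (as $\ca{O}_W$ is integrally closed in $\kappa(y)$) and flat over $\ca{O}_L$ (being torsion-free via the injection $\ca{O}_L \hookrightarrow \ca{O}_W$). By \ref{lem:al-perfd}, the whole statement then reduces to showing that $\ca{O}_W$ is pre-perfectoid.

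I would next apply \ref{lem:val-micro} to $\ca{O}_W$: either it is a field extension of $L$ (in which case $\ca{O}_W/\pi = 0$ and pre-perfectoidness is vacuous), or $V := \ca{O}_{W,\sqrt{p\ca{O}_W}}$ is a valuation ring of height $1$ extension of $\ca{O}_L$ and $\ca{O}_W \to V$ is an almost isomorphism. In the nontrivial case, $\ca{O}_W$ is a flat $\ca{O}_L$-algebra integrally closed in its fraction field, so pre-perfectoidness of $V$ transfers mod-$\pi$ to $\ca{O}_W$ in the almost sense and is upgraded to pre-perfectoidness again by \ref{lem:al-perfd}. By \ref{prop:val-diff}, it suffices now to show that $\Omega^1_{V/\ca{O}_L} \to \Omega^1_{V/\ca{O}_L}[1/p]$ is almost surjective, i.e., that $V[1/p] = \kappa(y)$ is a pre-perfectoid field.

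To obtain this I would invoke \ref{cor:val-diff}. Let $x' \in X$ be the image of $x$ under $\rz_{X_\eta}(X) \to X$, and let $\ak{p}_y \subseteq \ca{O}_{X,x'}$ be the prime corresponding to $y$; since $y \in X_\eta$ we have $p \notin \ak{p}_y$. Set $B := \ca{O}_{X,x'}/\ak{p}_y$, a local domain with fraction field $\kappa(y)$. The chain $\ca{O}_{X,x'} \to \ca{O}_{\rz_{X_\eta}(X),x} \to \ca{O}_W \to V$ has kernel exactly $\ak{p}_y$ (its target sits inside $\kappa(y)$), yielding an injective $\ca{O}_L$-algebra map $B \hookrightarrow V$ with $V[1/p] = \mrm{Frac}(B)$ a localization of $B[1/p]$. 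By the commutation of $\Omega^1$ with filtered colimits of algebras, the stalk of $\Omega^1_{X/S}$ at $x'$ identifies with $\Omega^1_{\ca{O}_{X,x'}/\ca{O}_L}$, which is almost $p$-divisible by hypothesis; the right-exact cotangent sequence
\begin{align*}
\ak{p}_y/\ak{p}_y^2 \longrightarrow B \otimes_{\ca{O}_{X,x'}} \Omega^1_{\ca{O}_{X,x'}/\ca{O}_L} \longrightarrow \Omega^1_{B/\ca{O}_L} \longrightarrow 0
\end{align*}
then transports this almost $p$-divisibility to $\Omega^1_{B/\ca{O}_L}$. An application of \ref{cor:val-diff} to $B \hookrightarrow V$ will deliver the pre-perfectoidness of $V[1/p] = \kappa(y)$, and hence of $\ca{O}_{\rz_{X_\eta}(X),x}$.

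The subtle point is the choice of $B$: the naive candidate $B = \ca{O}_{X,x'}$ does not meet the hypothesis of \ref{cor:val-diff}, because $\kappa(y)$ is the residue field of $\ca{O}_{X,x'}$ at $\ak{p}_y$, not a localization of $\ca{O}_{X,x'}[1/p]$. Passing to the quotient $\ca{O}_{X,x'}/\ak{p}_y$ turns this residue field into a fraction field (hence into a localization), while the surjectivity in the cotangent sequence for a surjection of rings guarantees that the almost $p$-divisibility of differentials descends from the original stalk to $B$.
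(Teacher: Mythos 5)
Your proof is correct and follows essentially the same route as the paper's. Both arguments hinge on the same three steps: (i) reduce the pre-perfectoidness of $\ca{O}_{\rz_{X_\eta}(X),x}$ to the pre-perfectoidness of the residue field $\kappa(y)=V[1/p]$ of some height-$1$ valuation ring $V$ extension of $\ca{O}_L$ factoring through $X$; (ii) build a subdomain $B\subseteq V$ of the form "(stalk of $\ca{O}_X$ at a point)$/(\text{prime cutting out }y)$" whose fraction field is $\kappa(y)$, and use the right-exact cotangent sequence to transfer the hypothesis on $\Omega^1_{X/S}$ to an almost-surjectivity statement for $\Omega^1_{B/\ca{O}_L}\to\Omega^1_{B/\ca{O}_L}[1/p]$; (iii) conclude by \ref{cor:val-diff}. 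The only noteworthy deviations are cosmetic. First, where the paper simply cites the already-proved equivalence in \ref{rem:val-criterion-faltings-acyclic} (which in turn rests on \ref{prop:limit-rz}.(\ref{item:prop:limit-rz-5}) and \ref{lem:val-micro}), you re-derive that equivalence inline, including the passage through $\ca{O}_W$ and the chain $\ca{O}_W\to V=\ca{O}_{W,\sqrt{p\ca{O}_W}}$ via \ref{lem:val-micro} and \ref{lem:al-perfd}; this is logically identical but longer. Second, you take $B=\ca{O}_{X,x'}/\ak{p}_y$ with $x'$ the image of the closed point of $W$, whereas the paper takes the stalk at the image $z$ of the closed point of $\spec(V)$; when $W$ has height $\ge 2$ the point $z$ is a proper generalization of $x'$, so the two rings differ, the paper's being a localization of yours. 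Both satisfy the hypotheses of \ref{cor:val-diff} (in particular the almost-surjectivity on $\Omega^1$ propagates to every stalk by exactness of stalks), so either choice works. Two small imprecisions worth noting: "$\ca{O}_L\to\ca{O}_W$ is an extension of valuation rings" is only true in the non-field case, which you later handle separately; and "$\Omega^1_{\ca{O}_{X,x'}/\ca{O}_L}$ is almost $p$-divisible" should rather be "the map to $\Omega^1_{\ca{O}_{X,x'}/\ca{O}_L}[1/p]$ is almost surjective", as stalk-exactness is what carries the sheaf-level hypothesis to the point.
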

\begin{proof}
	It suffices to show that for any point $y\in X^{\triv}$ and any valuation ring $V$ of height $1$ extension of $\ca{O}_L$ with fraction field $V[1/p]=\kappa(y)$ such that $\spec(V)\to \spec(\ca{O}_L)$ factors through $X$, the residue field $\kappa(y)$ is a pre-perfectoid field with respect to $V$ (see \ref{rem:criterion}). For any $\lambda\in\Lambda$, let $y_\lambda$ be the image of $y$ in $X^{\triv}_\lambda$, $z_\lambda$ the image of the closed point $z$ of $\spec(V)$ in $X_\lambda$, $\ak{p}_\lambda$ the prime ideal of the local ring $\ca{O}_{X_\lambda,z_\lambda}$ corresponding to $y_\lambda$. Then, we see that $B=\colim_{\lambda\in\Lambda}\ca{O}_{X_\lambda,z_\lambda}/\ak{p}_\lambda$ is a domain with fraction field $ \colim_{\lambda\in\Lambda}\kappa(y_\lambda)=\kappa(y)=V[1/p]$.
	
	We claim that  $\Omega^1_{B/\ca{O}_L}\to\Omega^1_{B/\ca{O}_L}[1/p]$ is almost surjective. Indeed, for any $\lambda\in\Lambda$, let $g_\lambda:\spec(\ca{O}_{X_\lambda,z_\lambda}/\ak{p}_\lambda)\to X_\lambda$ be the canonical morphism of schemes. Then, the canonical morphism of $\ca{O}_{X_\lambda,z_\lambda}/\ak{p}_\lambda$-modules $g_\lambda^*\Omega^1_{X_\lambda/S}\to \Omega^1_{(\ca{O}_{X_\lambda,z_\lambda}/\ak{p}_\lambda)/\ca{O}_L}$ is surjective. Let $g:\spec(B)\to X$ be the cofiltered limit of $(g_\lambda)_{\lambda\in\Lambda}$ as morphisms of locally ringed spaces. 
	\begin{align}
		\xymatrix{
			\spec(V)\ar[r]&\spec(B)\ar[r]^-{g}\ar[d]&X\ar[d]&\\
			&\spec(\ca{O}_{X_\lambda,z_\lambda}/\ak{p}_\lambda)\ar[r]^-{g_\lambda}& X_\lambda\ar[r]&S=\spec(\ca{O}_L)
		}
	\end{align}
	By taking filtered colimits, we see that $g^*\Omega^1_{X/S}\to \Omega^1_{B/\ca{O}_L}$ is also surjective. Similarly, $g^*\Omega^1_{X/S}[1/p]\to \Omega^1_{B/\ca{O}_L}[1/p]$ is surjective. As $\Omega^1_{X/S}\to \Omega^1_{X/S}[1/p]$ is almost surjective by assumption, so is $g^*\Omega^1_{X/S}\to g^*\Omega^1_{X/S}[1/p]$ and the claim follows.
	
	As $V[1/p]$ is the fraction field of $B$ by construction, we can apply \ref{cor:val-diff} to the $\ca{O}_L$-homomorphism $B\to V$ by the claim above. Thus, $\kappa(y)$ is a pre-perfectoid field with respect to $V$, which completes the proof.
\end{proof}

\begin{mythm}[Differential criterion for local Faltings acyclicity]\label{thm:diff-criterion-faltings-acyclic}
	Let $(X^{\triv}_\lambda\to X_\lambda)_{\lambda\in\Lambda}$ be an object in $\pro(\fal^{\mrm{open}}_{\eta\to S})$ whose Faltings cohomology is Riemann-Zariski faithful {\rm(\ref{defn:rz-faithful})}. Assume that the following conditions hold: 
	\begin{enumerate}
		\renewcommand{\labelenumi}{{\rm(\theenumi)}}
		\item For any $\lambda\in\Lambda$, $X_\lambda$ is integrally closed in $X^{\triv}_\lambda$.
		\item The morphism $\Omega^1_{X/S}\to \Omega^1_{X/S}[1/p]$ is almost surjective, where $\Omega^1_{X/S}$ (resp. $\Omega^1_{X/S}[1/p]$) is the filtered colimit over $\lambda\in\Lambda$ of the pullback of $\Omega^1_{X_\lambda/S}$ (resp. $\Omega^1_{X_\lambda/S}[1/p]$) to $X$.
	\end{enumerate}
	Then, $(X^{\triv}_\lambda\to X_\lambda)_{\lambda\in\Lambda}$ is locally Faltings acyclic {\rm(\ref{defn:locally-faltings-acyclic})}.
\end{mythm}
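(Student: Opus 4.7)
The proof will be a direct combination of the two preceding results. The plan is to use assumption (2) as input to Lemma \ref{lem:diff-criterion-faltings-acyclic} and then feed the output into Theorem \ref{thm:val-criterion-faltings-acyclic}, with the Riemann-Zariski faithfulness hypothesis and assumption (1) serving as the remaining ingredients.

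More precisely, I would first invoke Lemma \ref{lem:diff-criterion-faltings-acyclic}: since $\Omega^1_{X/S}\to \Omega^1_{X/S}[1/p]$ is almost surjective by assumption (2), the lemma immediately gives that for every $x\in X^{\rz}$ the stalk $\ca{O}_{\rz_{X_\eta}(X),x}$ is pre-perfectoid. (The lemma itself works by reducing to the residue field at a point of $X^{\triv}$, realized as the fraction field of an appropriate valuation ring extension of $\ca{O}_L$, and then applying Gabber-Ramero's differential characterization of pre-perfectoidness via Corollary \ref{cor:val-diff}.)

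With this in hand, the two hypotheses of Theorem \ref{thm:val-criterion-faltings-acyclic} are verified: hypothesis (i) of that theorem is exactly our assumption (1), and hypothesis (ii) is the conclusion of Lemma \ref{lem:diff-criterion-faltings-acyclic} obtained in the previous step. The Riemann-Zariski faithfulness of the Faltings cohomology of $(X^{\triv}_\lambda\to X_\lambda)_{\lambda\in\Lambda}$ is assumed. Therefore Theorem \ref{thm:val-criterion-faltings-acyclic} applies and yields that $(X^{\triv}_\lambda\to X_\lambda)_{\lambda\in\Lambda}$ is locally Faltings acyclic, which is the desired conclusion.

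There is no real obstacle here beyond citing the right results in the right order; the nontrivial content has already been absorbed into Lemma \ref{lem:diff-criterion-faltings-acyclic} (passing from a global differential statement to pre-perfectoidness of stalks of the Riemann-Zariski space) and Theorem \ref{thm:val-criterion-faltings-acyclic} (passing from stalk-wise pre-perfectoidness, via Theorem \ref{thm:acyclic} and the Riemann-Zariski faithfulness, to local Faltings acyclicity of the whole inverse system). The one minor point to double-check while writing is that the hypothesis ``$X_\lambda$ integrally closed in $X^{\triv}_\lambda$ for each $\lambda$'' from assumption (1) does imply, via cofiltered limits of locally ringed spaces, the equality $X=X^Y$ needed in the formulation of Definition \ref{defn:locally-faltings-acyclic}; this is exactly how the identification $X=X^Y$ is used at the end of the proof of Theorem \ref{thm:val-criterion-faltings-acyclic}, so nothing new is required.
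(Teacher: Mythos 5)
Your proposal is correct and follows exactly the paper's own proof, which consists of precisely this two-step citation: Lemma \ref{lem:diff-criterion-faltings-acyclic} converts the differential hypothesis into stalk-wise pre-perfectoidness on $X^{\rz}$, and then Theorem \ref{thm:val-criterion-faltings-acyclic} (with Riemann-Zariski faithfulness and assumption (1)) yields local Faltings acyclicity. No discrepancy.
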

\begin{proof}
	It follows directly from \ref{thm:val-criterion-faltings-acyclic} and \ref{lem:diff-criterion-faltings-acyclic}.
\end{proof}

\begin{mypara}\label{para:setup-criteria}
	Let $X^{\triv}\to X$ be a pro-open morphism of coherent schemes over $\eta\to S$ (\ref{defn:pro-open}). It defines an isomorphism class of objects in $\pro(\fal^{\mrm{open}}_{\eta\to S})$ given by $(U_\lambda\to X)_{\lambda\in\Lambda}$, where $(U_\lambda)_{\lambda\in\Lambda}$ is a directed inverse system of quasi-compact open subsets of $X_\eta$ with affine transition inclusion maps such that $X^{\triv}=\lim_{\lambda\in\Lambda}U_\lambda=\bigcap_{\lambda\in\Lambda}U_\lambda\subseteq X$ (see \ref{defn:pro-open}). Note that the canonical morphism of Faltings ringed sites 
	\begin{align}\label{eq:para:setup-criteria-2}
		\sigma:(\fal^{\et}_{X^{\triv}\to X},\falb)\longrightarrow (X,\ca{O}_X)
	\end{align}
	defined by the left exact continuous functor $\sigma^+:X\to \fal^{\et}_{X^{\triv}\to X}$ sending each quasi-compact open subset $U$ of $X$ to $U^{\triv}=X^{\triv}\times_{X}U\to U$ (cf. \cite[(7.8.5)]{he2024coh}), is the cofiltered limit of $(\sigma_\lambda:(\fal^{\et}_{U_\lambda\to X},\falb)\longrightarrow (X,\ca{O}_X))$ \eqref{eq:sigma_lambda} by \cite[7.12]{he2024coh}. Therefore, the condition that the Faltings cohomology of $(U_\lambda\to X)_{\lambda\in\Lambda}$ is Riemann-Zariski faithful (\ref{defn:rz-faithful}) if and only if the same conditions in \ref{defn:rz-faithful} hold for \eqref{eq:para:setup-criteria-2}. In this case, we also say that \emph{the Faltings cohomology of $X^{\triv}\to X$ is Riemann-Zariski faithful}.
\end{mypara}

\begin{mythm}\label{thm:criterion}
	Let $\spec(B_{\triv})\to \spec(B)$ be a pro-open morphism of affine schemes over $\eta\to S$ whose Faltings cohomology is Riemann-Zariski faithful {\rm(\ref{para:setup-criteria})}, $X=\spec(B)$. Assume that $B$ is integrally closed in $B_{\triv}$. Then, the following conditions are equivalent: 
	\begin{enumerate}
		\renewcommand{\labelenumi}{{\rm(\theenumi)}}
		\item The $\ca{O}_L$-algebra $B$ is pre-perfectoid.\label{item:thm:criterion-1}
		\item The canonical morphism of modules of differentials $\Omega^1_{B/\ca{O}_L}\to \Omega^1_{B/\ca{O}_L}[1/p]$ is surjective.\label{item:thm:criterion-2}
		\item For any $x\in X^{\rz}$ {\rm(\ref{para:limit-rz})}, the stalk $\ca{O}_{\rz_{X_\eta}(X),x}$ is pre-perfectoid.\label{item:thm:criterion-3}
	\end{enumerate}
\end{mythm}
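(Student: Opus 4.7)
I would establish the cycle of implications $(1)\Rightarrow(2)\Rightarrow(3)\Rightarrow(1)$. To begin, realize the pro-open morphism $\spec(B_{\triv})\to\spec(B)$ as $\lim_{\lambda\in\Lambda}(U_\lambda\to X)$ with $X=\spec(B)$ constant and $(U_\lambda)_{\lambda\in\Lambda}$ a directed inverse system of quasi-compact open subsets of $X_\eta$ whose intersection equals $\spec(B_{\triv})$; this makes our morphism an object of $\pro(\fal^{\mrm{open}}_{\eta\to S})$ in the sense of \ref{para:setup-criteria}. For $(1)\Rightarrow(2)$, since $B$ is pre-perfectoid and in particular flat over $\ca{O}_L$, Proposition \ref{prop:perfd-diff} yields an isomorphism of cotangent complexes $\bb{L}_{B/\ca{O}_L}\iso \bb{L}_{B[1/p]/L}$. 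Passing to $H_0$ and using $\Omega^1_{B[1/p]/L}=\Omega^1_{B/\ca{O}_L}[1/p]$, the localization map $\Omega^1_{B/\ca{O}_L}\to\Omega^1_{B/\ca{O}_L}[1/p]$ is itself an isomorphism, hence surjective.

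For $(2)\Rightarrow(3)$, I would apply Lemma \ref{lem:diff-criterion-faltings-acyclic} to the pro-object $(U_\lambda\to X)_{\lambda\in\Lambda}$. Since $X_\lambda=X$ is constant, the sheaf $\Omega^1_{X/S}$ appearing in that lemma is simply the pullback to $X$ of the quasi-coherent $\ca{O}_X$-module associated to $\Omega^1_{B/\ca{O}_L}$, and surjectivity of the module map $\Omega^1_{B/\ca{O}_L}\to \Omega^1_{B/\ca{O}_L}[1/p]$ yields surjectivity (in particular almost surjectivity) of the corresponding sheaf map. The lemma then produces pre-perfectoidness of $\ca{O}_{\rz_{X_\eta}(X),x}$ for every $x\in X^{\rz}$.

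For $(3)\Rightarrow(1)$, I would combine Theorem \ref{thm:val-criterion-faltings-acyclic} with Lemma \ref{lem:fal-acyc-perfd}. The Riemann-Zariski faithfulness of our pro-object is hypothesized, and (3) supplies pre-perfectoidness of every stalk. The integral-closure assumption ``$X_\lambda=X_\lambda^{Y_\lambda}$'' in Theorem \ref{thm:val-criterion-faltings-acyclic} is used in its proof only to guarantee $X=X^Y$ in the limit; in our setting this equality holds directly because $\colim_\lambda(\trm{integral closure of }B\trm{ in }\Gamma(U_\lambda,\ca{O}_{X_\eta}))$ equals the integral closure of $B$ in $B_{\triv}$, which is $B$ by hypothesis. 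Consequently we obtain the almost isomorphism $\ca{O}_X/p\ca{O}_X\to \rr\sigma_*(\falb/p\falb)$ where $\sigma$ is the morphism \eqref{eq:sigma}. Since $X=\spec(B)$ is affine and the higher Zariski cohomology of quasi-coherent sheaves on $X$ vanishes, applying $\rr\Gamma(X,-)$ shows that $B/pB\to \rr\Gamma(\fal^{\et}_{\spec(B_{\triv})\to\spec(B)},\falb/p\falb)$ is an almost isomorphism, i.e., Faltings acyclicity in the sense of Definition \ref{defn:faltings-acyclic}. Lemma \ref{lem:fal-acyc-perfd} then delivers pre-perfectoidness of $B$.

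The main technical care will lie in this last implication: one must confirm that the cofiltered-limit manipulations in Theorem \ref{thm:val-criterion-faltings-acyclic} accommodate our pro-open datum (where the individual $X_\lambda$ need not be integrally closed in $U_\lambda$) and that the passage from sheaf-level local Faltings acyclicity to the global Faltings acyclicity of Definition \ref{defn:faltings-acyclic} via the Leray spectral sequence on the affine $X$ is legal. Both ultimately reduce to the compatibility of integral closure with cofiltered limits of coherent schemes along affine transition maps and to the vanishing of higher quasi-coherent cohomology on affine schemes.
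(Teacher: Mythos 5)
Your proposal is correct and follows essentially the same route as the paper: $(1)\Rightarrow(2)$ via Proposition \ref{prop:perfd-diff}, $(2)\Rightarrow(3)$ via Lemma \ref{lem:diff-criterion-faltings-acyclic} applied to the representative $(U_\lambda\to X)_{\lambda\in\Lambda}$ from \ref{para:setup-criteria}, and $(3)\Rightarrow(1)$ via Theorem \ref{thm:val-criterion-faltings-acyclic}, Lemma \ref{lem:faltings-acyclic}.(\ref{item:lem:faltings-acyclic-1}), and Lemma \ref{lem:fal-acyc-perfd} --- the last step being exactly the ``apply $\rr\Gamma(X,-)$ and use vanishing of higher quasi-coherent cohomology on affine $X$'' you describe. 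You are also right to flag that the hypothesis of Theorem \ref{thm:val-criterion-faltings-acyclic} as literally stated (each $X_\lambda$ integrally closed in $X_\lambda^{\triv}$) need not hold term-by-term for the constant system $(U_\lambda\to X)$, and your resolution is valid: since integral closure commutes with filtered colimits, $\colim_\lambda X^{U_\lambda}$ is the spectrum of the integral closure of $B$ in $B_{\triv}$, which equals $B$ by hypothesis, so $X=X^Y$ holds in the limit even though the equality may fail at finite levels; that is the only place the per-level assumption is invoked in the proof of Theorem \ref{thm:val-criterion-faltings-acyclic}. The paper states the implication in one line without surfacing this point, so your commentary is a useful clarification rather than a deviation.
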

\begin{proof}
	(\ref{item:thm:criterion-1})$\Rightarrow$(\ref{item:thm:criterion-2}) is a special case of \ref{prop:perfd-diff}. (\ref{item:thm:criterion-2})$\Rightarrow$(\ref{item:thm:criterion-3}) is a special case of \ref{lem:diff-criterion-faltings-acyclic} (see \ref{para:setup-criteria}). (\ref{item:thm:criterion-3})$\Rightarrow$(\ref{item:thm:criterion-1}) follows from \ref{thm:val-criterion-faltings-acyclic}, \ref{lem:faltings-acyclic}.(\ref{item:lem:faltings-acyclic-1}) and \ref{lem:fal-acyc-perfd}.
\end{proof}

\begin{myrem}\label{rem:criterion}
	In \ref{thm:criterion}, the condition (\ref{item:thm:criterion-3}) is equivalent to the following statement: for any point $y\in X^{\triv}=\spec(B_\triv)$, its residue field $\kappa(y)$ is a pre-perfectoid field with respect to any valuation ring $V$ of height $1$ extension of $\ca{O}_L$ with fraction field $V[1/p]=\kappa(y)$ and $\ca{O}_L\to V$ factors through $B$. Indeed, this follows directly from the description of the stalks (\ref{prop:limit-rz}.(\ref{item:prop:limit-rz-5}) and \ref{lem:val-micro}).
\end{myrem}

\begin{myexample}[Pro-semi-stable]\label{exam:pro-semi-stable}
	We say that a pro-open morphism $X^{\triv}\to X$ of coherent schemes over $\eta\to S$ is \emph{pro-semi-stable} if there exists a directed inverse system $(X^{\triv}_\lambda\to X_\lambda)_{\lambda\in\Lambda}$ of open immersions of coherent schemes semi-stable over $\eta\to S$ in the sense of \ref{exam:semi-stable} with affine transition morphisms such that $(X^{\triv}\to X)=\lim_{\lambda\in\Lambda}(X^{\triv}_\lambda\to X_\lambda)_{\lambda\in\Lambda}$. 
	
	We remark that if there exists a complete discrete valuation field $K$ extension of $\bb{Q}_p$ with perfect residue field and a local, injective and integral homomorphism $\ca{O}_{K(\zeta_{\infty})}\to \ca{O}_L$ (where $K(\zeta_\infty)$ is the extension of $K$ in an algebraic closure by adjoining all roots of unity), then the Faltings cohomology of a pro-open morphism of coherent schemes $X^{\triv}\to X$ pro-semi-stable over $\eta\to S$ is Riemann-Zariski faithful by \ref{exam:semi-stable}, \ref{prop:rz-faithful-adequate} and \ref{cor:rz-continue}.
\end{myexample}

\section{Preliminaries on Torsion-Free Modules over Maximal Valuation Rings}\label{sec:torsion-free}

We take a brief review on Kaplansky's structure theorem on countably generated torsion-free modules over maximal valuation rings (see \ref{thm:max-val-mod} and \ref{cor:max-val-mod}), which will be used in the next section (see \ref{rem:cor:vanishing-2}).

\begin{mydefn}[{\cite[page 191]{krull1932val}, see \cite[page 305]{kaplansky1942max}}]\label{defn:max-val}
	Let $L$ be a valuation field. An \emph{immediate extension} of $L$ is an extension of valuation fields $L\to F$ (\ref{defn:val-ext}) such that
	\begin{enumerate}
		\renewcommand{\labelenumi}{{\rm(\theenumi)}}
		\item it induces an isomorphism of value groups, i.e., $L^\times/\ca{O}_L^\times\iso F^\times/\ca{O}_F^\times$, and that
		\item it induces an isomorphism of residue fields, i.e., $\ca{O}_L/\ak{m}_L\iso \ca{O}_F/\ak{m}_F$.
	\end{enumerate}
	We say that $L$ is \emph{maximal} if any immediate extension $L\to F$ is trivial (i.e., $L=F$). We say that an immediate extension $L\to F$ is \emph{maximal} if $F$ is maximal.
\end{mydefn}

We note that the completion of a valuation field is an immediate extension (\cite[\Luoma{6}.\textsection5.3, Proposition 5]{bourbaki2006commalg5-7}). In particular, any maximal valuation field is complete.

\begin{myrem}\label{rem:max-val}
	Let $L$ be a valuation field.
	\begin{enumerate}
		\renewcommand{\labelenumi}{{\rm(\theenumi)}}
		\item There is a necessary and sufficient condition for $L$ to be maximal, due to Kaplansky \cite[Theorem 4]{kaplansky1942max} (see \cite[page 336, footnote 7]{kaplansky1952mod}): for any set $J$, given any element $a_j\in L$ and any fractional ideal $I_j$ of $\ca{O}_L$ for any $j\in J$, if $(a_{j_1}+I_{j_1})\cap (a_{j_2}+I_{j_2})$ is non-empty for any $j_1,j_2\in J$, then $\cap_{j\in J}(a_j+I_j)$ is non-empty.
		\item Assume that $L$ is of height $1$. Then, $L$ is maximal if and only if $L$ is \emph{spherically complete} with respect to a norm defining its topology, i.e., every shrinking sequence of balls $B_{r_1}(a_1)\supseteq B_{r_2}(a_2)\supseteq\cdots$ in $L$ has non-empty intersection (see \cite[Theorem 4.27]{rooij1978analysis}).
		\item Any complete discrete valuation field is maximal. The completion $\widehat{\overline{\bb{Q}_p}}$ of an algebraic closure of $\bb{Q}_p$ is not maximal, see \cite[Chapter 3, \textsection3.4]{robert2000analysis}.
	\end{enumerate}
\end{myrem}

The existence of a maximal immediate extension is proved by Krull, by bounding the cardinalities of immediate extensions and using Zorn's lemma.

\begin{mythm}[{\cite[Satz 24]{krull1932val}}]\label{thm:max-val-exist}
	Any valuation field admits a maximal immediate extension.
\end{mythm}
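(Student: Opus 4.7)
The plan is to apply Zorn's lemma to a set of immediate extensions of $L$, the principal difficulty being that such extensions form only a proper class a priori, so a uniform cardinality bound must be established first.

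First I would establish the cardinality bound: for any immediate extension $L\to F$, the cardinality $|F|$ is bounded by a cardinal $\kappa$ depending only on $L$ (for instance $\kappa=2^{|L|}$ suffices). The key tool is the notion of a pseudo-convergent (or pseudo-Cauchy) sequence indexed by the value group $\Gamma_L=L^\times/\ca{O}_L^\times$: given $x\in F$, since $L\to F$ is immediate, for every $\gamma\in\Gamma_L$ we can choose $a_\gamma\in L$ with $v(x-a_\gamma)\geq \gamma$ (one uses that $(x-a)\ca{O}_F$ has generator whose value lies in $\Gamma_L$ to approximate $x$ by an element of $L$ within any prescribed precision). The assignment $x\mapsto (a_\gamma)_{\gamma}$ is essentially injective up to the ambiguity of the approximating choices, yielding $|F|\leq |L|^{|\Gamma_L|}\leq \kappa$.

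Next, fix an ambient set $T$ of cardinality $\kappa$ containing $L$, and let $\mathcal{P}$ be the set of pairs $(F,\ca{O}_F)$ where $L\subseteq F\subseteq T$ and $F$ (with valuation ring $\ca{O}_F\supseteq \ca{O}_L$) is an immediate extension of $L$. This is a genuine set, partially ordered by inclusion (compatibly with valuation rings). Every chain $(F_i)_{i\in I}$ in $\mathcal{P}$ admits the upper bound $\bigcup_i F_i$ with valuation ring $\bigcup_i \ca{O}_{F_i}$: this is again a valuation field because filtered unions of valuation rings with compatible inclusions are valuation rings (\ref{lem:val-ext} applied to each pair), and the value group and residue field of the union agree with those of $L$ since each $F_i$ is immediate over $L$. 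Hence by Zorn's lemma $\mathcal{P}$ admits a maximal element $F$.

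Finally I would verify that this $F$ is maximal as a valuation field. Suppose $F\to F'$ were a non-trivial immediate extension. Then $L\to F'$ is the composition of two immediate extensions, hence itself immediate (value groups and residue fields compose trivially). By the cardinality bound applied to $L\to F'$, one has $|F'|\leq \kappa=|T|$, so one can choose an embedding of $F'$ into $T$ over $F$ (after possibly enlarging $T$ at the outset to accommodate this, or by choosing $T$ of cardinality $\kappa$ with room to spare). The resulting valuation field structure on the image would lie in $\mathcal{P}$ and strictly dominate $F$, contradicting maximality. Therefore $F$ is a maximal immediate extension of $L$. The main obstacle is the cardinality estimate: without it, Zorn's lemma cannot be invoked, and making the pseudo-convergent sequence argument robust in the case of value groups of arbitrary rank is where Krull's original insight truly lies; once this is in hand, the rest is formal.
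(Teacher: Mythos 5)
The paper does not prove this theorem; it cites Krull's \cite[Satz 24]{krull1932val} and, in the surrounding text, summarizes Krull's method as precisely the one you adopt: a cardinality bound on immediate extensions followed by Zorn's lemma. So your approach matches the intended proof. Two technical points in your write-up deserve tightening, though neither is fatal. First, the approximation step as stated is too strong: for $x\in F\setminus L$ the set $\Delta_x=\{v(x-a):a\in L\}\subseteq\Gamma_L$ has no greatest element (this is where immediacy enters), but it need \emph{not} be cofinal in $\Gamma_L$, so one cannot in general ``approximate $x$ within any prescribed precision''; the cleaner injection is $x\mapsto\bigl(v(x-a)\bigr)_{a\in L}\in(\Gamma_L\cup\{\infty\})^{L}$, which is injective because if $x\neq y$ one can pick $a$ with $v(x-a)>v(x-y)$ and then $v(y-a)=v(x-y)<v(x-a)$; this yields $|F|\leq 2^{|L|}$ without any cofinality claim. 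Second, taking the ambient set $T$ of cardinality exactly $\kappa=2^{|L|}$ does not guarantee room to embed a proper immediate extension $F'$ of a Zorn-maximal $F\subseteq T$, since one could have $|F|=\kappa=|T|$; the standard fix is to take $|T|$ strictly larger than $\kappa$ (e.g.\ $\kappa^{+}$), so that $|T\setminus F|>\kappa\geq|F'\setminus F|$ and the embedding over $F$ always exists. With these two corrections the argument is complete and is the same as the one the paper attributes to Krull.
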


\begin{myrem}\label{rem:max-val-exist}
	In general, maximal immediate extensions of a valuation field are not unique up to isomorphisms, see \cite[\textsection5]{kaplansky1942max}. However, the uniqueness holds in certain cases by \cite[Theorem 5]{kaplansky1942max}. In particular, if the residue field of $L$ is algebraically closed of characteristic $p>0$ and if the value group $\Gamma=L^\times/\ca{O}_L^\times$ is $p$-divisible (i.e., $\Gamma=p\Gamma$), then the maximal immediate extension of $L$ is unique up to isomorphisms of valuation field extensions of $L$, and we denote it by $\widetilde{L}$, called the \emph{maximal completion} of $L$.
\end{myrem}

\begin{mycor}\label{cor:max-val-alg-clos}
	A maximal valuation field is algebraic closed if and only if its residue field is algebraically closed and its value group is divisible.
\end{mycor}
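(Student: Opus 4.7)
The plan is to verify the two directions separately, reducing each to standard facts about behavior of valuations under algebraic extensions.

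For the forward direction, suppose $L$ is algebraically closed. To see that the value group $\Gamma_L = L^\times/\ca{O}_L^\times$ is divisible, pick any $a \in L^\times$ and any integer $n \geq 1$; since $L$ is algebraically closed, there exists $b \in L$ with $b^n = a$, and then $n \cdot v(b) = v(a)$ in $\Gamma_L$. For the residue field $k_L = \ca{O}_L/\ak{m}_L$, take any monic polynomial $\bar{f} \in k_L[X]$, lift to a monic $f \in \ca{O}_L[X]$, find a root $\alpha \in L$, and observe that monicity forces $\alpha \in \ca{O}_L$ (otherwise $v(\alpha^n)$ would strictly dominate every other term in $f(\alpha) = 0$, contradicting cancellation); reducing modulo $\ak{m}_L$ produces a root of $\bar{f}$.

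For the converse, assume $L$ is maximal, $k_L$ is algebraically closed, and $\Gamma_L$ is divisible. Given any algebraic field extension $F/L$, I would first invoke the classical fact that the valuation of $L$ extends to $F$ (Bourbaki, \emph{Alg\`ebre commutative} Ch.\ VI, \textsection 3, or \cite[\href{https://stacks.math.columbia.edu/tag/00IA}{00IA}]{stacks-project}), which makes $L \to F$ an algebraic extension of valuation fields in the sense of \ref{defn:val-ext}. The plan is then to show this extension is immediate, so maximality of $L$ forces $F = L$, proving that $L$ is algebraically closed.

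For immediacy, two standard facts about algebraic extensions of valuation fields suffice: the residue field extension $k_L \to k_F$ is algebraic and the value group extension $\Gamma_L \to \Gamma_F$ has torsion quotient (both e.g.\ Bourbaki, \emph{Alg\`ebre commutative} Ch.\ VI, \textsection 8). Since $k_L$ is algebraically closed, the first gives $k_L = k_F$. For the value groups, if $\gamma \in \Gamma_F$ satisfies $n\gamma \in \Gamma_L$ for some $n \geq 1$, then by divisibility of $\Gamma_L$ there exists $\gamma' \in \Gamma_L$ with $n\gamma' = n\gamma$, whence $n(\gamma - \gamma') = 0$; but $\Gamma_F$ is a totally ordered abelian group and is therefore torsion-free, so $\gamma = \gamma' \in \Gamma_L$. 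Hence $\Gamma_L = \Gamma_F$, the extension is immediate, and maximality closes the argument.

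There is no genuine obstacle; the proof is a packaging of three classical facts (extendability of valuations along algebraic extensions, algebraicity of the residue extension, and torsion of the value-group quotient). The only mild subtlety is to remember that $\Gamma_F$ is automatically torsion-free as an ordered abelian group, which is what converts ``torsion quotient plus divisible subgroup'' into ``equality of value groups''.
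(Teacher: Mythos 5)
Your proof is correct and follows essentially the same route as the paper's: pass to the algebraic closure (or an arbitrary algebraic extension), use that the residue-field extension is algebraic and the value-group quotient is torsion, then invoke divisibility plus torsion-freeness of an ordered abelian group to conclude the extension is immediate, so maximality forces equality. The only cosmetic difference is in the value-group step: the paper uses divisibility to split $\overline{\Gamma} \cong \Gamma \oplus \overline{\Gamma}/\Gamma$ as $\mathbb{Z}$-modules and then kills the torsion summand, whereas you argue element-by-element that $n\gamma\in\Gamma_L$ and divisibility give $\gamma'\in\Gamma_L$ with $n(\gamma-\gamma')=0$, hence $\gamma=\gamma'$ — same insight, slightly more elementary packaging.
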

\begin{proof}
	Let $L$ be a maximal valuation field, $l=\ca{O}_L/\ak{m}_L$, $\Gamma=L^\times/\ca{O}_L^\times$. If $L$ is algebraically closed, then it is clear that $l$ is algebraically closed and $\Gamma$ is divisible. Conversely, let $\overline{L}$ be an algebraic closure of $L$. We endow $\overline{L}$ with a valuation extending that of $L$ (\cite[\Luoma{6}.\textsection8.6, Proposition 6]{bourbaki2006commalg5-7}), and we put $\overline{l}=\ca{O}_{\overline{L}}/\ak{m}_{\overline{L}}$ and $\overline{\Gamma}=\overline{L}^\times/\ca{O}_{\overline{L}}^\times$. Notice that $\overline{l}$ is algebraic over $l$ and $\overline{\Gamma}/\Gamma$ is torsion (\cite[\Luoma{6}.\textsection8.1, Proposition 1]{bourbaki2006commalg5-7}). As $l$ is assumed to be algebraically closed, we have $l=\overline{l}$. As $\Gamma$ is assumed to be divisible (so that injective as a $\bb{Z}$-module), we have $\overline{\Gamma}\cong\Gamma\oplus   \overline{\Gamma}/\Gamma$. But $\overline{\Gamma}$ is a totally ordered abelian group so that torsion-free. Thus, we see that $\Gamma=\overline{\Gamma}$. In particular, $\overline{L}$ is an immediate extension of $L$. The maximality of $L$ implies that $L=\overline{L}$, i.e., $L$ is algebraically closed.
\end{proof}

\begin{mythm}[{\cite[Theorem 12]{kaplansky1952mod}}]\label{thm:max-val-mod}
	Let $L$ be a maximal valuation field, $M$ a countably generated torsion-free $\ca{O}_L$-module. Then, $M=\oplus_{i\in I}M_i$, where $I$ is a countable set and $M_i$ is a torsion-free $\ca{O}_L$-module with rank $\dim_L (L\otimes_{\ca{O}_L}M_i)=1$ for any $i\in I$.
\end{mythm}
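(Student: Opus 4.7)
The plan is to follow Kaplansky's approach by iteratively splitting off rank-one direct summands, the crucial step being a splitting lemma that uses maximality of $L$.

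First, I would set up the basic notion of purity. A submodule $N \subseteq M$ is \emph{pure} if $N \cap rM = rN$ for every $r \in \ca{O}_L$; since $\ca{O}_L$ is a valuation ring (hence a domain) and $M$ is torsion-free, this is equivalent to $M/N$ being torsion-free. In particular, purity is transitive and passes to quotients, so once one splits off a pure submodule the complement remains countably generated and torsion-free. For any nonzero $x \in M$, the submodule $N_x := M \cap Lx \subseteq L \otimes_{\ca{O}_L} M$ contains $x$ and is both pure (an immediate check using torsion-freeness of $M$) and of rank one in $L \otimes_{\ca{O}_L} M$, since $Lx$ is one-dimensional over $L$.

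The main technical step is the splitting lemma: \emph{if $L$ is maximal, then every pure rank-one submodule $N$ of a countably generated torsion-free $\ca{O}_L$-module $M$ is a direct summand of $M$.} To prove it, fix generators $x_1, x_2, \dots$ of $M$ and construct a retraction $\rho : M \to N$ inductively by choosing elements $y_n = \rho(x_n) \in N$ that are compatible with the $\ca{O}_L$-linear relations among the $x_n$ in $M$. At each finite stage one has only a finitely generated constraint, which by purity of $N$ and the fact that $N$ has rank one (so $L \otimes_{\ca{O}_L} N \cong L$) can be translated into the condition that a certain coset $a_j + I_j$ in $L$ (with $I_j$ a fractional ideal of $\ca{O}_L$) be nonempty, and that any two such cosets meet. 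Passing to the limit over all finite stages, one obtains a family of cosets $\{a_j + I_j\}_{j \in J}$ in $L$ whose pairwise intersections are nonempty; the maximality criterion recalled in \ref{rem:max-val} then produces a common element, which packages into the desired retraction $\rho$, proving $M = N \oplus \ker(\rho)$.

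Given the splitting lemma, the decomposition is obtained by countable iteration. Enumerate generators $x_1, x_2, \dots$ of $M$. Apply the splitting lemma to $M_1 := N_{x_1}$ to write $M = M_1 \oplus M^{(1)}$, where $M^{(1)} \cong M/M_1$ is again countably generated and torsion-free because $M_1$ is pure. Let $x_2^{(1)} \in M^{(1)}$ be the image of $x_2$ under the resulting projection; setting $M_2 := N_{x_2^{(1)}} \subseteq M^{(1)}$ and splitting again, one gets $M = M_1 \oplus M_2 \oplus M^{(2)}$ with $x_1, x_2 \in M_1 \oplus M_2$. Continuing, at stage $n$ one ensures $x_1, \dots, x_n \in M_1 \oplus \cdots \oplus M_n$, so the union $\bigoplus_{n \in \bb{N}_{>0}} M_n$ contains every generator and hence equals $M$. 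Each $M_n$ has rank one by construction, giving the statement with $I = \bb{N}_{>0}$ (which is countable, possibly finite if the process terminates).

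The only essential difficulty is the splitting lemma: everything else is formal manipulation with purity for modules over a valuation domain, but splitting a pure rank-one submodule is precisely where Kaplansky's characterization of maximal valuation fields is used in an unavoidable way.
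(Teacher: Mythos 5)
The paper does not supply a proof of this statement; it cites Kaplansky's Theorem 12 directly, and your sketch reproduces Kaplansky's argument (iteratively split off pure rank-one summands, with the maximality criterion supplying the splitting), so the approach is the correct one. The one place that needs tightening is the proof of the splitting lemma: rather than choosing the $y_n = \rho(x_n)$ one at a time, one should record the linear constraints that a retraction imposes on \emph{all} the $y_n$ simultaneously, check that every finite subsystem is solvable in $N$ (a step that already requires the finite-rank case of the splitting theorem over maximal valuation rings, not merely purity and rank one), and only then apply the maximality criterion a single time to the resulting family of cosets; a greedy inductive choice of $y_1, y_2, \ldots$ as you describe could produce a partial retraction with no compatible extension before maximality is ever invoked, and the phrase ``a family of cosets in $L$ with pairwise nonempty intersections'' only makes literal sense once the multi-variable system has been reduced, via this finite solvability and the structure of fractional ideals of $\ca{O}_L$, to a one-variable consistency condition.
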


\begin{mycor}\label{cor:max-val-mod}
	Let $L$ be a maximal valuation field of height $1$, $M$ a countably generated torsion-free $\ca{O}_L$-module. Then, there exist countable sets $I$ and $J$ such that for any element $\epsilon\in \ak{m}_L$ there exists an $\ca{O}_L$-submodule of $M$ isomorphic to $L^{\oplus I}\oplus \ca{O}_L^{\oplus J}$ containing $\epsilon M$.
\end{mycor}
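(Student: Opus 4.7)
The plan is to apply the Kaplansky structure theorem \ref{thm:max-val-mod} to decompose $M$ into countably many rank-one torsion-free summands and then to analyze each summand as a submodule of $L$. Concretely, write $M=\bigoplus_{k\in K}M_k$ with $K$ countable and $\dim_L(L\otimes_{\ca{O}_L}M_k)=1$, and fix an embedding $M_k\hookrightarrow L\otimes_{\ca{O}_L}M_k\cong L$. Since $\ca{O}_L$ is a valuation ring of height one, a nonzero $\ca{O}_L$-submodule $N\subseteq L$ is controlled by $v(N\setminus\{0\})\subseteq\bb{R}$, where $v$ is the (height-one) valuation: either $v(N\setminus\{0\})$ is unbounded below, in which case $N=L$ (for every $y\in L$, pick $x\in N$ with $v(x)\le v(y)$ and note $y\in x\ca{O}_L\subseteq N$), or $v(N\setminus\{0\})$ has a finite infimum $r_N\in\bb{R}$.

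Using this dichotomy, partition $K=I\sqcup J$ so that $M_i\cong L$ for $i\in I$ and $M_j\subsetneq L$ with $r_j:=\inf v(M_j\setminus\{0\})>-\infty$ for $j\in J$. Both $I$ and $J$ are countable and depend only on the Kaplansky decomposition of $M$, not on $\epsilon$. For $\epsilon\in\ak{m}_L$, construct the required submodule termwise: for $i\in I$ keep $N_i:=M_i=L$, which trivially contains $\epsilon M_i$; for $j\in J$ with $r_j$ attained by some $x_j\in M_j$, set $N_j:=x_j\ca{O}_L=M_j\cong\ca{O}_L$; and for $j\in J$ with $r_j$ not attained, use $v(\epsilon)>0$ to choose $x_j\in M_j$ with $v(x_j)\le r_j+v(\epsilon)$, so that $N_j:=x_j\ca{O}_L\cong\ca{O}_L$ still satisfies $\epsilon M_j\subseteq N_j\subseteq M_j$ since any $y\in\epsilon M_j$ has $v(y)\ge v(\epsilon)+r_j\ge v(x_j)$. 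Then $N:=\bigoplus_{i\in I}N_i\oplus\bigoplus_{j\in J}N_j$ is an $\ca{O}_L$-submodule of $M$ isomorphic to $L^{\oplus I}\oplus\ca{O}_L^{\oplus J}$ and contains $\epsilon M$.

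The only delicate step is the last case of the construction, where $M_j$ is a non-finitely-generated proper submodule of $L$; here it is essential that $\epsilon$ has strictly positive valuation so that one can slip a principal $\ca{O}_L$-submodule strictly below the unattained infimum $r_j$ and still absorb the $\epsilon$-multiples. Everything else is a direct unwinding of Kaplansky's decomposition together with the classification of $\ca{O}_L$-submodules of $L$, the former being where the maximality hypothesis on $L$ enters.
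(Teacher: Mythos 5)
Your proof is correct and follows essentially the same approach as the paper: apply Kaplansky's decomposition (Theorem \ref{thm:max-val-mod}) to reduce to rank-one torsion-free summands viewed as submodules of $L$, then dichotomize each summand according to whether the infimum of its valuations is $-\infty$ (giving a copy of $L$) or finite (giving, for each $\epsilon$, a principal submodule $\mathcal{O}_L \cdot x_\epsilon$ squeezed between $\epsilon M_j$ and $M_j$). The only cosmetic difference is that you further split the finite-infimum case into attained versus non-attained, which the paper handles in one stroke; your explicit observation that $I$ and $J$ are determined by the Kaplansky decomposition independently of $\epsilon$ is a welcome clarification of a point the paper leaves implicit.
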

\begin{proof}
	After replacing $M$ by its direct summands by \ref{thm:max-val-mod}, we may assume that $M$ is a non-zero $\ca{O}_L$-submodule of $L$. Let $v_L:L^\times\to\bb{R}$ be a valuation map. If $\inf_{x\in M}v_L(x)=-\infty$, then $M=\bigcup_{x\in M}\ca{O}_L\cdot x\supseteq L$ and thus $M=L$. If $\inf_{x\in M}v_L(x)>-\infty$, then for any $\epsilon\in \ak{m}_L$ there exists $x_\epsilon\in M$ such that $\epsilon M\subseteq \ca{O}_L\cdot x_\epsilon\subseteq M$.
\end{proof}

\begin{myrem}\label{rem:max-val-mod}
	In \ref{cor:max-val-mod}, if moreover the value group of $L$ is isomorphic to $\bb{R}$, then there exists an almost isomorphism $M\to L^{\oplus I}\oplus \ca{O}_L^{\oplus J}$ by a similar argument.
	
	On the other hand, for any valuation field $L$ of height $1$, there exists an extension of valuation fields $L\to F$ of height $1$ such that $F$ is maximal whose value group is isomorphic to $\bb{R}$ by \cite[Proposition 1]{diarra1984ultra} (see \cite[7.1, 7.3]{escassut1995analysis} and see also \cite[Chapter 3, \textsection2.2]{robert2000analysis} for the special case $L=\bb{Q}_p$).
\end{myrem}

\begin{mylem}\label{lem:torsion-free-bc}
	Let $L$ be a maximal valuation field of height $1$, $(M_n)_{n\in\bb{N}}$ a directed inverse system of $\ca{O}_L$-modules, $N$ a torsion-free $\ca{O}_L$-module. If the torsion submodule of $\lim_{n\in\bb{N}}M_n$ is killed by an element $\pi\in\ca{O}_L$, then the torsion submodule of $\lim_{n\in\bb{N}}(M_n\otimes_{\ca{O}_L}N)$ is killed by $\pi\ak{m}_L^2$.
\end{mylem}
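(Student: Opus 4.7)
The plan is to reduce to the case where every $M_n$ is annihilated by $a$, to apply Kaplansky's structure theorem (Corollary~\ref{cor:max-val-mod}) to a countably generated torsion-free submodule of $N$ containing all components of a given torsion element, and then to extract a compatible lift into an inverse limit of direct sums of copies of $M_n$, losing a factor of $\ak{m}_L^2$ in the process.

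Concretely, I would start with a torsion element $x = (x_n)_n \in \lim_n (M_n \otimes_{\ca{O}_L} N)$ annihilated by some nonzero $a \in \ca{O}_L$, fix $\epsilon_1, \epsilon_2 \in \ak{m}_L$, and aim to prove $\pi \epsilon_1 \epsilon_2 x = 0$. Since $N$ is torsion-free over the valuation ring $\ca{O}_L$, it is flat, so the short exact sequence $0 \to M_n[a] \to M_n \xrightarrow{a} M_n$ remains exact after tensoring with $N$, yielding the identification $(M_n \otimes N)[a] = M_n[a] \otimes N$. Hence $x$ lies in $\lim_n (M_n[a] \otimes N)$, and replacing $M_n$ by $M_n[a]$ reduces to the situation where every $M_n$ is killed by $a$; the hypothesis then becomes that $\lim_n M_n$ is killed by $\pi$.

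Each $x_n$ is a finite sum of elementary tensors, so only countably many elements of $N$ occur in $x$ across all $n$; let $F_0 \subseteq N$ be the countably generated torsion-free submodule they generate. By Corollary~\ref{cor:max-val-mod} there exist countable sets $I, J$ such that, for each $\epsilon \in \ak{m}_L$, one finds $F_\epsilon \subseteq F_0$ with $\epsilon F_0 \subseteq F_\epsilon \cong L^{\oplus I} \oplus \ca{O}_L^{\oplus J}$. The key simplification is that $M_n \otimes_{\ca{O}_L} L = 0$ since $M_n$ is $a$-torsion, so
\begin{align*}
M_n \otimes_{\ca{O}_L} F_\epsilon \;\cong\; M_n \otimes_{\ca{O}_L} \bigl(L^{\oplus I} \oplus \ca{O}_L^{\oplus J}\bigr) \;=\; M_n^{(J)}
\end{align*}
(direct sum with finite support). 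Writing $\epsilon_1 x_n = \sum_i m_{n,i} \otimes (\epsilon_1 y_{n,i})$ with $\epsilon_1 y_{n,i} \in \epsilon_1 F_0 \subseteq F_{\epsilon_1}$ yields for each $n$ an element $\tilde{x}_n \in M_n^{(J)}$ mapping to $\epsilon_1 x_n$ in $M_n \otimes N$. Once one shows $(\epsilon_2 \tilde{x}_n)_n$ assembles into an element of $\lim_n M_n^{(J)}$, the conclusion is immediate: an element of $\lim_n M_n^{(J)}$ is a family with finite support in $J$ at each level, and its coordinate at each fixed $j \in J$ defines an element of $\lim_n M_n$; since $\epsilon_2 \tilde{x}$ is $a$-torsion, each such coordinate lies in $(\lim_n M_n)[a]$, which is killed by $\pi$, whence $\pi \epsilon_1 \epsilon_2 x = 0$.

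The hard part is establishing compatibility of the lifts $\tilde{x}_n$ across the transition maps of $\lim_n M_n^{(J)}$ at the cost of only one additional factor $\epsilon_2 \in \ak{m}_L$. The failure of $\tilde{x}_{n+1}$ to transport to $\tilde{x}_n$ sits in $\ker(M_n \otimes F_{\epsilon_1} \to M_n \otimes N) = \mrm{Tor}_1^{\ca{O}_L}(M_n, N/F_{\epsilon_1})$; using the exact sequence $0 \to F_0/F_{\epsilon_1} \to N/F_{\epsilon_1} \to N/F_0 \to 0$, the contribution of $F_0/F_{\epsilon_1}$ is killed by $\epsilon_1$ (since $\epsilon_1 F_0 \subseteq F_{\epsilon_1}$), but the part mapping to $\mrm{Tor}_1(M_n, N/F_0)$ is a priori uncontrolled. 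The resolution should exploit the fact that $F_0$ was chosen to contain all the elements of $N$ actually appearing in $x$, so that the obstruction reflects only ``formal'' ambiguities in the chosen representations — these should be absorbable either by passing to the pure hull of $F_0$ inside $N$ (where the map $M_n \otimes F \to M_n \otimes N$ becomes injective by flatness of $N/F$) and then comparing the two Tor-obstructions, or by directly using spherical completeness of $\ca{O}_L$ to make coherent choices across $n$. Either way, multiplying by $\epsilon_2 \in \ak{m}_L$ should neutralize the remaining obstruction, furnishing the second factor of $\ak{m}_L$ in the bound $\pi \ak{m}_L^2$.
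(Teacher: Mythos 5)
Your overall strategy shares the key ingredients with the paper's — reducing to $a$-torsion $M_n$ so that $M_n \otimes_{\ca{O}_L} L = 0$, invoking Kaplansky's structure theorem (Corollary~\ref{cor:max-val-mod}) for a countably generated torsion-free submodule of $N$, and passing to a module of the shape $L^{\oplus I} \oplus \ca{O}_L^{\oplus J}$. But the ``hard part'' you flag — making the lifts $\tilde{x}_n$ compatible across the transition maps at the cost of only a single extra factor in $\ak{m}_L$ — is a genuine gap, and neither resolution you sketch closes it. The obstruction is a $\lim^1$ phenomenon for the inverse system of kernels $\ker(M_n \otimes F_{\epsilon_1} \to M_n \otimes N)$, and multiplying by a fixed $\epsilon_2 \in \ak{m}_L$ does not kill it: those kernels are controlled by $\mrm{Tor}_1^{\ca{O}_L}(M_n, N/F_0)$, which is unrelated to $\epsilon_2$ and can be large (e.g.\ when $N/F_0$ has torsion killed only by a small element). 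Passing to the pure hull of $F_0$ in $N$ would make the maps injective, but that hull need not remain countably generated nor retain a Kaplansky decomposition, and the appeal to spherical completeness is not developed into an actual argument for coherent choices across $n$.

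The paper sidesteps the lift problem entirely by never working element-by-element. It writes $N \subseteq N' \subseteq \epsilon^{-1}N$ with $N' \cong L^{\oplus I} \oplus \ca{O}_L^{\oplus J}$, observes that $N \to N'$ is an $\epsilon$-isomorphism, and invokes an auxiliary lemma (\cite[7.3.(2)]{he2022sen}) to conclude that $\lim_n(M_n \otimes N) \to \lim_n(M_n \otimes N')$ is an $\epsilon^2$-isomorphism; this is exactly where the second factor of $\ak{m}_L$ is earned, and it comes packaged in a statement about the whole inverse system, not about a particular torsion element. Then, crucially, instead of trying to commute $\lim_n$ past a \emph{direct sum}, the paper embeds $M_n \otimes N'$ into the \emph{product} $\prod_I (M_n \otimes L) \times \prod_J M_n$, which commutes with $\lim_n$ on the nose, so the torsion of $\lim_n(M_n \otimes N')$ is directly bounded by $\pi$. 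Finally the general $N$ is handled via the fact that the poset of countably generated submodules is $\aleph_1$-filtered and $\aleph_1$-filtered colimits commute with countable limits — a cleaner reduction than tracking which elements of $N$ appear in a chosen representation of $x$. If you want to salvage your element-wise approach, you would need to either prove an explicit $\lim^1$-vanishing (or $\epsilon$-control) for those kernel systems, or import the $\epsilon$-isomorphism lemma the paper uses — at which point you might as well run the paper's argument directly.
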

\begin{proof}
	Firstly, assume that $N$ is countably generated. Then, for any $\epsilon\in\ak{m}_L$, we write $N\subseteq N'\subseteq \epsilon^{-1}N$ where $N'=L^{\oplus I}\oplus \ca{O}_L^{\oplus J}$ for some countable sets $I$ and $J$ by \ref{cor:max-val-mod}. In particular, $N\to N'$ is an $\epsilon$-isomorphism, i.e., its kernel and cokernel are killed by $\epsilon$. Hence, $\lim_{n\in\bb{N}}(M_n\otimes_{\ca{O}_L}N)\to \lim_{n\in\bb{N}}(M_n\otimes_{\ca{O}_L}N')$ is an $\epsilon^2$-isomorphism by \cite[7.3.(2)]{he2022sen}. Notice that $\lim_{n\in\bb{N}}(M_n\otimes_{\ca{O}_L}N')\subseteq \lim_{n\in\bb{N}}(\prod_I (M_n\otimes_{\ca{O}_L}L)\times\prod_JM_n)= \prod_I\lim_{n\in\bb{N}}(M_n\otimes_{\ca{O}_L}L)\times \prod_J\lim_{n\in\bb{N}}M_n$, whose torsion is killed by $\pi$ since $\lim_{n\in\bb{N}}M_n$ is so and $M_n\otimes_{\ca{O}_L}L$ is torsion-free. Therefore, the torsion elements of $\lim_{n\in\bb{N}}(M_n\otimes_{\ca{O}_L}N)$ is killed by $\pi\epsilon^2$, which verifies the statement in this case.
	
	In general, consider the directed inverse system $\scr{C}$ of countably generated submodules of $N$. Since any countable internal sum of countably generated submodules of $N$ is still a countably generated submodule, we see that any countable diagram of $\scr{C}$ has a cocone, i.e., $\scr{C}$ is $\aleph_1$-filtered (cf. \cite[5.3.1.7]{lurie2009topos}). One checks easily that $\aleph_1$-filtered colimits commutes with countable limits (cf. \cite[5.3.3.3]{lurie2009topos}). In particular, we have $\lim_{n\in\bb{N}}(M_n\otimes_{\ca{O}_L}N)=\lim_{n\in\bb{N}}\colim_{N'\in \scr{C}}(M_n\otimes_{\ca{O}_L}N')=\colim_{N'\in \scr{C}}\lim_{n\in\bb{N}}(M_n\otimes_{\ca{O}_L}N')$, whose torsion elements are killed by $\pi\ak{m}_L^2$ by the previous discussion.
\end{proof}

\section{Poly-stable Modification Conjecture and Completed Cohomology}\label{sec:polystable}
Assuming the poly-stable modification conjecture or working only with curves, we investigate the relation between the \'etale cohomology and coherent cohomology groups of limits of smooth varieties in the top degree (see \ref{thm:completed} and \ref{cor:completed}).

\begin{mypara}\label{para:poly-stable}
	Let $L$ be a valuation field of height $1$ extension of $\bb{Q}_p$, $\eta=\spec(L)$, $S=\spec(\ca{O}_L)$, $X^{\triv}\to X$ an open immersion of coherent schemes over $\eta\to S$. We say that $X^{\triv}\to X$ is \emph{poly-stable} if for any geometric point $\overline{x}$ of $X$, there exists an \'etale neighborhood $U\to X$ of $\overline{x}$ and finitely many open immersions of coherent schemes $(U^{\triv}_i\to U_i)_{1\leq i\leq n}$ semi-stable over $\eta\to S$ in the sense of \ref{exam:semi-stable} with an \'etale morphism over $S$,
	\begin{align}\label{eq:para:poly-stable}
		U\longrightarrow U_1\times_S\cdots\times_SU_n
	\end{align}
	such that $U^{\triv}=X^{\triv}\times_XU$ is the base change of $U_1^{\triv}\times_\eta\cdots\times_\eta U_n^{\triv}$ along \eqref{eq:para:poly-stable} (see \cite[5.2.15]{temkin2019logval}). 
\end{mypara}

\begin{mylem}\label{lem:poly-stable-basic}
	Let $F/L$ be an extension of valuation fields of height $1$ extensions of $\bb{Q}_p$, $X^{\triv}\to X$ an open immersion of coherent schemes over $\spec(L)\to \spec(\ca{O}_L)$, $X^{\triv}_F\to X_{\ca{O}_F}$ its base change along $\ca{O}_L\to\ca{O}_F$.
	\begin{enumerate}
		\renewcommand{\labelenumi}{{\rm(\theenumi)}}
		\item If $X^{\triv}\to X$ poly-stable over $\spec(L)\to \spec(\ca{O}_L)$, then $X^{\triv}_F\to X_{\ca{O}_F}$ is poly-stable over $\spec(F)\to \spec(\ca{O}_F)$.\label{item:lem:poly-stable-basic-1}
		\item Assume that $\ca{O}_F$ is an algebraic extension of $\ca{O}_L$ {\rm(\ref{defn:val-ext})}. If $X^{\triv}_F\to X_{\ca{O}_F}$ is poly-stable over $\spec(F)\to \spec(\ca{O}_F)$, then there exists a finite subextension $L'$ of $L$ in $F$ such that $X^{\triv}_{L'}\to X_{\ca{O}_{L'}}$ is poly-stable over $\spec(L')\to \spec(\ca{O}_{L'})$.\label{item:lem:poly-stable-basic-2}
	\end{enumerate}
\end{mylem}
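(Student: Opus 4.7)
The first assertion is immediate from base change. Given a poly-stable chart
$U\to U_1\times_S\cdots\times_SU_n$ for $X^{\triv}\to X$ with each $U_i$ \'etale over
$\spec(\ca{O}_L[T_0,\dots,T_d]/(T_0\cdots T_{b_i}-a_i))$ and $a_i\in\ak{m}_L\setminus\{0\}$,
I would base-change along $\ca{O}_L\to\ca{O}_F$. By \ref{lem:val-ext} this homomorphism is
injective and local, so each $a_i$ remains in $\ak{m}_F\setminus\{0\}$; since \'etaleness and
fibre products commute with base change, the base-changed diagram gives a poly-stable
chart for $X^{\triv}_F\to X_{\ca{O}_F}$ around the chosen geometric point, and compatibility with the trivial open is automatic since $X^{\triv}_F=X^{\triv}\times_S\spec(\ca{O}_F)$.

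For the second assertion, the plan is to use the standard limit argument of \cite[\textsection 8]{ega4-3}.
Write $\ca{O}_F=\colim_\alpha \ca{O}_{L_\alpha}$ over the directed system of finite
subextensions $L_\alpha$ of $F/L$, so that $\spec(\ca{O}_F)=\lim_\alpha\spec(\ca{O}_{L_\alpha})$ is a cofiltered limit with affine transition maps of finite presentation. Since $X$ is coherent, so is $X_{\ca{O}_F}$, and I would cover it
by finitely many \'etale neighbourhoods $\{U^{(j)}\to X_{\ca{O}_F}\}_{j\in J}$ each equipped with
the data witnessing poly-stability: an \'etale morphism
$U^{(j)}\to U^{(j)}_1\times_{\spec(\ca{O}_F)}\cdots\times_{\spec(\ca{O}_F)} U^{(j)}_{n_j}$,
and for each factor an \'etale morphism to a semi-stable model determined by some
$a_{j,i}\in\ak{m}_F\setminus\{0\}$, along with the equality of the trivial opens. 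All of this data is of finite presentation over $\ca{O}_F$,
so by the limit theorems of \cite[\textsection 8]{ega4-3} it descends to some $\ca{O}_{L_{\alpha_j}}$
while preserving \'etaleness. Since $J$ is finite, I take a single finite subextension $L'$ of $F/L$
containing all $L_{\alpha_j}$; the finitely many nonzero $a_{j,i}$ then lie in
$\ak{m}_{L'}\setminus\{0\}$ by the locality and injectivity of $\ca{O}_{L'}\to\ca{O}_F$
(\ref{lem:val-ext}), so the descended charts are semi-stable over $\ca{O}_{L'}$.

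The main technical point will be organizing the simultaneous descent of all finite-presentation
chart data to a single common finite subextension while preserving \'etaleness and the
compatibility with the trivial open $X^{\triv}_{L'}$. This is routine once one observes that
the covering $J$ is finite and every piece of chart data is finitely presented, so no essentially
new ingredient beyond the EGA limit formalism is needed; the only additional subtlety is
checking that for sufficiently large $L'$ the descended \'etale maps $\{U^{(j)}_{L'}\to X_{\ca{O}_{L'}}\}_{j\in J}$ still form a cover, which follows from the surjectivity of $X_{\ca{O}_F}\to X_{\ca{O}_{L'}}$, the quasi-compactness of $X$, and the fact that the images of the $U^{(j)}$ already form a finite open cover of $X_{\ca{O}_F}$.
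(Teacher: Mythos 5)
Your proof is correct and follows essentially the same route as the paper: part (1) by base change (noting that $\ca{O}_L\to\ca{O}_F$ is injective and local so the $a_i$ stay nonzero in $\ak{m}_F$), and part (2) by the EGA IV \S 8 spreading-out formalism applied to the finitely many finitely presented chart data (the paper cites \cite[8.8.2, 8.10.5]{ega4-3} and \cite[17.7.8]{ega4-4} for the same purpose). Your closing observation that the covering condition is automatic from the surjectivity of $X_{\ca{O}_F}\to X_{\ca{O}_{L'}}$ (which holds because $\ca{O}_{L'}\to\ca{O}_F$ is faithfully flat by \ref{lem:val-ext}) is a slightly more direct way to handle the "still a cover" point than invoking the spreading-out of joint surjectivity, but it amounts to the same thing.
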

\begin{proof}
	(\ref{item:lem:poly-stable-basic-1}) It follows from the fact that semi-stability is stable under base change by the explicit formula \eqref{eq:exam:semi-stable-1}.
	
	(\ref{item:lem:poly-stable-basic-2}) The poly-stability of $X_{\ca{O}_F}$ implies that it is locally of finite presentation over $\ca{O}_F$, and thus it is actually of finite presentation since it is coherent. Then, there exists an \'etale covering by finitely presented $\ca{O}_F$-schemes $\ak{U}=\{U_i\to X_{\ca{O}_F}\}_{i\in I}$ with $I$ finite and for each $i\in I$ an \'etale morphism of finitely presented $\ca{O}_F$-schemes $f_i:U_i\to U_{i,1}\times_{\ca{O}_F}\cdots\times_{\ca{O}_F} U_{i,n_i}$ such that each $U_{i,j}^{\triv}\to U_{i,j}$ over $\spec(F)\to\spec(\ca{O}_F)$ is of the form in \eqref{eq:exam:semi-stable-1}. By \cite[8.8.2, 8.10.5]{ega4-3} and \cite[17.7.8]{ega4-4}, there exists a finite subextension $L'$ of $L$ in $F$ and an \'etale covering by finitely presented $\ca{O}_{L'}$-schemes $\ak{V}=\{V_i\to X_{\ca{O}_{L'}}\}_{i\in I}$ and for each $i\in I$ an \'etale morphism of finitely presented $\ca{O}_{L'}$-schemes $g_i:V_i\to V_{i,1}\times_{\ca{O}_{L'}}\cdots\times_{\ca{O}_{L'}} V_{i,n_i}$ such that $\ak{U}$ and $f_i$ are the base change of $\ak{V}$ and $g_i$ along $\ca{O}_{L'}\to\ca{O}_F$ and that each $V_{i,j}^{\triv}\to V_{i,j}$ over $\spec(L')\to\spec(\ca{O}_{L'})$ is of the form in \eqref{eq:exam:semi-stable-1}. We see that $X^{\triv}_{L'}\to X_{\ca{O}_{L'}}$ is poly-stable over $\spec(L')\to \spec(\ca{O}_{L'})$.
\end{proof}

\begin{mylem}\label{lem:poly-stable-adequate}
	Let $K$ be a complete discrete valuation field extension of $\bb{Q}_p$ with perfect residue field, $\overline{K}$ an algebraic closure of $K$, $\overline{\eta}=\spec(\overline{K})$, $\overline{S}=\spec(\ca{O}_{\overline{K}})$, $\eta=\spec(K)$, $S=\spec(\ca{O}_K)$, $\scr{M}_S\to \ca{O}_{S_\et}$ the compactifying log structure on $S$ associated with the open immersion $\eta\to S$ {\rm(\cite[4.3]{he2024falmain})}, $X^{\triv}\to X$ an open immersion of coherent schemes poly-stable over $\eta\to S$, $\scr{M}_X\to \ca{O}_{X_\et}$ the compactifying log structure on $X$ associated with the open immersion $X^{\triv}\to X$. 
	\begin{enumerate}
		\renewcommand{\labelenumi}{{\rm(\theenumi)}}
		\item The scheme $X$ is normal and of finite type over $S$.\label{item:lem:poly-stable-adequate-1}
		\item The generic fibre $X_\eta$ is smooth over $\eta$ and $X_\eta\setminus X^{\triv}$ is the support of a strict normal crossings divisor on $X_\eta$.\label{item:lem:poly-stable-adequate-2}
		\item The structure morphism $(X,\scr{M}_X)\to (S,\scr{M}_S)$ is a smooth and saturated morphism between fine, saturated and regular log schemes.\label{item:lem:poly-stable-adequate-3}
	\end{enumerate} 
	In particular, $(X,\scr{M}_X)\to (S,\scr{M}_S)$ satisfies the conditions of \cite[\Luoma{3}.4.7]{abbes2016p}, and the morphism $X^{\triv}_{\overline{\eta}}\to X_{\overline{S}}$ is adequate over $\overline{\eta}\to \overline{S}$ in the sense of {\rm\ref{defn:essential-adequate-pair}} by {\rm\ref{rem:essential-adequate-log-smooth}}.
\end{mylem}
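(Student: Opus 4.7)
The approach is to reduce to an explicit étale-local poly-stable chart and then invoke Kato's theory of log regular log schemes. Since all three assertions are étale local on $X$, by \ref{para:poly-stable} and \ref{exam:semi-stable} I may replace $X$ by an étale neighborhood admitting a factorization
\[
X \longrightarrow Y := U_1 \times_S \cdots \times_S U_n
\]
where each $U_i = \spec(B_i)$ is a standard semi-stable $\ca{O}_K$-algebra as in \eqref{eq:exam:semi-stable-1} (after absorbing the unit factor of $a$ into one coordinate to arrange $a = \pi_K^{e_i}$), and $X^{\triv}$ is the preimage of $U_1^{\triv} \times_\eta \cdots \times_\eta U_n^{\triv}$. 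By \ref{exam:semi-stable-chart} and \ref{rem:quasi-adequate}, each $(U_i, \scr{M}_{U_i})$ is an fs log scheme carrying a saturated fs chart $\bb{N} \to P_i$ that realizes the structure morphism $(U_i, \scr{M}_{U_i}) \to (S, \scr{M}_S)$ as a smooth, saturated morphism of fs log schemes, with $B_i$ a Noetherian normal domain of finite type over $\ca{O}_K$.

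Next I would identify the natural log structure on the product $Y$ with the compactifying one of $Y^{\triv} := U_1^{\triv} \times_\eta \cdots \times_\eta U_n^{\triv} \hookrightarrow Y$. Since each $\bb{N} \to P_i$ is saturated, the pushout $\bb{N} \to Q := P_1 \oplus_{\bb{N}} \cdots \oplus_{\bb{N}} P_n$ in integral monoids remains saturated, so $Q$ is fs and the fs log fibre product of the $(U_i, \scr{M}_{U_i})$ over $(S, \scr{M}_S)$ has underlying scheme precisely $Y$, via the identification
\[
B_1 \otimes_{\ca{O}_K} \cdots \otimes_{\ca{O}_K} B_n \;=\; \ca{O}_K \otimes_{\bb{Z}[\bb{N}]} \bb{Z}[Q].
\]
A direct chart comparison then shows that the sum of the pulled-back log structures coincides with the compactifying log structure $\scr{M}_Y$ of $Y^{\triv} \hookrightarrow Y$. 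Hence $(Y, \scr{M}_Y) \to (S, \scr{M}_S)$ is smooth and saturated between fs log schemes, and by étale pullback along $X \to Y$ so is $(X, \scr{M}_X) \to (S, \scr{M}_S)$.

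To conclude, I invoke Kato's theorem (see \cite[\Luoma{3}.4.7]{abbes2016p}): an fs log scheme log smooth over a log regular base is log regular, and the underlying scheme of a log regular fs log scheme is normal. Since $\ca{O}_K$ is a discrete valuation ring, $(S, \scr{M}_S)$ is log regular, so the same holds for $(X, \scr{M}_X)$. This yields normality of $X$ and hence (1) (finite type being evident from the construction), together with the log-regularity assertion of (3). For (2), on the generic fibre $\scr{M}_S|_\eta$ is trivial and $\pi_K$ is a unit, so each equation $T_0 \cdots T_{b_i} = \pi_K^{e_i}$ defines a smooth affine $\eta$-scheme; hence $Y_\eta$ and therefore $X_\eta$ are smooth over $\eta$, and by \ref{rem:sncd} the complement $X_\eta \setminus X^{\triv}$ is the support of a strict normal crossings divisor. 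The main obstacle is the monoid-theoretic compatibility used in the second paragraph: verifying that the pushout $\bb{N} \to Q$ of the saturated charts $\bb{N} \to P_i$ is itself saturated and that the resulting fs log fibre product has the expected underlying scheme. This is a combinatorial check on the explicit $P_i$ of \ref{exam:semi-stable-chart}, but is the crucial step that allows Kato's log-regularity machinery to be applied.
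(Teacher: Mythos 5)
Your reduction to an étale-local chart and your appeal to Kato's log-regularity machinery are both in the spirit of the paper's proof, but there is a genuine gap that you yourself flag at the end and do not close: you never verify that the chart $\gamma\colon\bb{N}\to P_i$ is a \emph{saturated} homomorphism of monoids, and this is in fact the crux of the argument. Neither \ref{exam:semi-stable-chart} nor \ref{rem:quasi-adequate} says anything about saturatedness of $\gamma$; they only show $P_i$ is an fs monoid and that $\ca{O}_K\otimes_{\bb{Z}[\bb{N}]}\bb{Z}[P_i]$ is an adequate $\ca{O}_K$-algebra. Without this input, your assertion that ``each $\bb{N}\to P_i$ is saturated'' is unsupported, and consequently so is your claim that the pushout $\bb{N}\to Q=P_1\oplus_{\bb{N}}\cdots\oplus_{\bb{N}}P_n$ in integral monoids is saturated. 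The paper supplies exactly the missing check: it applies Tsuji's numerical criterion for saturated morphisms (for any prime $q$, $n\in\bb{N}$, and $x\in P$ with $qx-\gamma(n)\in P$, there is $m$ with $n\le qm$ and $x-\gamma(m)\in P$), which for the explicit monoid of \eqref{eq:exam:semi-stable-chart-1} amounts to taking $m=\min\{a_0,\dots,a_b\}$.

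You also take a genuinely different structural route for (3). The paper first proves that each single factor $(U_i,\scr{M}_{U_i})\to(S,\scr{M}_S)$ is log smooth and saturated, and then cites a separate result (\cite[4.4]{he2024falmain}) asserting that such properties pass to fibre products over $(S,\scr{M}_S)$. You instead attempt to handle the product directly by computing a monoid pushout $Q$ and comparing the resulting chart log structure with the compactifying log structure of $Y^{\triv}\hookrightarrow Y$. This is a viable alternative, but it pushes all the technical weight onto the pushout: one needs to know (a) that $Q$ is fs, which again requires saturatedness of each $\gamma$, and (b) that the chart log structure coincides with the compactifying one, which in the paper is established via log regularity (a log smooth fs log scheme over the regular $(S,\scr{M}_S)$ is itself regular, hence its log structure is compactifying). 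Your outline essentially runs these steps in the opposite order, so you would still need the single-factor saturatedness as the starting point. With that gap filled, your approach would work, but as written it leaves precisely the nontrivial combinatorial lemma unproved.
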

\begin{proof}
	With the notation in \ref{para:poly-stable}, after replacing each $U_i$ by an \'etale neighborhood of $\overline{x}$, we may assume that for any $1\leq i\leq n$ there exist homomorphisms of monoids $\alpha:\bb{N}\to \ca{O}_K$ and $\gamma:\bb{N}\to P$ of the forms in \ref{exam:semi-stable-chart} such that $U_i=\spec(\ca{O}_K\otimes_{\bb{Z}[\bb{N}]}\bb{Z}[P])$. Then, (\ref{item:lem:poly-stable-adequate-1}) and (\ref{item:lem:poly-stable-adequate-2}) follows immediately from the isomorphism \eqref{eq:exam:semi-stable-chart-5}. For (\ref{item:lem:poly-stable-adequate-3}), it suffices to check that each $(U_i,\scr{M}_{U_i})\to (S,\scr{M}_S)$ is a smooth and saturated morphism between fine, saturated and regular log schemes (\cite[4.4]{he2024falmain}). 
	
	With the notation in \ref{exam:semi-stable-chart}, since $P$ is an fs monoid, the log scheme $(U_i,\scr{M}'_{U_i})$ defined as $\spec(\ca{O}_K\otimes_{\bb{Z}[\bb{N}]}\bb{Z}[P])$ endowed with the log structure associated to the chart $P\to \bb{Z}[P]$ is fine and saturated (\cite[\Luoma{2}.5.15]{abbes2016p}). Moreover, since $P^{\mrm{gp}}\cong \bb{Z}\oplus\bb{Z}^d$ \eqref{eq:exam:semi-stable-chart-4} where $\gamma:\bb{N}^{\mrm{gp}}\to P^{\mrm{gp}}$ identifies with the inclusion to the first component, we see that $(U_i,\scr{M}'_{U_i})$ is a fine and saturated log scheme that is log smooth over $(S,\scr{M}_S)$ by \cite[\Luoma{2}.5.25]{abbes2016p}. As $(S,\scr{M}_S)$ is regular, so is $(U_i,\scr{M}'_{U_i})$ and thus $\scr{M}'_{U_i}$ coincides with the compactifying log structure $\scr{M}_{U_i}$ (\cite[4.4]{he2024falmain}). 
	
	It remains to check the saturatedness of $\gamma:\bb{N}\to P$. By the criterion \cite[\Luoma{1}.4.1]{tsuji2019saturated}, it suffices to check that for any prime number $q$, any $n\in\bb{N}$ and any $x\in P$ such that $qx-\gamma(n)\in P$, there exists $m\in\bb{N}$ such that $n\leq qm$ and $x-\gamma(m)\in P$. Indeed, we write $x=(a_0,\dots,a_d)\in P\subseteq \bb{N}^{1+b}\oplus\bb{Z}^{c-b}\oplus\bb{N}^{d-c}$ \eqref{eq:exam:semi-stable-chart-1}. Then, $qx-\gamma(n)\in P$ if and only if $n\leq qa_k$ for any $0\leq k\leq b$. We take $m$ to be the minimal element of $\{a_0,\dots,a_b\}\subseteq \bb{N}$. Hence, $n\leq qm$ and $x-\gamma(m)\in P$. This completes the proof.
\end{proof}

\begin{mythm}[{Faltings' main $p$-adic comparison theorem, \cite[Theorem 8 page 223]{faltings2002almost}, see \cite[4.8.13]{abbes2020suite}}]\label{thm:faltings-comparison}
	Let $K$ be a complete discrete valuation field extension of $\bb{Q}_p$ with perfect residue field, $\overline{K}$ an algebraic closure of $K$, $\overline{\eta}=\spec(\overline{K})$, $\overline{S}=\spec(\ca{O}_{\overline{K}})$, $X^{\triv}\to X$ an open immersion of coherent schemes poly-stable over $\overline{\eta}\to \overline{S}$ with $X$ proper over $\overline{S}$. Then, for any $n\in\bb{N}$, there exists a canonical morphism
	\begin{align}
		\rr\Gamma(X^{\triv}_{\et},\bb{Z}/p^n\bb{Z})\otimes_{\bb{Z}}^{\mrm{L}}\ca{O}_{\overline{K}}\longrightarrow \rr\Gamma(\fal^{\et}_{X^{\triv}\to X},\falb/p^n\falb)
	\end{align}
	which is an almost isomorphism {\rm(\cite[5.7]{he2024coh})}.
\end{mythm}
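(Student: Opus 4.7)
The plan is to mimic the Faltings--Abbes--Gros strategy: construct the comparison morphism abstractly from the structure of the Faltings topos, reduce to $n=1$ by dévissage, and then verify the almost isomorphism locally using the Koszul-theoretic computations already assembled in Sections~\ref{sec:koszul} and \ref{sec:essential-adequate-alg}. The canonical morphism of sites $\beta^+\colon X^{\triv}_\et\to \fal^{\et}_{X^{\triv}\to X}$, $(V\to X^{\triv})\mapsto (V\to X)$, induces a morphism of topoi $\beta$ under which $\beta^{-1}(\bb{Z}/p^n\bb{Z})=\bb{Z}/p^n\bb{Z}$, and the cohomological comparison $\rr\Gamma(X^{\triv}_\et,\bb{Z}/p^n\bb{Z})\iso \rr\Gamma(\fal^{\et}_{X^{\triv}\to X},\bb{Z}/p^n\bb{Z})$ follows from the formalism of \cite[7.32]{he2024coh} already invoked in \ref{cor:profet-coh}. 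Smash-producting with the unit $\bb{Z}\to\falb$ and reducing mod $p^n$ then yields the desired arrow, and the standard triangles $\bb{Z}/p\bb{Z}\to \bb{Z}/p^{n+1}\bb{Z}\to\bb{Z}/p^n\bb{Z}$ together with their analogue for $\falb$ (short exact since $\falb$ is $\ca{O}_{\overline{K}}$-flat) reduce us to the case $n=1$.

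Next I would apply the Leray spectral sequence for $\sigma\colon \fal^{\et}_{X^{\triv}\to X}\to X$ \eqref{eq:sigma} and exploit the properness of $X$ over $\overline{S}$: since the higher direct images $\rr^q\sigma_*(\falb/p\falb)$ are almost quasi-coherent on $X$ by Remark~\ref{rem:quasi-coherent} and $X$ is proper, their Zariski cohomology commutes up to almost isomorphism with tensoring over $\ca{O}_{\overline{K}}/p$. This reduces the theorem to checking, étale locally on $X$, that the induced map $\rr^q\beta_*(\bb{Z}/p\bb{Z})\otimes_{\bb{Z}}\ca{O}_{\overline{K}}/p\longrightarrow \rr^q\sigma_*(\falb/p\falb)$ is an almost isomorphism. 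By Lemma~\ref{lem:poly-stable-adequate}, every geometric point of $X$ admits an adequate chart, so by Remark~\ref{rem:profet-coh} the stalk of $\rr^q\sigma_*(\falb/p\falb)$ identifies almost with the Galois cohomology $H^q(\ca{H}^L,\overline{\ca{A}}/p\overline{\ca{A}})$, which is computed almost explicitly by Theorem~\ref{thm:coh}. On the étale side, Abhyankar's lemma applied to the strict normal crossings divisor $X_\eta\setminus X^{\triv}$ (Remark~\ref{rem:sncd}) identifies the stalk of $\rr^q\beta_*(\bb{Z}/p\bb{Z})$ with the cohomology of the tame fundamental group $\Delta_{p^\infty}\cong\prod_{i=1}^d\bb{Z}_p(1)$, namely $\wedge^q\ho_{\bb{Z}_p}(\Delta_{p^\infty},\bb{Z}/p\bb{Z})$, under the cyclotomic identification $\bb{Z}/p\bb{Z}\iso\mu_p$ furnished by $\zeta_p$.

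The main obstacle is expected to be the cocycle-level compatibility in the local step: one must show that the abstract comparison map of the previous paragraph actually lands in the $\nu=1$-summand $(\varphi_q\oplus\psi_q)^{(0)}$ of the decomposition of Theorem~\ref{thm:coh}(\ref{item:thm:coh-2}), while the contributions of the non-trivial characters $\nu\neq 1$ become almost zero modulo $\pi=\zeta_p-1$ thanks to the divisibility $\pi/\pi^{(\nu)}\in\ak{m}_L$ exploited in Theorem~\ref{thm:coh}(\ref{item:thm:coh-3}). Concretely this amounts to comparing cup products on both sides against the cyclotomic class of $\zeta_p-1$ in each Koszul degree, a bookkeeping which is already implicit in the explicit formulas $\varphi_n$, $\psi_n$ of \ref{lem:koszul} but needs to be matched with the cup-product structure on $X^{\triv}_\et$. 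Once this compatibility is in place, Theorem~\ref{thm:coh}(\ref{item:thm:coh-2}) yields the local almost isomorphism at each stalk, and properness together with Remark~\ref{rem:quasi-coherent} globalize it to the statement of the theorem.
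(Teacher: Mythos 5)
Your approach diverges fundamentally from the paper's: whereas you attempt to reprove the comparison from scratch by matching the local Koszul computations against the stalks of the Leray spectral sequence, the paper simply reduces to the Noetherian case. Concretely, the paper writes $(X^{\triv}\to X)=\lim_{K'}(Y^{\triv}_{\overline{\eta}}\to Y_{\ca{O}_{K'}})$ over the finite subextensions $K'/K$ inside $\overline{K}$ (possible after enlarging $K$ by \ref{lem:poly-stable-basic}.(\ref{item:lem:poly-stable-basic-2})), uses the continuity of Faltings cohomology \cite[7.12]{he2024coh}, cites Achinger's $K(\pi,1)$ theorem \cite[9.6]{achinger2015kpi1} for the isomorphism between the \'etale cohomology of $Y^{\triv}_{\overline{\eta}}$ and the Faltings cohomology with constant coefficients, and cites the Noetherian Faltings comparison theorem \cite[4.8.13]{abbes2020suite} itself. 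No local Koszul computation enters.

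Beyond the choice of route, your plan has a concrete gap: the claim that $\rr\Gamma(X^{\triv}_\et,\bb{Z}/p^n\bb{Z})\iso\rr\Gamma(\fal^{\et}_{X^{\triv}\to X},\bb{Z}/p^n\bb{Z})$ ``follows from the formalism of \cite[7.32]{he2024coh}'' is false. That reference establishes only that $\falb$ and its reductions are sheaves for the covanishing topology. The comparison with constant coefficients is precisely the $K(\pi,1)$ statement: the Faltings cohomology with constant coefficients computes finite-\'etale (i.e., fundamental-group) cohomology of $X^{\triv}$, and identifying that with genuine \'etale cohomology requires that $X^{\triv}$ be a $K(\pi,1)$ for $p$-torsion coefficients. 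This is Achinger's theorem, whose verification uses the log-smooth structure supplied by \ref{lem:poly-stable-adequate}; it is not formal. Your second acknowledged obstacle, the ``cocycle-level compatibility,'' is likewise not a small bookkeeping step: matching the abstract cup-product map on the \'etale side with the explicit $\varphi_n\oplus\psi_n$ decomposition of \ref{thm:coh} is essentially the full technical content of the comparison theorem, so the proposal has not reduced the problem but restated it. Finally, the globalization step you sketch (that Zariski cohomology of almost quasi-coherent sheaves on the proper $X$ commutes, up to almost isomorphism, with the derived tensor in question) is itself nontrivial over the non-Noetherian base $\ca{O}_{\overline{K}}$, and is exactly what the paper avoids by first working over a Noetherian $\ca{O}_{K'}$ and then passing to the filtered colimit.
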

\begin{proof}
	We put $\eta=\spec(K)$ and $S=\spec(\ca{O}_K)$. After enlarging $K$, we may assume that there exists an open immersion of coherent schemes $Y^{\triv}\to Y$ poly-stable over $\eta\to S$ with $Y$ proper over $S$ such that $(X^{\triv}\to X)=(Y^{\triv}_{\overline{\eta}}\to Y_{\overline{S}})$ by \ref{lem:poly-stable-basic}.(\ref{item:lem:poly-stable-basic-2}). Since $(X^{\triv}\to X)=\lim_{K'}(Y^{\triv}_{\overline{\eta}}\to Y_{\ca{O}_{K'}})$ where the limit is taking over all finite subextension $K'$ of $K$ in $\overline{K}$, we have $(\fal^{\et}_{X^{\triv}\to X},\falb)=\lim_{K'}(\fal^{\et}_{Y^{\triv}_{\overline{\eta}}\to Y_{\ca{O}_{K'}}},\falb)$ by \cite[7.12]{he2024coh}. Recall that there is a canonical commutative diagram (\cite[5.5]{he2024falmain})
	\begin{align}
		\xymatrix{
			\rr\Gamma(X^{\triv}_{\et},\bb{Z}/p^n\bb{Z})\otimes_{\bb{Z}}^{\mrm{L}}\ca{O}_{\overline{K}}&\rr\Gamma(\fal^{\et}_{X^{\triv}\to X},\bb{Z}/p^n\bb{Z})\otimes_{\bb{Z}}^{\mrm{L}}\ca{O}_{\overline{K}}\ar[l]_-{\alpha_{\overline{K}}}\ar[r]^-{\beta_{\overline{K}}}& \rr\Gamma(\fal^{\et}_{X^{\triv}\to X},\falb/p^n\falb)\\
			\rr\Gamma(Y^{\triv}_{\overline{\eta},\et},\bb{Z}/p^n\bb{Z})\otimes_{\bb{Z}}^{\mrm{L}}\ca{O}_{\overline{K}}\ar@{=}[u]&\rr\Gamma(\fal^{\et}_{Y^{\triv}_{\overline{\eta}}\to Y_{\ca{O}_{K'}}},\bb{Z}/p^n\bb{Z})\otimes_{\bb{Z}}^{\mrm{L}}\ca{O}_{\overline{K}}\ar[l]_-{\alpha_{K'}}\ar[r]^-{\beta_{K'}}\ar[u]& \rr\Gamma(\fal^{\et}_{Y^{\triv}_{\overline{\eta}}\to Y_{\ca{O}_{K'}}},\falb/p^n\falb)\ar[u]
		}
	\end{align}
	Recall that $\alpha_{K'}$ is an isomorphism by \cite[9.6]{achinger2015kpi1} whose assumptions are satisfied by \ref{lem:poly-stable-basic}.(\ref{item:lem:poly-stable-basic-1}) and \ref{lem:poly-stable-adequate} (see also \cite[4.4.2]{abbes2020suite} and \cite[5.3]{he2024falmain}), and that $\beta_{K'}$ is an almost isomorphism by Faltings' main $p$-adic comparison theorem (\cite[Theorem 8 page 223]{faltings2002almost}, see \cite[4.8.13]{abbes2020suite}). Taking filtered colimit over $K'$ by \cite[\Luoma{6}.8.7.7]{sga4-2}, we see that $\alpha_{\overline{K}}$ is an isomorphism and that $\beta_{\overline{K}}$ is an almost isomorphism.
\end{proof}

\begin{myconj}[{Poly-stable modification conjecture, cf. \cite[1.2.6]{temkin2019logval}}]\label{conj:poly-stable}
	Let $K$ be a complete discrete valuation field extension of $\bb{Q}_p$ with perfect residue field, $\overline{K}$ an algebraic closure of $K$, $\overline{\eta}=\spec(\overline{K})$, $\overline{S}=\spec(\ca{O}_{\overline{K}})$, $X^{\triv}\to X$ an open immersion of $\overline{S}$-schemes of finite presentation over $\overline{\eta}\to\overline{S}$. Assume that $X_{\overline{\eta}}$ is smooth over $\overline{\eta}$ and that $X_{\overline{\eta}}\setminus X^{\triv}$ is the support of a normal crossings divisor on $X_{\overline{\eta}}$. Then, there exists an $X_{\overline{\eta}}$-modification $X'$ of $X$ {\rm(\ref{para:riemann-zariski})} such that $X'^{\triv}\to X'$ is poly-stable over $\overline{\eta}\to \overline{S}$, where $X'^{\triv}=X^{\triv}$.
\end{myconj}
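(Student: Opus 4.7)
The plan is to reduce to a Noetherian situation by spreading out, then combine a de Jong--Temkin Galois alteration with a quotient construction to produce the required modification. First, by a standard spreading-out argument using the finite presentation hypothesis together with \cite[8.8.2, 8.10.5]{ega4-3} and \cite[17.7.8]{ega4-4}, there exist a finite extension $K'/K$ inside $\overline{K}$ and an $\ca{O}_{K'}$-scheme $X_0$ of finite type with open subscheme $X_0^{\triv}$ whose base change to $\ca{O}_{\overline{K}}$ is $(X^{\triv}\subseteq X)$, and such that $X_{0,K'}$ is smooth over $K'$ with $X_{0,K'}\setminus X_0^{\triv}$ the support of a strict normal crossings divisor. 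Since poly-stability descends along algebraic extensions of valuation rings (\ref{lem:poly-stable-basic}.(\ref{item:lem:poly-stable-basic-2})) and since modifications base-change to modifications, it will suffice to construct a poly-stable $X_{0,K''}$-modification of $X_0\otimes_{\ca{O}_{K'}}\ca{O}_{K''}$ over $\ca{O}_{K''}$ for some finite extension $K''/K'$.

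Next I would invoke the de Jong--Temkin theorem on semistable Galois alterations to produce, after a further finite Galois extension $L/K'$, a Galois alteration $f:Y\to X_0\otimes_{\ca{O}_{K'}}\ca{O}_L$ with group $G$ such that $Y^{\triv}\to Y$ is strictly semistable over $\ca{O}_L$. Because $X_{0,K'}$ is already smooth with normal crossings boundary, after further functorial blow-ups of $X_0$ along combinatorial centers one can arrange that $f$ factors as $Y\to X_0'\to X_0\otimes_{\ca{O}_{K'}}\ca{O}_L$, where the first arrow is the categorical quotient by a generically free action of $G$ and the second is an honest $X_{0,L}^{\triv}$-modification with $X_0'\cong Y/G$. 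The problem then reduces to showing that the pair $(X_0'^{\triv}\to X_0')$ is poly-stable over $\ca{O}_L$, or at least admits a further poly-stable modification disjoint from the generic fibre.

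The crucial and still-open step is precisely this last one: producing a poly-stable modification of the quotient of a strictly semistable log scheme by a finite group action compatible with the boundary log structure. Locally the question amounts to resolving quotients of charts of the form $\spec(\ca{O}_L[T_0,\ldots,T_b]/(T_0\cdots T_b-\pi_L^e))\times_{\ca{O}_L}\mathbb{A}^{d-b}_{\ca{O}_L}$ by subgroups of their toric automorphism groups preserving $T_0\cdots T_d$, which is in principle a toroidal problem amenable to iterated barycentric or Waldschmidt-type fan subdivisions. Temkin's program on relative Riemann--Zariski spaces and functorial logarithmic desingularization is designed to attack exactly this, and settles it in relative dimension $\leq 1$ via the stable modification theorem for curves \cite{temkin2010stablecurve}; in higher relative dimensions it remains the principal obstacle, essentially equivalent to producing a functorial log-resolution whose output is poly-stable (rather than merely toroidal or log-regular) on log-regular input equipped with a compatible finite group action. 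Consequently I view the first two paragraphs as reasonably routine given current technology, while the genuinely hard content of the conjecture is concentrated in the toroidal quotient step.
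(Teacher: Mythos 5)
This statement is labeled \emph{Conjecture} in the paper precisely because no proof is known; the paper does not prove it, and only the curve case (Theorem~\ref{thm:temkin}, via Temkin's stable modification theorem) is established and then invoked in the applications. You have correctly recognized this: your write-up is an analysis of the expected reduction and of where the difficulty concentrates, not a proof, and you say so explicitly. There is therefore no paper proof to compare against.

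That said, a couple of remarks on your reduction. The spreading-out step to a Noetherian base $\ca{O}_{K'}$ and the descent of poly-stability along algebraic valuation extensions via Lemma~\ref{lem:poly-stable-basic}.(\ref{item:lem:poly-stable-basic-2}) are sound and match how the paper itself handles the non-Noetherian base in \ref{thm:temkin}. Your second paragraph, however, is optimistic at one point: de Jong's Galois alteration machinery produces an alteration $Y\to X$ together with a $G$-action such that $Y/G$ is \emph{generically} isomorphic to (an alteration of) $X$, but arranging this into an honest $X_\eta$-modification $X_0'$ with $X_0'\cong Y/G$ dominating $X$ requires additional work and is not automatic from ``functorial blow-ups along combinatorial centers.'' You are right, though, that the genuine content of the conjecture is the toroidal-quotient resolution step — producing poly-stable (not merely log-regular) output — and that this is exactly the part Temkin's program resolves only in relative dimension~$\leq 1$. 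Your identification of the obstruction is accurate; the conjecture remains open beyond the curve case.
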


\begin{mythm}[{Temkin's stable modification theorem for curves, \cite[1.5]{temkin2010stablecurve}}]\label{thm:temkin}
	Under the assumptions in {\rm\ref{conj:poly-stable}} and with the same notation, assume moreover that $X^{\triv}$ is equidimensional of dimension $1$. Then, the statement of {\rm\ref{conj:poly-stable}} holds true.
\end{mythm}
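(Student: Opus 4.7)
The strategy is to approximate the data over a discrete valuation subring of $\ca{O}_{\overline{K}}$, apply the classical stable reduction theorem for marked curves in the Noetherian setting, and then base change back to $\overline{S}$. I would proceed in three steps.

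\emph{Step 1 (Approximation).} Since $X$ is of finite presentation over $\overline{S}$ and $X_{\overline{\eta}} \setminus X^{\triv}$ is the support of a normal crossings divisor on $X_{\overline{\eta}}$, a standard limit argument (\cite[8.8.2, 8.10.5, 17.7.8]{ega4-3}) produces a finite subextension $K'/K$ of $\overline{K}/K$ and a finitely presented open immersion $Y^{\triv} \to Y$ of $\ca{O}_{K'}$-schemes whose base change to $\ca{O}_{\overline{K}}$ recovers $X^{\triv} \to X$, such that $Y_{K'}$ is smooth over $K'$ of pure dimension one and $Y_{K'} \setminus Y^{\triv}$ is the support of a normal crossings divisor on $Y_{K'}$. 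This reduces the problem to constructing a suitable modification over the discrete valuation ring $\ca{O}_{K'}$.

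\emph{Step 2 (Compactification and stable reduction).} By Nagata compactification, I would embed $Y$ as a dense open subscheme of a proper $\ca{O}_{K'}$-scheme $\overline{Y}$. After normalizing $\overline{Y}$ and performing suitable blow-ups (to resolve singularities on the generic fibre and to arrange the complement of $Y^{\triv}$ in $\overline{Y}_{K'}$ as a strict normal crossings divisor), one can assume $\overline{Y}_{K'}$ is a smooth proper curve and $\overline{Y}_{K'} \setminus Y^{\triv}$ is a finite disjoint union of smooth divisors, regarded as a set of marked sections. Now apply the Deligne--Mumford--Knudsen stable reduction theorem for pointed curves: after replacing $K'$ by a suitable finite extension $K''$, there exists a stable pointed model $\overline{Y}'' \to \spec(\ca{O}_{K''})$ extending $(\overline{Y}_{K''}, \overline{Y}_{K''} \setminus Y^{\triv})$, whose coarse model is semi-stable over $\ca{O}_{K''}$ in the sense of \ref{exam:semi-stable}, with complement of the unmarked locus realized as a horizontal normal crossings divisor.

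\emph{Step 3 (Restriction and base change).} The stable model comes with a proper morphism $\overline{Y}'' \to \overline{Y}_{\ca{O}_{K''}}$ which is an isomorphism over $Y^{\triv}_{\ca{O}_{K''}}$. Restricting along the open immersion $Y_{\ca{O}_{K''}} \hookrightarrow \overline{Y}_{\ca{O}_{K''}}$ yields a $Y^{\triv}_{\ca{O}_{K''}}$-modification $Y'' \to Y_{\ca{O}_{K''}}$ with $Y''^{\triv} = Y^{\triv}_{\ca{O}_{K''}}$ such that $Y''^{\triv} \to Y''$ is semi-stable over $\spec(K'') \to \spec(\ca{O}_{K''})$. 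Base changing along $\ca{O}_{K''} \to \ca{O}_{\overline{K}}$ and using \ref{lem:poly-stable-basic}.(\ref{item:lem:poly-stable-basic-1}) to preserve semi-stability yields the desired $X^{\triv}$-modification $X' \to X$; since any semi-stable scheme is in particular poly-stable (taking a single factor in \eqref{eq:para:poly-stable}), this concludes the proof.

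\emph{Main obstacle.} The crux lies in Step 2: ensuring that stable reduction is available in the pointed (or punctured) variant over an arbitrary discrete valuation ring, not merely after base change to an algebraically closed field, and that it produces a genuine modification of the given $\overline{Y}$ rather than only an abstract proper model birational to the generic fibre. This combines Deligne--Mumford's stable reduction for proper smooth curves with Knudsen's contraction/expansion constructions for marked points, and requires careful bookkeeping of how the horizontal boundary interacts with the Nagata compactification so that the modification is an isomorphism precisely over $X^{\triv}$. Temkin's actual argument in \cite{temkin2010stablecurve} avoids some of these subtleties by working directly over the Riemann-Zariski space of the base and gluing local stable models obtained at each stalk of height one; for a base of the form $\spec(\ca{O}_{\overline{K}})$ with $\ca{O}_{\overline{K}}$ the valuation ring of an algebraic closure of a discretely valued field, the descent in Step 1 simplifies this to the classical Noetherian case.
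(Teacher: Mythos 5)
Your Step~1 matches the paper's Noetherian approximation, but from there the route diverges and leaves a gap that you correctly flag without closing. The paper does not compactify: after normalizing $Y$, it forms the multi-pointed curve $(Y,D)$, where $D$ is the schematic closure of $Y_\eta\setminus Y^{\triv}$ in $Y$, and applies Temkin's theorem \cite[1.5]{temkin2010stablecurve} to $(Y,D)$ directly. Temkin's result is a \emph{modification} theorem: its output is already a $Y_\eta$-modification $Y'\to Y$ together with a compatible $D_\eta$-modification $D'\to D$, with $Y'_s$ at-worst-nodal, $D'$ \'etale over $S$, and $D'$ a Cartier divisor disjoint from the nodes. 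Your Step~2 instead passes through a Nagata compactification $\overline{Y}$ and invokes Deligne--Mumford--Knudsen stable reduction, which produces an abstract proper flat family extending the generic fibre but, as you acknowledge in your ``Main obstacle'' paragraph, not automatically a proper morphism to $\overline{Y}_{\ca{O}_{K''}}$. Resolving that rational map by graph closure while preserving nodal fibres, handling the degenerate cases $2g-2+n\leq 0$ outside the moduli-theoretic formulation, and arranging that the restriction back to the non-compact $Y$ remains a modification are precisely the points that Temkin's theorem packages for you and that your outline leaves open.

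Moreover, even granting the modification, the bulk of the paper's proof comes afterwards: a five-case local analysis (geometric points in $Y'^{\triv}$, in $D'_\eta$, nonsingular points of $Y'_s$ outside $D'$, nonsingular points of $Y'_s$ in $D'$, and nodes of $Y'_s$) that extracts \'etale charts of the form \eqref{eq:exam:semi-stable-1} and thereby verifies that $Y'^{\triv}\to Y'$ is semi-stable in the precise sense of \ref{exam:semi-stable}, not merely at-worst-nodal with a marked divisor. Your Step~2 asserts this semi-stability of the coarse model without carrying out the verification. The cleanest repair is to cite Temkin's \cite[1.5]{temkin2010stablecurve} directly, as the paper does, which supplies the modification with exactly the stated properties, and then do the local-chart check.
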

\begin{proof}
	We put $\eta=\spec(K)$, $S=\spec(\ca{O}_K)$ and $s=\spec(\ca{O}_K/\ak{m}_K)$. After enlarging $K$ by \cite[8.8.2, 8.10.5]{ega4-3} and \cite[17.7.8]{ega4-4}, we may assume that there exists an open immersion $Y^{\triv}\to Y$ of $S$-schemes of finite type whose base change along $\overline{S}\to S$ is $X^{\triv}\to X$ and $Y_\eta$ is smooth over $\eta$. Note that $X_{\overline{\eta}}$ is equidimension of dimension $1$ (as its dense open subset $X^{\triv}$ is so) so that $Y_\eta$ is also equidimensional of dimension $1$. We claim that it suffices to show that after enlarging $K$ there exists a $Y_\eta$-modification $Y'$ of $Y$ such that $Y'^{\triv}\to Y'$ is poly-stable over $\eta\to S$, where $Y'^{\triv}=Y^{\triv}$. Indeed, $Y'^{\triv}_{\overline{\eta}}\to Y'_{\overline{S}}$ is poly-stable over $\overline{\eta}\to \overline{S}$ by \ref{lem:poly-stable-basic}.(\ref{item:lem:poly-stable-basic-1}) and $Y'_{\overline{S}}$ is an $X_{\overline{\eta}}=Y_{\overline{\eta}}$-modification of $X=Y_{\overline{S}}$.
	
	We focus on finding $Y'$ in the following. After replacing $Y$ by its integral closure in $Y_\eta$ (as $Y$ is Nagata and $Y_\eta$ is reduced, see \cite[\href{https://stacks.math.columbia.edu/tag/03GR}{03GR}]{stacks-project}), we may assume that $Y$ is normal. As $Y_\eta$ is smooth of finite type over $\eta$, $Y$ is a finite disjoint union of Noetherian normal integral schemes flat over $S$ with $\pi_0(Y_\eta)=\pi_0(Y)$ (\cite[\href{https://stacks.math.columbia.edu/tag/035L}{035L}]{stacks-project}). In particular, the special fibre $Y_s$ of $Y$ is either empty or equidimensional of dimension $1$ by \cite[\href{https://stacks.math.columbia.edu/tag/0B2J}{0B2J}]{stacks-project}. Hence, $Y$ is flat, of finite presentation, of relative dimension $1$ over $S$.
	
	We regard $Y_\eta\setminus Y^{\triv}$ as a reduced closed subscheme of $Y_\eta$ consisting of finitely many closed points, and let $D$ be its scheme theoretic closure in $Y$. Then, $D$ is flat of finite presentation over $S$ whose generic fibre is of dimension $0$, and thus its special fibre $D_s$ is also of dimension $0$ (\cite[\href{https://stacks.math.columbia.edu/tag/0B2J}{0B2J}]{stacks-project}). Hence, we can apply \cite[1.5]{temkin2010stablecurve} to the multi-pointed curve $(Y,D)$ over $S$. Then, after enlarging $K$ we obtain a commutative diagram of flat $S$-schemes of finite presentation 
	\begin{align}
		\xymatrix{
			D'\ar[r]\ar[d]&Y'\ar[d]\\
			D\ar[r]&Y
		}
	\end{align}
	where $Y'$ is an $Y_\eta$-modification of $Y$ whose special fibre $Y'_s$ is at-worst-nodal (\cite[\href{https://stacks.math.columbia.edu/tag/0C47}{0C47}]{stacks-project}),  $D'$ is a $D_\eta$-modification of $D$ which is \'etale over $S$, $D'\subseteq Y'$ is a Cartier divisor disjoint from the nodes of $Y'_s$. Note that $Y'^{\triv}=Y'_\eta\setminus D'_\eta=Y_\eta\setminus D_\eta=Y^{\triv}$. It suffices to show that $Y'^{\triv}\to Y'$ is semi-stable over $\eta\to S$ in the sense of \ref{exam:semi-stable}. Let $\overline{y}$ be a geometric point of $Y'$ with image $y\in Y'$. 
	
	If $y\in Y'^{\triv}$, then there exists an \'etale neighborhood $U$ of $\overline{y}$ and an \'etale morphism $U\to \spec(K[T^{\pm 1}])$ as $Y'^{\triv}$ is a smooth curve over $K$.
	
	If $y\in D'_\eta$, then there exists an \'etale neighborhood $U$ of $\overline{y}$ in $Y'_\eta$ whose image in $Y'_\eta$ intersects with $D'_\eta$ only at $y$. Let $y\in U$ be the image of $\overline{y}$. Then, after shrinking $U$, there exists a regular system of parameters $t$ of the $1$-dimensional regular local ring $\ca{O}_{U,y}$ which defines the strict normal crossings divisor $y\in U$ (\cite[\href{https://stacks.math.columbia.edu/tag/0BI9}{0BI9}]{stacks-project}). Then, the completed regular local ring $\widehat{\ca{O}_{U,y}}$ is canonically isomorphic to the algebra of power series $\kappa(y)[[t]]$, where $\kappa(y)$ is the residue field of $y$ finite over $K$ (\cite[\href{https://stacks.math.columbia.edu/tag/01TF}{01TF}, \href{https://stacks.math.columbia.edu/tag/00NO}{00NO}]{stacks-project}). After shrinking $U$, we may assume that $U=\spec(R)$ is affine and $t\in R$. Thus, the $\kappa(y)$-algebra homomorphism $\kappa(y)[T]\to R$ sending $T$ to $t$ is \'etale at $y$ by \cite[17.5.3]{ega4-4}. After shrinking $U$, we may assume that $\kappa(y)[T]\to R$ sending $T$ to $t$ is \'etale and thus so is $K[T]\to R$. Note that $T=0$ defines the strict normal crossings divisor $y=U_\eta\setminus U^{\triv}$.
	
	If $y$ is a non-singular point of $Y'_s$ which does not lie in $D'$, then $Y'\setminus D'$ is smooth at $y$ as it is flat over $S$ (\cite[\href{https://stacks.math.columbia.edu/tag/01V9}{01V9}]{stacks-project}). Hence, there exists an \'etale neighborhood $U$ of $\overline{y}$ and an \'etale morphism $U\to \spec(\ca{O}_K[T^{\pm 1}])$ with $U^{\triv}=U_\eta$ over $Y'^{\triv}$.
	
	If $y$ is a non-singular point of $Y'_s$ which lies in $D'$, then since the Cartier divisor $D'$ is \'etale over $S$, after passing to an \'etale neighborhood of $\overline{y}$, we may assume that $Y=\spec(R)$ is affine and $D'=\spec(\ca{O}_{K'})$ is defined by a nonzero divisor $t\in R$, where $K'$ is an unramified finite extension of $K$. Thus, the local ring $R_y$ of $R$ at $y$ is a $2$-dimensional regular local ring where $t$ and a uniformizer of $K$ form a regular system of parameters (\cite[\href{https://stacks.math.columbia.edu/tag/00NU}{00NU}]{stacks-project}). Then, the completed regular local ring $\widehat{R_y}$ is canonically isomorphic to $\ca{O}_{K'}[[t]]$ (\cite[\href{https://stacks.math.columbia.edu/tag/00NO}{00NO}]{stacks-project}). After shrinking $\spec(R)$ by \cite[17.5.3]{ega4-4}, we may assume that $\ca{O}_{K'}[T]\to R$ sending $T$ to $t$ is \'etale and thus so is $\ca{O}_K[T]\to R$. Note that $T=0$ defines the strict normal crossings divisor $D'$ in $Y'$.
	
	If $y$ is a singular point of $Y'_s$, then after passing to an \'etale neighborhood of $\overline{y}$ by \cite[\href{https://stacks.math.columbia.edu/tag/0CBY}{0CBY}]{stacks-project}, we may assume that there exists an \'etale morphism $Y'\to \spec(\ca{O}_{K'}[u,v]/(uv-a))$ over $\ca{O}_K'$, where $K'$ is an unramified finite extension of $K$ and $a\in\ca{O}_{K'}$. Since $y$ is a singular point of $Y'_s$ and $Y'_\eta$ is smooth, we have $a\in\ak{m}_{K'}\setminus \{0\}$ and the image of $\overline{y}$ in $\spec(\ca{O}_{K'}[u,v]/(uv-a))$ is the point defined by $u=v=0$. After multiplying a unit on $a$ (and also on $u$), we may assume that $a\in \ak{m}_K\setminus\{0\}$ as $K'$ is unramified over $K$. Thus, $Y'\to \spec(\ca{O}_K[u,v]/(uv-a))$ is a semi-stable chart (\ref{exam:semi-stable}).
\end{proof}

\begin{mycor}\label{cor:polystable-model}
	Let $K$ be a complete discrete valuation field extension of $\bb{Q}_p$ with perfect residue field, $\overline{K}$ an algebraic closure of $K$, $\overline{\eta}=\spec(\overline{K})$, $\overline{S}=\spec(\ca{O}_{\overline{K}})$, $(Y_n)_{n\in\bb{N}}$ a directed inverse system of separated smooth $\overline{\eta}$-schemes of finite type. Assume moreover that one of the following conditions holds:
	\begin{enumerate}
		\renewcommand{\labelenumi}{{\rm(\theenumi)}}
		\item The poly-stable modification conjecture {\rm\ref{conj:poly-stable}} is true.
		\item For any $n\in\bb{N}$, $Y_n$ is equidimensional of dimension $1$.
	\end{enumerate}
	Then, there exists a directed inverse system $(X^{\triv}_n\to X_n)_{n\in\bb{N}}$ of open immersions of coherent schemes poly-stable over $\overline{\eta}\to\overline{S}$ with each $X_n$ proper over $\overline{S}$
	such that there is an isomorphism of inverse systems of $\overline{\eta}$-schemes $(Y_n)_{n\in\bb{N}}\iso (X^{\triv}_n)_{n\in\bb{N}}$.
\end{mycor}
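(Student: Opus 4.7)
The plan is to produce the system $(X^{\triv}_n\to X_n)_{n\in\bb{N}}$ in three compatible steps: compactify each $Y_n$ over $\overline{\eta}$ with strict normal crossings boundary, spread out to proper flat integral models over $\overline{S}$, and then apply the poly-stable modification theorem (Conjecture~\ref{conj:poly-stable}, or Theorem~\ref{thm:temkin} in the curve case) all along the tower, arranging each successive choice compatibly with the previously constructed transitions by repeated scheme-theoretic closure.

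First, since $\overline{K}$ has characteristic $0$, I would use Nagata compactification, Chow's lemma, and Hironaka's resolution of singularities to construct inductively smooth projective $\overline{\eta}$-schemes $\overline{Y}_n$ containing $Y_n$ as an open subscheme whose complement is the support of a strict normal crossings divisor, such that each transition morphism $Y_m\to Y_n$ extends to a morphism $\overline{Y}_m\to \overline{Y}_n$. Given $\overline{Y}_{n-1}$, pick any smooth projective compactification $\overline{Y}'_n$ of $Y_n$ with strict normal crossings boundary, replace it by the scheme-theoretic closure of $Y_n$ in $\overline{Y}'_n\times_{\overline{\eta}}\overline{Y}_{n-1}$ (this resolves the rational map induced by $Y_n\to Y_{n-1}\hookrightarrow \overline{Y}_{n-1}$), and reapply embedded resolution along the complement of $Y_n$ to restore smoothness and the strict normal crossings property. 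In the curve case this reduces to the classical smooth projective compactification. Next, I would construct proper flat $\overline{S}$-schemes $Z_n$ with generic fibre $\overline{Y}_n$ and transitions $Z_m\to Z_n$ extending those on generic fibres: since $\overline{Y}_n$ is smooth projective, descend a projective embedding $\overline{Y}_n\hookrightarrow \bb{P}^N_{\overline{\eta}}$ to a projective embedding over a finite extension $L_n/K$ (using \cite[8.8.2, 8.10.5]{ega4-3}), take the scheme-theoretic closure in $\bb{P}^N_{\ca{O}_{L_n}}$, and base change to $\overline{S}$; then enforce compatibility inductively by defining $Z_n$ as the scheme-theoretic closure of $Y_n$ in the product of any such initial model with $Z_{n-1}$ over $\overline{S}$.

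Now construct $X_n$ inductively. For the base case, apply Conjecture~\ref{conj:poly-stable} (or Theorem~\ref{thm:temkin}) to $Y_0\hookrightarrow Z_0$, whose hypotheses hold since $Z_{0,\overline{\eta}}=\overline{Y}_0$ is smooth with strict normal crossings complement $\overline{Y}_0\setminus Y_0$. For the inductive step, observe that $X_{n-1}$ is flat over $\overline{S}$ (being poly-stable, see \ref{lem:poly-stable-adequate}) and is a $\overline{Y}_{n-1}$-modification, so $\overline{Y}_{n-1}$ is a scheme-theoretically dense open subscheme of $X_{n-1}$; hence $\overline{Y}_{n-1}$ is both open and closed (the latter by properness of $\overline{Y}_{n-1}$) and dense in $X_{n-1,\overline{\eta}}$, forcing $X_{n-1,\overline{\eta}}=\overline{Y}_{n-1}$. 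Let $\widetilde{X}_n$ be the scheme-theoretic closure of $Y_n$ in $Z_n\times_{Z_{n-1}}X_{n-1}$; it is proper over $\overline{S}$, its generic fibre is the closure of $Y_n$ in $\overline{Y}_n\times_{\overline{Y}_{n-1}}\overline{Y}_{n-1}=\overline{Y}_n$, which equals $\overline{Y}_n$ by reducedness, and it admits canonical projections to both $Z_n$ and $X_{n-1}$. Applying Conjecture~\ref{conj:poly-stable} (resp.\ Theorem~\ref{thm:temkin}) to $Y_n\hookrightarrow \widetilde{X}_n$ produces a $\overline{Y}_n$-modification $X_n$ of $\widetilde{X}_n$ that is poly-stable over $\overline{\eta}\to\overline{S}$ with $X^{\triv}_n=Y_n$; composing with $\widetilde{X}_n\to X_{n-1}$ yields the transition morphism, and $X_n$ is proper over $\overline{S}$ as a tower of proper morphisms.

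The main obstacle is ensuring that the generic fibre remains equal to $\overline{Y}_n$ (and in particular smooth with strict normal crossings complement $\overline{Y}_n\setminus Y_n$) after every modification, so that the hypotheses of Conjecture~\ref{conj:poly-stable} stay satisfied throughout the induction; this is guaranteed by always taking scheme-theoretic closures of $Y_n$ (resp.~$\overline{Y}_n$) inside fibre products over $\overline{S}$, which only alter the special fibre. A secondary technical point is that each scheme-theoretic closure and modification must remain of finite presentation over $\overline{S}$, which is handled by descending at each stage to the Noetherian intermediate base $\ca{O}_{L_n}$ for a suitable finite extension $L_n/K$ and base-changing at the end.
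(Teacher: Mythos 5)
Your proof is correct and follows essentially the same strategy as the paper: an inductive construction using Nagata compactification, Hironaka's resolution on the generic fibre, Noetherian descent to handle finite presentation over the non-Noetherian $\overline{S}$, and the poly-stable modification theorem \ref{conj:poly-stable} (or \ref{thm:temkin} for curves). The organization differs in one respect worth noting: the paper performs a single \emph{relative} Nagata compactification of the descended morphism $Y'\to X'$ over $S=\spec(\ca{O}_K)$ at each step, so that $Z_{n+1}$ is by construction proper over $X_n$ and the transition map $X_{n+1}\to X_n$ comes for free; you instead pre-build the auxiliary towers $(\overline{Y}_n)$ over $\overline{\eta}$ and $(Z_n)$ over $\overline{S}$ and then restore compatibility with the previously constructed $X_{n-1}$ by passing to the scheme-theoretic closure of $Y_n$ inside $Z_n\times_{Z_{n-1}}X_{n-1}$. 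Your version therefore requires a few extra verifications (that $X_{n-1,\overline{\eta}}=\overline{Y}_{n-1}$, that the generic fibre of the closure is again $\overline{Y}_n$ so the hypotheses of \ref{conj:poly-stable} persist, and that the resulting transition restricts to the given map on $Y_n$), which you correctly supply; the paper's one-pass relative construction sidesteps them, which is why it is the cleaner route, but both are sound.
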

\begin{proof}
	We construct it by induction and we put $\eta=\spec(K)$ and $S=\spec(\ca{O}_K)$. Assume that we have constructed a poly-stable morphism $(X^{\triv}_n\to X_n)\to (\overline{\eta}\to\overline{S})$ with $X^{\triv}_n=Y_n$. As $Y_{n+1}$ and $X_n$ are separated of finite presentation over $S$, after enlarging $K$ by \cite[8.8.2, 8.10.5]{ega4-3}, we may assume that there exists a separated morphism $Y'\to X'$ of $S$-schemes of finite type over $\eta\to S$ whose base change along $\overline{S}\to S$ is $Y_{n+1}\to X_n$. By Nagata compactification theorem (\cite[\href{https://stacks.math.columbia.edu/tag/0F41}{0F41}]{stacks-project}), there exists an open immersion $Y'\to Z'$ of $X'$-schemes with $Z'$ proper over $X'$. By Hironaka's resolution of singularities (\cite[Main Theorems I and II]{hironaka1964resolution}), there exists a $Y'$-modification $Z''_\eta$ of $Z'_\eta$ such that $Z''_\eta$ is smooth over $\eta$ and that $Z''_\eta\setminus Y'$ is the support of a normal crossings divisor on $Z''_\eta$. Applying Nagata compactification theorem again, we obtain an open immersion $Z''_\eta\to Z''$ of $Z'$-schemes with $Z''$ proper over $Z'$.
	
	As $S$ is Noetherian, $Z''$ is proper of finite presentation over $X'$. Let $Z_{n+1}$ be the base change of $Z''$ along $\overline{S}\to S$. Then, there is a canonical commutative diagram of $\overline{S}$-schemes of finite presentation
	\begin{align}\label{eq:cor:polystable-model-1}
		\xymatrix{
			Y_{n+1}\ar[d]\ar[r]&Z_{n+1,\overline{\eta}}\ar[r]\ar[d]&Z_{n+1}\ar[d]\\
			Y_n\ar[r]&X_{n,\overline{\eta}}\ar[r]&X_n
		}
	\end{align} 
	where $Y_{n+1}\to Z_{n+1,\overline{\eta}}$ is an open immersion, $Z_{n+1,\overline{\eta}}$ is smooth, $Z_{n+1,\overline{\eta}}\setminus Y_{n+1}$ is the support of a normal crossings divisor on $Z_{n+1,\overline{\eta}}$ (as $\overline{\eta}\to\eta$ is a regular morphism), and $Z_{n+1}$ is proper over $X_n$. Applying \ref{conj:poly-stable} or \ref{thm:temkin} to the morphism $(Y_{n+1}\to Z_{n+1})\to (\overline{\eta}\to\overline{S})$, we obtain a $Z_{n+1,\overline{\eta}}$-modification $X_{n+1}$ of $Z_{n+1}$ such that $(X^{\triv}_{n+1}\to X_{n+1})$ is poly-stable over $\overline{\eta}\to\overline{S}$, where $X^{\triv}_{n+1}=Y_{n+1}$. We see that $X_{n+1}$ is also proper over $X_n$, and thus proper over $\overline{S}$. This completes the induction process.
\end{proof}

\begin{mythm}\label{thm:vanishing}
	Let $K$ be a complete discrete valuation field extension of $\bb{Q}_p$ with perfect residue field, $\overline{K}$ an algebraic closure of $K$, $\overline{\eta}=\spec(\overline{K})$, $\overline{S}=\spec(\ca{O}_{\overline{K}})$, $(X^{\triv}_n\to X_n)_{n\in\bb{N}}$ a directed inverse system of open immersions of coherent schemes poly-stable over $\overline{\eta}\to \overline{S}$ with each $X_n$ proper over $\overline{S}$. Regarding $(X^{\triv}_n\to X_n)_{n\in\bb{N}}$ as an object of $\pro(\fal^{\mrm{open}}_{\overline{\eta}\to \overline{S}})$ {\rm(\ref{para:zeta-3})}, assume that for any $x\in X^{\rz}=\lim_{n\in\bb{N}}X_n^{\rz}$, the stalk $\ca{O}_{\rz_{X_{\overline{\eta}}}(X),x}$ is pre-perfectoid, where $\rz_{X_{\overline{\eta}}}(X)=\lim_{n\in\bb{N}}\rz_{X_{n,\overline{\eta}}}(X_n)$ (see {\rm\ref{para:limit-rz}}). Then, for any integer $k\in\bb{N}$, there exists a canonical isomorphism of homotopy colimits {\rm(\cite[\href{https://stacks.math.columbia.edu/tag/090Z}{090Z}]{stacks-project})} in the derived category of almost $\ca{O}_{\overline{K}}$-modules (see {\rm\cite[5.7]{he2024coh}})
	\begin{align}\label{eq:thm:vanishing-1}
		\op{hocolim}_{n\in\bb{N}}\rr\Gamma(X^{\triv}_{n,\et},\ca{O}_{\overline{K}}/p^k\ca{O}_{\overline{K}})\iso \op{hocolim}_{n\in\bb{N}}\rr\Gamma(X_n,\ca{O}_{X_n}/p^k\ca{O}_{X_n}).
	\end{align}
\end{mythm}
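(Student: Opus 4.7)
The strategy is to compare both sides with the Faltings cohomology of the limit site $\fal^{\et}_{X^{\triv}\to X}$, using Faltings' main $p$-adic comparison for the étale side and local Faltings acyclicity for the coherent side. On the étale side, I apply \ref{thm:faltings-comparison} to each individual $(X^{\triv}_n\to X_n)$ (which is poly-stable with $X_n$ proper over $\overline{S}$) and combine with the $\bb{Z}$-flatness of $\ca{O}_{\overline{K}}$ to get an almost quasi-isomorphism
\begin{align*}
\rr\Gamma(X^{\triv}_{n,\et},\ca{O}_{\overline{K}}/p^k\ca{O}_{\overline{K}}) \iso \rr\Gamma(\fal^{\et}_{X^{\triv}_n\to X_n},\falb/p^k\falb).
\end{align*}
Taking the sequential $\op{hocolim}$ over $n\in\bb{N}$, and combining \eqref{eq:fal-limit} with the limit formalism \cite[\Luoma{6}.8.7.5, \Luoma{6}.8.7.7]{sga4-2} for coherent sites (which gives $H^q(\fal^{\et}_{X^{\triv}\to X},\falb/p^k\falb)=\colim_n H^q(\fal^{\et}_{X^{\triv}_n\to X_n},\falb/p^k\falb)$), the right side is canonically quasi-isomorphic to $\rr\Gamma(\fal^{\et}_{X^{\triv}\to X},\falb/p^k\falb)$, since both agree on all cohomology groups via the canonical comparison map (and countable hocolim is determined by cohomology).

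To reach the coherent side, I apply the valuative criterion for local Faltings acyclicity \ref{thm:val-criterion-faltings-acyclic} to the pro-object $(X^{\triv}_n\to X_n)_{n\in\bb{N}}$ in $\pro(\fal^{\mrm{open}}_{\overline{\eta}\to \overline{S}})$. To check its hypotheses: each $(X^{\triv}_n\to X_n)$ descends by \ref{lem:poly-stable-basic}.(\luoma{2}) to a poly-stable morphism over some finite subextension of $K$, and by \ref{lem:poly-stable-adequate} this descent is a log smooth morphism between regular fs log schemes whose generic fibre is smooth; then \ref{rem:essential-adequate-log-smooth} guarantees that $(X^{\triv}_n\to X_n)$ itself is adequate over $\overline{\eta}\to\overline{S}$. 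Since $\overline{K}$ contains all roots of unity, \ref{prop:rz-faithful-adequate} yields that each $(X^{\triv}_n\to X_n)$ has Riemann-Zariski faithful Faltings cohomology, and \ref{cor:rz-continue} propagates this to the whole pro-object. Furthermore, $X_n$ is normal by \ref{lem:poly-stable-adequate}.(\luoma{1}) and hence integrally closed in its dense open $X^{\triv}_n$, and pre-perfectoidness of the stalks $\ca{O}_{\rz_{X_{\overline{\eta}}}(X),x}$ is given by hypothesis. Thus \ref{thm:val-criterion-faltings-acyclic} applies, giving the local Faltings acyclicity
\begin{align*}
\ca{O}_X/p^k\ca{O}_X \iso \rr\sigma_*(\falb/p^k\falb)
\end{align*}
as an almost isomorphism of sheaves on $X$.

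Taking derived global sections on $X$ and using the Leray spectral sequence produces an almost isomorphism $\rr\Gamma(X,\ca{O}_X/p^k\ca{O}_X) \iso \rr\Gamma(\fal^{\et}_{X^{\triv}\to X},\falb/p^k\falb)$. Since $X=\lim_n X_n$ is a cofiltered limit of coherent spectral spaces with spectral transitions (\ref{para:limit-ringed-space}), the same coherent-site limit formalism identifies $\rr\Gamma(X,\ca{O}_X/p^k\ca{O}_X)$ canonically with $\op{hocolim}_{n\in\bb{N}}\rr\Gamma(X_n,\ca{O}_{X_n}/p^k\ca{O}_{X_n})$, again via agreement of all cohomology groups under the canonical map. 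Chaining these identifications with the étale-side computation from the first paragraph yields the desired canonical almost isomorphism \eqref{eq:thm:vanishing-1}. The main obstacle I anticipate is the verification that each poly-stable $(X^{\triv}_n\to X_n)$ is essentially adequate so that \ref{prop:rz-faithful-adequate} can be invoked: this requires descending to finite level via \ref{lem:poly-stable-basic}.(\luoma{2}) and then exploiting the log smoothness of \ref{lem:poly-stable-adequate} to enter \ref{rem:essential-adequate-log-smooth}. A secondary but routine subtlety is justifying the quasi-isomorphism between sequential hocolimits and derived sections of the limit site, which we handle via cohomology-group comparison since the systems are countable and directed.
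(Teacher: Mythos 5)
Your proposal is correct and follows essentially the same route as the paper: both sides are compared with $\rr\Gamma(\fal^{\et}_{X^{\triv}_n\to X_n},\falb/p^k\falb)$, using Faltings' main $p$-adic comparison \ref{thm:faltings-comparison} for the \'etale side, and the chain \ref{lem:poly-stable-basic}, \ref{lem:poly-stable-adequate}, \ref{prop:rz-faithful-adequate}, \ref{cor:rz-continue}, \ref{thm:val-criterion-faltings-acyclic} to establish local Faltings acyclicity for the coherent side, with the SGA4 limit formalism for coherent sites tying the hocolimits together. The only cosmetic difference is that you route explicitly through the limit site and its derived global sections, while the paper phrases the same identification directly at the level of colimits of cohomology groups before invoking the hocolim characterization.
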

\begin{proof}
	Since each $X^{\triv}_n\to X_n$ is adequate over $\overline{\eta}\to \overline{S}$ by \ref{lem:poly-stable-basic} and \ref{lem:poly-stable-adequate}, its Faltings cohomology is Riemann-Zariski faithful by \ref{prop:rz-faithful-adequate} and \ref{cor:rz-continue}. Thus, $(X^{\triv}_n\to X_n)_{n\in\bb{N}}$ is locally Faltings acyclic by \ref{thm:val-criterion-faltings-acyclic}. Therefore, by \ref{defn:locally-faltings-acyclic}, \ref{rem:locally-faltings-acyclic} and \cite[\Luoma{6}.8.7.7]{sga4-2}, the canonical morphism
	\begin{align}\label{eq:thm:vanishing-3}
		\colim_{n\in\bb{N}}H^q(X_n,\ca{O}_{X_n}/p^k\ca{O}_{X_n})\longrightarrow \colim_{n\in\bb{N}}H^q(\fal^{\et}_{X^{\triv}_n\to X_n},\falb/p^k\falb)
	\end{align}
	is an almost isomorphism for any $q,\ k\in\bb{N}$ (note that $X_n$ is integrally closed in $X^{\triv}_n$). In other words, the canonical morphism of homotopy colimits
	\begin{align}\label{eq:thm:vanishing-4}
		\op{hocolim}_{n\in\bb{N}}\rr\Gamma(X_n,\ca{O}_{X_n}/p^k\ca{O}_{X_n})\longrightarrow \op{hocolim}_{n\in\bb{N}}\rr\Gamma(\fal^{\et}_{X^{\triv}_n\to X_n},\falb/p^k\falb)
	\end{align}
	is an isomorphism in the derived category of almost $\ca{O}_{\overline{K}}$-modules (\cite[\href{https://stacks.math.columbia.edu/tag/0CRK}{0CRK}]{stacks-project}).
	
	On the other hand, by Faltings' main $p$-adic comparison theorem \ref{thm:faltings-comparison}, there is a canonical morphism of homotopy colimits
	\begin{align}\label{eq:thm:vanishing-5}
		\op{hocolim}_{n\in\bb{N}}\rr\Gamma(X^{\triv}_{n,\et},\ca{O}_{\overline{K}}/p^k\ca{O}_{\overline{K}})\longrightarrow \op{hocolim}_{n\in\bb{N}}\rr\Gamma(\fal^{\et}_{X^{\triv}_n\to X_n},\falb/p^k\falb)
	\end{align}
	which is an isomorphism in the derived category of almost $\ca{O}_{\overline{K}}$-modules (\cite[\href{https://stacks.math.columbia.edu/tag/0CRK}{0CRK}]{stacks-project}). Composing \eqref{eq:thm:vanishing-5} with the inverse of \eqref{eq:thm:vanishing-4}, we obtain a canonical isomorphism \eqref{eq:thm:vanishing-1}.
\end{proof}

\begin{mycor}\label{cor:vanishing-1}
	Under the assumptions in {\rm\ref{thm:vanishing}} and with the same notation, for any integer $q>\limsup_{n\to\infty}\{\dim(X^{\triv}_n)\}$, we have
	\begin{align}\label{eq:cor:vanishing-1}
		\colim_{n\in\bb{N}}H^q(X^{\triv}_{n,\et},\bb{Z}/p^k\bb{Z})=0.
	\end{align} 
\end{mycor}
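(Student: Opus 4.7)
The plan is to read off the corollary from Theorem \ref{thm:vanishing} by first checking that the coherent right-hand side vanishes for degree reasons, and then descending from $\ca{O}_{\overline{K}}/p^k$-coefficients to $\bb{Z}/p^k\bb{Z}$-coefficients via a universal-coefficient/flatness argument.

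First I would fix $q>d=\limsup_n\dim X^{\triv}_n$ and $k\in\bb{N}$; after dropping finitely many indices we may assume $\dim X^{\triv}_n<q$ for every $n$. Since $X_n$ is flat, proper and poly-stable over $\overline{S}$, its special fibre $X_{n,s}$ is a scheme of finite type over the residue field $\ca{O}_{\overline{K}}/\ak{m}_{\overline{K}}$ of Krull dimension $\dim X^{\triv}_n$, hence a Noetherian topological space of dimension $<q$. The coherent sheaf $\ca{O}_{X_n}/p^k\ca{O}_{X_n}$ being topologically supported on $X_{n,s}$, Grothendieck's vanishing theorem yields $H^q(X_n,\ca{O}_{X_n}/p^k\ca{O}_{X_n})=0$ for every $n$. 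Taking the colimit and invoking Theorem \ref{thm:vanishing} then forces the $\ca{O}_{\overline{K}}$-module $\colim_n H^q(X^{\triv}_{n,\et},\ca{O}_{\overline{K}}/p^k\ca{O}_{\overline{K}})$ to be almost zero.

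Next I would translate this to a statement with $\bb{Z}/p^k\bb{Z}$-coefficients. Writing $\ca{O}_{\overline{K}}=\colim_L\ca{O}_L$ over finite subextensions $L/K$ in $\overline{K}$, each $\ca{O}_L/p^k\ca{O}_L$ is a finite free $\bb{Z}/p^k\bb{Z}$-module, so $\ca{O}_{\overline{K}}/p^k\ca{O}_{\overline{K}}$ is a filtered colimit of finite free, in particular faithfully flat, $\bb{Z}/p^k\bb{Z}$-modules. Since each $X^{\triv}_n$ is coherent, étale cohomology commutes with filtered colimits of abelian sheaves, yielding the identity
\begin{align*}
H^q(X^{\triv}_{n,\et},\ca{O}_{\overline{K}}/p^k\ca{O}_{\overline{K}})\cong H^q(X^{\triv}_{n,\et},\bb{Z}/p^k\bb{Z})\otimes_{\bb{Z}/p^k\bb{Z}}\ca{O}_{\overline{K}}/p^k\ca{O}_{\overline{K}}.
\end{align*}
Setting $M=\colim_n H^q(X^{\triv}_{n,\et},\bb{Z}/p^k\bb{Z})$, the previous step says precisely that $M\otimes_{\bb{Z}/p^k\bb{Z}}\ca{O}_{\overline{K}}/p^k\ca{O}_{\overline{K}}$ is almost zero as an $\ca{O}_{\overline{K}}$-module.

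To finish I would deduce $M=0$ by one more use of faithful flatness. If $m\in M$ were nonzero and $j\leq k$ were the least integer with $p^jm=0$, the resulting injection $\bb{Z}/p^j\bb{Z}\hookrightarrow M$, $1\mapsto m$, would produce after tensoring an injection $\ca{O}_{\overline{K}}/p^j\ca{O}_{\overline{K}}\hookrightarrow M\otimes_{\bb{Z}/p^k\bb{Z}}\ca{O}_{\overline{K}}/p^k\ca{O}_{\overline{K}}$; almost vanishing of the target would then force $\ak{m}_{\overline{K}}\subseteq p^j\ca{O}_{\overline{K}}$, contradicting the existence in $\ak{m}_{\overline{K}}$ of elements of arbitrarily small positive valuation. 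The only mildly delicate step in all of this will be the passage from an almost isomorphism over $\ca{O}_{\overline{K}}$ to an honest equality over $\bb{Z}/p^k\bb{Z}$; the remainder is a direct application of Theorem \ref{thm:vanishing} together with standard support and flatness manipulations.
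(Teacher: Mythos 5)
Your proof is correct and takes essentially the same route as the paper's: bound the dimension of the special fibre, invoke Grothendieck's vanishing for coherent cohomology in degree $q$, feed this into Theorem~\ref{thm:vanishing}, and then descend from $\ca{O}_{\overline{K}}$-coefficients to integral coefficients. Two remarks. First, a small factual slip: $\ca{O}_L/p^k\ca{O}_L$ is \emph{not} finite over $\bb{Z}/p^k\bb{Z}$ for general $K$, since $K$ is only assumed to be a complete discrete valuation field with perfect residue field (e.g.\ $\ca{O}_K = W(\overline{\bb{F}_p})$), not a finite extension of $\bb{Q}_p$; however, $\ca{O}_{\overline{K}}/p^k\ca{O}_{\overline{K}}$ is still flat over $\bb{Z}/p^k\bb{Z}$ (as $\ca{O}_{\overline{K}}$ is torsion-free over $\bb{Z}_p$), hence by Lazard's theorem a filtered colimit of finite free $\bb{Z}/p^k\bb{Z}$-modules, so your passage through étale cohomology's compatibility with filtered colimits on coherent schemes survives once the justification is adjusted. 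Second, where the paper reduces to $k=1$ by dévissage and then uses that $\colim_n H^q(X^{\triv}_{n,\et},\bb{F}_p)$ is a free $\bb{F}_p$-module (so almost vanishing of its base change to $\ca{O}_{\overline{K}}/p\ca{O}_{\overline{K}}$ forces vanishing), you keep $k$ general and make the same conclusion via the least-torsion-exponent trick; and where the paper invokes relative Grothendieck vanishing (\cite[\href{https://stacks.math.columbia.edu/tag/0E7D}{0E7D}]{stacks-project}) for the proper morphism $f_n$, you restrict to the Noetherian special fibre and use classical Grothendieck vanishing. Both variations are sound and come to the same thing.
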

\begin{proof}
	We may assume that $q>\sup_{n\in\bb{N}}\{\dim(X^{\triv}_n)\}$ after discarding finitely many $n\in\bb{N}$. By d\'evissage, it suffices to prove the case where $k=1$. 
	
	We claim that  $H^q(X_n,\ca{O}_{X_n}/p\ca{O}_{X_n})=0$. Indeed, for any $n\in\bb{N}$, $X_n$ is a finite disjoint union of normal integral schemes flat over $\overline{S}$ by \ref{lem:essential-adequate-generic} (whose assumptions are satisfied by \ref{lem:poly-stable-basic} and \ref{lem:poly-stable-adequate}). Thus, the dimension of the special fibre of $X_n$ is bounded by the dimension of the generic fibre (\cite[\href{https://stacks.math.columbia.edu/tag/0B2J}{0B2J}]{stacks-project}) and thus strictly less than $q$. Let $f_n:X_n\to \overline{S}$ denote the canonical morphism of schemes. As $f_n$ is proper whose special fibre has dimension strictly less than $q$, we have
	\begin{align}\label{eq:thm:vanishing-6}
		H^q(X_n,\ca{O}_{X_n}/p\ca{O}_{X_n})=\rr^q f_{n*}(\ca{O}_{X_n}/p\ca{O}_{X_n})=0
	\end{align}
	by Grothendieck's vanishing \cite[\href{https://stacks.math.columbia.edu/tag/0E7D}{0E7D}]{stacks-project}.
	
	The claim implies that $\colim_{n\in\bb{N}}H^q(X^{\triv}_{n,\et},\bb{F}_p)\otimes_{\bb{Z}_p}\ca{O}_{\overline{K}}$ is almost zero by \ref{thm:vanishing}. As $\colim_{n\in\bb{N}}H^q(X^{\triv}_{n,\et},\bb{F}_p)$ is an (free) $\bb{F}_p$-module, its base change to $\ca{O}_{\overline{K}}/p\ca{O}_{\overline{K}}$ is almost zero if and only if itself is zero. This completes the proof.
\end{proof}

\begin{mycor}\label{cor:vanishing-2}
	Under the assumptions in {\rm\ref{thm:vanishing}} and with the same notation, let $F$ be a valuation field extension of $\overline{K}$ of height $1$. Then, there is a canonical exact sequence of almost $\ca{O}_F$-modules for any $q\in\bb{N}$,
	\begin{align}\label{eq:cor:vanishing-2-1}
		\xymatrix{
			0\ar[r]&\widehat{\ca{M}^q}\ar[r]&\ca{H}^q\ar[r]&T_p(\ca{M}^{q+1})\ar[r]&0,
		}
	\end{align}
	where $\ca{H}^q=\lim_{k\in\bb{N}}\colim_{n\in\bb{N}}H^q(X^{\triv}_{n,\et},\bb{Z}/p^k\bb{Z})\otimes_{\bb{Z}_p}\ca{O}_F$, and  $\ca{M}^q=\colim_{n\in\bb{N}}H^q(X_n,\ca{O}_{X_n})\otimes_{\ca{O}_{\overline{K}}}\ca{O}_F$ with $\widehat{\ca{M}^q}=\lim_{k\in\bb{N}}\ca{M}^q/p^k\ca{M}^q$ its $p$-adic completion and $T_p(\ca{M}^q)=\ho_{\bb{Z}_p}(\bb{Q}_p/\bb{Z}_p,\ca{M}^q)=\lim_{k\in\bb{N}}\ca{M}^q[p^k]$ its Tate module.
\end{mycor}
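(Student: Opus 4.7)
The plan is to take cohomology of the Faltings-type comparison of Theorem~\ref{thm:vanishing}, extend scalars from $\ca{O}_{\overline{K}}$ to $\ca{O}_F$, and then split the coherent side into a $p^k$-quotient and a $p^k$-torsion piece via the Bockstein sequence before passing to the inverse limit in $k$.

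First, applying $H^q$ to the homotopy colimit isomorphism~\eqref{eq:thm:vanishing-1} in the derived category of almost $\ca{O}_{\overline{K}}$-modules (using that filtered colimits commute with cohomology) produces, for each $k\in\bb{N}$, an almost isomorphism
\begin{align*}
\colim_{n\in\bb{N}} H^q(X^{\triv}_{n,\et},\ca{O}_{\overline{K}}/p^k\ca{O}_{\overline{K}})\iso\colim_{n\in\bb{N}} H^q(X_n,\ca{O}_{X_n}/p^k\ca{O}_{X_n}).
\end{align*}
Since $\ca{O}_F$ is torsion-free over the valuation ring $\ca{O}_{\overline{K}}$, it is flat, so tensoring with $\ca{O}_F$ preserves this almost isomorphism. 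On the \'etale side, a universal coefficient argument for \'etale cohomology with coefficients in the flat $\bb{Z}/p^k\bb{Z}$-module $\ca{O}_F/p^k\ca{O}_F$, justified by expressing the latter as a filtered colimit of finite free $\bb{Z}/p^k\bb{Z}$-modules and invoking noetherianness of each $X^{\triv}_n$, identifies $H^q(X^{\triv}_{n,\et},\ca{O}_F/p^k\ca{O}_F)$ with $H^q(X^{\triv}_{n,\et},\bb{Z}/p^k\bb{Z})\otimes_{\bb{Z}_p}\ca{O}_F$; taking $\lim_k$ recovers $\ca{H}^q$.

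On the coherent side, set $M^q_n=H^q(X_n,\ca{O}_{X_n})$ and $M^q=\colim_n M^q_n$. The Bockstein sequence attached to $0\to\ca{O}_{X_n}\xrightarrow{\cdot p^k}\ca{O}_{X_n}\to\ca{O}_{X_n}/p^k\ca{O}_{X_n}\to 0$ yields the short exact sequence
\begin{align*}
0\to M^q_n/p^kM^q_n\to H^q(X_n,\ca{O}_{X_n}/p^k\ca{O}_{X_n})\to M^{q+1}_n[p^k]\to 0.
\end{align*}
Taking the filtered colimit over $n$ and then tensoring with the flat module $\ca{O}_F$ (which commutes with quotients modulo $p^k$ and with $p^k$-torsion) produces
\begin{align*}
0\to\ca{M}^q/p^k\ca{M}^q\to \colim_{n\in\bb{N}} H^q(X_n,\ca{O}_{X_n}/p^k\ca{O}_{X_n})\otimes_{\ca{O}_{\overline{K}}}\ca{O}_F\to\ca{M}^{q+1}[p^k]\to 0.
\end{align*}

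Finally, I pass to the inverse limit in $k$. The transition maps $\ca{M}^q/p^{k+1}\ca{M}^q\twoheadrightarrow\ca{M}^q/p^k\ca{M}^q$ are surjective, so Mittag-Leffler gives $\lim^1_k\ca{M}^q/p^k\ca{M}^q=0$; combining this with the identification of the middle term with $\ca{H}^q$ from the previous steps (limits of almost isomorphisms remain almost isomorphisms) produces the desired exact sequence~\eqref{eq:cor:vanishing-2-1}. The main technicality to keep track of in the full write-up is the compatibility of all the lim--colim--tensor operations with the almost structure; since every tensor is with the flat module $\ca{O}_F$ and the relevant $\lim^1$ vanishes, no genuine obstacle is expected.
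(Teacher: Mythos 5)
Your proposal is correct and follows essentially the same route as the paper: take the Bockstein exact sequence from $0\to\ca{O}_{X_n}\xrightarrow{\cdot p^k}\ca{O}_{X_n}\to\ca{O}_{X_n}/p^k\to 0$, pass to the filtered colimit over $n$, base change along the flat map $\ca{O}_{\overline{K}}\to\ca{O}_F$, then take $\lim_k$ using Mittag--Leffler for the term $\ca{M}^q/p^k\ca{M}^q$, and finally identify the middle term with $\ca{H}^q$ by taking $H^q$ of the comparison of Theorem~\ref{thm:vanishing}. The one place you are more explicit than the paper is the identification $H^q(X^{\triv}_{n,\et},\ca{O}_F/p^k\ca{O}_F)\cong H^q(X^{\triv}_{n,\et},\bb{Z}/p^k\bb{Z})\otimes_{\bb{Z}_p}\ca{O}_F$, which the paper compresses into the single sentence ``The conclusion follows from \ref{thm:vanishing}.''; your Lazard-type argument (coherent scheme, flat $\bb{Z}/p^k\bb{Z}$-coefficients as a filtered colimit of finite frees) is the correct justification, though ``coherent'' would suffice in place of ``noetherian''. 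Your parenthetical ``limits of almost isomorphisms remain almost isomorphisms'' does require a small check (control of $\lim^1$ of the kernels via $\ak{m}_F^2=\ak{m}_F$), but it does hold here since $F$ is non-discretely valued of height $1$, so this is not a gap.
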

\begin{proof}
	Taking cohomologies of the exact sequence $0\to \ca{O}_{X_n}\stackrel{\cdot p^k}{\longrightarrow}\ca{O}_{X_n}\longrightarrow \ca{O}_{X_n}/p^k\ca{O}_{X_n}\longrightarrow 0$ of coherent modules over $X_n$, we obtain an exact sequence for any $k\in\bb{N}$,
	\begin{align}
		0\longrightarrow H^q(X_n,\ca{O}_{X_n})/p^k\longrightarrow H^q(X_n,\ca{O}_{X_n}/p^k\ca{O}_{X_n})\longrightarrow H^{q+1}(X_n,\ca{O}_{X_n})[p^k]\longrightarrow 0.
	\end{align}
	Taking the flat base change along $\ca{O}_{\overline{K}}\to \ca{O}_F$ and taking filtered colimit over $n\in\bb{N}$, we obtain an exact sequence
	\begin{align}
		0\longrightarrow \ca{M}^q/p^k\ca{M}^q\longrightarrow \colim_{n\in\bb{N}}H^q(X_n,\ca{O}_{X_n}/p^k\ca{O}_{X_n})\otimes_{\ca{O}_{\overline{K}}}\ca{O}_F\longrightarrow \ca{M}^{q+1}[p^k]\longrightarrow 0.
	\end{align}
	As the inverse system $(\ca{M}^q/p^k\ca{M}^q)_{k\in\bb{N}}$ satisfies the Mittag-Leffler condition, we have $\rr^1\lim_{k\in\bb{N}}\ca{M}^q/p^k\ca{M}^q=0$ (\cite[\href{https://stacks.math.columbia.edu/tag/07KW}{07KW}]{stacks-project}). Thus, taking inverse limit over $k\in\bb{N}$, we obtain an exact sequence
	\begin{align}
		0\longrightarrow \widehat{\ca{M}^q}\longrightarrow \lim_{k\in\bb{N}}\colim_{n\in\bb{N}}H^q(X_n,\ca{O}_{X_n}/p^k\ca{O}_{X_n})\otimes_{\ca{O}_{\overline{K}}}\ca{O}_F\longrightarrow T_p(\ca{M}^{q+1})\longrightarrow 0.
	\end{align}
	The conclusion follows from \ref{thm:vanishing}.
\end{proof}

\begin{myrem}\label{rem:cor:vanishing-2}
	In \ref{cor:vanishing-2}, $\ca{M}^q$ is a countably generated $\ca{O}_F$-module for any $q\in\bb{N}$. Indeed, since $X_n$ is the base change of a proper $\ca{O}_{K'}$-scheme along $\overline{S}\to\spec(\ca{O}_{K'})$ for a finite subextension $K'$ of $K$ in $\overline{K}$ by \ref{lem:poly-stable-basic}.(\ref{item:lem:poly-stable-basic-2}), we see that $H^q(X_n,\ca{O}_{X_n})$ is a $\ca{O}_{\overline{K}}$-module of finite presentation (\cite[\href{https://stacks.math.columbia.edu/tag/02O5}{02O5}]{stacks-project}). 
	
	In particular, $\ca{M}^q/\ca{M}^q[p^\infty]$ is a countably generated torsion-free $\ca{O}_F$-module. We take $F$ to be the maximal completion $\widetilde{\overline{K}}$ of the valuation field $\overline{K}$ (see \ref{rem:max-val-exist}), which is still algebraically closed by \ref{cor:max-val-alg-clos}. Then, we apply Kaplansky's structure theorem \ref{cor:max-val-mod} to it so that there exists countable sets $I_q$ and $J_q$ such that for any $\epsilon\in\ak{m}_{\widetilde{\overline{K}}}$ we can write
	\begin{align}\label{eq:rem:cor:vanishing-2-1}
		\epsilon \cdot (\ca{M}^q/\ca{M}^q[p^\infty])\subseteq \widetilde{\overline{K}}^{\oplus I_q}\oplus \ca{O}_{\widetilde{\overline{K}}}^{\oplus J_q} \subseteq \ca{M}^q/\ca{M}^q[p^\infty].
	\end{align}
	In particular, we have
	\begin{align}\label{eq:rem:cor:vanishing-2-2}
		 \colim_{n\in\bb{N}}H^q(X_{n,\overline{\eta}},\ca{O}_{X_{n,\overline{\eta}}})\otimes_{\overline{K}}\widetilde{\overline{K}}= \ca{M}^q[1/p]=\widetilde{\overline{K}}^{\oplus I_q}\oplus \widetilde{\overline{K}}^{\oplus J_q}.
	\end{align}
\end{myrem}

\begin{mylem}\label{lem:torsion-completion}
	Let $M$ be a torsion $\bb{Z}_p$-module, $\widehat{M}$ its $p$-adic completion. Assume that $\widehat{M}[p^\infty]=\widehat{M}[p^m]$ for some $m\in\bb{N}$. Then, $\widehat{M}=\widehat{M}[p^m]$ and $\bigcap_{k\in\bb{N}}p^kM=p\cdot \bigcap_{k\in\bb{N}}p^kM$.
\end{mylem}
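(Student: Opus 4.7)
The plan is to exploit the standard identification of the kernel of the completion map. Set $N := \bigcap_{k\in\bb{N}} p^k M$, which is the kernel of the canonical map $M \to \widehat{M} = \lim_n M/p^n M$, so that $M/N$ embeds as a $\bb{Z}_p$-submodule of $\widehat{M}$. Since $M$ is torsion, so is $M/N$, and by hypothesis its image is contained in $\widehat{M}[p^\infty] = \widehat{M}[p^m]$; consequently $p^m(M/N) = 0$, i.e.\ $p^m M \subseteq N$.

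Combining this with the obvious inclusion $N \subseteq p^{m+1}M$ gives the chain $p^m M \subseteq N \subseteq p^{m+1}M \subseteq p^m M$, so all three modules coincide. In particular $N = p^m M = p^{m+1}M = pN$, which is the second assertion of the lemma. Iterating the equality $p^m M = p^{m+1}M$ then yields $p^n M = N$ for every $n \geq m$, so the transition maps in the inverse system $(M/p^n M)_{n \in \bb{N}}$ are isomorphisms beyond index $m$. Hence $\widehat{M} = M/p^m M = M/N$, which is manifestly killed by $p^m$, establishing $\widehat{M} = \widehat{M}[p^m]$.

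The argument is entirely elementary and no serious obstacle is anticipated. The only step worth explicitly flagging is the routine identification of the kernel of $M \to \widehat{M}$ with $\bigcap_k p^k M$, which is immediate from the definition of $\widehat{M}$ as an inverse limit; everything else reduces to chasing the inclusions $p^m M \subseteq N \subseteq p^{m+1}M$ that the hypothesis forces upon us.
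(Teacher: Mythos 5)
Your proof is correct, and it is genuinely more elementary than the paper's. Both arguments begin identically: since $M$ is torsion, $M/N$ (with $N=\bigcap_k p^kM$) embeds in $\widehat{M}[p^\infty]=\widehat{M}[p^m]$, giving $p^mM\subseteq N$. From there the paper passes to the $p$-adic completion of the exact sequence $0\to N\to M\to M/N\to 0$, invoking \cite[\href{https://stacks.math.columbia.edu/tag/0BNG}{0BNG}]{stacks-project} (exactness of $p$-adic completion when the quotient has bounded $p$-power torsion) to deduce $\widehat{N}=0$ and hence $N=pN$; it also uses the identification $(M/N)^\wedge=\widehat{M}$ to get $\widehat{M}=\widehat{M}[p^m]$. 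You bypass this homological tool entirely with the sandwich $p^mM\subseteq N\subseteq p^{m+1}M\subseteq p^mM$, which forces all three to coincide and delivers both conclusions at once: $N=p^mM=p^{m+1}M=pN$, and $p^nM=N$ for all $n\geq m$ so that $\widehat{M}=M/N$ is killed by $p^m$. The only extra observation you need is the trivial inclusion $N\subseteq p^{m+1}M$, which the paper's route does not exploit. Your version is shorter, self-contained, and avoids the appeal to the Stacks Project.
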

\begin{proof}
	As $M$ is torsion, $M/\bigcap_{k\in\bb{N}}p^kM$ is contained in $\widehat{M}[p^\infty]$. By assumption, it is killed by $p^m$, and thus so is its $p$-adic completion $(M/\bigcap_{k\in\bb{N}}p^kM)^\wedge=\widehat{M}$. 
	
	On the other hand, taking $p$-adic completion of the exact sequence, $0\to \bigcap_{k\in\bb{N}}p^kM\to M\to M/\bigcap_{k\in\bb{N}}p^kM\to 0$, we obtain an exact sequence $0\to (\bigcap_{k\in\bb{N}}p^kM)^\wedge\to \widehat{M}\iso (M/\bigcap_{k\in\bb{N}}p^kM)^\wedge\to 0$ as $M/\bigcap_{k\in\bb{N}}p^kM$ is killed by $p^m$ (\cite[\href{https://stacks.math.columbia.edu/tag/0BNG}{0BNG}]{stacks-project}). In particular, $(\bigcap_{k\in\bb{N}}p^kM)^\wedge=0$, which implies that $\bigcap_{k\in\bb{N}}p^kM=p\cdot \bigcap_{k\in\bb{N}}p^kM$.
\end{proof}

\begin{mycor}\label{cor:vanishing-3}
	Under the assumptions in {\rm\ref{thm:vanishing}} and with the same notation, assume moreover that $d=\limsup_{n\to\infty}\{\dim(X^{\triv}_n)\}<+\infty$ and that the $p$-power torsion of $\lim_{k\in\bb{N}}\colim_{n\in\bb{N}}H^d(X^{\triv}_{n,\et},\bb{Z}/p^k\bb{Z})$ is bounded. Then, for any valuation field $F$ extension of $\overline{K}$ of height $1$, there is a canonical isomorphism of almost $\ca{O}_F$-modules 
	\begin{align}\label{eq:cor:vanishing-3-1}
		(\ca{M}^d/\ca{M}^d[p^\infty])^\wedge\iso \ca{H}^d/\ca{H}^d[p^\infty]
	\end{align}
	where  $\ca{M}^d=\colim_{n\in\bb{N}}H^d(X_n,\ca{O}_{X_n})\otimes_{\ca{O}_{\overline{K}}}\ca{O}_F$ and $\ca{H}^d=\lim_{k\in\bb{N}}\colim_{n\in\bb{N}}H^d(X^{\triv}_{n,\et},\bb{Z}/p^k\bb{Z})\otimes_{\bb{Z}_p}\ca{O}_F$.
\end{mycor}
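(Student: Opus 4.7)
The plan is to combine the exact sequence of \ref{cor:vanishing-2} with a vanishing $\ca{M}^{d+1}=0$ to obtain an almost isomorphism $\widehat{\ca{M}^d}\iso \ca{H}^d$, and then to perform a derived $p$-adic completion on the torsion sequence $0\to \ca{M}^d[p^\infty]\to \ca{M}^d\to \ca{M}^d/\ca{M}^d[p^\infty]\to 0$ to identify $\widehat{\ca{M}^d}/\widehat{\ca{M}^d}[p^\infty]$ with $(\ca{M}^d/\ca{M}^d[p^\infty])^\wedge$. The bounded-torsion hypothesis is the bridge that splices these two identifications.

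First, I would prove that $\ca{M}^{d+1}=0$. After discarding finitely many $n$, one has $\dim X_n^{\triv}\le d$, and since $X_n$ is proper and flat over $\overline{S}$ its special fibre has dimension at most $d$; Grothendieck vanishing then gives $H^{d+1}(X_n,\ca{O}_{X_n}/p\ca{O}_{X_n})=0$, so the long exact sequence of cohomology yields $H^{d+1}(X_n,\ca{O}_{X_n})=p\cdot H^{d+1}(X_n,\ca{O}_{X_n})$. By \ref{rem:cor:vanishing-2}, this module is the flat base change to $\ca{O}_{\overline{K}}$ of a finitely generated module over a Noetherian DVR $\ca{O}_{K'}$, to which Nakayama applies (since $p$ generates a power of $\ak{m}_{K'}$), and hence it vanishes. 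Consequently $\ca{M}^{d+1}=0$, $T_p(\ca{M}^{d+1})=0$, and the almost exact sequence of \ref{cor:vanishing-2} collapses to an almost isomorphism $\widehat{\ca{M}^d}\iso \ca{H}^d$. The $\bb{Z}_p$-flatness of $\ca{O}_F$ then transports the bounded-torsion hypothesis to yield $\widehat{\ca{M}^d}[p^\infty]=\widehat{\ca{M}^d}[p^m]$ almost, for some $m$.

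Next, writing $M=\ca{M}^d$, $T=M[p^\infty]$, and $N=M/T$ (so $N$ is $p$-torsion-free), the sequence $0\to T\to M\to N\to 0$ reduces modulo $p^k$ to a short exact sequence because $N[p^k]=0$; since all three inverse systems have surjective transitions, $\rr^1\lim$ vanishes and one obtains a clean short exact sequence
\begin{align*}
0\longrightarrow \widehat{T}\longrightarrow \widehat{M}\longrightarrow \widehat{N}\longrightarrow 0.
\end{align*}
A direct telescoping argument on $(x_k)\in\lim_k N/p^k N$ with $p^r x=0$ (which forces $x_k\in p^{k-r}N$, and then the compatibility condition iteratively forces $x_k\in p^k N$) shows that $\widehat{N}$ is itself $p$-torsion-free, so $\widehat{M}[p^\infty]\subseteq \widehat{T}$. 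Conversely, since $\widehat{T}\hookrightarrow \widehat{M}$ and $\widehat{M}[p^\infty]$ is almost $p^m$-torsion, an almost analogue of \ref{lem:torsion-completion} applied to the torsion $\bb{Z}_p$-module $T$ shows that $\widehat{T}$ is almost $p^m$-torsion, hence almost contained in $\widehat{M}[p^\infty]$. Therefore $\widehat{T}=\widehat{M}[p^\infty]$ almost, and we conclude $(\ca{M}^d/\ca{M}^d[p^\infty])^\wedge=\widehat{N}=\widehat{M}/\widehat{M}[p^\infty]\iso \ca{H}^d/\ca{H}^d[p^\infty]$.

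The main obstacle is this final step, where the almost structure must be tracked carefully through the derived $p$-adic completion: verifying the clean exactness of $0\to \widehat{T}\to \widehat{M}\to \widehat{N}\to 0$, proving that $\widehat{N}$ inherits $p$-torsion-freeness from $N$, and adapting \ref{lem:torsion-completion} to the almost category are all indispensable ingredients that must cooperate so that the bounded-torsion hypothesis on $\ca{H}^d$ propagates correctly to $\widehat{T}$.
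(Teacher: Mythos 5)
Your proof follows essentially the same route as the paper — the collapse of the sequence from \ref{cor:vanishing-2} via $\ca{M}^{d+1}=0$, the $p$-adic completion of the torsion sequence, and the identification of the torsion submodule of $\widehat{\ca{M}^d}$ with the completion of $\ca{M}^d[p^\infty]$ are all as in the paper's argument. However, there is a genuine gap in your transfer of the bounded-torsion hypothesis. You write that ``the $\bb{Z}_p$-flatness of $\ca{O}_F$ transports the bounded-torsion hypothesis'' to conclude $\widehat{\ca{M}^d}[p^\infty]=\widehat{\ca{M}^d}[p^m]$ almost. This does not follow from flatness alone: the hypothesis concerns $\lim_{k}\colim_{n}H^d(X^{\triv}_{n,\et},\bb{Z}/p^k\bb{Z})$, whereas $\ca{H}^d=\lim_{k}\bigl(\colim_{n}H^d(X^{\triv}_{n,\et},\bb{Z}/p^k\bb{Z})\otimes_{\bb{Z}_p}\ca{O}_F\bigr)$, and the tensor is taken \emph{inside} the inverse limit. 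Inverse limits do not commute with tensoring by a flat module, so flatness gives no control on the torsion of $\ca{H}^d$. The paper closes exactly this gap with Lemma \ref{lem:torsion-free-bc}, which in turn relies on Kaplansky's structure theorem \ref{cor:max-val-mod} for countably generated torsion-free modules over maximal valuation rings (here $\bb{Z}_p$), approximating $\ca{O}_F$ up to $\epsilon$-isomorphism by a countable direct sum $L^{\oplus I}\oplus\ca{O}_L^{\oplus J}$ and computing the limit factorwise. This is a substantive ingredient, not a formal consequence of flatness, and must be invoked explicitly.

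Two smaller remarks. Your derivation of $\ca{M}^{d+1}=0$ via long exact sequence and Nakayama is correct but roundabout; the paper obtains it directly from Grothendieck vanishing $\rr^{d+1}f_{n*}\ca{O}_{X_n}=0$, since the special fibre of the proper morphism $f_n$ has dimension $\le d$. Your exactness argument for $0\to\widehat{T}\to\widehat{M}\to\widehat{N}\to 0$ (reduction modulo $p^k$ using $N[p^k]=0$, then Mittag-Leffler) is a fine alternative to the paper's citation of the Stacks project, and your telescoping proof that $\widehat{N}$ is $p$-torsion-free is likewise sound. Once the transfer of bounded torsion is repaired via \ref{lem:torsion-free-bc}, your application of \ref{lem:torsion-completion} to $T=\ca{M}^d[p^\infty]$ and the final identification go through as in the paper.
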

\begin{proof}
	We take again the notation in \ref{cor:vanishing-2} and we put $f_n:X_n\to \overline{S}$ the canonical morphism. As in the proof of \eqref{eq:thm:vanishing-6}, $f_n$ is proper whose special fibre has dimension no more than $d$, thus $H^{d+1}(X_n,\ca{O}_{X_n})=\rr^{d+1}f_{n*}\ca{O}_{X_n}=0$ by Grothendieck's vanishing \cite[\href{https://stacks.math.columbia.edu/tag/0E7D}{0E7D}]{stacks-project}. In particular, we get $\ca{M}^{d+1}=0$. Thus, $T_p(\ca{M}^{d+1})=0$ and $\widehat{\ca{M}^d}$ is almost isomorphic to $\ca{H}^d$ by \ref{cor:vanishing-2}. Since $\bb{Z}_p$ is a maximal valuation ring and the $p$-power torsion of $\lim_{k\in\bb{N}}\colim_{n\in\bb{N}}H^d(X^{\triv}_{n,\et},\bb{Z}/p^k\bb{Z})$ is bounded by assumption, the $p$-power torsion of $\ca{H}^d=\lim_{k\in\bb{N}}\colim_{n\in\bb{N}}H^d(X^{\triv}_{n,\et},\bb{Z}/p^k\bb{Z})\otimes_{\bb{Z}_p}\ca{O}_F$ is also bounded by \ref{lem:torsion-free-bc}. Hence, the $p$-power torsion of $\widehat{\ca{M}^d}$ is bounded. 
	
	Consider the exact sequence $0\to \ca{M}^d[p^\infty]\to \ca{M}^d\to \ca{M}^d/\ca{M}^d[p^\infty]\to 0$. Passing to $p$-adic completion, the sequence
	\begin{align}
		\xymatrix{
			0\ar[r]&\widehat{\ca{M}^d[p^\infty]}\ar[r]&\widehat{\ca{M}^d}\ar[r]&(\ca{M}^d/\ca{M}^d[p^\infty])^\wedge\ar[r]&0
		}
	\end{align}
	is still exact as $\ca{M}^d/\ca{M}^d[p^\infty]$ is torsion-free (\cite[\href{https://stacks.math.columbia.edu/tag/0315}{0315}]{stacks-project}). The previous discussion shows that the $p$-power torsion of $\widehat{\ca{M}^d[p^\infty]}$ is bounded. Hence, $\widehat{\ca{M}^d[p^\infty]}$ is killed by a certain power of $p$ by \ref{lem:torsion-completion}. Since $(\ca{M}^d/\ca{M}^d[p^\infty])^\wedge$ is still torsion-free (\cite[5.20]{he2024coh}), we see that $\widehat{\ca{M}^d[p^\infty]}=\widehat{\ca{M}^d}[p^\infty]$. Therefore, we deduce from the isomorphism of almost $\ca{O}_{\overline{K}}$-modules $\widehat{\ca{M}^d}\iso \ca{H}^d$ \eqref{eq:cor:vanishing-2-1} an isomorphism of their torsion-free quotients (as almost modules) $(\ca{M}^d/\ca{M}^d[p^\infty])^\wedge\iso \ca{H}^d/\ca{H}^d[p^\infty]$.
\end{proof}

\begin{myrem}\label{rem:cor:vanishing-3}
	We take again the notation in \ref{rem:cor:vanishing-2}.
	\begin{enumerate}
		\renewcommand{\labelenumi}{{\rm(\theenumi)}}
		\item Assume that there is an isomorphism of almost $\ca{O}_F$-modules for some $q\in\bb{N}$,
		\begin{align}\label{eq:rem:cor:vanishing-3-1}
			(\ca{M}^q/\ca{M}^q[p^\infty])^\wedge\iso \ca{H}^q/\ca{H}^q[p^\infty],
		\end{align}
		where  $\ca{M}^q=\colim_{n\in\bb{N}}H^q(X_n,\ca{O}_{X_n})\otimes_{\ca{O}_{\overline{K}}}\ca{O}_F$ and $\ca{H}^q=\lim_{k\in\bb{N}}\colim_{n\in\bb{N}}H^q(X^{\triv}_{n,\et},\bb{Z}/p^k\bb{Z})\otimes_{\bb{Z}_p}\ca{O}_F$. Then, it induces a homomorphism of $\widetilde{\overline{K}}$-modules by composing with $\ca{M}^q\to (\ca{M}^q/\ca{M}^q[p^\infty])^\wedge$ and inverting $p$,
		\begin{align}\label{eq:rem:cor:vanishing-3-2}
			\ca{M}^q[1/p]\longrightarrow \ca{H}^q[1/p].
		\end{align}
		If we take $F=\widetilde{\overline{K}}$, then \eqref{eq:rem:cor:vanishing-3-2} fits into a commutative diagram induced by \eqref{eq:rem:cor:vanishing-2-1},
		\begin{align}\label{eq:rem:cor:vanishing-3-3}
			\xymatrix{
				\ca{M}^q[1/p]\ar[r]\ar@{->>}[d]& \ca{H}^q[1/p]\\
				\bigoplus_{J_q}\widetilde{\overline{K}}\ar[r]&\widehat{\bigoplus}_{J_q}\widetilde{\overline{K}}\ar[u]_-{\wr}
			}
		\end{align}
		where the horizontal arrow on the bottom is induced from the $p$-adic completion map $\bigoplus_{J_q}\ca{O}_{\widetilde{\overline{K}}}\to\widehat{\bigoplus}_{J_q}\ca{O}_{\widetilde{\overline{K}}}$ by inverting $p$, the left vertical arrow is surjective and the right vertical arrow is an isomorphism. In particular, \eqref{eq:rem:cor:vanishing-3-2} has dense image with respect to the topology induced by the $p$-adic topology of $\ca{H}^q/\ca{H}^q[p^\infty]$ (which coincides with the topology quotient of the $p$-adic topology of $\ca{H}^q$ if the $p$-power torsion of $\ca{H}^q$ is bounded).\label{item:rem:cor:vanishing-3-1}
		\item Instead of the extra assumption in \ref{cor:vanishing-3}, if we assume that the $p$-power torsion of $\colim_{n\in\bb{N}}H^q(X_n,\ca{O}_{X_n})$ is bounded for any $q\in\bb{N}$, then one can prove that for any valuation field $F$ extension of $\overline{K}$ of height $1$, \eqref{eq:cor:vanishing-2-1} induces a canonical isomorphism of almost $\ca{O}_F$-modules  
		\begin{align}\label{eq:rem:cor:vanishing-3-4}
			(\ca{M}^q/\ca{M}^q[p^\infty])^\wedge\iso \ca{H}^q/\ca{H}^q[p^\infty].
		\end{align}
		\label{item:rem:cor:vanishing-3-2}
	\end{enumerate}	
\end{myrem}

\begin{mythm}\label{thm:completed}
	Let $K$ be a complete discrete valuation field extension of $\bb{Q}_p$ with perfect residue field, $\overline{K}$ an algebraic closure of $K$, $\overline{\eta}=\spec(\overline{K})$, $(Y_n)_{n\in\bb{N}}$ a directed inverse system of separated smooth $\overline{\eta}$-schemes of finite type such that $d=\limsup_{n\to\infty}\{\dim(Y_n)\}<+\infty$. Assume that the following conditions hold:
	\begin{enumerate}
		\renewcommand{\labelenumi}{{\rm(\theenumi)}}
		\item Either the poly-stable modification conjecture {\rm\ref{conj:poly-stable}} is true, or $Y_n$ is equidimensional of dimension $1$ for any $n\in\bb{N}$.\label{item:thm:completed-1}
		\item For any point $y$ of the inverse limit $\lim_{n\in\bb{N}}Y_n$ of locally ringed spaces, its residue field $\kappa(y)$ is a pre-perfectoid field with respect to any valuation ring $V$ of height $1$ extension of $\ca{O}_{\overline{K}}$ with fraction field $V[1/p]=\kappa(y)$.\label{item:thm:completed-2}
	\end{enumerate}
	Then, for any integer $q>d$ and any integer $k\in\bb{N}$, we have
	\begin{align}\label{eq:thm:completed-1}
		\colim_{n\in\bb{N}}H^q(Y_{n,\et},\bb{Z}/p^k\bb{Z})=0.
	\end{align} 
\end{mythm}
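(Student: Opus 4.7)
The plan is to reduce to Corollary \ref{cor:vanishing-1}, which gives the desired vanishing as soon as we can exhibit a poly-stable integral model of $(Y_n)_{n\in\bb{N}}$ whose associated Riemann-Zariski stalks are all pre-perfectoid.

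First, using hypothesis (\ref{item:thm:completed-1}), I apply Corollary \ref{cor:polystable-model} to produce a directed inverse system $(X_n^{\triv}\to X_n)_{n\in\bb{N}}$ of open immersions of coherent schemes poly-stable over $\overline{\eta}\to\overline{S}$, with each $X_n$ proper over $\overline{S}$, together with compatible isomorphisms of $\overline{\eta}$-schemes $X_n^{\triv}\iso Y_n$. In particular $\sup_{n\in\bb{N}}\dim(X_n^{\triv})=\limsup_{n\to\infty}\dim(Y_n)\leq d$, so the dimension condition of Corollary \ref{cor:vanishing-1} is met, and both sides of \eqref{eq:thm:completed-1} are identified under these isomorphisms.

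The key step is to verify the stalk-wise pre-perfectoidness hypothesis of Theorem \ref{thm:vanishing}. Write $X=\lim_{n\in\bb{N}}X_n$ as in \eqref{eq:rz-limit} and $X^{\triv}=\lim_{n\in\bb{N}}X_n^{\triv}\iso\lim_{n\in\bb{N}}Y_n$ as locally ringed spaces. For any $x\in X^{\rz}$, the homeomorphism in \eqref{eq:para:limit-rz-5} together with Proposition \ref{prop:limit-rz}(\ref{item:prop:limit-rz-5})(\ref{item:prop:limit-rz-6}) provides a pair $(y\to W)$ with $y\in X^{\triv}$, $\kappa(y)=\ca{O}_W[1/p]$, and a canonical isomorphism $\widehat{\ca{O}_{\rz_{X_{\overline{\eta}}}(X),x}}\iso\widehat{\ca{O}_W}$ of $p$-adic completions. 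By Lemma \ref{lem:val-micro}, either $\ca{O}_W=\kappa(y)$---in which case both $p$-adic completions vanish and pre-perfectoidness is automatic---or the localization $\ca{O}_{W,\sqrt{p\ca{O}_W}}$ is a height-$1$ valuation ring extension of $\ca{O}_{\overline{K}}$ with fraction field $\kappa(y)$, and moreover $\ca{O}_W\to\ca{O}_{W,\sqrt{p\ca{O}_W}}$ is an almost isomorphism so that pre-perfectoidness transfers between them. In this latter case, hypothesis (\ref{item:thm:completed-2}) applied to $y\in\lim_{n\in\bb{N}}Y_n$ with $V=\ca{O}_{W,\sqrt{p\ca{O}_W}}$ asserts that $\kappa(y)$ is pre-perfectoid with respect to $V$; hence so are $\ca{O}_W$ and $\ca{O}_{\rz_{X_{\overline{\eta}}}(X),x}$.

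With the stalk hypothesis confirmed, Corollary \ref{cor:vanishing-1} applied to $(X_n^{\triv}\to X_n)_{n\in\bb{N}}$ yields $\colim_{n\in\bb{N}}H^q(X_{n,\et}^{\triv},\bb{Z}/p^k\bb{Z})=0$ for all $q>d$ and all $k\in\bb{N}$, and transporting through $X_n^{\triv}\iso Y_n$ concludes \eqref{eq:thm:completed-1}. The main subtlety---the one genuinely specific to this theorem---is the translation from hypothesis (\ref{item:thm:completed-2}), phrased in terms of residue fields of points of the locally ringed limit $\lim_{n\in\bb{N}}Y_n$, to pre-perfectoidness of every stalk of $\rz_{X_{\overline{\eta}}}(X)$; this bookkeeping is supplied by Proposition \ref{prop:limit-rz} and Lemma \ref{lem:val-micro}. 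All of the deeper content (local Faltings acyclicity via the valuative criterion \ref{thm:val-criterion-faltings-acyclic}, Faltings' main $p$-adic comparison theorem \ref{thm:faltings-comparison}, and Grothendieck's vanishing on proper models) has already been absorbed into Theorem \ref{thm:vanishing} and Corollary \ref{cor:vanishing-1}.
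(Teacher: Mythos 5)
Your proof is correct and follows the same route as the paper: construct poly-stable proper integral models via Corollary \ref{cor:polystable-model}, translate hypothesis (2) into the stalk-wise pre-perfectoidness required by Theorem \ref{thm:vanishing}, and conclude from Corollary \ref{cor:vanishing-1}. The only difference is that you unfold the translation step (via Proposition \ref{prop:limit-rz} and Lemma \ref{lem:val-micro}) in detail, whereas the paper compresses it into a citation of Remark \ref{rem:val-criterion-faltings-acyclic}, which records exactly that equivalence.
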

\begin{proof}
	 By \ref{cor:polystable-model}, there exists a directed inverse system $(X^{\triv}_n\to X_n)_{n\in\bb{N}}$ of open immersions of coherent schemes poly-stable over $\overline{\eta}\to\overline{S}$ with each $X_n$ proper over $\overline{S}$ and with an isomorphism of inverse systems of $\overline{\eta}$-schemes $(Y_n)_{n\in\bb{N}}\iso (X^{\triv}_n)_{n\in\bb{N}}$, where $\overline{S}=\spec(\ca{O}_{\overline{K}})$. Notice that the condition (\ref{item:thm:completed-2}) implies that $(X^{\triv}_n\to X_n)_{n\in\bb{N}}$ satisfies the conditions in \ref{thm:vanishing} by \ref{rem:val-criterion-faltings-acyclic}, and the conclusion follows from \ref{cor:vanishing-1} immediately.
\end{proof}

\begin{myrem}\label{rem:thm:completed}
	We give some rough remarks on each assumption in \ref{thm:completed}.
	\begin{enumerate}
		\renewcommand{\labelenumi}{{\rm(\theenumi)}}
		\item If each $Y_n$ is proper, then we don't need the assumption (\ref{item:thm:completed-1}). Indeed, in this case, we take any compactification $(X_n)_{n\in\bb{N}}$ of $(Y_n)_{n\in\bb{N}}$ and consider its $(Y_n)$-modifications $(X'_n)_{n\in\bb{N}}$. All these modifications (or equivalently the Riemann-Zariski spaces) form an object of $\pro(\fal^{\mrm{open}}_{\overline{\eta}\to \overline{S}})$ whose Faltings cohomology is Riemann-Zariski faithful, since its stalks are pre-perfectoid by the assumption (\ref{item:thm:completed-2}). On the other hand, as $Y_n$ is proper, we still have Faltings' main $p$-adic comparison theorem for each (non-smooth) $X'_n$ by \cite[5.17]{he2024falmain}. Therefore, similar as \ref{thm:vanishing}, the colimit of \'etale cohomology of $Y_n$ is computed by the colimit of coherent cohomology of $X'_n$, and we deduce \eqref{eq:thm:completed-1} from Grothendieck's vanishing as in \ref{cor:vanishing-1}.\label{item:rem:thm:completed-1}
		\item If $(Y_n)_{n\in\bb{N}}$ admits a compactification $(\overline{Y_n})_{n\in\bb{N}}$ over $\overline{\eta}$ such that the inverse limit of its analytification as adic spaces is representable by a perfectoid space in the sense of \cite[2.20]{scholze2013perfsurv}, then one can check that the assumption (\ref{item:thm:completed-2}) is satisfied. Hence, any inverse system of Shimura varieties that are perfectoid in the infinite level are examples of $(Y_n)_{n\in\bb{N}}$ in \ref{thm:completed} (see \ref{para:intro-shimura}). We expect that any Shimura datum satisfies the assumption (\ref{item:thm:completed-2}).\label{item:rem:thm:completed-2}
	\end{enumerate}
\end{myrem}

\begin{mycor}\label{cor:completed}
	Let $K$ be a complete discrete valuation field extension of $\bb{Q}_p$ with perfect residue field, $\overline{K}$ an algebraic closure of $K$, $\overline{\eta}=\spec(\overline{K})$, $(Y_n\to \overline{Y_n})_{n\in\bb{N}}$ a directed inverse system of open immersions of coherent $\overline{\eta}$-schemes such that each $\overline{Y_n}$ is proper smooth and $\overline{Y_n}\setminus Y_n$ is the support of a normal crossings divisor on $\overline{Y_n}$ and that $d=\limsup_{n\to\infty}\{\dim(Y_n)\}<+\infty$. Assume that the following conditions hold:
	\begin{enumerate}
		\renewcommand{\labelenumi}{{\rm(\theenumi)}}
		\item Either the poly-stable modification conjecture {\rm\ref{conj:poly-stable}} is true, or $Y_n$ is equidimensional of dimension $1$ for any $n\in\bb{N}$.\label{item:cor:completed-1}
		\item For any point $y$ of the inverse limit $\lim_{n\in\bb{N}}Y_n$ of locally ringed spaces, its residue field $\kappa(y)$ is a pre-perfectoid field with respect to any valuation ring $V$ of height $1$ extension of $\ca{O}_{\overline{K}}$ with fraction field $V[1/p]=\kappa(y)$.\label{item:cor:completed-2}
		\item The $p$-power torsion of $\lim_{k\in\bb{N}}\colim_{n\in\bb{N}}H^d(Y_{n,\et},\bb{Z}/p^k\bb{Z})$ is bounded.\label{item:cor:completed-3}
	\end{enumerate}
	Then, there is a canonical homomorphism of $\widetilde{\overline{K}}$-modules,
	\begin{align}\label{eq:cor:completed-1}
		\colim_{n\in\bb{N}}H^d(\widetilde{\overline{Y_n}},\ca{O}_{\widetilde{\overline{Y_n}}})\longrightarrow \left(\lim_{k\in\bb{N}}\colim_{n\in\bb{N}}H^d(Y_{n,\et},\ca{O}_{\widetilde{\overline{K}}}/p^k\ca{O}_{\widetilde{\overline{K}}})\right)[1/p],
	\end{align}
	where $\widetilde{\overline{K}}$ is the maximal completion of $\overline{K}$ {\rm(\ref{rem:max-val-exist})}, $\widetilde{\overline{\eta}}=\spec(\widetilde{\overline{K}})$ and $\widetilde{\overline{Y_n}}=\widetilde{\overline{\eta}}\times_{\overline{\eta}}\overline{Y_n}$, fitting into the following commutative diagram
	\begin{align}\label{eq:cor:completed-2}
		\xymatrix{
			\colim_{n\in\bb{N}}H^d(\widetilde{\overline{Y_n}},\ca{O}_{\widetilde{\overline{Y_n}}})\ar[r]\ar@{->>}[d]& \left(\lim_{k\in\bb{N}}\colim_{n\in\bb{N}}H^d(Y_{n,\et},\ca{O}_{\widetilde{\overline{K}}}/p^k\ca{O}_{\widetilde{\overline{K}}})\right)[1/p]\\
			\bigoplus_J\widetilde{\overline{K}}\ar[r]&\widehat{\bigoplus}_J\widetilde{\overline{K}}\ar[u]_-{\wr}
		}
	\end{align}
	where $J$ is a countable set, the horizontal arrow on the bottom is induced from the $p$-adic completion map $\bigoplus_J\ca{O}_{\widetilde{\overline{K}}}\to\widehat{\bigoplus}_J\ca{O}_{\widetilde{\overline{K}}}$ by inverting $p$, the left vertical arrow is surjective and the right vertical arrow is an isomorphism. In particular, \eqref{eq:cor:completed-1} has dense image with respect to the topology induced by the $p$-adic topology of $\lim_{k\in\bb{N}}\colim_{n\in\bb{N}}H^d(Y_{n,\et},\ca{O}_{\widetilde{\overline{K}}}/p^k\ca{O}_{\widetilde{\overline{K}}})$.
\end{mycor}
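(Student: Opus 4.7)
The plan is to factor the argument through a choice of poly-stable integral models compatible with the given compactifications, then invoke Corollary \ref{cor:vanishing-3} combined with Kaplansky's structure theorem.

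First, applying Corollary \ref{cor:polystable-model} to $(Y_n)_{n\in\bb{N}}$ under hypothesis (\ref{item:cor:completed-1}), I would construct a directed inverse system $(X^{\triv}_n\to X_n)_{n\in\bb{N}}$ of open immersions of coherent schemes poly-stable over $\overline{\eta}\to \overline{S}$ with each $X_n$ proper over $\overline{S}$ and $X^{\triv}_n\cong Y_n$. Revisiting the induction in the proof of Corollary \ref{cor:polystable-model}, at the inductive step I would replace the arbitrary intermediate Nagata compactification by the closure $W_{n+1}$ of the graph of $Y_{n+1}\to X_n$ inside $\overline{Y_{n+1}}\times_{\overline{S}}X_n$, which is a proper $X_n$-scheme containing $Y_{n+1}$ as an open subscheme with a canonical proper birational first projection $W_{n+1}\to \overline{Y_{n+1}}$. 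Applying Hironaka's resolution and then Conjecture \ref{conj:poly-stable} (or Theorem \ref{thm:temkin} in the curve case) to $W_{n+1}$ yields $X_{n+1}$ together with a canonical birational morphism $X_{n+1,\overline{\eta}}\to \overline{Y_{n+1}}$ between smooth proper $\overline{\eta}$-varieties, compatible with the transition maps by density of $Y_{n+1}$ and separatedness of $\overline{Y_n}$ over $\overline{\eta}$.

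Second, hypothesis (\ref{item:cor:completed-2}) together with Remark \ref{rem:val-criterion-faltings-acyclic} ensures that all stalks of the Riemann-Zariski space $\rz_{X_{\overline{\eta}}}(X)$ are pre-perfectoid, so the assumptions of Theorem \ref{thm:vanishing} hold for $(X^{\triv}_n\to X_n)_{n\in\bb{N}}$. Combined with hypothesis (\ref{item:cor:completed-3}), this allows invoking Corollary \ref{cor:vanishing-3} with $F=\widetilde{\overline{K}}$, producing a canonical almost isomorphism
\begin{align*}
    (\ca{M}^d/\ca{M}^d[p^\infty])^\wedge \iso \ca{H}^d/\ca{H}^d[p^\infty],
\end{align*}
where $\ca{M}^d=\colim_n H^d(X_n,\ca{O}_{X_n})\otimes_{\ca{O}_{\overline{K}}}\ca{O}_{\widetilde{\overline{K}}}$ and $\ca{H}^d=\lim_k\colim_n H^d(Y_{n,\et},\ca{O}_{\widetilde{\overline{K}}}/p^k\ca{O}_{\widetilde{\overline{K}}})$. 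Applying Remark \ref{rem:cor:vanishing-3}.(\ref{item:rem:cor:vanishing-3-1}) then produces the canonical homomorphism $\ca{M}^d[1/p]\to\ca{H}^d[1/p]$ fitting into the commutative diagram \eqref{eq:rem:cor:vanishing-3-3}, with a countable index set $J=I_d\sqcup J_d$ furnished by Kaplansky's decomposition of the countably generated torsion-free $\ca{O}_{\widetilde{\overline{K}}}$-module $\ca{M}^d/\ca{M}^d[p^\infty]$ (Remark \ref{rem:cor:vanishing-2}, invoking that $\widetilde{\overline{K}}$ is a maximal valuation field of height one).

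Finally, to identify $\ca{M}^d[1/p]$ with $\colim_n H^d(\widetilde{\overline{Y_n}},\ca{O}_{\widetilde{\overline{Y_n}}})$, I would apply flat base change along $\ca{O}_{\overline{K}}\to \ca{O}_{\widetilde{\overline{K}}}$ followed by inverting $p$ to obtain $\ca{M}^d[1/p]=\colim_n H^d(X_{n,\widetilde{\overline{\eta}}},\ca{O}_{X_{n,\widetilde{\overline{\eta}}}})$, and then invoke birational invariance of top coherent cohomology: for the canonical birational morphism $f_n\colon X_{n,\widetilde{\overline{\eta}}}\to \widetilde{\overline{Y_n}}$ of smooth proper varieties over the characteristic-zero field $\widetilde{\overline{K}}$ (obtained from the first step by base change), smoothness of the target guarantees rational singularities hence $Rf_{n*}\ca{O}_{X_{n,\widetilde{\overline{\eta}}}}=\ca{O}_{\widetilde{\overline{Y_n}}}$, yielding a canonical isomorphism $H^d(\widetilde{\overline{Y_n}},\ca{O}_{\widetilde{\overline{Y_n}}})\iso H^d(X_{n,\widetilde{\overline{\eta}}},\ca{O}_{X_{n,\widetilde{\overline{\eta}}}})$. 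Taking colimits and composing with the map of the second step yields the required canonical homomorphism \eqref{eq:cor:completed-1} and the commutative diagram \eqref{eq:cor:completed-2}. The main technical subtlety is the functoriality of the construction in the first step, namely producing the canonical morphisms $X_{n,\overline{\eta}}\to \overline{Y_n}$ compatibly with the transition maps of $(Y_n)_{n\in\bb{N}}$ and $(\overline{Y_n})_{n\in\bb{N}}$; this reduces to the graph-closure bookkeeping described above and presents no essential obstacle.
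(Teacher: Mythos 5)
Your overall strategy — construct poly‐stable integral models $(X^{\triv}_n\to X_n)_{n\in\bb N}$ via \ref{cor:polystable-model}, invoke \ref{cor:vanishing-3} and \ref{rem:cor:vanishing-3}.(\ref{item:rem:cor:vanishing-3-1}) with $F=\widetilde{\overline K}$, and use Kaplansky's decomposition to produce the diagram — is the same route the paper takes. There are, however, two concrete problems with your implementation and one omission.

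First, the graph‐closure step as written does not produce an integral model. Since $\overline{Y_{n+1}}$ is an $\overline\eta$‐scheme, the fibre product $\overline{Y_{n+1}}\times_{\overline S}X_n$ factors through $\overline\eta\times_{\overline S}X_n=X_{n,\overline\eta}$ and therefore lives entirely over $\overline\eta$. The closure $W_{n+1}$ of the graph inside this space is a subscheme of the generic fibre; it has empty special fibre and cannot serve as a proper $\overline S$‐scheme on which to run the modification conjecture. You must first replace $\overline{Y_{n+1}}$ by a proper $\overline S$‐scheme $Z'$ with $Z'_{\overline\eta}=\overline{Y_{n+1}}$ (as the paper does, via a compactification of a Noetherian approximation), and only then close up the graph of $\overline{Y_{n+1}}\to X_n$ inside $Z'\times_{\overline S}X_n$.

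Second, you overlook that the transition morphisms $\overline{Y_{n+1}}\to\overline{Y_n}$ are part of the hypothesis (the input is an inverse system of open immersions $Y_n\to\overline{Y_n}$, not merely of $Y_n$). Using these morphisms directly, the paper arranges $Z_{n+1,\overline\eta}=\overline{Y_{n+1}}$ exactly; since $\overline{Y_{n+1}}$ is already smooth with normal crossings boundary, no Hironaka step is needed on the generic fibre, and because a $Z_{n+1,\overline\eta}$‐modification is an isomorphism over the generic fibre one gets $X_{n,\overline\eta}=\overline{Y_n}$ on the nose. This makes the identification $\ca M^d[1/p]=\colim_n H^d(\widetilde{\overline{Y_n}},\ca O_{\widetilde{\overline{Y_n}}})$ immediate, bypassing your rational‐singularities/birational‐invariance argument. (That argument is correct — the paper itself records it in Remark \ref{rem:cor:completed}.(\ref{item:rem:cor:completed-1}) to prove independence of the choice of smooth compactification — but it is not needed inside the proof of the corollary if one sets up $X_{n,\overline\eta}$ correctly.)

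Third, the word ``canonical'' in the statement requires showing the resulting homomorphism $\colim_n H^d(\widetilde{\overline{Y_n}},\ca O_{\widetilde{\overline{Y_n}}})\to\ca H^d[1/p]$ does not depend on the choice of poly‐stable models, which involves noncanonical ingredients (Hironaka, the modification conjecture, Noetherian descent). You assert functoriality of the transition maps but never address independence of the ambient choices. The paper handles this by observing that the category $\scr C$ of admissible inverse systems $(X^{\triv}_n\to X_n)_{n\in\bb N}$ is cofiltered and that the construction of the map is functorial in $\scr C$.
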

\begin{proof}
	By a similar argument as in \ref{cor:polystable-model}, there exists a directed inverse system $(X^{\triv}_n\to X_n)_{n\in\bb{N}}$ of open immersions of coherent schemes poly-stable over $\overline{\eta}\to\overline{S}$ with each $X_n$ proper over $\overline{S}$ and with an isomorphism of inverse systems of open immersions of $\overline{\eta}$-schemes $(Y_n\to \overline{Y_n})_{n\in\bb{N}}\iso (X^{\triv}_n\to X_{n,\overline{\eta}})_{n\in\bb{N}}$, where $\overline{S}=\spec(\ca{O}_{\overline{K}})$. Indeed, in the construction of \eqref{eq:cor:polystable-model-1}, one can take $Z_{n+1,\overline{\eta}}=\overline{Y_{n+1}}$ and take $Z_{n+1}$ be the base change of a compactification of a Noetherian approximation of $\overline{Y_{n+1}}$. Moreover, if we denote by $\scr{C}$ the category of such inverse systems $(X^{\triv}_n\to X_n)_{n\in\bb{N}}$, then by the same arguments we see that $\scr{C}$ is cofiltered. 
	
	Notice that the condition (\ref{item:thm:completed-2}) (resp. (\ref{item:cor:completed-3})) implies that each object $(X^{\triv}_n\to X_n)_{n\in\bb{N}}$ in $\scr{C}$ satisfies the conditions in \ref{thm:vanishing} by \ref{rem:val-criterion-faltings-acyclic} (resp. the conditions in \ref{cor:vanishing-3}). Therefore, \eqref{eq:cor:vanishing-3-1} defines a homomorphism of $\widetilde{\overline{K}}$-modules 
	\begin{align}\label{eq:cor:completed-3}
		\colim_{n\in\bb{N}}H^d(\widetilde{\overline{Y_n}},\ca{O}_{\widetilde{\overline{Y_n}}})\longrightarrow \left(\lim_{k\in\bb{N}}\colim_{n\in\bb{N}}H^d(Y_{n,\et},\ca{O}_{\widetilde{\overline{K}}}/p^k\ca{O}_{\widetilde{\overline{K}}})\right)[1/p]
	\end{align}
	with the required property by \ref{rem:cor:vanishing-3}.(\ref{item:rem:cor:vanishing-3-1}). Unwinding this construction, we see that it is functorial in $\scr{C}$. Thus, we conclude that \eqref{eq:cor:completed-3} does not depend on the choice of $(X^{\triv}_n\to X_n)_{n\in\bb{N}}\in\ob(\scr{C})$ as $\scr{C}$ is cofiltered.
\end{proof}

\begin{myrem}\label{rem:cor:completed}
	We give some rough remarks on the extra assumptions in \ref{cor:completed}.
	\begin{enumerate}
		\renewcommand{\labelenumi}{{\rm(\theenumi)}}
		\item The module $\colim_{n\in\bb{N}}H^d(\widetilde{\overline{Y_n}},\ca{O}_{\widetilde{\overline{Y_n}}})$ does not depend on the smooth compactification $(\overline{Y_n})_{n\in\bb{N}}$ of $(Y_n)_{n\in\bb{N}}$. Indeed, each $\overline{Y_n}$ is smooth and thus of rational singularities (\cite[(2), page 144]{hironaka1964resolution}) so that smooth modification of $\overline{Y_n}$ does not change its cohomology. Then, the claim follows from the fact that such smooth compactifications form a cofiltered category by Hironaka's resolution of singularities (\cite[Main Theorems I and II]{hironaka1964resolution}).\label{item:rem:cor:completed-1}
		\item If $(Y_n)_{n\in\bb{N}}$ is a tower of finite \'etale Galois covers of $Y_0$ corresponding to a countable shrinking basis of open normal subgroups of a $p$-adic analytic group and if the analytification of $Y_0$ is homotopic to a finite simplicial complex, then the assumption (\ref{item:cor:completed-3}) is satisfied by \cite[1.1]{calegariemerton2012completed} and \cite[\Luoma{11}.4.4]{sga4-3} (see the proof of \cite[2.1.5]{emerton2006interpolation}). In particular, inverse systems of Shimura varieties lie in this situation, see \cite[2.2]{emerton2006interpolation}.\label{item:rem:cor:completed-2}
	\end{enumerate}
\end{myrem}

\bibliographystyle{myalpha}
\bibliography{bibli}


\end{document}